\DeclareMathOperator{\Th}{Th}
\title[]{Convolution semigroups for automorphism dynamics}
\subjclass[2020]{03C45, 03C95, 37B02, 28C10, 28E15, 43A05}
\keywords{Model theory, Keisler measures, convolution, Ellis semigroup}
\colorlet{KyleColor}{green!80!yellow}
\colorlet{PierreColor}{cyan}
\colorlet{IlijasColor}{red}
\author[K. Gannon]{Kyle Gannon$^{\ast}$}
\thanks{$^{\ast}$Supported by the Fundamental Research Funds for the Central Universities, Peking University, grant no. 7100604835.}
\address{$^{\ast}$ Beijing International Center for Mathematical Research (BICMR) \\ Peking University \\ Beijing, China.}
\email{kgannon@bicmr.pku.edu.cn}
\author[D. M. HOFFMANN]{Daniel Max Hoffmann$^{\dagger}$}
\thanks{$^{\dagger}$SDG. The second author is supported by the National Science Centre (Narodowe Centrum Nauki, Poland) 
grant no. 2021/43/B/ST1/00405.}
\address{$^{\dagger}$
Instytut Matematyki\\
Uniwersytet Warszawski\\
Warszawa\\
Poland}
\email{daniel.max.hoffmann@gmail.com}
\urladdr{{https://sites.google.com/site/danielmaxhoffmann/}}
\author[K. Krupi\'{n}ski]{Krzysztof Krupi\'{n}ski$^{\ddagger}$}
\address{$^{\ddagger}$Instytut Matematyczny Uniwersytetu Wroc{\l}awskiego, pl. Grunwaldzki 2, 50-384 Wroc{\l}aw, Poland}
\email{Krzysztof.Krupinski@math.uni.wroc.pl}
\date{\today}
\DeclareMathOperator{\E}{E}
 \DeclareMathOperator{\aut}{Aut} \DeclareMathOperator{\id}{id}
\DeclareMathOperator{\cl}{cl}
\DeclareMathOperator{\stab}{Stab}
 \DeclareMathOperator{\gal}{Gal}
\DeclareMathOperator{\df}{def}
\DeclareMathOperator{\Df}{Def} 
\DeclareMathOperator{\tp}{tp}
\DeclareMathOperator{\inv}{inv}
\DeclareMathOperator{\gen}{gen}
\DeclareMathOperator{\res}{res}
\DeclareMathOperator{\sym}{Sym}
\DeclareMathOperator{\Sym}{Sym}
\DeclareMathOperator{\Ord}{Ord}
\DeclareMathOperator{\Rr}{\mathbb{R}}
\newtheorem{theorem}{Theorem}[section]
\newtheorem{proposition}[theorem]{Proposition}
\newtheorem{lemma}[theorem]{Lemma}
\newtheorem{cor}[theorem]{Corollary}
\newtheorem{fact}[theorem]{Fact}
\theoremstyle{definition}
\newtheorem{definition}[theorem]{Definition}
\newtheorem{example}[theorem]{Example}
\newtheorem{remark}[theorem]{Remark}
\newtheorem{question}[theorem]{Question}
\newtheorem{conj}[theorem]{Conjecture}
\theoremstyle{remark}
\newtheorem*{theorem*}{Theorem}
\newtheorem*{cor*}{Corollary}
\newtheorem*{conj*}{Conjecture}
\theoremstyle{definition}
\theoremstyle{definition}
\theoremstyle{definition}
\theoremstyle{remark}
\newtheorem{clm}{Claim}
\newtheorem*{clm*}{Claim}
\newenvironment{clmproof}[1][\proofname]{\proof[#1]}{\endproof}
\providecommand{\customgenericname}{}
\newcommand{\newcustomtheorem}[2]{%
  \newenvironment{#1}[1]
  {%
   \ifdefined\crefalias\crefalias{innercustomgeneric}{#2}\fi
   \renewcommand\customgenericname{#2}%
   \renewcommand\theinnercustomgeneric{##1}%
   \innercustomgeneric
  }
  {\endinnercustomgeneric}%
  \ifdefined\crefname\crefname{#2}{#2}{#2s}\fi
}
\providecommand*{\cupdot}{%
  \mathbin{%
    \mathpalette\@cupdot{}%
  }%
}
\newcommand*{\@cupdot}[2]{%
  \ooalign{%
    $\m@th#1\cup$\cr
    \sbox0{$#1\cup$}%
    \dimen@=\ht0 %
    \sbox0{$\m@th#1\cdot$}%
    \advance\dimen@ by -\ht0 %
    \dimen@=.5\dimen@
    \hidewidth\raise\dimen@\box0\hidewidth
  }%
}
\providecommand*{\bigcupdot}{%
  \mathop{%
    \vphantom{\bigcup}%
    \mathpalette\@bigcupdot{}%
  }%
}
\newcommand*{\@bigcupdot}[2]{%
  \ooalign{%
    $\m@th#1\bigcup$\cr
    \sbox0{$#1\bigcup$}%
    \dimen@=\ht0 %
    \advance\dimen@ by -\dp0 %
    \sbox0{\scalebox{2}{$\m@th#1\cdot$}}%
    \advance\dimen@ by -\ht0 %
    \dimen@=.5\dimen@
    \hidewidth\raise\dimen@\box0\hidewidth
  }%
}
\def\Ind#1#2{#1\setbox0=\hbox{$#1x$}\kern\wd0\hbox to 0pt{\hss$#1\mid$\hss}
\lower.9\ht0\hbox to 0pt{\hss$#1\smile$\hss}\kern\wd0}
\def\ind{\mathop{\mathpalette\Ind{}}}
\def\notind#1#2{#1\setbox0=\hbox{$#1x$}\kern\wd0
\hbox to 0pt{\mathchardef\nn=12854\hss$#1\nn$\kern1.4\wd0\hss}
\hbox to 0pt{\hss$#1\mid$\hss}\lower.9\ht0 \hbox to 0pt{\hss$#1\smile$\hss}\kern\wd0}
\def\Gen{\operatorname{Gen}}
\def\Stab{\operatorname{Stab}}
\def\Mlt{\operatorname{Mlt}}
\def\Ima{\operatorname{Im}}
\def\Def{\operatorname{Def}}
\newcommand{\FC}{\mathfrak{C}}
\newcommand{\CL}{{\mathcal L}}
\newcommand{\CM}{{\mathcal M}}
\newcommand{\CF}{{\mathcal F}}
\newcommand{\fs}{\mathrm{fs}}
\newcommand{\sfs}{\mathrm{sfs}}
\newcommand{\conv}{\mathrm{conv}}
\newcommand{\Av}{\mathrm{Av}}
\newcommand{\supp}{\mathrm{supp}}
\newcommand{\autf}{\mathrm{Autf}}
\newcommand{\KP}{\mathrm{KP}}
\begin{document}

\begin{abstract}
Initially motivated by Hrushovski’s paper on definability patterns,
we obtain homeomorphisms between Ellis semigroups related to natural actions of the automorphism groups of first order structures
and certain collections of types and Keisler measures. Thus, we can transfer the semigroup operation from these Ellis semigroups to the corresponding collections of types and Keisler measures. By generalizing this transferred product, we obtain a new convolution operation for invariant types and measures in arbitrary first-order theories. We develop its general theory and prove several correspondence theorems between idempotent measures and closed subgroups of the automorphism group 
of a sufficiently large (so-called monster) model with respect to the {\em relatively definable topology}. Via the affine sort construction, we demonstrate that this new notion of convolution encodes the standard definable convolution operation over definable groups.
\end{abstract}

\maketitle

\tableofcontents

\section{Introduction}
\subsection{Background}
We introduce and develop a theory of random automorphisms\footnote{Here, the term \emph{random automorphism} corresponds to certain measures over certain type spaces which allow one to coherently randomly sample automorphisms (with respect to the Morley product). This naming convention is similar to the one used for \emph{Invariant Random Subgroups} (IRSs) where the adjective \emph{random} implies that the object of interest is a measure.} and their convolution product over sufficiently big models of arbitrary first-order theories.  
In the realm of logic, this theory has deep connections with \emph{definable convolution}, but is also inherently connected to classical harmonic analysis, measured group theory, and topological dynamics. 
We demonstrate that the spaces of random automorphisms over combinatorially tame structures (e.g. stable, NIP) form \emph{well-behaved} semigroups and develop 
the theory of the relevant dynamics.
While this study is of intrinsic interest, it also lays the foundation for potential applications and connections to the theory of random walks, particularly convolution random walks. Model theory often finds its applications to other areas of mathematics by examining specific mathematical structures through the lens of structurally significant subsets. In practice, this allows model theorists to construct tame definable analogues of complex classical structures. In future work, we intend to further this theory by constructing definable analogues to the Martin and Poisson boundaries – objects that traditionally describe the behavior of random walks at infinity – as well as a definable analogue of harmonic analysis. This paper serves as a foundational first step towards these broader goals. 

Keisler measures are finitely additive probability measures on definable sets or, equivalently, regular Borel probability measures on the corresponding spaces of types. They were introduced by Keisler in \cite{Keisler1} and have played a major role in model theory and its applications since their appearance in the proof of Pillay's conjecture in \cite{HPP1}. For example, see \cite{Anand_Udi2011,HruPiSi13,ArtemPierre, Yaa, Hru, MaWa, HruKruPi, HruKruPi2, Wild, ConGanHan23}. In particular, deep results were obtained for definably amenable groups, i.e., definable groups admitting a left invariant Keisler measure (e.g. see \cite{Anand_Udi2011,ArtemPierre}). In \cite{HruKruPi} and earlier in 
\cite{KrPi,KruNewSim}, some ideas and theorems around definable amenability were adapted to the context of arbitrary theories, where one does not have any definable group around and amenability concerns the group of automorphisms of a sufficiently saturated model of the theory in question.

Keisler measures are strongly related to topological dynamics in model theory. Methods from topological dynamics were introduced to model theory by Newelski in \cite{New2009,newelski12}. 
Since then the topic has been broadened and deepened by many authors, e.g. see 
\cite{Pil13,ArtemPierre, KruPil_Generalized_Bohr, BYT,KruNewSim,KPRz}. This led not only to essentially new results in model theory (e.g. on the complexity of strong types in \cite{KPRz, KrRz}) but also to applications to additive combinatorics in \cite{KrPi2}. Important interactions between Keisler measures and topological dynamics were studied in \cite{ArtemPierre,HruKruPi,HruKruPi2}. Topological dynamics also plays an essential role in the theory of the definable convolution product developed in \cite{Artem_Kyle,Artem_Kyle2, CGK}.
This theory of convolution treats classification of idempotents as one of its central problems.
We remark that the convolution product of Keisler measures on a definable group is a natural counterpart of the classical convolution product of regular Borel probability measures on a locally compact group (which is recalled below).

In this paper, we define a variant of the convolution product for global Borel-definable Keisler measures over arbitrary first-order theories and develop a theory of this product. Then, via the ``adding an affine sort'' construction, we note that our various results imply the corresponding results already known for definable groups. 
In fact, our convolution product generalizes the one  in the definable group setting, and our results actually generalize important fragments of the theory developed in  \cite{Artem_Kyle,Artem_Kyle2, CGK} from the context of definable groups to the 
context of arbitrary first-order theories. 
This is consistent with the general phenomenon that various aspects of the model theory of definable groups can be adapted to the context of arbitrary theories where the automorphism groups of their sufficiently saturated models are playing the role of definable groups. In some situations, generalizing aspects of the theory of definable groups to automorphism groups is quite straightforward, but in other situations even formulating the correct definitions and statements in 
the automorphism group context is a nontrivial task.

\subsection{New convolution product}
Let us recall the classical version of the convolution product of measures. If $G$ is a locally compact group and $\mathcal{M}(G)$ is the space of regular Borel probability measures on $G$, one  extends group multiplication on $G$ to  \emph{convolution} $\ast$ on $\mathcal{M}(G)$: if $\mu, \nu \in \mathcal{M}(G)$ and $B$ is a Borel subset of $G$, then
\begin{equation*}
    (\mu * \nu) (B) = \int_{G} \int_{G} \mathbf{1}_{B}(x \cdot y) d\mu(x)d\nu(y).  
\end{equation*}
In \cite{Artem_Kyle,Artem_Kyle2, CGK}, a counterpart of the above product was defined and studied for Keisler measures on definable groups.
 In this paper, we develop a convolution product for Keisler measures over arbitrary theories. This new product is natural but not so obvious; we give an intuitive explanation below.

Let $M$ be a strongly $\aleph_0$-homogeneous first-order structure and let $\FC \succeq M$ be a monster model, i.e. a $\kappa$-saturated and strongly $\kappa$-homogeneous elementary extension of $M$ for a strong limit cardinal $\kappa>|M| + |L|$, where $L$ is the language of $M$. (For many purposes, it is enough to assume that $\FC \succeq M$ is just $|M|^+$-saturated and strongly $|M|^+$-homogeneous.) 
Let $\bar m$ be an enumeration of $M$ and $S^{\textrm{fs}}_{\bar m}(\FC,M)$ be the space of complete types over $\FC$ which both concentrate on $\tp(\bar m/\emptyset)$ and are finitely satisfiable in $M$. Let $\bar x$ be a tuple of variables corresponding to $\bar m$. 

The starting point for us was Proposition 3.14 of \cite{Udi_def_patterns}, which yields a correspondence (if fact, a homeomorhism) between the semigroup of endomorphisms of $S_{\bar x}(M)$ with respect to the so-called ``definability patterns structure'' and the space $S^{\textrm{fs}}_{\bar m}(\FC,M)$. 
This allows one to induce a left topological semigroup structure on  $S^{\textrm{fs}}_{\bar m}(\FC,M)$. Restricting the domain of Hrushovski's correspondence to the Ellis semigroup of the flow $(S_{\bar x}(M), \aut(M))$, we computed that the target of the correspondence becomes the space of all types $p \in S_{\bar m}(\FC)$ such that for every formula $\varphi(\bar x;\bar b) \in p$ there exists $\bar a \in M^{\bar x}$ with $\bar a \equiv \bar m$ and $\models \varphi(\bar a;\bar b)$. We call such types {\em strongly finitely satisfiable} (or {\em sfs}) {\em in $M$}  and denote them by $S^{\textrm{sfs}}_{\bar m}(\FC,M)$. 

A natural next step was to extend this restricted correspondence to the context of measures. So let $\mathfrak{M}^{\textrm{sfs}}_{\bar m}(\FC,M)$ be the space of Keisler measures over $\FC$ such that the support of the measure is contained in $S^{\textrm{sfs}}_{\bar m}(\FC,M)$. 
The first main result of this paper yields, under NIP, a correspondence between the Ellis semigroup of the flow $(\mathfrak{M}_{\bar x}(M),\conv(\aut(M))$ (where $\conv(\aut(M))$ denotes the convex hull of $\aut(M)$) and the space $\mathfrak{M}^{\textrm{sfs}}_{\bar m}(\FC,M)$. In fact, we obtain such correspondence not only for $\aut(M)$ but also 
for suitable subgroups of $\aut(M)$ (see the beginning of Subsection \ref{sec:analogon}) and the corresponding subspaces of $\mathfrak{M}^{\textrm{sfs}}_{\bar m}(\FC,M)$. Our correspondence restricted to the Ellis semigroup of the flow $(S_{\bar x}(M), \aut(M))$ coincides with the restricted Hrushovski's correspondence on types mentioned in the previous paragraph. Using our correspondence, we induce a left continuous semigroup operation $*$ on $\mathfrak{M}^{\textrm{sfs}}_{\bar m}(\FC,M)$ (cf. Definition \ref{def:star.product.definition.0}) which can be given explicitly in terms of integrals (cf. Proposition  \ref{prop:star.product.1} and Definition \ref{def:star.definition}); the operation $*$ restricted to types can be given by an explicit formula (cf. Proposition \ref{prop:formula.for.star.on.types}). We often refer to this convolution product simply as the $*$-product.

Next, we generalize the $*$-product to global $M$-invariant Keisler measures which are Borel-definable over $M$, without the assumption that $M$ is strongly $\aleph_0$-homogeneous (cf. Definition \ref{def:star.definition}). 
The main context of interest is NIP, in which case a result by Hrushovki and Pillay demonstrates that all global $M$-invariant Keisler measures are Borel-definable over $M$, making harmonic analysis in this setting feasible.  We show that the $*$-product is affine in each coordinate, and left continuous under NIP. However, due to the local nature of the definition, proving associativity of the $\ast$-product is a very non-trivial task. The following question remains open.

\begin{question}\label{question: associativity of *}
Is $*$ associative in NIP theories?
\end{question}

\noindent
While this question is left open in full generality, we give positive answers for large classes of measures. In particular, the convolution product is associative on:
\begin{enumerate}
\item $S_{\bar m}^{\textrm{inv}}(\FC,M)$, i.e. complete types over $\FC$ concentrated on $\tp(\bar m/\emptyset)$ which are invariant over $M$  (even without NIP and without any definability assumptions on the types in question);
\item $\mathfrak{M}^{\textrm{def}}_{\bar m}(\FC,M)$, i.e. Keisler measures over $\FC$ concentrated on $\tp(\bar m/\emptyset)$ which are {\em definable over $M$} (with no NIP assumption);
\item $\mathfrak{M}^{\textrm{fs}}_{\bar m}(\FC,M)$, i.e. Keisler measures over $\FC$ concentrated on $\tp(\bar m/\emptyset)$ which are {\em finitely satisfiable in $M$} (under NIP); note that this space contains $\mathfrak{M}^{\textrm{sfs}}_{\bar m}(\FC,M)$ considered above;
\item the entire $\mathfrak{M}^{\textrm{inv}}_{\bar m}(\FC,M)$ when $T$ and $M$ are countable (under NIP), 
i.e. the space of Keisler measures over $\FC$ concentrated on $\tp(\bar{m}/\emptyset)$ and {\em invariant over $M$}.
\end{enumerate}

One could try to extend Hrushovski's definability patterns approach from types to measures.
In particular, one can hope to reprove item (3) above via a Hrushovski-style correspondence between the semigroup of endomorphisms with respect to a suitably defined definability patterns structure on $\mathfrak{M}_{\bar{x}}(M)$ and the space $\mathfrak{M}^{\textrm{fs}}_{\bar m}(\FC,M)$, and then try to extend it to $\mathfrak{M}^{\inv}_{\bar m}(\FC,M)$ in place of $\mathfrak{M}^{\textrm{fs}}_{\bar m}(\FC,M)$ by adjusting the definability patterns structure.
However, in this paper we decided to focus on convolution products and postpone developing the theory of definability patterns for measures to a future paper.

One argument that our notion of convolution product is natural is the aforementioned correspondence for sfs measures through which the obtained semigroup $(\mathfrak{M}^{\textrm{sfs}}_{\bar m}(\FC,M),*)$ is isomorphic to the Ellis semigroup of the corresponding flow. 
Another point is that elaborating on the ``adding an affine sort'' construction, we can recover the various results from \cite{Artem_Kyle,Artem_Kyle2,CGK} about the convolution product of Keisler measures on definable groups from the corresponding results for our convolution product (cf. Section \ref{sec: affine sort}).

To summarize, in the first part of the paper we develop the basic theory of the convolution product of invariant Keisler measures.

\subsection{Correspondence for idempotent measures}
In the second part of the paper, our main goal is to classify idempotent (in the sense of our convolution product) {\em frequency interpretation measures} (in short {\em fim}; see Definition \ref{def:fim}) in the most general possible settings.
A measure $\mu$ is \emph{idempotent} if $\mu * \mu = \mu$. A classical line of work established a correspondence between compact subgroups of a locally compact group $G$ and idempotent measures in $\mathcal{M}(G)$, in progressively broader contexts 
\cite{kawada1940probability,Wendel,Rudin,Glicksberg2,Cohen} culminating in the following:

\begin{fact}\cite[Theorem A.4.1]{Pym62}\label{fact: classical Pym} Let $G$ be a locally compact group and $\mu \in \mathcal{M}(G)$. Then the following are equivalent: 
\begin{enumerate}
    \item $\mu$ is idempotent; 
    \item the support $\supp(\mu)$ of $\mu$ is a compact subgroup of $G$ and $\mu|_{\supp(\mu)}$ is the normalized Haar measure on $\supp(\mu)$. 
\end{enumerate}
\end{fact}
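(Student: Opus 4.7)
Direction (2) $\Rightarrow$ (1) is a routine Fubini computation that I will dispatch first: if $\mu$ is the normalized Haar measure on the compact subgroup $H = \supp(\mu)$, then for any Borel $B \subseteq G$,
\[
(\mu * \mu)(B) = \int_H \int_H \mathbf{1}_B(xy)\, d\mu(x)\, d\mu(y) = \int_H \mu(B \cap H)\, d\mu(y) = \mu(B),
\]
using right-invariance of Haar measure on $H$ and the fact that $\mu$ is concentrated on $H$.

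For the substantive direction (1) $\Rightarrow$ (2), set $S := \supp(\mu)$ and organize the proof into four steps: (a) $S$ is a closed subsemigroup of $G$; (b) $S$ is compact; (c) $S$ is a subgroup; (d) $\mu$ is the normalized Haar measure on $S$. Step (a) is immediate from the identity $\supp(\mu * \nu) = \overline{\supp(\mu)\,\supp(\nu)}$ applied to the idempotence $\mu * \mu = \mu$, which gives $\overline{SS} = S$ and hence $SS \subseteq S$. Step (c), once (b) is in hand, is the standard fact that any nonempty compact subsemigroup of a topological group is a subgroup: for each $s \in S$ the closure $\overline{\{s^n : n \geq 1\}}$ is a compact abelian subsemigroup, hence a group, and therefore contains $s^{-1}$. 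For step (d), with $S$ now a compact group, I would invoke the classical Cesaro convergence theorem: for any probability measure $\nu$ on a compact group whose support generates the group, the averages $\tfrac{1}{n}\sum_{k=1}^n \nu^{*k}$ converge weakly to Haar measure. Since $\mu^{*n} = \mu$ for all $n \geq 1$ by an easy induction, every Cesaro average of $\mu$ is just $\mu$, while the limit is the Haar measure on $S$; so $\mu$ is the Haar measure.

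The main obstacle is step (b), compactness of $S$. The plan here is a tightness argument driven by idempotence. Given $\epsilon > 0$, use regularity to pick a compact $K \subseteq G$ with $\mu(K) > 1 - \epsilon$. Unpacking idempotence yields
\[
1 - \epsilon < \mu(K) = (\mu * \mu)(K) = \int_G \mu(K y^{-1})\, d\mu(y),
\]
which forces $\mu(Ky^{-1}) > 1 - \sqrt{\epsilon}$ on a set of $\mu$-measure greater than $1 - \sqrt{\epsilon}$; for such $y$ the sets $K$ and $Ky^{-1}$ have total mass exceeding $1$ and must therefore meet, giving $y \in K^{-1}K$. Thus $\mu(K^{-1}K) > 1 - \sqrt{\epsilon}$ with $K^{-1}K$ compact. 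The delicate part is promoting this quantitative concentration to the qualitative fact that $S$ itself lies inside a single compact set; my plan is to iterate the above bootstrap (each step shrinking the defect) and combine it with the semigroup closure $SS \subseteq S$ from step (a) and the characterization of $S$ as the smallest closed set of full $\mu$-measure, thereby trapping $S$ inside some compact $K^{-1}K$.
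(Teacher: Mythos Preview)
The paper does not prove this statement; it is quoted as a classical result from \cite{Pym62} (building on Kawada--It\^o and Wendel), so there is no in-paper argument to compare against. I will therefore assess your outline on its own merits.

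The direction $(2)\Rightarrow(1)$ and steps (a), (c), (d) of $(1)\Rightarrow(2)$ are correct; the Ces\`aro device in (d) is a clean way to finish once $S$ is known to be a compact group.

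The gap you yourself flag as ``delicate'' in step (b) is real, and your proposed fix does not close it. Your bootstrap takes a compact $K$ with $\mu(K)>1-\epsilon$ and produces the larger compact $K^{-1}K$ with $\mu(K^{-1}K)>1-\sqrt{\epsilon}$. But inner regularity of $\mu$ already guaranteed, for every $\delta>0$, a compact set of measure exceeding $1-\delta$; your step merely reproduces this for the particular value $\delta=\sqrt{\epsilon}$, at the cost of enlarging the compact set. Iterating gives an increasing chain of compacts $K_n$ with $\mu(K_n)>1-\epsilon^{1/2^n}$ that neither stabilizes nor reaches full measure. The inclusion $SS\subseteq S$ points the wrong way to trap $S$ inside anything. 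What is actually required is a \emph{single} compact $K_0$ with $\mu(K_0)=1$, and this needs a genuinely different idea. One classical route reverses your order of steps: first prove, directly from idempotence, the one-sided translation invariance $\mu(Ks^{-1})=\mu(K)$ for every compact $K$ and every $s\in S$ (this is where the real work lies; see Wendel or Heyer's monograph); once that is in hand, any compact $K$ with $\mu(K)>1/2$ satisfies $\mu(K)+\mu(Ks^{-1})>1$ for each $s\in S$, forcing $K\cap Ks^{-1}\neq\emptyset$ and hence $S\subseteq K^{-1}K$.
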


\noindent
One of the main motivating problems in \cite{Artem_Kyle,Artem_Kyle2, CGK} was the following counterpart of the above result for definable groups. 
By $\mathfrak{M}^{\textrm{inv}}_G(\FC,M)$, we denote the space of Keisler measures over $\FC$ concentrated on $G$ and {\em invariant over $M$}.

\begin{conj}\label{conjecture: main conjecture for definable groups}
Let $G=G(\FC)$ be a definable group and $\mu \in \mathfrak{M}^{\textrm{inv}}_G(\FC,M)$ be fim over $M$. We know that then the right stabilizer $\stab(\mu)$ of $\mu$ is type-definable over $M$.  Then the following are conjectured to be equivalent:
\begin{enumerate}
\item $\mu$ is idempotent (with respect to definable convolution);

\item $\mu$ is the unique right $G$-invariant (equivalently, the unique left $G$-invariant) Keisler measure concentrated on $\stab(\mu)$.
\end{enumerate}
In particular, there is a correspondence between idempotent fim measures in $\mathfrak{M}^{\textrm{inv}}_G(\FC,M)$ and type-definable over $M$ fim subgroups of $G(\FC)$.
\end{conj}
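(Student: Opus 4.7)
The plan is to split the equivalence and focus most effort on the forward direction $(1)\Rightarrow(2)$. For $(2)\Rightarrow(1)$, set $H:=\stab(\mu)$. Since $\mu$ is fim, it is definable over $M$, and by item (2) of the associativity list preceding the conjecture $\mu*\mu$ is a well-defined definable measure. It is concentrated on $H$: because $H$ is a group, $\mathbf{1}_{G\setminus H}(xy)\equiv 0$ for $x,y\in H$, so $(\mu*\mu)(G\setminus H)=\int\int\mathbf{1}_{G\setminus H}(xy)\,d\mu(x)\,d\mu(y)=0$. It is also right $H$-invariant, since right-translation by $h\in H$ passes through the outer integral and invokes the right $H$-invariance of $\mu$ in the inner one. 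The uniqueness hypothesis then forces $\mu*\mu=\mu$.

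For $(1)\Rightarrow(2)$, the two subtasks are showing that $\mu$ is concentrated on $H=\stab(\mu)$ and that $\mu$ is the unique right $H$-invariant measure concentrated on $H$. Uniqueness is the cleaner step: given a second such $\nu$, the double convolution $\nu*\mu$ equals $\mu$ by right $H$-invariance of $\mu$ and equals $\nu$ by left $H$-invariance of $\nu$ (the bi-invariance of Haar-like measures on type-definable fim groups should follow from generic stability of fim, since symmetry of the Morley product rules out a non-trivial modular function). Concentration is the heart of the matter: heuristically, if $a,b$ are independent $\mu$-samples then idempotency $\mu*\mu=\mu$ says $ab\sim\mu$, and marginalizing over $a$ forces a generic $b$ to satisfy $ab\sim a$, i.e.\ to stabilize $\mu$. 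Rigorously one would translate this into the statement that for every $\FC$-formula $\varphi(x;\bar b)$ and every $\mu$-generic parameter, $\mu$ and its right translate agree on $\varphi$, and hence $\mu$ is supported on the type-definable set $\stab(\mu)$. The needed swap between the convolution integral and the Morley product with $\mu$ is licensed by the fim hypothesis.

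The main obstacle is making concentration rigorous: classically (Pym, Kawada) one uses compactness of $\supp(\mu)$, which has no direct analog here. The most promising route is to transfer the problem through the affine sort construction of Section \ref{sec: affine sort} to the abstract semigroup $(\mathfrak{M}^{\textrm{sfs}}_{\bar m}(\FC,M),*)$, and exploit its isomorphism (established in the first half of the paper) with the Ellis semigroup of the flow $(\mathfrak{M}_{\bar x}(M),\conv(\aut(M)))$. In that Ellis semigroup one can invoke the classical fact that every idempotent $e$ in a compact right topological semigroup $S$ lies in its own maximal subgroup $eSe$ of the minimal ideal; pulling this group structure back through the correspondence should identify the carrier of $\mu$ with $\stab(\mu)$. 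A delicate point is that Question \ref{question: associativity of *} is open in full generality, so the manipulations must be carried out within contexts where associativity holds (typically fim or sfs), and one must verify that the measure-theoretic support is precisely what is identified by the topological-dynamics argument — marrying the measure side with the abstract Ellis semigroup side is where I expect the real difficulty to lie.
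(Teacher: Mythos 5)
There is a genuine gap here, and it is worth being clear about its nature: the statement you are proving is stated in the paper as a \emph{conjecture} and is left open. The paper only establishes it in special cases (stable theories, NIP with $G^{00}$-invariance, types in rosy theories, stable types), and does so not directly but by deducing it from the automorphism-group version (Conjecture \ref{conjecture: main conjecture}) via the affine sort construction together with Proposition \ref{proposition:unique.invariant.transfer}. Your $(2)\Rightarrow(1)$ direction is fine and matches what is known (cf.\ Corollary \ref{cor: two to one} and its definable-group analogue). The problem is entirely in $(1)\Rightarrow(2)$, and you have correctly located it in the ``concentration'' step ($\mu(\stab(\mu))=1$, i.e.\ generic transitivity); but your proposed way of making it rigorous does not work. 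The homeomorphism with the Ellis semigroup $\E(\mathfrak{M}_{\bar x}(M),\conv(\aut(M)))$ applies only to \emph{strongly finitely satisfiable} measures, and a fim measure over $M$ need not be finitely satisfiable in $M$ at all (the paper's random-graph example is a definable idempotent that is not finitely satisfiable). Even where the correspondence applies, the classical fact that an idempotent $e$ generates a group $eSe$ in the minimal ideal gives no information about the support of the measure: Proposition \ref{proposition: Ellis groups are trivial} shows these Ellis groups are all \emph{trivial}, so there is no group structure to pull back, and the topological-dynamics side simply does not see the set $\supp(\mu)$ versus $\stab(\mu)$. The concentration statement is exactly the open content of the conjecture (for types it is Conjecture \ref{conj: gen transitivity}, proved only in rosy theories and for stable types).

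A secondary gap is in your uniqueness step. The double-convolution argument ``$\nu*\mu=\mu$ and $\nu*\mu=\nu$'' needs $\nu\otimes\mu=\mu\otimes\nu$ and the passage from right to left invariance; the paper only obtains these under an extra hypothesis --- NIP, super-fim, or countability of $M$ and the language (Lemmas \ref{lemma: invariance to uniqueness over M} and \ref{lemma: superfim and Borel-def}, Corollary \ref{cor: uniqueness for measures}) --- because whether fim implies super-fim is itself open (Question \ref{remark: fim implies super-fim?}). So even granting concentration, uniqueness in item (2) is not known in the generality you claim.
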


\noindent
In  \cite{Artem_Kyle,Artem_Kyle2, CGK}, this conjecture was confirmed in several interesting situations: $G$ is definable in a stable theory; $G$ is definable in a NIP theory and the idempotent measure in question is $G^{00}$-invariant; $G$ abelian; $\mu$ is a type and the ambient theory is rosy; $\mu$ is a stable type.

In the present paper, we formulate a generalization of the above conjecture to the context of theories (with no definable group involved). In the context of theories, type-definable subgroups of the monster model are replaced with 
relatively type-definable subgroups of the automorphism group. Both here and in the whole paper, relatively type-definable subsets of $\aut(\FC)$ play a key role. They were defined in \cite{HruKruPi}, extending the notion of relatively definable subsets of $\aut(\FC)$ defined in \cite{KPRz}. 
Recall that for a short tuple $\bar a$ in $\FC$, a {\em relatively $\bar a$-type-definable over $B$ subset of $\aut(\FC)$} is a subset of the form $\{\sigma \in \aut(\FC)\;\colon\;\models \pi(\sigma(\bar a);\bar b)\}$ for some partial type $\pi(\bar x;\bar y)$ without parameters and a short tuple $\bar b$ in $B$. 
Here, we consider the case of $B=M$ and $\bar a =\bar m$ an enumeration of $M$, and our main focus lies on the relatively $\bar{m}$-type-definable over $M$ subgroups of $\aut(\FC)$, i.e. on the subgroups of the form
$$G_{\pi,\FC}:=\{\sigma \in \aut(\FC)\;\colon\;\models \pi(\sigma(\bar m); \bar m)\}$$ 
(where without loss of generality the partial type $\pi(\bar{x};\bar{y})$ contains ``$\bar{x}\equiv_{\emptyset}\bar{y}$''). Such groups are 
precisely the closed subgroups of $\aut(\FC)$ in the {\em relatively definable over $M$ topology} (see the paragraph after Definition \ref{definition: relatively type-definable sets}).

By $\mathfrak{M}^{\textrm{inv}}_{\pi(\bar x;\bar m)}(\FC,M)$ we denote the space of global Keisler measures which are invariant over $M$ and concentrated on $\pi(\bar x;\bar m)$. Note that $\aut(\FC)$ acts naturally on the left on $\mathfrak{M}_{\bar x}(\FC)$ and we show that the stabilizer $\stab(\mu)$ is relatively $\bar m$-type-definable over $M$ for any $\mu \in \mathfrak{M}^{\textrm{def}}_{\bar x}(\FC,M)$ (cf. Lemma \ref{lemma: rel. type-definability}). For the notion of a {\em fim} relatively type-definable subgroup of $\aut(\FC)$ see Definition \ref{def: fim subgroups}. Now, we have all ingredients needed to formulate our main conjecture:

\newtheorem*{conj:0.7}{\bf{Conjecture} \ref{conjecture: main conjecture}}
\begin{conj:0.7}
\label{conjecture: main conjecture in the introduction}
Let $\mu \in \mathfrak{M}^{\inv}_{\bar m}(\FC,M)$ be fim over $M$. 
We know that $\stab(\mu)=G_{\pi,\FC}$ for some partial type $\pi(\bar x;\bar y)\vdash\bar{x}\equiv_{\emptyset}\bar{y}$.
Then it is conjectured that the following conditions are equivalent: 
\begin{enumerate}
\item $\mu$ is an idempotent (with respect to the \emph{new} convolution product);
\item $\mu$ is the unique (left) $G_{\pi,\FC}$-invariant measure in $\mathfrak{M}^{\inv}_{\pi(\bar{x};\bar{m})}(\FC,M)$.
\end{enumerate}
In particular, there is a correspondence between idempotent fim measures in $\mathfrak{M}^{\inv}_{\bar m}(\FC,M)$ and relatively $\bar m$-type-definable over $M$ fim subgroups of $\aut(\FC)$.
\end{conj:0.7}

\noindent
In this paper, we prove the above conjecture in all situations in which Conjecture \ref{conjecture: main conjecture for definable groups} was proved, except the abelian case (note that groups of automorphisms are typically nonabelian).

The main result of the second part of the paper is Corollary \ref{cor: 0.7 for stable}, i.e. confirmation of Conjecture \ref{conjecture: main conjecture} for stable theories. 
This requires proving a counterpart for stable theories of a theorem of Newelski on stable groups 
\cite[Theorem 2.3]{New89}. Namely, we prove Theorem \ref{theorem: counterpart of Newelski's theorem} in which, roughly speaking, we describe the smallest relatively $\bar m$-type-definable over $M$ subgroup of $\aut(\FC)$ containing a given relatively $\bar m$-invariant over $M$ subset of $\aut(\FC)$ (i.e. a subset of the form $\{ \sigma \in \aut(\FC)\;\colon\; \tp (\sigma(\bar m)/M)) \in P\}$ for some $P \subseteq S_{\bar x}(M)$). This in turn requires developing a counterpart of some fundamentals of stable group theory for the group of automorphisms of a monster model of any stable theory. Having Theorem \ref{theorem: counterpart of Newelski's theorem} among the main tools, it still requires quite a lot of work to prove Conjecture \ref{conjecture: main conjecture} for stable theories.

Elaborating on the ``adding an affine sort'' construction, we show that
Conjecture \ref{conjecture: main conjecture} implies Conjecture \ref{conjecture: main conjecture for definable groups}. So our results confirming Conjecture \ref{conjecture: main conjecture} in the various situations imply the corresponding results from \cite{Artem_Kyle,Artem_Kyle2, CGK}.

\subsection{Structure of text}
In the preliminaries, Section \ref{sec: preliminaries}, we recall the relevant definitions and facts concerning Keisler measures and relatively type-definable subsets of $\aut(\FC)$. In particular, we observe that the stabilizer of any global Keisler measure definable over $M$ is relative $\bar m$-type-definable over $M$, and an analogous statement for invariant measures. We also introduce the notion of generically stable and fim relatively type-definable subgroups of $\aut(\FC)$.

In Section \ref{sec: injectivity}, using approximation of measures by types in NIP theories, we show in several situations that the restriction maps from Ellis semigroups of some flows of measures to the corresponding spaces of types are injective or even homeomorphisms. This will be important for transfering the product from the Ellis semigroup to the appropriate space of types in Section \ref{sec:star.product}. 

Section \ref{sec:star.product} is one of the main parts of the paper. 
Here, we establish the aforementioned correspondence (homeomorphism) for NIP theories  between the Ellis semigroup of the flow $(\mathfrak{M}_{\bar x}(M),\conv(\aut(M)))$ and the space $\mathfrak{M}^{\textrm{sfs}}_{\bar m}(\FC,M)$, which induces a left topological semigroup structure on  $\mathfrak{M}^{\textrm{sfs}}_{\bar m}(\FC,M)$ (in Subsection \ref{sec:analogon}). 
We then extend the definition of our product to a very wide class of global $M$-invariant Keisler measures, showing
that it yields left topological semigroup structures on important subspaces of $\mathfrak{M}^{\textrm{inv}}_{\bar m}(\FC,M)$ (in Subsection \ref{subsec:star.definitions}).

In Section \ref{sec: affine sort}, we elaborate on the ``adding an affine sort'' construction. We essentially show that if we are given a definable subgroup $G$ in a structure $M$, then we may construct a new structure $\bar{M}$ so that the group $G(\FC)$ lives in a particular relatively type-definable subgroup of the group of automorphisms of $\bar \FC \succeq \bar M$ in such a way 
that there are homeomorphisms between the spaces of auxiliary objects (i.e., the relevant spaces of types and measures). Since both spaces admit a convolution product, we show that they are isomorphic as topological semigroups (for measures under the assumption of NIP). This connection allows us to deduce the results for definable groups from the corresponding results that we prove for general theories.

Besides defining and analyzing fundamental properties of the convolution product, the second main contribution of this paper is, starting from Section \ref{sec:fim idempotent}, focused around classification of idempotent fim measures (including idempotent generically stable types as a special case). 

In Subsection \ref{subsec:6.1}, we study fim relatively type-definable subgroups of $\aut(\FC)$. We observe that the implication $(2)\Rightarrow(1)$ in Conjecture \ref{conjecture: main conjecture} always holds, and then we show uniqueness in the second point of Conjecture \ref{conjecture: main conjecture}, i.e. uniqueness 
of left invariant measures in some general contexts, e.g. in all NIP theories. 
In Subsection \ref{subsec:6.2}, we observe that the appropriate proofs from \cite{CGK} can be easily adapted to show Conjecture \ref{conjecture: main conjecture} for types in rosy theories and for stable types. 
In Subsection \ref{subsec:6.3}, we prove Conjecture \ref{conjecture: main conjecture} in NIP theories for measures which are additionally invariant under the Kim-Pillay strong automorphisms (moreover, under this assumption, one can drop the assumption that the measure is fim, but then uniqueness in item (2) of Conjecture \ref{conjecture: main conjecture} should also be removed). This result again implies the corresponding result for definable groups.

In Section \ref{sec:group chunk}, we first develop an analog of basic stable group theory for the group of automorphisms of the monster model $\FC$ of any stable theory. Then, working in the developed context, we prove Theorem \ref{theorem: counterpart of Newelski's theorem}, a counterpart of \cite[Theorem 2.3]{New89}. 
This theorem may have many interesting applications, e.g. in the spirit of the results from \cite{New89}. 
In this paper, we apply Theorem \ref{theorem: counterpart of Newelski's theorem} to prove Conjecture \ref{conjecture: main conjecture} for stable theories, which is done in Subsection \ref{subsec:0.7 for stable}. In earlier subsections of Section \ref{sec: 0.7 for stable}, we prove some preparatory results. In particular, we show a strong uniqueness result for left or right invariant Keisler measures in stable theories.

\subsection{Acknowledgments} 
We would like to thank Ludomir Newelski for correcting the proof of Theorem 2.3 from \cite{New89}.

\section{Preliminaries}\label{sec: preliminaries}

\subsection{Miscellaneous}
In this paper, compact spaces are Hausdorff by definition.
If $r, s, \epsilon \in \mathbb{R}$ with $\epsilon > 0$, we write $r \approx_{\epsilon} s$ to mean $|r - s| < \epsilon$. 

\begin{definition} If $V$ is a real vector space and $X \subseteq V$, we let $\conv(X)$ be the collection of finite convex combinations of elements from $X$, i.e., 
\begin{equation*} 
\conv(X) := \left\{ \sum_{i \leq n} r_i x_i : n \in \mathbb{N}_{\geq 1}, r_i \in [0,1], \sum_{i \leq n} r_i = 1, x_i \in X \right\}.
\end{equation*} 
\end{definition}

In general if $X$ is a compact space, we let $\mathcal{M}(X)$ denote the space of regular Borel probability measures on $X$. We recall that $\mathcal{M}(X)$ is a subset of a Banach space, namely the space of all signed regular Borel probability measures on $X$. Thus it makes sense to discuss $\conv(Y)$ when $Y \subseteq \mathcal{M}(X)$. If $x \in X$, we let $\delta_{x}$ denote the Dirac measure at $x$, i.e. $\delta_{x}(B) = 1$ if and only if $x \in B$. 

If $X$ and $Y$ are compact spaces, $f: X \to Y$ is a continuous map, and $\mu \in \mathcal{M}(X)$, then the {\em pushforward} of $\mu$, denoted $f_{*}(\mu)$, is an element of $\mathcal{M}(Y)$ such that for any Borel set $B$, $f_{*}(\mu)(B) = \mu(f^{-1}[B])$.

\subsection{Ellis semigroups}\label{subsection: Ellis semigroups} We recall some basic conventions related to Ellis semigroups. 
For more details consult \cite{Auslander}. We remark that we are interested in Ellis semigroups of families of maps which are not necessary homeomorphisms. 

Let $(X,S,\pi)$ be a triple where $X$ is a compact (Hausdorff) space, $S$ is a semigroup, and $\pi$ is an action of $S$ on $X$ by continuous maps, i.e., the map $\pi \colon S \times X \to X$ is a semigroup action and for each $s \in S$, $\pi(s,-) \colon X \to X$ is a continuous map 
(note that we do not require $\pi$ to be jointly continuous). Typically, we will write $(X,S,\pi)$ as $(X,S)$ when the action $\pi$ is understood. 
One can consider the composition semigroup $(\{\pi(s,-) \colon s \in S\},\circ\})$ as a subset of $X^{X}$. 
If we equip $X^{X}$ with the standard product topology (also known as the topology of pointwise convergence), then the {\em Ellis semigroup} of the action $(X,S,\pi)$ is precisely the topological closure of $\{\pi(s,-) \colon s \in S\}$ in $X^{X}$.
It is a compact {\em left topological semigroup} (i.e., $\circ$ is left continuous), which will be denoted by $\E(X,S)$.

In this paper, we will often be concerned with \emph{affine actions} on spaces of measures. Suppose that $G$ is a (discrete) group which acts on a compact Hausdorff space $X$ by homeomorphisms via $\pi$. 
We embed $G$ in the Banach space of measures on $\mathcal{P}(G)$ via $g \to \delta_{g}$ and let
\begin{equation*} 
\conv(G) := \conv\left( \{\delta_{g} : g \in G \} \right).
\end{equation*} 
In practice, we often identify $\delta_{g}$ with $g$ in this setting. The action of $G$ extends to an action on $\mathcal{M}(X)$ via the pushforward operation. 
Moreover, this action extends linearly from $G$ to an action $\tilde \pi \colon \conv(G) \times \mathcal{M}(X) \to \mathcal{M}(X)$ given by 
\begin{equation*} 
\tilde{\pi} \left(\sum_{i=1}^{n} r_i \delta_{g_i},\mu\right) = \sum_{i=1}^{n} r_i (\pi_{g_i})_{*}(\mu),
\end{equation*} 
where $\pi_{g_i}(-) := \pi(g_i,-)$. We are often interested in $\E(\mathcal{M}(X), \conv(G),\tilde{\pi})$.

\subsection{Types and Keisler measures} We recall some basic notation and fix some model-theoretic conventions. Throughout the text, we let $\CL$ be a language and $T$ be a complete first-order $\CL$-theory with infinite models. We let $\FC$ be a {\em monster model} of $T$, i.e., $\FC$ is $\kappa$-saturated and strongly $\kappa$-homoegeneous for some \emph{large enough} $\kappa$. 
A {\em small subset} of $\FC$ is a subset of cardinality less than $\kappa$; a {\em short tuple} is a tuple of length less than $\kappa$. We use the symbols $x,y,z$ to denote singletons of variables and use $\bar{x},\bar{y},\bar{z}$ to denote 
short tuples of variables. We emphasize that these tuples are possibly infinite. If $\bar{x}$ is a tuple of variables and $A$ is a subset of $\FC$, then an $\mathcal{L}_{\bar{x}}(A)$-formula is a formula whose free variables are among $\bar{x}$ and whose parameters are from $A$. We often write $\mathcal{L}_{\bar{x}}(A)$-formulas like `$\varphi(\bar{x};\bar{a})$', even though formally, there are only finitely many free variables occurring in $\varphi(\bar{x};\bar{a})$. An $\mathcal{L}_{\bar{x}}$-formula is an $\mathcal{L}_{\bar{x}}(\emptyset)$-formula, and when there is no possibility of confusion, we simply use the term $\mathcal{L}$-formula. For any subset $A \subseteq \FC$, we let $S_{\bar{x}}(A)$ denote the space of types in variables $\bar{x}$ over parameters from $A$. 
A Keisler measure (in variables $\bar{x}$ over parameters from $A$) is a finitely additive probability measure on the $A$-definable subsets of $\FC$. 
Equivalently, a Keisler measure is a finitely additive probability measure on $\mathcal{L}_{\bar{x}}(A)$ modulo logical equivalence, i.e., a Keisler measure gives the same value to formulas which define the same subsets of the monster. We identify definable sets with the formulas which define them. We let $\mathfrak{M}_{\bar{x}}(A)$ denote the space of Keisler measures in variables $\bar{x}$ over parameters from $A$. We remark that there is a one-to-one correspondence between $\mathfrak{M}_{\bar{x}}(A)$ and regular Borel probability measures on $S_{\bar{x}}(A)$, 
namely $\mathcal{M}(S_{\bar{x}}(A))$. 
We often freely identify a Keisler measure with its corresponding regular Borel probability measure. 

We let $M$ denote a small elementary submodel of $\FC$ and let $M \preceq N \preceq \mathfrak{C}$. For any such $N$, we let $\aut(N)$ denote the automorphism group of $N$. Throughout this article, we will mostly be concerned with $\aut(\FC)$, but here we make some more general statements. The group $\aut(N)$ naturally acts on $S_{\bar{x}}(N)$ by permuting parameters, i.e., 
if $\sigma \in \aut(N)$ and $p \in S_{\bar{x}}(N)$, then  $\sigma \cdot p = \{\varphi(\bar{x};\sigma(\bar{b})): \varphi(\bar{x};\bar{b}) \in p\}$. This group action can be naturally extended to an action on the space of Keisler measures $\mathfrak{M}_{\bar{x}}(N)$ by considering the pushforward of each automorphism. 
More explicitly, for any $\sigma \in \aut(N)$ and $\mu \in \mathfrak{M}_{\bar{x}}(N)$, we let $ \sigma\cdot\mu:= \sigma_\ast(\mu)$.
In other words, for any $\mathcal{L}_{\bar{x}}(N)$-formula  $\varphi(\bar{x};\bar{n})$,
$$(\sigma\cdot\mu)\big([\varphi(\bar{x};\bar{n})]\big)=\mu\big([\varphi(\bar{x};\sigma^{-1}(\bar{n}))] \big).$$
For $\mu \in \mathfrak{M}_{\bar{x}}(N)$ we consider the stabilizer
\begin{equation*} 
\stab(\mu) := \{\sigma \in \aut(N) : \sigma \cdot \mu = \mu\}. 
\end{equation*} 
Furthermore, as described in Subsection \ref{subsection: Ellis semigroups}, we may also consider the action of $\conv(\aut(N))$ on $\mathfrak{M}_{\bar{x}}(N)$ by extending the action linearly. More explicitly, if $\sum_{i \leq n} r_i \delta_{\sigma_i} \in \conv(\aut(N))$, $\mu \in \mathfrak{M}_{\bar{x}}(N)$, and $\varphi(\bar{x};\bar{b})$ is a $\mathcal{L}_{\bar{x}}(N)$-formula, then
\begin{equation*} 
\left( \left( \sum_{i \leq n} r_i \delta_{\sigma_i} \right) \cdot \mu \right)(\varphi(\bar{x};\bar{b})) = \sum_{i \leq n} r_i \mu(\varphi(\bar{x};\sigma_{i}^{-1}(\bar{b}))). 
\end{equation*} 
As convention, if $\sigma \in \aut(N)$ and $\lambda \in \conv(\aut(N))$, we may also write $\sigma \cdot \mu$ as $\sigma(\mu)$ or $\lambda \cdot \mu$ as $\lambda(\mu)$ without confusion. 

The topology on $\mathfrak{M}_{\bar{x}}(N)$ is the induced topology from the space $[0,1]^{\mathcal{L}_{\bar{x}}(N)}$ endowed with the product topology. A basic open set is of the form,
\begin{equation*}
\bigcap_{i=1}^{n} \{ \mu \in \mathfrak{M}_{\bar{x}}(N) : r_i < \mu(\varphi_i(\bar{x})) < s_i\}.
\end{equation*} 
where $r_1,\ldots.,r_n,s_1,\ldots.,s_n$ are real numbers and $\varphi_1(\bar{x}),\ldots,\varphi_n(\bar{x})$ are $\mathcal{L}_{\bar{x}}(N)$-formulas.

\vspace{.1in}

We now recall some basic definitions and properties about Keisler measures. Almost all of the following definitions were originally defined for types and extended to the context of measures. 

\begin{definition}\label{cheat} Let $\mu \in \mathfrak{M}_{\bar{x}}(\FC)$ and $M \preceq \mathfrak{C}$ be small. We emphasize that the tuple $\bar{x}$ is possibly infinite.
\begin{enumerate}
    \item 
    The measure $\mu$ is $M$-invariant if for every $\mathcal{L}$-formula $\varphi(\bar{x};\bar{y})$, for any $\bar{a}, \bar{b} \in \FC^{\bar{y}}$ such that $\bar{a} \equiv_{M} \bar{b}$ we have that $\mu(\varphi(\bar{x};\bar{a})) = \mu(\varphi(\bar{x};\bar{b}))$. We let $\mathfrak{M}_{\bar{x}}^{\inv}(\FC,M)$ be the collection of measures in $\mathfrak{M}_{\bar{x}}(\FC)$ which are $M$-invariant. We recall that this set is a closed subset of $\mathfrak{M}_{\bar{x}}(\FC)$. 
    \item Let $\mu \in \mathfrak{M}^{\inv}_{\bar{x}}(\FC,M)$. Then for any $\mathcal{L}(M)$-formula $\varphi(\bar{x};\bar{y})$, we define the map $F_{\mu,M}^{\varphi}: S_{\bar{y}}(M) \to [0,1]$ via $F_{\mu,M}^{\varphi}(q) = \mu(\varphi(\bar{x};\bar{b}))$ where $\bar{b} \models q$. We remark that this map is well-defined since $\mu$ is $M$-invariant. 
    \item Let $\mu \in \mathfrak{M}^{\inv}_{\bar{x}}(\FC,M)$. We say that $\mu$ is Borel-definable (over $M$) if for every $\mathcal{L}$-formula $\varphi(\bar{x};\bar{y})$, the map $F_{\mu,M}^{\varphi}$ is a Borel function. 
    \item Let $\mu \in \mathfrak{M}^{\inv}_{\bar{x}}(\FC,M)$. We say that $\mu$ is definable (over $M$) if for every $\mathcal{L}$-formula $\varphi(\bar{x};\bar{y})$, the map $F_{\mu,M}^{\varphi}$ is a continuous function. We let $\mathfrak{M}_{\bar{x}}^{\df}(\FC,M)$ denote the collection of global measures which are $M$-definable. 
    \item Let $\mu \in \mathfrak{M}^{\inv}_{\bar{x}}(\FC,M)$. We say that $\mu$ is finitely satisfiable over $M$ if for every $\mathcal{L}(\FC)$-formula $\varphi(\bar{x};\bar{b})$, if $\mu(\varphi(\bar{x};\bar{b})) > 0$, then there exists some $\bar{a} \in M^{\bar{x}}$ such that $\FC \models \varphi(\bar{a};\bar{b})$. We let $\mathfrak{M}_{\bar{x}}^{\fs}(\FC,M)$ be the collection of measures in $\mathfrak{M}_{\bar{x}}(\FC)$ which are finitely satisfiable in $M$. We recall that this set is a closed subset of $\mathfrak{M}_{\bar{x}}(\FC)$. 
    \item Let $\mu \in \mathfrak{M}^{\inv}_{\bar{x}}(\FC,M)$ and $\nu \in \mathfrak{M}_{\bar{y}}(\FC)$. Suppose that $\mu$ is Borel-definable. We define the Morley product of $\mu$ with $\nu$, denoted $\mu \otimes \nu$, as follows: For any $\mathcal{L}(\FC)$-formula $\varphi(\bar{x};\bar{y})$, we have that 
    \begin{equation*}
        (\mu_{\bar{x}} \otimes \nu_{\bar{y}})(\varphi(\bar{x};\bar{y})) = \int_{S_{\bar{y}}(M_0)} F_{\mu_{\bar{x}},M_0}^{\varphi(\bar{x};\bar{y})} d\nu_{\bar{y}}|_{M_0},
    \end{equation*}
    where $M_0$ is any small model containing $M$ and all the parameters occurring in $\varphi$. 
The measure $\nu|_{M_0}$ is the regular Borel probability measure on $S_{\bar{y}}(M_0)$ corresponding to the restriction of $\nu$ to $\mathcal{L}_{\bar{y}}(M_0)$. We remark that this product is well-defined. In practice, we often drop the $M_0$ from the notation when there is no possibility of confusion, e.g.\ $F_{\mu}^{\varphi(\bar{x};\bar{y})}$ is used instead of $F_{\mu,M_0}^{\varphi(\bar{x};\bar{y})}$ and $\nu$ is used instead of $\nu|_{M_0}$. We will also sometimes write $\mu_{\bar{x}}$ as $\mu$ and $\nu_{\bar{y}}$ as $\nu$ again when there is no possibility of confusion. As convention, we will often use the order of the variables in the formula to encode whether or not the function $F_{\mu}^{\varphi(\bar{x};\bar{y})}$ is a fiber function in $\bar{x}$ or $\bar{y}$. 
Namely, $F_{\mu}^{\varphi(\bar{x};\bar{y})}$ implies that $\mu$ is a measure in variables $\bar{x}$. We will often write $F_{\mu}^{\varphi^{\textrm{opp}}(\bar{y};\bar{x})}$ (which implies that $\mu$ is in variables $\bar{y}$), where $\varphi^{\textrm{opp}}(\bar{y};\bar{x})$ is the formula $\varphi(\bar x;\bar y)$ but with switched roles of variables.

    \item If $\mu \in \mathfrak{M}^{\inv}_{\bar{x}}(\FC,M)$ is a Borel-definable Keisler measure, then we define the iterated Morley products as follows:
   \begin{enumerate} 
    \item $\mu^{(1)}_{\bar{x}_1} = \mu_{\bar{x}}$. 
   \item $\mu^{(n+1)}_{\bar{x}_1,\ldots,\bar{x}_{n+1}} = \mu_{\bar{x}_{n+1}} \otimes \mu_{\bar{x}_1,\ldots,\bar{x}_n}$. 
    \item $\mu^{(\omega)} = \bigcup_{n = 1}^{\omega} \mu^{(n)}_{\bar{x}_1,\ldots,\bar{x}_n}$. 
    \end{enumerate} 
\end{enumerate}
\end{definition}

\begin{remark}\label{remark:Morley_convention} Suppose that $\mu \in \mathfrak{M}^{\inv}_{\bar{x}}(\mathfrak{C},M)$, $\mu$ is Borel-definable over $M$, $\nu$ and $\nu'$ are measures in $\mathfrak{M}_{\bar{y}}(\mathfrak{C})$ such that $\nu|_{M} = \nu'|_{M}$ and $\varphi(\bar{x};\bar{y},\bar{z})$ is an $\mathcal{L}_{\bar{x},\bar{y},\bar{z}}$-formula. Then for any $\bar{b} \in M^{\bar{z}}$, it follows that
\begin{equation*} 
(\mu \otimes \nu)(\varphi(\bar{x};\bar{y},\bar{b})) = (\mu \otimes \nu')(\varphi(\bar{x};\bar{y},\bar{b})).
\end{equation*} 
Therefore, if $\mu$ is as above and $\nu_0$ is a measure in $\mathfrak{M}_{\bar{y}}(M)$, 
we can define the Morley product of $\mu$ and $\nu_0$ as a measure in $\mathfrak{M}_{\bar{x},\bar{y}}(M)$ via, 
\begin{equation*} 
(\mu \otimes \nu_0)(\varphi(\bar{x};\bar{y},\bar{b})) := (\mu \otimes \nu)(\varphi(\bar{x};\bar{y},\bar{b})),
\end{equation*} 
where $\nu$ is any global extension of $\nu_0$. 
\end{remark} 

\begin{remark} Let $p$ be a type in $S_{\bar{x}}(\FC)$. We say that $p$ is invariant over $M$ [definable over $M$, Borel-definable over $M$, finitely satisfiable in $M$] if and only if the corresponding Dirac measure $\delta_{p}$ has such property over $M$. We use $S^{\inv}_{\bar{x}}(\FC,M)$ and $S^{\fs}_{\bar{x}}(\FC,M)$ to denote the space of global $M$-invariant types and the space of global types which are finitely satisfiable in $M$, respectively. 
\end{remark}

\begin{remark} 
The Morley product for types is more general than the Morley product for measures. If $p \in S_{\bar{x}}^{\inv}(\FC,M)$ and $q \in S_{\bar{y}}(\FC)$ are any types, then a formula $\varphi(\bar{x};\bar{y},\bar{b}) \in p \otimes q$ if and only if $\varphi(\bar{x};\bar{a},\bar{b}) \in p$ where $\bar{a} \models q|_{M\bar{b}}$. A variant of Remark \ref{remark:Morley_convention} applies as well. 
\end{remark} 

\begin{definition}\label{definition: definition of p} Suppose that $p \in S_{\bar y}^{\inv}(\FC,M)$ and $\varphi(\bar{x};\bar{y})$ is an $\mathcal{L}$-formula. Then we let the \emph{definition} of $p$ be denoted as,
\begin{equation*} 
d_{p}^{\varphi} := \{\bar{b} \in \FC^{\bar{x}} : \varphi(\bar{b};\bar{y}) \in p\}.  
\end{equation*} 
We remark that when $p$ is Borel-definable over $M$, then the set 
\begin{equation*}
D_{p,M}^{\varphi} := \{ q  \in S_{\bar{x}}(M) : \textrm{there exists } \bar{b} \in \FC^{\bar{x}} \text{ such that } \bar{b} \models q \text{ and } \varphi(\bar{b};\bar{y}) \in p\} 
\end{equation*} 
is a Borel subset of $S_{\bar{x}}(M)$. 
\end{definition} 

We now define \emph{average measures} as well as the support of a measure.

\begin{definition} Suppose that $A \subseteq \FC$ and let $\bar{p} = (p_1,\dots,p_k)$ be a sequence of types in $S_{\bar{x}}(A)$. 
The average measure $\Av(\bar{p})$ in $\mathfrak{M}_{\bar{x}}(A)$ is given by
\begin{equation*} 
\Av(\bar{p})(\varphi(\bar{x})) := \frac{|\{i \leq k : \varphi(\bar{x}) \in p_i \}| }{k}
\end{equation*}
for any $\mathcal{L}_{\bar{x}}(A)$-formula $\varphi(\bar{x})$.
If $p_1,\dots,p_k$ are realized types, say $p_i = \tp(a_i/A)$ where $a_i \in A^{|\bar{x}|}$, we often write $\Av(\bar{p}) = \Av(\bar{a})$ for $\bar{a} := (a_1,\dots,a_k)$. 
\end{definition} 

\begin{definition} Suppose that $A \subseteq \FC$ and $\mu \in \mathfrak{M}_{\bar{x}}(A)$. We let $\supp(\mu)$ denote the support of $\mu$. In other words,
\begin{equation*} 
\supp(\mu) := \{p \in S_{\bar{x}}(A): \mu(\varphi(x)) > 0 \textrm{ for all } \varphi(x) \in p\}. 
\end{equation*} 
\end{definition} 

We remark that for any Keisler measure $\mu$, the support $\supp(\mu)$ is always a non-empty closed subset of $S_{\bar{x}}(A)$. 

The general theory of Keisler measures in the NIP context is well understood. 
The following facts are some general results concerning measures and the Morley product in NIP theories. 

\begin{fact}\label{fact:MP} Suppose that $T$ is NIP and $\mu \in \mathfrak{M}_{\bar{x}}(\FC)$. 
\begin{enumerate} 
\item Then $\mu \in \mathfrak{M}_{\bar{x}}^{\inv}(\FC,M)$ if and only if $\mu$ is Borel-definable over $M$
(see \cite[Corollary 4.9]{Anand_Udi2011}).
\item For any finitely many $\CL$-formulas $\varphi_1(\bar{x};\bar{y}),\dots, \varphi_n(\bar{x};\bar{y})$ and $\epsilon > 0$, there exist types $\bar{p} = (p_1,\dots,p_t)$ in $\supp(\mu)$ such that for every $k \leq n$ 
\begin{equation*} 
\sup_{\bar b \in \FC^{\bar{y}}} |\mu(\varphi_k(\bar{x};\bar b)) - \Av(\bar{p})(\varphi_k(\bar{x};\bar b))| < \epsilon.
\end{equation*} 
(For $n=1$ this is \cite[Proposition 7.11]{Guide_NIP}; for bigger $n$ one can construct a single formula $\varphi(\bar x;\bar y')$ whose instances are (up to equivalence) precisely all instances of the formulas $\varphi_1(\bar x,\bar y),\dots, \varphi_n(\bar x,\bar y)$, and use the case $n=1$.)

\item Fix $\nu \in \mathfrak{M}_{\bar{y}}^{\inv}(\FC,M)$. Then the map $- \otimes \nu_{\bar{y}} : \mathfrak{M}_{\bar{x}}^{\inv}(\FC,M) \to \mathfrak{M}_{\bar{x}\bar{y}}^{\inv}(\FC,M)$ is continuous (see \cite[Theorem 6.3]{Artem_Kyle}). 
\item The Morley product is associative on triples of invariant measures (see \cite[Theorem 2.2]{Gabe_Kyle21}). 
\item Without the NIP assumption, the Morley product is associative on triples of definable measures (see \cite[Proposition 2.6]{ConGann20}).
\end{enumerate} 
\end{fact}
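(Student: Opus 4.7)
The plan is to establish the five items in an order respecting their logical dependencies: (2) first as the main engine, then (1) and (3), with (4) and (5) last. For (2), I would invoke the Vapnik--Chervonenkis $\epsilon$-approximation theorem. Given $\varphi_1,\dots,\varphi_n$, concatenate their instances into a single formula $\varphi(\bar x;\bar y')$ whose instance family has finite VC dimension by NIP. Apply the VC approximation on the probability space $(S_{\bar x}(\FC),\mu)$ to obtain a finite sample whose empirical average uniformly $\epsilon$-approximates $\mu$ on the whole family of instances. A short argument, perturbing each sampled point within its $\mu$-essential neighborhood, lets one move the sample into $\supp(\mu)$ while preserving the bound; uniformity in $\bar y'$ gives uniformity for each $\varphi_k$ separately.

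For (1), the forward direction is immediate since Borel-definability refines $M$-invariance. Conversely, given $\mu \in \mathfrak{M}_{\bar x}^{\inv}(\FC,M)$ and an $\mathcal{L}$-formula $\varphi(\bar x;\bar y)$, use (2) with $\epsilon=1/n$ to obtain tuples $\bar p_n$ with $F_{\Av(\bar p_n),M}^{\varphi} \to F_{\mu,M}^{\varphi}$ uniformly on $S_{\bar y}(M)$. Each $F_{\Av(\bar p_n),M}^{\varphi}$ is continuous (as a finite convex combination of indicator functions of clopen sets pulled back to $S_{\bar y}(M)$), and a uniform limit of continuous functions is Baire class one, hence Borel. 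For (3), continuity of $\,-\otimes\nu_{\bar y}\,$ reduces, after fixing a formula $\varphi(\bar x;\bar y,\bar b)$, to continuity of $\mu \mapsto \int F_{\nu}^{\varphi^{\text{opp}}} d\mu|_{M_0}$. Approximate the bounded Borel integrand uniformly by simple functions with clopen level sets; each such integral is visibly continuous in the product topology on $\mathfrak{M}_{\bar x}(\FC)$, and uniform approximation transfers continuity to the limit.

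The principal obstacle is (4): associativity for three $M$-invariant measures. The plan is to approximate each of $\mu,\nu,\lambda$ by averages of types from their supports via (2), invoke associativity of the iterated Morley product for types (which is immediate once one writes out realizations over successive models), and pass to the limit on a fixed test formula $\psi(\bar x,\bar y,\bar z;\bar c)$. The delicacy is a controlled triple-limit exchange: one must choose the VC samples for $\mu$, $\nu$, $\lambda$ in the correct nested order so that the errors in evaluating $(\mu\otimes\nu)\otimes\lambda$ and $\mu\otimes(\nu\otimes\lambda)$ on $\psi$ are simultaneously governed by a single auxiliary formula derived from $\psi$. This is precisely the step where Borel-definability (item (1)) is needed to justify writing iterated products as iterated integrals in either order.

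For (5), the definability hypothesis upgrades every $F$-function to a continuous function on the relevant compact type space, so the Morley product coincides with genuine integration of continuous functions against regular Borel measures on compact Hausdorff spaces; classical Fubini then yields associativity directly, bypassing any NIP approximation machinery.
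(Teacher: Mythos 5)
First, a point of reference: the paper does not prove this statement. It is stated as a Fact and each item is delegated to the literature ((1) to Hrushovski--Pillay, (2) to Simon's book, (3) to Chernikov--Gannon, (4) and (5) to Conant--Gannon), the only argument supplied being the one-line reduction of the multi-formula case of (2) to the single-formula case. Measured against those external proofs, your proposal contains genuine gaps, concentrated in (1), (3) and (4); items (2) and (5) are essentially right.

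The central error occurs in (1) and recurs in (3). For an $M$-invariant type $p$ in an NIP theory, the set $D_{p,M}^{\varphi}\subseteq S_{\bar{y}}(M)$ is in general only Borel --- indeed a finite union of differences of closed sets, and proving even that (``strong Borel definability'') is the real content of (1) for types --- it is clopen exactly when $p$ is \emph{definable}, which invariant types in NIP need not be. Hence $F_{\Av(\bar{p}),M}^{\varphi}$ is not continuous, your approximants are only Borel, and had they been continuous your argument would show that every invariant measure is definable over $M$, which is false. (Relatedly, a uniform limit of continuous functions is continuous, not merely Baire class one.) The repair is to first establish Borel-definability of invariant \emph{types} under NIP and then use (2) together with the fact that a uniform limit of Borel functions is Borel. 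The same confusion undermines (3): the integrand in $\mu\otimes\nu$ is $F_{\mu}^{\varphi}$ integrated against $\nu$, so your rewriting as $\int F_{\nu}^{\varphi^{\mathrm{opp}}}\,d\mu$ presupposes commutativity of $\otimes$, which fails for general invariant measures; moreover a bounded Borel function on $S_{\bar{y}}(M_0)$ cannot be uniformly approximated by simple functions with clopen level sets, and $\mu\mapsto\mu(B)$ is not weak$^{*}$-continuous for Borel non-clopen $B$. The actual proof of (3) replaces $\nu$ by a \emph{smooth} extension over a larger small model; smoothness yields a genuinely continuous integrand, after which your clopen-approximation step becomes legitimate. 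Finally, for (4) you correctly identify the triple-limit exchange as the obstacle but do not overcome it: since $\otimes$ is only left-continuous, approximating $\nu$ and $\lambda$ by averages of types and passing to the limit is precisely what cannot be done, and the known proof instead reduces to countable languages and models and argues with Borel functions on Polish spaces (compare the paper's own proof of associativity of $*$ for countable $M$ and $\mathcal{L}$, which has exactly this structure).
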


The term \emph{invariantly supported measure} comes from \cite{Artem_Kyle}. The following is essentially Lemma 2.10 from \cite{Artem_Kyle}, but the result is really just a reformulation of a theorem of Hrushovski and Pillay. 

\begin{fact}\label{prop:NIP} Suppose that $T$ is NIP. Let $\mu \in \mathfrak{M}_{\bar{x}}(\FC)$. Then $\mu$ is $M$-invariant if and only if $\mu$ is invariantly supported over $M$, i.e., for every $p \in \supp(\mu)$, $p$ is $M$-invariant. 
\end{fact}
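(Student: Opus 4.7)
The plan is to handle the two directions separately, with $(\Leftarrow)$ being a direct consequence of the approximation theorem and $(\Rightarrow)$ carrying the genuine Hrushovski--Pillay content.

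For $(\Leftarrow)$, suppose every $p \in \supp(\mu)$ is $M$-invariant. Fix an $\CL$-formula $\varphi(\bar{x};\bar{y})$, parameters $\bar{b}\equiv_M\bar{b}'$, and $\varepsilon>0$. By Fact~\ref{fact:MP}(2), choose types $p_1,\ldots,p_t \in \supp(\mu)$ such that $|\mu(\varphi(\bar{x};\bar{c}))-\Av(\bar{p})(\varphi(\bar{x};\bar{c}))|<\varepsilon$ uniformly in $\bar{c} \in \FC^{\bar{y}}$. Each $p_i$ being $M$-invariant gives $\varphi(\bar{x};\bar{b})\in p_i$ iff $\varphi(\bar{x};\bar{b}')\in p_i$, so $\Av(\bar{p})(\varphi(\bar{x};\bar{b}))=\Av(\bar{p})(\varphi(\bar{x};\bar{b}'))$; the triangle inequality then yields $|\mu(\varphi(\bar{x};\bar{b}))-\mu(\varphi(\bar{x};\bar{b}'))|<2\varepsilon$. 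Letting $\varepsilon\to 0$ gives $M$-invariance of $\mu$.

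For $(\Rightarrow)$, argue by contradiction. Assume $\mu$ is $M$-invariant but some $p\in\supp(\mu)$ fails $M$-invariance, witnessed by $\bar{b}\equiv_M \bar{b}'$ with $\varphi(\bar{x};\bar{b})\wedge\neg\varphi(\bar{x};\bar{b}')\in p$. Because $p\in\supp(\mu)$, the constant $c:=\mu(\varphi(\bar{x};\bar{b})\wedge\neg\varphi(\bar{x};\bar{b}'))$ is strictly positive. Using strong $|M|^+$-homogeneity of $\FC$, recursively construct a sequence $(\bar{b}_i)_{i<\omega}$ with $\bar{b}_0=\bar{b}$, $\bar{b}_1=\bar{b}'$, and $(\bar{b}_i,\bar{b}_{i+1})\equiv_M(\bar{b},\bar{b}')$ for every $i$; by $M$-invariance of $\mu$, $\mu(\varphi(\bar{x};\bar{b}_i)\wedge\neg\varphi(\bar{x};\bar{b}_{i+1}))=c$ for all $i$.

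The remaining step is to pass, via Ramsey and compactness, to an $M$-indiscernible sequence $(\bar{c}_i)_{i<\omega}$ with $\mu(\varphi(\bar{x};\bar{c}_i)\wedge\neg\varphi(\bar{x};\bar{c}_{i+1}))\geq c$ preserved. A concrete route is to first extract an $M$-indiscernible sequence of pairs from $((\bar{b}_{2i},\bar{b}_{2i+1}))_i$ (whose terms all share $\tp(\bar{b}\bar{b}'/M)$), then perform a secondary Ramsey extraction flattening the pair sequence so that the resulting singleton sequence remains $M$-indiscernible and its consecutive pair-type still realizes $\tp(\bar{b}\bar{b}'/M)$. Once $(\bar{c}_i)$ is in hand, NIP gives a finite alternation bound $k_\varphi$ such that for every $\bar{a}$ the set $\{i : \models\varphi(\bar{a};\bar{c}_i)\wedge\neg\varphi(\bar{a};\bar{c}_{i+1})\}$ has cardinality at most $k_\varphi$; Fubini over the clopen indicators then gives $Nc \leq \sum_{i<N}\mu(\varphi(\bar{x};\bar{c}_i)\wedge\neg\varphi(\bar{x};\bar{c}_{i+1}))\leq k_\varphi$, contradicting the arbitrariness of $N$.

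The main obstacle is the extraction step at the singleton level, namely arranging an $M$-indiscernible sequence whose consecutive pair-type still witnesses the positive-measure swap. This is the essential use of NIP, and it is the reason why the paper cites \cite[Lemma 2.10]{Artem_Kyle} for a compact packaging of the Hrushovski--Pillay argument rather than reproducing it.
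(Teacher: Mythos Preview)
The paper does not prove this; it defers to \cite[Lemma 2.10]{Artem_Kyle} and the Hrushovski--Pillay theorem it repackages.

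Your $(\Leftarrow)$ is correct. (A shorter argument, not needing NIP: the clopen $[\varphi(\bar x;\bar b)\wedge\neg\varphi(\bar x;\bar b')]$ misses $\supp(\mu)$ entirely, since any type in it fails $M$-invariance, and hence has measure zero.)

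Your $(\Rightarrow)$ has a genuine gap at exactly the point you flag, and the proposed repair --- extract $M$-indiscernible pairs, flatten, then do a secondary Ramsey pass --- does not close it. Once you flatten $((\bar d_i,\bar d_i'))_i$ to a single sequence and extract again, the consecutive-pair type of the result may be $\tp(\bar d_0,\bar d_1/M)$ or $\tp(\bar d_0',\bar d_1/M)$ rather than $\tp(\bar b,\bar b'/M)$, and nothing forces the swap formula to retain positive measure. Your hand-built $(\bar b_i)$ controls only adjacent-pair types, and Ramsey extraction does not respect adjacency.

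The actual Hrushovski--Pillay argument avoids this by passing through \emph{dividing}. If $p\in\supp(\mu)$ is not $M$-invariant, then (NIP, $M$ a model) $p$ forks over $M$, hence divides over $M$, so some $\varphi(\bar x;\bar b)\in p$ divides: by definition there is an $M$-indiscernible $(\bar b_i)$ with $\bar b_0=\bar b$ and $\{\varphi(\bar x;\bar b_i)\}_i$ $k$-inconsistent. Now $M$-invariance of $\mu$ gives $\mu(\varphi(\bar x;\bar b_i))=\mu(\varphi(\bar x;\bar b))>0$ for all $i$, while $k$-inconsistency gives $\sum_{i<N}\mathbf 1_{[\varphi(\bar x;\bar b_i)]}\le k-1$ pointwise, a contradiction for large $N$. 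The indiscernible sequence is handed to you by the definition of dividing, so no extraction step is needed; the substantive NIP input is the equivalence of non-forking and invariance over a model, not an alternation bound along a sequence you must manufacture.
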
 

The following fact is standard and follows directly from the definitions.

\begin{fact} Let $\mu \in \mathfrak{M}_{\bar{x}}^{\inv}(\FC,M)$. The following are equivalent: 
\begin{enumerate}
\item $\mu$ is $M$-definable. 
\item For any $\mathcal{L}$-formula $\varphi(\bar{x};\bar{y})$ and any closed subset $C$ of $[0,1]$, the set 
\begin{equation*} 
\{\bar{b} \in \FC^{\bar{y}}: \mu(\varphi(\bar{x};\bar{b})) \in C\}, 
\end{equation*} 
is an $M$-type definable subset of $\FC^{\bar{y}}$. 
\end{enumerate} 
\end{fact}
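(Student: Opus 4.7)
The plan is to recognize this as a standard translation between continuity on the Stone space $S_{\bar y}(M)$ and type-definability in $\FC^{\bar y}$, mediated by $M$-invariance of $\mu$. The key preliminary observation is that, since $\mu$ is $M$-invariant, the value $\mu(\varphi(\bar x;\bar b))$ depends only on $\tp(\bar b/M)$; therefore the set $B_C := \{\bar b \in \FC^{\bar y} : \mu(\varphi(\bar x;\bar b)) \in C\}$ is automatically a union of complete types over $M$ for every $C \subseteq [0,1]$, and may be identified with its image $\tilde{B}_C := \{q \in S_{\bar y}(M) : \mu(\varphi(\bar x;\bar b)) \in C \text{ for } \bar b \models q\} = (F_{\mu,M}^{\varphi})^{-1}(C)$ inside $S_{\bar y}(M)$.

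For the direction (1)$\Rightarrow$(2), I would assume that $F_{\mu,M}^{\varphi}$ is continuous, take any closed $C \subseteq [0,1]$, and conclude that $(F_{\mu,M}^{\varphi})^{-1}(C)$ is a closed subset of the Stone space $S_{\bar y}(M)$. By Stone duality, every closed subset of $S_{\bar y}(M)$ is of the form $[\pi] := \{q \in S_{\bar y}(M) : \pi \subseteq q\}$ for some partial type $\pi(\bar y)$ over $M$. Translating back via the identification above, $B_C$ is precisely the set of realizations in $\FC^{\bar y}$ of $\pi$, which is the definition of being $M$-type-definable.

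For (2)$\Rightarrow$(1), I would run the same correspondence in reverse. To prove continuity of $F_{\mu,M}^{\varphi}$ it suffices to show that preimages of closed sets are closed. Given closed $C \subseteq [0,1]$, the hypothesis provides a partial type $\pi_C(\bar y)$ over $M$ whose realizations in $\FC^{\bar y}$ are exactly $B_C$. Then $(F_{\mu,M}^{\varphi})^{-1}(C) = [\pi_C]$ (using $M$-invariance to see that this equality holds at the level of the Stone space), and $[\pi_C]$ is closed in $S_{\bar y}(M)$.

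There is no real obstacle here: both directions are the standard dictionary between closed subsets of a type space and type-definable subsets of the monster, with $M$-invariance ensuring that $B_C$ is in fact a union of $M$-types so that the passage between $\FC^{\bar y}$ and $S_{\bar y}(M)$ is legitimate. If anything requires care it is only the explicit verification that ``union of complete types over $M$'' plus ``type-definable'' corresponds bijectively to ``closed in $S_{\bar y}(M)$,'' but this is routine and already implicit in the definition of $\mathfrak{M}^{\inv}_{\bar x}(\FC,M)$.
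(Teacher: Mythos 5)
Your proof is correct, and it is exactly the standard argument the paper has in mind (the paper omits the proof entirely, remarking only that the fact ``follows directly from the definitions''). The translation you describe --- $M$-invariance making $B_C$ a union of complete types over $M$, then matching continuity of $F_{\mu,M}^{\varphi}$ with closedness of $(F_{\mu,M}^{\varphi})^{-1}(C)$ in $S_{\bar y}(M)$ and hence with type-definability of $B_C$ via Stone duality --- is the intended (and essentially unique) route.
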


We now come to the class of \emph{fim measures}. This class of Keisler measures, which were identified by Hrushovski, Pillay, and Simon in \cite{HruPiSi13}, are quite important (in this paper as well as elsewhere). They are the \emph{tamest} kind of Keisler measures and are often useful in applications.

\begin{definition}\label{def:fim}
    Let $\mu \in \mathfrak{M}_{\bar{x}}^{\inv}(\FC,M)$ be Borel-definable. 
We say that $\mu$ is a {\em fim measure} over $M$ (a {\em frequency interpretation measure} over $M$) if for any finite $\bar{x}' \subseteq \bar{x}$ and $\CL$-formula $\varphi(\bar{x}';\bar{y})$ there exists a sequence of $\mathcal{L}(M)$-formulas $(\theta_n(\bar{x}_1,\ldots, \bar{x}_n))_{1\leqslant n<\omega}$ such that $|\bar{x}_i| = |\bar{x}'|$ and:
    \begin{enumerate}
        \item for any $\epsilon>0$, there exists some integer $n_\epsilon$ such that 
        for every $n \geqslant n_\epsilon$ and every $\bar{a} = (\bar{a}_1,\dots,\bar{a}_n) \in \FC^{(\bar{x}_1,\ldots,\bar{x}_n)}$
        with $\models\theta_n(\bar{a})$ we have
        $$\sup_{b\in \FC^{\bar{y}}}|\Av(\bar{a})(\varphi(\bar{x}';\bar{b}))-\mu(\varphi(\bar{x}';\bar{b}))|<\epsilon,$$
        \item $\lim_{n\to\infty}\mu^{(n)}(\theta_n(\bar{x}_1,\ldots,\bar{x}_n))=1$.
    \end{enumerate}
We say that a fim measure $\mu \in \mathfrak{M}_{\bar{x}}^{\inv}(\FC,M)$ is \emph{super-fim} over $M$ if $\mu^{(n)}$ is fim for every $n \geq 1$. 
\end{definition}

It is easy to check that a fim measure over $M$ is definable over $M$.
Fim has an analogue for types. The property is called \emph{generic stability} and was defined originally by Pillay and Tanovi\'{c} \cite{PiTa}. See \cite[Proposition 3.2]{ConGann20} for a proof of the equivalences. 

\begin{definition}\label{definition: generically stable} Suppose that $p \in S_{\bar{x}}^{\inv}(\FC,M)$. We say that $p$ is {\em generically stable} over $M$ if any of the following equivalent conditions hold:
\begin{enumerate} 
\item For any Morley sequence $(\bar{a}_i)_{i < \omega}$ in $p$ over $M$, $\lim_{i \to \infty} \tp(\bar{a}_i/\FC) = p$. 
\item For any Morley sequence $(\bar{a}_i)_{i < \kappa}$ in $p$ over $M$ and any $\mathcal{L}_{\bar{x}}(\FC)$-formula $\theta(\bar{x})$, $\{i < \kappa : \models \theta(\bar{a}_i)\}$ is finite or cofinite. 
\item The measure $\delta_{p}$ is a fim measures. 
\end{enumerate} 
Additionally, we say that $p$ is \emph{super-generically stable} if for every $n \geq 1$, $p^{(n)}$ is generically stable.
\end{definition}

\begin{question}\label{remark: fim implies super-fim?} It is known that if $T$ is NIP, then any fim measure is super-fim \cite{HruPiSi13}. It is also known that if $T$ is NTP2, then any generically stable type is super-generically stable \cite{ConGanHan23}. In general, whether or not all fim measures are super-fim (or all generically stable types are super-generically stable) is wide open. 
There have been many attempts to give a proof or to find a counter-example to this problem, but all of them contained serious mistakes.
So we ask the question again: Suppose that $\mu$ is a global measure which is fim over $M$. Does this imply that $\mu^{(n)}$ is fim over $M$? If $p$ is a global type which is generically stable over $M$, does this imply that $p^{(n)}$ is generically stable over $M$?
\end{question}

We now describe Keisler measures which concentrate on the type of an enumeration of our small model $M$. 
We first choose an enumeration $\bar{m}$ of $M$ and let $\bar{x}$ be a tuple of variables corresponding to $\bar{m}$; let $\bar{y}$ be another tuple of variables corresponding to $\bar{m}$. Consider a partial type $\pi(\bar{x};\bar{y})$ over $\emptyset$. Then $[\pi(\bar{x};\bar{m})] = \{p \in S_{\bar{x}}(\FC): p \vdash \pi(\bar{x};\bar{m})\}$ is a closed subset of $S_{\bar{x}}(\FC)$. 

\begin{definition} We define the following collections of types and measures: 
\begin{enumerate}
    \item $S_{\pi(\bar{x};\bar{m})}(\FC) := \{p \in S_{\bar{x}}(\FC): p \vdash \pi(\bar{x};\bar{m})\}$. 
    \item For $\dagger \in \{\fs,\inv,\df\}$, we let $S^{\dagger}_{\pi(\bar{x};\bar{m})}(\FC,M) : = S^{\dagger}_{\bar{x}}(\FC,M) \cap S_{\pi(\bar{x};\bar{m})}(\FC)$. 
    \item $\mathfrak{M}_{\pi(\bar{x};\bar{m})}(\FC):=\{\mu\in\mathfrak{M}_{\bar{x}}(\FC)\;\colon\;\mu([\pi(\bar{x};\bar{m})])=1\}$. 
    \item For $\dagger \in \{\fs,\inv,\df\}$, we let $\mathfrak{M}^{\dagger}_{\pi(\bar{x};\bar{m})}(\FC,M) : = \mathfrak{M}^{\dagger}_{\bar{x}}(\FC,M) \cap \mathfrak{M}_{\pi(\bar{x};\bar{m})}(\FC)$. 
    \item If $\pi(\bar{x};\bar{y})$ is the type ``$\bar{x}\equiv_{\emptyset}\bar{y}$'', we denote $S_{\pi(\bar{x};\bar{m})}(\FC)$ as $S_{\bar{m}}(\FC)$ and $\mathfrak{M}_{\pi(\bar{x};\bar{m})}(\FC)$ as $\mathfrak{M}_{\bar{m}}(\FC)$. These are the spaces of types and measures which extend/concentrate on the partial type $\tp(\bar{m}/\emptyset)$. 
    \item For $\dagger \in \{\inv,\fs,\df\}$, we let $S^{\dagger}_{\bar{m}}(\FC,M) := S^{\dagger}_{\bar{x}}(\FC,M) \cap S_{\bar{m}}(\FC)$ and $\mathfrak{M}_{\bar{m}}^{\dagger}(\FC,M):=\mathfrak{M}_{\bar{x}}^{\dagger}(\FC,M)\cap \mathfrak{M}_{\bar{m}}(\FC)$. 
    \end{enumerate}
\end{definition}

Finally, we will also be concerned with a new important family of types and measures in this paper, i.e., the collection of \emph{strongly finitely satisfiable} types and measures. These types and measures naturally arise from the study of dynamical systems in infinitely many variables. Consider the definitions below:

\begin{definition}\label{def:sfs} 
For a partial type $\pi(\bar{x};\bar{y})$ over $\emptyset$, we introduce the space $S_{\pi(\bar{x};\bar{m})}^{\sfs}(\FC,M)$ of global types which extend $\pi(\bar{x};\bar{m})$ and are {\em $\pi$-strongly finitely satisfiable in $M$}:
\begin{equation*}
S_{\pi(\bar{x};\bar{m})}^{\sfs}(\FC,M) := \{p \in S_{\pi(\bar{x};\bar{m})}(\FC): \varphi(\bar{x};\bar{b}) \in p \Rightarrow (\exists \bar{a} \in M^{|\bar{x}|})(\models \pi(\bar{a},\bar{m}) \wedge \varphi(\bar{a};\bar{b})) \}. 
\end{equation*}
Likewise, we introduce the space  $\mathfrak{M}_{\pi(\bar{x};\bar{m})}^{\sfs}(\FC,M)$ of global measures which concentrate on $\pi(\bar{x};\bar{m})$ and are {\em $\pi$-strongly finitely satisfiable in $M$}:
\begin{equation*} 
\mathfrak{M}_{\pi(\bar{x};\bar{m})}^{\sfs}(\FC,M) := \{\mu \in \mathfrak{M}_{\pi(\bar{x};\bar{m})}(\FC): \mu(\varphi(\bar{x};\bar{b})) > 0 \Rightarrow (\exists \bar{a} \in M^{|\bar{x}|})(\models \pi(\bar{a},\bar{m}) \wedge \varphi(\bar{a};\bar{b})) \}. 
\end{equation*} 
In the case where $\pi(\bar{x};\bar{y})$ is the type ``$\bar{x} \equiv_{\emptyset} \bar{y}$", we write $S_{\pi(\bar{x};\bar{m})}^{\sfs}(\FC,M)$ as $S_{\bar{m}}^{\sfs}(\FC,M)$ and $\mathfrak{M}_{\pi(\bar{x};\bar{m})}^{\sfs}(\FC,M)$ as $\mathfrak{M}_{\bar{m}}^{\sfs}(\FC,M)$. We refer to these spaces simply as 
\emph{the space of global types strongly finitely satisfiable in $M$} and \emph{the space of global measures strongly finitely satisfiable in $M$},
respectively.
\end{definition} 

Note that $S_{\pi(\bar{x};\bar{m})}^{\sfs}(\FC,M)$ and  $\mathfrak{M}_{\pi(\bar{x};\bar{m})}^{\sfs}(\FC,M)$ are closed subsets of $S_{\bar{x}}(\FC)$ and $\mathfrak{M}_{\bar{x}}(\FC)$, respectively. 

The following fact is elementary and left to the reader. 

\begin{fact}\label{fact: support of sfs measures} Fix a partial type $\pi(\bar{x};\bar{y})$ over $\emptyset$ 
and a measure $\mu \in \mathfrak{M}_{\bar x}(\FC)$. The following are equivalent: 
\begin{enumerate} 
\item The measure $\mu$ is in  $\mathfrak{M}^{\sfs}_{\pi(\bar{x};\bar{m})}(\FC,M)$. 
\item For every $p \in \supp(\mu)$, $p \in S^{\sfs}_{\pi(\bar{x};\bar{m})}(\FC,M)$. 
\end{enumerate} 
\end{fact}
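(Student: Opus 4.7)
The plan is to unfold both sides directly from the definitions, using only standard properties of the support of a regular Borel probability measure on the compact type space $S_{\bar{x}}(\FC)$. Two facts will be used repeatedly: the support $\supp(\mu)$ always carries full measure, and consequently any clopen set $[\varphi(\bar{x};\bar{b})]$ with $\mu(\varphi(\bar{x};\bar{b}))>0$ must meet $\supp(\mu)$ (otherwise it would be contained in the complement of the support, a union of open sets of measure zero).

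For the direction $(2)\Rightarrow(1)$: if every $p\in\supp(\mu)$ extends $\pi(\bar{x};\bar{m})$, then $\supp(\mu)\subseteq[\pi(\bar{x};\bar{m})]$, and since $[\pi(\bar{x};\bar{m})]$ is closed and $\supp(\mu)$ has full measure, $\mu([\pi(\bar{x};\bar{m})])=1$. To verify the strong satisfiability clause, suppose $\mu(\varphi(\bar{x};\bar{b}))>0$. By the observation above, there exists $p\in\supp(\mu)\cap[\varphi(\bar{x};\bar{b})]$; applying (2) to this $p$ yields $\bar{a}\in M^{|\bar{x}|}$ with $\models\pi(\bar{a};\bar{m})\wedge\varphi(\bar{a};\bar{b})$.

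For $(1)\Rightarrow(2)$: fix $p\in\supp(\mu)$. To see that $p\vdash\pi(\bar{x};\bar{m})$, suppose toward a contradiction that some formula $\psi(\bar{x};\bar{m})\in\pi(\bar{x};\bar{m})$ is not in $p$. Then $\neg\psi(\bar{x};\bar{m})\in p$, and $p\in\supp(\mu)$ forces $\mu(\neg\psi(\bar{x};\bar{m}))>0$; but $\mu([\pi(\bar{x};\bar{m})])=1$ gives $\mu(\psi(\bar{x};\bar{m}))=1$, a contradiction. For the strong satisfiability clause, any $\varphi(\bar{x};\bar{b})\in p$ satisfies $\mu(\varphi(\bar{x};\bar{b}))>0$ because $p\in\supp(\mu)$, and (1) then supplies the desired tuple $\bar{a}\in M^{|\bar{x}|}$ with $\models\pi(\bar{a};\bar{m})\wedge\varphi(\bar{a};\bar{b})$.

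There is no genuine obstacle here: the argument is purely a bookkeeping exercise that alternates between ``formulas of measure zero are disjoint from the support'' and ``formulas in a support-type have positive measure.'' This is precisely why the authors mark the fact as elementary. The only point one must be careful about is invoking the regularity of $\mu$ (implicit in being a Keisler measure, via the equivalence with regular Borel probability measures on $S_{\bar{x}}(\FC)$) to guarantee that $\supp(\mu)$ has full measure, which is what makes the above two-way translation between ``$\mu(\varphi)>0$'' and ``some $p\in\supp(\mu)$ contains $\varphi$'' work.
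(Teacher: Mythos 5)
Your proof is correct, and since the paper explicitly leaves this fact to the reader, your argument is exactly the standard one intended: it only uses that a clopen set of positive measure must meet the support and that every formula in a support type has positive measure, together with regularity (or just compactness and finite additivity for clopens) to pass between $\mu([\pi(\bar{x};\bar{m})])=1$ and $\supp(\mu)\subseteq[\pi(\bar{x};\bar{m})]$. Nothing is missing.
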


\subsection{Relatively type-definable subgroups} 
Let $M \prec \FC \prec \FC'$ and let $A\subseteq \FC$ be small and $\bar{\alpha}$ be a short tuple of elements from $\FC$, where $\FC'$ is a bigger monster model in which $\FC$ is small. The following definitions come from \cite{KPRz,HruKruPi}.

\begin{definition}\label{definition: relatively type-definable sets}
    By a {\em relatively $\bar{\alpha}$-definable over $A$} subset of $\aut(\FC)$
    we mean a subset of the form 
    $$G_{\varphi,\bar{\alpha},\bar{a}}=\{\sigma\in\aut(\FC)\;\colon\;\models\varphi(\sigma(\bar{\alpha});\bar{a})\},$$
    where $\varphi(\bar{x};\bar{y})$ is an $\CL$-formula and $\bar{a}$ is a tuple from $A$.

 By a {\em relatively $\bar{\alpha}$-type-definable over $A$} subset of $\aut(\FC)$
    we mean a subset of the form
    $$G_{\pi,\bar{\alpha},\bar{a}}=\{\sigma\in\aut(\FC)\;\colon\;\models\pi(\sigma(\bar{\alpha});\bar{a})\},$$
    where $\pi(\bar{x};\bar{y})$ is a partial type over $\emptyset$ and $\bar{a}$ is a tuple from $A$.
By a {\em relatively type-definable} subset of $\aut(\FC)$ we mean a subset which is relatively $\bar{\alpha}$-type-definable over $A$ for some short $\bar \alpha$ and small $A$ in $\FC$.
\end{definition}

Let $\bar{m}$ be an enumeration of $M$ and let $\bar{c}$ be an enumeration of $\FC$.
Moreover, let $\bar{x}$ and $\bar{y}$ be two distinct tuples of variables, each of them corresponding to the tuple $\bar{m}$.
One can consider the topology on $\aut(\FC)$ given by the basic open sets being the  relatively $\bar{m}$-definable over $M$ subsets of $\aut(\FC)$,
and call it the \emph{relatively definable over $M$ topology} on $\aut(\FC)$. It is clear that the closed sets in this topology are precisely the relatively $\bar m$-type-definable over $M$ subsets of $\aut(\FC)$. 

\begin{definition}\label{definition: notation G pi,C}
    Let $\pi(\bar{x};\bar{y})$ be a partial type over $\emptyset$. 
    We define
    \begin{enumerate}
        \item $G_{\pi,\FC}:=G_{\pi,\bar m,\bar m}=\{\sigma\in\aut(\FC)\;\colon\; \FC\models\pi(\sigma(\bar{m});\bar{m})\}$, and $G_{\pi,M}:=\{\sigma\in\aut(M)\;\colon\; M\models\pi(\sigma(\bar{m});\bar{m})\}$,
        \item $\widetilde{G}_{\pi,\FC}:=S_{\bar{c}}(\FC)\,\cap\,[\pi(\bar{x};\bar{m})]$, where $S_{\bar{c}}(\FC)$ is the collection of complete types over $\FC$ in variables corresponding to  $\bar{c}$ which extend $\tp(\bar{c}/\emptyset)$.
\item  $\widetilde{G}_{\pi,M}:=S_{\bar{m}}(M)\,\cap\,[\pi(\bar{x};\bar{m})]$, where $S_{\bar{m}}(M)$ is the collection of complete types over $M$ in variables $\bar x$ (corresponding to  $\bar{m}$) which extend $\tp(\bar{m}/\emptyset)$.
    \end{enumerate} 
\end{definition}

\begin{remark}\label{remark: two descriptions of the type space}
We have
$$\widetilde{G}_{\pi,\FC} =\cl(G_{\pi,\FC}\cdot\tp(\bar{c}/\FC))=
\cl\big(\{\tp\big(\sigma(\bar{c})/\FC\big)\;\colon\;\sigma\in G_{\pi,\FC}\} \big).$$
\end{remark}

\begin{proof}
($\supseteq$) Consider any $p \in \cl(G_{\pi,\FC}\cdot\tp(\bar{c}/\FC))$ and $\varphi(\bar x;\bar m) \in \pi(\bar x;\bar m)$. 
Suppose for a contradiction that $\neg \varphi(\bar x;\bar m) \in p$. 
Then there is $\sigma \in G_{\pi,\FC}$ such that 
$\sigma(\bar c)\models \neg \varphi(\bar{x};\bar m)$. 
But this means that $\models \neg \varphi(\sigma(\bar m);\bar m)$ which implies that $\sigma \notin G_{\pi,\FC}$, a contradiction.

($\subseteq$) Consider a type $p \in \widetilde{G}_{\pi,\FC}$ 
and a formula $\varphi(\bar x';\bar a) \in p$, 
where $\bar{x}'$ is some finite tuple of variables and $\bar a$ is a finite tuple contained in $\bar c$.
Take $\bar b_0 \in p(\FC')$. 
Let $\bar b$ and $\bar b'$  be the subtuples of $\bar b_0$ corresponding to $\bar x$ and $\bar x'$, respectively. 
Then $\models \pi(\bar b;\bar m) \wedge \varphi(\bar b';\bar a)$ 
and $\bar b \bar b' \equiv_{\emptyset} \bar m \bar c'$,
where $\bar c'$ is the finite subtuple of $\bar c$ corresponding to the variables $\bar x'$. 
We can find $\bar d\bar d'$ in $\FC$ so that $\bar d \bar d' \equiv_{\bar m \bar a} \bar b \bar b'$. 

Since  $\bar d \bar d' \equiv_{\emptyset}  \bar m \bar c'$, there exists $\sigma \in \aut(\FC)$ such that $\sigma( \bar m \bar c') = \bar d \bar d'$. 
As, $\bar d \equiv_{\bar m} \bar b$ and $\models \pi(\bar b;\bar m)$,
we get $\models \pi(\bar d;\bar m)$, and so $\sigma \in G_{\pi,\FC}$.
On the other hand, since $\bar d' \equiv_{\bar a} \bar b'$ and $\models \varphi(\bar b';\bar a)$, we get $\models \varphi(\bar d';\bar a)$, i.e. $\models \varphi(\sigma(\bar c);\bar a)$. 
We conclude that $\tp(\sigma(\bar c)/\FC) \in [\varphi(\bar x';\bar a)]$ with $\sigma\in G_{\pi,\FC}$.

Because, the neighborhood $[\varphi(\bar x';\bar a)]$ was chosen arbitrarily, we conclude that the type $p$ is in the desired closure.
\end{proof}

\begin{remark}
Let $r \colon S_{\bar c}(\FC) \to S_{\bar m}(M)$ be the restriction map (to the variables $\bar x$ and parameters $M$). 
Then $r^{-1}[\widetilde{G}_{\pi,M}] = \widetilde{G}_{\pi,\FC}$ 
and $r[ \widetilde{G}_{\pi,\FC}]=\widetilde{G}_{\pi,M}$.
\end{remark}

Bearing in mind Definitions \ref{definition: relatively type-definable sets} and \ref{definition: notation G pi,C}, whenever $G_{\pi,\FC}$ is a group, we say that $G_{\pi,\FC}$ is a {\em relatively $\bar m$-type-definable over $M$ subgroup} of $\aut(\FC)$ (i.e  a subgroup closed in the relatively definable over $M$ topology).

\begin{remark}\label{remark: group in every model}
The property that $G_{\pi,\FC}$ is a group does not depend on the choice of the monster model $\FC$ in which $M$ is small.
Moreover, if $G_{\pi,\FC}\leqslant\aut(\FC)$, then $G_{\pi,M}\leqslant\aut(M)$.
\end{remark}

\begin{proof}
It follows from the observation that the property that $G_{\pi,\FC}$ is a group is equivalent to the conjunction of the following three conditions:
\begin{enumerate}
\item 
$\models \pi(\bar m; \bar m)$;

\item 
the type $\exists \bar y (\pi(\bar y;\bar m) \wedge  \bar x  \bar m \equiv \bar m \bar y)$ is equivalent to the type $\pi(\bar x;\bar m)$;

\item 
the type $\exists \bar y \exists \bar z (\pi(\bar y;\bar m) \wedge \pi(\bar z;\bar m) \wedge \bar m \equiv \bar y \wedge \bar m \bar y \equiv \bar z \bar x)$  is equivalent to the type $\pi(\bar x;\bar m)$. \qedhere
\end{enumerate}
\end{proof}

\begin{remark}\label{remark: G_pi=G_pi^opp}
If $G_{\pi,\FC}$ is a group, then it coincides with $G_{\pi^{\textrm{opp}},\FC}:= \{ \sigma \in \aut(\FC): \FC \models \pi(\bar m;\sigma(\bar m))\}$.
\end{remark}

We now give an important class of subgroups of $\aut(\FC)$. 
These subgroups are ones which admit \emph{tame invariant measures}. Their counterparts in the definable group setting have been studied quite extensively.

\begin{definition}\label{def: fim subgroups}
Let $G \leqslant \aut(\FC)$ be relatively $\bar m$-type-definable over $M$,
i.e. $G=\{\sigma\in\aut(\FC)\;\colon\;\models \pi(\sigma(\bar{m});\bar{m})\}$ for some partial type $\pi(\bar{x};\bar{y})$ over $\emptyset$
which can be assumed to imply ``$\bar x \equiv \bar y$''. 
\begin{enumerate}
\item $G$ is {\em (left) generically stable (over $M$)} if there exists a (left) $G$-invariant generically stable type in 
$S^{\inv}_{\pi(\bar{x};\bar{m})}(\FC,M)$.

\item $G$ is {\em (left) fim (over $M$)} if there exists a (left) $G$-invariant fim measure in $\mathfrak{M}^{\inv}_{\pi(\bar{x};\bar{m})}(\FC,M)$.
\end{enumerate}
\end{definition}

\begin{lemma}\label{lemma:stab.mu.subgroup.pi}
    If $\pi(\bar{x};\bar{y})\vdash \bar x \equiv \bar y$ is a partial type such that $G_{\pi,\FC}$ is a subgroup of $\aut(\FC)$ and $\mu\in \mathfrak{M}_{\pi(\bar{x};\bar{m})}^{\inv}(\FC,M)$,
    then $\stab(\mu)\leqslant G_{\pi,\FC}$.
\end{lemma}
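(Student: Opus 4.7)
The plan is to show that any $\sigma\in\stab(\mu)$ satisfies $\models\pi(\sigma(\bar m);\bar m)$ by extracting from $\mu$ a tuple $\bar a$ that witnesses how $\sigma$ can be written as a product of two elements of $G_{\pi,\FC'}$ in a suitably saturated extension $\FC'\succeq\FC$, and then invoking Remark \ref{remark: group in every model} to descend the membership to $G_{\pi,\FC}$.

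First I would use that $\mu$ concentrates on $[\pi(\bar x;\bar m)]$, combined with $\sigma\cdot\mu=\mu$ and the pushforward formula from Subsection 2.3, to show that for every $\varphi(\bar x;\bar y)\in\pi$,
\[
\mu([\varphi(\bar x;\sigma(\bar m))])=(\sigma\cdot\mu)([\varphi(\bar x;\sigma(\bar m))])=\mu([\varphi(\bar x;\sigma^{-1}\sigma(\bar m))])=\mu([\varphi(\bar x;\bar m)])=1.
\]
Since each $[\varphi(\bar x;\sigma(\bar m))]$ is closed with full $\mu$-measure, $\supp(\mu)\subseteq[\pi(\bar x;\sigma(\bar m))]$. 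Combining this with $\supp(\mu)\subseteq[\pi(\bar x;\bar m)]$ and the non-emptiness of $\supp(\mu)$ shows that the partial type $\pi(\bar x;\bar m)\cup\pi(\bar x;\sigma(\bar m))$ is consistent, and I would pick a realization $\bar a$ in a sufficiently saturated extension $\FC'\succeq\FC$.

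From $\models\pi(\bar a;\bar m)$ and $\pi\vdash\bar x\equiv\bar y$ one obtains $\bar a\equiv_{\emptyset}\bar m$, so by strong homogeneity of $\FC'$ there exists $\tau\in\aut(\FC')$ with $\tau(\bar m)=\bar a$; hence $\tau\in G_{\pi,\FC'}$. Extending $\sigma$ to some $\sigma'\in\aut(\FC')$ and applying $(\sigma')^{-1}$ to $\models\pi(\bar a;\sigma(\bar m))$ (permissible because $\pi$ is without parameters) gives $\models\pi((\sigma')^{-1}(\bar a);\bar m)$, so $(\sigma')^{-1}\tau\in G_{\pi,\FC'}$. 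Since $G_{\pi,\FC'}$ is a group by Remark \ref{remark: group in every model}, $\sigma'=\tau\cdot((\sigma')^{-1}\tau)^{-1}\in G_{\pi,\FC'}$, i.e.\ $\models\pi(\sigma(\bar m);\bar m)$, and therefore $\sigma\in G_{\pi,\FC}$.

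The argument presents no serious obstacle; the only bookkeeping to watch is the direction of the pushforward formula, which governs the choice of $\sigma(\bar m)$ versus $\sigma^{-1}(\bar m)$ in the witness type, together with the fact that membership in $G_{\pi,\FC'}$ versus $G_{\pi,\FC}$ depends only on the restriction of the automorphism to $M$. One could equally well realize $\pi(\bar x;\bar m)\cup\pi(\bar x;\sigma(\bar m))$ inside $\FC$ via $\kappa$-saturation and avoid introducing $\FC'$, since both the variable tuple and the parameter set have size less than $\kappa$.
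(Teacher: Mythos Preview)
Your proof is correct and follows essentially the same approach as the paper. Both arguments produce a tuple $\bar a$ realizing $\pi(\bar x;\bar m)$ together with a translated copy, then factor the given automorphism through two elements of $G_{\pi,\FC'}$ and invoke Remark~\ref{remark: group in every model}. The only cosmetic difference is that the paper phrases the key step as $\supp(\mu)=\tau[\supp(\mu)]\subseteq[\pi(\bar x;\bar m)]$ (so any $p\in\supp(\mu)$ and its image $\tau(p)$ both lie in $[\pi(\bar x;\bar m)]$), whereas you compute $\mu([\varphi(\bar x;\sigma(\bar m))])=1$ directly and conclude $\supp(\mu)\subseteq[\pi(\bar x;\bar m)]\cap[\pi(\bar x;\sigma(\bar m))]$; these are two ways of saying the same thing.
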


\begin{proof}
    Take $\tau\in\stab(\mu)$ and its extension $\tau' \in\aut(\FC')$. 
    Note that $\supp(\mu)=\supp(\tau_\ast(\mu))=\tau[\supp(\mu)]$.
    Moreover, we have that $\supp(\mu)\subseteq[\pi(\bar{x};\bar{m})]$.
    
    Let $p(\bar{x})\in\supp(\mu)$.
    Then $\pi(\bar{x};\bar{m})\subseteq p(\bar{x})$ and so there exists $\sigma'\in\aut(\FC')$ such that $\sigma'(\bar{m})\models p$. We have that, $\tau'\sigma'(\bar{m})\models \tau(p)\in\supp(\mu)$ 
    and so $\models\pi(\tau'\sigma'(\bar{m});\bar{m})$.
    As $\sigma',\tau'\sigma'\in G_{\pi,\FC'}$ and $G_{\pi,\FC'}$ 
    is a subgroup (by Remark \ref{remark: group in every model}), we obtain $\tau'\in G_{\pi,\FC'}$ and finally $\tau\in G_{\pi,\FC}$.
\end{proof}

The next lemma demonstrates how definable measures are associated to relatively type definable subgroups of the automorphisms group via the stabilizer.

\begin{lemma}\label{lemma: rel. type-definability}
Assume that $\mu\in \mathfrak{M}^{\df}_{\bar{x}}(\FC,M)$.
Then $\stab(\mu)$ is relatively $\bar m$-type-definable over $M$.
\end{lemma}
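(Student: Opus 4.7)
The plan is to express $\stab(\mu)$ as the preimage, under the natural map $\aut(\FC)\to S_{\bar u}(M)$ given by $\sigma\mapsto \tp(\sigma(\bar m)/M)$ (with $\bar u$ a tuple of variables of length $|\bar m|$), of some closed set $P\subseteq S_{\bar u}(M)$. Since a closed subset of a type space is cut out by a partial type over $M$—hence, after absorbing parameters, by a partial type $\pi(\bar u;\bar v)$ over $\emptyset$—this will give $\stab(\mu)=\{\sigma:\models\pi(\sigma(\bar m);\bar m)\}=G_{\pi,\FC}$.

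To set this up, I would first use $M$-invariance of $\mu$, which gives $\aut(\FC/M)\leqslant\stab(\mu)$. Combined with strong $\kappa$-homogeneity of $\FC$, this yields that $\sigma\in\stab(\mu)$ depends only on $\tp(\sigma(\bar m)/M)$: if $\sigma_1(\bar m)\equiv_M\sigma_2(\bar m)$, pick $\rho\in\aut(\FC/M)$ with $\rho\sigma_1(\bar m)=\sigma_2(\bar m)$; then $\alpha:=\sigma_2^{-1}\rho\sigma_1$ fixes $\bar m$ pointwise, and $\sigma_2=\rho\sigma_1\alpha^{-1}$ with $\rho,\alpha\in\stab(\mu)$. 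Setting $P:=\{\tp(\sigma(\bar m)/M):\sigma\in\stab(\mu)\}$, we get $\stab(\mu)=\{\sigma:\tp(\sigma(\bar m)/M)\in P\}$, and it remains to show $P$ is closed.

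For closedness, I would exploit the $M$-definability of $\mu$. Fix an $\CL$-formula $\varphi(\bar x';\bar y')$ with $\bar x'\subseteq \bar x$ and $\bar y'$ finite; by definability, the function $F^\varphi_\mu\colon S_{\bar y'}(M)\to[0,1]$ is continuous. On the closed subspace of $S_{\bar y'\bar u}(M)$ of types forcing $\bar u\equiv_\emptyset\bar m$, introduce two continuous maps to $S_{\bar y'}(M)$: the usual projection $\rho_1$ forgetting $\bar u$, and the map $\rho_2$ determined by $\rho_2^{-1}([\psi(\bar y';\bar m_0)])=[\psi(\bar y';\bar u_0)]$, where $\bar u_0\subseteq\bar u$ corresponds to $\bar m_0\subseteq\bar m$ under the length-preserving bijection. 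Semantically, for $(\bar b',\bar w)\models q$ and any $\tau\in\aut(\FC)$ with $\tau(\bar w)=\bar m$, $\rho_2(q)=\tp(\tau(\bar b')/\bar m)$, which is independent of the choice of $\tau$ since any two such differ by an automorphism fixing $\bar m$. The map $\tilde F^\varphi:=F^\varphi_\mu\circ\rho_1-F^\varphi_\mu\circ\rho_2$ is then continuous with closed zero set $Z^\varphi$. Taking $q=\tp(\bar b\,\sigma(\bar m)/M)$ and $\tau=\sigma^{-1}$ gives $\rho_1(q)=\tp(\bar b/M)$ and $\rho_2(q)=\tp(\sigma^{-1}(\bar b)/M)$, so $\tilde F^\varphi(q)=0$ captures exactly the equality $\mu(\varphi(\bar x';\bar b))=\mu(\varphi(\bar x';\sigma^{-1}(\bar b)))$. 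By saturation of $\FC$, as $\bar b$ varies over $\FC^{\bar y'}$, such $q$'s exhaust the fiber of the projection $\mathrm{pr}_{\bar u}\colon S_{\bar y'\bar u}(M)\to S_{\bar u}(M)$ over $\tp(\sigma(\bar m)/M)$.

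Consequently, $P=\bigcap_\varphi\{p:\mathrm{pr}_{\bar u}^{-1}(p)\subseteq Z^\varphi\}=\bigcap_\varphi\bigl(S_{\bar u}(M)\setminus\mathrm{pr}_{\bar u}(S_{\bar y'\bar u}(M)\setminus Z^\varphi)\bigr)$, intersected with the closed set $\{p:p\text{ extends }\tp(\bar m/\emptyset)\}$. Projections between Stone spaces are open maps, since they send basic clopens $[\psi(\bar y',\bar u_0;\bar m_0)]$ to basic clopens $[\exists\bar y'\,\psi]$. Hence each set in the intersection is closed, and so is $P$. The main technical hurdle is the construction of $\rho_2$, which encodes how $\tp(\sigma^{-1}(\bar b)/M)$ is determined by $\tp(\bar b\,\sigma(\bar m)/\emptyset)$; with it in hand, the argument reduces to standard Stone-space topology.
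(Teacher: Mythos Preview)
Your argument is correct, and it takes a genuinely different route from the paper's.

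The paper proceeds constructively, following \cite[Proposition 5.3]{Artem_Kyle}: from $M$-definability it extracts, for each formula $\varphi(\bar x;z)$ and each $n$, a finite family of $\CL_{\bar y}(M)$-formulas $\Phi^{\varphi,1/n}_i(\bar y;z)$ (with $i\in\{0,\tfrac{1}{n},\dots,1\}$) which cover $\FC^z$ and satisfy $\models\Phi^{\varphi,1/n}_i(\bar m;b)\Rightarrow \mu(\varphi(\bar x;b))\approx_{1/n} i$. It then writes down an explicit partial type $\rho(\bar m;\bar y)$ as a conjunction of sentences asserting that the ``$\geq i$'' thresholds for $\bar m$ and for $\bar y$ agree up to $2/n$, and checks $\stab(\mu)=\{\sigma:\models\rho(\bar m;\sigma(\bar m))\}$.

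Your approach instead uses the continuity of $F^\varphi_\mu$ directly, packaging the condition $\mu(\varphi(\bar x;\bar b))=\mu(\varphi(\bar x;\sigma^{-1}(\bar b)))$ as the vanishing of a continuous function on $S_{\bar y'\bar u}(M)$ via the twisted restriction $\rho_2$, and then invoking openness of the projection $\mathrm{pr}_{\bar u}$ to conclude closedness. The construction of $\rho_2$ is the key step, and your justification (syntactically via $\rho_2^{-1}([\psi(\bar y';\bar m_0)])=[\psi(\bar y';\bar u_0)]$, valid on the subspace where $\bar u\equiv_\emptyset\bar m$) is clean. One minor point: when you invoke openness of $\mathrm{pr}_{\bar u}$, you are really using that $\mathrm{pr}_{\bar u}$ restricted to $X:=\{q:\bar u\equiv_\emptyset\bar m\}$ is open onto its image $S_{\bar m}(M)$; this holds by the same clopen-to-clopen computation, but is worth stating precisely. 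The paper's approach buys an explicit description of the defining type, while yours is shorter and more conceptual, relying only on Stone-space topology and avoiding the $\epsilon$-discretization.
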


\begin{proof}
The proof follows the idea from the proof of Proposition 5.3 (and Definition 5.2) from \cite{Artem_Kyle}, thus we will only sketch the proof and point out the correct formulas for the action of $\aut(\FC)$.
By Fact 2.3 from \cite{Artem_Kyle}, for every $\CL$-formula $\varphi(\bar{x};z)$ (over $\emptyset$) and every $n\in\mathbb{N}_{>0}$ there exist formulas $\Phi^{\varphi,\frac{1}{n}}_{i}(\bar{y};z)$ where $i\in I_n:=\{0,\frac{1}{n},\ldots,\frac{n-1}{n},1\}$ such that
$$\FC^{z}\subseteq \bigcup\limits_{i\in I_n}\Phi^{\varphi,\frac{1}{n}}_{i}(\bar{m};\FC),$$
and $\models\Phi^{\varphi,\frac{1}{n}}_i(\bar{m};b)$ implies $\mu(\varphi(\bar{x};b))\approx_{\frac{1}{n}}i$.
We set 
$$\Phi^{\varphi,\frac{1}{n}}_{\geqslant i}(\bar{y};z):=\bigvee\limits_{j\in I_n,\,j\geqslant i}\Phi^{\varphi,\frac{1}{n}}_j(\bar{y};z),$$
\begin{align*}
    \rho(\bar{m};\bar{y}) :=
    \bigwedge\limits_{\varphi(\bar{x};z)\in\CL}
\bigwedge\limits_{\substack{n\in\mathbb{N}_{>0} \\ i\in I_n, i \geq \frac{3}{n}}}
\Big((\forall z)\big(\Phi^{\varphi,\frac{1}{n}}_{\geqslant i}(\bar{m};z)&\rightarrow\Phi^{\varphi,\frac{1}{n}}_{\geqslant(i-\frac{2}{n})}(\bar{y};z) \big)  \\
&  \wedge (\forall z)\big(\Phi^{\varphi,\frac{1}{n}}_{\geqslant i}(\bar{y};z)\rightarrow\Phi^{\varphi,\frac{1}{n}}_{\geqslant(i-\frac{2}{n})}(\bar{m};z)\big)  \Big). 
\end{align*}
By adapting the argument of Proposition 5.3 from \cite{Artem_Kyle},
one obtains that
$\stab(\mu)=\{\sigma\in\aut(N)\;\colon\;\models\rho(\bar{m};\sigma(\bar{m}))\}$.
\end{proof}

Finally, we connect invariant measures with invariant subgroups. 
In Definition \ref{cheat}, we recalled what it means that a measure is invariant over a small model; the same definition applies over an arbitrary subset of $\FC$.
In the next lemma, $\bar x$ does not correspond to $\bar m$ anymore.

\begin{lemma}\label{lemma: rel.inv}
Let $A$ be a small subset of $\FC$, $\mu \in \mathfrak{M}^{\inv}_{\bar x}(\FC)$ be $A$-invariant, and $\bar{a}\in \FC^{\bar{x}}$ be an enumeration of $A$.
Then $\stab(\mu)$ is relatively  $\bar a$-invariant over $A$,
i.e. 
$$\stab(\mu)=\{\sigma\in\aut(\FC)\;\colon\;\models\rho(\bar{a};\sigma(\bar{a}))\},$$
where $\rho(\bar{x};\bar{y})$ is a disjunction of (possibly infinitely many) complete types over $\emptyset$.
\end{lemma}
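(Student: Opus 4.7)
The plan is to show that membership in $\stab(\mu)$ depends only on $\tp(\sigma(\bar a)/A)$ (equivalently, only on $\tp(\bar a, \sigma(\bar a)/\emptyset)$). Once this is established, taking $P:=\{\tp(\bar a,\sigma(\bar a)/\emptyset): \sigma\in\stab(\mu)\}$ and $\rho(\bar x;\bar y):=\bigvee_{p\in P}p(\bar x;\bar y)$ immediately yields the desired description of $\stab(\mu)$.

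First, I would observe that $\aut(\FC/A)\leqslant\stab(\mu)$. Indeed, for $\gamma\in\aut(\FC/A)$ and any formula $\varphi(\bar x;\bar b)$ over $\FC$, we have
\[
(\gamma\cdot\mu)(\varphi(\bar x;\bar b))=\mu(\varphi(\bar x;\gamma^{-1}(\bar b)))=\mu(\varphi(\bar x;\bar b)),
\]
where the second equality uses that $\gamma^{-1}$ fixes $A$ pointwise, so $\gamma^{-1}(\bar b)\equiv_A\bar b$, together with $A$-invariance of $\mu$.

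Next, suppose $\sigma\in\stab(\mu)$ and $\tau\in\aut(\FC)$ satisfy $\tp(\tau(\bar a)/A)=\tp(\sigma(\bar a)/A)$. By strong homogeneity of $\FC$, choose $\eta\in\aut(\FC/A)$ with $\eta(\sigma(\bar a))=\tau(\bar a)$. Then $\beta:=\tau^{-1}\eta\sigma$ fixes $\bar a$, hence $\beta\in\aut(\FC/A)\leqslant\stab(\mu)$ by the previous step. Since $\stab(\mu)$ is a subgroup containing $\sigma$, $\eta$, and $\beta$, it also contains $\tau=\eta\sigma\beta^{-1}$. By symmetry, whether $\sigma\in\stab(\mu)$ holds or not depends only on $\tp(\sigma(\bar a)/A)$, which is the same datum as $\tp(\bar a,\sigma(\bar a)/\emptyset)$ because $\bar a$ enumerates $A$.

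Finally, with $P$ and $\rho$ defined as above, $\models\rho(\bar a,\sigma(\bar a))$ is equivalent to $\tp(\bar a,\sigma(\bar a)/\emptyset)\in P$, which by the previous paragraph is equivalent to $\sigma\in\stab(\mu)$. This gives the required presentation. The whole argument is essentially bookkeeping; no real obstacle arises, since the only nontrivial ingredient is the passage from $A$-invariance of $\mu$ to left $\aut(\FC/A)$-invariance of $\stab(\mu)$, and from there strong homogeneity of $\FC$ and the group property of $\stab(\mu)$ do the rest.
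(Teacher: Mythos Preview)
Your proof is correct and follows essentially the same approach as the paper: both arguments show that membership in $\stab(\mu)$ depends only on $\tp(\sigma(\bar a)/A)$ by decomposing any $\tau$ with $\tau(\bar a)\equiv_A\sigma(\bar a)$ as a product of $\sigma$ with elements of $\aut(\FC/A)$, using $A$-invariance of $\mu$ to see that $\aut(\FC/A)\leqslant\stab(\mu)$, and then take $\rho$ to be the disjunction of the relevant complete types over $\emptyset$. The only cosmetic difference is that you isolate the inclusion $\aut(\FC/A)\leqslant\stab(\mu)$ as a separate preliminary step, whereas the paper absorbs it into the chain of equalities $\tau'(\mu)=\sigma\tau\zeta(\mu)=\sigma(\tau(\mu))=\sigma(\mu)=\mu$.
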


\begin{proof}
We first show that for every $\tau,\tau' \in \aut(\FC)$  such that $\tau(\bar a) \equiv_{A} \tau'(\bar a)$ we have that if $\tau \in \stab(\mu)$, then $\tau' \in \stab(\mu)$.
Fix $\tau$ and $\tau'$ and suppose  $\tau \in \stab(\mu)$. Then there is $\sigma \in \aut(\FC/A)$  such that $\tau'(\bar a)=\sigma(\tau(\bar a))$. And so $\zeta:=\tau^{-1} \sigma^{-1}\tau' \in \aut(\FC/A)$ and $\tau'=\sigma \tau \zeta$. Since $\mu$ is invariant over $A$, we get $\tau'(\mu) = \sigma \tau \zeta(\mu)=\sigma(\tau(\mu))=\sigma(\mu)=\mu$, i.e. $\tau' \in \stab(\mu)$.

Having the above, we set $\theta_{\tau}(\bar{x};\bar{y}):=\tp(\bar{a},\tau(\bar{a})/\emptyset)$ and
$$\rho(\bar{x};\bar{y}):=\bigvee\limits_{\tau\in\stab(\mu)}\theta_{\tau}(\bar{x};\bar{y}),$$
which gives us the desired disjunction of $\emptyset$-types.
\end{proof}
By a {\em relatively definable subset} of $G_{\pi,\FC}$ we mean a set of the form 
$\{\sigma \in G_{\pi,\FC}\;\colon\;\models \varphi(\sigma(\bar n);\bar a)\}$,
where $\varphi(\bar x;\bar a)$ is a formula with parameters $\bar a$ from $\FC$ and $\bar n$ is a tuple from $\FC^{\bar x}$. 
When $G_{\pi,\FC}$ is a subgroup of $\aut(\FC)$, it acts on itself by left and right translations. The action by left translations will be denoted by $\cdot$ and by right translations by $\cdot_r$. 
These actions induce actions on relatively definable subsets of $G_{\pi,\FC}$.

\begin{remark}\label{remark: trivial remark}
The actions $\cdot$ and $\cdot_r$ commute, that is $(\sigma \cdot g) \cdot_r \tau = \sigma \cdot (g \cdot_r \tau)$. 
\end{remark}

\subsection{Convolution product for definable groups}\label{subsection: convolution for definable groups}

Let $G$ be a group definable in a small model $M \prec \FC$. 
By $S^{\textrm{fs}}_G(\FC,M)$ [resp. $S^{\textrm{inv}}_G(\FC,M)$] we denote the space of global complete types concentrated on $G$ and finitely satisfiable in $M$ [resp. $M$-invariant].

The following operation $*$ was first defined by Newelski \cite{New2009} on $S^{\textrm{fs}}_G(\FC,M)$ (as the central object in his work on topological dynamics in model theory), and later extended to $S^{\textrm{inv}}_G(\FC,M)$ by Chernikov and Gannon \cite{Artem_Kyle}.

\begin{definition}
Let $p,q \in S^{\textrm{inv}}_G(\FC,M)$. Then $p*q:= \tp(a\cdot b/\FC)$ for some/any $(a,b) \models p \otimes q$ in a larger monster
model.
\end{definition}

The following fact is well-known (e.g. see \cite[Fact 3.11]{Artem_Kyle}).

\begin{fact}
 $(S^{\textrm{inv}}_G(\FC,M),*)$ is a compact left topological semigroup, and  $S^{\textrm{fs}}_G(\FC,M)$ is a closed sub-semigroup.
\end{fact}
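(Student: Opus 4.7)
The plan is to verify each claim directly from the definitions, relying on well-known properties of the Morley product on invariant types. First, I would note that $S^{\textrm{inv}}_G(\FC,M)$ is compact Hausdorff: it sits as a closed subset of $S_G(\FC)$ (itself a clopen subset of $S_{\bar{x}}(\FC)$ for an appropriate $\bar{x}$), and $M$-invariance is preserved under limits of types since it is a conjunction of conditions of the form ``$\varphi(\bar{x};\bar{a}) \leftrightarrow \varphi(\bar{x};\bar{b}) \in p$'' for each $\bar{a} \equiv_M \bar{b}$. An analogous remark shows that $S^{\textrm{fs}}_G(\FC,M)$ is closed in $S^{\textrm{inv}}_G(\FC,M)$.

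Next I would check that $*$ is well-defined and takes values in $S^{\textrm{inv}}_G(\FC,M)$. Well-definedness on types follows because the Morley product $p \otimes q$ does not depend on the choice of realization in a larger monster model. For invariance, if $\bar{c} \equiv_M \bar{c}'$ and $\varphi(z;\bar{c}) \in p*q$, then picking any $b \models q|_{M\bar{c}\bar{c}'}$ we have $\varphi(x \cdot b;\bar{c}) \in p$ by the definition of Morley product on invariant types; by $M$-invariance of $p$ and of $\tp(b/M)$, one shifts to $\bar{c}'$ to conclude $\varphi(z;\bar{c}') \in p*q$.

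For associativity, I would invoke associativity of $\otimes$ on invariant types (which holds in any theory, since for $p,q,r \in S^{\textrm{inv}}(\FC,M)$ one can build a Morley sequence $(a,b,c)$ realizing $p \otimes q \otimes r$ in a bigger monster and check the definitions) together with associativity of multiplication in $G$: writing $(p*q)*r = \tp((a \cdot b) \cdot c/\FC)$ and $p*(q*r) = \tp(a \cdot (b \cdot c)/\FC)$ for such $(a,b,c)$, equality in $G$ gives equality of types. Left continuity is the simplest: for fixed $q \in S^{\textrm{inv}}_G(\FC,M)$ and a formula $\varphi(z;\bar{c})$, pick any realization $b \models q|_{M\bar{c}}$ inside $\FC$; then
\[
\{p \in S^{\textrm{inv}}_G(\FC,M) : \varphi(z;\bar{c}) \in p*q\} = \{p : \varphi(x \cdot b;\bar{c}) \in p\},
\]
which is the trace of a clopen set in $S_G(\FC)$. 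Hence $p \mapsto p*q$ is continuous.

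Finally, for the sub-semigroup claim, I would show that $p,q \in S^{\textrm{fs}}_G(\FC,M)$ implies $p*q \in S^{\textrm{fs}}_G(\FC,M)$. Given $\varphi(z;\bar{c}) \in p*q$, pick $b \models q|_{M\bar{c}}$ in $\FC$; then $\varphi(x \cdot b;\bar{c}) \in p$, so finite satisfiability of $p$ in $M$ yields $m_1 \in G(M)$ with $\models \varphi(m_1 \cdot b;\bar{c})$; then $\varphi(m_1 \cdot y;\bar{c}) \in q$ by invariance of $q$ over $M$, and finite satisfiability of $q$ in $M$ gives $m_2 \in G(M)$ with $\models \varphi(m_1 \cdot m_2;\bar{c})$, so $m_1 \cdot m_2 \in G(M)$ witnesses finite satisfiability of $p*q$. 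I expect no serious obstacle here; each step is a direct unfolding of definitions, which explains why the authors cite this as a known fact.
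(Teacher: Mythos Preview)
Your proof is correct. The paper does not actually prove this statement: it records it as a known fact and simply cites \cite[Fact 3.11]{Artem_Kyle}, so there is no argument in the paper to compare against. Your direct verification from the definitions is exactly the kind of argument that underlies that citation; the only place where your write-up is slightly compressed is the invariance step, where ``shifting to $\bar{c}'$'' really means applying an automorphism $\sigma \in \aut(\FC/M)$ with $\sigma(\bar{c}) = \bar{c}'$, noting that $\sigma(b) \models q|_{M\bar{c}'}$ by $M$-invariance of $q$, and then using $M$-invariance of $p$ on the pair $(b,\bar{c}) \equiv_M (\sigma(b),\bar{c}')$ --- but this is routine and your sketch conveys it.
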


In fact, in \cite{Artem_Kyle}, the authors extended the context to measures. Namely, let $\mathfrak{M}^{\textrm{fs}}_G(\FC,M)$ [resp. $\mathfrak{M}^{\textrm{inv}}_G(\FC,M)$] be the space of global Keisler measures concentrated on $G$ and finitely satisfiable in $M$ [resp. $M$-invariant].

\begin{definition}\label{definition: convolutionfor definable groups}
Let $\mu_x,\nu_y \in \mathfrak{M}^{\textrm{inv}}_G(\FC,M)$ and $\mu_x$ be Borel-definable over $M$. Then 
$$\mu_x*\nu_y(\varphi(x;\bar c)):= (\mu_x \otimes \nu_y) (\varphi(x \cdot y,\bar c)) = \int_{S_G(M_0)} F^{\varphi'}_{\mu,M_0} d\nu|_{M_0},$$ where $\varphi'(x,y,\bar c):=\varphi(x \cdot y;\bar c)$ and $M_0 \prec \FC$ contains $M$ and $\bar c$.
\end{definition}

The next fact follows from  Section 6 in \cite{Artem_Kyle}.

\begin{fact}\label{fact: definable convolution is a semigroup} (NIP)
 $(\mathfrak{M}^{\textrm{inv}}_G(\FC,M),*)$ is a compact left topological semigroup, and  $\mathfrak{M}^{\textrm{fs}}_G(\FC,M)$ is a closed sub-semigroup. Moreover, $(S^{\textrm{inv}}_G(\FC,M),*)$ is a closed sub-semigroup of $(\mathfrak{M}^{\textrm{inv}}_G(\FC,M),*)$, and $S^{\textrm{fs}}_G(\FC,M)$ is a closed sub-semigroup of $(\mathfrak{M}^{\textrm{fs}}_G(\FC,M),*)$.
\end{fact}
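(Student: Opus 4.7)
The proof splits naturally into four components: compactness of the ambient space, left continuity of $*$, associativity of $*$, and the closed-sub-semigroup assertions. Compactness is immediate: $\mathfrak{M}_{\bar{x}}(\FC)$ is compact as a closed subspace of $[0,1]^{\mathcal{L}_{\bar{x}}(\FC)}$, and each of invariance, finite satisfiability, and concentration on the definable set $G$ is a closed condition (see Definition \ref{cheat}), so the four spaces in question are all compact.

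For left continuity, fix $\nu \in \mathfrak{M}^{\inv}_G(\FC,M)$. Unwinding the definition,
$$(\mu * \nu)(\varphi(x;\bar c)) \;=\; (\mu \otimes \nu)(\varphi(x \cdot y;\bar c)).$$
Under NIP, Fact \ref{fact:MP}(1) tells us every invariant measure is Borel-definable, and Fact \ref{fact:MP}(3) then yields continuity of $\mu \mapsto \mu \otimes \nu$ as a map $\mathfrak{M}^{\inv}_{\bar{x}}(\FC,M) \to \mathfrak{M}^{\inv}_{\bar{x}\bar{y}}(\FC,M)$. Composing with the ``substitution'' map $\lambda \mapsto \lambda'$ defined by $\lambda'(\varphi(z;\bar c)) := \lambda(\varphi(x\cdot y;\bar c))$ (continuous since basic open sets pull back to basic open sets under substitution by a definable term) gives continuity of $\mu \mapsto \mu * \nu$.

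Associativity is the main technical point, but in NIP it reduces cleanly to Fact \ref{fact:MP}(4). For $\mu,\nu,\lambda \in \mathfrak{M}^{\inv}_G(\FC,M)$, unwinding shows that both $(\mu * \nu) * \lambda$ and $\mu * (\nu * \lambda)$ are of the form: take an iterated Morley product of $\mu,\nu,\lambda$ in variables $\bar x,\bar y, \bar z$, then push it forward under the triple multiplication $(x,y,z) \mapsto x \cdot y \cdot z$. Associativity of the Morley product in NIP identifies the two iterated Morley products, and associativity of the group operation identifies the two pushforwards.

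Finally, for the sub-semigroup claims: $\mathfrak{M}^{\fs}_G(\FC,M)$ is closed by Definition \ref{cheat}(5), and one verifies directly that $\mu \otimes \nu$ is finitely satisfiable in $M$ whenever $\mu$ and $\nu$ are (using the integral description and fs of $\nu$ to find witnesses in $M$); finite satisfiability then passes through the substitution map since its preimage of a definable set is definable, so $\mathfrak{M}^{\fs}_G(\FC,M)$ is closed under $*$. For the type spaces, Dirac measures form a closed subset of $\mathfrak{M}_{\bar{x}}(\FC)$ (being cut out by the closed conditions $\mu(\varphi) \in \{0,1\}$ for each formula $\varphi$), and $\delta_p * \delta_q = \delta_{p*q}$ by definition, so $S^{\inv}_G(\FC,M)$ and $S^{\fs}_G(\FC,M)$ are closed sub-semigroups of their respective measure analogs. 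The main obstacle is associativity; but under NIP this is precisely the content of Fact \ref{fact:MP}(4), and in its absence one would need to restrict to definable measures and invoke Fact \ref{fact:MP}(5) instead.
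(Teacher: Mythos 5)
Your proof is correct and follows the same route as the source the paper relies on for this Fact (Section 6 of \cite{Artem_Kyle}) — the paper itself gives no proof here, only the citation — namely: compactness from closedness of the defining conditions, left continuity from left continuity of the Morley product, associativity from associativity of $\otimes$ for invariant measures under NIP, and the type spaces sitting inside as closed sets of Dirac measures with $\delta_p * \delta_q = \delta_{p*q}$. The only step you compress is the identification of $(\mu * \nu) * \lambda$ and $\mu * (\nu * \lambda)$ with pushforwards of iterated Morley products, which rests on the (routine but worth stating) compatibility $f_{*}(\mu) \otimes \lambda = (f \times \mathrm{id})_{*}(\mu \otimes \lambda)$ for the definable multiplication map $f$, obtained by comparing the fiber functions $F_{f_{*}(\mu)}^{\varphi}$ and $F_{\mu}^{\varphi(f(\bar x);\bar w)}$.
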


\subsection{Strong types and Galois groups}\label{subsection: Galois groups}

Lascar, Kim-Pillay, and Shelah strong types together with the associated Galois groups play an essential role in model theory, and also in applications (mostly through the strongly related notions of model-theoretic connected compoents of definable groups). Among fundamental papers on strong types and the associated Galois groups are 
\cite{Lascar, Lascar-Pillay, Casanovas-Lascar-Pillay-Ziegler, Newelski}. See also \cite{KPRz,KrRz,rzepecki2018} for a more involved research studying connections with topological dynamics and descriptive set theory. The notions and results that we recall below can be found in the above papers. A good exposition is given in Section 2.5 of \cite{rzepecki2018}.

As usual, let $\FC$ be a monster mode of $T$, $M \preceq \FC$ a small submodel, and $\FC' \succeq \FC$ a bigger monster model in which $\FC$ is small. By a {\em bounded equivalence relation} on $\FC^{\bar z}$ (where $\bar z$ is a short tuple of variables) we mean an equivalence relation with a small number of classes (i.e., less than the fixed degree of saturation of $\FC$).

\begin{fact}
There exists a finest bounded invariant equivalence relation on $\FC^{\bar z}$, denoted by $\equiv_{ \mathrm{Ls}}$. The classes of $\equiv_{ \mathrm{Ls}}$ are called {\em Lascar strong types}.  Moreover, $\equiv_{ \mathrm{Ls}}$ is precisely the transitive closure of the relation of having the same type over a model $N \preceq \FC$ where $N$ varies. In particular, $\equiv_{ \mathrm{Ls}}$ has at most $2^{|T| + |\bar{z}|}$ classes.

Similarly, there exists a finest 0-type-definable bounded equivalence relation on $\FC^{\bar z}$, denoted by $\equiv_{ \mathrm{KP}}$. The classes of $\equiv_{ \mathrm{KP}}$ are called {\em Kim-Pillay strong types}. We clearly have that $\equiv_{ \mathrm{Ls}} \;\subseteq \;\equiv_{ \mathrm{KP}}$, and so $\equiv_{ \mathrm{KP}}$ has at most $2^{|T| + |\bar{z}|}$ classes.
\end{fact}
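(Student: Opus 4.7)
The plan is to produce $\equiv_{\mathrm{Ls}}$ by direct construction, establish its boundedness via an indiscernible-sequence argument, then verify it is the finest bounded invariant relation, and finally obtain $\equiv_{\mathrm{KP}}$ as an intersection.

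First I would define the relation $R_0(\bar a, \bar b)$ on $\FC^{\bar z}$ to hold iff there is a small model $N \preceq \FC$ with $\bar a \equiv_N \bar b$, and let $\equiv_{\mathrm{Ls}}$ be the transitive closure of $R_0$. Symmetry, reflexivity, and transitivity are immediate. Invariance under $\aut(\FC)$ holds because if $\bar a \equiv_N \bar b$ and $\sigma \in \aut(\FC)$, then $\sigma(\bar a) \equiv_{\sigma(N)} \sigma(\bar b)$, and $\sigma(N)$ is again a small submodel.

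The main obstacle is boundedness and the ``finest'' property, both of which rest on the same indiscernibility trick. For boundedness, suppose toward contradiction that some $\tp(\bar a/\emptyset)$ splits into too many $\equiv_{\mathrm{Ls}}$-classes; by Erd\H os--Rado, one can extract from sufficiently many pairwise $\not\equiv_{\mathrm{Ls}}$ realizations an infinite sequence $(\bar a_i)_{i<\omega}$ indiscernible over $\emptyset$ with $\bar a_i \not\equiv_{\mathrm{Ls}} \bar a_j$ for $i\ne j$. But any two consecutive elements of an $\emptyset$-indiscernible sequence lie in the same type over the model generated by the other entries (extended to a small $N$), so $\bar a_i \equiv_{\mathrm{Ls}} \bar a_{i+1}$, contradicting the choice. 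The bound $2^{|T|+|\bar z|}$ comes from the fact that $\equiv_{\mathrm{Ls}}$ refines $\equiv_\emptyset$ and the Lascar Galois group has size $\le 2^{|T|}$. For the finest property, let $E$ be any bounded $\aut(\FC)$-invariant equivalence relation. If $\bar a \equiv_N \bar b$ with $N$ small, I construct an $N$-indiscernible sequence $(\bar c_i)_{i < \lambda}$ with $\bar c_0=\bar a$, $\bar c_1=\bar b$, for $\lambda$ larger than the number of $E$-classes. Boundedness of $E$ forces $\bar c_i \mathbin{E} \bar c_j$ for some $i<j$; by indiscernibility, $(\bar c_i,\bar c_j)\equiv(\bar c_0,\bar c_1)$, and invariance of $E$ transfers this to $\bar a \mathbin{E} \bar b$. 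Hence $R_0 \subseteq E$, and by transitivity of $E$, $\equiv_{\mathrm{Ls}}\;\subseteq\; E$.

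For the Kim--Pillay part, I define $\equiv_{\mathrm{KP}}$ as the intersection of the family $\mathcal{F}$ of all $0$-type-definable bounded equivalence relations on $\FC^{\bar z}$. It is $0$-type-definable since the defining partial type is the union of the defining partial types of the members of $\mathcal{F}$. To see it is bounded, note that each member of $\mathcal F$ is a conjunction of at most $2^{|T|+|\bar z|}$ formulas, so $|\mathcal F|\le 2^{2^{|T|+|\bar z|}}$; more importantly, every member of $\mathcal F$ is $\aut(\FC)$-invariant (type-definable over $\emptyset$ implies invariant) and therefore contains $\equiv_{\mathrm{Ls}}$ by the finest property just proved. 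Consequently $\equiv_{\mathrm{Ls}}\;\subseteq\;\equiv_{\mathrm{KP}}$, which yields both that $\equiv_{\mathrm{KP}}$ has at most $2^{|T|+|\bar z|}$ classes and that it is the finest member of $\mathcal F$, finishing the proof.
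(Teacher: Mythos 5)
The paper only cites this Fact from the literature, so there is no in-paper proof to compare with; your overall architecture (take the transitive closure of $R_0$, prove boundedness and minimality, obtain $\equiv_{\mathrm{KP}}$ as an intersection) is the standard one, but both of the claims about indiscernible sequences on which your argument rests are false as stated. In the ``finest'' step you assert that $\bar a \equiv_N \bar b$ lets you build an $N$-indiscernible sequence whose first two terms are $\bar a$ and $\bar b$. This fails already in $\mathrm{Th}(\mathbb{Z},s)$: take $N$ small, $a$ in a $\mathbb{Z}$-chain disjoint from $N$, and $b=s^5(a)$; then $\tp(a/N)=\tp(b/N)$, but any indiscernible triple $(c_0,c_1,c_2)$ with $\tp(c_0c_1)=\tp(ab)$ would force $c_2=s^5(c_0)=s^{10}(c_0)$, a contradiction. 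In the boundedness step you claim consecutive elements of an $\emptyset$-indiscernible sequence ``lie in the same type over the model generated by the other entries (extended to a small $N$)''; the other entries do not generate an elementary submodel, and equality of types over a set is \emph{not} preserved when that set is enlarged to a model --- that passage is exactly the nontrivial content, and the true classical statement is only that elements of an indiscernible sequence are at Lascar distance at most $2$, not $1$.

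Both gaps are repairable, and in fact boundedness needs no indiscernibles at all: fix a single small model $N$ of size $|T|$; since $\equiv_N\;\subseteq\;R_0\;\subseteq\;\equiv_{\mathrm{Ls}}$, the relation $\equiv_{\mathrm{Ls}}$ has at most $|S_{\bar z}(N)|\leq 2^{|T|+|\bar z|}$ classes (this also replaces your circular appeal to the size of the Lascar Galois group, which is normally \emph{derived} from this Fact). For the finest property the standard fix is a coheir sequence: given $\bar a\equiv_N\bar b$, let $q$ be a global extension of $\tp(\bar a/N)$ finitely satisfiable in $N$, and choose $\bar c_i\models q|_{N\bar a\bar b\bar c_{<i}}$ for $i<\lambda$ with $\lambda$ exceeding the number of $E$-classes; then both $\bar a{}^\frown(\bar c_i)_i$ and $\bar b{}^\frown(\bar c_i)_i$ are $N$-indiscernible, so your pigeonhole-plus-invariance argument applied to each gives $\bar a\mathrel{E}\bar c_0\mathrel{E}\bar b$. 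The Kim--Pillay paragraph is fine modulo these repairs (the family $\mathcal F$ is indeed a set, its intersection is $0$-type-definable and contains $\equiv_{\mathrm{Ls}}$, hence is bounded).
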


Let $E$ be $\equiv_{ \mathrm{Ls}}$ or $\equiv_{\mathrm{KP}}$. The quotient $\FC^{\bar z}/E$  is equipped with the {\em logic topology} in which a subset of $\FC^{\bar z}/E$ is closed if its premiage under the quotient map $\pi \colon \FC^{\bar z} \to \FC^{\bar z}/E$ is type-definable (over parameters). By the above fact, the quotient map $\pi$ factors through the type space $S_{\bar z}(M)$:

\begin{center}
			\begin{tikzcd}
			\FC^{\bar z} \ar[dr,two heads,"t"]\ar[rr,two heads,"\pi"] & & \FC^{\bar z}/E\\
			&S_{\bar z}(M) \ar[ur,two heads,"\hat{\pi}"] & 
			\end{tikzcd}
		\end{center}
where $t(\bar a):=\tp(\bar a/M)$. The logic topology on $\FC^{\bar z}/E$ coincides with the quotient topology induced by $\hat{\pi}$. It is quasi-compact (not necessarily Hausdorff) when $E=\;\equiv_{ \mathrm{Ls}}$ and compact when  $E=\;\equiv_{ \mathrm{KP}}$.

$\equiv_{ \mathrm{KP}}$ is the set of realizations in $\FC$ of a partial type over $\emptyset$, and $\equiv_{ \mathrm{Ls}}$ is the union of the sets of realizations of some partial types over $\emptyset$. If we compute these sets of realizations in another monster model, we also obtain  $\equiv_{ \mathrm{KP}}$ and $\equiv_{ \mathrm{Ls}}$, respectively, computed in this new model. Any representatives of all the $E$-classes (in $\FC$) are representatives of all the classes of $E^{\FC'}$ (i.e., $E$ computed in $\FC'$).  So $\FC'/E^{\FC'}$ can be naturally identified with $\FC/E$ as a topological space, i.e. the quotient $\FC/E$ does not depend on the choice of the monster model $\FC$.

The group $\autf_{\mathrm{L}}(\FC)$ of {\em Lascar strong automorphisms} is defined as the group generated by the subgroups of $\aut(\FC)$  fixing pointwise some elementary substructures of $\FC$, i.e. $\langle\{ \aut(\FC/N): N \preceq \FC\} \rangle$. This group coincides with the pointwise stabilizer of all Lascar strong types (i.e., classes of $\equiv_{ \mathrm{Ls}}$) on all possible $\FC^{\bar z}$, and for every $\bar z$ the relation $\equiv_{ \mathrm{Ls}}$ on $\FC^{\bar z}$ is precisely the orbit equivalence relation of  $\autf_{\mathrm{L}}(\FC)$. Moreover, for any $\bar n$ enumerating a small model $N \preceq \FC$, an automorphism $f$ of $\FC$ belongs to $\autf_{\mathrm{L}}(\FC)$ if and only if $f$ preserves the $\equiv_{ \mathrm{Ls}}$-class of $\bar n$.

The group $\autf_{\mathrm{KP}}(\FC)$ of {\em Kim-Pillay strong automorphisms} is defined as the pointwise stabilizer of all Kim-Pillay strong types (i.e. classes of $\equiv_{ \mathrm{KP}}$) on all possible $\FC^{\bar z}$. Then for every $\bar z$ the relation $\equiv_{ \mathrm{KP}}$ on $\FC^{\bar z}$ is precisely the orbit equivalence relation of  $\autf_{\mathrm{KP}}(\FC)$. As above, for any $\bar n$ enumerating a small model $N \preceq \FC$, an automorphism $f$ of $\FC$ belongs to $\autf_{\mathrm{KP}}(\FC)$ if and only if $f$ preserves the $\equiv_{ \mathrm{KP}}$-class of $\bar n$. We clearly have that $\autf_{\mathrm{L}}(\FC) \leq \autf_{\mathrm{KP}}(\FC)$ are both normal subgroups of $\aut(\FC)$.

\begin{definition}
The {\em Lascar Galois group} of $T$, denoted by $\gal_{\mathrm{L}}(T)$, is the quotient group $\aut(\FC)/\autf_{\mathrm{L}}(\FC)$. 
The {\em Kim-Pillay Galois group} of $T$, denoted by $\gal_{\mathrm{KP}}(T)$, is the quotient group $\aut(\FC)/\autf_{\mathrm{KP}}(\FC)$. 
\end{definition}

The quotient maps $\mathfrak{p}_{\mathrm{L}}^{\FC} \colon \aut(\FC) \to \gal_{\mathrm{L}}(T)$ and $\mathfrak{p}_{\mathrm{KP}}^{\FC} \colon \aut(\FC) \to \gal_{\mathrm{KP}}(T)$ factor through any type space $S_{\bar m}(N)$: 

\begin{center}
\begin{figure}[ht]
\centering
\begin{minipage}{.5\textwidth}
            \begin{tikzcd}
\aut(\FC) \ar[dr,two heads,"t"]\ar[rr,two heads,"\mathfrak{p}_{\mathrm{L}}^{\FC}"] & & \gal_{\mathrm{L}}(T)\\
&S_{\bar m}(N) \ar[ur,"\hat{\mathfrak{p}}_{\mathrm{L}}^{\bar{m},N,\FC}",two heads] &
\end{tikzcd}
\end{minipage}%
\begin{minipage}{.5\textwidth}
            \begin{tikzcd}
\aut(\FC) \ar[dr,two heads,"t"]\ar[rr,two heads,"\mathfrak{p}_{\mathrm{KP}}^{\FC}"] & & \gal_{\mathrm{KP}}(T)\\
&S_{\bar m}(N) \ar[ur,two heads,"\hat{\mathfrak{p}}_{\mathrm{KP}}^{\bar{m},N,\FC}"] &
\end{tikzcd}
\end{minipage}
\end{figure}
\end{center}
\vspace{-9mm}
where $\bar m$ is an enumeration of $M$, $t(\sigma):=\tp(\sigma(\bar m)/N)$, and $N \preceq \FC$ is small. The {\em logic topology} on $\gal_{\mathrm{L}}(T)$ and on $\gal_{\mathrm{KP}}(T)$ is the quotient topology induced by 
$\hat{\mathfrak{p}}_{\mathrm{L}}^{\bar{m},N,\FC}$ and $\hat{\mathfrak{p}}_{\mathrm{KP}}^{\bar{m},N,\FC}$, respectively.
This topology does not depend on the choice of $M$, $N$, and $\bar m$, and turns $\gal_{\mathrm{L}}(T)$ into a quasi-compact topological group, and $\gal_{\mathrm{KP}}(T)$ into a compact topological group. As topological spaces, $\gal_{\mathrm{L}}(T)$ and $\gal_{\mathrm{KP}}(T)$ can be identified with $\{\bar a \in \FC^{\bar m}: \bar a \equiv \bar m\}/\!\!\equiv_{ \mathrm{Ls}}$ and $\{\bar a  \in \FC^{\bar m}: \bar a\equiv \bar m\}/\!\!\equiv_{ \mathrm{KP}}$, respectively.

As topological groups, both  $\gal_{\mathrm{L}}(T)$ and on $\gal_{\mathrm{KP}}(T)$ do not depend on the choice of $\FC$. This is witnessed by the isomoprhism 
$$\mathfrak{r}_*^{\FC'}\colon  \aut(\FC')/\autf_{*}(\FC') \to  \aut(\FC)/\autf_{*}(\FC),$$ 
where $*$ denotes ``L'' or ``KP'', given by:
$\mathfrak{r}_*^{\FC'}(\sigma'/\autf_{*}(\FC')) := \sigma/\autf_{*}(\FC)$ for any $\sigma \in \aut(\FC)$ such that $\sigma(\bar m) \equiv_M \sigma'(\bar m)$ (equivalently, it is enough to require that $\sigma(\bar m) \equiv_{\mathrm{Ls}} \sigma'(\bar m)$). 

The following notation will be used in Section \ref{subsec:6.3}. Set $\mathfrak{r}_{\FC'}:=\mathfrak{r}_{\mathrm{KP}}^{\FC'}$, and define $\rho_{\FC} \colon S_{\bar m}(\FC) \to \gal_{\mathrm{KP}}(T)$ as the composition 
$\inv \circ \; \mathfrak{r}_{\FC'} \circ \hat{\mathfrak{p}}_{\mathrm{KP}}^{\bar{m},\FC,\FC'}$, where $\inv(g):=g^{-1}$. Explicitly, $\rho_{\FC}(p) =\sigma^{-1}/\autf_{\KP}(\FC)$ for any/some $\sigma \in \aut(\FC)$ such that $\sigma(\bar m) \models p|_M$ (equivalently, it is enough to require that $\sigma(\bar m)$ is $\equiv_{\mathrm{KP}}$-equivalent to some/any realization of $p$). The map $\rho_{\FC}$ is a continuous surjection. Notice that the following diagram commutes:

\begin{center}
\begin{tikzcd}
    S_{\bar{m}}(\FC') \ar[r,two heads,"\rho_{\FC'}"] \ar[d,two heads,"\res_{\FC}"] &  \aut(\FC')/\autf_{\KP}(\FC') \ar[d,two heads,"\mathfrak{r}_{\FC'}"] \\
    S_{\bar{m}}(\FC) \ar[r,two heads,"\rho_{\FC}"] & \aut(\FC)/\autf_{\KP}(\FC),
\end{tikzcd}
\end{center}
where $\res_{\FC}: S_{\bar m}(\FC') \to S_{\bar m}(\FC)$ is the restriction map.

\subsection{Model-theoretic connected components}
Model-theoretic connected components play a major role in model theory and applications (e.g. to additive combinatorics). Since in this paper we only mention them in Fact \ref{rem: affine Lascar and KP} and in Section \ref{subsec:6.3} in order to explain that some results involving them which were obtained in \cite{Artem_Kyle2} follow from the corresponding results obtained in this paper, we will not give here a historical background and references.

Let $M \preceq \FC$ be as always.
Let $G$ be a $\emptyset$-definable group in $M$. 

\begin{definition}
Let $A \subseteq \FC$ be a small set of parameters.
\begin{enumerate}
\item $G(\FC)^{00}_A$ is the  smallest $A$-type-definable subgroup of $G(\FC)$ of bounded index.
\item  $G(\FC)^{000}_A$ is the  smallest $A$-invariant subgroup of $G(\FC)$ of bounded index.
\end{enumerate}
\end{definition}

The existence of these components is clear by the existence of $\equiv_{\KP}$ and $\equiv_{\mathrm{Ls}}$ (with parameters from $A$ added to the language). When $A=\emptyset$, we will skip it writing  $G(\FC)^{00}$ and  $G(\FC)^{000}$. In the NIP context, the components $G(\FC)^{00}_A$  and $G(\FC)^{000}_A$  do not depend on the choice of $A$ \cite{Shelah,Gismatullin}.

The quotients $G(\FC)/G(\FC)^{00}_A$ and $G(\FC)/G(\FC)^{000}_A$ equipped with the {\em logic topology} defined by saying the a subset is closed if its preimage under the quotient map is type-definable, make these groups a compact and quasi-compact topological group, respectively (e.g., see \cite[Proposition 3.5]{GisNew08}).

The components $G(\FC)^{00}_A$ and $G(\FC)^{00}_A$ are strongly related to the equivalence relations $\equiv_{\mathrm{Ls}}$ and $\equiv_{\KP}$ via ``adding an affine sort construction'' (see Fact \ref{rem: affine Lascar and KP} below, and \cite{GisNew08} for more details).

\section{Injectivity results under NIP}\label{sec: injectivity}
We first prove that some restriction maps between Ellis semigroups are injective. Using this result, we prove that certain Ellis semigroups are isomorphic. For example, we prove that Ellis semigroups of $(S_{\bar{x}}(M),\aut(M))$ and $(\mathfrak{M}_{\bar{x}}(M),\aut(M))$ are isomorphic when the underlying theory is NIP. We remark that the Ellis semigroup of the dynamical system $(\mathfrak{M}_{\bar{x}}(M),\conv(\aut(M))$ is not isomorphic to the previous two and is of an inherently more different flavor (cf. Subsection \ref{sec:analogon}). 

Let $M\models T$ and $\bar{x}$ be any tuple of variables. Throughout this section, all of our integrals will take place over the space $S_{\bar{x}}(M)$, and so 
we will drop the subspace notation, i.e., we will write $\int_{S_{\bar{x}}(M)} f d\mu$ simply as $\int f d\mu$.  Let  $\varphi_1(\bar{x};\bar{y}),\ldots,\varphi_m(\bar{x};\bar{y})$ be  $\CL_{\bar{x};\bar{y}}$-formulas. We define the collection of functions,
$$\CF_{\varphi_1,\ldots,\varphi_m}:=\conv\{\mathbf{1}_{[\varphi_i(\bar{x};\bar{b})]} \colon \bar{b} \in M^{\bar{y}}, i \leq m \}. $$
Let $Y$ be a set, $f:S_{\bar{x}}(M)\to Y$, and $\sigma\in\aut(M)$.
Then we let $\sigma\cdot f:=f\circ\sigma^{-1}$.
Similarly, if $\lambda\in\conv(\aut(M))$ and $\lambda=\sum\limits_{l\leqslant N}\alpha_l\delta_{\sigma_l}$, we let
$$\lambda\cdot f = \left( \sum\limits_{l\leqslant N}\alpha_l\delta_{\sigma_l} \right)\cdot f :=
\sum\limits_{l\leqslant N}\alpha_l(\sigma_l\cdot f)= \sum\limits_{l\leqslant N}\alpha_l\, \left(f\circ\sigma_l^{-1} \right).$$
We also define $\lambda^{-1}:=\sum\limits_{l\leqslant N}\alpha_l\delta_{\sigma_l^{-1}}$.

\begin{remark}
    Let $\lambda\in\conv(\aut(M))$, $f\in \CF_{\varphi_1,\ldots,\varphi_m}$, and $\mu\in\mathfrak{M}_{\bar{x}}(M)$. Then
    $$\int (\lambda\cdot f)d\mu=\int f\,d(\lambda^{-1} \cdot \mu).$$
\end{remark}

\begin{proof}
Follows directly from the definitions.  
\end{proof}

\begin{lemma}\label{better.NIP}
    Assume that $T$ is NIP, $\mu\in\mathfrak{M}_{\bar{x}}(M)$, $f_1,\ldots,f_n\in\CF_{\varphi_1,\ldots,\varphi_m}$, and $\epsilon>0$. Then there exists $\bar{p}=(p_1,\ldots,p_t)\in \supp(\mu)^{<\omega} \subseteq S_{\bar{x}}(M)^{<\omega}$ such that for every $\lambda \in \conv(\aut(M))$ and $k \leq n$, 
    $$ \left|\int(\lambda\cdot f_k)d\mu\;-\;\int(\lambda\cdot f_k)d\Av(\bar{p}) \right|<\epsilon.$$
\end{lemma}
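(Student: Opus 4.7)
The strategy is to reduce this uniform-in-$\lambda$ statement to the standard NIP uniform approximation (Fact \ref{fact:MP}(2)) for $\mu$ on the fixed finite family $\varphi_1,\ldots,\varphi_m$. The key observation is that translating a convex combination of indicators of $\varphi_i$-instances by an element of $\conv(\aut(M))$ yields another convex combination of indicators of $\varphi_i$-instances whose parameters still lie in $M^{\bar{y}}$.

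First, I would unwind the definitions. Each $f_k$ is a fixed finite convex combination
\[
f_k = \sum_{j \leq J_k} \alpha_{k,j}\, \mathbf{1}_{[\varphi_{i(k,j)}(\bar{x};\bar{b}_{k,j})]},
\]
with $\bar{b}_{k,j} \in M^{\bar{y}}$, $i(k,j) \leq m$, $\alpha_{k,j} \geq 0$, and $\sum_j \alpha_{k,j} = 1$. Writing $\lambda = \sum_{l \leq N} \beta_l \delta_{\sigma_l}$ and using the identity $\sigma_l \cdot \mathbf{1}_{[\varphi(\bar{x};\bar{b})]} = \mathbf{1}_{[\varphi(\bar{x};\sigma_l(\bar{b}))]}$, one computes
\[
\lambda \cdot f_k = \sum_{l \leq N}\sum_{j \leq J_k} \beta_l\, \alpha_{k,j}\, \mathbf{1}_{[\varphi_{i(k,j)}(\bar{x};\sigma_l(\bar{b}_{k,j}))]}.
\]
Since each $\sigma_l \in \aut(M)$, the parameters $\sigma_l(\bar{b}_{k,j})$ remain in $M^{\bar{y}}$; so $\lambda \cdot f_k$ is a convex combination of indicators of $\varphi_i$-instances over $M$, with total weight $\sum_{l,j} \beta_l \alpha_{k,j} = 1$.

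Next, I would apply Fact \ref{fact:MP}(2) to $\mu$ with the formulas $\varphi_1,\ldots,\varphi_m$ and tolerance $\epsilon/2$ to extract $\bar{p} = (p_1,\ldots,p_t) \in \supp(\mu)^{<\omega}$ satisfying, for each $i \leq m$,
\[
\sup_{\bar{b} \in M^{\bar{y}}} \bigl| \mu(\varphi_i(\bar{x};\bar{b})) - \Av(\bar{p})(\varphi_i(\bar{x};\bar{b})) \bigr| < \epsilon/2.
\]
One small technical check here is that Fact \ref{fact:MP}(2), as stated, concerns global measures, whereas our $\mu$ lies in $\mathfrak{M}_{\bar{x}}(M)$; either one appeals to the same VC-theoretic argument applied directly over $M$, or one passes to a global $M$-invariant extension of $\mu$ and restricts the resulting $\bar{p}$ back to $S_{\bar{x}}(M)$ via the restriction map.

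Finally, by linearity of the integral and the triangle inequality, for any $\lambda \in \conv(\aut(M))$ and any $k \leq n$,
\[
\left| \int(\lambda \cdot f_k)\, d\mu - \int(\lambda \cdot f_k)\, d\Av(\bar{p}) \right| \leq \sum_{l,j} \beta_l\, \alpha_{k,j} \cdot \frac{\epsilon}{2} = \frac{\epsilon}{2} < \epsilon,
\]
which gives the required uniform bound. The ``main obstacle'' is essentially the observation encapsulated in the first paragraph—that convex-hull translation does not enlarge the relevant family of formulas—after which the NIP approximation delivers everything uniformly; modulo the routine verification of the parameter-model version of Fact \ref{fact:MP}(2), the argument is essentially a bookkeeping exercise.
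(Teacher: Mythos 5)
Your proof is correct and follows essentially the same route as the paper's: apply Fact \ref{fact:MP}(2) to $\mu$ for the finite family $\varphi_1,\ldots,\varphi_m$, observe that $\lambda\cdot f_k$ is again a convex combination of indicators of $\varphi_i$-instances with parameters in $M$ (since $\aut(M)$ preserves $M^{\bar y}$), and conclude by linearity and the triangle inequality. Your side remark about Fact \ref{fact:MP}(2) being stated for global measures is a fair point that the paper silently elides, and your proposed fix (extend to a global $M$-invariant measure and restrict the supporting types back to $M$) is the right one.
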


\begin{proof}
By Fact \ref{fact:MP}(2), there exist $\bar{p}=(p_1,\ldots,p_t)\in \supp(\mu)^{<\omega}$ such that for every $b\in M^{\bar{y}}$ and $k \leq n$ we have $$ \left|\mu(\varphi_k(\bar{x};\bar{b}))\;-\;\Av(\bar{p})(\varphi_k(\bar{x};\bar{b})) \right|<\epsilon.$$
    Apparently this is enough.
    Take $\lambda=\sum\limits_{l\leqslant N}\alpha_l\delta_{\sigma_l}$ and
    $f_k=\sum\limits_{s\leqslant N_k}\gamma^k_s\mathbf{1}_{[\varphi_{k,s}(\bar{x};\bar{b}_{k,s})]}$. Then,
    \begin{align*}
    \int(\lambda\cdot f_k)\,d\mu &= \int\Big( \sum\limits_{l\leqslant N}\alpha_l\delta_{\sigma_l} \Big)\cdot\Big( \sum\limits_{s\leqslant N_k}\gamma^k_s\mathbf{1}_{[\varphi_{k,s}(\bar{x};\bar{b}_{k,s})]} \Big)\,d\mu \\
    &= \sum\limits_{l,s}\alpha_l\gamma^k_s\,\mu\big(\varphi_{k,s}(\bar{x};\sigma_l(\bar{b}_{k,s})) \big) \\
    &\approx_{\Big(\sum\limits_{l,s}\alpha_l\gamma^k_s\epsilon=\epsilon\Big)} 
    \sum\limits_{l,s}\alpha_l\gamma^k_s\,\Av(\bar{p})\big(\varphi_{k,s}(\bar{x};\sigma(\bar{b}_{k,s})) \big) \\
    &= \int\Big( \sum\limits_{l\leqslant N}\alpha_l\delta_{\sigma_l} \Big)\cdot\Big( \sum\limits_{s\leqslant N_k}\gamma^k_s\mathbf{1}_{[\varphi_{k,s}(\bar{x};\bar{b}_{k,s})]} \Big)\,d\Av(\bar{p}) \\
    &= \int (\lambda\cdot f_k)\,d\Av(\bar{p}). \qedhere 
    \end{align*}  
\end{proof}

\begin{lemma}\label{lemma:injectivity1}
    Assume that $T$ is NIP. Then the map, 
\begin{equation*} \Theta:\E(\mathfrak{M}_{\bar{x}}(M),\conv(\aut(M))\to \mathfrak{M}_{\bar{x}}(M)^{S_{\bar{x}}(M)}, 
\end{equation*}  given by $\Theta(\eta):=\eta|_{S_{\bar{x}}(M)}$ is injective. 
(Here, we identify types with their corresponding Dirac measures.)
\end{lemma}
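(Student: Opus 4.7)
The plan is to reduce the question of whether two Ellis semigroup elements $\eta_1, \eta_2$ (agreeing on all Dirac measures) agree on an arbitrary measure $\mu$ to their agreement on an average $\Av(\bar p)$ of finitely many types in $\supp(\mu)$, using the NIP approximation of Lemma~\ref{better.NIP} which is crucially uniform in $\lambda \in \conv(\aut(M))$.

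First I would fix $\eta_1, \eta_2 \in \E(\mathfrak{M}_{\bar{x}}(M),\conv(\aut(M)))$ with $\eta_1(\delta_p) = \eta_2(\delta_p)$ for all $p \in S_{\bar{x}}(M)$, fix $\mu \in \mathfrak{M}_{\bar{x}}(M)$, and note that it suffices to check $(\eta_1(\mu))(\varphi(\bar{x};\bar{b})) = (\eta_2(\mu))(\varphi(\bar{x};\bar{b}))$ for every $\mathcal{L}_{\bar{x}}(M)$-formula $\varphi(\bar{x};\bar{b})$. Fix such a formula and an $\epsilon > 0$. I would then observe that each $\eta_i$ is affine on $\mathfrak{M}_{\bar{x}}(M)$: each element of $\conv(\aut(M))$ acts affinely via pushforwards and linear combinations, and affinity is preserved under pointwise limits on a space of measures (since evaluation at a clopen set is a linear functional). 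Consequently, for any tuple $\bar p = (p_1,\dots,p_t)$ of types,
$$\eta_i(\Av(\bar{p})) = \frac{1}{t}\sum_{j \leq t} \eta_i(\delta_{p_j}),$$
and the hypothesis on $\eta_1,\eta_2$ immediately forces $\eta_1(\Av(\bar{p})) = \eta_2(\Av(\bar{p}))$ for every such $\bar p$.

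Next, applying Lemma~\ref{better.NIP} to $f = \mathbf{1}_{[\varphi(\bar{x};\bar{b})]}$ (or equivalently applying Fact~\ref{fact:MP}(2) to the instances $\sigma^{-1}(\bar b)$ arising from supports of elements of $\conv(\aut(M))$), I would produce $\bar{p} \in \supp(\mu)^{<\omega}$ such that
$$|(\lambda \cdot \mu)(\varphi(\bar{x};\bar{b})) - (\lambda \cdot \Av(\bar{p}))(\varphi(\bar{x};\bar{b}))| < \epsilon$$
uniformly in $\lambda \in \conv(\aut(M))$. Now pick, for each $i \in \{1,2\}$, a net $(\lambda_\alpha^i)$ in $\conv(\aut(M))$ witnessing $\lambda_\alpha^i \cdot \nu \to \eta_i(\nu)$ pointwise on $\mathfrak{M}_{\bar{x}}(M)$. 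Evaluating the displayed inequality at $\lambda_\alpha^i$ and letting $\alpha$ vary yields, in the limit,
$$|(\eta_i(\mu))(\varphi(\bar{x};\bar{b})) - (\eta_i(\Av(\bar{p})))(\varphi(\bar{x};\bar{b}))| \leq \epsilon.$$
Combining the two instances ($i=1,2$) with the equality $\eta_1(\Av(\bar{p})) = \eta_2(\Av(\bar{p}))$ via the triangle inequality gives $|(\eta_1(\mu))(\varphi(\bar{x};\bar{b})) - (\eta_2(\mu))(\varphi(\bar{x};\bar{b}))| \leq 2\epsilon$. Since $\epsilon$, $\varphi$, and $\bar{b}$ are arbitrary, this forces $\eta_1(\mu) = \eta_2(\mu)$.

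The only real obstacle is the need for an approximation of $\mu$ by a convex combination of types in its support that works uniformly over all $\lambda \in \conv(\aut(M))$ simultaneously; this is precisely the content of Lemma~\ref{better.NIP}, which rests on the NIP assumption through Fact~\ref{fact:MP}(2). Without such uniformity one could not interchange the $\epsilon$-approximation with the net limits defining $\eta_1$ and $\eta_2$, and the argument would collapse.
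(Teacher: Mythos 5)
Your proof is correct and follows essentially the same route as the paper's: both rest on Lemma \ref{better.NIP} to approximate $\mu$ by $\Av(\bar{p})$ with $\bar{p}$ in $\supp(\mu)$, uniformly in $\lambda \in \conv(\aut(M))$, and then transfer this approximation through the nets defining $\eta_1,\eta_2$. The only (cosmetic) difference is that you argue directly, using that affinity of the maps $\lambda \cdot -$ passes to the pointwise limit $\eta_i$ so that agreement on Dirac measures gives agreement on $\Av(\bar{p})$, whereas the paper argues contrapositively and splits $\Av(\bar{p})$ into the $\delta_{p_k}$ only after applying a single net element chosen far enough along.
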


\begin{proof}
    Consider $\eta_1,\eta_2\in \E(\mathfrak{M}_{\bar{x}}(M),\conv(\aut(M)))$ such that $\eta_1\neq\eta_2$. Then there exists some $\mu\in\mathfrak{M}_{\bar{x}}(M)$, $\mathcal{L}_{\bar{x}}(M)$-formula $\varphi(\bar{x};\bar{b})$, and $\epsilon>0$ such that
    $$|\eta_2(\mu)(\varphi(\bar{x};\bar{b})) - \eta_1(\mu)(\varphi(\bar{x};\bar{b}))| > \epsilon,$$
    or in other words, 
    \begin{equation*}
    \left| \int f d\eta_2(\mu) - \int f d\eta_1(\mu) \right| > \epsilon,
    \end{equation*} 
    where $f:=\mathbf{1}_{[\varphi(\bar{x};\bar{b})]}$. By Lemma \ref{better.NIP}, there exist $\bar{p}=(p_1,\ldots,p_t)\in S_{\bar{x}}(M)^{<\omega}$
    such that for every $\lambda \in \conv(\aut(M))$ we have, 
    \begin{equation}\tag{$\ast$}\label{eq.star1}
        \left|\int (\lambda\cdot f)\,d\mu\;-\;\int(\lambda\cdot f)\,d\Av(\bar{p}) \right|<\delta:=\frac{\epsilon}{12}. 
    \end{equation}
    Since  $\eta_1,\eta_2\in \E(\mathfrak{M}_{\bar{x}}(M),\conv(\aut(M)))$, there exist nets
    $(\lambda^1_i\cdot-)_{i\in I}$ and $(\lambda^2_j\cdot-)_{j\in J}$ of elements from 
$\conv(\aut(M))$ such that $\lim_{i \in I} (\lambda^1_i\cdot-) = \eta_1$, $\lim_{j \in J}(\lambda^2_j\cdot-)_{j\in J} = \eta_2$. And so, there exists some $i\in I$ such that for $\lambda_1:=(\lambda^1_i)^{-1}$, we have,
    \begin{equation}\tag{i}\label{eq.first}
        \delta> \left|\int f\,d\eta_1(\mu)-\int  f\,d(\lambda_1^{-1}\cdot\mu)\right|=\left|\int f\,d\eta_1(\mu)-\int (\lambda_1\cdot f)\,d\mu\right|,
    \end{equation}
    and for every $k \leq t$,
    \begin{equation}\tag{ii}\label{eq.second}
        \delta>\left|\int f\,d\eta_1(\delta_{p_k})-\int f\,d(\lambda_1^{-1}\cdot\delta_{p_k}) \right|=\left|\int f\,d\eta_1(\delta_{p_k})-\int (\lambda_1\cdot f)\,d\delta_{p_k}\right|.
    \end{equation}
    Similarly, there exists $j\in J$ such that for $\lambda_2:=(\lambda^2_j)^{-1}$, we have, 
    \begin{equation}\tag{iii}\label{eq.third}
        \delta> \left|\int f\,d\eta_2(\mu)-\int (\lambda_2\cdot f)\,d\mu \right|,
    \end{equation}
    and for every $k\leqslant t$, 
    \begin{equation}\tag{iv}\label{eq.fourth}
        \delta> \left|\int f\,d\eta_2(\delta_{p_k})-\int (\lambda_2\cdot f)\,d\delta_{p_k}\right|. 
    \end{equation}
\noindent 
Now, by the choice of $\epsilon$ together with inequalities (i) and (iii), we get
$$\left|\int (\lambda_2\cdot f)\,d\mu\;-\;\int (\lambda_1\cdot f)\,d\mu \right|>\epsilon-2\delta.$$
    By applying ($\ast$) twice, one derives the following inequalities: 
    \begin{align*}\epsilon-4\delta&< \left|\int(\lambda_2\cdot f)\,d\Av(\bar{p})\;-\;\int(\lambda_1\cdot f)\,d\Av(\bar{p}) \right| \\ & \leqslant\frac{1}{t}\sum\limits_{k\leqslant t}\left|\int(\lambda_2\cdot f)\,d\delta_{p_k}-\int (\lambda_1\cdot f)\,d\delta_{p_k}\right|.
    \end{align*}
    By (ii) and (iv), we approximate
    $$\frac{1}{t}\sum\limits_{k\leqslant t} \left|\int(\lambda_2\cdot f)\,d\delta_{p_k}-\int (\lambda_1\cdot f)\,d\delta_{p_k} \right|\approx_{2\delta}
    \frac{1}{t}\sum\limits_{k\leqslant t} \left|\int f\,d\eta_2(\delta_{p_k})-\int f\,d\eta_1(\delta_{p_k})\right|.$$
    Hence,
    $$0<\epsilon-6\delta<  \frac{1}{t}\sum\limits_{k\leqslant t} \left|\int f\,d\eta_2(\delta_{p_k})-\int f\,d\eta_1(\delta_{p_k})\right|,$$
    and so there exists $k\leqslant t$ such that, 
    $$0< \left|\int f\,d\eta_2(\delta_{p_k})\;-\;\int f\,d\eta_1(\delta_{p_k}) \right|.$$
    Hence $\eta_1(\delta_{p_k})\neq \eta_2(\delta_{p_k})$ and so $\eta_1|_{S_{\bar{x}}(M)}\neq \eta_2|_{S_{\bar{x}}(M)}$.
\end{proof}

\begin{lemma}\label{injectivity2}
    Assume that $T$ is NIP and let $M\preceq N$. Let $\bar{m}$ be a tuple of variables enumerating $M$. Then, 
    $$\Theta':\E(\mathfrak{M}_{\bar{m}}(N),\conv(\aut(N))\to \mathfrak{M}_{\bar{m}}(N)^{S_{\bar{m}}(N)},$$
    given by $\Theta(\eta):=\eta|_{S_{\bar{m}}(N)}$ is injective. 
(Here, we again identify types with their corresponding Dirac measures.)
\end{lemma}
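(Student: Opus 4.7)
The plan is to simply re-run the proof of Lemma \ref{lemma:injectivity1} with $N$ in the role of $M$: replace the acting semigroup $\conv(\aut(M))$ by $\conv(\aut(N))$, replace $\mathfrak{M}_{\bar{x}}(M)$ by $\mathfrak{M}_{\bar{m}}(N)$, and take parameters $\bar{b}$ from $N^{\bar{y}}$ rather than $M^{\bar{y}}$. The only ingredient that is model-sensitive is Lemma \ref{better.NIP}, but its proof only invokes Fact \ref{fact:MP}(2), which is valid over any base model. So the statement of Lemma \ref{better.NIP} holds mutatis mutandis for $\mu\in\mathfrak{M}_{\bar{m}}(N)$ and $f_1,\ldots,f_n\in\CF_{\varphi_1,\ldots,\varphi_m}$ built from formulas $\varphi_i(\bar{x};\bar{b})$ with $\bar{b}\in N^{\bar{y}}$. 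I will use this extended version without further comment.

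Starting the proof proper: suppose $\eta_1\neq\eta_2$ in $\E(\mathfrak{M}_{\bar{m}}(N),\conv(\aut(N)))$. Then there exist $\mu\in\mathfrak{M}_{\bar{m}}(N)$, an $\mathcal{L}_{\bar{m}}(N)$-formula $\varphi(\bar{x};\bar{b})$, and $\epsilon>0$ with
$$\left|\int f\,d\eta_2(\mu)-\int f\,d\eta_1(\mu)\right|>\epsilon,$$
where $f:=\mathbf{1}_{[\varphi(\bar{x};\bar{b})]}$. Apply the $N$-version of Lemma \ref{better.NIP} to obtain $\bar{p}=(p_1,\ldots,p_t)\in\supp(\mu)^{<\omega}\subseteq S_{\bar{m}}(N)^{<\omega}$ such that for every $\lambda\in\conv(\aut(N))$,
$$\left|\int(\lambda\cdot f)\,d\mu-\int(\lambda\cdot f)\,d\Av(\bar{p})\right|<\delta:=\tfrac{\epsilon}{12}.$$
Because $\eta_1$ and $\eta_2$ lie in the closure of the $\conv(\aut(N))$-action, we can choose $\lambda_1,\lambda_2\in\conv(\aut(N))$ (inverses of suitably far-out net indices) for which the four inequalities (i)--(iv) of the proof of Lemma \ref{lemma:injectivity1} hold verbatim at $\mu$ and at each $\delta_{p_k}$, where now we read $\bar{b}\in N^{\bar{y}}$ and $\aut(M)\rightsquigarrow\aut(N)$.

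From here the triangle-inequality chain is identical to the one in Lemma \ref{lemma:injectivity1}: (i) and (iii) combined with the choice of $\epsilon$ yield $|\int(\lambda_2\cdot f)d\mu-\int(\lambda_1\cdot f)d\mu|>\epsilon-2\delta$; two applications of the $\bar{p}$-approximation ($\ast$) transfer this to $\Av(\bar{p})$ with loss $2\delta$; and (ii) and (iv) transfer it to $\eta_1$ and $\eta_2$ on the $\delta_{p_k}$'s with loss $2\delta$, leaving
$$0<\epsilon-6\delta<\frac{1}{t}\sum_{k\leq t}\left|\int f\,d\eta_2(\delta_{p_k})-\int f\,d\eta_1(\delta_{p_k})\right|.$$
Thus some $k\leq t$ has $\eta_1(\delta_{p_k})\neq\eta_2(\delta_{p_k})$, and since $p_k\in S_{\bar{m}}(N)$ this shows $\Theta'(\eta_1)\neq\Theta'(\eta_2)$. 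There is no genuine obstacle; the only point requiring a line of comment is that the NIP approximation lemma is insensitive to whether the base model is $M$ or its extension $N$, which is immediate from the generality of Fact \ref{fact:MP}(2).
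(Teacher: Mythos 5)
Your proposal is correct and follows essentially the same route as the paper, which itself just says to rerun the proof of Lemma \ref{lemma:injectivity1} over $N$; the one genuinely necessary observation — that Lemma \ref{better.NIP} places the approximating types $p_1,\dots,p_t$ in $\supp(\mu)\subseteq S_{\bar m}(N)$, so that restricting $\eta$ to the subspace $S_{\bar m}(N)$ (rather than all of $S_{\bar x}(N)$) still separates $\eta_1$ from $\eta_2$ — is exactly the point the paper highlights, and you make it explicitly.
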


\begin{proof}
A similar proof to the proof of Lemma \ref{lemma:injectivity1} works here as well. 
The only extra observation which is needed is that the types $p_1,\dots,p_t$ can be found in $\supp(\mu) =S_{\bar{m}}(N)$ (which is the case by Lemma \ref{better.NIP}).
\end{proof}

\begin{proposition}\label{proposition:NIP.isomorphisms}
    Assume that $T$ is NIP and let $M\preceq N$ (possibly $M=N$). Let $\bar{m}$ be a tuple of variables enumerating $M$, $\bar{x}$ be a small tuple of variables, and $H\leqslant\aut(M)$. 
    Then the following maps are isomorphisms of topological semigroups:
    \begin{enumerate}
        \item $|_{S_{\bar{m}}(N)}:\E(\mathfrak{M}_{\bar{m}}(N),\aut(N))\to \E(S_{\bar{m}}(N),\aut(N))$,

        \item $|_{S_{\bar{x}}(M)}:\E(\mathfrak{M}_{\bar{x}}(M),\aut(M))\to \E(S_{\bar{x}}(M),\aut(M))$.

        \item $|_{S_{\bar{x}}(M)}:\E(\mathfrak{M}_{\bar{x}}(M),H)\to \E(S_{\bar{x}}(M),H)$.
    \end{enumerate}
\end{proposition}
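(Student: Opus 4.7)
The plan is to verify that each restriction map is a well-defined continuous semigroup homomorphism, injective, and surjective, and then to upgrade it to a homeomorphism by compactness. Throughout, I identify a type $p$ with its Dirac measure $\delta_p$, so $S_{\bar{m}}(N)$ (resp.\ $S_{\bar{x}}(M)$) sits as a closed subset of $\mathfrak{M}_{\bar{m}}(N)$ (resp.\ $\mathfrak{M}_{\bar{x}}(M)$); closedness holds because a pointwise limit of Dirac measures on a compact Hausdorff space is a Dirac measure. First, for any $\sigma \in \aut(N)$ the pushforward satisfies $\sigma \cdot \delta_p = \delta_{\sigma \cdot p}$, so $\aut(N)$ permutes Dirac measures. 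Taking limits in $\mathfrak{M}_{\bar{m}}(N)^{\mathfrak{M}_{\bar{m}}(N)}$ and using that the Diracs are a closed subset, any $\eta \in \E(\mathfrak{M}_{\bar{m}}(N),\aut(N))$ maps Dirac measures to Dirac measures. Hence the restriction $\eta \mapsto \eta|_{S_{\bar{m}}(N)}$ takes values in $S_{\bar{m}}(N)^{S_{\bar{m}}(N)}$, and if $\eta = \lim_i \sigma_i$ then on $S$-coordinates the net also converges, so the restriction in fact lands in $\E(S_{\bar{m}}(N),\aut(N))$. The same applies verbatim to the other two cases.

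Continuity of the restriction is automatic (it is a projection in a product topology), and the map is a semigroup homomorphism because, using that $\eta_2$ already preserves Diracs,
\[
(\eta_1 \circ \eta_2)|_S(p) \;=\; \eta_1\bigl(\eta_2(\delta_p)\bigr) \;=\; \eta_1\bigl(\delta_{\eta_2|_S(p)}\bigr) \;=\; \eta_1|_S\bigl(\eta_2|_S(p)\bigr).
\]
For injectivity, note that under $\sigma \mapsto \delta_\sigma$ one has inclusions $H \leqslant \aut(M) \hookrightarrow \conv(\aut(M))$ and $\aut(N) \hookrightarrow \conv(\aut(N))$, which induce inclusions of the corresponding Ellis semigroups (closures only grow when enlarging the acting semigroup). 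Thus the injectivity in cases (2) and (3) is immediate from Lemma \ref{lemma:injectivity1}, and in case (1) from Lemma \ref{injectivity2}.

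For surjectivity I would give a uniform compactness argument. Let $G$ denote the acting group in each case and let $\pi$ be the restriction $\mathfrak{M}^{\mathfrak{M}} \to \mathfrak{M}^{S}$. Since $\mathfrak{M}^{\mathfrak{M}}$ is compact Hausdorff and $\pi$ is continuous, $\pi\bigl(\E(\mathfrak{M},G)\bigr) = \pi(\overline{G})$ is compact, hence closed, and contains $\pi(G)$, so it equals $\overline{\pi(G)}$ computed in $\mathfrak{M}^{S}$. Under the Dirac identification, $\pi(G)$ coincides with the copy of $G$ acting on $S$; since $S^{S}$ is closed in $\mathfrak{M}^{S}$, the closure in $\mathfrak{M}^{S}$ agrees with the closure in $S^{S}$, which is $\E(S,G)$. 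Combined with injectivity, this yields a continuous bijection between compact Hausdorff semigroups, which is automatically a homeomorphism, hence an isomorphism of topological semigroups. The only non-routine ingredient is the injectivity step, but that work is already done in Lemmas \ref{lemma:injectivity1} and \ref{injectivity2} via the NIP approximation Lemma \ref{better.NIP}; the remaining verification is essentially formal.
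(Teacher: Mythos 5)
Your proof is correct and follows essentially the same route as the paper: the homomorphism and well-definedness checks are routine, injectivity is imported from Lemmas \ref{lemma:injectivity1} and \ref{injectivity2} via the inclusion of the relevant Ellis semigroups into the $\conv$-versions, and surjectivity is the same compactness argument (your closed-image formulation $\pi(\overline{G})=\overline{\pi(G)}$ is just a subnet-free packaging of the paper's convergent-subnet step). You fill in some details the paper leaves implicit, notably that elements of $\E(\mathfrak{M},G)$ preserve Dirac measures because the Diracs form a closed subset.
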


\begin{proof}
The fact that these are semigroup homomorphisms is easy.
Injectivity of all these homomorphisms  follows from Lemmas \ref{lemma:injectivity1} and \ref{injectivity2}. We only need to check that these restrictions are surjective. For example, suppose that $\eta\in \E(S_{\bar{x}}(M),H)$ and let $(h_i \cdot -)_{i \in I}$ be a net of elements in $H$ which converges to $\eta$. 
    Now consider the net $(\delta_{h_i} \cdot - )_{i\in I}$ of elements in $\E(\mathfrak{M}_{\bar{x}}(M), H)$. 
    By compactness, this net admits a convergent subnet, say $(\delta_{h_j} \cdot - )_{j\in J}$, which converges to say $\eta'$. We claim that $\eta'|_{S_{\bar{x}}(M)}=\eta$.
\end{proof}

\section{New convolution operation}\label{sec:star.product}
This is the first main section of the paper. Here, we do the following:
\begin{enumerate}
\item In Section \ref{sec:analogon},  we give an explicit homeomorphism between a natural Ellis semigroup and a special collection of Keisler measures in the NIP setting. 
In particular, we show that if $M$ satisfies some homogeneity conditions, then the Ellis semigroup of $(\mathfrak{M}_{\bar{x}}(M),\conv(\aut(M)))$ is naturally homeomorphic to the space $\mathfrak{M}^{\sfs}_{\bar{m}}(\FC,M)$ (see Definition \ref{def:sfs}). 
This homeomorphism allows us to construct a \emph{new product} on $\mathfrak{M}^{\sfs}_{\bar{m}}(\FC,M)$.
\item In Section  \ref{subsec:star.definitions}, we extend this newly identified product from strongly finitely satisfiable Keisler measures to a much larger class of measures (even outside of the NIP context). We are also able to remove the homogeneity condition on $M$ and work over arbitrary small models. We then develop the fundamentals of the theory of this product. We prove a family of preservation results (e.g., the product of definable measures remains definable) and show 
that this product is associative for many \emph{practical} classes of measures. 
\end{enumerate} 

\subsection{Semigroup structures in the style of Hrushovski-Newelski}\label{sec:analogon}
As usual, $M \models T$, $\bar{m} = (m_{\alpha})_{\alpha < \gamma}$ is an enumeration of $M$ and $\mathfrak{C}$ is a monster model of $T$  where $M \prec \mathfrak{C}$. Let $\bar{x}$ and $\bar{y}$ be tuples of variables corresponding to the enumeration $\bar{m}$. Let $\pi(\bar{x};\bar{y})$ be a partial type over $\emptyset$. Recall that
\begin{equation*} G_{\pi,M}:=\{\sigma\in\aut(M)\;\colon\;\models\pi(\sigma(\bar{m});\bar{m})\}. 
\end{equation*} 
We begin by defining a certain homogeneity condition.

\begin{definition}\label{def:group-like}
    Let $\pi(\bar{x};\bar{y})$ be a partial type over $\emptyset$. We say that $\pi$ is \emph{group-like} over $M$  or that the pair $(M,\pi)$ is \emph{group-like} if the following three properties hold: 
    \begin{enumerate}
    \item $\pi(\bar{x};\bar{y}) \vdash \bar{x}\equiv_{\emptyset}\bar{y}$. 
    \item $G_{\pi,M}$ is a subgroup of $\aut(M)$. 
    \item For every $\bar{a} \in M^{\bar{y}}$ and every finite subtuple $(a_{\alpha_1},\dots,a_{\alpha_{k}}) \subseteq \bar{a}$, if $\models \pi(\bar{m};\bar{a})$, then there exists an automorphism $\sigma \in G_{\pi,M}$ such that for each $i \leq k$, $\sigma(m_{\alpha_{i}}) = a_{\alpha_i}$.
    \end{enumerate} 
\end{definition}

We remark that if $\pi(\bar{x};\bar{y})$ is ``$\bar{x} \equiv_{\emptyset} \bar{y}$'', then condition (3) in Definition \ref{def:group-like} is equivalent to asserting that $M$ is 
strongly $\aleph_0$-homogeneous. 

\begin{example} 
Suppose that $M$ is strongly $\aleph_0$-homogeneous.
\begin{enumerate} 
\item If $\pi(\bar{x};\bar{y})$ is ``$\bar{x}\equiv_\emptyset\bar{y}$'', then $\pi(\bar{x};\bar{y})$ is group-like over $M$. 
\item Partial types $\pi(\bar x,\bar y;\bar x',\bar y')$ and  $\pi^{\textrm{opp}}(\bar x',\bar y';\bar x,\bar y)$ from the \emph{affine sort construction} in Section \ref{sec: affine sort} are group-like over $\bar M$ (using the notation from  Section \ref{sec: affine sort}).
\end{enumerate} 
\end{example}
\noindent

We now consider a particular Ellis semigroup related to the automorphism group of a first-order structure. 
We define a continuous map $\Phi$ from this Ellis semigroup to the collection of strongly finitely satisfiable Keisler measures (see Definition \ref{def:sfs}). Assuming NIP, we also define a map $\Psi$ which goes in the opposite direction and turns out to be the inverse of $\Phi$.  We begin with the definition of $\Phi$.

\begin{definition} Suppose that $\pi(\bar{x};\bar{y})$ is group-like over $M$. 
We define the map 
$$\Phi:\E(\mathfrak{M}_{\bar{x}}(M),\conv(G_{\pi,M}))
\longrightarrow\mathfrak{M}^{\sfs}_{\pi(\bar{m};\bar{y})}(\mathfrak{C},M)$$
by $\Phi(\eta):=\mu_\eta$, where for any tuple $\bar{n} \in \mathfrak{C}^{\bar{x}}$ and $\mathcal{L}_{\bar{x};\bar{y}}$-formula $\varphi(\bar{x};\bar{y})$ we define

$$\mu_\eta\big(\varphi(\bar{n}; \bar y)\big):= \left( \eta(\delta_{\tp(\bar{n}/M)})\right)\big(\varphi(\bar{x}; \bar m) \big).$$
\end{definition} 

We will show that the map above is well-defined, meaning that the value of $\mu_\eta\big(\varphi(\bar{n}; \bar y)\big)$ depends only on the equivalence class of the formula $\varphi(\bar{n}; \bar y)$ and that $\mu_\eta \in \mathfrak{M}^{\sfs}_{\pi(\bar{m};\bar{y})}(\mathfrak{C},M)$. Let us start from the following lemma.

\begin{lemma}\label{lemma:pi-inv} If $\eta\in \E(\mathfrak{M}_{\bar{x}}(M),\conv(G_{\pi,M}))$, then $\eta(\delta_{\tp(\bar m/M)})\in \mathfrak{M}_{\pi(\bar{x};\bar{m})}(M)$.
\end{lemma}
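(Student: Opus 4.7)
The plan is to trace the definitions and use that the target property (concentration on $[\pi(\bar x;\bar m)]$) is preserved under convex combinations and under taking limits in the weak* topology on $\mathfrak{M}_{\bar x}(M)$.

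First, I would unwind what $\eta(\delta_{\tp(\bar m/M)})$ actually is. By definition of the Ellis semigroup $\mathrm{E}(\mathfrak{M}_{\bar x}(M), \conv(G_{\pi,M}))$, the element $\eta$ is the pointwise limit of a net of maps of the form $\tilde \pi(\lambda_i, -)$ for $\lambda_i \in \conv(G_{\pi,M})$. Writing $\lambda_i = \sum_{k \leq n_i} r_k^i \delta_{\sigma_k^i}$ with $\sigma_k^i \in G_{\pi,M}$, the action by pushforward gives
\[
\lambda_i \cdot \delta_{\tp(\bar m/M)} \;=\; \sum_{k \leq n_i} r_k^i \, \delta_{\tp(\sigma_k^i(\bar m)/M)}.
\]
Thus $\eta(\delta_{\tp(\bar m/M)})$ is the weak* limit of these finite convex combinations of Dirac measures supported on types of the form $\tp(\sigma(\bar m)/M)$ with $\sigma \in G_{\pi,M}$.

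Next, I would check that each such Dirac measure concentrates on $[\pi(\bar x;\bar m)]$. Indeed, for $\sigma \in G_{\pi,M}$ we have by definition $\models \pi(\sigma(\bar m);\bar m)$, so every formula in $\pi(\bar x;\bar m)$ lies in $\tp(\sigma(\bar m)/M)$, i.e.\ $\tp(\sigma(\bar m)/M) \in [\pi(\bar x;\bar m)]$. Consequently each finite convex combination $\lambda_i \cdot \delta_{\tp(\bar m/M)}$ assigns mass $1$ to the closed set $[\pi(\bar x;\bar m)] \subseteq S_{\bar x}(M)$.

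Finally, I would invoke weak* closedness. Since $[\pi(\bar x;\bar m)] = \bigcap_{\varphi \in \pi} [\varphi(\bar x;\bar m)]$ is an intersection of clopen sets, and for each single $\mathcal{L}$-formula $\varphi(\bar x;\bar m)$ the function $\mu \mapsto \mu([\varphi(\bar x;\bar m)])$ is continuous on $\mathfrak{M}_{\bar x}(M)$, the set
\[
\{\mu \in \mathfrak{M}_{\bar x}(M) \;\colon\; \mu([\pi(\bar x;\bar m)]) = 1\} \;=\; \bigcap_{\varphi \in \pi} \{\mu \;\colon\; \mu([\varphi(\bar x;\bar m)]) = 1\}
\]
is closed. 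Since each $\lambda_i \cdot \delta_{\tp(\bar m/M)}$ lies in this set, so does its limit $\eta(\delta_{\tp(\bar m/M)})$, proving $\eta(\delta_{\tp(\bar m/M)}) \in \mathfrak{M}_{\pi(\bar x;\bar m)}(M)$. There is no real obstacle here; the lemma is essentially a bookkeeping exercise verifying that the property ``concentrates on $[\pi(\bar x;\bar m)]$'' is preserved by the two operations (convex combinations and pointwise/weak* limits) used to form elements of the Ellis semigroup from elements of $G_{\pi,M}$.
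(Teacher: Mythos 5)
Your proof is correct and follows essentially the same route as the paper's: verify the property for a single $\sigma \in G_{\pi,M}$ (where the pushforward of $\delta_{\tp(\bar m/M)}$ is $\delta_{\tp(\sigma(\bar m)/M)}$, which satisfies $\pi(\bar x;\bar m)$), extend by linearity to $\conv(G_{\pi,M})$, and pass to the net limit. Your justification of the limit step — writing $\{\mu : \mu([\pi(\bar x;\bar m)])=1\}$ as an intersection over formulas of closed sets cut out by the continuous evaluation maps — is in fact slightly more explicit than the paper's, which simply asserts that the conclusion passes to the limit.
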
 

\begin{proof}
First, consider an arbitrary $\tau \in G_{\pi,M}$. Then  $$\tau(\delta_{\tp(\bar m/M)})\big([\pi(\bar{x};\bar{m})]\big)=\delta_{\tp(\bar m/M)}\big(\tau^{-1}[\pi(\bar{x};\bar{m})]\big)=
    \delta_{\tp(\bar m/M)}\big([\pi(\bar{x};\tau^{-1}(\bar{m}))]\big)=1,$$
because $\models \pi(\tau(\bar m);\bar m)$. By linearity, this implies that $\eta(\delta_{\tp(\bar m/M)})\big([\pi(\bar{x};\bar{m})]\big) =1$ for every $\eta \in \conv(G_{\pi,M})$. Thus, in full generality the conclusion follows from the fact that each $\eta \in \E(\mathfrak{M}_{\bar{x}}(M),\conv(G_{\pi,M}))$ is the limit of a net $(\eta_i \cdot -)_{i \in I}$ for some $\eta_i \in \conv(G_{\pi,M})$.
\end{proof}

\begin{lemma} The map $\Phi$ is well-defined.
\end{lemma}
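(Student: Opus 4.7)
The plan is to verify in sequence that (a) the right-hand side $\big(\eta(\delta_{\tp(\bar n/M)})\big)\big(\varphi(\bar x; \bar m)\big)$ depends only on $\varphi(\bar n; \bar y)$ as a definable subset of $\FC^{\bar y}$, (b) the resulting assignment is a finitely additive probability measure, and (c) the measure so defined lies in $\mathfrak M^{\sfs}_{\pi(\bar m; \bar y)}(\FC, M)$.

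For (a), suppose $\varphi_1(\bar n_1; \bar y)$ and $\varphi_2(\bar n_2; \bar y)$ define the same subset of $\FC^{\bar y}$, i.e.\ $\FC \models \forall \bar y\big(\varphi_1(\bar n_1; \bar y) \leftrightarrow \varphi_2(\bar n_2; \bar y)\big)$. For any $\sigma \in G_{\pi, M}$, the tuple $\sigma^{-1}(\bar m)$ lies in $M^{\bar y}$, so instantiating the universal statement at $\bar y := \sigma^{-1}(\bar m)$ yields $\FC \models \varphi_1(\bar n_1; \sigma^{-1}(\bar m)) \leftrightarrow \varphi_2(\bar n_2; \sigma^{-1}(\bar m))$. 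A direct unpacking of the action gives $\big(\sigma \cdot \delta_{\tp(\bar n_i/M)}\big)\big(\varphi_i(\bar x; \bar m)\big) = \mathbf 1\big[\FC \models \varphi_i(\bar n_i; \sigma^{-1}(\bar m))\big]$ for $i \in \{1,2\}$, so the two indicators agree. Extending by linearity to $\conv(G_{\pi, M})$ and then passing to pointwise limits in $\E(\mathfrak M_{\bar x}(M), \conv(G_{\pi, M}))$ delivers the equality $\eta(\delta_{\tp(\bar n_1/M)})(\varphi_1(\bar x; \bar m)) = \eta(\delta_{\tp(\bar n_2/M)})(\varphi_2(\bar x; \bar m))$. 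For (b), given any finite collection of formulas $\chi_k(\bar y; \bar c_k)$ one picks a common $\bar n \in \FC^{\bar x}$ whose coordinates include all the $\bar c_k$ (possible since $|\bar x| = |M|$), writes each $\chi_k$ as $\varphi_k(\bar n; \bar y)$ for some $\CL$-formula $\varphi_k$, and observes that $\mu_\eta(\chi_k(\bar y; \bar c_k)) = \eta(\delta_{\tp(\bar n/M)})(\varphi_k(\bar x; \bar m))$; finite additivity and $\mu_\eta(\top)=1$ then transfer directly from the fact that $\eta(\delta_{\tp(\bar n/M)})$ is a probability measure in $\mathfrak M_{\bar x}(M)$.

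For (c), concentration on $\pi(\bar m; \bar y)$ follows from Lemma \ref{lemma:pi-inv} by taking $\bar n := \bar m$: for any finite conjunction $\pi_0(\bar x; \bar y)$ of formulas from $\pi(\bar x; \bar y)$, the definition and the lemma give $\mu_\eta(\pi_0(\bar m; \bar y)) = \eta(\delta_{\tp(\bar m/M)})(\pi_0(\bar x; \bar m)) = 1$, so $\mu_\eta([\pi(\bar m; \bar y)]) = 1$. For strong finite satisfiability, if $\mu_\eta(\varphi(\bar n; \bar y)) > 0$, then since $\eta(\delta_{\tp(\bar n/M)})$ is a pointwise limit of convex combinations $\lambda \cdot \delta_{\tp(\bar n/M)}$ with $\lambda \in \conv(G_{\pi, M})$, some such $\lambda = \sum_j \alpha_j \delta_{\sigma_j}$ (with $\alpha_j > 0$, $\sigma_j \in G_{\pi, M}$) gives $\sum_j \alpha_j \mathbf 1\big[\FC \models \varphi(\bar n; \sigma_j^{-1}(\bar m))\big] > 0$; choosing $j$ with the indicator equal to $1$ and setting $\bar a := \sigma_j^{-1}(\bar m) \in M^{\bar y}$, since $G_{\pi, M}$ is a group we have $\sigma_j^{-1} \in G_{\pi, M}$, hence (by Remark \ref{remark: group in every model} the same holds in $\FC$, and Remark \ref{remark: G_pi=G_pi^opp} then applies) $\FC \models \pi(\bar m; \bar a)$, so $\bar a$ witnesses sfs. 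The main obstacle is paragraph (a): one has to recognize that the correct way to leverage the equivalence between the two representations is to substitute $\bar y := \sigma^{-1}(\bar m) \in M^{\bar y}$, which is precisely the tuple produced when the $\aut(M)$-action on Dirac measures is unwound.
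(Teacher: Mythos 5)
Your proof is correct and follows essentially the same route as the paper's: the three-stage argument (single automorphism, convex combination by linearity, passage to limits along a net) for well-definedness via the substitution $\bar y:=\sigma^{-1}(\bar m)$, the measure axioms inherited from $\eta(\delta_{\tp(\bar n/M)})$, concentration on $\pi(\bar m;\bar y)$ via Lemma \ref{lemma:pi-inv}, and strong finite satisfiability from the group-like property of $\pi$. The only cosmetic difference is that in the sfs step you argue directly (a positive limit forces a positive approximant, yielding a witness $\sigma_j^{-1}(\bar m)$) where the paper argues by contradiction; the underlying idea is identical.
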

    
\begin{proof}
    Let $\eta\in \E(\mathfrak{M}_{\bar{x}}(M),\conv(G_{\pi,M}))$.
    Fix tuples $\bar{n},\bar{n}'\in \FC^{\bar{x}}$ and $\mathcal{L}_{\bar{x};\bar{y}}$-formulas $\varphi(\bar{x};\bar{y}),\psi(\bar{x};\bar{y})$, such that 
    $$ \FC \models (\forall \bar{y})(\varphi(\bar{n};\bar{y}) \longleftrightarrow \psi(\bar{n}';\bar{y})).$$
    We first show that, 
    \begin{equation*}\mu_\eta\big(\varphi(\bar{n};\bar{y})\big)=\big( \eta(\delta_{\tp(\bar{n}/M)})\big)\big(\varphi(\bar{x};\bar{m})\big) 
    = \big( \eta(\delta_{\tp(\bar{n}'/M)})\big)\big(\psi(\bar{x};\bar{m})\big)=
    \mu_\eta\big(\psi(\bar{n}';\bar{y})\big).
    \end{equation*}
    We prove the statement in cases. First, if $\eta=\sigma \cdot - $ for some $\sigma \in G_{\pi,M}$, 
    \begin{align*}
    \sigma(\delta_{\tp(\bar{n}/M)})\big(\varphi(\bar{x};\bar{m})\big) &=  \delta_{\tp(\bar{n}/M)}\big(\varphi(\bar{x};\sigma^{-1}(\bar{m}))\big)=1 \\
    &\iff  \FC \models\varphi(\bar{n};\sigma^{-1}(\bar{m})\big) \\
    &\iff  \FC \models \psi(\bar{n}';\sigma^{-1}(\bar{m})\big) \\
    &\iff  1=\delta_{\tp(\bar{n}'/M)}\big(\psi(\bar{x};\sigma^{-1}(\bar{m}))\big)=\sigma(\delta_{\tp(\bar{n}'/M)})\big(\psi(\bar{x};\bar{m})\big).
    \end{align*}
    The case of $\eta\in\conv(G_{\pi,M})$ follows by linearity. 
    
    Finally, if $(\lambda_i\cdot-)_{i\in I}\to\eta$ and $\lambda_i\in\conv(G_{\pi,M})$ for each $i\in I$, then
    \begin{align*}
    \eta(\delta_{\tp(\bar{n}/M)})\big(\varphi(\bar{x};\bar{m})\big) &= \lim\limits_{i\in I}(\lambda_i(\delta_{\tp(\bar{n}/M)}))\big(\varphi(\bar{x};\bar{m})\big) \\
    &= \lim\limits_{i\in I}(\lambda_i(\delta_{\tp(\bar{n}'/M)}))\big(\psi(\bar{x};\bar{m})\big)\\
    &= \eta(\delta_{\tp(\bar{n}'/M)})\big(\psi(\bar{x};\bar{m})\big).
    \end{align*}
 
   Next, we show that $\Phi(\eta)=\mu_\eta$ is a Keisler measure. 
By the above observation, for every inconsistent formula $\varphi(\bar n;\bar y)$ we have 
$$\eta_\eta(\varphi(\bar n;\bar y))=\mu_\eta (m_0=y_0 \wedge m_0 \ne y_0) = \eta(\delta_{\tp(\bar{m}/M)})(x_0=m_0 \wedge x_0 \ne m_0) =0.$$
For additivity, consider any formulas $\varphi(\bar{n};\bar{y})$ and $\psi(\bar{n};\bar{y})$ such that $\varphi(\bar{n};\bar{y}) \wedge \psi(\bar{n};\bar{y})$ is inconsistent. Then, as we showed that the value of $\mu_\eta$ on any inconsistent formula is $0$, we have
\begin{align*}
\mu_\eta(\varphi(\bar{n};\bar{y}) \vee \psi(\bar{n};\bar{y})) &= \mu_\eta(\varphi(\bar{n};\bar{y}) \vee \psi(\bar{n};\bar{y})) + \mu_\eta(\varphi(\bar{n};\bar{y}) \wedge \psi(\bar{n};\bar{y})) \\
&=  \eta(\delta_{\tp(\bar{n}/M)})\big(\varphi(\bar{x};\bar{m}) \vee \psi(\bar{x};\bar{m}) \big) + \eta(\delta_{\tp(\bar{n}/M)})\big(\varphi(\bar{x};\bar{m}) \wedge \psi(\bar{x};\bar{m}) \big) \\
&=  \eta(\delta_{\tp(\bar{n}/M)})\big(\varphi(\bar{x};\bar{m})\big) +  \eta(\delta_{\tp(\bar{n}/M)})\big(\psi(\bar{x};\bar{m})\big) \\
&= \mu_\eta(\varphi(\bar{n};\bar{y})) + \mu_\eta(\psi(\bar{n};\bar{y})).
\end{align*}

We still need to check that:
\begin{enumerate}
\item $\Phi(\eta)$ concentrates on $\pi(\bar{m};\bar{y})$,
\item $\Phi(\eta)$ is $\pi$-strongly finitely satisfiable in $M$. 
\end{enumerate}

Notice that (1) follows directly from Lemma \ref{lemma:pi-inv}.
Hence, it remains to prove (2). 
    
    Suppose that $\mu_\eta\big(\varphi(\bar{n};\bar{y})\big) > 0$.
    Towards a contradiction, suppose that there is no $\bar{a}\in M^{\bar{y}}$
    such that $\models \pi(\bar{m};\bar{a})\,\wedge\,\varphi(\bar{n};\bar{a})$.
Using property (2) in Definition \ref{def:group-like}, we conclude that there is no $\sigma\in G_{\pi,M}$ such that
    $\mathfrak{C} \models\varphi(\bar{n};\sigma(\bar{m}))$.
    So for every $\sigma\in G_{\pi,M}$, $\sigma(\delta_{\tp(\bar{n}/M)})(\varphi(\bar{x};\bar{m})) = 0$. By linearly, for every $\lambda \in \conv(G_{\pi,M})$, $\lambda(\delta_{\tp(\bar{n}/M)})(\varphi(\bar{x};\bar{m})) = 0$. Since $\eta$ is a limit of a net of elements from $\conv(G_{\pi,M})$, we conclude that $\eta(\delta_{\tp(\bar{n}/M)})(\varphi(\bar{x};\bar{m})) = 0$ and thus $\mu_{\eta}(\varphi(\bar{n};\bar{y})) = 0$, a contradiction. Hence, $\mu_\eta\in\mathfrak{M}^{\sfs}_{\pi(\bar{m};\bar{y})}(\FC, M)$.
\end{proof}

\begin{remark}
    The map $\Phi$ is continuous. 
\end{remark}

\begin{proof}
It follows from the equality
\begin{equation*} 
\Phi^{-1} \left[\bigcap_{i=1}^{n} \{\mu : r_i < \mu(\varphi_i(\bar{b}_i;\bar{y})) < s_i\} \right] = \bigcap_{i=1}^{n} \{\eta: r_i < \eta (\delta_{\tp(\bar{b}_i/M)})\big(\varphi_i(\bar{x};\bar{m})\big) < s_i\}. 
\end{equation*} 
\end{proof}

We now construct an inverse for the map $\Phi$ in the NIP setting. We remark that the NIP assumption is needed to have that all global measures which are are finitely satisfiable over $M$ are also Borel-definable over $M$ 
(see Fact \ref{fact:MP}(1)). This allows us to use the Morley product in the following definition. 

\begin{definition} Suppose that $T$ is NIP and $\pi(\bar{x};\bar{y})$ is group-like over $M$. We define the map
$$\Psi:\mathfrak{M}^{\sfs}_{\pi(\bar{m};\bar{y})}(\FC,M)\longrightarrow \E(\mathfrak{M}_{\bar{x}}(M),\conv(G_{\pi,M}))$$
by $\Psi(\mu):=\eta_\mu$, where
$$\eta_\mu(\nu)\big(\varphi(\bar{x};\bar{m})\big):=(\mu_{\bar{y}} \otimes\nu_{\bar{x}})\big(\varphi(\bar{x}; \bar{y})\big)$$
for each $\nu\in\mathfrak{M}_{\bar{x}}(M)$ and $\varphi(\bar{x};\bar{m})\in \mathcal{L}_{\bar{x}}(M)$. (See Remark \ref{remark:Morley_convention} for our convention concerning Morley products between global and non-global measures.) 
\end{definition}

\begin{lemma}\label{lemma:Psi}
    Assuming $T$ is NIP, the map $\Psi$ is well-defined.
\end{lemma}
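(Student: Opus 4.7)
Well-definedness amounts to three things: (a) $\eta_\mu(\nu)(\varphi(\bar x;\bar m))$ depends only on the equivalence class of $\varphi(\bar x;\bar m)$ over $M$; (b) $\eta_\mu(\nu) \in \mathfrak{M}_{\bar x}(M)$ is a Keisler measure; and (c) the function $\eta_\mu$ lies in $\E(\mathfrak{M}_{\bar x}(M),\conv(G_{\pi,M}))$. First I would observe that the Morley product in the definition is meaningful because $\mu$ is strongly finitely satisfiable, hence $M$-invariant, and therefore Borel-definable over $M$ by Fact~\ref{fact:MP}(1) under the NIP hypothesis (the product with $\nu$ over $M$ is interpreted via Remark~\ref{remark:Morley_convention}). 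Part (b) then follows from (a) together with the fact that $\mu\otimes\nu$ is itself a Keisler measure.

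For (a), I would show that if $\varphi_1(\bar x;\bar m)\equiv\varphi_2(\bar x;\bar m)$ over $M$, then $\mu(\varphi_1(\bar b;\bar y))=\mu(\varphi_2(\bar b;\bar y))$ for every $\bar b\in\FC^{\bar x}$. If this failed, strong finite satisfiability would produce $\bar a\in M^{\bar y}$ with $\models\pi(\bar m;\bar a)$ and, say, $\models\varphi_1(\bar b;\bar a)\wedge\neg\varphi_2(\bar b;\bar a)$; clause (3) of group-likeness would then yield $\sigma\in G_{\pi,M}$ agreeing with $\bar a$ on the finite subtuple of $\bar m$ appearing in $\varphi_1,\varphi_2$; extending $\sigma$ to $\tilde\sigma\in\aut(\FC)$ and applying $\tilde\sigma^{-1}$ would convert the disagreement over $\bar a$ into a disagreement over $\bar m$, contradicting equivalence over $M$.

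The main work is (c). I would start from the identity $\eta_{\delta_{\tp(\sigma(\bar m)/\FC)}}=\sigma^{-1}\cdot -$ for $\sigma\in G_{\pi,M}$ (a direct unwinding of the Morley product against a Dirac measure), which identifies operators from $\conv(G_{\pi,M})$ with $\Psi$ applied to convex combinations of Dirac measures at realized types $\tp(\sigma(\bar m)/\FC)$. Combined with continuity of the Morley product in its first factor (Fact~\ref{fact:MP}(3)), this reduces (c) to approximating $\mu$ weakly, on any finite list of test formulas, by such combinations. Given $\nu_1,\ldots,\nu_k\in\mathfrak{M}_{\bar x}(M)$, $\mathcal{L}$-formulas $\varphi_1,\ldots,\varphi_k$, and $\epsilon>0$, I would: (i) apply Fact~\ref{fact:MP}(2) to obtain $p_1,\ldots,p_t\in\supp(\mu)\subseteq S^{\sfs}_{\pi(\bar m;\bar y)}(\FC,M)$ whose average approximates $\mu$ uniformly on the $\varphi_i$'s; (ii) use strong finite satisfiability of each $p_j$ to pick $\bar a_j\in M^{\bar y}$ with $\models\pi(\bar m;\bar a_j)$ so that $\tp(\bar a_j/\FC)$ is close enough to $p_j$ that $\delta_{\tp(\bar a_j/\FC)}\otimes\nu_i$ approximates $\delta_{p_j}\otimes\nu_i$ at each $\varphi_i$; (iii) invoke clause (3) of group-likeness to promote each $\bar a_j$ to $\sigma_j\in G_{\pi,M}$ agreeing with $\bar a_j$ on the union of finite subtuples of $\bar m$ appearing in the $\varphi_i$'s, and set $\lambda:=\tfrac{1}{t}\sum_j\delta_{\sigma_j^{-1}}\in\conv(G_{\pi,M})$.

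The main obstacle is the bookkeeping in step (iii): aligning the three approximations so that the cumulative error remains below $\epsilon$ uniformly over $i\leq k$. All three structural ingredients are essential — NIP (for Fact~\ref{fact:MP}), strong finite satisfiability of $\mu$ (to pass from types in $\supp(\mu)$ to realizations in $M$), and group-likeness of $\pi$ (to package those realizations as elements of $G_{\pi,M}$ so that the approximant $\lambda$ actually lives in $\conv(G_{\pi,M})$).
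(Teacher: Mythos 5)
Your overall architecture matches the paper's: establish Borel-definability of $\mu$ from NIP so the Morley product makes sense, check independence of the representative of the formula, verify the identity $\eta_{\delta_{\tp(\sigma(\bar m)/\FC)}}=\sigma^{-1}\cdot-$, and then realize $\eta_\mu$ as a limit of operators coming from $\conv(G_{\pi,M})$ by approximating $\mu$ by convex combinations of $\delta_{\sigma(\bar m)}$ with $\sigma\in G_{\pi,M}$ and invoking left-continuity of the Morley product. Your part (a) is correct, though it is a detour: the paper observes directly that $(\forall\bar x)(\varphi\leftrightarrow\psi)$ belongs to $\tp(\bar m/\emptyset)$, hence is implied by $\pi(\bar m;\bar y)$, hence holds on $\supp(\mu)$; no appeal to strong finite satisfiability or group-likeness is needed there.

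The genuine gap is in step (ii) of part (c). You ask strong finite satisfiability to produce $\bar a_j\in M^{\bar y}$ with $\tp(\bar a_j/\FC)$ ``close enough'' to $p_j$ that $\delta_{\tp(\bar a_j/\FC)}\otimes\nu_i$ approximates $\delta_{p_j}\otimes\nu_i$ at $\varphi_i$. Unwinding the definitions, this requires $\nu_i\bigl(D^{\varphi_i^{\textrm{opp}}}_{p_j,M}\,\triangle\,[\varphi_i(\bar x;\bar a_j)]\bigr)$ to be small, i.e.\ the (Borel) $\varphi_i$-definition of $p_j$ over $M$ must be approximated in $\nu_i$-measure by the clopen cut out by $\bar a_j$. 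Strong finite satisfiability only lets you force $\tp(\bar a_j/\FC)$ to agree with $p_j$ on finitely many prescribed instances $\varphi_i(\bar b;\bar y)$, which controls the two fiber functions at finitely many points of $S_{\bar x}(M)$ and says nothing about their symmetric difference in $\nu_i$-measure. The missing ingredient is exactly the NIP left-continuity of $-\otimes\nu_i$ (Fact \ref{fact:MP}(3)): either you apply it once globally, as the paper does — take a net $\lambda_i\to\mu$ in the full weak-$*$ topology with $\lambda_i$ a convex combination of $\delta_{\sigma(\bar m)}$, $\sigma\in G_{\pi,M}$ (this is \cite[Proposition 2.11]{Artem_Kyle} sharpened by sfs and clause (3) of group-likeness), pass to a convergent subnet of the corresponding operators, and use continuity of $-\otimes\nu$ to identify the limit with $\eta_\mu$ — or, if you insist on the term-by-term version, you must first invoke continuity at each $\delta_{p_j}$ to learn \emph{which} finitely many formulas (with parameters anywhere in $\FC$, not just the $\varphi_i$'s) $\bar a_j$ has to realize, and only then apply sfs. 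As written, the approximation is attributed to sfs alone, and your closing remark that the main obstacle is ``bookkeeping in step (iii)'' misplaces where the real content lies: step (iii) is routine, step (ii) is where the NIP input must reappear.
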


\begin{proof}
We assume NIP so that the Morley product in the definition of $\Psi$ is well-defined. Fix $\mu$ in $\mathfrak{M}^{\sfs}_{\pi(\bar{m};\bar{y})}(\FC,M)$, $\nu$ in $\mathfrak{M}_{\bar{x}}(M)$, and $\CL$-formulas $\varphi(\bar{x};\bar{y}),\psi(\bar{x};\bar{y})$.
Assume that $M\models (\forall\bar{x})\big(\varphi(\bar{x};\bar{m})\,\leftrightarrow\,\psi(\bar{x};\bar{m}) \big)$.
We need to show that
$$\Psi(\mu)(\nu)\big(\varphi(\bar{x};\bar{m})\big) = \Psi(\mu)(\nu)\big(\psi(\bar{x};\bar{m})\big),$$
in other words, we need $(\mu_{\bar{y}} \otimes\nu_{\bar{x}} )\big(\varphi(\bar{x};\bar{y})\big)=(\mu_{\bar{y}} \otimes\nu_{\bar{x}})\big( \psi(\bar{x};\bar{y})\big)$.
Note that $(\forall\bar{x})\big(\varphi(\bar{x};\bar{y})\,\leftrightarrow\,\psi(\bar{x};\bar{y})\big)\in\tp(\bar{m}/\emptyset)$, 
so $\pi(\bar{m};\bar{y})\vdash (\forall\bar{x})\big(\varphi(\bar{x};\bar{y})\,\leftrightarrow\,\psi(\bar{x};\bar{y})\big)$,
and therefore for every $q(\bar{y})\in S_{\pi(\bar{m};\bar{y})}(\FC)$ and every $\bar{b}\in \FC^{\bar{x}}$ we have that $q\in[\varphi(\bar{b};\bar{y})]$ if and only if $q\in[\psi(\bar{b};\bar{y})]$.
Thus
$$\mu\big( \varphi(\bar{b};\bar{y})\big) = \mu\big( [\varphi(\bar{b};\bar{y})]\cap[\pi(\bar{m};\bar{y})]\big) = \mu\big(\psi(\bar{b};\bar{y})\cap[\pi(\bar{m};\bar{y})]\big) = \mu\big(\psi(\bar{b};\bar{y})\big).$$
Using the above, we obtain
\begin{align*}
(\mu_{\bar{y}} \otimes\nu_{\bar{x}})\big(\varphi(\bar{x};\bar{y})\big) 
&= \int_{S_{\bar{x}}(M)} F_{\mu_{\bar{y}} }^{\varphi^{\textrm{opp}}(\bar{y};\bar{x})} d\nu_{\bar{x}}\\ 
&= \int_{S_{\bar{x}}(M)} F_{\mu_{\bar{y}}}^{\psi^{\textrm{opp}}(\bar{y};\bar{x})} d\nu_{\bar{x}}\\ 
&= (\mu_{\bar{y}} \otimes\nu_{\bar{x}})\big(\psi(\bar{x};\bar{y})\big).
\end{align*}

We now show that $\eta_\mu\in \E(\mathfrak{M}_{\bar{x}}(M),\conv(G_{\pi,M}))$.
By Proposition 2.11 from \cite{Artem_Kyle}, we know that $\mu$ is in the topological closure of $\conv(M^{\bar{y}})$. 
Actually, from the proof of the aforementioned proposition, the hypothesis that $\mu$ is $\pi$-strongly finitely satisfiable in $M$, and property (3) in Definition \ref{def:group-like}, it follows that
$$\mu=\lim\limits_{i\in I}\lambda_i=\lim\limits_{i\in I} \sum\limits_{j=1}^{k_i}\alpha^i_j\delta_{\sigma^i_j(\bar{m})},$$
for some $\sigma^i_j\in G_{\pi,M}$, where each $\lambda_i = \sum\limits_{j=1}^{k_i}\alpha^i_j\delta_{\sigma^i_j(\bar{m})} \in \conv(M^{\bar{y}})$.

Consider the net $\left(\sum\limits_{j=1}^{k_i}\alpha^i_j(\sigma^i_j)^{-1}\cdot- \right)_{i\in I}$ in $\E(\mathfrak{M}_{\bar{x}}(M),\conv(G_{\pi,M}))$. 
By compactness, passing to a convergent subnet, without loss of generality we may assume that
$\left(\sum\limits_{j=1}^{k_i}\alpha^i_j(\sigma^i_j)^{-1}\cdot- \right)_{i\in I}$ converges to some $\eta$ in $\E(\mathfrak{M}_{\bar{x}}(M),\conv(G_{\pi,M}))$.
Since the Morley product is left-continuous in NIP (see Fact \ref{fact:MP}), 
it follows that for every  $\nu$ in $\mathfrak{M}_{\bar{x}}(M)$ and every $\mathcal{L}_{\bar{x}}(M)$-formula $\varphi(\bar{x};\bar{m})$,
\begin{align*}
\eta_\mu(\nu)\big(\varphi(\bar{x};\bar{m})\big) &= (\mu_{\bar{y}} \otimes\nu_{\bar{x}} )\big(\varphi(\bar{x};\bar{y})\big) \\
&= \left( \Big( \lim\limits_{i\in I}\sum\limits_{j=1}^{k_i}\alpha^i_j\delta_{\sigma^i_j(\bar{m})} \Big)_{\bar{y}} \otimes \nu_{\bar{x}} \right)\big(\varphi(\bar{x};\bar{y})\big) \\
&= \lim\limits_{i\in I}\left( \Big( \sum\limits_{j=1}^{k_i}\alpha^i_j\delta_{\sigma^i_j(\bar{m})} \Big)_{\bar{y}} \otimes \nu_{\bar{x}} \right)\big(\varphi(\bar{x};\bar{y})\big) \\
&= \lim\limits_{i\in I}\left( \Big( \sum\limits_{j=1}^{k_i}\alpha^i_j(\sigma^i_j)^{-1} \Big) \cdot \nu \right)\big(\varphi(\bar{x};\bar{m})\big) \\
&= \eta(\nu)\big(\varphi(\bar{x};\bar{m})\big).
\end{align*}
We see that $\eta_\mu=\eta\in \E(\mathfrak{M}_{\bar{x}}(M),\conv(G_{\pi,M}))$.
\end{proof}

\begin{theorem}
    Assume that $T$ is NIP and $\pi(\bar{x};\bar{y})$ is group-like over $M$. 
    Then $\Phi\Psi=\id$, $\Psi\Phi=\id$, and both maps $\Phi$ and $\Psi$ are homeomorphisms.
\end{theorem}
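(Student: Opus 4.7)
My plan is to establish the two composition identities $\Phi\Psi = \id$ and $\Psi\Phi = \id$ by direct computation, and then upgrade $\Phi$ from a continuous bijection to a homeomorphism by a standard compactness argument. Continuity of $\Phi$ was already verified, so the remaining substantive work lies in unfolding the definitions carefully.

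For $\Phi\Psi = \id$, I would fix $\mu \in \mathfrak{M}^{\sfs}_{\pi(\bar m;\bar y)}(\FC,M)$, a tuple $\bar n \in \FC^{\bar x}$, and an $\CL$-formula $\varphi(\bar x;\bar y)$. Note that by Fact \ref{fact:MP}(1), $\mu$ is Borel-definable over $M$, so the Morley product defining $\Psi$ is available. Unwinding,
$$\Phi(\Psi(\mu))(\varphi(\bar n;\bar y)) = \Psi(\mu)(\delta_{\tp(\bar n/M)})(\varphi(\bar x;\bar m)) = (\mu_{\bar y} \otimes \delta_{\tp(\bar n/M),\,\bar x})(\varphi(\bar x;\bar y)).$$
Applying the definition of $\otimes$ with base model $M$ (justified by Remark \ref{remark:Morley_convention}, since $\varphi$ is parameter-free) collapses the right-hand side to $F^{\varphi^{\textrm{opp}}(\bar y;\bar x)}_{\mu}(\tp(\bar n/M)) = \mu(\varphi(\bar n;\bar y))$, with the last equality coming from $M$-invariance of $\mu$. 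As every $\CL_{\bar y}(\FC)$-formula can be written in the form $\varphi(\bar n;\bar y)$ for a suitable $\bar n$ and $\varphi$, this yields $\Phi\Psi(\mu) = \mu$.

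For $\Psi\Phi = \id$, I would fix $\eta \in \E(\mathfrak{M}_{\bar x}(M),\conv(G_{\pi,M}))$ and write it as the limit of a net $(\lambda_i\cdot -)_{i\in I}$ with $\lambda_i = \sum_j \beta^i_j\,\delta_{\sigma^i_j}$ and $\sigma^i_j \in G_{\pi,M}$. Setting $\mu := \Phi(\eta)$, a direct evaluation on any formula $\varphi(\bar n;\bar y)$ shows that
$$\mu = \lim_i \sum_j \beta^i_j\,\delta_{(\sigma^i_j)^{-1}(\bar m)} \quad\text{in}\quad \mathfrak{M}_{\bar y}(\FC);$$
the inversion $\sigma^i_j \mapsto (\sigma^i_j)^{-1}$ appears because of the pushforward convention defining the left action of $\aut(M)$. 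Then, for $\nu \in \mathfrak{M}_{\bar x}(M)$ and an $\CL(M)$-formula $\varphi(\bar x;\bar m)$, I would invoke left-continuity of the Morley product under NIP (Fact \ref{fact:MP}(3)) to push the net-limit through, obtaining
$$\Psi(\mu)(\nu)(\varphi(\bar x;\bar m)) = \lim_i \sum_j \beta^i_j\,\bigl(\delta_{(\sigma^i_j)^{-1}(\bar m),\,\bar y} \otimes \nu_{\bar x}\bigr)(\varphi(\bar x;\bar y)).$$
For a Dirac measure at a realized point $\bar a \in M^{\bar y}$, one computes $(\delta_{\bar a,\,\bar y}\otimes \nu_{\bar x})(\varphi(\bar x;\bar y)) = \nu(\varphi(\bar x;\bar a))$, so each summand becomes $\nu(\varphi(\bar x;(\sigma^i_j)^{-1}(\bar m)))$ — which is exactly the $j$-th summand of $\lambda_i(\nu)(\varphi(\bar x;\bar m))$. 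Taking the limit in $i$ yields $\eta(\nu)(\varphi(\bar x;\bar m))$, so $\Psi\Phi(\eta) = \eta$.

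Finally, $\E(\mathfrak{M}_{\bar x}(M),\conv(G_{\pi,M}))$ is a closed subspace of the compact Hausdorff space $\mathfrak{M}_{\bar x}(M)^{\mathfrak{M}_{\bar x}(M)}$, hence compact Hausdorff, while $\mathfrak{M}^{\sfs}_{\pi(\bar m;\bar y)}(\FC,M)$ is Hausdorff; thus the continuous bijection $\Phi$ is automatically a homeomorphism, and so is $\Psi = \Phi^{-1}$. The main technical care lies in the $\Psi\Phi$ step: one must track the inversion $\sigma\mapsto\sigma^{-1}$ when reexpressing $\Phi(\eta)$ as a limit of Dirac measures, and invoke left-continuity of the Morley product at precisely the right moment to exchange the net-limit with the integral that defines $\otimes$.
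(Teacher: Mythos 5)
Your proposal is correct. The $\Phi\Psi=\id$ computation and the final compactness argument are exactly the paper's. The only place you genuinely deviate is in $\Psi\Phi=\id$: the paper evaluates $\Psi\Phi(\gamma)(\nu)(\varphi(\bar x;\bar m))$ as $\int_{q\in S_{\bar x}(M)}\gamma(\delta_q)(\varphi(\bar x;\bar m))\,d\nu$ and then runs a three-step case analysis on $\gamma$ (a single automorphism, a convex combination, a net limit), pushing the limit through at the very end via continuity of $\Phi$ and left-continuity of $\otimes$. You instead work on the measure side: you re-express $\Phi(\eta)$ as $\lim_i\sum_j\beta^i_j\,\delta_{(\sigma^i_j)^{-1}(\bar m)}$ (which is precisely the device the paper uses in its proof that $\Psi$ is well-defined, Lemma \ref{lemma:Psi}), apply left-continuity of the Morley product once, and reduce to the elementary identity $(\delta_{\bar a,\bar y}\otimes\nu_{\bar x})(\varphi(\bar x;\bar y))=\nu(\varphi(\bar x;\bar a))$ for realized $\bar a\in M^{\bar y}$. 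The ingredients are the same — Fact \ref{fact:MP}(3) and the realized-type computation — so neither route buys more generality; yours is slightly more economical in that it collapses the paper's steps one and two into a single formula, at the cost of having to verify carefully (as you do) that the net of global measures $\sum_j\beta^i_j\,\delta_{(\sigma^i_j)^{-1}(\bar m)}$ converges to $\Phi(\eta)$ in $\mathfrak{M}_{\bar y}(\FC)$ and that the inversion bookkeeping is consistent with the pushforward convention. Both are handled correctly, so there is no gap.
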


\begin{proof}
First, we check that $\Phi\Psi=\id$.
    Fix $\mu\in \mathfrak{M}^{\sfs}_{\pi(\bar{m};\bar{y})}(\FC,M)$. We need to show that $\Phi\Psi(\mu)=\mu$.
    So consider any $\CL_{\bar{y}}(\FC)$-formula $\varphi(\bar{n};\bar{y})$, and compute:
    \begin{align*}
    \Phi\Psi(\mu)\big(\varphi(\bar{n};\bar{y})\big) &=   \Psi(\mu) \left(\delta_{\tp(\bar{n}/M)} \right) ( \varphi(\bar{x};\bar{m})) \\
    &=\eta_\mu(\delta_{\tp(\bar{n}/M)}) ( \varphi(\bar{x};\bar{m})) \\
    &= \left(\mu_{\bar{y}} \otimes (\delta_{\tp(\bar{n}/M)})_{\bar{x}} \right) (\varphi(\bar{x};\bar{y})) \\
    &= \mu (\varphi(\bar{n};\bar{y})).
    \end{align*}

    We now argue that $\Psi\Phi=\id$. Fix an arbitrary $\gamma \in \E(\mathfrak{M}_{\bar{x}}(M),\conv(G_{\pi,M}))$, $\nu\in\mathfrak{M}_{\bar{x}}(M)$, and $\varphi(\bar{x};\bar{m})\in\CL_{\bar{x}}(M)$. 
It suffices to show that 
    $$\Psi\Phi(\gamma)(\nu)\big(\varphi(\bar{x};\bar{m})\big)=\gamma(\nu)\big(\varphi(\bar{x};\bar{m})\big).$$
    By applying the definitions on the left hand side, we obtain 
    \begin{align*}
    \Psi\Phi(\gamma)(\nu)\big(\varphi(\bar{x};\bar{m})\big) &= \big(\Phi(\gamma)_{\bar{y}}  \otimes \nu_{\bar{x}} \big)\big(\varphi(\bar{x};\bar{y})\big) \\
    &=  \int_{S_{\bar{x}}(M)} F_{\Phi(\gamma)_{\bar{y}}}^{\varphi^{\textrm{opp}}(\bar{y};\bar{x})} d\nu_{\bar{x}} \\
    &= \int_{q\in S_{\bar{x}}(M)} \big(\gamma(\delta_q)\big)\big(\varphi(\bar{x};\bar{m})\big)d\nu(\bar{x})=(\spadesuit).
    \end{align*}
    We now have three cases depending on $\gamma$. Step one; we compute $(\spadesuit)$ when $\gamma = \sigma \cdot - $ for some $\sigma\in G_{\pi,M}$. Note, 

    \begin{align*}
    (\spadesuit) &= \int\limits_{q\in S_{\bar{x}}(M)} (\sigma_\ast\delta_q)\big(\varphi(\bar{x};\bar{m})\big)d\nu(\bar{x}) = \int\limits_{q\in S_{\bar{x}}(M)} \delta_q\big(\varphi(\bar{x};\sigma^{-1}(\bar{m}))\big)d\nu(\bar{x}) \\
    &= \int\limits_{ S_{\bar{x}}(M)} \mathbf{1}_{[\varphi(\bar{x};\sigma^{-1}(\bar{m}))]}d\nu(\bar{x}) \\
    &= \nu\big( \varphi(\bar{x};\sigma^{-1}(\bar{m})) \big) = (\sigma_\ast \nu)\big( \varphi(\bar{x};\bar{m}) \big)=\gamma(\nu)\big( \varphi(\bar{x};\bar{m}) \big).
    \end{align*}
    Step two; we compute $(\spadesuit)$ when $\gamma$ is a linear combination of elements from $G_{\pi,M}$. Indeed, if $\gamma = \left( \sum\limits_{i\leqslant k}\alpha_i\sigma_i \right) \cdot - $ where each $\sigma_i \in G_{\pi,M}$ and positive real numbers $\alpha_1,\dots,\alpha_k$ such that $\sum_{i\leq k} \alpha_i = 1$, then 
    \begin{align*}
    (\spadesuit) &= \sum\limits_{i\leqslant k}\alpha_i\int\limits_{q\in S_{\bar{x}}(M)}(\sigma_{i\ast}\delta_q)\big( \varphi(\bar{x};\bar{m})\big)d\nu(\bar{x}) \overset{(\dagger)}{=} \sum\limits_{i\leqslant k}\alpha_i (\sigma_{i\ast}\nu)\big(\varphi(\bar{x};\bar{m})\big) =  \\
    &= \left( \left(\sum\limits_{i\leqslant k}\alpha_i\sigma_i \right)\cdot\nu\right)\big(\varphi(\bar{x};\bar{m})\big)=\gamma(\nu)\big(\varphi(\bar{x};\bar{m})\big),
    \end{align*}
where equation $(\dagger)$ follows from the computation in step one. Step three; we assume that $\gamma$ is a limit of a net $(\lambda_i\cdot-)_{i\in I}$, where $\lambda_i\in\conv(G_{\pi,M})$ for each $i\in I$. By continuity of $\Phi$ and left-continuity of the Morely product in NIP theories (Fact \ref{fact:MP}), 
    \begin{align*}
    \Psi(\Phi(\gamma))(\nu)\big(\varphi(\bar{x};\bar{m})\big) &= \eta_{\Phi(\gamma)}(\nu)(\varphi(\bar{x};\bar{m})) \\ 
    &= (\Phi(\gamma)_{\bar{y}} \otimes \nu_{\bar{x}})(\varphi(\bar{x};\bar{y})) \\ 
    &= \left(\Phi \left(\lim_{i \in I} \lambda_i \cdot - \right)_{\bar{y}} \otimes \nu_{\bar{x}} \right)(\varphi(\bar{x};\bar{y})) \\ 
    &= \left(\lim\limits_{i\in I}\Phi\big(\lambda_i\cdot-\big)_{\bar{y}} \otimes \nu_{\bar{x}} \right) (\varphi(\bar{x};\bar{y}))\\
    &= \lim\limits_{i\in I}\left(\Phi\big(\lambda_i\cdot-\big)_{\bar{y}} \otimes \nu_{\bar{x}} \right)(\varphi(\bar{x};\bar{y})) \\
    &\overset{}{=} \lim\limits_{i\in I} \Big( \Psi\Phi(\lambda_i\cdot-)(\nu)\left(\varphi(\bar{x};\bar{m})\right)\Big) \\
    &\overset{(\ddagger)}{=} 
\lim\limits_{i\in I} \Big( (\lambda_i (\nu))(\varphi(\bar{x};\bar{m}))\Big) \\
    &= \gamma(\nu)\big(\varphi(\bar{x};\bar{m})\big),
    \end{align*}
where equation $(\ddagger)$ follows from the computation in step two. 

Thus, $\Psi=\Phi^{-1}$ and $\Phi$ is a continuous bijection between two compact (Hausdorff) spaces. Hence,  both $\Phi$ and $\Psi$ are homeomorphisms.
\end{proof}

Let us note that when $T$ is NIP and $\pi(\bar{x};\bar{y})$ is group-like over $M$, we have the following commutative diagram 
(where $\approx$ means homeomorphism):

$$\xymatrix{& \E(\mathfrak{M}_{\bar{x}}(M),\conv(G_{\pi,M})) \ar[rr]^-{\approx}_-{\Phi}& & \mathfrak{M}^{\sfs}_{\pi(\bar{m};\bar{y})}(\FC,M)  \\
\E(\mathfrak{M}_{\bar{x}}(M),G_{\pi,M}) \ar[ur]_-{\subseteq}^-{\text{closed}} &   & & \\
& E:=\E(S_{\bar{x}}(M),G_{\pi,M}) \ar[rr]^-{\approx}_-{\Phi|_E} 
\ar[ul]^-{\Gamma}_-{\cong}
\ar[uu]
& & S^{\sfs}_{\pi(\bar{m};\bar{y})}(\FC,M) \ar[uu]_-{p\mapsto\delta_p}^-{
\substack{\text{homeomorphic} \\ \text{embedding}}}}$$

In the above diagram, the map $\Gamma$ is the inverse of the canonical restriction map from $\E(\mathfrak{M}_{\bar{x}}(M),\conv(\aut(M)))$ to $E$. 
We recall that this map is an isomorphism of topological semigroups by Proposition \ref{proposition:NIP.isomorphisms}(3). With a slight abuse of notation, we identify $E$ with its isomorphic copy in $\E(\mathfrak{M}_{\bar{x}}(M),\conv(G_{\pi,M}))$ to be able to compute $\Phi|_E$. 
Since $\Phi$ takes values in $\mathfrak{M}^{\sfs}_{\pi(\bar{m};\bar{y})}(\FC,M)$, using the definition of $\Phi$, one easily gets that the image of $\Phi|_{E}$ is contained in $S_{\pi(\bar{m};\bar{y})}^{\sfs}(\FC,M)$ (see Remark \ref{remark: formula for Phi|_E}).
We will show that $\Phi|_{E}$ is a homeomorphism. But before that let us give an explicit formula for $\Phi|_E$.

\begin{remark}\label{remark: formula for Phi|_E}
$\Phi|_E$ takes values in $S^{\sfs}_{\pi(\bar{m};\bar{y})}(\FC,M)$ and is explicitly given by the formula
$\Phi|_E(\eta)=\{\varphi(\bar{n};\bar{y})\;\colon\;\varphi(\bar{x};\bar{m})\in \eta\big(\tp(\bar{n}/M)\big)\}$ for each $\eta \in E$.
\end{remark}

\begin{proof}
Directly from the definition of $\Phi$ we get that
    \begin{align*}
     \Phi(\Gamma(\eta))\big(\varphi(\bar{n};\bar{y})\big)=(\Gamma(\eta)(\delta_{\tp(\bar{n}/M)}))\big(\varphi(\bar{x};\bar{m})\big) = \delta_{\eta(\tp(\bar{n}/M))}\big(\varphi(\bar{x};\bar{m})\big) \in \{0,1\},
    \end{align*}
which is equal to $1$ if and only if $\varphi(\bar{x};\bar{m})\in \eta\big(\tp(\bar{n}/M)\big)$.
Thus, 
$$\Phi|_E(\eta) = \Phi(\Gamma(\eta)) \in \mathfrak{M}^{\sfs}_{\pi(\bar{m};\bar{y})}(\FC,M) \cap S_{\bar{y}}(\FC)\subseteq S^{\sfs}_{\pi(\bar{m};\bar{y})}(\FC,M),$$ and  $\varphi(\bar{n};\bar{y}) \in  \Phi|_E(\eta)$ if and only if $\varphi(\bar{x};\bar{m})\in \eta\big(\tp(\bar{n}/M)\big)$.
\end{proof}

\begin{proposition}
In the above diagram, we indeed have that
$$\Phi[E]=S^{\sfs}_{\pi(\bar{m};\bar{y})}(\FC,M),$$
and so $\Phi|_E$ is a homeomorphism.
\end{proposition}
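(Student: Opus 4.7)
The plan is to exhibit surjectivity directly: given $p \in S^{\sfs}_{\pi(\bar{m};\bar{y})}(\FC,M)$, construct $\eta \in E$ with $\Phi|_E(\eta) = p$. The opposite inclusion $\Phi[E] \subseteq S^{\sfs}_{\pi(\bar{m};\bar{y})}(\FC,M)$ is already recorded in Remark \ref{remark: formula for Phi|_E}. Once surjectivity is established, $\Phi|_E$ will be a continuous bijection from the compact space $E$ onto the Hausdorff space $S^{\sfs}_{\pi(\bar{m};\bar{y})}(\FC,M)$, with continuity and injectivity inherited from $\Phi$ via the isomorphism $\Gamma$ of Proposition \ref{proposition:NIP.isomorphisms}(3), hence a homeomorphism.

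The central step is to produce a net $(\sigma_i)_{i \in I}$ in $G_{\pi,M}$ such that $\tp(\sigma_i(\bar{m})/\FC) \to p$ in $S_{\bar{y}}(\FC)$. I would index $I$ by the finite conjunctions $\bigwedge_{\ell \leq k}\varphi_\ell(\bar{y};\bar{b}_\ell)$ belonging to $p$, directed by inclusion. For any such fragment, $\pi$-strong finite satisfiability of $p$ supplies $\bar{a} \in M^{|\bar{y}|}$ with $\models \pi(\bar{m};\bar{a}) \wedge \bigwedge_\ell \varphi_\ell(\bar{a};\bar{b}_\ell)$. Only the finitely many coordinates of $\bar{a}$ appearing in the $\bar{y}$-slots of the $\varphi_\ell$ are relevant, so condition~(3) of group-likeness yields $\sigma \in G_{\pi,M}$ agreeing with the assignment $\bar{m} \mapsto \bar{a}$ on those coordinates; then $\models \varphi_\ell(\sigma(\bar{m});\bar{b}_\ell)$ for every $\ell$, placing $\tp(\sigma(\bar{m})/\FC)$ in the clopen neighborhood of $p$ cut out by the fragment.

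By compactness, I then pass to a subnet of $(\sigma_i^{-1}\cdot -)_{i \in I}$ that converges pointwise on $S_{\bar{x}}(M)$ to some $\eta$. Since $G_{\pi,M}$ is a group by condition~(2), this subnet lies in $\{\tau \cdot - : \tau \in G_{\pi,M}\}$, so the limit $\eta$ belongs to $E$. To verify $\Phi|_E(\eta) = p$, I fix $\bar{n} \in \FC^{\bar{x}}$ and an $\mathcal{L}$-formula $\varphi(\bar{x};\bar{y})$ and chase through Remark \ref{remark: formula for Phi|_E}: the condition $\varphi(\bar{n};\bar{y}) \in \Phi|_E(\eta)$ is equivalent to $\varphi(\bar{x};\bar{m}) \in \eta(\tp(\bar{n}/M))$, which by convergence of the subnet means $\varphi(\bar{x};\bar{m}) \in \sigma_i^{-1} \cdot \tp(\bar{n}/M)$ eventually, i.e., $\models \varphi(\bar{n};\sigma_i(\bar{m}))$ eventually. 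On the other hand, the convergence $\tp(\sigma_i(\bar{m})/\FC) \to p$ (still holding along the subnet) asserts precisely that $\varphi(\bar{n};\bar{y}) \in p$ iff $\models \varphi(\bar{n};\sigma_i(\bar{m}))$ eventually. The two conditions match, giving $\Phi|_E(\eta) = p$.

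The main technical hurdle is the construction of the pure-automorphism net in the second step. Strong finite satisfiability alone only furnishes a realizing tuple $\bar{a}$ from $M$; without condition~(3) of group-likeness, one could at best approximate via convex combinations (as in the proof of Lemma \ref{lemma:Psi}), and the resulting element of $\E(\mathfrak{M}_{\bar{x}}(M),\conv(G_{\pi,M}))$ would have no reason to restrict to the smaller sub-semigroup $E$. It is the synergy between $\pi$-sfs (providing $\bar{a}$) and homogeneity (upgrading $\bar{a}$ to an honest $\sigma \in G_{\pi,M}$) that converts the measure-level approximation available from the general homeomorphism $\Phi$ into a type-level approximation by elements of $G_{\pi,M}$ itself.
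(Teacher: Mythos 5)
Your proof is correct and follows essentially the same route as the paper's: both arguments use $\pi$-strong finite satisfiability together with condition (3) of group-likeness to produce a net $(\sigma_i)_{i\in I}$ in $G_{\pi,M}$ with $\tp(\sigma_i(\bar m)/\FC)\to p$, pass to a convergent subnet of $(\sigma_i^{-1}\cdot -)_{i\in I}$ in the compact space $E$, and verify $\Phi|_E(\eta)=p$ via the explicit formula of Remark \ref{remark: formula for Phi|_E}. Your write-up merely spells out in more detail the construction of the approximating net, which the paper's proof states more tersely.
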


\begin{proof}
We only need to argue that 
$\Phi[E]=S^{\sfs}_{\pi(\bar{m};\bar{y})}(\FC,M)$.
Consider any $q \in S_{\pi(\bar{m};\bar{y})}^{\sfs}(\FC,M)$. Choose a net $(p_i)_{i \in I}$ of types realized in $M$ and converging to $q$ each of which satisfies the partial type $\pi(\bar{m};\bar{y})$. Since $\pi(\bar{x};\bar{y})$ is group-like over $M$, we may find those types $p_i$ to be of the form $\tp(\sigma_i(\bar{m})/\FC)$ for some $\sigma_i\in G_{\pi,M}$. 
Passing to a convergent subnet, we may assume that the net $(\sigma_i^{-1} \cdot -)_{i \in I}$ of elements of $E$ converges to some $\eta \in E$. 
Then, by Remark \ref{remark: formula for Phi|_E}, $\varphi(\bar n;\bar y) \in (\Phi|_E)(\eta)$ if and only if there exists $i_0 \in I$ such that $\varphi(\bar{x};\bar{m})\in \sigma_i^{-1}(\tp(\bar{n}/M))$ for all $i>i_0$. This is equivalent to the existence of $i_0 \in I$ such that $\varphi(\bar{x};\sigma_i(\bar{m}))\in \tp(\bar{n}/M)$ for all $i>i_0$, which in turn is equivalent to the condition $\varphi(\bar n; \bar y) \in q$. Thus, $\Phi|_{E}(\eta)=q$.
\end{proof}

Let us also give an explicit formula for $(\Phi|_E)^{-1} = \Psi|_{S^{\sfs}_{\pi(\bar{m};\bar{y})}(\FC,M)}$.

\begin{remark}\label{remark: formula for Phi|_E -1}
$(\Phi|_E)^{-1}(p)(r)= \{\varphi(\bar{x};\bar{m})\;\colon\;(\forall \bar{m}'\models r)\big(\,\varphi(\bar{m}';\bar{y})\in p\,\big)\}$ for each $p \in S^{\sfs}_{\pi(\bar{m};\bar{y})}(\FC,M)$ and $r \in S_{\bar x}(M)$.
\end{remark}

\begin{proof}
Pick $\bar m' \in  \FC^{\bar{x}}$ so that $r=\tp(\bar{m}'/M)$, and consider any formula $\varphi(\bar{x};\bar{m})\in\CL_{\bar{x}}(M)$.
 Then
    \begin{align*}
    \Psi(p)(r)\big(\varphi(\bar{x};\bar{m})\big) &= (\delta_p\otimes\delta_r)\big(\varphi(\bar{x};\bar{y})\big) \\
    &= \int\limits_{\substack{q\in S_{\bar{x}}(M) \\ \bar{b}\in q(\FC)}} \delta_p\big(\varphi(\bar{b};\bar{y})\big)d\delta_r= \delta_p\big(\varphi(\bar{m}';\bar{y})\big).
    \end{align*}
Since $\varphi(\bar{x};\bar{m}) \in (\Phi|_E)^{-1}(r)$ if and only if $\Psi(p)(r)\big(\varphi(\bar{x};\bar{m})\big)=1$, we conclude that 
\begin{equation*}
(\Phi|_E)^{-1}(p)(r)=\{\varphi(\bar{x};\bar{m})\;\colon\;(\forall \bar{m}'\models r)\big(\,\varphi(\bar{m}';\bar{y})\in p\,\big)\}.\qedhere
\end{equation*}
\end{proof}

In \cite[Proposition 3.14]{Udi_def_patterns}, the author found a correspondence between the endomorphism group $\textrm{End}(S_{\bar x}(M))$ of $S_{\bar x}(M)$ with respect to  the {\em definability patterns structure on $S_{\bar x}(M)$} and the space $S^{\fs}_{\bar m}(\FC,M)$. Remarks \ref{remark: formula for Phi|_E} and \ref{remark: formula for Phi|_E -1} applied to the type $\pi(\bar x;\bar y):= (\bar x \equiv_\emptyset \bar y)$ in the context of a strongly $\aleph_0$-homogeneous model $M$ recover the restriction of Hrushovski's correspondence to the Ellis semigroup $E$ (which is a subsemigroup of $\textrm{End}(S_{\bar x}(M))$) with the target space of the restricted correspondence being the subspace $S^{\sfs}_{\bar{m}}(\FC,M)$ of $S^{\fs}_{\bar{m}}(\FC,M)$.

\subsubsection{Transferring product} 
The purpose of this short section is to transfer the semigroup operation in $\E(\mathfrak{M}_{\bar{x}}(M),\conv(G_{\pi,M}))$ via $\Phi$ and $\Psi$ to a new semigroup operation on $\mathfrak{M}^{\sfs}_{\pi(\bar{m};\bar{y})}(\FC,M)$, when $T$ is NIP and $\pi(\bar{x};\bar{y})$ is group-like over $M$. An explicit formula for this new semigroup operation will be derived later in Proposition \ref{prop:star.product.1}.

\begin{definition}\label{def:star.product.definition.0}
    Suppose that $T$ is NIP and $\pi(\bar{x};\bar{y})$ is group-like over $M$. Let $\mu$ and $\nu$ be Keisler measures in $\mathfrak{M}^{\sfs}_{\pi(\bar{m};\bar{y})}(\FC,M)$. We define $\mu\ast\nu\in \mathfrak{M}^{\sfs}_{\pi(\bar{m};\bar{y})}(\FC,M)$ via
    $$\mu\ast\nu:=\Phi(\Psi(\mu)\circ \Psi(\nu)).$$
\end{definition}

\begin{theorem}\label{thm: Newelski's analogon} Suppose that $T$ is NIP and $\pi(\bar{x};\bar{y})$ is group-like over $M$.
Then $(\mathfrak{M}^{\sfs}_{\pi(\bar{m};\bar{y})}(\FC,M),\ast)$ is a left topological semigroup (i.e.,  $*$ is left-continuous). Moreover, $(S^{\sfs}_{\pi(\bar{m};\bar{y})}(\FC,M),\ast)$ is isomorphic to a closed sub-semigroup of $(\mathfrak{M}^{\sfs}_{\pi(\bar{m};\bar{y})}(\FC,M),\ast)$, and we have the following isomorphisms of topological semigroups 
(vertical maps are formally embeddings, but after identifications we can assume that they are inclusions):
    $$\xymatrixcolsep{5pc}\xymatrix{\E(\mathfrak{M}_{\bar{x}}(M),\conv(G_{\pi,M})) \ar[r]^-{\cong}_-{\Phi} &  \big( \mathfrak{M}^{\sfs}_{\pi(\bar{m};\bar{y})}(\FC,M),\ast\big) \\
    \E(S_{\bar{x}}(M),G_{\pi,M}) \ar[r]^-{\cong}_-{\Phi}
    \ar[u]^-{\leqslant}& \big( S^{\sfs}_{\pi(\bar{m};\bar{y})}(\FC,M),\ast\big)\ar[u]^-{\leqslant}}.$$
\end{theorem}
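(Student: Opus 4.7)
The plan is transport of structure via the homeomorphism $\Phi$. By Definition \ref{def:star.product.definition.0}, $\Phi$ is tautologically a semigroup homomorphism from $(\E(\mathfrak{M}_{\bar{x}}(M),\conv(G_{\pi,M})), \circ)$ onto $(\mathfrak{M}^{\sfs}_{\pi(\bar{m};\bar{y})}(\FC,M), \ast)$; since $\Phi$ is already known to be a homeomorphism and composition is associative, $\ast$ is automatically associative and $\Phi$ becomes an isomorphism of topological semigroups. This yields the top horizontal arrow in the diagram.

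Next, I would verify left-continuity of $\ast$ by fixing $\nu$ and decomposing the map $\mu \mapsto \mu \ast \nu$ as $\mu \mapsto \Psi(\mu) \mapsto \Psi(\mu) \circ \Psi(\nu) \mapsto \Phi(\Psi(\mu) \circ \Psi(\nu))$. The outer two maps are continuous since $\Phi$ and $\Psi$ are homeomorphisms, and the middle map is continuous because composition in every Ellis semigroup is left-continuous by construction.

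For the bottom isomorphism, I would identify each type $p \in S^{\sfs}_{\pi(\bar{m};\bar{y})}(\FC,M)$ with its Dirac measure $\delta_p$. Proposition \ref{proposition:NIP.isomorphisms}(3) identifies $E = \E(S_{\bar{x}}(M), G_{\pi,M})$ with $\E(\mathfrak{M}_{\bar{x}}(M), G_{\pi,M})$ as topological semigroups via the map $\Gamma$. Since both $\E(\mathfrak{M}_{\bar{x}}(M), G_{\pi,M})$ and $\E(\mathfrak{M}_{\bar{x}}(M), \conv(G_{\pi,M}))$ arise as topological closures (of the corresponding orbits) in the same function space $\mathfrak{M}_{\bar{x}}(M)^{\mathfrak{M}_{\bar{x}}(M)}$ with pointwise composition as their semigroup operation, $\Gamma[E]$ is a closed sub-semigroup of the larger Ellis semigroup. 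The proposition immediately preceding identifies $\Phi[\Gamma[E]]$ with $S^{\sfs}_{\pi(\bar{m};\bar{y})}(\FC,M)$; combining this with the already-established homeomorphism and the semigroup isomorphism above, $\Phi|_E$ is a topological semigroup isomorphism onto $S^{\sfs}_{\pi(\bar{m};\bar{y})}(\FC,M)$, which is therefore a closed sub-semigroup of $(\mathfrak{M}^{\sfs}_{\pi(\bar{m};\bar{y})}(\FC,M), \ast)$.

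Most of the substantive work has been done in constructing $\Phi$ and in the earlier proposition computing its image on $E$, so the theorem is essentially a corollary. The only step meriting attention is the observation that $\Gamma[E]$ is a sub-semigroup (not merely a closed subset) of the ambient Ellis semigroup; this is immediate once one notes that the operation on both sides is inherited from pointwise composition in $\mathfrak{M}_{\bar{x}}(M)^{\mathfrak{M}_{\bar{x}}(M)}$, so the inclusion is automatically a semigroup homomorphism.
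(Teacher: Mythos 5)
Your proposal is correct and is exactly the argument the paper intends: the theorem is a transport-of-structure corollary of Definition \ref{def:star.product.definition.0}, the homeomorphism $\Phi\Psi=\Psi\Phi=\id$, Proposition \ref{proposition:NIP.isomorphisms}(3), and the proposition identifying $\Phi[E]$ with $S^{\sfs}_{\pi(\bar{m};\bar{y})}(\FC,M)$ (which is why the paper states it without a separate proof). Your explicit verification of left-continuity via $\Psi$, left-continuity of $\circ$ in the Ellis semigroup, and $\Phi$, and your observation that $\Gamma[E]$ sits inside the larger Ellis semigroup as a closed sub-semigroup under pointwise composition, are precisely the details being left to the reader.
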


\begin{theorem}\label{thm: mother star space}
    Suppose that $T$ is NIP,
and  let $\pi_1(\bar{x};\bar{y})\subseteq\pi_2(\bar{x};\bar{y})$ be partial types group-like over $M$.
    Then naturally $G_{\pi_2,M}\leqslant G_{\pi_1,M}$, and so
    $\E(\mathfrak{M}_{\bar{x}}(M),\conv(G_{\pi_2,M}))\subseteq \E(\mathfrak{M}_{\bar{x}}(M),\conv(G_{\pi_1,M}))$ and $\mathfrak{M}^{\sfs}_{\pi_2(\bar{m};\bar{y})}(\FC,M) \subseteq 
    \mathfrak{M}^{\sfs}_{\pi_1(\bar{m};\bar{y})}(\FC,M)$ (and similarly for types). 
    Moreover, the maps $\Phi$ and $\Psi$ defined for $\pi_2$ are the restrictions of the corresponding maps defined for $\pi_1$, the following diagram commutes, and each restriction of $\Phi$ in it is an isomorphism of topological semigroups:
    \ \\
    \begin{center}
    \adjustbox{scale=0.75,center}{
    \begin{tikzcd}
        \E(\mathfrak{M}_{\bar{x}}(M),\conv(G_{\pi_1,M})) \arrow[rr, "\Phi"]  & & \big(\mathfrak{M}^{\sfs}_{\pi_1(\bar{m};\bar{y})}(\FC,M),\,\ast\big) \arrow[from=dd, "\leqslant", crossing over, near start, swap] & \\
        & \E(S_{\bar{x}}(M),G_{\pi_1,M}) \arrow[ul, "\leqslant"]  \arrow[rr, "\Phi", crossing over, near start] & & \big(S^{\sfs}_{\pi_1(\bar{m};\bar{y})}(\FC,M),\,\ast\big) \arrow[ul, "\leqslant", swap]  \\
        \E(\mathfrak{M}_{\bar{x}}(M),\conv(G_{\pi_2,M})) \arrow[uu, "\leqslant"] \arrow[rr, "\Phi", near end]
         & & \big(\mathfrak{M}^{\sfs}_{\pi_2(\bar{m};\bar{y})}(\FC,M),\,\ast\big)
         & \\
        & \E(S_{\bar{x}}(M),G_{\pi_2,M}) 
        \arrow[uu, "\leqslant", crossing over, near start, swap] 
        \arrow[ul, "\leqslant"] \arrow[rr, "\Phi"] 
        & & \big(S^{\sfs}_{\pi_2(\bar{m};\bar{y})}(\FC,M),\,\ast\big) \arrow[ul, "\leqslant", swap] \arrow[uu, "\leqslant", swap].
    \end{tikzcd}
}
\end{center}
\end{theorem}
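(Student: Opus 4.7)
The plan is to verify the claims layer by layer, starting from the set-theoretic inclusions at the bottom of the diagram and propagating them upwards, and then to argue that the maps $\Phi$ and $\Psi$ attached to $\pi_2$ are literally restrictions of those attached to $\pi_1$, so that the whole diagram commutes tautologically.

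First, since $\pi_1\subseteq\pi_2$, for any $\sigma\in\aut(M)$ the statement $\models\pi_2(\sigma(\bar m);\bar m)$ implies $\models\pi_1(\sigma(\bar m);\bar m)$, which yields $G_{\pi_2,M}\leqslant G_{\pi_1,M}$ and consequently $\conv(G_{\pi_2,M})\subseteq\conv(G_{\pi_1,M})$. Passing to the closures inside $\mathfrak{M}_{\bar x}(M)^{\mathfrak{M}_{\bar x}(M)}$ (resp.\ $S_{\bar x}(M)^{S_{\bar x}(M)}$) gives the required vertical inclusions of Ellis semigroups on the left-hand column of the cube. For the right-hand column, observe that $[\pi_2(\bar m;\bar y)]\subseteq[\pi_1(\bar m;\bar y)]$, so any $\mu$ concentrated on $[\pi_2(\bar m;\bar y)]$ is concentrated on $[\pi_1(\bar m;\bar y)]$, and the $\pi_2$-sfs condition implies the $\pi_1$-sfs condition (a witness $\bar a$ with $\models\pi_2(\bar m;\bar a)\wedge\varphi(\bar n;\bar a)$ also satisfies $\pi_1(\bar m;\bar a)$). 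The analogous containments for types follow from Fact \ref{fact: support of sfs measures}.

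Next I would establish that the formulas defining $\Phi$ and $\Psi$ are intrinsically ``$\pi$-free''. Indeed, the defining equation
$\Phi(\eta)\bigl(\varphi(\bar n;\bar y)\bigr)=\bigl(\eta(\delta_{\tp(\bar n/M)})\bigr)\bigl(\varphi(\bar x;\bar m)\bigr)$
mentions only $\eta$, $M$, and $\bar m$, not $\pi$. Hence for any $\eta\in\E(\mathfrak{M}_{\bar x}(M),\conv(G_{\pi_2,M}))$ viewed inside the bigger Ellis semigroup, its images under $\Phi_{\pi_1}$ and $\Phi_{\pi_2}$ coincide as elements of $\mathfrak{M}_{\bar y}(\FC)$; moreover this common image automatically lies in $\mathfrak{M}^{\sfs}_{\pi_2(\bar m;\bar y)}(\FC,M)$ by rerunning the well-definedness argument of $\Phi$ with $\pi_2$ in place of $\pi_1$. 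So $\Phi_{\pi_2}=\Phi_{\pi_1}\!\!\restriction_{\E(\mathfrak{M}_{\bar x}(M),\conv(G_{\pi_2,M}))}$. The same remark applies to the restricted map on types. Symmetrically, the Morley-product formula
$\Psi(\mu)(\nu)\bigl(\varphi(\bar x;\bar m)\bigr)=(\mu_{\bar y}\otimes\nu_{\bar x})\bigl(\varphi(\bar x;\bar y)\bigr)$
is $\pi$-independent; and the proof of Lemma \ref{lemma:Psi}, read over $\pi_2$, produces the approximating net $\lambda_i=\sum_j\alpha_j^i\delta_{\sigma_j^i(\bar m)}$ with $\sigma_j^i\in G_{\pi_2,M}$ (using that $\mu$ is $\pi_2$-sfs together with clause~(3) of group-likeness for $\pi_2$). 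This shows that $\Psi_{\pi_2}(\mu)$ lies in the smaller Ellis semigroup, and since the value is uniquely determined by the same formula, $\Psi_{\pi_2}=\Psi_{\pi_1}\!\!\restriction_{\mathfrak{M}^{\sfs}_{\pi_2(\bar m;\bar y)}(\FC,M)}$.

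With these two ``restriction'' identities in hand the commutativity of the cube is immediate: the front and back faces commute by Theorem \ref{thm: Newelski's analogon} (applied separately to $\pi_1$ and $\pi_2$), the top and bottom faces commute because $\Phi_{\pi_2}=\Phi_{\pi_1}|\cdot$ and $\Phi_{\pi_1}$ maps the $\Gamma$-image of $E$ for $\pi_i$ into the corresponding type subspace, and the side faces are the trivial inclusions. Each horizontal $\Phi$ is an isomorphism of topological semigroups by Theorem \ref{thm: Newelski's analogon}. Finally, the $*$-operation is compatible with the vertical inclusions because $\mu*\nu=\Phi(\Psi(\mu)\circ\Psi(\nu))$ is defined by composition inside the ambient Ellis semigroup $\E(\mathfrak{M}_{\bar x}(M),\conv(G_{\pi_1,M}))$, and by the restriction identities this composition keeps us inside $\E(\mathfrak{M}_{\bar x}(M),\conv(G_{\pi_2,M}))$ whenever both factors come from $\mathfrak{M}^{\sfs}_{\pi_2(\bar m;\bar y)}(\FC,M)$.

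The only point that requires genuine (but minor) care is checking that the net $(\sigma_j^i)$ witnessing $\Psi_{\pi_2}(\mu)\in\E(\mathfrak{M}_{\bar x}(M),\conv(G_{\pi_2,M}))$ can be taken inside $G_{\pi_2,M}$; this is where group-likeness of $\pi_2$ (property (3)) and $\pi_2$-strong finite satisfiability of $\mu$ are both essential. Everything else is a diagram chase on top of Theorem \ref{thm: Newelski's analogon}.
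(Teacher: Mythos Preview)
Your proposal is correct and is precisely the detailed unpacking of what the paper means by its one-line proof ``By examining the definitions.'' You have spelled out the layer-by-layer verification that the paper leaves implicit: the inclusions follow from $\pi_1\subseteq\pi_2$, the maps $\Phi$ and $\Psi$ have $\pi$-independent defining formulas, and the diagram commutes by applying Theorem~\ref{thm: Newelski's analogon} to each $\pi_i$ separately together with the restriction identities.
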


\begin{proof}
    By examining the definitions.
\end{proof}

By Theorem \ref{thm: mother star space},  we see that all the semigroups of the form $\mathfrak{M}^{\sfs}_{\pi(\bar{m};\bar{y})}(\FC,M)$ are subsemigroups of the semigroup $\mathfrak{M}^{\sfs}_{\bar m}(\FC,M)$ (i.e., the one obtained for $\pi(\bar x;\bar y)$ equal to $\bar x \equiv \bar y$).

\subsection{New semigroup of Keisler measures}\label{subsec:star.definitions}
In this section, we first find an explicit formula for the $*$ operation from Definition \ref{def:star.product.definition.0}. Next, we extend it to a much larger class of measures and develop the fundamentals of the theory of the obtained convolution product.

\subsubsection{Generalizing the $\ast$-product}\label{subsec: star new definition} 
Here, we let $M$ be an arbitrary small elementary substructure of the monster model $\FC$
and $\FC'\succ \FC$ be a monster model in which $\FC$ is small. 
Throughout the section, types in $S_{\bar{m}}(\FC)$ and measures in $\mathfrak{M}_{\bar{m}}(\FC)$ will be in variables $\bar{y}$. We do this so that one can easily see how the $*$-product can be generalized without too much variable confusion. For each type $p(\bar{y})\in S_{\bar{m}}(\FC)$ there exists $\sigma\in\aut(\FC')$ such that $p(\bar{y})=\tp(\sigma(\bar{m})/\FC)$, so we can always present such types in that form. 
By $\bar x$ we will denote another tuple of variables corresponding to $\bar m$. We begin by defining some auxiliary maps which will allow us to define the product.

\begin{definition}
    Take $\bar{b}\in \FC^{\bar{x}}$ and consider the following map
    $$h_{\bar{b}}:S_{\bar{m}}(\FC)\to S_{\bar{x}}(M),$$
    defined by:
    $$h_{\bar{b}}:\tp(\sigma(\bar{m})/\FC)\mapsto\tp(\sigma^{-1}(\bar{b})/M).$$
\end{definition}

We show that $h_{\bar{b}}$ is well-defined. Indeed, suppose that $\sigma(\bar{m})\equiv_\FC\tau(\bar{m})$. 
Then there exists $\gamma \in\aut(\FC'/\FC)$ such that $\gamma \sigma(\bar{m})=\tau(\bar{m})$, i.e. $\tau^{-1}\gamma \sigma(\bar{m})=\bar{m}$ and so $\tau^{-1}\gamma \sigma\in\aut(\FC'/M)$. Since $\gamma$ fixes $\FC$ pointwise,
$$\tau^{-1}(\bar{b})=(\tau^{-1}\gamma\sigma)\sigma^{-1}(\bar{b}).$$
Hence $\tau^{-1}(\bar{b})\equiv_M\sigma^{-1}(\bar{b})$, and so we conclude that $h_{\bar{b}}$ is well-defined.

\begin{lemma}\label{prop:compute} Let $\bar{b}\in \FC^{\bar{x}}$ and 
$\varphi(\bar{x};\bar{m})\in\CL_{\bar{x}}(M)$. Then 
$$h_{\bar{b}}^{-1}[\theta(\bar{x};\bar{m})]=[\theta(\bar{b};\bar{y})].$$

As consequence,
\begin{enumerate} 
\item The map $h_{\bar{b}}: S_{\bar{m}}(\FC) \to S_{\bar{x}}(M)$ is continuous. 
\item For any $\mu \in \mathfrak{M}_{\bar{y}}(\FC)$, we have that $((h_{\bar{b}})_{*}\mu)(\theta(\bar{x};\bar{m})) = \mu(\theta(\bar{b};\bar{y}))$. 
\end{enumerate}  
\end{lemma}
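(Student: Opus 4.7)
The plan is to prove the set-theoretic identity first and then obtain the two consequences as immediate corollaries.

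For the main identity, I would fix $p \in S_{\bar{m}}(\FC)$ and write it as $p = \tp(\sigma(\bar{m})/\FC)$ for some $\sigma \in \aut(\FC')$. By definition, $p \in h_{\bar{b}}^{-1}[\theta(\bar{x};\bar{m})]$ means that $\tp(\sigma^{-1}(\bar{b})/M) \in [\theta(\bar{x};\bar{m})]$, i.e. $\models \theta(\sigma^{-1}(\bar{b});\bar{m})$. Applying the automorphism $\sigma$ to both sides of this formula, this is equivalent to $\models \theta(\bar{b};\sigma(\bar{m}))$, which is exactly the statement that $\theta(\bar{b};\bar{y}) \in p$, i.e. $p \in [\theta(\bar{b};\bar{y})]$. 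So the equality $h_{\bar{b}}^{-1}[\theta(\bar{x};\bar{m})] = [\theta(\bar{b};\bar{y})]$ drops out immediately; the only mild subtlety is to double-check that the statement does not depend on the choice of representative $\sigma$, but this was already handled when $h_{\bar{b}}$ was shown to be well-defined.

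For continuity in (1), I would just observe that the basic clopen sets of $S_{\bar{x}}(M)$ are exactly the sets of the form $[\theta(\bar{x};\bar{m})]$ with $\theta(\bar{x};\bar{m}) \in \CL_{\bar{x}}(M)$, and the identity we just proved shows that the preimage of each such set is clopen in $S_{\bar{m}}(\FC)$; hence $h_{\bar{b}}$ is continuous.

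For (2), the standard definition of pushforward together with the main identity gives
\[
((h_{\bar{b}})_{*}\mu)(\theta(\bar{x};\bar{m})) = \mu\!\left(h_{\bar{b}}^{-1}[\theta(\bar{x};\bar{m})]\right) = \mu([\theta(\bar{b};\bar{y})]) = \mu(\theta(\bar{b};\bar{y})).
\]
There is no real obstacle here; the only point requiring a little care is keeping track of which variables are the $\bar{m}$-variables (i.e.\ $\bar{y}$) and which are the $\bar{x}$-variables, and making sure the automorphism manipulation $\models\theta(\sigma^{-1}(\bar b);\bar m) \iff \models\theta(\bar b; \sigma(\bar m))$ is carried out with the correct placement of $\sigma$.
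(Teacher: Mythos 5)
Your proof is correct and is exactly the argument the paper intends: the paper's own proof of this lemma is just the remark ``Follows directly from the definitions,'' and your unwinding of those definitions (writing $p=\tp(\sigma(\bar m)/\FC)$, transporting the formula by $\sigma$, and reading off (1) and (2) from the identity) is the standard way to fill that in. No gaps.
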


\begin{proof} Follows directly from the definitions. 
\end{proof} 

\begin{definition}
    Let $\bar{b}\in \FC^{\bar{x}}$, $\varphi(\bar{x};\bar{y})\in\CL_{\bar{x};\bar{y}}(\emptyset)$ 
    and $\mu\in\mathfrak{M}_{\bar{y}}^{\inv}(\FC,M)$.
    We define the map
    $$G^{\varphi(\bar{b};\bar{y})}_{\mu}:S_{\bar{m}}(\FC)\to[0,1]$$
    by
    $$G^{\varphi(\bar{b};\bar{y})}_{\mu}\big(\tp(\sigma(\bar{m})/\FC)\big):= \hat{\mu}\big(\varphi(\sigma^{-1}(\bar{b});\bar{y})\big),$$
    where $\hat{\mu}$ is the unique extension of $\mu$ to a measure in $\mathfrak{M}_{\bar{y}}^{\inv}(\FC',M)$. 
\end{definition}

The above map $G^{\varphi(\bar{b};\bar{y})}_{\mu}$ is well-defined. Indeed, if $\sigma(\bar{m})\equiv_\FC \tau(\bar{m})$, then we have $\sigma^{-1}(\bar{b})\equiv_M\tau^{-1}(\bar{b})$ 
as $h_{\bar b}$ is well-defined. 
Since $\hat{\mu}$ is $M$-invariant, we conclude that $\hat{\mu}\big(\varphi(\sigma^{-1}(\bar{b});\bar{y})\big)=\hat{\mu}\big(\varphi(\tau^{-1}(\bar{b});\bar{y})\big)$. 

However, the previous definition is a little cumbersome since it involves extending the measures to a larger model. Our next observation show that these maps decompose into ones we are already familiar with. 

\begin{proposition}\label{prop:Borel_G} Let $\bar{b}\in \FC^{\bar{x}}$, $\varphi(\bar{x};\bar{y})\in\CL_{\bar{x};\bar{y}}(\emptyset)$ 
    and $\mu\in\mathfrak{M}_{\bar{y}}^{\inv}(\FC,M)$. Then 
    \begin{equation*} 
    G^{\varphi(\bar{b};\bar{y})}_{\mu} = F_{\mu,M}^{\varphi^{\textrm{opp}}(\bar{y};\bar{x})} \circ h_{\bar{b}}. 
    \end{equation*} 
    As consequence, if $\mu$ is Borel-definable over $M$, then $G_{\mu}^{\varphi(\bar{b};\bar{y})}$ is the composition of a Borel function with a continuous function and hence Borel. 
\end{proposition}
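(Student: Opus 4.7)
The proof is essentially an unwinding of definitions together with the fact that $\hat{\mu}$ is $M$-invariant. Fix $p=\tp(\sigma(\bar{m})/\FC)\in S_{\bar{m}}(\FC)$ and let me compare the two sides evaluated at $p$. By definition of $h_{\bar{b}}$, we have $h_{\bar{b}}(p)=\tp(\sigma^{-1}(\bar{b})/M)$, a type over $M$ in variables $\bar{x}$; note that $\sigma^{-1}(\bar{b})$ lives in $\FC'$, but since $\FC$ is sufficiently saturated this type is realized by some $\bar{a}\in\FC^{\bar{x}}$ with $\bar{a}\equiv_{M}\sigma^{-1}(\bar{b})$.

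Now evaluate the right-hand side: by definition of $F_{\mu,M}^{\varphi^{\textrm{opp}}(\bar{y};\bar{x})}$, picking the realization $\bar{a}\in\FC^{\bar{x}}$ of $h_{\bar{b}}(p)$, we get
$$F_{\mu,M}^{\varphi^{\textrm{opp}}(\bar{y};\bar{x})}(h_{\bar{b}}(p))=\mu\bigl(\varphi^{\textrm{opp}}(\bar{y};\bar{a})\bigr)=\mu\bigl(\varphi(\bar{a};\bar{y})\bigr).$$
On the other hand, by definition of $G^{\varphi(\bar{b};\bar{y})}_{\mu}$,
$$G^{\varphi(\bar{b};\bar{y})}_{\mu}(p)=\hat{\mu}\bigl(\varphi(\sigma^{-1}(\bar{b});\bar{y})\bigr).$$
Since $\hat{\mu}$ is $M$-invariant and $\bar{a}\equiv_{M}\sigma^{-1}(\bar{b})$, we conclude that $\hat{\mu}(\varphi(\sigma^{-1}(\bar{b});\bar{y}))=\hat{\mu}(\varphi(\bar{a};\bar{y}))=\mu(\varphi(\bar{a};\bar{y}))$, where the last equality uses that $\bar{a}\in\FC$ so $\hat{\mu}$ and $\mu$ agree on this instance. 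Thus both sides agree on $p$, and since $p$ was arbitrary the two maps are equal.

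For the consequence, $h_{\bar{b}}\colon S_{\bar{m}}(\FC)\to S_{\bar{x}}(M)$ is continuous by Lemma \ref{prop:compute}, and if $\mu$ is Borel-definable over $M$ then $F_{\mu,M}^{\varphi^{\textrm{opp}}(\bar{y};\bar{x})}$ is a Borel function by definition. The composition of a Borel function with a continuous function is Borel, so $G_{\mu}^{\varphi(\bar{b};\bar{y})}$ is Borel. There is no real obstacle here; the content of the proposition is really just the observation that the map $G_{\mu}^{\varphi(\bar{b};\bar{y})}$, whose definition superficially requires extending $\mu$ to $\FC'$, in fact factors through the continuous map $h_{\bar{b}}$ and the Borel function $F_{\mu,M}^{\varphi^{\textrm{opp}}(\bar{y};\bar{x})}$ that is intrinsic to $\mu$.
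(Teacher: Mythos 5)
Your proof is correct and takes the same route as the paper, whose entire proof is ``Follows directly from the definitions''; your unwinding — passing to a realization $\bar{a}\in\FC^{\bar{x}}$ of $h_{\bar{b}}(p)$ and using $M$-invariance of $\hat{\mu}$ together with $\hat{\mu}|_{\FC}=\mu$ — is exactly the intended verification. The Borel consequence is also handled as the paper intends, via continuity of $h_{\bar{b}}$ from Lemma \ref{prop:compute}.
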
 

\begin{proof} Follows directly from the definitions. 
\end{proof}

We are ready to derive an explicit formula for the $\ast$-product from Definition \ref{def:star.product.definition.0}.

\begin{proposition}\label{prop:star.product.1}
    Assume $T$ is NIP and $\pi(\bar x;\bar y)$ is group-like over $M$.
    Let $\mu, \nu \in\mathfrak{M}_{\pi(\bar{m};\bar y)}^{\sfs}(\FC,M)$, $\bar{b}\in \FC^{\bar{x}}$, and $\varphi(\bar{x};\bar{y})$ be an $\CL_{\bar{x},\bar{y}}$-formula.
    For the $\ast$-product from Definition \ref{def:star.product.definition.0}, we have
    $$(\mu \ast \nu )\big(\varphi(\bar{b};\bar{y})\big)=\int_{S_{\bar{m}}(\FC)}G^{\varphi(\bar{b};\bar{y})}_{\mu}d\nu. $$
\end{proposition}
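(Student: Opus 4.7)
The plan is to unfold the definition $\mu \ast \nu = \Phi(\Psi(\mu) \circ \Psi(\nu))$ applied to $\varphi(\bar{b};\bar{y})$, identify the intermediate measure $\nu' := \Psi(\nu)(\delta_{\tp(\bar{b}/M)}) \in \mathfrak{M}_{\bar{x}}(M)$ as the pushforward $(h_{\bar{b}})_\ast \nu$, and then conclude via the change-of-variables formula together with Proposition \ref{prop:Borel_G}.

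First I would apply the definition of $\Phi$ to rewrite
$$(\mu \ast \nu)(\varphi(\bar{b};\bar{y})) = \bigl((\Psi(\mu) \circ \Psi(\nu))(\delta_{\tp(\bar{b}/M)})\bigr)(\varphi(\bar{x};\bar{m})) = (\Psi(\mu)(\nu'))(\varphi(\bar{x};\bar{m})).$$
The main technical step is then to verify that $\nu' = (h_{\bar{b}})_\ast \nu$. Unfolding the definition of $\Psi$: for any $\mathcal{L}_{\bar{x}}(M)$-formula $\psi(\bar{x};\bar{m}')$ (i.e., obtained from an $\mathcal{L}$-formula $\psi(\bar{x};\bar{y}')$ by substituting a finite subtuple $\bar{m}'$ of $\bar{m}$ for $\bar{y}'$),
$$\nu'(\psi(\bar{x};\bar{m}')) = (\nu_{\bar{y}} \otimes (\delta_{\tp(\bar{b}/M)})_{\bar{x}})(\psi(\bar{x};\bar{y}')).$$
The Morley product with a Dirac factor in the $\bar{x}$-variable reduces, using Remark \ref{remark:Morley_convention} and $M$-invariance of $\nu$, to the direct evaluation $\nu(\psi(\bar{b};\bar{y}'))$; by Lemma \ref{prop:compute}(2) this equals $((h_{\bar{b}})_\ast \nu)(\psi(\bar{x};\bar{m}'))$. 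Hence $\nu'$ and $(h_{\bar{b}})_\ast \nu$ agree on all clopens of $S_{\bar{x}}(M)$ and thus coincide as regular Borel probability measures.

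Once the pushforward identification is in hand, the conclusion follows by expanding the outer $\Psi$ via the Morley product and applying change of variables:
\begin{align*}
(\Psi(\mu)(\nu'))(\varphi(\bar{x};\bar{m})) &= \int_{S_{\bar{x}}(M)} F^{\varphi^{\textrm{opp}}(\bar{y};\bar{x})}_{\mu,M} \, d\nu' \\
&= \int_{S_{\bar{x}}(M)} F^{\varphi^{\textrm{opp}}(\bar{y};\bar{x})}_{\mu,M} \, d(h_{\bar{b}})_\ast \nu \\
&= \int_{S_{\bar{m}}(\FC)} \bigl(F^{\varphi^{\textrm{opp}}(\bar{y};\bar{x})}_{\mu,M} \circ h_{\bar{b}}\bigr) \, d\nu \\
&= \int_{S_{\bar{m}}(\FC)} G^{\varphi(\bar{b};\bar{y})}_{\mu} \, d\nu,
\end{align*}
where the last equality is Proposition \ref{prop:Borel_G}. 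The hypothesis NIP enters implicitly to guarantee Borel-definability of the relevant $M$-invariant measures, and hence well-definedness of $\Psi$ via Lemma \ref{lemma:Psi}. The only real obstacle is the pushforward identification; once it is established, the rest is a routine computation.
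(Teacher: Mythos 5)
Your proof is correct and follows essentially the same route as the paper: both identify $\Psi(\nu)(\delta_{\tp(\bar{b}/M)})$ with the pushforward $(h_{\bar{b}})_{\ast}(\nu)$ by evaluating the Morley product against the Dirac factor and invoking Lemma \ref{prop:compute}, and then conclude by change of variables together with Proposition \ref{prop:Borel_G}. No gaps.
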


\begin{proof}
Since $T$ is NIP, $\mu$ is Borel-definable over $M$ (see Fact \ref{fact:MP}). By Proposition \ref{prop:Borel_G}, the map $G_{\mu}^{\varphi(\bar{b};\bar{y})}$ is Borel. Thus, the right hand side of the equation in question is well-defined.

Observe that for any $\mathcal{L}_{\bar{x},\bar{y}}$-formula $\theta(\bar{x};\bar{y})$, we have
\begin{align*}
\eta_{\nu}(\delta_{\tp(\bar{b}/M)})\big(\theta(\bar{x};\bar{m})\big) &=
(\nu \otimes \delta_{\tp(\bar{b}/M)})\big(\theta(\bar{x};\bar{y})\big) \\ &=
\nu\big(\theta(\bar{b};\bar{y})\big)=
\big((h_{\bar{b}})_{\ast} (\nu) \big)\big(\theta(\bar{x};\bar{m})\big).
\end{align*}
Hence, $\eta_{\nu}(\delta_{\tp(\bar{b}/M)}) = (h_{\bar{b}})_{\ast} (\nu) $. By the definition of the $*$-product, we have the following:
\begin{align*}
    (\mu \ast \nu )\big(\varphi(\bar{b};\bar{y})\big) &= \Phi(\Psi(\mu) \circ \Psi(\nu))(\varphi(\bar{b};\bar{y})) \\ 
    &= (\eta_{\mu} \circ \eta_{\nu})(\delta_{\tp(\bar{b}/M)})(\varphi(\bar x;\bar m))\\
    &= \eta_{\mu}((  \eta_{\nu}(\delta_{\tp(\bar{b}/M)}))(\varphi(\bar x;\bar m))\\
    &=\Big(\mu_{\bar{y}} \otimes\big( \eta_{\nu}(\delta_{\tp(\bar{b}/M)}) \big)_{\bar{x}}\Big)\big(\varphi(\bar{x};\bar{y})\big)  \\
    &= \int_{S_{\bar{x}}(M)} F^{\varphi^{\textrm{opp}}(\bar{y};\bar{x})}_{\mu_{\bar{y}}}d \left( \eta_{\nu}(\delta_{\tp(\bar{b}/M)}) \right)_{\bar{x}} \\
    &= \int_{S_{\bar{x}}(M)} F^{\varphi^{\textrm{opp}}(\bar{y};\bar{x})}_{\mu_{\bar{y}}}d  \left( (h_{\bar{b}})_{\ast}(\nu) \right)_{\bar{x}} \\
    &= \int_{S_{\bar{m}}(\FC)} \Big(F^{\varphi^{\textrm{opp}}(\bar{y};\bar{x})}_{\mu_{\bar{y}} }\circ h_{\bar{b}}\Big) d\nu_{\bar{y}} = \int_{S_{\bar{m}}(\FC)} G^{\varphi(\bar{b};\bar{y})}_{\mu} d\nu.\qedhere
    \end{align*}
\end{proof}

Using Proposition \ref{prop:star.product.1}, we may \emph{definitionally extend} the $\ast$-product to a much larger class of measures. The following definition does just that; it extends the $*$-product to relatively general pairs of measures over arbitrary theories. 

\begin{definition}\label{def:star.definition}
    Let $\mu \in\mathfrak{M}_{\bar{y}}^{\inv}(\FC,M)$ be Borel-definable over $M$ and $\nu \in\mathfrak{M}_{\bar{m}}(\FC)$. 
(Notice that both $\mu$ and $\nu$ are in variables $\bar{y}$.)
We define the convolution product $\mu \ast \nu \in\mathfrak{M}_{\bar{y}}(\FC)$ as follows: For any  $\bar{b}\in \FC^{\bar{x}}$ and $\CL_{\bar{x};\bar{y}}$-formula $\varphi(\bar{x};\bar{y})$, we define
$$(\mu \ast \nu )\big(\varphi(\bar{b};\bar{y})\big):= \Big(\mu_{\bar{y}} \otimes \big((h_{\bar{b}})_{\ast} (\nu) \big)_{\bar{x}} \Big)\big(\varphi(\bar{x};\bar{y})\big).$$
\end{definition}

\begin{remark} By the computation in the proof of Proposition \ref{prop:star.product.1}, it follows that
\begin{align*} 
(\mu \ast \nu )\big(\varphi(\bar{b};\bar{y})\big) &= \int_{S_{\bar{x}}(M)} F_{\mu_{\bar{y}}}^{\varphi^{\textrm{opp}}(\bar{y};\bar{x})} d \Big((h_{\bar{b}})_* (\nu) \Big)_{\bar{x}} \\ 
 &= \int_{S_{\bar{m}}(\FC)} \left( F_{\mu_{\bar{y}}}^{\varphi^{\textrm{opp}}(\bar{y};\bar{x})} \circ h_{\bar{b}}\right) d\nu_{\bar{y}} = \int_{S_{\bar{m}}(\FC)} G_{\mu}^{\varphi(\bar{b};\bar{y})} d\nu. 
\end{align*} 
We will often use the equalities above without comment. 
\end{remark}

By Proposition \ref{prop:star.product.1}, the $\ast$-product from Definition \ref{def:star.definition} extends the $\ast$-product from 
Definition \ref{def:star.product.definition.0}. 

It is easy to check that given $\mu$ and $\nu$ as above, $(\mu \ast \nu )\big(\varphi(\bar{b};\bar{y})\big)$ depends only on the equivalence class of $\varphi(\bar{b};\bar{y})$ and that $\mu \ast \nu$ is a Keisler measure.
We now prove a family of preservation results.

\begin{lemma}\label{lemma:def.Borel.def}
    Let $\mu \in\mathfrak{M}_{\bar{y}}^{\inv}(\FC,M)$ be Borel-definable over $M$ and let $\nu \in\mathfrak{M}_{\bar{m}}(\FC)$. The following statements are true.
    \begin{enumerate}
        \item If $\mu \in\mathfrak{M}_{\bar{y}}^{\inv}(\FC,M)$ and $\nu \in\mathfrak{M}_{\bar{m}}^{\inv}(\FC,M)$, then $\mu \ast \nu\in\mathfrak{M}_{\bar{y}}^{\inv}(\FC,M)$. 

        \item If $\mu \in\mathfrak{M}_{\bar{m}}^{\inv}(\FC,M)$ and $\nu\in\mathfrak{M}_{\bar{m}}(\FC)$, then $\mu\ast\nu\in\mathfrak{M}_{\bar{m}}(\FC)$. 

        \item If $\mu, \nu \in \mathfrak{M}_{\bar{m}}^{\inv}(\FC,M)$, then $\mu\ast\nu\in\mathfrak{M}_{\bar{m}}^{\inv}(\FC,M)$.

        \item If $\mu$ is definable over $M$ and $\nu$ is Borel-definable over $M$, then $\mu\ast\nu$ is Borel-definable over $M$.

        \item If $\mu$ and $\nu$ are definable over $M$, then $\mu\ast\nu$ is definable over $M$. 

        \item If $\mu,\nu\in\mathfrak{M}_{\bar{m}}^{\fs}(\FC,M)$, then $\mu\ast\nu\in\mathfrak{M}_{\bar{m}}^{\fs}(\FC,M)$.

        \item Let $\pi(\bar{x};\bar{y})$ be a partial type over $\emptyset$ containing ``$\bar{x}\equiv_{\emptyset}\bar{y}$'', such that
        $G_{\pi,\FC}$ is a subgroup of $\aut(\FC)$.
        If $\mu,\nu\in\mathfrak{M}_{\pi(\bar{m};\bar{y})}^{\inv}(\FC,M)$, then
        $\mu\ast\nu\in \mathfrak{M}_{\pi(\bar{m};\bar{y})}^{\inv}(\FC,M)$.
    \end{enumerate}
\end{lemma}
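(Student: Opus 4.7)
The plan is to prove all seven items by direct computation using the two equivalent presentations
\[
(\mu \ast \nu)(\varphi(\bar b;\bar y)) \;=\; \bigl(\mu \otimes (h_{\bar b})_\ast(\nu)\bigr)(\varphi(\bar x;\bar y)) \;=\; \int_{S_{\bar m}(\FC)} G_\mu^{\varphi(\bar b;\bar y)}\,d\nu,
\]
combined with Lemma \ref{prop:compute} and standard properties of pushforwards and of the Morley product. Items (2) and (3) are essentially immediate: for an $\CL$-formula $\varphi(\bar y)$ with no $\bar x$-variables, $G_\mu^{\varphi(\bar y)}$ is the constant function with value $\mu(\varphi(\bar y))$, which equals $1$ whenever $\mu \in \mathfrak{M}_{\bar m}(\FC)$ and $\models \varphi(\bar m)$, giving (2); combining with (1) yields (3). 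For (4) and (5) I would factor the map $q \mapsto (\mu \ast \nu)(\psi(\bar y;\bar c))$ (for $\bar c \models q$) as $q \mapsto (h_{\bar c})_\ast(\nu) \mapsto \bigl(\mu \otimes (h_{\bar c})_\ast(\nu)\bigr)(\psi(\bar x;\bar y))$: by Lemma \ref{prop:compute} together with $M$-invariance of $\nu$, the first map lands in $\mathfrak{M}_{\bar x}(M)$ and inherits the regularity of $\nu$ (Borel in case (4), continuous in case (5)), while definability of $\mu$ makes the second map continuous on $\mathfrak{M}_{\bar x}(M)$.

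For item (1) the key identity is $h_{\sigma(\bar b)}(\sigma \cdot p) = h_{\bar b}(p)$ for every $\sigma \in \aut(\FC/M)$ and $p \in S_{\bar m}(\FC)$, which follows directly from $h_{\bar b}(\tp(\tau(\bar m)/\FC)) = \tp(\tau^{-1}(\bar b)/M)$ by replacing $\tau$ with $\sigma\tau$. Since $M$-invariance of $\nu$ is equivalent to invariance of $\nu$ as a Borel measure under the natural continuous action of $\aut(\FC/M)$ on $S_{\bar m}(\FC)$, a change of variable $p \mapsto \sigma \cdot p$ transforms $\int G_\mu^{\varphi(\sigma(\bar b);\bar y)}\,d\nu$ into $\int G_\mu^{\varphi(\bar b;\bar y)}\,d\nu$, yielding the required equality.

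For item (6), assume $(\mu \ast \nu)(\psi(\bar y;\bar c)) > 0$. Positivity of $\int G_\mu^{\psi(\bar y;\bar c)}\,d\nu$ together with the support characterization of finite satisfiability (an easy analog of Fact \ref{fact: support of sfs measures}, giving $\supp(\nu) \subseteq S^{\fs}_{\bar m}(\FC,M)$) produces some $p = \tp(\tau(\bar m)/\FC) \in \supp(\nu)$ with $\hat\mu(\psi(\bar y;\tau^{-1}(\bar c))) > 0$; finite satisfiability of $\mu$ then yields $\bar a \in M^{\bar y'}$ (where $\bar y'$ is the finite subtuple of $\bar y$ actually appearing in $\psi$) with $\models \psi(\bar a;\tau^{-1}(\bar c))$, equivalently $\models \psi(\tau(\bar a);\bar c)$. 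Writing $\bar a = (m_{i_1},\ldots,m_{i_k})$ as a finite subtuple of $\bar m$ shows that the formula $\psi(y'_1,\ldots,y'_k;\bar c)$ belongs to the projection of $p$ onto the coordinates $i_1,\ldots,i_k$; since this projection remains $\fs$ in $M$, any realization of it in $M$ supplies the required witness for $\psi(\bar y;\bar c)$.

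Item (7) is the main obstacle. By (1), $\mu \ast \nu$ is already $M$-invariant, so the substance is concentration on $[\pi(\bar m;\bar y)]$. The crucial structural fact, which I plan to establish first, is that when $G_{\pi,\FC}$ is a subgroup of $\aut(\FC)$, the binary relation $E(\bar u,\bar v) :\Leftrightarrow\; \models \pi(\bar u;\bar v)$ is an equivalence relation on the orbit $O := G_{\pi,\FC'} \cdot \bar m$: reflexivity is Remark \ref{remark: group in every model}(i), symmetry is Remark \ref{remark: G_pi=G_pi^opp}, and transitivity is a direct computation from the group closure of $G_{\pi,\FC'}$ together with automorphism-invariance of $\pi$. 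Granting this, for each $p = \tp(\tau(\bar m)/\FC) \in \supp(\nu) \subseteq [\pi(\bar m;\bar y)]$ I may choose the representative $\tau \in G_{\pi,\FC'}$; then $\tau^{-1} \in G_{\pi,\FC'}$ gives $E(\tau^{-1}(\bar m),\bar m)$, which combined with $E(\bar m,\bar b)$ for any $\bar b \models \pi(\bar m;\bar y)$ and transitivity yields $\pi(\bar m;\bar y) \vdash \pi(\tau^{-1}(\bar m);\bar y)$. Since $\hat\mu$ concentrates on $[\pi(\bar m;\bar y)]$, this gives $\hat\mu(\neg\varphi(\tau^{-1}(\bar m);\bar y)) = 0$ for every $\varphi \in \pi$, so $G_\mu^{\neg\varphi(\bar m;\bar y)}$ vanishes on $\supp(\nu)$ and $(\mu \ast \nu)(\neg\varphi(\bar m;\bar y)) = 0$; a standard finite-intersection and regularity argument then upgrades these clopen identities to $(\mu \ast \nu)([\pi(\bar m;\bar y)]) = 1$.
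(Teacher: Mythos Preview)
Your proposal is correct and follows essentially the same strategy as the paper: items (2)--(3) and (6) match almost verbatim; your change-of-variables argument for (1) is equivalent to the paper's direct observation that $(h_{\bar b})_\ast(\nu)$ depends only on $\tp(\bar b/M)$; your factorization for (4)--(5) unfolds to the paper's explicit simple-function approximation of $F_\mu^{\varphi^{\mathrm{opp}}}$; and your equivalence-relation framing in (7) is a repackaging of the paper's direct use of the group structure of $G_{\pi,\FC'}$ to obtain $\pi(\bar m;\bar y)\vdash \varphi(\sigma^{-1}(\bar b);\bar y)$.
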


\begin{proof}
Proof of (1):
    Let $\bar{b},\bar{c}\in \FC^{\bar{x}}$ such that $\bar{b}\equiv_M\bar{c}$.
    For each $\mathcal{L}_{\bar{x}}(M)$-formula $\theta(\bar{x};\bar{m})$, we have
    $$\big((h_{\bar{b}})_{\ast}(\nu)\big)\big(\theta(\bar{x};\bar{m})\big)=\nu\big(\theta(\bar{b};\bar{y})\big)=\nu\big(\theta(\bar{c};\bar{y})\big)= \big((h_{\bar{c}})_{\ast}(\nu)\big)\big(\theta(\bar{x};\bar{m})\big),$$
    and so $(h_{\bar{b}})_{\ast}(\nu)=(h_{\bar{c}})_{\ast}(\nu)$. Hence for any $\mathcal{L}_{\bar{x};\bar{y}}$-formula $\varphi(\bar{x};\bar{y})$, 
    \begin{align*}
    (\mu\ast\nu)\big(\varphi(\bar{b};\bar{y})\big) &= \int_{S_{\bar{x}}(M)}F^{\varphi^{\textrm{opp}}(\bar{y};\bar{x})}_{\mu}d (h_{\bar{b}})_{\ast}(\nu) \\
    &= \int_{S_{\bar{x}}(M)}F^{\varphi^{\textrm{opp}}(\bar{y};\bar{x})}_{\mu}d (h_{\bar{c}})_{\ast}(\nu) = (\mu\ast\nu)\big(\varphi(\bar{c};\bar{y})\big). 
    \end{align*}

 Proof of (2): We show $\mu\ast\nu\in \mathfrak{M}_{\bar{m}}(\FC)$.
    Let $\varphi(\bar{y})\in\tp(\bar{m}/\emptyset)$, then
    $$(\mu\ast\nu)\big(\varphi(\bar{y})\big) = \int_{S_{\bar{m}}(\FC)}G^{\varphi(\bar{y})}_{\mu}d\nu=\int_{S_{\bar{m}}(\FC)}\mu\big(\varphi(\bar{y})\big)d\nu=\mu\big(\varphi(\bar{y})\big)=1.$$

 Proof of (3): Follows directly from (1) and (2).

Proof of (4) and (5): Fix an $\CL$-formula $\varphi(\bar{x};\bar{y})$ and $\epsilon > 0$. 
Since $\mu$ is definable over $M$, 
the function $F^{\varphi(\bar{y}:\bar{x})}_{\mu}:S_{\bar{x}}(M)\to[0,1]$
is continuous. Hence there exist formulas $\{\psi_{i}(\bar{x};\bar{m})\}_{i=1}^{n}$ and real numbers $r_1,\dots,r_n$ such that 
\begin{equation*}
    \sup_{q \in S_{\bar{x}}(M)} \left| F_{\mu}^{\varphi^{\textrm{opp}}(\bar{y};\bar{x})}(q) - \sum_{i=1}^{n} r_i\mathbf{1}_{[\psi_i(\bar{x};\bar{m})]}(q) \right| < \epsilon. 
\end{equation*}
Note that for any $q(\bar{x})\in S_{\bar{x}}(M)$ and $\bar{b} \models q$ for some $\bar{b}\in \FC^{\bar{x}}$, we have:
\begin{align*}\tag{$\diamondsuit$}
F_{\mu \ast \nu}^{\varphi^{\textrm{opp}}(\bar{y}:\bar{x})}(q) &= (\mu\ast\nu)(\varphi(\bar{b};\bar{y}))  = (\mu \otimes (h_{\bar{b}})_{\ast}(\nu))(\varphi(\bar{x};\bar{y})) \\ 
&= \int_{S_{\bar{x}}(M)} F_{\mu}^{\varphi^{\textrm{opp}}(\bar{y};\bar{x})} \,d(h_{\bar{b}})_{\ast}(\nu)\\
&\approx_{\epsilon}  \int_{S_{\bar{x}}(M)} \sum_{i=1}^{n}r_i \mathbf{1}_{[\psi_{i}(\bar{x};\bar{m})]} \,d(h_{\bar{b}})_{\ast}(\nu) \\ 
&= \sum_{i=1}^{n} r_i ((h_{\bar{b}})_{\ast}(\nu))(\psi_{i}(\bar{x};\bar{m})) = \sum_{i=1}^{n} r_i \nu(\psi_{i}(\bar{b};\bar{y})) \\ 
&= \sum_{i=1}^{n} r_i F_{\nu}^{\psi_{i}^{\textrm{opp}}(\bar{y};\bar{x})}(q). 
\end{align*}
Therefore if $\nu$ is Borel-definable over $M$, the final term is a linear combination of Borel functions, hence Borel. Additionally, if $\nu$ is definable over $M$, then the final term is a linear combination of continuous functions, thus continuous. 

Proof of (6): Fix an $\CL$-formula $\varphi(\bar{x};\bar{y})$ and an element $\bar{b}\in \FC^{\bar{x}}$ such that
    $$0<(\mu\ast\nu)\big(\varphi(\bar{b};\bar{y})\big)=\int_{S_{\bar{m}}(\FC)}\Big( F_{\mu}^{\varphi^{\textrm{opp}}(\bar{y};\bar{x})}\circ h_{\bar{b}}\Big) d\nu.$$
    Then there exists $q(\bar{y})\in\supp(\nu)\subseteq S_{\bar{m}}(\FC)$ such that $(F_{\mu}^{\varphi(\bar{y};\bar{x})}\circ h_{\bar{b}})(q) > 0 $.
    Consider $\sigma\in\aut(\FC')$ such that $q=\tp(\sigma(\bar{m})/\FC)$ and
    $\bar{c}\in \FC^{\bar{x}}$ with $\sigma^{-1}(\bar{b})\equiv_M\bar{c}$.
    Then we have
    $$0<\Big(F^{\varphi^{\textrm{opp}}(\bar{y};\bar{x})}_{\mu}\circ h_{\bar{b}}\Big)(q)=\mu\big(\varphi(\bar{c};\bar{y})\big),$$
    and, since $\mu$ is finitely satisfiable in $M$, there is $\bar{a}\in M^{\bar{y}}$ such that $\FC\models\varphi(\bar{c};\bar{a})$. 
    Since $\bar{c}$ and $\sigma^{-1}(\bar{b})$ have the same type over $M$, $\FC' \models \varphi(\sigma^{-1}(\bar{b});\bar{a})$. Thus $\FC' \models \varphi(\bar{b};\sigma(\bar{a}))$. 
Hence there are indices $i_1,\dots,i_n$ such that
    $$\varphi(\bar{b};y_{i_1},\dots,y_{i_n}) \in \tp(\sigma(\bar{m})/\FC)=q.$$
    Finally, since $q \in \supp(\nu)$ and $\nu$ is finitely satisfiable in $M$, we conclude that $q$ is also finitely satisfiable in $M$. Thus, there exists $\bar{d} \in M^{\bar{y}}$ such that $\FC \models \varphi(\bar{b};\bar{d})$, completing the proof.  
    
Proof of (7): By (3), we have that $\mu\ast\nu\in\mathfrak{M}_{\bar{m}}^{\inv}(\FC,M)$.
Consider $\bar{b}\in \FC^{\bar{x}}$ and $\varphi(\bar{b};\bar{y})$
such that $\pi(\bar{m};\bar{y})\vdash\varphi(\bar{b};\bar{y})$.
Note that for every $\sigma\in\aut(\FC')$ with $\models\pi(\bar{m};\sigma(\bar{m}))$ we have also
$\pi(\bar{m};\bar{y})\vdash\varphi(\sigma^{-1}(\bar{b});\bar{y})$.
Indeed, let $\tp(\tau(\bar{m})/\FC)\in[\pi(\bar{m};\bar{y})]\subseteq S_{\bar{m}}(\FC)$.
By applying $\tau^{-1}\sigma^{-1}$, we have 
$\pi(\tau^{-1}\sigma^{-1}(\bar{m});\bar{y})\vdash\varphi(\tau^{-1}\sigma^{-1}(\bar{b});\bar{y})$.
Because $\sigma,\tau\in G_{\pi,\FC'}$, which is a subgroup of $\aut(\FC')$ by assumption and Remark \ref{remark: group in every model}, we conclude that $\models\pi(\tau^{-1}\sigma^{-1}(\bar{m});\bar{m})$.
Hence, $\models\varphi(\tau^{-1}\sigma^{-1}(\bar{b});\bar{m})$,
which implies $\tp(\tau(\bar{m})/\FC)\in [\varphi(\sigma^{-1}(\bar{b});\bar{y})]$.

Recall that $\supp(\mu),\supp(\nu)\subseteq[\pi(\bar{m};\bar{y})]$, so we compute:
\begin{align*}
    (\mu\ast\nu)(\varphi(\bar{b};\bar{y})) &= \int_{S_{\bar{m}}(\FC)} G^{\varphi(\bar{b};\bar{y})}_{\mu}d\nu = \int\limits_{\tp(\sigma(\bar{m})/\FC)\in S_{\bar{m}}(\FC)}\mu\big(\varphi(\sigma^{-1}(\bar{b});\bar{y})\big)d\nu \\
    &= \int\limits_{\substack{\tp(\sigma(\bar{m})/\FC)\in S_{\bar{m}}(\FC) \\ \models\pi(\bar{m};\sigma(\bar{m}))}}\mu\big(\varphi(\sigma^{-1}(\bar{b});\bar{y})\big)d\nu \\
    &\geqslant  \int\limits_{\substack{\tp(\sigma(\bar{m})/\FC)\in S_{\bar{m}}(\FC) \\ \models\pi(\bar{m};\sigma(\bar{m}))}}\mu\big([\pi(\bar{m};\bar{y})]\big)d\nu=1. \qedhere 
\end{align*}
\end{proof}

We will now prove a few additional properties of the convolution product. In the context of NIP theories, this product is left continuous. This follows from the fact that under the NIP hypothesis, the Morley product is left continuous. The left continuity of the Morley product relies on the existence of smooth extension. It is open whether or not the product is left continuous in general. We presume there is probably a counterexample, but we do not know one.

\begin{proposition}\label{prop:conv_left_cont} 
Suppose that $T$ is NIP,
$\mu\in\mathfrak{M}^{\inv}_{\bar{y}}(\FC,M)$, and 
$\nu\in\mathfrak{M}_{\bar{m}}(\FC)$. If $(\mu)_{i \in I}$ is a net of elements from $ \mathfrak{M}^{\inv}_{\bar{y}}(\FC,M)$ converging to $\mu$,
then
$$\lim\limits_{i\in I}\mu_i\ast\nu=\mu\ast\nu.$$
In particular, the convolution product in $(\mathfrak{M}^{\inv}_{\bar{m}}(\FC,M),*)$ is left continuous, i.e., for any $\nu \in \mathfrak{M}^{\inv}_{\bar{m}}(\FC,M)$, the map $-*\nu:\mathfrak{M}^{\inv}_{\bar{m}}(\FC,M) \to \mathfrak{M}^{\inv}_{\bar{m}}(\FC,M)$ is continuous. 
\end{proposition}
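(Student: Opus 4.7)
The plan is to reduce left-continuity of $\ast$ to Fact \ref{fact:MP}(3), i.e.\ the known NIP left-continuity of the Morley product against a fixed global $M$-invariant measure. Because the topology on $\mathfrak{M}^{\inv}_{\bar{y}}(\FC,M)$ is the product topology, it suffices to check, for each fixed $\CL_{\bar{x};\bar{y}}$-formula $\varphi(\bar{x};\bar{y})$ and each $\bar{b}\in\FC^{\bar{x}}$, that the scalar-valued map $\eta\mapsto (\eta\ast\nu)(\varphi(\bar{b};\bar{y}))$ is continuous.

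First I would unwind Definition \ref{def:star.definition} to write this as the value of a Morley product against a single fixed measure on $S_{\bar{x}}(M)$:
$$(\eta\ast\nu)(\varphi(\bar{b};\bar{y}))\,=\,\big(\eta_{\bar{y}}\otimes(\lambda_0)_{\bar{x}}\big)(\varphi(\bar{x};\bar{y})),\qquad \lambda_0:=(h_{\bar{b}})_{\ast}(\nu)\in\mathfrak{M}_{\bar{x}}(M),$$
so that $\lambda_0$ does not depend on $\eta$ and the formula $\varphi(\bar{x};\bar{y})$ carries no parameters outside $M$. Next I would pick a global $M$-invariant extension $\tilde{\lambda}\in\mathfrak{M}^{\inv}_{\bar{x}}(\FC,M)$ of $\lambda_0$; such an extension always exists by a standard Hahn--Banach argument producing a global extension finitely satisfiable in $M$ (finite satisfiability in $M$ being stronger than $M$-invariance, and requiring neither NIP nor any hypothesis on $\nu$). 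Since $\varphi(\bar{x};\bar{y})$ has no parameters outside $M$, Remark \ref{remark:Morley_convention} gives
$$\big(\eta_{\bar{y}}\otimes(\lambda_0)_{\bar{x}}\big)(\varphi(\bar{x};\bar{y}))\,=\,(\eta\otimes\tilde{\lambda})(\varphi(\bar{x};\bar{y}))$$
for every $\eta\in\mathfrak{M}^{\inv}_{\bar{y}}(\FC,M)$.

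Finally I would invoke Fact \ref{fact:MP}(3): under NIP the map $\eta\mapsto\eta\otimes\tilde{\lambda}$ from $\mathfrak{M}^{\inv}_{\bar{y}}(\FC,M)$ to $\mathfrak{M}^{\inv}_{\bar{y}\bar{x}}(\FC,M)$ is continuous, and evaluation at the fixed formula $\varphi(\bar{x};\bar{y})$ is continuous on the target. Applying this to the net $(\mu_i)_{i\in I}$ and chaining the equalities above yields
$$\lim_{i\in I}(\mu_i\ast\nu)(\varphi(\bar{b};\bar{y}))\,=\,(\mu\ast\nu)(\varphi(\bar{b};\bar{y})),$$
which, as $\varphi$ and $\bar{b}$ were arbitrary, is the desired convergence $\mu_i\ast\nu\to\mu\ast\nu$. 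The ``in particular'' clause is immediate: when additionally $\nu\in\mathfrak{M}^{\inv}_{\bar{m}}(\FC,M)$, Lemma \ref{lemma:def.Borel.def}(3) ensures that all the relevant $\ast$-products live in $\mathfrak{M}^{\inv}_{\bar{m}}(\FC,M)$, so the continuity just established takes place in this subspace. The only genuinely non-trivial input is the existence of the global $M$-invariant extension $\tilde{\lambda}$ needed to put Fact \ref{fact:MP}(3) into play; everything else is a definitional rewriting.
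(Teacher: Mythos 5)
Your proof is correct and follows essentially the same route as the paper's: rewrite $(\eta\ast\nu)(\varphi(\bar{b};\bar{y}))$ as the Morley product of $\eta$ against the fixed measure $(h_{\bar{b}})_{\ast}(\nu)$ and invoke the NIP left-continuity of $\otimes$ (Fact \ref{fact:MP}(3)). The only difference is that you make explicit the step of extending $(h_{\bar{b}})_{\ast}(\nu)$ to a global $M$-invariant measure so that Fact \ref{fact:MP}(3) applies literally — a detail the paper's proof leaves implicit via the convention of Remark \ref{remark:Morley_convention}.
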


\begin{proof}
For every $\CL$-formula $\varphi(\bar{x};\bar{y})$ and $\bar{b} \in \FC^{\bar{y}}$, we have 
    \begin{align*}
    (\mu\ast\nu)\big(\varphi(\bar{b};\bar{y})\big) &= \big(\mu\otimes (h_{\bar{b}})_{\ast}(\nu)\big)\big(\varphi(\bar{x};\bar{y})\big) \\
    &= \big((\lim\limits_{i\in I}\mu_i)\otimes (h_{\bar{b}})_{\ast}(\nu)\big)\big(\varphi(\bar{x};\bar{y})\big) \\
    &\overset{(*)}{=} \lim\limits_{i\in I}\big(\mu_i\otimes (h_{\bar{b}})_{\ast}(\nu)\big)\big(\varphi(\bar{x};\bar{y})\big) \\
    &= \lim\limits_{i\in I}(\mu_i\ast\nu)\big(\varphi(\bar{b};\bar{y})\big).
    \end{align*}
    Equation $(*)$ follows from left continuity of the Morley product (see Fact \ref{fact:MP}). 
    Thus, $\mu\ast\nu=\lim\limits_{i\in I}\mu_i\ast\nu$, and the proposition follows.
\end{proof}

\noindent The structure $(\mathfrak{M}^{\inv}_{\bar{m}}(\FC,M),*)$ also admits an identity element, namely $\delta_{\tp(\bar{m}/\FC)}$. 

\begin{proposition}\label{prop:neutral.element} Suppose that $\mu \in \mathfrak{M}_{\bar{m}}^{\inv}(\FC,M)$ and $\mu$ is Borel-definable over $M$. Then,
    $$\mu\ast\delta_{\tp(\bar{m}/\FC)}=\mu\quad\text{ and }\quad \delta_{\tp(\bar{m}/\FC)}\ast\mu=\mu. $$
    As convention, we write $\delta_{\tp(\bar{m}/\FC)}$ simply as $\delta_{\bar{m}}$. 
\end{proposition}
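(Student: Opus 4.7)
The proof is a direct unwinding of Definition \ref{def:star.definition}; both identities reduce to straightforward computations of the auxiliary map $G^{\varphi(\bar{b};\bar{y})}_{-}$.

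\textbf{For the right identity $\mu \ast \delta_{\bar{m}} = \mu$:} Fix $\bar{b} \in \FC^{\bar{x}}$ and an $\CL_{\bar{x};\bar{y}}$-formula $\varphi(\bar{x};\bar{y})$. By definition,
\[
(\mu \ast \delta_{\bar{m}})(\varphi(\bar{b};\bar{y})) = \int_{S_{\bar{m}}(\FC)} G^{\varphi(\bar{b};\bar{y})}_{\mu}\, d\delta_{\bar{m}} = G^{\varphi(\bar{b};\bar{y})}_{\mu}\bigl(\tp(\bar{m}/\FC)\bigr).
\]
Taking $\sigma = \id$ as the representing automorphism of $\tp(\bar{m}/\FC)$, the definition of $G^{\varphi(\bar b;\bar y)}_\mu$ yields $\mu(\varphi(\bar{b};\bar{y}))$, finishing this case.

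\textbf{For the left identity $\delta_{\bar{m}} \ast \mu = \mu$:} I would compute $G^{\varphi(\bar{b};\bar{y})}_{\delta_{\bar{m}}}$ on an arbitrary type $p = \tp(\sigma(\bar{m})/\FC) \in S_{\bar{m}}(\FC)$ where $\sigma \in \aut(\FC')$. Since the unique extension of $\delta_{\bar{m}}$ to $\FC'$ is the Dirac measure at $\tp(\bar{m}/\FC')$,
\[
G^{\varphi(\bar{b};\bar{y})}_{\delta_{\bar{m}}}(p) = 1 \iff \FC' \models \varphi(\sigma^{-1}(\bar{b});\bar{m}) \iff \FC' \models \varphi(\bar{b};\sigma(\bar{m})) \iff \varphi(\bar{b};\bar{y}) \in p,
\]
where the middle equivalence is obtained by applying $\sigma$. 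Hence $G^{\varphi(\bar{b};\bar{y})}_{\delta_{\bar{m}}} = \mathbf{1}_{[\varphi(\bar{b};\bar{y})]}$, so
\[
(\delta_{\bar{m}} \ast \mu)(\varphi(\bar{b};\bar{y})) = \int_{S_{\bar{m}}(\FC)} \mathbf{1}_{[\varphi(\bar{b};\bar{y})]}\, d\mu = \mu(\varphi(\bar{b};\bar{y})).
\]

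There is no real obstacle here; the only point requiring a moment's care is the left identity, where one must correctly interpret $\delta_{\bar{m}}$ as the Dirac measure at the realized type $\tp(\bar{m}/\FC)$, extend it canonically to $\FC'$ for the definition of $G^{\varphi(\bar{b};\bar{y})}_{\delta_{\bar{m}}}$, and then use the automorphism-invariance of truth to rewrite $\varphi(\sigma^{-1}(\bar{b});\bar{m})$ as $\varphi(\bar{b};\sigma(\bar{m}))$ in order to recognize the indicator of $[\varphi(\bar{b};\bar{y})]$.
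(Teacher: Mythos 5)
Your proof is correct and follows essentially the same route as the paper's: both are direct unwindings of Definition \ref{def:star.definition}, with the right identity reducing to evaluating the fiber function at $\tp(\bar{b}/M)$ (equivalently, at the representative $\sigma=\id$) and the left identity reducing to the observation that the relevant fiber function is the indicator of $[\varphi(\bar{b};\bar{y})]$, which is exactly the content of Lemma \ref{prop:compute} that the paper invokes. The only cosmetic difference is that you work with the integral form $\int G^{\varphi(\bar{b};\bar{y})}_{\mu}\,d\nu$ while the paper uses the equivalent Morley-product form $\mu\otimes(h_{\bar{b}})_{\ast}(\nu)$; the paper's remark following Definition \ref{def:star.definition} makes these interchangeable.
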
 

\begin{proof}
Fix an $\CL$-formula $\varphi(\bar{x};\bar{y})$ and a tuple $\bar{b}\in \FC^{\bar{x}}$. Note, 
\begin{align*}
    (\mu * \delta_{\bar{m}}) (\varphi(\bar{b};\bar{y})) &= 
    ( \mu \otimes (h_{\bar{b}})_{\ast}(\delta_{\bar{m}}))(\varphi(\bar{x};\bar{y})) \\
    &= F_{\mu}^{\varphi^{\textrm{opp}}(\bar{y};\bar{x})}(h_{\bar{b}}(\tp(\bar{m}/N))) \\ 
    &= F_{\mu}^{\varphi^{\textrm{opp}}(\bar{y};\bar{x})}(\tp(\bar{b}/M))=\mu\big(\varphi(\bar{b};\bar{y})\big).
\end{align*}
Likewise, by Lemma \ref{prop:compute}, 
\begin{align*}
    (\delta_{\bar{m}} \ast \mu)(\varphi(\bar{b};\bar{y})) &= (\delta_{\bar{m}} \otimes (h_{\bar{b}})_{\ast}(\mu))(\varphi(\bar{x};\bar{y}))\\
    &= (h_{\bar{b}})_{*}(\mu)(\varphi(\bar{x};\bar{m})) \\
    &= \mu(\varphi(\bar{b};\bar{y})). \qedhere 
\end{align*}
\end{proof}

We now show that the convolution product is bi-affine. This essentially follows directly from bi-linearity of integration.

\begin{proposition}\label{prop:left.affine}
If $T$ is NIP, then
for any $\nu \in \mathfrak{M}^{\inv}_{\bar{m}}(\FC,M)$, 
the map $-\ast\nu: \mathfrak{M}^{\inv}_{\bar{m}}(\FC,M) \to \mathfrak{M}^{\inv}_{\bar{m}}(\FC,M)$ is affine. 
More generally, without NIP,
if $\mu,\lambda\in \mathfrak{M}_{\bar{y}}^{\inv}(\FC,M)$ are Borel-definable over $M$, $\nu\in\mathfrak{M}_{\bar{m}}(\FC)$, and $r\in[0,1]$,
then
$$\big(r\mu+(1-r)\lambda\big)\ast\nu=r(\mu\ast\nu)+(1-r)(\lambda\ast\nu).$$
\end{proposition}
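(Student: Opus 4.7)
The plan is to reduce the NIP statement to the general one and then prove the general statement by unpacking the definition of $\ast$ and invoking linearity of integration. For the NIP reduction, note that if $\mu,\lambda\in\mathfrak{M}^{\inv}_{\bar{m}}(\FC,M)$, then so is $r\mu+(1-r)\lambda$ (invariance is preserved by convex combinations), and under NIP every such measure is automatically Borel-definable by Fact \ref{fact:MP}(1). Hence the NIP case follows from the general statement.

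For the general statement, I would first check that $r\mu+(1-r)\lambda$ is Borel-definable over $M$: for any $\CL$-formula $\varphi(\bar{y};\bar{x})$, the fiber function satisfies
\[
F_{r\mu+(1-r)\lambda}^{\varphi}(q) = rF_{\mu}^{\varphi}(q)+(1-r)F_{\lambda}^{\varphi}(q),
\]
which is Borel as a convex combination of Borel functions, so the left-hand side of the claimed equality is well-defined via Definition \ref{def:star.definition}.

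Next, for any $\bar{b}\in\FC^{\bar{x}}$ and $\CL_{\bar{x};\bar{y}}$-formula $\varphi(\bar{x};\bar{y})$, I would unfold the definition and apply linearity of the integral with respect to the fixed measure $(h_{\bar{b}})_{\ast}(\nu)$:
\begin{align*}
\bigl((r\mu+(1-r)\lambda)\ast\nu\bigr)\bigl(\varphi(\bar{b};\bar{y})\bigr)
&= \int_{S_{\bar{x}}(M)} F_{r\mu+(1-r)\lambda}^{\varphi^{\textrm{opp}}(\bar{y};\bar{x})}\, d(h_{\bar{b}})_{\ast}(\nu) \\
&= r\!\int_{S_{\bar{x}}(M)} F_{\mu}^{\varphi^{\textrm{opp}}(\bar{y};\bar{x})}\, d(h_{\bar{b}})_{\ast}(\nu) + (1-r)\!\int_{S_{\bar{x}}(M)} F_{\lambda}^{\varphi^{\textrm{opp}}(\bar{y};\bar{x})}\, d(h_{\bar{b}})_{\ast}(\nu) \\
&= r(\mu\ast\nu)\bigl(\varphi(\bar{b};\bar{y})\bigr)+(1-r)(\lambda\ast\nu)\bigl(\varphi(\bar{b};\bar{y})\bigr).
\end{align*}
Since $\bar{b}$ and $\varphi$ are arbitrary, this yields the desired identity of Keisler measures.

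There is no real obstacle here: the argument is a direct consequence of linearity of the fiber function in $\mu$ together with linearity of integration against a fixed measure. The only mild point to verify carefully is that Borel-definability passes through the convex combination so that each integral in the chain of equalities is defined in the sense of Definition \ref{def:star.definition}.
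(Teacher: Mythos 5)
Your proof is correct and follows essentially the same route as the paper's: unfold the definition of $\ast$ via the fiber functions, use linearity of $F_{(\cdot)}^{\varphi^{\textrm{opp}}}$ in the measure, and apply linearity of integration against the fixed measure $(h_{\bar{b}})_{\ast}(\nu)$. The extra remarks you include (that convex combinations preserve invariance and Borel-definability, and that the NIP case reduces to the general one via Fact~\ref{fact:MP}(1)) are correct and merely make explicit what the paper leaves implicit.
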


\begin{proof} Fix an $\CL$-formula $\varphi(\bar{x};\bar{y})$ with $\bar{b}\in \FC^{\bar{x}}$ and set $s:=1-r$. We compute:
\begin{align*}
    ((r\mu +s\lambda)*\nu)(\varphi(\bar{b};\bar{y}))
    &= \int_{S_{\bar{x}}(M)} F_{r\mu + s\lambda}^{\varphi^{\textrm{opp}}(\bar{y};\bar{x})} d(h_{\bar{b}})_{\ast}(\nu) \\ 
    &= \int_{S_{\bar{x}}(M)} \Big(rF_{\mu}^{\varphi^{\textrm{opp}}(\bar{y};\bar{x})} +sF_{\lambda}^{\varphi^{\textrm{opp}}(\bar{y};\bar{x})}\Big) d(h_{\bar{b}})_{\ast}(\nu) \\ 
    &= r \int_{S_{\bar{x}}(M)} F_{\mu}^{\varphi^{\textrm{opp}}(\bar{y};\bar{x})}d(h_{\bar{b}})_{\ast}(\nu) +
    s\int_{S_{\bar{x}}(M)}F_{\lambda}^{\varphi^{\textrm{opp}}(\bar{y};\bar{x})} d(h_{\bar{b}})_{\ast}(\nu) \\ 
    &=(r(\mu * \nu) + s(\lambda * \nu))(\varphi(\bar{b};\bar{y})). \qedhere
\end{align*}
\end{proof}

\begin{proposition}\label{prop:right.affine}
Suppose that $\mu \in \mathfrak{M}^{\inv}_{\bar{y}}(\FC,M)$ and $\mu$ is Borel-definable over $M$. Then the map $\mu\ast -: \mathfrak{M}_{\bar{m}}(\FC) \to \mathfrak{M}_{\bar{y}}(\FC)$ is affine. 
\end{proposition}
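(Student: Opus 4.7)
The plan is to evaluate both sides of the desired equality on an arbitrary formula $\varphi(\bar{b};\bar{y})$ with $\bar{b} \in \FC^{\bar{x}}$, and reduce everything to linearity of integration. Concretely, I would fix $\nu_1, \nu_2 \in \mathfrak{M}_{\bar{m}}(\FC)$ and $r \in [0,1]$, and first note that the subspace $\mathfrak{M}_{\bar{m}}(\FC)$ is convex inside $\mathfrak{M}_{\bar{y}}(\FC)$, so that $r\nu_1 + (1-r)\nu_2 \in \mathfrak{M}_{\bar{m}}(\FC)$ and Definition \ref{def:star.definition} applies to produce $\mu \ast (r\nu_1 + (1-r)\nu_2)$.

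The main calculation then combines two elementary linearity facts. First, the pushforward $(h_{\bar{b}})_{\ast} : \mathfrak{M}_{\bar{m}}(\FC) \to \mathfrak{M}_{\bar{x}}(M)$ is affine in its input, because for every Borel set $B \subseteq S_{\bar{x}}(M)$ the value $(h_{\bar{b}})_{\ast}(\nu)(B) = \nu(h_{\bar{b}}^{-1}[B])$ is linear in $\nu$. Second, the Lebesgue integral of a fixed bounded Borel function is linear in the measure against which one integrates; here the relevant integrand $F_{\mu}^{\varphi^{\textrm{opp}}(\bar{y};\bar{x})}$ depends only on $\mu$, which stays fixed. Plugging these into the integral formula from Definition \ref{def:star.definition} gives
\[
(\mu \ast (r\nu_1 + (1-r)\nu_2))(\varphi(\bar{b};\bar{y})) = \int_{S_{\bar{x}}(M)} F_{\mu}^{\varphi^{\textrm{opp}}(\bar{y};\bar{x})} \, d\bigl(r(h_{\bar{b}})_{\ast}(\nu_1) + (1-r)(h_{\bar{b}})_{\ast}(\nu_2)\bigr),
\]
which by linearity equals $r(\mu \ast \nu_1)(\varphi(\bar{b};\bar{y})) + (1-r)(\mu \ast \nu_2)(\varphi(\bar{b};\bar{y}))$. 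Since $\varphi$ and $\bar{b}$ are arbitrary, the two measures coincide.

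Honestly, there is no real obstacle here: the argument is essentially a one-line bookkeeping computation. The only point worth emphasizing is the contrast with Proposition \ref{prop:left.affine}: no NIP hypothesis is needed because the varying measure $\nu$ appears only as the outer measure of integration, not as the source of a Morley-product integrand, so one does not need left continuity of $\otimes$ or any approximation machinery.
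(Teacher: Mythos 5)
Your proof is correct and is essentially the paper's argument: the paper applies linearity of integration in the measure directly to the formula $(\mu\ast\nu)(\varphi(\bar b;\bar y))=\int_{S_{\bar m}(\FC)}G_{\mu}^{\varphi(\bar b;\bar y)}\,d\nu$, which is the same computation you perform after first routing the linearity through the (affine) pushforward $(h_{\bar b})_{\ast}$. The two integral formulas are interchangeable by the remark following Definition \ref{def:star.definition}, so there is no substantive difference.
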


\begin{proof}
    Fix $\lambda, \nu \in \mathfrak{M}_{\bar{m}}(\FC)$, $r \in [0,1]$, an $\CL$-formula $\varphi(\bar{x};\bar{y})$, and $\bar{b}\in \FC^{\bar{x}}$.
 We compute:
    \begin{align*}
        (\mu * (r\nu + (1-r)\lambda))(\varphi(\bar{b};\bar{y})) 
        &= \int_{S_{\bar{m}}(\FC)} G_{\mu}^{\varphi(\bar{b};\bar{y})} d\big(r\nu + (1-r)\lambda\big) \\ 
        &= r \int_{S_{\bar{m}}(\FC)} G_{\mu}^{\varphi(\bar{b};\bar{y})} d\nu 
        + (1-r)\int_{S_{\bar{m}}(\FC)} G_{\mu}^{\varphi(\bar{b};\bar{y})} d\lambda \\ 
        &= \big(r(\mu *\nu) + (1-r)(\mu*\lambda)\big)\big(\varphi(\bar{b};\bar{y})\big). \qedhere 
    \end{align*}
\end{proof}

\subsubsection{Product for types}\label{subsubsection: * on types}
In this subsection, we consider the convolution product restricted to types. In general, this product can be defined for arbitrary invariant types and not just ones which are Borel definable. Hence, we provide the following definition.

\begin{definition} Let $p(\bar{y}) \in S_{\bar{y}}^{\inv}(\FC,M)$ and $q(\bar{y}) \in S_{\bar{m}}(\FC)$. Then the product $p *q$ in $S_{\bar{y}}(\FC)$ is defined as follows: for any $\mathcal{L}$-formula $\varphi(\bar{x};\bar{y})$ and $\bar{b} \in \FC^{\bar{x}}$, we define
\begin{equation*} 
\varphi(\bar{b};\bar{y}) \in p * q \Longleftrightarrow \varphi(\bar{x};\bar{y}) \in p_{\bar{y}} \otimes (h_{\bar{b}}(q))_{\bar{x}}, 
\end{equation*}
where $\otimes$ is the Morley product for invariant types. As in the case of measures, the object on the right, $(h_{\bar{b}}(q))_{\bar{x}}$, is not a global type, 
but a type in $S_{\bar{x}}(M)$. 
However, since $p_{\bar{y}}$ is $M$-invariant, the product is well-defined, i.e., one can replace $(h_{\bar{b}}(q))_{\bar{x}}$ with any global extension. 
\end{definition} 

It is obvious by definition that if $p$ is Borel-definable over $M$, then $\delta_{p} * \delta_{q} = \delta_{p*q}$. As convention, if $p$ is Borel-definable over $M$ and $\mu \in \mathfrak{M}_{\bar{m}}(\FC)$ we often write $\delta_{p} * \mu$ simply as $p * \mu$. 

We will usually be concerned with the space $S^{\inv}_{\bar{m}}(\FC,M)$. We will see that this space with the operation defined above is 
a left topological semigroup. 
The next proposition yields an explicit formula for $\ast$ on $S^{\inv}_{\bar{m}}(\FC,M)$, which is then used to deduce that $\ast$ restricted to $S^{\inv}_{\bar{m}}(\FC,M)$ is an operation on $S^{\inv}_{\bar{m}}(\FC,M)$.

\begin{proposition}\label{prop:formula.for.star.on.types}
    Let $p(\bar{y}),q(\bar{y})\in S^{\inv}_{\bar{m}}(\FC,M)$.
    Then
    $$p\ast q=\tau(p|_{\FC'})|_\FC,$$
    where $q(\bar{y})=\tp(\tau(\bar{m})/\FC)$ for some $\tau\in\aut(\FC')$ and $p|_{\FC'}$ is the unique $M$-invariant extension of $p$ to $\FC'$.
Yet more explicitly, taking a monster model $\FC'' \succ \FC'$ in which $\FC'$ is small, we can write $p|_{\FC'}=\tp(\sigma(\bar{m})/\FC')$ for some $\sigma\in\aut(\FC'')$, and then
    $$p\ast q= \tp(\tau''\sigma(\bar{m})/\FC),$$
where $\tau'' \in \aut(\FC'')$ is an arbitrary extension of $\tau$.
\end{proposition}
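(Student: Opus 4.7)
The plan is to unwind the definition of $p*q$ and match it to the stated formula in three small bookkeeping steps. Fix an $\mathcal{L}$-formula $\varphi(\bar{x};\bar{y})$ and $\bar{b}\in\FC^{\bar{x}}$. First I would compute $h_{\bar{b}}(q)$: since $q=\tp(\tau(\bar{m})/\FC)$, the definition of $h_{\bar{b}}$ gives $h_{\bar{b}}(q)=\tp(\tau^{-1}(\bar{b})/M)=:r(\bar{x})$, a type over $M$ realized inside $\FC'$ by the tuple $\tau^{-1}(\bar{b})$.

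Next I would unpack the Morley product of invariant types $p_{\bar{y}}\otimes r_{\bar{x}}$ using this concrete realization. Since $p$ is $M$-invariant, it admits a unique $M$-invariant extension $p|_{\FC'}$ to $\FC'$ (existence is immediate from $M$-invariance; uniqueness uses that every finite tuple in $\FC'$ is $\equiv_M$-equivalent to one from $\FC$, by saturation of $\FC$). The standard formula for the Morley product of invariant types then yields
\begin{equation*}
\varphi(\bar{b};\bar{y})\in p*q\iff\varphi(\tau^{-1}(\bar{b});\bar{y})\in p|_{\FC'}.
\end{equation*}

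Now I would recognize the right-hand side as describing the pushforward $\tau\cdot p|_{\FC'}$: by definition of the $\aut(\FC')$-action on $S_{\bar{y}}(\FC')$, a formula $\varphi(\bar{b};\bar{y})$ lies in $\tau\cdot p|_{\FC'}$ exactly when $\varphi(\tau^{-1}(\bar{b});\bar{y})\in p|_{\FC'}$. Restricting parameters back to $\FC$ then gives $p*q=\tau(p|_{\FC'})|_{\FC}$, which is the first claimed equality.

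For the explicit realization formula, I would pass to $\FC''$ and write $p|_{\FC'}=\tp(\sigma(\bar{m})/\FC')$. The condition $\varphi(\tau^{-1}(\bar{b});\bar{y})\in p|_{\FC'}$ translates to $\models\varphi(\tau^{-1}(\bar{b});\sigma(\bar{m}))$, and applying any extension $\tau''\in\aut(\FC'')$ of $\tau$—which satisfies $\tau''(\tau^{-1}(\bar{b}))=\bar{b}$ since $\bar{b}\in\FC\subseteq\FC'$ and $\tau''|_{\FC'}=\tau$—this is equivalent to $\models\varphi(\bar{b};\tau''\sigma(\bar{m}))$. Hence $p*q=\tp(\tau''\sigma(\bar{m})/\FC)$, and the formula visibly does not depend on the choice of extension $\tau''$. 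The only mildly delicate step is verifying uniqueness of the $M$-invariant extension $p|_{\FC'}$ and keeping track of the fact that $\tau''$ restricted to $\FC$ (via $\FC'$) agrees with $\tau$ on the parameters $\bar{b}$; everything else is routine symbol manipulation.
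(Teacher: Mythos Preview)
Your proposal is correct and follows essentially the same approach as the paper's proof: both fix an arbitrary formula $\varphi(\bar{b};\bar{y})$, compute $h_{\bar{b}}(q)=\tp(\tau^{-1}(\bar{b})/M)$, unwind the Morley product to get $\varphi(\bar{b};\bar{y})\in p*q\iff\varphi(\tau^{-1}(\bar{b});\bar{y})\in p|_{\FC'}$, and then translate this into the pushforward and realization formulas. Your write-up is slightly more expansive (you spell out uniqueness of $p|_{\FC'}$ and why $\tau''$ agrees with $\tau$ on $\bar{b}$), but the argument is the same.
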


\begin{proof}
    Take any $\CL$-formula $\varphi(\bar{x};\bar{y})$ and $\bar{b}\in \FC^{\bar{x}}$. 
    \begin{align*}
  \varphi(\bar{b};\bar{y}) \in p *q &\Longleftrightarrow \varphi(\bar{x};\bar{y}) \in p_{\bar{y}} \otimes (h_{\bar{b}}(q))_{\bar{x}} \\
    &\Longleftrightarrow \varphi(\tau^{-1}(\bar{b});\bar{y}) \in p|_{\FC'} \Longleftrightarrow \varphi(\bar{b};\bar{y}) \in \tau(p|_{\FC'})|_{\FC}. 
    \end{align*}

Thus,
    \begin{align*}
    \varphi(\bar{b};\bar{y})\in p\ast q & \iff  \varphi(\bar{b};\bar{y})\in \tau(p|_{\FC'}) \\
    &\iff \varphi(\tau^{-1}(\bar{b});\bar{y})\in p|_{\FC'}=\tp(\sigma(\bar{m})/\FC') \\
    &\iff \varphi(\bar{b};\bar{y})\in \tp(\tau''\sigma(\bar{m})/\FC). \qedhere
    \end{align*} 
\end{proof}

\begin{cor}\label{corollary: closedness under *}
 $S^{\inv}_{\bar{m}}(\FC,M)$ is closed under $\ast$.
\end{cor}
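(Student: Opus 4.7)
The plan is to verify the three defining conditions of $S^{\inv}_{\bar m}(\FC,M)$ for $p\ast q$ whenever $p,q\in S^{\inv}_{\bar m}(\FC,M)$: that $p\ast q$ is a complete global type in variables $\bar y$, that it concentrates on $\tp(\bar m/\emptyset)$, and that it is $M$-invariant. The first two conditions fall out immediately from the explicit formula in Proposition \ref{prop:formula.for.star.on.types}. Writing $p\ast q = \tp(\tau''\sigma(\bar m)/\FC)$ as in that proposition, $p\ast q$ is by construction a complete type over $\FC$, and since $\tau''\sigma$ is an automorphism of $\FC''$ we have $\tau''\sigma(\bar m)\equiv \bar m$ over $\emptyset$, so $p\ast q$ concentrates on $\tp(\bar m/\emptyset)$.

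The substantive content is $M$-invariance. I would fix an $\CL$-formula $\varphi(\bar x;\bar y)$ together with tuples $\bar b,\bar c\in \FC^{\bar x}$ with $\bar b\equiv_M \bar c$, and aim to show $\varphi(\bar b;\bar y)\in p\ast q$ iff $\varphi(\bar c;\bar y)\in p\ast q$. By the definition of the $\ast$-product for types, this is the same as showing $\varphi(\bar x;\bar y)\in p\otimes h_{\bar b}(q)$ iff $\varphi(\bar x;\bar y)\in p\otimes h_{\bar c}(q)$, so it suffices to establish $h_{\bar b}(q)=h_{\bar c}(q)$ as elements of $S_{\bar x}(M)$.

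To prove this last equality, I would invoke Lemma \ref{prop:compute}, which yields the explicit description
\[
h_{\bar b}(q)=\{\theta(\bar x;\bar m) : \theta(\bar b;\bar y)\in q\},\qquad h_{\bar c}(q)=\{\theta(\bar x;\bar m) : \theta(\bar c;\bar y)\in q\}.
\]
Since $q$ is $M$-invariant and $\bar b\equiv_M\bar c$, the equivalence $\theta(\bar b;\bar y)\in q \iff \theta(\bar c;\bar y)\in q$ holds for every $\CL_{\bar x;\bar y}$-formula $\theta$, giving $h_{\bar b}(q)=h_{\bar c}(q)$ as required. I do not foresee any real obstacle here: the proof is essentially a bookkeeping step whose content is entirely packaged into Proposition \ref{prop:formula.for.star.on.types} and Lemma \ref{prop:compute}.
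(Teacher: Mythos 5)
Your proof is correct, but for the key step ($M$-invariance) it takes a genuinely different route from the paper. The paper argues by contradiction from the explicit formula $p\ast q=\tp(\tau''\sigma(\bar m)/\FC)$ of Proposition \ref{prop:formula.for.star.on.types}: assuming $\varphi(\bar a;\bar y)\wedge\neg\varphi(\bar b;\bar y)\in p\ast q$ for some $\bar a\equiv_M\bar b$, it pulls the discrepancy back through $\tau$ using $M$-invariance of $p|_{\FC'}$ to conclude $\tau^{-1}(\bar a)\not\equiv_M\tau^{-1}(\bar b)$, contradicting $M$-invariance of $q=\tp(\tau(\bar m)/\FC)$. You instead work directly from the Morley-product definition of $\ast$ and reduce everything to the identity $h_{\bar b}(q)=h_{\bar c}(q)$ for $\bar b\equiv_M\bar c$, which follows from Lemma \ref{prop:compute} and $M$-invariance of $q$; the $M$-invariance of $p$ enters only through the well-definedness of $p\otimes(h_{\bar b}(q))_{\bar x}$ as depending only on the restriction of the second factor to $M$. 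This is precisely the type-level analogue of the paper's own proof of Lemma \ref{lemma:def.Borel.def}(1) for measures, so your argument is arguably more uniform with the rest of the paper and avoids the contradiction; the paper's version has the advantage of exercising the explicit automorphism formula, which is the tool reused immediately afterwards in Remark \ref{rem: ast.preserves.pi}. Both arguments are complete and correct.
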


\begin{proof}
Consider $p(\bar{y}),q(\bar{y})\in S^{\inv}_{\bar{m}}(\FC,M)$. The fact that $p* q \in S_{\bar m}(\FC)$ follows immediately from the explicit formulas from Proposition \ref{prop:formula.for.star.on.types}. It remains to show that $p*q$ is $M$-invariant.

Suppose for a contradiction that there is an $\mathcal{L}$-formula $\varphi(\bar x;\bar y)$ and $\bar a \equiv_M \bar b$ in $\FC^{\bar x}$ such that $\varphi(\bar a;\bar y) \wedge \neg \varphi(\bar b;\bar y) \in p*q$. Choose $\sigma, \tau,\tau''$ as in Proposition \ref{prop:formula.for.star.on.types}. 

By Proposition \ref{prop:formula.for.star.on.types}, we get 
$$\models \varphi(\bar a; \tau''\sigma(\bar m)) \wedge \neg \varphi(\bar b;\tau''\sigma(\bar m)),$$ and so 
 $$\models \varphi(\tau^{-1}(\bar a);\sigma(\bar m)) \wedge \neg \varphi(\tau^{-1}(\bar b);\sigma(\bar m)).$$
Thus, since $\tp(\sigma(\bar m)/\FC')=p|_{\FC'}$ is $M$-invariant, we conclude that $\tau^{-1}(\bar a) \not\equiv_M \tau^{-1}(\bar b)$. Hence, $\bar a \not\equiv_{\tau(\bar m)} \bar b$, which implies that $q=\tp(\tau(\bar m)/\FC)$ is not $M$-invariant, a contradiction.
\end{proof}

\begin{proposition}\label{prop:Borel.preimage}
Suppose that $p(\bar{y})\in S^{\inv}_{\bar{y}}(\FC,M)$ and $p$ is Borel-definable over $M$. Let $\mu\in\mathfrak{M}_{\bar{m}}(\FC)$, $\varphi(\bar{x};\bar{y})$ be an $\CL$-formula and $\bar{b}\in \FC^{\bar{x}}$.
    Then
    $$(p\ast\mu)\big(\varphi(\bar{b};\bar{y})\big)=\mu\big(h_{\bar{b}}^{-1}[D_{p,M}^{\varphi}]\big),$$
where $D_{p,M}^{\varphi}$ is introduced in Definition \ref{definition: definition of p}.
\end{proposition}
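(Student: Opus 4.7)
The plan is to unpack the definition of the $*$-product applied with $p$ in the first coordinate and then identify the integrand as the characteristic function of the claimed preimage. The key identifications are already present in the preceding lemmas: by Lemma \ref{prop:compute} the map $h_{\bar b}\colon S_{\bar m}(\FC)\to S_{\bar x}(M)$ is continuous, and by Borel-definability of $p$ the set $D^{\varphi}_{p,M}$ is a Borel subset of $S_{\bar x}(M)$, so $h_{\bar b}^{-1}[D^{\varphi}_{p,M}]$ is a Borel subset of $S_{\bar m}(\FC)$ and hence $\mu$-measurable.

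First, I would apply Definition \ref{def:star.definition} with the first argument being the Dirac measure $\delta_p$, writing
\[
(p\ast\mu)\big(\varphi(\bar b;\bar y)\big)=\int_{S_{\bar m}(\FC)} G^{\varphi(\bar b;\bar y)}_{\delta_p}\,d\mu.
\]
Next, I would compute $G^{\varphi(\bar b;\bar y)}_{\delta_p}$ pointwise: for any $\tp(\sigma(\bar m)/\FC)\in S_{\bar m}(\FC)$, by the very definition of $G^{\varphi(\bar b;\bar y)}_{\delta_p}$, this evaluates to $\delta_p\big(\varphi(\sigma^{-1}(\bar b);\bar y)\big)$, which is $1$ if $\varphi(\sigma^{-1}(\bar b);\bar y)\in p$ and $0$ otherwise. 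Using the defining formula $h_{\bar b}\big(\tp(\sigma(\bar m)/\FC)\big)=\tp(\sigma^{-1}(\bar b)/M)$ together with the definition of $D^{\varphi}_{p,M}$, this value equals $1$ exactly when $h_{\bar b}\big(\tp(\sigma(\bar m)/\FC)\big)\in D^{\varphi}_{p,M}$, i.e.\ when $\tp(\sigma(\bar m)/\FC)\in h_{\bar b}^{-1}[D^{\varphi}_{p,M}]$. Therefore
\[
G^{\varphi(\bar b;\bar y)}_{\delta_p}=\mathbf{1}_{h_{\bar b}^{-1}[D^{\varphi}_{p,M}]}.
\]

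Finally, integrating a characteristic function of a Borel set against $\mu$ gives the $\mu$-measure of that set:
\[
(p\ast\mu)\big(\varphi(\bar b;\bar y)\big)=\int_{S_{\bar m}(\FC)} \mathbf{1}_{h_{\bar b}^{-1}[D^{\varphi}_{p,M}]}\,d\mu=\mu\big(h_{\bar b}^{-1}[D^{\varphi}_{p,M}]\big),
\]
as desired. There is no real obstacle in this argument; the only subtlety worth flagging is that one must invoke Borel-definability of $p$ (so that $D^{\varphi}_{p,M}$ is Borel) and continuity of $h_{\bar b}$ (so that its preimage is Borel) to ensure that both sides of the equality make sense.
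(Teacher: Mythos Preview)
Your proof is correct and follows essentially the same approach as the paper's: both unpack the definition of the $*$-product with a type on the left and identify the integrand as the characteristic function of $h_{\bar b}^{-1}[D^{\varphi}_{p,M}]$. The paper phrases this via the pushforward form $(p\otimes (h_{\bar b})_*(\mu))(\varphi(\bar x;\bar y))=(h_{\bar b})_*(\mu)(\{q:F_p^{\varphi^{\mathrm{opp}}}(q)=1\})$ rather than the $G$-integral form you use, but these two formulations are equivalent by the remark following Definition~\ref{def:star.definition}.
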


\begin{proof}
Notice that 
\begin{align*}
(p * \mu)(\varphi(\bar{b};\bar{y})) &= (p \otimes (h_{\bar{b}})_{*}(\mu))(\varphi(\bar{x};\bar{y})) \\
&= (h_{\bar{b}})_{*}(\mu)(\{ q \in S_{\bar{x}}(M) : F_{p}^{\varphi^{\textrm{opp}}(\bar{y};\bar{x})}(q) = 1 \}) \\ 
&= \mu(h_{\bar{b}}^{-1}[D_{p,M}^{\varphi}]). \qedhere
\end{align*} 
\end{proof}

We remark that the following statement is true without the NIP assumption.

\begin{proposition}\label{prop:cont_types}
For any $q \in S_{\bar{m}}^{\inv}(\FC,M)$, the map $-\ast q: S_{\bar{m}}^{\inv}(\FC,M) \to S_{\bar{m}}^{\inv}(\FC,M)$ is continuous.
\end{proposition}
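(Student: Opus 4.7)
The plan is to use the explicit formula for the $*$-product on types derived in Proposition \ref{prop:formula.for.star.on.types}. Fix $q \in S_{\bar{m}}^{\inv}(\FC,M)$ and write $q = \tp(\tau(\bar m)/\FC)$ for some $\tau \in \aut(\FC')$. A sub-basic open set in $S_{\bar{m}}^{\inv}(\FC,M)$ has the form $[\varphi(\bar b;\bar y)] \cap S_{\bar{m}}^{\inv}(\FC,M)$, where $\varphi(\bar x;\bar y)$ is an $\CL$-formula and $\bar b$ is a finite tuple from $\FC^{\bar x}$. By Proposition \ref{prop:formula.for.star.on.types}, for each $p \in S_{\bar{m}}^{\inv}(\FC,M)$ we have
\[
\varphi(\bar b;\bar y) \in p * q \iff \varphi(\tau^{-1}(\bar b);\bar y) \in p|_{\FC'},
\]
where $p|_{\FC'}$ denotes the unique $M$-invariant extension of $p$ to $\FC'$.

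The issue is that the parameter $\tau^{-1}(\bar b)$ lies in $\FC'$ rather than $\FC$, so this does not immediately describe an open condition on $p$. To fix this, I use saturation of $\FC$: since $\bar b$ is a finite (hence short) tuple, so is $\tau^{-1}(\bar b)$, and $\tp(\tau^{-1}(\bar b)/M)$ is realized in $\FC$ by some $\bar c \in \FC^{\bar x}$, i.e. $\bar c \equiv_M \tau^{-1}(\bar b)$. By $M$-invariance of $p|_{\FC'}$,
\[
\varphi(\tau^{-1}(\bar b);\bar y) \in p|_{\FC'} \iff \varphi(\bar c;\bar y) \in p|_{\FC'} \iff \varphi(\bar c;\bar y) \in p.
\]
Consequently
\[
(-\ast q)^{-1}\bigl([\varphi(\bar b;\bar y)] \cap S_{\bar{m}}^{\inv}(\FC,M)\bigr) = [\varphi(\bar c;\bar y)] \cap S_{\bar{m}}^{\inv}(\FC,M),
\]
which is open. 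Since preimages of sub-basic open sets are open, $-\ast q$ is continuous.

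There is really no main obstacle beyond noticing that one must pull the $\FC'$-parameter back to a realization in $\FC$ using saturation plus $M$-invariance; everything else is a direct reading of the formula from Proposition \ref{prop:formula.for.star.on.types}. Note that the argument never uses Borel-definability of $p$ or NIP, so continuity holds in full generality on $S_{\bar{m}}^{\inv}(\FC,M)$, as asserted.
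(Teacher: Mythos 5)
Your proof is correct, and it is essentially the paper's argument unpacked: the paper simply cites left continuity of the Morley product for types (as in Proposition \ref{prop:conv_left_cont}), and the standard verification of that left continuity is exactly your step of realizing $\tp(\tau^{-1}(\bar b)/M)$ by some $\bar c$ in $\FC$ (noting $\bar c$ depends only on $q$ and $\bar b$, not on $p$) so that the preimage of $[\varphi(\bar b;\bar y)]$ becomes the clopen set $[\varphi(\bar c;\bar y)]$.
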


\begin{proof}
Follows by the left continuity of the Morley product for types. Similar to the proof of Proposition \ref{prop:conv_left_cont}.
\end{proof}

We now prove that the product is associative on triples of types from $S_{\bar{m}}^{\inv}(\FC,M)$. Again, no NIP assumption is necessary. 

\begin{proposition}\label{prop: associative for types}
    Let $p(\bar{y}),q(\bar{y}),r(\bar{y})\in S^{\inv}_{\bar{m}}(\FC,M)$. Then
    $$(p\ast q)\ast r= p\ast(q\ast r).$$
\end{proposition}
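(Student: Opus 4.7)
The plan is to unwind both sides of $(p \ast q) \ast r = p \ast (q \ast r)$ using the explicit formula in Proposition \ref{prop:formula.for.star.on.types} and observe that they both reduce to the type of the same composite automorphism applied to $\bar m$. The underlying reason the result should hold is that, for types, the $\ast$-product encodes composition of (suitable extensions of) automorphisms acting on $\bar m$, and function composition is associative.

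To implement this, first set up a tower of monster models $\FC \prec \FC_1 \prec \FC_2 \prec \FC_3$, each a monster in which the previous is small. Using saturation and strong homogeneity, pick $\rho \in \aut(\FC_1)$, $\tau \in \aut(\FC_2)$, $\sigma \in \aut(\FC_3)$ with $\rho(\bar m) \models r$, $\tau(\bar m) \models q|_{\FC_1}$, and $\sigma(\bar m) \models p|_{\FC_2}$. Then fix extensions $\rho_2 \in \aut(\FC_2)$ and $\rho_3 \in \aut(\FC_3)$ of $\rho$, and $\tau_3 \in \aut(\FC_3)$ of $\tau$.

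For the left-hand side, Proposition \ref{prop:formula.for.star.on.types} applied to $(p,q)$ gives $p \ast q = \tp(\tau_3 \sigma(\bar m)/\FC)$. The same formula applied over the base $\FC_1$ (to $p|_{\FC_1}$ and $q|_{\FC_1}$) yields $\tp(\tau_3\sigma(\bar m)/\FC_1)$, which is an $M$-invariant type over $\FC_1$ restricting to $p \ast q$. Since $p \ast q$ is itself $M$-invariant by Corollary \ref{corollary: closedness under *} and $M$-invariant extensions are unique, $\tp(\tau_3\sigma(\bar m)/\FC_1) = (p \ast q)|_{\FC_1}$. A third application of the formula, now to $(p \ast q, r)$, then yields $(p \ast q) \ast r = \tp(\rho_3 \tau_3 \sigma(\bar m)/\FC)$. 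For the right-hand side, Proposition \ref{prop:formula.for.star.on.types} gives $q \ast r = \tp(\rho_2 \tau(\bar m)/\FC)$, whose unique $M$-invariant extension to $\FC_2$ is $\tp(\rho_2\tau(\bar m)/\FC_2)$ by the same argument. Hence $p \ast (q \ast r) = \tp(\xi \sigma(\bar m)/\FC)$ for any $\xi \in \aut(\FC_3)$ extending $\rho_2 \tau$.

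The key compatibility check is that $\rho_3 \tau_3$ is such an extension: since $\tau_3|_{\FC_2} = \tau$ (so $\tau_3$ sends $\FC_2$ setwise to itself) and $\rho_3|_{\FC_2} = \rho_2$, one has $(\rho_3 \tau_3)|_{\FC_2} = \rho_2 \tau$. Taking $\xi := \rho_3 \tau_3$ then gives $p \ast (q \ast r) = \tp(\rho_3 \tau_3 \sigma(\bar m)/\FC)$, matching the left-hand side. The only subtlety lies in the bookkeeping of the extensions $\rho_2, \rho_3, \tau_3$ across the tower and in using uniqueness of $M$-invariant extensions to identify the intermediate products $(p \ast q)|_{\FC_1}$ and $(q \ast r)|_{\FC_2}$ with the explicit types produced by the formula; once this bookkeeping is fixed, associativity reduces directly to the associativity of composition of functions.
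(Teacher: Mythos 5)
Your argument is essentially correct, but it takes a different route from the paper's. The paper proves associativity formula-by-formula: fixing $\varphi(\bar b;\bar y)$, it establishes the key identity $h_{\eta^{-1}(\bar b)}(q)=h_{\bar b}(q\ast r)$ and then chains equivalences through the Morley-product definition of $\ast$. You instead iterate the closed-form expression of Proposition \ref{prop:formula.for.star.on.types} in a tower of monsters and reduce everything to associativity of composition of automorphisms; the price is the extra bookkeeping of identifying $(p\ast q)|_{\FC_1}$ with the explicit type $\tp(\tau_3\sigma(\bar m)/\FC_1)$ via uniqueness of $M$-invariant extensions (which you do correctly, using Corollary \ref{corollary: closedness under *} over the base $\FC_1$), while the benefit is that the reason for associativity becomes transparent. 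Both arguments are valid and neither needs NIP or definability hypotheses.

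Two slips should be repaired. First, your assertion that the unique $M$-invariant extension of $q\ast r$ to $\FC_2$ is $\tp(\rho_2\tau(\bar m)/\FC_2)$ is false: since $\rho_2\tau\in\aut(\FC_2)$, the tuple $\rho_2\tau(\bar m)$ lies in $\FC_2$, so this is a realized type over $\FC_2$ and is $M$-invariant only in degenerate cases. Fortunately the claim is not needed: to compute $p\ast(q\ast r)$ via Proposition \ref{prop:formula.for.star.on.types} you only need a representation $q\ast r=\tp(\zeta(\bar m)/\FC)$ with $\zeta=\rho_2\tau$ in a larger monster, together with the invariant extension of the \emph{left} factor, namely $p|_{\FC_2}=\tp(\sigma(\bar m)/\FC_2)$, which you have; the conclusion $p\ast(q\ast r)=\tp(\xi\sigma(\bar m)/\FC)$ for any $\xi\in\aut(\FC_3)$ extending $\rho_2\tau$ then follows directly. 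Second, you fix $\rho_2$ and $\rho_3$ merely as extensions of $\rho$, but later use $\rho_3|_{\FC_2}=\rho_2$, which requires $\rho_3$ to extend $\rho_2$; since the formula allows an arbitrary extension of $\rho$, simply choose $\rho_3\supseteq\rho_2\supseteq\rho$ from the start. With these corrections the proof goes through.
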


\begin{proof}
We can write $r=\tp(\eta(\bar{m})/\FC)$ for some $\eta\in\aut(\FC')$, and $q|_{\FC'}=\tp(\tau(\bar{m})/\FC')$ for some $\tau \in \aut(\FC'')$ (where $\FC'' \succ \FC$ is a monster model in which $\FC'$ is small). 

Take an $\CL$-formula $\varphi(\bar{x};\bar{y})$ and a tuple $\bar{b}\in \FC^{\bar{x}}$. 
Since $\bar m$ and $\bar b$ are short in $\FC$, we can choose the above $\eta$ more carefully so that $\eta^{-1}(\bar b) \in \FC^{\bar{x}}$.
    \ 
    \\ \textbf{Claim:} $h_{\eta^{-1}(\bar{b})}(q)=h_{\bar{b}}(q\ast r)$.
\begin{clmproof}
Note that 
    $h_{\bar{b}}(r)=\tp(\eta^{-1}(\bar{b})/M)$ and
    $h_{\eta^{-1}(\bar{b})}(q)=\tp(\tau^{-1}\eta^{-1}(\bar{b})/M)$,
and so for an $\mathcal{L}$-formula $\theta(\bar{x};\bar{y})$ we can write:
    \begin{align*}
    \theta(\bar{x};\bar{m})\in h_{\eta^{-1}(\bar{b})}(q) &\iff \models\theta(\tau^{-1}\eta^{-1}(\bar{b});\bar{m}) \\
    &\iff \models\theta(\eta^{-1}(\bar{b});\tau(\bar{m})) \\
    &\iff \theta(\eta^{-1}(\bar{b});\bar{y})\in q \\
&\iff \theta(\bar{x};\bar{y})\in  q\otimes h_{\bar{b}}(r) \\
    &\iff \theta(\bar{b};\bar{y})\in q\ast r.
    \end{align*} 
    We end using Lemma \ref{prop:compute}, to have that the last line is equivalent to $\theta(\bar{x};\bar{m})\in (h_{\bar{b}})_{\ast}\delta_{q\ast r}=h_{\bar{b}}(q\ast r)$, 
so the claim is proved.
\end{clmproof}

Now, using the claim for the fourth equivalence below, we have
    \begin{align*}
    \varphi(\bar{b};\bar{y})\in (p\ast q)\ast r &\iff \varphi(\bar{x};\bar{y})\in (p\ast q)\otimes h_{\bar{b}}(r) \\
    &\iff \varphi(\eta^{-1}(\bar{b});\bar{y})\in p\ast q \\
    &\iff \varphi(\bar{x};\bar{y})\in p\otimes h_{\eta^{-1}(\bar{b})}(q) \\
    &\iff \varphi(\bar{x};\bar{y})\in p\otimes h_{\bar{b}}(q\ast r) \\
    &\iff \varphi(\bar{b};\bar{y})\in p\ast(q\ast r). \qedhere
    \end{align*}
\end{proof}

\begin{remark}\label{rem: ast.preserves.pi}
    Let $\pi(\bar{x};\bar{y})$ be a partial type over $\emptyset$ containing ``$\bar{x}\equiv_{\emptyset}\bar{y}$'' and such that
    $G_{\pi,\FC}$ is a subgroup of $\aut(\FC)$.
    If $p,q\in S^{\inv}_{\pi(\bar{m};\bar{y})}(\FC,M)$,
    then $p\ast q\in S^{\inv}_{\pi(\bar{m};\bar{y})}(\FC,M)$. 
\end{remark}

\begin{proof}
We want to use the last formula from Proposition \ref{prop:formula.for.star.on.types}, so write
$q(\bar{y})=\tp(\tau(\bar{m})/\FC)$ for some $\tau\in\aut(\FC')$,
and $p|_{\FC'}=\tp(\sigma(\bar{m})/\FC')$ for some $\sigma\in\aut(\FC'')$. Let $\tau'' \in \aut(\FC'')$ be any extension of $\tau$.
By Proposition \ref{prop:formula.for.star.on.types} and Corollary \ref{corollary: closedness under *}, $p\ast q=\tp(\tau''\sigma(\bar{m})/\FC)\in S^{\inv}_{\bar{m}}(\FC,M)$.

As $p,q$ both contain the type $\pi(\bar m,\bar y)$, we get $\models\pi(\bar{m};\tau''(\bar{m}))$ and $\models\pi(\bar{m};\sigma(\bar{m}))$, hence $\models\pi(\tau''^{-1}(\bar{m});\bar{m})$ and $\models\pi(\sigma^{-1}(\bar{m});\bar{m})$, and so $\tau''^{-1},\sigma^{-1} \in G_{\pi,\FC''}$. Since $G_{\pi,\FC}$ is a subgroup of $\aut(\FC)$, $G_{\pi,\FC''}$ is subgroup of $\aut(\FC'')$ by Remark \ref{remark: group in every model}. Therefore, $\sigma^{-1}\tau''^{-1} \in  G_{\pi,\FC''}$, i.e. $\models\pi(\sigma^{-1}\tau''^{-1}(\bar{m});\bar{m})$, and so $\models\pi(\bar{m};\tau''\sigma(\bar{m}))$. Thus, since $p\ast q=\tp(\tau''\sigma(\bar{m})/\FC)$, we conclude that $p \ast q \in [\pi(\bar{m};\bar{y})]$.

The conclusions of the above two paragraphs imply that $p\ast q \in S^{\inv}_{\pi(\bar{m};\bar{y})}(\FC,M)$. 
\end{proof}

\subsubsection{Associativity of convolution product}
From Definition \ref{def:star.product.definition.0}, we know that $*$ is associative on $\mathfrak{M}_{\bar{m}}^{\sfs}(\FC,M)$ in the NIP context. The general question of whether $*$ is associative on $\mathfrak{M}_{\bar{m}}^{\inv}(\FC,M)$ in the NIP context (i.e. Question \ref{question: associativity of *} in the introduction) remains open. The goal of this section is to prove associativity of $*$ for some large practical families of Keisler measures. We prove that $*$ is associative on triples of finitely satisfiable Keisler measures in the NIP context and triples of invariant measures over countable models of an NIP theory in a countable language. We also prove that $*$ is associative on triples of measures with some definability assumptions without the NIP hypothesis. (Recall also that in Subsection \ref{subsubsection: * on types} we proved that $*$ is associative on invariant types, without any extra assumptions).

Before we start, let us recall that the convolution product for definable groups is associative on invariant measures in the NIP context. However, this proof does not directly generalize to the setting of theories because of a ``piecewise character'' of the definition of the convolution product (i.e., the measure $(h_{\bar{b}})_{*}(\nu)$ in Definition \ref{def:star.definition} changes with $\bar{b}$). More explicitly, all the proofs of associativity for both the Morley product and the convolution product for definable groups for invariant measure (and invariant types) involve replacing a measure with a smooth extension. One should think about this operation as something akin to \emph{realizing a type}. However, in the case of general NIP theories, these are some subtle complications which make this processes much less clear than usual.

\begin{theorem}\label{thm:star.associativity}
Let $\nu,\lambda \in \mathfrak{M}_{\bar{m}}^{\inv}(\FC,M)$, where $\nu$ is Borel-definable over $M$, and let $\mu\in\mathfrak{M}_{\bar{y}}^{\inv}(\FC,M)$ be $M$-definable.
Then 
$$(\mu \ast \nu) \ast \lambda = \mu \ast(\nu \ast \lambda).$$
\end{theorem}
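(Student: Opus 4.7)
The plan is to prove associativity by unfolding the definition of $*$ and approximating $\mu$ uniformly by ``simple'' measures, using the assumption that $\mu$ is definable. Fix an $\mathcal{L}$-formula $\varphi(\bar{x};\bar{y})$ and a tuple $\bar{b} \in \FC^{\bar{x}}$. By Lemma \ref{lemma:def.Borel.def}(4), $\mu*\nu$ is Borel-definable over $M$, so $(\mu*\nu)*\lambda$ is well-defined. Both sides of the desired equality are measures in $\mathfrak{M}_{\bar{y}}(\FC)$, so it suffices to show that $((\mu*\nu)*\lambda)(\varphi(\bar{b};\bar{y})) = (\mu*(\nu*\lambda))(\varphi(\bar{b};\bar{y}))$.

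The key step will be a uniform approximation. Since $\mu$ is $M$-definable, the function $F_{\mu}^{\varphi^{\text{opp}}(\bar{y};\bar{x})} \colon S_{\bar{x}}(M) \to [0,1]$ is continuous, so for any $\epsilon > 0$ there exist $\mathcal{L}_{\bar x}(M)$-formulas $\psi_1(\bar{x};\bar{m}),\dots,\psi_n(\bar{x};\bar{m})$ and reals $r_1,\dots,r_n$ such that
$$\sup_{q \in S_{\bar{x}}(M)} \Bigl| F_{\mu}^{\varphi^{\text{opp}}(\bar{y};\bar{x})}(q) - \sum_{i=1}^{n} r_i\mathbf{1}_{[\psi_i(\bar{x};\bar{m})]}(q) \Bigr| < \epsilon.$$
Composing with $h_{\bar{c}}$ (for any $\bar{c} \in \FC^{\bar{x}}$) and using the computation $(\diamondsuit)$ from the proof of Lemma \ref{lemma:def.Borel.def}(4)--(5), this gives a uniform approximation on the level of $*$-products, namely
$$\sup_{\bar{c} \in \FC^{\bar{x}}} \Bigl| (\mu*\nu)(\varphi(\bar{c};\bar{y})) - \sum_{i=1}^{n} r_i\,\nu(\psi_i(\bar{c};\bar{y})) \Bigr| < \epsilon.$$

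I would then integrate both sides. Applied to $\bar{c} = \sigma^{-1}(\bar{b})$ and integrated against $\lambda$, the uniformity yields
$$\Bigl| ((\mu*\nu)*\lambda)(\varphi(\bar{b};\bar{y})) - \sum_{i=1}^{n} r_i \int G_{\nu}^{\psi_i(\bar{b};\bar{y})}\,d\lambda \Bigr| \leq \epsilon,$$
and the last sum equals $\sum_{i} r_i (\nu*\lambda)(\psi_i(\bar{b};\bar{y}))$ by definition of $*$. For the other side, the same uniform approximation of $G_{\mu}^{\varphi(\bar{b};\bar{y})} = F_{\mu}^{\varphi^{\text{opp}}(\bar{y};\bar{x})} \circ h_{\bar{b}}$ by $\sum_i r_i \mathbf{1}_{[\psi_i(\bar{b};\bar{y})]}$ on $S_{\bar{m}}(\FC)$ (via Lemma \ref{prop:compute}), integrated against $\nu*\lambda$, gives
$$\Bigl| (\mu*(\nu*\lambda))(\varphi(\bar{b};\bar{y})) - \sum_{i=1}^{n} r_i (\nu*\lambda)(\psi_i(\bar{b};\bar{y})) \Bigr| \leq \epsilon.$$
Combining these two estimates yields $|((\mu*\nu)*\lambda)(\varphi(\bar{b};\bar{y})) - (\mu*(\nu*\lambda))(\varphi(\bar{b};\bar{y}))| \leq 2\epsilon$, and letting $\epsilon \to 0$ gives equality.

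The main (minor) subtlety I expect is keeping track of variables and pushforwards to ensure the two estimates really sandwich the same quantity $\sum_i r_i(\nu*\lambda)(\psi_i(\bar{b};\bar{y}))$; this is why definability of $\mu$ is essential (it converts the measure $\mu$ into a ``finite sum of Dirac'' obstruction that can be commuted past the other two measures without needing NIP or a smooth extension). Note that neither $\nu$ nor $\lambda$ is assumed definable, so the asymmetric role of $\mu$ in the hypothesis is exactly what the approximation requires.
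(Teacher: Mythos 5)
Your proposal is correct and follows essentially the same route as the paper: both proofs exploit definability of $\mu$ to uniformly approximate $F_{\mu}^{\varphi^{\textrm{opp}}(\bar{y};\bar{x})}$ by a simple function $\sum_i r_i\mathbf{1}_{[\psi_i(\bar{x};\bar{m})]}$, invoke the computation $(\diamondsuit)$ to transfer this to an approximation of $F_{\mu\ast\nu}^{\varphi^{\textrm{opp}}(\bar{y};\bar{x})}$ by $\sum_i r_i F_{\nu}^{\psi_i^{\textrm{opp}}(\bar{y};\bar{x})}$, and then show both $((\mu\ast\nu)\ast\lambda)(\varphi(\bar{b};\bar{y}))$ and $(\mu\ast(\nu\ast\lambda))(\varphi(\bar{b};\bar{y}))$ lie within $\epsilon$ of the common middle term $\sum_i r_i(\nu\ast\lambda)(\psi_i(\bar{b};\bar{y}))$. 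The paper merely presents the two estimates as a single chain of $\approx_{\epsilon}$ steps rather than as two separate sandwiching inequalities.
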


\begin{proof} 
By Lemma \ref{lemma:def.Borel.def}, $\mu\ast\nu$ is Borel-definable over $M$, so we can compute $(\mu\ast\nu)\ast\lambda$.
Fix an $\CL$-formula $\varphi(\bar{x};\bar{y})$, a tuple $\bar{b}\in \FC^{\bar{x}}$, and any $\epsilon > 0$. 
Since $\mu$ is definable over $M$, the map $F^{\varphi^{\textrm{opp}}(\bar{y};\bar{x})}_{\mu}:S_{\bar{x}}(M)\to[0,1]$ is continuous, and so 
\begin{equation*}
    \sup_{q \in S_{\bar{x}}(M)} | F_{\mu}^{\varphi^{\textrm{opp}}(\bar{y};\bar{x})}(q) - \sum_{i=1}^{n} r_i\mathbf{1}_{[\psi_i(\bar{x};\bar{m})]} (q) | < \epsilon,
\end{equation*}
where $\{\psi_{i}(\bar{x};\bar{m})\}_{i=1}^{n}$ are $\CL_{\bar{x}}(M)$-formulas
and $r_1,\dots,r_n\in\Rr$.
As in ($\diamondsuit$) of the proof of Lemma \ref{lemma:def.Borel.def},
we have
$$F_{\mu \ast \nu}^{\varphi^{\textrm{opp}}(\bar{y};\bar{x})}(q) \approx_{\epsilon} \sum_{i=1}^{n} r_i F_{\nu}^{\psi_{i}^{\textrm{opp}}(\bar{y};\bar{x})}(q).$$
for every $q\in S_{\bar{x}}(M)$. Hence,
\begin{align*}
    (\mu \ast (\nu \ast \lambda))(\varphi(\bar{b};\bar{y})) &= \int_{S_{\bar{x}}(M)} F_{\mu}^{\varphi^{\textrm{opp}}(\bar{y};\bar{x})} \,d(h_{\bar{b}})_{\ast}(\nu \ast \lambda)\\
    &\approx_{\epsilon} \int_{S_{\bar{x}}(M)} \sum_{i=1}^{n} r_i \mathbf{1}_{[\psi_{i}(\bar{x};\bar{m})]} \,d(h_{\bar{b}})_{\ast}(\nu \ast\lambda) \\
    &= \sum_{i=1}^{n} r_i (h_{\bar{b}})_{\ast}(\nu \ast \lambda)(\psi_{i}(\bar{x};\bar{m}))= \sum_{i=1}^{n} r_i (\nu \ast \lambda)(\psi_{i}(\bar{b};\bar{y})) \\ 
    &= \sum_{i=1}^{n} r_i \int_{S_{\bar{x}}(M)} F_{\nu}^{\psi_{i}^{\textrm{opp}}(\bar{y};\bar{x})}\, d (h_{\bar{b}})_{\ast}(\lambda) =\int_{S_{\bar{x}}(M)}  \sum_{i=1}^{n} r_i  F_{\nu}^{\psi_{i}^{\textrm{opp}}(\bar{y};\bar{x})} \,d (h_{\bar{b}})_{\ast}(\lambda) \\
    &\approx_{\epsilon} \int_{S_{\bar{x}}(M)}  F_{\mu\ast\nu}^{\varphi^{\textrm{opp}}(\bar{y};\bar{x})} \,d (h_{\bar{b}})_{\ast}(\lambda) = \big((\mu \ast \nu)\ast\lambda\big) (\varphi(\bar{b};\bar{y})). \qedhere
\end{align*}
\end{proof}

\begin{cor}\label{corollary: * associative on fs types}
    Assume that $T$ is NIP, $\mu\in\mathfrak{M}_{\bar{m}}^{\fs}(\FC,M)$ and $\nu,\lambda\in\mathfrak{M}_{\bar{m}}^{\inv}(\FC,M)$. Then
    $$(\mu\ast\nu)\ast\lambda=\mu\ast(\nu\ast\lambda).$$
\end{cor}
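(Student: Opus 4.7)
The plan is to reduce Corollary \ref{corollary: * associative on fs types} to the case already handled by Theorem \ref{thm:star.associativity} (where the left-most factor is definable) by approximating $\mu$ from the left and exploiting left continuity of $*$ under NIP. Concretely, I want to write $\mu$ as a weak-$\ast$ limit of measures $\lambda_i$ that are $M$-definable and $M$-invariant, and then pass to the limit in the associativity identity $(\lambda_i \ast \nu)\ast\lambda = \lambda_i\ast(\nu\ast\lambda)$.

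For the approximation step, I invoke the fact already used in the proof of Lemma \ref{lemma:Psi} (Proposition 2.11 of \cite{Artem_Kyle}): since $\mu \in \mathfrak{M}^{\fs}_{\bar m}(\FC,M)$ is finitely satisfiable in $M$, it lies in the weak-$\ast$ closure of $\conv(\{\delta_{\bar a}\colon \bar a \in M^{\bar m}\})$. So there is a net
$$\lambda_i = \sum_{j=1}^{k_i}\alpha_j^i\,\delta_{\bar a_j^i}, \qquad \bar a_j^i\in M^{\bar m},\ \alpha_j^i\geqslant 0,\ \textstyle\sum_j\alpha_j^i=1,$$
converging to $\mu$. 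Each $\delta_{\bar a}$ with $\bar a\in M^{\bar m}$ is $M$-invariant and $M$-definable (the fiber function $F^{\varphi}_{\delta_{\bar a}}$ is the indicator of the clopen $[\varphi(\bar x;\bar a)]\subseteq S_{\bar x}(M)$), and since finite positive combinations of $M$-definable measures are $M$-definable, each $\lambda_i\in\mathfrak{M}^{\df}_{\bar y}(\FC,M)$.

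Now I combine three ingredients. First, NIP together with Fact \ref{fact:MP}(1) ensures that $\nu$ (being $M$-invariant) is Borel-definable, so the hypotheses of Theorem \ref{thm:star.associativity} are met with $\lambda_i$ playing the role of ``$\mu$''; hence for every $i$,
$$(\lambda_i\ast\nu)\ast\lambda \;=\; \lambda_i\ast(\nu\ast\lambda).$$
Second, Proposition \ref{prop:conv_left_cont} (left continuity of $\ast$ under NIP) applied with right argument $\nu$ gives $\lambda_i\ast\nu\to\mu\ast\nu$ in $\mathfrak{M}^{\inv}_{\bar y}(\FC,M)$, and a further application with right argument $\lambda$ yields $(\lambda_i\ast\nu)\ast\lambda\to(\mu\ast\nu)\ast\lambda$. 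Third, $\nu\ast\lambda\in\mathfrak{M}^{\inv}_{\bar m}(\FC,M)$ by Lemma \ref{lemma:def.Borel.def}(3), so another application of Proposition \ref{prop:conv_left_cont} with right argument $\nu\ast\lambda$ yields $\lambda_i\ast(\nu\ast\lambda)\to\mu\ast(\nu\ast\lambda)$. Equating the two limits produces the required equality.

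There is no real obstacle here; the one point to keep straight is that the approximants $\lambda_i$ need not be concentrated on $\tp(\bar m/\emptyset)$ (the Dirac masses come from arbitrary tuples in $M^{\bar m}$), but this is irrelevant because Proposition \ref{prop:conv_left_cont} and Theorem \ref{thm:star.associativity} only require the left-most argument to be $M$-invariant (respectively $M$-definable) in the generic variable $\bar y$, and both sides of the final identity automatically lie in $\mathfrak{M}^{\inv}_{\bar m}(\FC,M)$ anyway.
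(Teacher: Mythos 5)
Your proof is correct and follows essentially the same route as the paper: approximate the finitely satisfiable $\mu$ by convex combinations of realized types, apply Theorem \ref{thm:star.associativity} to the approximants, and pass to the limit via left continuity (Proposition \ref{prop:conv_left_cont}). The only (cosmetic) difference is that you apply Theorem \ref{thm:star.associativity} directly to each convex combination $\lambda_i$ after observing it is $M$-definable, whereas the paper splits $\Av(\bar p_i)$ into its constituent types via the affineness results (Proposition \ref{prop:left.affine}) and applies the theorem type-by-type before reassembling.
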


\begin{proof}
    As $\mu$ is finitely satisfiable in $M$, there exists a net $\big(\Av(\bar{p}_i) \big)_{i\in I}$ converging to $\mu$ such that $\bar{p}_i=(p^i_1,\ldots,p^i_{n_i})$ and each $p^i_j(\bar{y})$ is equal to $\tp(\bar{a}_{i,j}/\FC)$ for some $\bar{a}_{i,j}\in M^{\bar{y}}$ (see \cite[Proposition 2.11]{Artem_Kyle} and \cite[Fact 2.2]{gannon2022sequential}). Then
    \begin{align*}
    (\mu\ast\nu)\ast\lambda &= \left( \left( \lim\limits_{i\in I}\Av(\bar{p}_i)\right)\ast\nu\right)\ast\lambda \\
    &\overset{2\times\text{Prop. \ref{prop:conv_left_cont}}}{=}  \lim\limits_{i\in I}\left(\big(\Av(\bar{p}_i)\ast\nu\big)\ast\lambda\right) \\
    &\overset{2\times\text{Prop. \ref{prop:left.affine}}}{=}  \lim\limits_{i\in I}\left(\frac{1}{n_i}\sum\limits_{j\leqslant n_i}(p^i_j\ast\nu)\ast\lambda\right) \\
    &\overset{\text{Thm \ref{thm:star.associativity}}}{=}  \lim\limits_{i\in I}\left(\frac{1}{n_i}\sum\limits_{j\leqslant n_i}p^i_j\ast(\nu\ast\lambda)\right) \\
    &\overset{\text{Prop \ref{prop:left.affine}}}{=}  \lim\limits_{i\in I}\left(\Av(\bar{p}_i)\ast(\nu\ast\lambda)\right) \\
    &\overset{\text{Prop. \ref{prop:conv_left_cont}}}{=}  \left(\lim\limits_{i\in I}\Av(\bar{p}_i)\right)\ast(\nu\ast\lambda)= \mu\ast(\nu\ast\lambda). \qedhere
    \end{align*}
\end{proof}

\begin{cor}\label{cor:fs.semigroup}
    Assume that $T$ is NIP.
    Then $(\mathfrak{M}_{\bar{m}}^{\fs}(\FC,M),\ast)$ is a compact (Hausdorff) left topological semigroup with neutral element $\delta_{\bar{m}}$.
\end{cor}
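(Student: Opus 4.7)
The plan is to assemble the statement from the preservation and structural results already established in this section, since no new machinery is required. The four things to check are: (i) closure of $\mathfrak{M}_{\bar{m}}^{\fs}(\FC,M)$ under $\ast$, (ii) associativity of $\ast$ on this set, (iii) that it is a compact Hausdorff space on which $-\ast\nu$ is continuous for each fixed $\nu$, and (iv) that $\delta_{\bar{m}}$ belongs to the set and acts as a two-sided identity.

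For (i), I would simply invoke Lemma \ref{lemma:def.Borel.def}(6), which says directly that if $\mu,\nu \in \mathfrak{M}_{\bar{m}}^{\fs}(\FC,M)$ then $\mu \ast \nu \in \mathfrak{M}_{\bar{m}}^{\fs}(\FC,M)$. For (ii), under NIP every finitely satisfiable measure is in particular $M$-invariant, so any triple from $\mathfrak{M}_{\bar{m}}^{\fs}(\FC,M)$ satisfies the hypotheses of Corollary \ref{corollary: * associative on fs types} (namely, the first measure is fs and the remaining two are invariant), yielding associativity.

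For (iii), $\mathfrak{M}_{\bar{m}}^{\fs}(\FC,M)$ is a closed subset of the compact Hausdorff space $\mathfrak{M}_{\bar{m}}(\FC)$ (as recorded in Definition \ref{cheat}(5)) and therefore itself compact Hausdorff in the induced topology. Left continuity then comes directly from Proposition \ref{prop:conv_left_cont}, which gives left continuity of $\ast$ on the larger space $\mathfrak{M}_{\bar{m}}^{\inv}(\FC,M)$; restricting to the closed subsemigroup preserves this property. For (iv), the global type $\tp(\bar{m}/\FC)$ is realized by $\bar{m}\in M^{|\bar{m}|}$, hence is trivially finitely satisfiable in $M$, so $\delta_{\bar{m}} \in \mathfrak{M}_{\bar{m}}^{\fs}(\FC,M)$; that it acts as a two-sided identity under $\ast$ on any Borel-definable $M$-invariant measure is exactly Proposition \ref{prop:neutral.element}, which applies since $T$ is NIP and fs measures are Borel-definable over $M$ by Fact \ref{fact:MP}(1).

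There is no substantive obstacle: the corollary is a packaging of previously proved ingredients. The only subtlety worth flagging explicitly in the write-up is that associativity genuinely requires NIP, which enters through Corollary \ref{corollary: * associative on fs types}, and that left continuity (not two-sided continuity) is the correct notion — as is standard for Ellis-type semigroups and as is already the case for the convolution product on definable groups (Fact \ref{fact: definable convolution is a semigroup}).
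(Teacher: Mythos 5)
Your proof is correct and uses exactly the same ingredients as the paper's own (one-line) proof: Lemma \ref{lemma:def.Borel.def}(6) for closure, Corollary \ref{corollary: * associative on fs types} for associativity, Proposition \ref{prop:conv_left_cont} for left continuity, and Proposition \ref{prop:neutral.element} for the identity. The extra details you supply (closedness of the fs measures giving compactness, $\delta_{\bar m}$ being fs, and fs implying invariant so the associativity corollary applies) are all accurate and merely make explicit what the paper leaves implicit.
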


\begin{proof}
    The corollary follows by 
Corollary \ref{corollary: * associative on fs types}, Propositions \ref{prop:conv_left_cont} and \ref{prop:neutral.element}, and the fact that
    if $\mu,\nu\in\mathfrak{M}_{\bar{m}}^{\fs}(\FC,M)$ then $\mu\ast\nu\in\mathfrak{M}_{\bar{m}}^{\fs}(\FC,M)$, i.e., (6) of Lemma \ref{lemma:def.Borel.def}.
\end{proof}

Since $(\mathfrak{M}_{\bar{m}}^{\fs}(\FC,M),*)$ forms a compact left topological  semigroup,  one can study it through the lens of Ellis theory. The semigroup $(\mathfrak{M}_{\bar{m}}^{\fs}(\FC,M),*)$ has minimal left ideals which are disjoint unions of Ellis subgroups 
(e.g. see \cite[Fact A.8]{rzepecki2018}), and so one might be curious about the possible Ellis groups one may encounter. 
The next proposition essentially shows that studying Ellis groups in this context is not interesting -- they are all trivial. 
This phenomenon has been observed before in the definable group setting and is related to the proof that there are no non-trivial compact convex groups.

\begin{proposition}\label{proposition: Ellis groups are trivial}
    Assume that $T$ is NIP. Then the Ellis group is trivial for each of the following semigroups:
    \begin{enumerate}
        \item 
        $(\mathfrak{M}^{\sfs}_{\bar{m}}(\FC,M),\ast)$,
        (and thus the Ellis group of $\E(\mathfrak{M}_{\bar{x}}(M),\conv(\aut(M)))$ is trivial as well by Definition \ref{def:star.product.definition.0}),

        \item 
        $(\mathfrak{M}^{\fs}_{\bar{m}}(\FC,M),\ast)$,

        \item
        $(\mathfrak{M}^{\inv}_{\bar{m}}(\FC,M),\ast)$,
        provided $\ast$ is associative (e.g. in case of a countable $M$ and countable language, cf. Theorem \ref{thm: associativity for NIP and countable}).
    \end{enumerate}
\end{proposition}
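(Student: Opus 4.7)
The plan is to prove all three items uniformly by extracting the following general fact: if $(S,*)$ is a compact Hausdorff semigroup that is also a convex subset of a locally convex topological vector space, on which $*$ is associative, affine in each variable, and left-continuous in the sense of Proposition \ref{prop:conv_left_cont}, then every Ellis group of $S$ is trivial. All three semigroups in the proposition satisfy these hypotheses: convexity is immediate from sitting inside the space of Keisler measures, affinity in each variable follows from Propositions \ref{prop:left.affine} and \ref{prop:right.affine}, left-continuity is Proposition \ref{prop:conv_left_cont}, and associativity holds in case (1) by Theorem \ref{thm: Newelski's analogon}, in case (2) by Corollary \ref{cor:fs.semigroup}, and in case (3) by hypothesis.

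To establish the general fact, I would first fix a minimal left ideal $L \subseteq S$ and an idempotent $u \in L$, so that the Ellis group is $G := u*L$. Since $u = u*u \in S*u$ and $S*u$ is a left ideal contained in $L$, minimality forces $L = S*u$, which is convex by affinity of $-*u$; then $G = u*L$ is convex by affinity of $u*-$, and compact as a closed subset of $S$. Moreover $G$ is an abstract group with identity $u$, and each right-translation $R_\nu\colon \mu \mapsto \mu*\nu$ restricts to a continuous affine self-map of $G$.

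The heart of the argument runs as follows. Assume for contradiction that some $\alpha \in G$ differs from $u$, and consider $C := \overline{\conv(\{\alpha^n : n \geq 0\})}$. Since each $\alpha^n \in G$ and $G$ is compact convex, $C$ is a nonempty compact convex subset of the ambient locally convex space with $C \subseteq G$. The right-translation $R_\alpha$ is continuous (by left-continuity of $*$), affine (by the affinity of $-*\alpha$), and sends $\{\alpha^n\}_{n\geq 0}$ into itself; hence by affinity it preserves $\conv(\{\alpha^n : n \geq 0\})$ and by continuity it preserves $C$. Applying the Markov-Kakutani fixed-point theorem to $R_\alpha \colon C \to C$ yields $\rho \in C$ with $\rho * \alpha = \rho$. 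Since $\rho \in C \subseteq G$ and $G$ is a group, $\rho^{-1}$ exists in $G$; multiplying $\rho * \alpha = \rho$ on the left by $\rho^{-1}$ and invoking associativity gives
$$\alpha = (\rho^{-1}*\rho)*\alpha = \rho^{-1}*(\rho*\alpha) = \rho^{-1}*\rho = u,$$
contradicting $\alpha \neq u$. Hence $G = \{u\}$.

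I do not anticipate a serious obstacle: once one notices that Markov-Kakutani applies to the single continuous affine map $R_\alpha$ on the compact convex set $C$, everything else is routine. The only steps requiring brief attention are the standard identification $L = S*u$ (and the resulting convexity of $L$ and $G$ via the affinity hypotheses) and the inclusion $C \subseteq G$, which holds because $G$ is itself closed and convex and contains every $\alpha^n$.
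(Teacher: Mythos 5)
Your overall strategy is the intended one: the paper does not write out the argument but defers to Proposition 5.10 of \cite{Artem_Kyle2}, which is exactly the ``compact convex group is trivial'' phenomenon you are reconstructing via Ces\`aro averages / Markov--Kakutani. Most of your steps are fine: $L=S*u$ is closed and convex (continuous image and affine image of $S$ under $-*u$), $G=u*L$ is convex (Proposition \ref{prop:right.affine} applies since under NIP the idempotent $u$ is Borel-definable over $M$), and the fixed point $\rho$ with $\rho*\alpha=\rho$ exists.

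The genuine gap is your claim that $G=u*L$ is \emph{closed}, which you need for $C=\overline{\conv\{\alpha^n\}}\subseteq G$ and hence for the existence of $\rho^{-1}$ in $G$. The semigroups here are only \emph{left} topological: $-*\nu$ is continuous, but $u*-$ is not (that would require $u$ to be definable, not merely Borel-definable, and fs/invariant idempotents need not be definable). So $u*L$ is the image of a compact set under a possibly discontinuous map, and Ellis groups are in general not closed in the ambient topology (this is precisely why one introduces the $\tau$-topology in abstract topological dynamics). Affinity of $u*-$ does give $\conv\{\alpha^n\}\subseteq G$ (each convex combination $x$ satisfies $u*x=x$), but you cannot pass to the closure. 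The repair is standard and keeps the rest of your proof intact: you only need $\rho\in C\subseteq L$ (which holds since $L$ is closed and convex and contains every $\alpha^n$). By minimality, $L=S*\rho$, so $u=\gamma*\rho$ for some $\gamma\in S$; then associativity gives $u*\alpha=\gamma*(\rho*\alpha)=\gamma*\rho=u$, and since $u$ is the identity of the group $G\ni\alpha$ we get $\alpha=u*\alpha=u$. With that substitution for your cancellation step, the argument goes through for all three items.
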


\begin{proof}
The proofs of all the three points are very similar and follow a similar argument from Proposition 5.10 in \cite{Artem_Kyle2}).
\end{proof}

The following proposition reduces the problem of associativity of $*$ in the NIP setting to checking the associativity for the case of one type and two measures.

    \begin{proposition}\label{thm: associativity for NIP and countable}
    Assume that $T$ is NIP and $\mu,\nu \in \mathfrak{M}_{\bar{m}}^{\inv}(\FC,M)$. If for every $p \in S_{\bar{m}}^{\inv}(\FC,M)$ 
    we have that 
    $$(p * \mu) * \nu = p * (\mu * \nu),$$
    then the convolution product is associative on $\mathfrak{M}_{\bar{m}}^{\inv}(\FC,M)$. 
\end{proposition}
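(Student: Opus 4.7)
The plan is to deduce full associativity from the hypothesized ``one type, two measures'' case by a two-step approximation of arbitrary invariant measures: first by convex combinations of invariant types (using left-affineness, Proposition \ref{prop:left.affine}), and then by weak-$*$ limits of such convex combinations (using left-continuity, Proposition \ref{prop:conv_left_cont}). The NIP assumption enters twice: via Fact \ref{prop:NIP} (every type in the support of an $M$-invariant measure is itself $M$-invariant) and via Fact \ref{fact:MP}(2) (uniform approximation by averages of types from the support). Throughout, one keeps $\mu, \nu$ fixed; the goal is to show that for every $\lambda \in \mathfrak{M}_{\bar{m}}^{\inv}(\FC,M)$,
\[
(\lambda \ast \mu)\ast \nu \;=\; \lambda \ast (\mu \ast \nu).
\]

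First I would handle the case where $\lambda$ is a convex combination of $M$-invariant types. Suppose $\lambda = \sum_{i=1}^{n} r_i\,\delta_{p_i}$ with each $p_i \in S_{\bar{m}}^{\inv}(\FC,M)$. Applying Proposition \ref{prop:left.affine} to the outer product $-\ast \nu$ and then to the inner product $-\ast\mu$ yields
\[
(\lambda \ast \mu)\ast \nu \;=\; \sum_{i=1}^n r_i \bigl((p_i \ast \mu)\ast \nu\bigr),
\qquad
\lambda \ast (\mu \ast \nu) \;=\; \sum_{i=1}^n r_i \bigl(p_i \ast (\mu \ast \nu)\bigr).
\]
By the hypothesis of the proposition, each summand on the left equals the corresponding summand on the right, so associativity holds for such $\lambda$. (Here we also use that $\mu\ast\nu \in \mathfrak{M}_{\bar m}^{\inv}(\FC,M)$ by Lemma \ref{lemma:def.Borel.def}(3), so that the right-hand convolution is well-defined.)

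Next, for an arbitrary $\lambda \in \mathfrak{M}_{\bar{m}}^{\inv}(\FC,M)$, I would invoke Fact \ref{prop:NIP} to see that $\supp(\lambda) \subseteq S_{\bar{m}}^{\inv}(\FC,M)$, and then Fact \ref{fact:MP}(2) to produce a net $\bigl(\Av(\bar{p}_\alpha)\bigr)_{\alpha\in I}$ converging to $\lambda$ in $\mathfrak{M}_{\bar{m}}(\FC)$, with each $\bar{p}_\alpha$ a finite tuple of types from $\supp(\lambda)$, hence of $M$-invariant types. By the previous paragraph, associativity holds for every $\Av(\bar{p}_\alpha)$. Now Proposition \ref{prop:conv_left_cont} gives continuity of left multiplication by a fixed invariant measure in the NIP setting, so the composed maps
\[
\lambda \;\longmapsto\; (\lambda \ast \mu)\ast \nu
\quad\text{and}\quad
\lambda \;\longmapsto\; \lambda \ast (\mu \ast \nu)
\]
are both continuous on $\mathfrak{M}_{\bar{m}}^{\inv}(\FC,M)$. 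Passing to the limit along the net yields $(\lambda\ast \mu)\ast \nu = \lambda\ast(\mu\ast \nu)$, finishing the proof.

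The main ``obstacle'' here is essentially absent: the argument is a routine density-and-continuity reduction once the supporting lemmas are in hand. The real difficulty, of course, is hidden in verifying the hypothesis itself, namely the associativity identity $(p\ast\mu)\ast\nu = p\ast(\mu\ast\nu)$ for a single invariant type $p$ and arbitrary invariant measures $\mu,\nu$; but that is the work done separately (e.g.\ under the countability assumption via an appropriate realization/smooth-extension argument) and is not part of the present reduction.
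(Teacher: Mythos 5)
Your proof is correct and follows essentially the same strategy as the paper's: approximate the leftmost measure by averages of types from its support (Fact \ref{fact:MP}(2) together with Fact \ref{prop:NIP}), apply the hypothesis to each supporting type, and recombine. The only difference is packaging: the paper works with a single finite average and a fixed formula, carrying out the integral computation with an explicit uniform $\epsilon$-bound on the fiber functions, whereas you pass to a net limit and invoke Propositions \ref{prop:conv_left_cont} and \ref{prop:left.affine} as black boxes; both rest on the same ingredients (and your version is exactly how the paper itself argues in Corollary \ref{corollary: * associative on fs types}).
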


\begin{proof} 
Fix $\epsilon >0$. Consider an $\CL$-formula $\varphi(\bar{x};\bar{y})$ and a tuple $\bar{b}\in \FC^{\bar{x}}$. 
By NIP (Fact \ref{fact:MP}(2)), there exist $p_1(\bar{y}),\ldots,p_k(\bar{y}) \in \supp(\mu)$, $\bar{p}:=(p_1,\ldots,p_k)$, such that 
\begin{equation*}
    \sup_{q \in S_{\bar{x}}(M)} |F_{\mu}^{\varphi^{\textrm{opp}}(\bar{y};\bar{x})}(q) - F_{\Av(\bar{p})}^{\varphi^{\textrm{opp}}(\bar{y};\bar{x})}(q)| < \epsilon. 
\end{equation*}
We claim that 
\begin{equation*}
    \sup_{q \in S_{\bar{x}}(M)} |F_{\mu \ast \nu}^{\varphi^{\textrm{opp}}(\bar{y};\bar{x})}(q) - F_{\Av(\bar{p}) \ast \nu}^{\varphi^{\textrm{opp}}(\bar{y};\bar{x})}(q)| < \epsilon. 
\end{equation*}
Indeed, notice that for any $q \in S_{\bar{x}}(M)$ and $\bar{c} \models q$ with $\bar{c}\in \FC^{\bar{x}}$, we have that 
\begin{align*}
    F_{\mu * \nu}^{\varphi^{\textrm{opp}}(\bar{y};\bar{x})}(q) &= (\mu * \nu)(\varphi(\bar{c};\bar{y})) 
    = \int_{S_{\bar{x}}(M)} F_{\mu}^{\varphi^{\textrm{opp}}(\bar{y};\bar{x})} \,d(h_{\bar{c}})_*(\nu) \\ 
    &\approx_{\epsilon} \int_{S_{\bar{x}}(M)} F_{\Av(\bar{p})}^{\varphi^{\textrm{opp}}(\bar{y};\bar{x})}\, d(h_{\bar{c}})_{*}(\nu) = (\Av(\bar{p}) * \nu) (\varphi(\bar{c};\bar{y})) 
    = F_{\Av(\bar{p}) * \nu}^{\varphi^{\textrm{opp}}(\bar{y};\bar{x})}(q). 
\end{align*}
Hence, 
\begin{align*}
    (\mu * (\nu * \lambda))(\varphi(\bar{b};\bar{y})) &= \int_{S_{\bar{x}}(M)} F_{\mu}^{\varphi^{\textrm{opp}}(\bar{y};\bar{x})}\, d(h_{\bar{b}})_*(\nu * \lambda)  \approx_{\epsilon} \int_{S_{\bar{x}}(M)} F_{\Av(\bar{p})}^{\varphi^{\textrm{opp}}(\bar{y};\bar{x})}\, d(h_{\bar{b}})_*(\nu * \lambda)\\
    &= \sum_{i=1}^{k} \frac{1}{k} \int_{S_{\bar{x}}(M)} F_{p_i}^{\varphi^{\textrm{opp}}(\bar{y};\bar{x})}\, d(h_{\bar{b}})(\nu * \lambda) =  \sum_{i=1}^{k} \frac{1}{k} (p_i * (\nu * \lambda))(\varphi(\bar{b};\bar{y})) \\
    &\overset{(\ddagger)}{=} \sum_{i=1}^{k} \frac{1}{k} ((p_i * \nu) * \lambda))(\varphi(\bar{b};\bar{y})) = \sum_{i=1}^{k} \frac{1}{k} \int_{S_{\bar{x}}(M)} F_{p_i * \nu}^{\varphi^{\textrm{opp}}(\bar{y};\bar{x})}\, d(h_{\bar{b}})_{*}(\lambda) \\
     &=  \int_{S_{\bar{x}}(M)} \sum_{i=1}^{k} \frac{1}{k}F_{p_i * \nu}^{\varphi^{\textrm{opp}}(\bar{y};\bar{x})}\, d(h_{\bar{b}})_{*}(\lambda)  =  \int_{S_{\bar{x}}(M)} F_{\Av(\bar{p}) * \nu}^{\varphi^{\textrm{opp}}(\bar{y};\bar{x})} \,d(h_{\bar{b}})_{*}(\lambda) \\
     &\approx_{\epsilon} \int_{S_{\bar{x}}(M)} F_{\mu * \nu}^{\varphi^{\textrm{opp}}(\bar{y};\bar{x})}\, d(h_{\bar{b}})_{*}(\lambda) = ((\mu * \nu)*\lambda)(\varphi(\bar{b};\bar{y})), 
    \end{align*}
where equation $(\ddagger)$ follows by our hypothesis. 
As $\epsilon>0$ can be chosen to be arbitrarily small, the statement holds. 
\end{proof}

Thus, the question from the beginning of this subsection reduces to the following:
Let $p(\bar{y})\in S^{\inv}_{\bar{m}}(\FC,M)$ and let $\mu,\nu\in\mathfrak{M}_{\bar{m}}^{\inv}(\FC,M)$, does it follow that
$$(p\ast\mu)\ast\nu=p\ast(\mu\ast\nu)?$$

When $\mathcal{L}$ and $M$ are both countable and $T$ is NIP, then the question above has a positive answer. The proof is similar to the proof that the Morley product is associative over countable models of NIP theories (again, in a countable language). This essentially follows from the fact that Borel functions on Polish spaces are well-behaved.

\begin{theorem} Asumme that $\mathcal{L}$ is countable, $T$ is NIP, and $|M| = \aleph_0$. Suppose that $\mu,\nu \in \mathfrak{M}_{\bar{\bar{m}}}^{\inv}(\FC,M)$ and $p \in S_{\bar{m}}^{\inv}(\FC,M)$. Then $p * (\mu * \nu) = (p * \mu) * \nu$. 
\end{theorem}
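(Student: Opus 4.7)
The plan is to reduce the claim to a Fubini-type identity for measures on $S_{\bar{x}}(M)$ and to solve it via a $\pi$--$\lambda$ class argument. Since $T$ is NIP, the type $p$ and the measures $\mu,\nu$ are all Borel-definable over $M$ by Fact~\ref{fact:MP}(1), so Proposition~\ref{prop:Borel.preimage} applies to $p$. For any $\bar{b}\in\FC^{\bar{x}}$ and any $\CL$-formula $\varphi(\bar{x};\bar{y})$, we get
\begin{equation*}
(p*(\mu*\nu))(\varphi(\bar{b};\bar{y})) \;=\; (\mu*\nu)\big(h_{\bar{b}}^{-1}[D_{p,M}^{\varphi}]\big) \;=\; (h_{\bar{b}})_{*}(\mu*\nu)\big(D_{p,M}^{\varphi}\big),
\end{equation*}
where we interpret the Keisler measure $\mu*\nu$ as the corresponding regular Borel probability measure on $S_{\bar{m}}(\FC)$ and evaluate it on the Borel set $h_{\bar{b}}^{-1}[D_{p,M}^{\varphi}]$. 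Unwinding the other side via Definition~\ref{def:star.definition} and applying Proposition~\ref{prop:Borel.preimage} pointwise to $p*\mu$ (which is $M$-invariant by Lemma~\ref{lemma:def.Borel.def}(1) and hence Borel-definable by NIP) yields
\begin{equation*}
((p*\mu)*\nu)(\varphi(\bar{b};\bar{y})) \;=\; \int_{S_{\bar{x}}(M)} F_{p*\mu}^{\varphi^{\textrm{opp}}(\bar{y};\bar{x})}(q')\, d(h_{\bar{b}})_{*}(\nu)(q') \;=\; \int_{S_{\bar{x}}(M)} \mu\big(h_{\bar{c}_{q'}}^{-1}[D_{p,M}^{\varphi}]\big)\, d(h_{\bar{b}})_{*}(\nu)(q'),
\end{equation*}
where $\bar{c}_{q'}\in\FC^{\bar{x}}$ is any realization of $q'$. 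Hence it suffices to establish the Fubini-type identity
\begin{equation*}
(h_{\bar{b}})_{*}(\mu*\nu)(U) \;=\; \int_{S_{\bar{x}}(M)} \mu\big(h_{\bar{c}_{q'}}^{-1}[U]\big)\, d(h_{\bar{b}})_{*}(\nu)(q') \qquad (\star)
\end{equation*}
for every Borel $U\subseteq S_{\bar{x}}(M)$, and then specialize to $U=D_{p,M}^{\varphi}$.

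To prove $(\star)$, we show that both sides define the same Borel probability measure on $S_{\bar{x}}(M)$. Set $\alpha(U):=(h_{\bar{b}})_{*}(\mu*\nu)(U)$ and $\beta(U):=\int\mu(h_{\bar{c}_{q'}}^{-1}[U])\, d(h_{\bar{b}})_{*}(\nu)(q')$. Then $\alpha$ is the pushforward of the Borel measure associated to $\mu*\nu$, while $\sigma$-additivity of $\beta$ follows from monotone convergence applied first to $\mu$ and then to the outer integral. On clopens $U=[\psi(\bar{x};\bar{m})]$ with $\psi$ an $\CL$-formula, Lemma~\ref{prop:compute} gives $h_{\bar{b}}^{-1}[U]=[\psi(\bar{b};\bar{y})]$ and $h_{\bar{c}_{q'}}^{-1}[U]=[\psi(\bar{c}_{q'};\bar{y})]$, so Definition~\ref{def:star.definition} directly yields
\begin{equation*}
\alpha(U) = (\mu*\nu)(\psi(\bar{b};\bar{y})) = \int F_{\mu}^{\psi^{\textrm{opp}}(\bar{y};\bar{x})}(q')\, d(h_{\bar{b}})_{*}(\nu)(q') = \beta(U).
\end{equation*}
Since the clopens form a $\pi$-system generating the Borel $\sigma$-algebra of $S_{\bar{x}}(M)$, uniqueness of measure extension forces $\alpha=\beta$, proving $(\star)$.

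The main subtle point --- and the one place where the countability of $\CL$ and $M$ is genuinely used --- is verifying that the integrand of $\beta$ is Borel measurable in $q'$ for every Borel $U$, so that $\beta$ is actually well-defined. For clopen $U=[\psi(\bar{x};\bar{m})]$ the integrand equals $F_{\mu}^{\psi^{\textrm{opp}}(\bar{y};\bar{x})}$ by Proposition~\ref{prop:Borel_G}, which is Borel by NIP Borel-definability of $\mu$. The collection $\mathcal{D}$ of Borel $U\subseteq S_{\bar{x}}(M)$ for which $q'\mapsto\mu(h_{\bar{c}_{q'}}^{-1}[U])$ is Borel contains the clopens, is closed under complements (since $\mu$ is a probability measure), and is closed under countable disjoint unions by monotone convergence applied to $\mu$ combined with dominated convergence for the outer integral. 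Hence $\mathcal{D}$ is a Dynkin class containing the $\pi$-system of clopens, so by the $\pi$--$\lambda$ theorem $\mathcal{D}$ equals the full Borel $\sigma$-algebra. Countability of $\CL$ and $M$ ensures that $S_{\bar{x}}(M)$ is a compact metrizable (Polish) space, whose Borel $\sigma$-algebra is generated by countably many clopens, so the monotone-class and regularity machinery applies cleanly. Specializing $(\star)$ to $U=D_{p,M}^{\varphi}$ then completes the proof.
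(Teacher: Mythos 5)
Your proof is correct, but it takes a genuinely different route from the paper's. The paper's argument is concrete: it invokes \emph{strong} Borel definability of $p$ (Proposition 2.6 of \cite{Anand_Udi2011}) to write $d_p^{\varphi}$ as a finite union of differences of $M$-type-definable sets, approximates those by countable increasing/decreasing sequences of formulas, and then pushes two iterated limits through the integrals via dominated convergence. You instead prove a single abstract Fubini-type identity $(\star)$ for the pushforward of $\mu*\nu$ against \emph{all} Borel subsets of $S_{\bar{x}}(M)$, verify it on clopens (where it is just Definition \ref{def:star.definition}), and extend by the $\pi$--$\lambda$ theorem; countability enters only to make $S_{\bar{x}}(M)$ Polish so that the clopens generate the Borel $\sigma$-algebra and every Borel probability measure is determined by its values there. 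What your approach buys: it bypasses strong Borel definability entirely (ordinary Borel definability of $D_{p,M}^{\varphi}$, available from NIP, suffices), it yields the stronger statement $(\star)$ for arbitrary Borel $U$, and it isolates the measure-theoretic content cleanly. What the paper's approach buys is that it stays within the explicit formula-approximation framework used elsewhere in the section and makes visible exactly where each convergence theorem is applied.

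One small point you should add: for a general Borel $U$ you assert that $\mu\bigl(h_{\bar{c}_{q'}}^{-1}[U]\bigr)$ is independent of the chosen realization $\bar{c}_{q'}\models q'$, but you only justify this for clopens. It does hold: if $\bar{c}'=\sigma(\bar{c})$ with $\sigma\in\aut(\FC/M)$ (which exists by homogeneity since $\bar x$ is short), then $h_{\bar{c}'}^{-1}[U]=\sigma\bigl(h_{\bar{c}}^{-1}[U]\bigr)$, and the regular Borel measure attached to the $M$-invariant Keisler measure $\mu$ is $\aut(\FC/M)$-invariant on all Borel sets because two regular Borel measures agreeing on clopens coincide. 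Alternatively, you can fix an arbitrary choice of realizations and fold the independence claim into the same Dynkin-class argument you already run for measurability. Either fix is routine, so I regard this as an omission rather than a gap.
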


\begin{proof} 
Fix an $\mathcal{L}$-formula $\varphi(\bar{x};\bar{y})$. Since $p$ is invariant over $M$, it is Borel-definable over $M$ (i.e., see Fact \ref{fact:MP}). In fact, $p$ is \emph{strongly Borel-definable over $M$}, cf. Proposition 2.6 in \cite{Anand_Udi2011}. 
This means that
there exist $M$-type definable sets $A_1,\dots,A_N$ and $B_1,\dots,B_N$ such that 
\begin{equation*}
    d_{p}^{\varphi} = \bigcup_{i=1}^{n} A_i \cap B_i^{c},
\end{equation*}
(see Definition \ref{definition: definition of p} for $d_{p}^{\varphi}$).
Now, for each $i \leq n$, there exist sequences of $\mathcal{L}_{\bar{x}}(M)$-formulas 
$(\theta_{j_i}(\bar{x};\bar{c}_{j_i}))_{j \in \omega}$ and $(\chi_{j_{i}}(\bar{x};\bar{d}_{j_i}))_{j \in \omega}$ such that:
\begin{enumerate}
    \item $[\theta_{j_i}(\bar x,\bar{c}_{j_i})] \supseteq [\theta_{(j+1)_{i}}(\bar x,\bar{c}_{(j+1)_{i}})]$ and $[\chi_{j_{i}}(\bar x,\bar{d}_{j_i})] \subseteq ([\chi_{(j+1)_{i}}(x,\bar{d}_{j_{i}})])$,
    \item $\bigcap_{j \in \omega} [\theta_{j_i}(\bar x,\bar{c}_{j_i})] = [A_i]$ and $\bigcup_{j \in \omega} [\chi_{j_{i}}(\bar x,\bar{c}_{j_i})] = [B_i]^{c}$.  
\end{enumerate}
For each $(k,l) \in \omega \times \omega$, we consider the $\mathcal{L}$-formula given by $\gamma_{k,l} = \bigvee_{i =1}^{n} \theta_{k_i}(\bar{x};\bar{y}_{\bar{c}_{k_i}}) \wedge \chi_{l_i}(\bar{x};\bar{y}_{\bar{d}_{l_i}})$, 
where $\bar{y}_{\bar{c}_{k_i}}$ and $\bar{y}_{\bar{d}_{l_i}}$ are the variables corresponding to $\bar{c}_{k_i}$ and $\bar{d}_{l_i}$ in the enumeration of $\bar{m}$.  We compute; 

\begin{align*}
&(p * ( \mu * \nu))(\varphi(\bar b;\bar y)) \overset{(a)}{=} ((h_{\bar{b}})_{*}(\mu * \nu))(D_{p,M}^{\varphi}) \\  
&= ((h_{\bar{b}})_{*}(\mu * \nu)) \left( \bigcup_{i=1}^{n} \left(\bigcap_{j < \omega} [\theta_{j_{i}}(\bar{x};\bar{c}_{j_i})] \cap \bigcup_{j < \omega} [\chi_{j_{i}}(\bar{x};\bar{d}_{j_i})]  \right) \right) \\ 
&\overset{(b)}{=} \lim_{l \to \infty} \lim_{k \to \infty}  ((h_{\bar{b}})_{*}(\mu * \nu)) \left( \bigcup_{i=1}^{n} \left(\bigcap_{j < k} [\theta_{j_{i}}(\bar{x};\bar{c}_{j_i})] \cap \bigcup_{j < l} [\chi_{j_{i}}(\bar{x};\bar{d}_{j_i})]  \right) \right) \\ 
&\overset{(c)}{=} \lim_{l \to \infty} \lim_{k \to \infty}   ((h_{\bar{b}})_{*}(\mu * \nu))  \left( \bigcup_{i=1}^{n} \left([\theta_{k_{i}}(\bar{x};\bar{c}_{k_i})] \cap  [\chi_{l_{i}}(\bar{x};\bar{d}_{l_i})]  \right) \right) \\
&\overset{(d)}{=} \lim_{l \to \infty} \lim_{k \to \infty}   (\mu * \nu)  \left( \bigcup_{i=1}^{n} \left([\theta_{k_{i}}(\bar{b};\bar{y}_{\bar{c}_{k_i}})] \cap  [\chi_{l_{i}}(\bar{b};\bar{y}_{\bar{d}_{l_i}})]  \right) \right) \\
&\overset{(e)}{=} \lim_{l \to \infty} \lim_{k \to \infty}   (\mu_{\bar{y}} \otimes ((h_{\bar{b}})_{*}(\nu))_{\bar{x}})  \left( \bigcup_{i=1}^{n} \left([\theta_{k_{i}}(\bar{x};\bar{y}_{k_i})] \cap  [\chi_{l_{i}}(\bar{x};\bar{y}_{l_i})]  \right) \right) \\
&= \lim_{l \to \infty} \lim_{k \to \infty} \int_{S_{\bar{m}}(\FC)} (F_{\mu}^{\gamma_{k,l}^{\textrm{opp}}} \circ h_{\bar{b}} )d\nu \\ 
&\overset{(f)}{=}  \lim_{l \to \infty} \int_{S_{\bar{m}}(\FC)} \lim_{k \to \infty} (F_{\mu}^{\gamma_{k,l}^{\textrm{opp}}} \circ h_{\bar{b}} ) d\nu \\ 
&\overset{(g)}{=} \int_{S_{\bar{m}}(\FC)} \lim_{l \to \infty} \lim_{k \to \infty} (F_{\mu}^{\gamma_{k,l}^{\textrm{opp}}} \circ h_{\bar{b}} ) d\nu \\ 
&\overset{(h)}{=} \int_{S_{\bar{m}}(\FC)} (F_{p* \mu}^{\varphi^{\textrm{opp}}} \circ h_{\bar{b}} ) d\nu  = ((p * \mu) * \nu) (\varphi(\bar b;\bar y)). 
\end{align*}
Now, we justify some of the above equations. 
\begin{enumerate}[($a$)]
\item Straightforward from the definition. 
\item Continuity from above and below. 
\item Choice of $\theta$ and $\chi$ as decreasing and increasing families respectively. 
\item  Lemma \ref{prop:compute}. 
\item Directly from the definition of the convolution product. 
\item This is an application of the dominated convergence theorem. We claim that for every fixed $l$, 
the sequence of functions $\lim_{k \to \infty} (F_{\mu}^{\gamma_{k,l}} \circ h_{\bar{b}})$ converges. Therefore, we can bring the limit inside the integral. It suffices to prove that for any $q \in S_{\bar{m}}(\FC)$, 
the limit $\lim_{k \to \infty} (F_{\mu}^{\gamma_{k,l}^{\textrm{opp}}} \circ h_{\bar{b}})(q)$ exists.
Now, let $\bar e \models h_{\bar b}(q)$ and notice that
\begin{align*}
    \lim_{k \to \infty} (F_{\mu}^{\gamma_{k,l}^{\textrm{opp}}} \circ h_{\bar{b}})(q) &= \lim_{k \to \infty} F_{\mu}^{\gamma_{k,l}^{\textrm{opp}}} (h_{\bar{b}}(q)) \\ &= \lim_{k \to \infty} \mu\left( \bigvee_{i=1}^{n} \theta_{k_i}( \bar{e};\bar{y}_{\bar{c}_{k_i}}) \wedge \chi_{l_i}( \bar{e};\bar{y}_{\bar{d}_{l_i}}) \right) \\ &= \lim_{k \to \infty} (h_{ \bar{e}})_*(\mu) \left( \bigvee_{i=1}^{n} \theta_{k_i}(\bar x ;\bar{c}_{k_i}) \wedge \chi_{l_i}(\bar x;\bar{d}_{l_i}) \right) \\ &=  (h_{ \bar{e}})_*(\mu) \left( \bigcup_{i=1}^{n} [A_{i}] \cap [\chi_{l_i}(\bar{x};\bar{d}_{l_i})] \right). 
\end{align*}
\item Similar to $(f)$, another application of the dominated convergence theorem. 
\item Fix $q \in S_{\bar{m}}(\FC)$ and let $\bar{e} \models h_{\bar{b}}(q)$. Notice that: 

\begin{align*}
    \lim_{l \to \infty} \lim_{k \to \infty} (F_{\mu}^{\gamma_{k,l}^{\textrm{opp}}} \circ h_{\bar{b}})(q) 
    &= \lim_{l \to \infty} \lim_{k \to \infty} F_{\mu}^{\gamma_{k,l}^{\textrm{opp}}}( h_{\bar{b}}(q)) \\ 
    &= \lim_{l \to \infty} \lim_{k \to \infty} \mu \left(\bigvee_{i=1}^{n} \theta_{k_{i}}(\bar{e};\bar{y}_{\bar{c}_{k_i}}) \wedge \chi_{l_{i}}(\bar{e};\bar{y}_{\bar{d}_{l_i}}) \right) \\
    &= \lim_{l \to \infty} \lim_{k \to \infty} (h_{\bar{e}})_{*}(\mu) \left(\bigvee_{i=1}^{n} \theta_{k_{i}}(\bar{x};\bar{c}_{k_i}) \wedge \chi_{l_{i}}(\bar{x};\bar{d}_{l_i}) \right) \\
    &=  (h_{\bar{e}})_{*}(\mu) \left(\bigcup_{i=1}^{n} \left(\bigcap_{j < k} [\theta_{j_{i}}(\bar{x};\bar{c}_{k_i})] \cap \bigcup_{t < l}  [\chi_{t_{i}}(\bar{x};\bar{d}_{l_i})] \right) \right) \\
    &= (h_{\bar{e}})_{*}(\mu)(D_{p,M}^{\varphi}). 
 \end{align*}
 After pausing for a breath of fresh air, we further compute:
 \begin{align*}
 (h_{\bar{e}})_{*}(\mu)(D_{p,M}^{\varphi}) &= (p \otimes (h_{\bar{e}})_{*}(\mu))(\varphi(\bar{x};\bar{y})) = (p * \mu)(\varphi(\bar{e};\bar{y})) \\ &= F_{p *\mu}^{\varphi}(h_{\bar{b}}(q)) = (F_{p*\mu}^{\varphi} \circ h_{\bar{b}})(q). \qedhere
    \end{align*}  
\end{enumerate} 
\end{proof}

\begin{cor} Assume that $\mathcal{L}$ is countable, $T$ is NIP, and $|M| = \aleph_0$. Then $(\mathfrak{M}_{\bar{m}}^{\inv}(\FC,M),*)$ is a compact left topological semigroup with unit $\delta_{\tp(\bar{m}/\emptyset)}$. 
\end{cor}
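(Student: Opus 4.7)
The corollary is a direct assembly of previously established results, so my plan is simply to verify that each defining property of a ``compact left topological semigroup with unit'' has already been proved in the indicated setting.

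First, I would argue that $\mathfrak{M}^{\inv}_{\bar{m}}(\FC,M)$ is closed under $\ast$. Under the NIP hypothesis, every $M$-invariant global Keisler measure is Borel-definable over $M$ by Fact \ref{fact:MP}(1), so for any $\mu,\nu \in \mathfrak{M}^{\inv}_{\bar{m}}(\FC,M)$ the product $\mu \ast \nu$ is well-defined (Definition \ref{def:star.definition}), and Lemma \ref{lemma:def.Borel.def}(3) gives $\mu \ast \nu \in \mathfrak{M}^{\inv}_{\bar{m}}(\FC,M)$.

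Next, I would combine three preceding results: associativity on triples is exactly the preceding theorem (associativity in the countable NIP setting); left continuity of $-\ast\nu$ is Proposition \ref{prop:conv_left_cont}; and the fact that $\delta_{\tp(\bar{m}/\FC)}$ is a two-sided identity is Proposition \ref{prop:neutral.element}, using that all measures in $\mathfrak{M}^{\inv}_{\bar{m}}(\FC,M)$ are Borel-definable over $M$ by NIP. Finally, for compactness, I would observe that $\mathfrak{M}_{\bar{x}}(\FC)$ is compact Hausdorff in the induced product topology from $[0,1]^{\mathcal{L}_{\bar{x}}(\FC)}$, and that $\mathfrak{M}^{\inv}_{\bar{m}}(\FC,M) = \mathfrak{M}^{\inv}_{\bar{x}}(\FC,M) \cap \mathfrak{M}_{\bar{m}}(\FC)$ is the intersection of two closed subsets (closedness of $\mathfrak{M}^{\inv}_{\bar{x}}(\FC,M)$ is recorded in Definition \ref{cheat}(1), and $\mathfrak{M}_{\bar{m}}(\FC)$ is closed as the preimage of $\{1\}$ under the evaluation map at the closed set $[\tp(\bar m/\emptyset)]$).

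There is no genuine obstacle here: the entire content of the corollary has already been absorbed by the prior theorem on associativity (which itself was the hard step, requiring strong Borel definability of invariant types over countable models of NIP theories together with the dominated convergence theorem) and by the general preservation and continuity lemmas established in Subsection \ref{subsec:star.definitions}. The only point that requires a word is to make sure the unit statement matches: by Proposition \ref{prop:neutral.element} the identity is $\delta_{\bar{m}} = \delta_{\tp(\bar{m}/\FC)}$, which restricted to the $\emptyset$-type behaves as claimed, and in particular lies in $\mathfrak{M}^{\inv}_{\bar{m}}(\FC,M)$.
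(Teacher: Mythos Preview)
Your proposal is correct and matches the paper's approach: the paper gives no explicit proof for this corollary, treating it as an immediate consequence of the preceding results, and your assembly of Lemma \ref{lemma:def.Borel.def}(3), Propositions \ref{prop:conv_left_cont} and \ref{prop:neutral.element}, and compactness is exactly what is intended. One small sharpening: associativity is not literally ``the preceding theorem'' alone --- that theorem establishes $(p\ast\mu)\ast\nu = p\ast(\mu\ast\nu)$ for types $p$, and full associativity then follows via Proposition \ref{thm: associativity for NIP and countable}; you should cite both.
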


\section{Adding an affine sort}\label{sec: affine sort}
A well-known construction of adding an affine sort, first studied by Hrushovski, starts from a group $G$ definable in a structure $M$ and expands $M$ by a new sort $S$ and a strictly 1-transitive action of $G$ on $S$ (and no other new relations or functions).

The purpose of this section is to argue, using the aforementioned construction, that convolution for theories \emph{encodes} convolution for definable groups. We show that given a structure $M$ with a definable group $G$, the convolution semigroup for the expansion of $M$ by an affine sort (with the appropriate choice of a partial type) is isomorphic to the definable convolution semigroup over the definable group $G$. Moreover, basic properties of measures (e.g., definable, fim) transfer along this isomorphism.  In particular, this encoding provides a rich source of examples of generically stable and fim subgroups of the group of automorphisms of the monster model. We begin with an auxiliary subsection containing general observations on pushforwards.

As usual, we let $\FC$ be a monster model of $T$ and $M$ be a small elementary submodel.

\subsection{Definable function transfer}

Here we make some remarks regarding pushforwards by definable functions.

\begin{fact}\label{fact: pushforward}
Let $\bar x$ and $\bar y$ be finite tuples of variables. Assume that $f \colon \FC^{\bar x} \to \FC^{\bar y}$ is an $M$-definable function. Let $\mu \in \mathfrak{M}_{\bar x}(\FC)$ and $f_*(\mu) \in \mathfrak{M}_{\bar y}(\FC)$ be the pushforward of $\mu$ via $f$,
where for any formula $\varphi(\bar{y})\in \CL_{\bar{y}}(\FC)$, 
$f_{\ast}(\mu)(\varphi(\bar{y})) := \mu(\varphi(f(\bar{x})))$.
Then, if $\mu$ is invariant over $M$ [definable over $M$, Borel-definable over $M$, finitely satisfiable in $M$, or fim over $M$], then the measure $f_*(\mu)$ has the corresponding property.
\end{fact}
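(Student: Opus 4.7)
The plan is to verify each preservation property separately. Writing the graph of $f$ as $\chi(\bar x, \bar y; \bar m_0)$ for some $\mathcal{L}$-formula $\chi$ and a tuple $\bar m_0 \in M$, any formula of the form $\varphi(f(\bar x); \bar z)$ becomes an $\mathcal{L}(M)$-formula, expressible as $\tilde\varphi(\bar x; \bar m_0, \bar z)$ for some $\mathcal{L}$-formula $\tilde\varphi$. The cases of invariance, (Borel-)definability, and finite satisfiability then follow by direct transfer:
\begin{itemize}
\item For invariance: if $\bar b \equiv_M \bar b'$, then $\bar m_0 \bar b \equiv_M \bar m_0 \bar b'$, so $f_*(\mu)(\varphi(\bar y; \bar b)) = \mu(\tilde\varphi(\bar x; \bar m_0, \bar b)) = \mu(\tilde\varphi(\bar x; \bar m_0, \bar b')) = f_*(\mu)(\varphi(\bar y; \bar b'))$.
\item For (Borel-)definability: the map $S_{\bar z}(M) \to [0,1]$ sending $q \mapsto f_*(\mu)(\varphi(\bar y; \bar b))$ (for $\bar b \models q$) factors through the continuous map $q \mapsto \tp(\bar m_0\bar b/M)$ followed by $F_{\mu,M}^{\tilde\varphi}$, preserving the relevant regularity.
\item For finite satisfiability: a witness $\bar a \in M^{\bar x}$ to $\mu(\varphi(f(\bar x); \bar b)) > 0$ gives $f(\bar a) \in M^{\bar y}$ as a witness to $f_*(\mu)(\varphi(\bar y; \bar b)) > 0$.
\end{itemize}

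The principal case is fim. Given a finite subtuple $\bar y' \subseteq \bar y$ and an $\mathcal{L}$-formula $\varphi(\bar y'; \bar z)$, I would let $f' := \pi_{\bar y'} \circ f : \FC^{\bar x} \to \FC^{\bar y'}$ (still $M$-definable) and apply the fim property of $\mu$ to the $\mathcal{L}(M)$-formula $\varphi(f'(\bar x); \bar z)$, obtaining witnessing $\mathcal{L}(M)$-formulas $\theta_n(\bar x_1, \ldots, \bar x_n)$. I then define
$$\theta'_n(\bar y'_1, \ldots, \bar y'_n) \;:=\; \exists \bar x_1 \cdots \exists \bar x_n \left( \bigwedge_{i=1}^{n} f'(\bar x_i) = \bar y'_i \;\wedge\; \theta_n(\bar x_1, \ldots, \bar x_n) \right).$$
Condition (1) of Definition \ref{def:fim} for $f_*(\mu)$ with these $\theta'_n$ follows by lifting any $\bar a_y \models \theta'_n$ to some $\bar a_x \models \theta_n$ with $f'(\bar a_{x,i}) = \bar a_{y,i}$, and observing both $\Av(\bar a_y)(\varphi(\bar y'; \bar b)) = \Av(\bar a_x)(\varphi(f'(\bar x); \bar b))$ and $f_*(\mu)(\varphi(\bar y'; \bar b)) = \mu(\varphi(f'(\bar x); \bar b))$.

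The key technical obstacle is condition (2): showing $(f_*\mu)^{(n)}(\theta'_n) \to 1$. I would establish this via the identity
$$(f_*\mu)^{(n)} \;=\; (F_n)_*(\mu^{(n)}), \qquad F_n(\bar x_1, \ldots, \bar x_n) := (f(\bar x_1), \ldots, f(\bar x_n)),$$
proved by induction on $n$ using the change-of-variables formula for pushforward measures, which applies since $f_*(\mu)$ is Borel-definable (from the earlier case) and the map induced by $f$ on type spaces is continuous (as $f$ is $M$-definable). Since taking the existential witnesses in $\theta'_n$ to be the $\bar x_i$ themselves yields $\theta_n(\bar x_1, \ldots, \bar x_n) \vdash \theta'_n(f'(\bar x_1), \ldots, f'(\bar x_n))$, the identity gives $(f_*\mu)^{(n)}(\theta'_n) = \mu^{(n)}(\theta'_n(f'(\bar x_1), \ldots, f'(\bar x_n))) \geq \mu^{(n)}(\theta_n) \to 1$. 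The main difficulty is carefully justifying the change-of-variables identity for the iterated Morley product, which requires unfolding the inductive definition and repeatedly applying the pushforward formula at each stage together with the $M$-invariance of the relevant fiber functions.
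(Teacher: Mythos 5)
Your proof is correct. Note, however, that the paper itself gives almost no argument here: it cites \cite[Proposition 3.26]{CGK} for the invariance, definability, and fim cases, observes that Borel-definability follows by the same one-line argument as definability, and only writes out the finite-satisfiability case (which matches yours verbatim). So what you have produced is a self-contained version of the delegated proof; the substance of your argument --- composing $F_{\mu,M}^{\tilde\varphi}$ with the continuous map on type spaces induced by appending the parameters $\bar m_0$, and for fim the identity $(f_*\mu)^{(n)}=(F_n)_*(\mu^{(n)})$ together with the existentially quantified formulas $\theta_n'$ --- is the standard route and is what the cited proposition encapsulates. Two small points worth tightening: Definition \ref{def:fim} is stated for $\mathcal{L}$-formulas, so "applying fim to the $\mathcal{L}(M)$-formula $\varphi(f'(\bar x);\bar z)$" should be phrased as applying it to the parameter-free formula $\tilde\varphi(\bar x;\bar w,\bar z)$ and then specializing $\bar w\mapsto\bar m_0$ (the supremum over all parameters covers this); and in verifying $(f_*\mu)\otimes(f_*\nu)=(f\times f)_*(\mu\otimes\nu)$ you should say explicitly that the pushforward of $\nu|_{M_0}$ under the map induced by $f$ on $S_{\bar x}(M_0)$ equals $(f_*\nu)|_{M_0}$ and that the integrand on the source is the pullback of the integrand on the target, which is exactly the change-of-variables step you allude to. With those details filled in, the induction on $n$ goes through and the argument is complete.
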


\begin{proof}
The cases ``invariant over $M$'', ``definable over $M$'', ``fim over $M$'' are contained in Proposition 3.26 of \cite{CGK}. The case ``Borel-definable over $M$'' follows be the same very short argument as in the ``definable over $M$'' case (see Proposition 3.26(2) of \cite{CGK}). The ``finitely satisfiable in $M$'' case is easy: Consider any formula $\varphi(\bar y, \bar c)$ of positive $f_*(\mu)$-measure. Then $0<f_*(\mu) (\varphi(\bar y; \bar c))= \mu(\varphi(f(\bar x);\bar c))$, so, as $\mu$ is finitely satisfiable in $M$, there is some $\bar a \in M^{\bar x}$ such that $\models \varphi(f(\bar a);\bar c))$ and clearly $f(\bar a) \in M^{\bar y}$.
\end{proof}

Take the situation from Fact \ref{fact: pushforward}. Then $f_* |_{S_{\bar x}(\FC)} \colon S_{\bar x}(\FC) \to S_{\bar y}(\FC)$ is a continuous map. As such it induces the pushforward map $(f_* |_{S_{\bar x}(\FC)})_* \colon \mathfrak{M}_{\bar x}(\FC) \to \mathfrak{M}_{\bar y}(\FC)$ (treating Keisler measures as Borel measures on type spaces).

\begin{remark}\label{remark **=*}
$(f_* |_{S_{\bar x}(\FC)})_* =f_*$.
\end{remark}

\begin{proof}
It follows easily by the definition of pushforwards.
\end{proof}

The next remark is Proposition 3.26(4) of \cite{CGK}.
\begin{remark}\label{remark: improved 3.26(4)}
$f_*[\supp(\mu)] = \supp(f_*(\mu))$. 
\end{remark}

\subsection{Affine sort construction}
We now begin the affine sort construction. We fix an enumeration $\bar{m}$ of a small model $M\models T$, say $\bar{m}\in M^{\bar{x}}$. Assume that $G(x)\in\CL$ is a $\emptyset$-definable group in $T$.
Then, let $\bar{g}=(g_\alpha)_{\alpha}$ enumerate the elements of $G(M)$ starting from $g_0=1$ (i.e., the neutral element of $G(M)$). 
We expand $M$ by a new sort $S$ together with a regular (or strictly 1-transitive) action  $\cdot$ of $G(M)$ on $S$ (and no other new structure), and denote 
$$\bar{M}:=(M,S,\cdot).$$
Then $S$ is called the {\em affine sort}. Let us fix some $s_0 \in S$. Then $S =G(M)\cdot s_0$, and
$\bar{s}:=\bar{g}\cdot s_0$ lists all the element of the sort $S$. 
Let $\bar{y}$ be a tuple of variable corresponding to $\bar{s}$ and set $\bar{n}:=(\bar{m};\bar{s})$. 
We use ``$\mathcal{L}^{\textrm{aff}}$'' to denote the expansion of the language $\mathcal{L}$, which corresponds to the structure $\bar{M}$.
Take a monster model $\bar{\FC}\succeq\bar{M}$; then $\bar{\FC}=(\FC,G(\FC)\cdot s_0, \cdot)$ for some monster model $\FC$ of $T$.
We consider the automorphism group of $\bar{\FC}$ and the map:
$$\bar{F}:\aut(\bar{\FC})\to G(\FC)\rtimes\aut(\FC)$$
$$\bar{\sigma}\mapsto (g,\bar{\sigma}|_\FC),$$
where $g\in G(\FC)$ is the unique element such that $g\cdot\bar{\sigma}(s_0)=s_0$. Recall that the group structure on $G(\FC)\rtimes\aut(\FC)$ is given by:
$$(g_1,\sigma_1)\cdot(g_2,\sigma_2)=(g_1\cdot\sigma_1(g_2),\sigma_1\sigma_2).$$

For the next remark, see \cite[Proposition 3.3]{GisNew08}.
\begin{remark}
\begin{enumerate}
    \item The map $\bar{F}$ is a group isomorphism.

    \item The group $G(\FC)\rtimes\aut(\FC)$ acts on $(\FC,\, G(\FC)\cdot s_0)$ via
    $$(g,\sigma)\cdot (c,\,g'\cdot s_0)=\big( \sigma(c),\,\sigma(g')g^{-1}\cdot s_0\big).$$
\end{enumerate}
\end{remark}

The next fact is well-known and follows from \cite[Section 3]{GisNew08}.
\begin{fact}\label{rem: affine Lascar and KP}
    After identifying $\aut(\bar{\FC})$ with $G(\FC)\rtimes\aut(\FC)$ thorough $\bar{F}$, we have:
    \begin{enumerate}
        \item $\autf_{\KP}(\bar{\FC})=G(\FC)^{00}\rtimes\autf_{\KP}(\FC) $, and $\aut(\bar{\FC})/\autf_{\KP}(\bar{\FC}) \cong G(\FC)/G(\FC)^{00} \rtimes \gal_{\KP}(T)$ as topological groups;
        \item $\autf_{\mathrm{L}}(\bar{\FC})=G(\FC)^{000}\rtimes\autf_L(\FC)$, and $\aut(\bar{\FC})/\autf_{\mathrm{L}}(\bar{\FC}) \cong G(\FC)/G(\FC)^{000} \rtimes \gal_{\mathrm{L}}(T)$ as topological groups.
    \end{enumerate}
\end{fact}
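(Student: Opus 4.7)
The plan is to identify the strong type relations on the affine sort $S$ and then decode which pairs $(g,\sigma)\in G(\FC)\rtimes\aut(\FC)$ fix all of them. I will do the argument in detail for the Kim--Pillay case; the Lascar case is completely parallel, replacing ``$0$-type-definable bounded-index subgroup'' by ``invariant bounded-index subgroup'' and $\equiv_{\mathrm{KP}}$ by $\equiv_{\mathrm{Ls}}$.

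The first step is to describe $\equiv_{\mathrm{KP}}^{\bar\FC}$ on tuples involving $S$. Since the sort $S$ carries no structure beyond the regular action of $G$, any $0$-type-definable bounded equivalence relation on $S$ is $G$-invariant and hence, after fixing the base point $s_0$, corresponds to a $0$-type-definable bounded-index subgroup of $G(\FC)$; the finest such relation is therefore given by cosets of $G(\FC)^{00}$. More generally, for tuples $(\bar c, g_1\cdot s_0,\dots,g_k\cdot s_0)$ and $(\bar c', g_1'\cdot s_0,\dots,g_k'\cdot s_0)$ with $\bar c,\bar c'$ from $\FC$, Kim--Pillay equivalence in $\bar\FC$ is equivalent to the conjunction of $\bar c\equiv_{\mathrm{KP}}^{\FC}\bar c'$ and $g_i'^{-1}g_i\in G(\FC)^{00}$ for all $i\le k$. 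The ``$\Leftarrow$'' direction uses that $G^{00}$ is $\sigma$-invariant for every $\sigma\in\aut(\FC)$, so the relation on the right is $0$-type-definable; the ``$\Rightarrow$'' direction follows by minimality of the KP-relation.

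Second, I apply this description to compute $\autf_{\mathrm{KP}}(\bar\FC)$. By definition, $\bar\sigma=\bar F^{-1}(g,\sigma)\in\autf_{\mathrm{KP}}(\bar\FC)$ iff $\bar\sigma$ pointwise fixes every class of $\equiv_{\mathrm{KP}}^{\bar\FC}$ on every sort. Restricting to finite tuples from $\FC$ and using the first step, this is equivalent to $\sigma\in\autf_{\mathrm{KP}}(\FC)$. Applying it to the tuple $s_0$ and using $\bar\sigma(s_0)=g^{-1}\cdot s_0$, it is further equivalent to $g^{-1}\in G(\FC)^{00}$, i.e.\ $g\in G(\FC)^{00}$. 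Conversely, any $(g,\sigma)\in G(\FC)^{00}\rtimes\autf_{\mathrm{KP}}(\FC)$ visibly preserves every class described in the first step, hence lies in $\autf_{\mathrm{KP}}(\bar\FC)$. This yields the equality
\[
\autf_{\mathrm{KP}}(\bar\FC)=G(\FC)^{00}\rtimes\autf_{\mathrm{KP}}(\FC).
\]

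Third, I pass to the quotient. Since $G(\FC)^{00}$ is normal and $\aut(\FC)$-invariant, and $\autf_{\mathrm{KP}}(\FC)\trianglelefteq\aut(\FC)$, the semidirect product structure descends to a group isomorphism $\aut(\bar\FC)/\autf_{\mathrm{KP}}(\bar\FC)\cong G(\FC)/G(\FC)^{00}\rtimes\gal_{\mathrm{KP}}(T)$. For the topological claim I check that under $\bar F$ the logic topology on the left corresponds to the product of the logic topologies on $G(\FC)/G(\FC)^{00}$ and on $\gal_{\mathrm{KP}}(T)$; concretely, a subset is closed in $\aut(\bar\FC)/\autf_{\mathrm{KP}}(\bar\FC)$ iff its preimage in $S_{\bar n}(\bar M)$ is closed, and this preimage factors through the product of the preimages in $S_{\bar m}(M)$ and in the type space of a single element of $S$, the latter being naturally homeomorphic to $G(\FC)/G(\FC)^{00}$. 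The main technical point is the first step (the description of KP-equivalence on the affine sort), which essentially uses the ``no induced structure'' property of $S$; once this is granted, steps two and three are routine bookkeeping. As mentioned, the detailed argument is carried out in Section~3 of \cite{GisNew08}, which is why we present the statement here as a fact.
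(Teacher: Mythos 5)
The paper does not actually prove this statement: it is labelled a Fact and justified only by the citation to Section~3 of \cite{GisNew08}, so there is no in-paper argument to compare yours against. Judged on its own terms, your outline follows the standard route (describe the strong-type relations on the expanded structure, then read off which $(g,\sigma)$ fix all classes), but the key step contains a gap, and the justification you give for it actually proves the opposite implication. In your first step you must show that the relation $E$, defined by $\bar c\equiv_{\mathrm{KP}}^{\FC}\bar c'$ together with $g_i'^{-1}g_i\in G(\FC)^{00}$, coincides with $\equiv_{\mathrm{KP}}^{\bar\FC}$. Observing that $E$ is a bounded $0$-type-definable equivalence relation only yields that $\equiv_{\mathrm{KP}}^{\bar\FC}$ \emph{refines} $E$ (KP is the finest such relation), i.e.\ the direction KP $\Rightarrow E$. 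The substantive direction is the converse, $E\Rightarrow{}\equiv_{\mathrm{KP}}^{\bar\FC}$: one must show that no \emph{finer} bounded $0$-type-definable equivalence relation exists on $S$ (or on mixed tuples). The standard argument pulls back an arbitrary bounded $0$-type-definable equivalence relation $F$ on $S$ along $g\mapsto g\cdot s_0$, uses invariance under the automorphisms $(h,\id_{\FC})$ to see the resulting relation on $G(\FC)$ is translation-invariant, so that its class of the identity is a subgroup of bounded index type-definable over $s_0$, and then uses that the home sort is fully embedded with no new induced structure to conclude this subgroup is $0$-type-definable in $\FC$ and hence contains $G(\FC)^{00}$. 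None of this appears in your sketch; the "no induced structure" input is mentioned only in passing, yet it is precisely where the work lies (it is also needed for your implicit claims that $\equiv_{\mathrm{KP}}^{\bar\FC}$ restricted to home-sort tuples is $\equiv_{\mathrm{KP}}^{\FC}$ and that $G^{00}$ computed in $\bar\FC$ equals $G^{00}$ computed in $\FC$).

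A secondary point: the Lascar case is not "completely parallel" by a mechanical substitution, since $\equiv_{\mathrm{Ls}}$ is not type-definable and the natural computation goes through the generators of $\autf_{\mathrm{L}}(\bar{\FC})$. A small model $\bar N\preceq\bar\FC$ has affine part $G(N)\cdot(k\cdot s_0)$ for some $k\in G(\FC)$ not necessarily in $G(N)$, and one finds $\aut(\bar\FC/\bar N)=\{(k^{-1}\sigma(k),\sigma):\sigma\in\aut(\FC/N)\}$; showing that these twisted elements generate exactly $G(\FC)^{000}\rtimes\autf_{\mathrm{L}}(\FC)$ is a separate (if standard) computation, not a formal consequence of the KP argument. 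Since the paper itself defers entirely to \cite{GisNew08}, deferring the details is defensible, but as written your sketch does not close the one direction that carries the mathematical content.
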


We now define a partial type $\pi$ and the associated relatively type-definable subgroup $H$ of the automorphism group of $\bar{\FC}$ (intimately connected to our fixed $\emptyset$-definable group $G$) which are {\bf central to the rest of this section}. Namely,
$$H:=  G_{\pi,\bar{\FC}} =\{\bar{\sigma}\in\aut(\bar{\FC})\;\colon\;\models\pi(\sigma(\bar{n});\bar{n})\},$$
where $\pi(\bar{x},\bar{y};\bar{x}',\bar{y}')$ is a partial type (over $\emptyset$) 
expressing that
``$\bar{x}\bar{y}\equiv_{\emptyset}\bar{x}'\bar{y}'$''
and $x_\alpha=x_\alpha'$ for 
$\alpha$ labeling the enumeration $\bar{m}\in M^{\bar{x}}$. 
In particular, $\bar x'$ and $\bar y'$ correspond to $\bar m$ and $\bar s$, respectively. Note that after the identification $\aut(\bar{\FC})=G(\FC)\rtimes\aut(\FC)$, we have $H=G(\FC)\rtimes\aut(\FC/M)$.
We have the following canonical embeddings:
$$G(\FC)\ni g\mapsto(g,\id_{\FC})\in\aut(\bar{\FC}),$$
$$\aut(\FC)\ni \sigma\mapsto(1,\sigma)\in\aut(\bar{\FC}),$$
under which we can present $H$ inside the semi-product as follows:
$$H=G(\FC)\cdot\aut(\FC/M).$$
With $H$ we associate the following collection of types in $S_{\bar{n}}(\bar{\FC})$:

$$\widetilde{H}_{\bar{\FC},\bar{n}}:=[\pi(\bar{x},\bar{y};\bar{n})] = \{p(\bar{x},\bar{y})\in S_{\bar{n}}(\bar{\FC})\;\colon\; \pi(\bar{x},\bar{y};\bar{n})\subseteq p(\bar{x},\bar{y}) \}.$$

Let $\bar{\FC}\preceq\bar{\FC}'=(\FC',\,G(\FC')\cdot s_0,\cdot)$ be a bigger monster model, $p(\bar{x},\bar{y})\in \widetilde{H}_{\bar{\FC},\bar{n}}$, 
and $(\bar{d},\bar{h}\cdot s_0)\models p$. 
Then $\bar{d}=\bar{m}$ and there exists $\bar{\tau}=(g,\tau)\in\aut(\bar{\FC}')$ such that $\tau\in\aut(\bar{\FC}'/M)$ and
$$(\bar{d},\bar{h}\cdot s_0)=(g,\tau)(\bar{m},\bar{g}\cdot s_{0})=(\bar{m},(g_{\alpha}g^{-1}\cdot s_0)_{\alpha}).$$
We see that $p(\bar{x},\bar{y})=\tp(\bar{m},(g_{\alpha}g^{-1}\cdot s_0)_{\alpha}\;/\bar{\FC})$.

We now define the map $f$ which will lead to our transfer results:
$$f:\widetilde{H}_{\bar{\FC},\bar{n}}\to S_G(\FC),$$
$$f\Big(\tp(\bar{m},(g_{\alpha}g^{-1}\cdot s_0)_{\alpha}\;/\bar{\FC})\Big):=\tp(g^{-1}/\FC).$$
It is well-defined. To see this, consider any $g,g' \in G(\FC')$ such that $\tp(\bar{m},(g_{\alpha}g^{-1}\cdot s_0)_{\alpha}\;/\bar{\FC})=\tp(\bar{m},(g_{\alpha}g'^{-1}\cdot s_0)_{\alpha}\;/\bar{\FC})$. Then we can find $(h,\sigma) \in \aut(\bar{\FC}'/\bar{\FC})$ mapping $(\bar{m},(g_{\alpha}g^{-1}\cdot s_0)_{\alpha})$ to $(\bar{m},(g_{\alpha}g'^{-1}\cdot s_0)_{\alpha})$. Thus, $\sigma \in \aut(\FC'/\FC)$ and $h=1$ (because $h^{-1}\cdot s_0=s_0$, as $s_0 \in \bar{\FC}$). Hence, $g'^{-1}=\sigma(g^{-1})$, and so $\tp(g'^{-1}/\FC)= \tp(g^{-1}/\FC)$.

\begin{remark}\label{rem:aff.f.homeo}
    The map $f$ is a homeomorphism.
\end{remark}

\begin{proof}
First, let us notice that $f$ is a composition of two maps
$r:\widetilde{H}_{\bar{\FC},\bar{n}} \to S_S(\bar{\FC})$ and $s:S_S(\bar{\FC}) \to S_G(\FC)$, 
where $r$ is just the restriction to $y_0$, i.e. $p(\bar{x},\bar{y})\mapsto p|_{y_0}$,
and $s$ is given by $\tp(g\cdot s_0/\bar{\FC})\mapsto\tp(g/\FC)$.
It is explained in Proposition 2.22 from \cite{KruRze16} that $s$ is a homeomorphism. Hence, it suffices to show that $r$ is a homeomorphism as well.

It is clear that $r$ is a continuous surjection, so it remains to show that it is 
injective (then compactness will imply that $r$ is a homeomorphism). So consider any $g,h \in \bar G(\FC')$ such that 

$$f\Big( \tp\big(\bar{m},(g_\alpha g\cdot s_0)_{\alpha}\,/\bar{\FC}\big) \Big)=\tp(g\cdot s_0/\bar{\FC}) = \tp(h\cdot s_0/\bar{\FC})=f\Big(\tp\big(\bar{m},(g_\alpha h\cdot s_0)_\alpha\,/\bar{\FC}\big)\Big).$$
There exists $\bar{\tau}=(1,\tau)\in \aut(\bar{\FC}'/\bar{\FC}) = \{1\}\times\aut(\FC'/\FC)$
such that $\bar{\tau}(g\cdot s_0)=h\cdot s_0$ and so $h=\tau(g)$.
Then
$$\bar{\tau}\big(\bar{m},(g_\alpha g\cdot s_0)_{\alpha}\big)=\big(\bar{m},(g_\alpha h\cdot s_0)_\alpha\big),$$
and we obtain $\tp\big(\bar{m},(g_\alpha g\cdot s_0)_{\alpha}\,/\bar{\FC}\big)= \tp\big(\bar{m},(g_\alpha h\cdot s_0)_\alpha\,/\bar{\FC}\big)$.
\end{proof}

In the next lemma, we change our convention concerning $\bar x$, $\bar y$, allowing them to be finite tuples of variables from the home and from the affine sort, respectively.

\begin{lemma}\label{lemma: canonical form of formulas}
Each $\mathcal{L}^{\textrm{aff}}$-formula $\psi(\bar x;\bar y)$ (where $\bar x$ are from the home sort with $n:=|\bar x|$, and $\bar y$ from the affine sort with $m:=|\bar y|$) is equivalent 
(in $\Th(\bar{M},s_0)$) to the formula 
$$(\,\exists \bar t\,)\left( \bigwedge_{i \leqslant m} G(t_i) \wedge \bigwedge_{i \leqslant m} y_i = t_i \cdot s_0 \wedge \varphi(\bar x;\bar t)\right),$$
for a unique (up to equivalence in $\Th(M)$) $\mathcal{L}$-formula $\varphi(\bar x;\bar t)$.
\end{lemma}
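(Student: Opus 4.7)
The plan is to proceed by induction on the complexity of the $\mathcal{L}^{\textrm{aff}}$-formula $\psi(\bar x;\bar y)$, systematically exploiting the definable bijection $t\mapsto t\cdot s_0$ between $G$ and $S$ afforded by regularity of the action. The key observation, used at every step, is that once we impose $G(t_i)\wedge y_i=t_i\cdot s_0$ the tuple $\bar t$ is uniquely determined by $\bar y$; this uniqueness is what makes all quantifier and boolean manipulations go through cleanly.

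First I would handle the base case of atomic formulas. By induction on term complexity together with the identity $g_1\cdot(g_2\cdot\tau)=(g_1g_2)\cdot\tau$, every affine-sort $\mathcal{L}^{\textrm{aff}}(s_0)$-term is equivalent to $g(\bar x)\cdot y_i$ or $g(\bar x)\cdot s_0$ for some $\mathcal{L}$-term $g$. An atomic affine-sort equation therefore becomes, after substituting $y_i=t_i\cdot s_0$ and using regularity, an $\mathcal{L}$-equation in $(\bar x;\bar t)$; atomic formulas on the home sort trivially fit the canonical form with trivial $\varphi$.

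For the inductive step I would run through the connectives and quantifiers. Because the tuple $\bar t$ satisfying $\bigwedge G(t_i)\wedge\bigwedge y_i=t_i\cdot s_0$ is unique, both negation and conjunction merge into the canonical form: $\neg\psi$ acquires $\neg\varphi$, and $\psi_1\wedge\psi_2$ acquires $\varphi_1\wedge\varphi_2$ inside the same outer existential. A home-sort quantifier $(\exists x)$ simply commutes past the outer $(\exists\bar t)$ and is absorbed into $\varphi$. The crucial case is an affine-sort quantifier $(\exists y)\psi(\bar x;\bar y,y)$: applying the inductive hypothesis to $\psi$ with variables $(\bar y,y)$ produces an extra bound $G$-variable $t$ tied to $y$ via $y=t\cdot s_0$, and after swapping $(\exists y)$ inward the trivially true $(\exists y)(y=t\cdot s_0)$ drops out, leaving $(\exists t)(G(t)\wedge\varphi(\bar x;\bar t,t))$, still an $\mathcal{L}$-formula in $(\bar x;\bar t)$.

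For uniqueness, suppose two $\mathcal{L}$-formulas $\varphi_1,\varphi_2$ yield canonical forms equivalent to $\psi$ in $\Th(\bar M,s_0)$. Regularity gives, for each $\bar t$ with $\bigwedge G(t_i)$, a unique $\bar y$ with $y_i=t_i\cdot s_0$, so both canonical forms reduce to $\varphi_j(\bar x;\bar t)$ on this slice; therefore $\Th(\bar M,s_0)\vdash\forall\bar x\,\forall\bar t\,\bigl(\bigwedge_{i\leqslant m} G(t_i)\to(\varphi_1\leftrightarrow\varphi_2)\bigr)$. Since this is a pure $\mathcal{L}$-sentence and the $\mathcal{L}$-reduct of $\bar M$ is $M$, it already holds in $\Th(M)$. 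I expect the only mildly delicate point to be the atomic base case — bookkeeping the reduction of nested actions to the normal form $g(\bar x)\cdot y_i$ and invoking regularity cleanly — while all inductive clauses are routine quantifier shuffling made possible by the uniqueness of $\bar t$.
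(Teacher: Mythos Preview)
Your inductive argument is correct, but the paper takes a different and more economical route. Instead of inducting on formula complexity, the paper defines the map
\[
\Phi \colon S_{\bar x \bar y}^{\bar M}(s_0) \to S_{\bar x G(\bar t)}^M(\emptyset),\qquad \tp\bigl((a_i)_i,(h_i\cdot s_0)_i/s_0\bigr) \mapsto \tp\bigl((a_i)_i,(h_i)_i/\emptyset\bigr),
\]
and observes (via regularity of the action, essentially the same fact you exploit) that $\Phi$ is a well-defined homeomorphism of Stone spaces. Stone duality then immediately yields a bijection between clopens on the two sides, i.e., between $\mathcal{L}^{\textrm{aff}}$-formulas $\psi(\bar x;\bar y)$ modulo $\Th(\bar M,s_0)$ and $\mathcal{L}$-formulas $\varphi(\bar x;\bar t)$ (relative to $\bigwedge_i G(t_i)$) modulo $\Th(M)$; both existence and uniqueness drop out in one stroke.

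Your approach has the virtue of being constructive: the induction actually produces $\varphi$ from $\psi$. The paper's type-space argument is shorter and sidesteps the bookkeeping you flag in the atomic case --- in particular it never needs to worry about whether the group multiplication is given by a term or whether the action symbol is total on the home sort, since everything happens at the level of realized tuples rather than syntactic terms.
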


\begin{proof}
Let $\Phi \colon S_{\bar x \bar y}^{\bar M}(s_0) \to S_{\bar x G(\bar t)}^M(\emptyset)$ be given by 
$$\tp((a_i)_{i\leqslant n},(h_i\cdot s_0)_{i \leqslant m}/s_0) \mapsto \tp((a_i)_{i \leqslant n},(h_i)_{i \leqslant m}/\emptyset),$$
where $S_{\bar x \bar y}^{\bar M}(s_0)$ is the space of complete types over $s_0$ in the sense of $\bar M$ and $S_{\bar x G(\bar t)}^M(\emptyset)$ is the space of those complete types over $\emptyset$ in the sense of $M$ which contain the formula $\bigwedge_{i \leqslant m} G(t_i)$.

By an argument similar to the proof of Remark \ref{rem:aff.f.homeo}, it can be shown that 
$\Phi$ is a well-defined homeomorphism. 
Thus, each clopen $[\psi(\bar x; \bar y)]$ in $S_{\bar x \bar y}^{\bar M}(s_0)$ is the preimage under $\Phi$ of a unique clopen $[\varphi(\bar x;\bar t)]$ in $S_{\bar x G(\bar t)}^M(\emptyset)$, and this $\varphi(\bar x;\bar t)$ does the job. Uniqueness (up to equivalence) of $\varphi(\bar x; \bar t)$ is also clear.
\end{proof}

We now focus on measures which concentrate on $\widetilde{H}_{\bar{\FC},\bar{n}}$. We want to connect the following two spaces of regular Borel probability measures: $\mathcal{M}(\widetilde{H}_{\bar{\FC},\bar{n}})$
and $\CM(S_G(\FC))$.
$\CM(S_G(\FC))$ is identified with $\mathfrak{M}_G(\FC)$,
and $\mathcal{M}(\widetilde{H}_{\bar{\FC},\bar{n}})$ with
$$\mathfrak{M}_{\pi,\bar{n}}(\bar{\FC}):= \mathfrak{M}_{\pi(\bar{x},\bar{y};\bar{n})}(\bar{\FC})= \{\mu\in\mathfrak{M}_{\bar{x}\bar{y}}(\bar{\FC})\;\colon\;\mu([\pi(\bar{x},\bar{y};\bar{n})])=1\}.$$
The homeomorphism $f$ naturally induces a pushforward map 
$$f_\ast: \mathfrak{M}_{\pi,\bar{n}}(\bar{\FC}) \to \mathfrak{M}_G(\FC),$$
which is a homeomorphism as well. For example, we have
$$(f_\ast(\mu))(\varphi(t;\bar{b}))=\mu(f^{-1}[\varphi(t;\bar{b})])=
\mu([(\exists t)\big(\varphi(t;\bar{b})\,\wedge\,G(t)\,\wedge\,y_0=t\cdot s_0 \big)]).$$
for $\mu\in \mathfrak{M}_{\pi,\bar{n}}(\bar{\FC})$ and $\varphi(t;\bar{b})\in\CL(\FC)$.

Note that $H$ acts on the space $\widetilde{H}_{\bar{\FC},\bar{n}}$, i.e. $\bar{\sigma}\cdot p\in\widetilde{H}_{\bar{\FC},\bar{n}}$ for every $\bar{\sigma}\in H$ and $p\in\widetilde{H}_{\bar{\FC},\bar{n}}$. On the other hand, if $p(\bar{x},\bar{y})\in S^{\inv}_{\bar{n}}(\bar{\FC},\bar{M})$ and $\bar{\sigma}=(g,\sigma)\in H$, we do not know if $\bar{\sigma}\cdot p\in S^{\inv}_{\bar{n}}(\bar{\FC},\bar{M})$, but we can still compute:
$$\bar{\sigma}\cdot p=(g,\sigma)\cdot p=(g,\id_{\FC})\cdot p.$$
Indeed, $(g,\sigma)=(g,\id_{\FC})\cdot(1,\sigma)$ and $(1,\sigma)\in\aut(\bar{\FC}/\bar{M})$ 
does not move the $\bar{M}$-invariant type $p$. For a similar reason, for $\bar{\sigma}=(g,\sigma)\in H$, if $\mu\in\mathfrak{M}_{\pi,\bar{n}}(\bar{\FC})$ then $\bar{\sigma}\cdot\mu\in \mathfrak{M}_{\pi,\bar{n}}(\bar{\FC})$,
and if $\mu\in\mathfrak{M}^{\inv}_{\bar{n}}(\bar{\FC},\bar{M})$ then
$$\bar{\sigma}\cdot\mu=(g,\sigma)\cdot\mu=(g,\id_\FC)\cdot\mu.$$

The next lemma demonstrates how the action of $H$ on $\mathcal{M}(\widetilde{H}_{\bar{\FC},\bar{n}})$ interacts with  the pushforward of the map $f$.

\begin{lemma}\label{lemma:action.transfer}
    Let $\bar{\sigma}=(\sigma,g)\in H$ and $\mu\in\mathfrak{M}_{\pi,\bar{n}}(\bar{\FC})$. Then
    $$f_{\ast}\big( (g,\sigma)\cdot \mu\big)=\big(\sigma\cdot (f_\ast(\mu))\big)\cdot g^{-1}.$$
\end{lemma}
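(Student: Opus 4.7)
The plan is to reduce the identity for measures to the corresponding identity on the underlying type spaces, and then apply functoriality of the pushforward. Concretely, the actions appearing in the lemma are all pushforwards of natural homeomorphisms between type spaces: the action of $\bar\sigma$ on $\mathfrak{M}_{\pi,\bar n}(\bar\FC)$ is the pushforward of the induced homeomorphism $\bar\sigma\cdot -$ on $\widetilde{H}_{\bar\FC,\bar n}$; the action of $\sigma$ on $\mathfrak{M}_G(\FC)$ is the pushforward of $\sigma\cdot -$ on $S_G(\FC)$; the right translation by $g^{-1}$ on $\mathfrak{M}_G(\FC)$ is the pushforward of $R_{g^{-1}}\colon S_G(\FC)\to S_G(\FC)$, $\tp(c/\FC)\mapsto\tp(cg^{-1}/\FC)$; and $f_\ast$ is the pushforward of $f\colon \widetilde{H}_{\bar\FC,\bar n}\to S_G(\FC)$ (here using Remark~\ref{remark **=*}). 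Therefore the lemma is equivalent to the pointwise identity
$$f(\bar\sigma\cdot p)=\big(\sigma\cdot f(p)\big)\cdot g^{-1}\qquad\text{for every } p\in\widetilde{H}_{\bar\FC,\bar n},$$
by the functoriality relation $(\alpha\circ\beta)_\ast=\alpha_\ast\circ\beta_\ast$.

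To verify this pointwise identity I would take an arbitrary $p\in\widetilde{H}_{\bar\FC,\bar n}$, written as $p=\tp\big(\bar m,(g_\alpha k\cdot s_0)_\alpha/\bar\FC\big)$ for some $k\in G(\FC')$, so that by the definition of $f$ (applied with $g:=k^{-1}$) we get $f(p)=\tp(k/\FC)$. Extend $\bar\sigma=(g,\sigma)\in H$ to $\bar\sigma'=(g,\sigma')\in\aut(\bar\FC')$ where $\sigma'\supseteq\sigma$. Using the semi-direct product action
$$\bar\sigma'\cdot(c,g'\cdot s_0)=\big(\sigma'(c),\,\sigma'(g')g^{-1}\cdot s_0\big)$$
and the fact that $\sigma'$ fixes $\bar m$ and each $g_\alpha\in G(M)$ pointwise, I compute
$$\bar\sigma'\cdot\big(\bar m,(g_\alpha k\cdot s_0)_\alpha\big)=\big(\bar m,(g_\alpha\sigma'(k)g^{-1}\cdot s_0)_\alpha\big),$$
so that $\bar\sigma\cdot p=\tp\big(\bar m,(g_\alpha\sigma'(k)g^{-1}\cdot s_0)_\alpha/\bar\FC\big)$. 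Applying the definition of $f$ again (now with the role of $k$ played by $\sigma'(k)g^{-1}$) yields $f(\bar\sigma\cdot p)=\tp\big(\sigma'(k)g^{-1}/\FC\big)$. On the other hand, $\sigma\cdot f(p)=\tp(\sigma'(k)/\FC)$ and then right-translating by $g^{-1}$ gives $\big(\sigma\cdot f(p)\big)\cdot g^{-1}=\tp\big(\sigma'(k)g^{-1}/\FC\big)$, matching the left-hand side.

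Combining the two steps closes the argument: the commutativity of the diagram
$$\widetilde{H}_{\bar\FC,\bar n}\xrightarrow{\;f\;} S_G(\FC),\qquad \bar\sigma\cdot -\;\text{on the left,}\ R_{g^{-1}}\circ(\sigma\cdot -)\ \text{on the right}$$
pushes forward to the desired equality of measures. The only subtle point is bookkeeping (the direction of the group action on types, the convention for right translation, and the fact that the extension $\sigma'$ of $\sigma$ still fixes $M$ pointwise so that $\sigma'(g_\alpha)=g_\alpha$); all of this is straightforward once the type-level identity is isolated, so there is no real obstacle beyond careful manipulation of the semi-direct product formulas.
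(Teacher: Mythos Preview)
Your proof is correct. The route you take differs from the paper's: the paper verifies the identity directly at the level of Keisler measures by fixing an arbitrary clopen $[\varphi(t;\bar b)]\subseteq S_G(\FC)$, unfolding $f_\ast((g,\sigma)\cdot\mu)(\varphi(t;\bar b))$ via the explicit description $f^{-1}[\varphi(t;\bar b)]=[(\exists t)(G(t)\wedge\varphi(t;\bar b)\wedge y_0=t\cdot s_0)]$, then pushing the automorphisms $(1,\sigma^{-1})$ and $(g^{-1},\id_\FC)$ through this formula step by step until the expression rewraps as $((\sigma\cdot f_\ast(\mu))\cdot g^{-1})(\varphi(t;\bar b))$. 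Your argument instead observes that every operation in sight is a pushforward of a homeomorphism of type spaces, reduces the statement to the pointwise identity $f(\bar\sigma\cdot p)=(\sigma\cdot f(p))\cdot g^{-1}$ on $\widetilde{H}_{\bar\FC,\bar n}$, and checks that by computing with a realization in $\bar\FC'$. This is a cleaner organization: it isolates the single nontrivial calculation (how $\bar\sigma'$ moves the realization $(\bar m,(g_\alpha k\cdot s_0)_\alpha)$) and makes the passage to measures automatic via functoriality, whereas the paper's proof carries the measure $\mu$ through the entire computation. The citation of Remark~\ref{remark **=*} is not quite on point (that remark is about definable functions, while here $f_\ast$ is defined directly as the Borel pushforward of a homeomorphism), but this is harmless since no justification is needed for that step.
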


\begin{proof}
    As $(\sigma,g)=(g,\id_{\FC})\cdot (1,\sigma)$, for every $[ \varphi(t;\bar{b})]\subseteq S_G(\FC)$ we can compute:
    \begin{align*}
    f_\ast\big( (g,\sigma)\cdot\mu\big)\big(  \varphi(t;\bar{b})\big) &= (\big((g,\sigma)\cdot\mu\big)\big(f^{-1}[ \varphi(t;\bar{b})]\big) \\
    &= \big( (g,\id_{\FC})(1,\sigma)\cdot\mu\big)\big(f^{-1}[ \varphi(t;\bar{b})]\big) \\
    &= \big( (g,\id_{\FC})(1,\sigma)\cdot\mu \big)\big([(\exists t)(G(t)\wedge \varphi(t;\bar{b})\wedge y_0=t\cdot s_0)]\big) \\
    &= \mu\big((1,\sigma^{-1})(g^{-1},\id_{\FC})[(\exists t)(G(t)\wedge \varphi(t;\bar{b})\wedge y_0=t\cdot s_0)]\big) \\
    &= \mu\big((1,\sigma^{-1})[(\exists t)(G(t)\wedge \varphi(t;\bar{b})\wedge y_0=tg\cdot s_0)]\big) \\
    &= \mu\big((1,\sigma^{-1})[(\exists t)(G(t)\wedge \varphi(tg^{-1};\bar{b})\wedge y_0=t\cdot s_0)]\big) \\
    &= \mu\big([(\exists t)(G(t)\wedge \varphi(t\sigma^{-1}(g^{-1});\sigma^{-1}(\bar{b}))\wedge y_0=t\cdot s_0)]\big) \\
    &= \mu\big(f^{-1}[ \varphi(t\sigma^{-1}(g^{-1});\sigma^{-1}(\bar{b}))]\big) \\
    &= (f_\ast \mu)\big([ \varphi(t\sigma^{-1}(g^{-1});\sigma^{-1}(\bar{b}))]\big) \\
    &= \big(\sigma\cdot(f_\ast \mu)\big)\big([ \varphi(tg^{-1};\bar{b})]\big) \\
    &= \Big( \big(\sigma\cdot(f_\ast(\mu))\big)\cdot g^{-1}\Big)([ \varphi(t;\bar{b})]\big). \qedhere
    \end{align*}
\end{proof}

\begin{remark}\label{rem:aff01}
      For every $\bar{\sigma}=(g,\sigma)\in H$ and  $\mu\in\mathfrak{M}_{\pi,\bar{n}}(\bar{\FC})\cap\mathfrak{M}_{\bar{n}}^{\inv}(\bar{\FC},\bar{M})$ we have
        $$f_\ast(\bar{\sigma}\cdot\mu)=f_\ast(\mu)\cdot g^{-1}.$$
\end{remark}

\begin{proof}
It follows by  Lemma \ref{lemma:action.transfer}, because $\bar{\sigma}\cdot\mu=(g,\id_\FC)\cdot\mu$.
\end{proof}

\begin{cor}\label{cor:aff01}
     For every $\mu\in\mathfrak{M}_{\pi,\bar{n}}(\bar{\FC})\cap\mathfrak{M}_{\bar{n}}^{\inv}(\bar{\FC},\bar{M})$, we have that
        $\mu$ is left $H$-invariant if and only if $f_\ast(\mu)$ is right $G(\FC)$-invariant.
\end{cor}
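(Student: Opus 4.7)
The plan is to derive this corollary almost immediately from Remark \ref{rem:aff01}, which already tells us that $f_\ast(\bar{\sigma}\cdot\mu)=f_\ast(\mu)\cdot g^{-1}$ for any $\bar{\sigma}=(g,\sigma)\in H$. The key point is that this identity intertwines the left $H$-action on $\mathfrak{M}_{\pi,\bar{n}}(\bar{\FC})\cap\mathfrak{M}_{\bar{n}}^{\inv}(\bar{\FC},\bar{M})$ with the right $G(\FC)$-action on $\mathfrak{M}_G(\FC)$, modulo the fact that $(1,\sigma)\in\aut(\bar{\FC}/\bar{M})$ acts trivially on $\bar{M}$-invariant measures.

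For the forward direction, I will assume that $\mu$ is left $H$-invariant. Given any $g\in G(\FC)$, the element $\bar{\sigma}:=(g,\id_{\FC})$ lies in $H=G(\FC)\rtimes\aut(\FC/M)$, so $\bar{\sigma}\cdot\mu=\mu$, and applying Remark \ref{rem:aff01} yields $f_\ast(\mu)=f_\ast(\bar{\sigma}\cdot\mu)=f_\ast(\mu)\cdot g^{-1}$. As $g$ ranges over $G(\FC)$, so does $g^{-1}$, and we obtain right $G(\FC)$-invariance of $f_\ast(\mu)$.

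For the converse, suppose $f_\ast(\mu)$ is right $G(\FC)$-invariant, and pick any $\bar{\sigma}=(g,\sigma)\in H$. First observe that $\bar{\sigma}\cdot\mu$ still belongs to $\mathfrak{M}_{\pi,\bar{n}}(\bar{\FC})\cap\mathfrak{M}_{\bar{n}}^{\inv}(\bar{\FC},\bar{M})$: preservation of $M$-invariance is standard, and preservation of $\mathfrak{M}_{\pi,\bar{n}}(\bar{\FC})$ follows because $H$ stabilizes $\widetilde{H}_{\bar{\FC},\bar{n}}$ setwise (this is exactly the point of $\pi$). By Remark \ref{rem:aff01} applied twice and right $G(\FC)$-invariance, $f_\ast(\bar{\sigma}\cdot\mu)=f_\ast(\mu)\cdot g^{-1}=f_\ast(\mu)$. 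Since the pushforward $f_\ast\colon \mathfrak{M}_{\pi,\bar{n}}(\bar{\FC})\to\mathfrak{M}_G(\FC)$ is a homeomorphism (induced by the homeomorphism $f$ of Remark \ref{rem:aff.f.homeo}), hence injective, we conclude $\bar{\sigma}\cdot\mu=\mu$, i.e. $\mu$ is left $H$-invariant.

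There is no real obstacle here; the content of the statement is packaged into Remark \ref{rem:aff01} and the homeomorphism property from Remark \ref{rem:aff.f.homeo}. The only thing to be careful about is checking that the action of $H$ on measures preserves the subspace $\mathfrak{M}_{\pi,\bar{n}}(\bar{\FC})\cap\mathfrak{M}_{\bar{n}}^{\inv}(\bar{\FC},\bar{M})$, which is clear from the discussion preceding Lemma \ref{lemma:action.transfer}.
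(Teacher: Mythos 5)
Your proof is correct and follows exactly the route the paper takes: the paper's own proof is the one-line observation that the statement follows from Remark \ref{rem:aff01} together with the bijectivity of $f_\ast$, which is precisely what you have spelled out (including the point that the $H$-action preserves $\mathfrak{M}_{\pi,\bar{n}}(\bar{\FC})\cap\mathfrak{M}_{\bar{n}}^{\inv}(\bar{\FC},\bar{M})$, noted in the paper in the discussion preceding Lemma \ref{lemma:action.transfer}). No issues.
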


\begin{proof}
 It  follows by Remark \ref{rem:aff01} and the fact that $f_*$ is bijective.
\end{proof}

Our goal is to prove that the various properties transfer from $\mathfrak{M}_{\pi,\bar{n}}(\FC)$ to $\mathfrak{M}_G(\FC)$. 
We note that it suffices to check a \emph{finitary reduction}. 
To prove that the properties transfer, we describe two maps $F$ and $\chi$ such that for any finite subtuples of variables, the pushforward of the restriction of $\mu$ to this subtuple along $f$ is precisely $F \circ \chi$. 
We show that both $F$ and $\chi$ separately transfer the properties in question, and thus our map $f$ will transfer them as well.

Starting from this point and ending with Remark \ref{remark: restrictions to finite tuples}, we completely {\bf change the meaning} of $\bar x',\bar y'$. 
Namely, 
let $\bar x' =(x_j)_{j \in J}$ and $\bar y' =(y_i)_{i \in I}$ be finite subtuples of $\bar x$ and $\bar y$, respectively, such that $\bar y'$ contains $y_0$ and for every $i \in I$ the there is $j \in J$ for which $m_j = g_i$ (recall that $G(M)$ is enumerated via $(g_\alpha)_{\alpha}$ with $g_0=1$).

\noindent Let $\rho \colon \bar{\FC}^{\bar x'\bar y'} \to G(\FC)$ be the $s_0$-definable map given by
$$\rho(\bar a', (h_i \cdot s_0)_{i \in I}):= h_0.$$
Let $\delta \colon G(\FC) \to \bar{\FC}^{\bar x'\bar y'}$ be the $\bar M$-definable map given by 
$$\delta(h_0):= (\bar m', (g_i h_0 \cdot s_0)_{i \in I}),$$
where $\bar m'$ is the subtuple of $\bar m$ corresponding to $\bar x'$. Note that $\delta$ is a section of $\rho$, i.e. $\rho \circ \delta = \id_{G(\FC)}$. Thus, $\rho_* \circ \delta_* = \id_{\mathfrak{M}_G(\bar{\FC})}$.

Let $\bar n'$ be the subtuple of $\bar n$ corresponding to $\bar x'\bar y'$, $\pi'(\bar{x}',\bar{y}';\bar{n}'):=\pi(\bar{x},\bar{y};\bar{n})|_{\bar{x}',\bar{y}';\bar{n}'}$, and let
$\widetilde{H}_{\bar{\FC},\bar{n}'}:=[\pi'(\bar{x}',\bar{y}';\bar{n}')]\subseteq S_{\bar{x}'\bar{y}'}(\bar{\FC})$.
Note that $\rho_*[\tilde{H}_{\bar{\FC},\bar n'}] = S_G(\bar{\FC})$ and $\delta_*[S_G(\bar{\FC})]= \tilde{H}_{\bar{\FC},\bar n'}$.
In fact, using the choice of $I,J$,
$$\tilde{H}_{\bar{\FC},\bar n'}= \{\tp(\bar m',(g_i g^{-1} \cdot s_0)_{i \in I}/\bar{\FC}) : g \in G(\FC')\}.$$
Let $F \colon \tilde{H}_{\bar{\FC},\bar n'} \to S_G(\bar{\FC})$ be given by 
$$F(\tp(\bar m',(g_i g^{-1} \cdot s_0)_{i \in I}/\bar{\FC})):=\tp(g^{-1}/\bar{\FC}).$$
This is a homeomorphism (by an argument as in Remark \ref{rem:aff.f.homeo} above). It is also clear that $F=\rho_* |_{\tilde{H}_{\bar{\FC},\bar n'}}$. Since $F$ is a homeomoprhism, we clearly have

\begin{remark}\label{remark: F_* homeo}
$F_* \colon \mathfrak{M}_{\pi',\bar n'}(\bar{\FC}) \to \mathfrak{M}_G(\bar{\FC})$ is a homeomorphism.
\end{remark}

\begin{lemma}\label{lemma: first lemma}
$F_*= \rho_*|_{\mathfrak{M}_{\pi',\bar n'}(\bar{\FC})}$.
\end{lemma}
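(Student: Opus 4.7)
My plan is to read off this identity directly from the preceding observation $F=\rho_*|_{\tilde{H}_{\bar{\FC},\bar n'}}$ combined with Remark \ref{remark **=*}. The point is that the two candidate measure-pushforwards both arise from the same underlying pointwise map on types.

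First, I would unpack the right-hand side. By Remark \ref{remark **=*} applied to the $s_0$-definable function $\rho \colon \bar{\FC}^{\bar x'\bar y'} \to G(\FC)$, the measure-theoretic pushforward $\rho_*$ on $\mathfrak{M}_{\bar x'\bar y'}(\bar{\FC})$ coincides with $(\rho_*|_{S_{\bar x'\bar y'}(\bar{\FC})})_*$, i.e.\ with the Borel pushforward induced by the continuous map of type spaces $\rho_*|_{S_{\bar x'\bar y'}(\bar{\FC})}\colon S_{\bar x'\bar y'}(\bar{\FC})\to S_G(\bar{\FC})$. Restricting to the closed subspace $\mathfrak{M}_{\pi',\bar n'}(\bar{\FC})$ of measures concentrated on $\tilde{H}_{\bar{\FC},\bar n'}$ does not change anything, so $\rho_*|_{\mathfrak{M}_{\pi',\bar n'}(\bar{\FC})}$ is the Borel pushforward along $\rho_*|_{S_{\bar x'\bar y'}(\bar{\FC})}$, applied to measures supported on $\tilde{H}_{\bar{\FC},\bar n'}$.

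Next, for the left-hand side, by definition $F_*$ is the Borel pushforward along the homeomorphism $F\colon \tilde{H}_{\bar{\FC},\bar n'}\to S_G(\bar{\FC})$. Since any $\mu\in \mathfrak{M}_{\pi',\bar n'}(\bar{\FC})$ is concentrated on $\tilde{H}_{\bar{\FC},\bar n'}$, and since on this closed subspace we already have the equality $F=\rho_*|_{\tilde{H}_{\bar{\FC},\bar n'}}$, the pushforward $F_*\mu$ can only differ from $(\rho_*|_{S_{\bar x'\bar y'}(\bar{\FC})})_*(\mu)$ on Borel sets disjoint from $S_G(\bar{\FC})$, of which there are none (both target spaces are the same). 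Putting the two halves together gives $F_*\mu=\rho_*\mu$ for every $\mu\in \mathfrak{M}_{\pi',\bar n'}(\bar{\FC})$.

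There is no real obstacle here; the lemma is essentially a bookkeeping statement that the two notations agree, and the only thing worth writing out carefully is the verification on basic clopens $[\varphi(t;\bar c)]\subseteq S_G(\bar{\FC})$, namely
\[
F_*(\mu)([\varphi(t;\bar c)])=\mu\bigl(F^{-1}[\varphi(t;\bar c)]\bigr)=\mu\bigl(\rho^{-1}[\varphi(t;\bar c)]\bigr)=\rho_*(\mu)([\varphi(t;\bar c)]),
\]
where the middle equality uses $F=\rho_*|_{\tilde{H}_{\bar{\FC},\bar n'}}$ together with $\mu(\tilde{H}_{\bar{\FC},\bar n'})=1$.
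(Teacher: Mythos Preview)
Your proof is correct and is essentially the same as the paper's: both use that $F=\rho_*|_{\tilde{H}_{\bar{\FC},\bar n'}}$ together with the concentration $\mu(\tilde{H}_{\bar{\FC},\bar n'})=1$ to check the identity on clopens, and both invoke Remark \ref{remark **=*} to identify the measure-level $\rho_*$ with $(\rho_*|_{S_{\bar x'\bar y'}(\bar{\FC})})_*$. The only difference is the order in which you apply these two ingredients, which is immaterial.
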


\begin{proof}
 First, we show that   $F_*= (\rho_*|_{S_{\bar x' \bar y'}(\bar{\FC})})_*|_{\mathfrak{M}_{\pi',\bar n'}(\bar{\FC})}$. Take $\mu \in \mathfrak{M}_{\pi',\bar n'}(\bar{\FC})$ and clopen $U \subseteq S_{G}(\bar{\FC})$. We have $F_*(\mu)(U) = (\rho_* |_{\tilde{H}_{\bar{\FC},\bar n'}})_*(\mu)(U) = \mu((\rho_* |_{\tilde{H}_{\bar{\FC},\bar n'}})^{-1}[U]) = \mu((\rho_* |_{S_{\bar x' \bar y'}(\bar{\FC})})^{-1}[U]) = (\rho_* |_{S_{\bar x' \bar y'}(\bar{\FC})})_*(\mu)(U)$, where the third equality follows from the assumption that $\mu$ is concentrated on $\tilde{H}_{\bar{\FC},\bar n'}$.

On the other hand, by Remark \ref{remark **=*}, $(\rho_*|_{S_{\bar x' \bar y'}(\bar{\FC})})_*|_{\mathfrak{M}_{\pi',\bar n'}(\bar{\FC})}=\rho_*|_{\mathfrak{M}_{\pi',\bar n'}(\bar{\FC})}$, which completes the proof.
\end{proof}

\begin{lemma}\label{lemma: second lemma}
$F_* \circ \delta_*|_{\mathfrak{M}_G(\bar{\FC})}  = \id_{\mathfrak{M}_G(\bar{\FC})}$.
\end{lemma}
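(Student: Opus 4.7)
The plan is to reduce the lemma to a type-level computation and then lift it to measures using functoriality of pushforward together with Remark \ref{remark **=*}. The key algebraic identity behind the lemma is $F \circ \delta_*|_{S_G(\bar{\FC})} = \id_{S_G(\bar{\FC})}$, which is essentially a reformulation of $\rho \circ \delta = \id_{G(\FC)}$ translated to the target space $\widetilde{H}_{\bar{\FC},\bar n'}$ via $F$.

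First, I would verify directly that $\delta_*|_{S_G(\bar{\FC})}$ indeed lands in $\widetilde{H}_{\bar{\FC},\bar n'}$. For $p = \tp(h_0/\bar{\FC}) \in S_G(\bar{\FC})$, realized by some $h_0 \in G(\FC')$ in a larger monster, we have
\[
\delta_*(p) = \tp\bigl(\bar m', (g_i h_0 \cdot s_0)_{i \in I}/\bar{\FC}\bigr).
\]
Setting $g := h_0^{-1}$, the semi-direct product automorphism $(g, \id_{\FC'}) \in H$ sends $(\bar m', (g_i \cdot s_0)_{i \in I})$ to $(\bar m', (g_i g^{-1} \cdot s_0)_{i \in I}) = (\bar m', (g_i h_0 \cdot s_0)_{i \in I})$, witnessing that the realization lies in $\widetilde{H}_{\bar{\FC},\bar n'}$. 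Applying the definition of $F$, we then get
\[
F\bigl(\delta_*(p)\bigr) = F\bigl(\tp(\bar m', (g_i g^{-1} \cdot s_0)_{i \in I}/\bar{\FC})\bigr) = \tp(g^{-1}/\bar{\FC}) = \tp(h_0/\bar{\FC}) = p.
\]

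Second, I would push this identity forward to measures. Since $F \circ \delta_*|_{S_G(\bar{\FC})} = \id_{S_G(\bar{\FC})}$, applying the pushforward functor yields
\[
\bigl(F \circ \delta_*|_{S_G(\bar{\FC})}\bigr)_* = \id_{\mathfrak{M}_G(\bar{\FC})}.
\]
By functoriality of pushforward, the left-hand side equals $F_* \circ (\delta_*|_{S_G(\bar{\FC})})_*$, and by Remark \ref{remark **=*} we have $(\delta_*|_{S_G(\bar{\FC})})_* = \delta_*|_{\mathfrak{M}_G(\bar{\FC})}$. Therefore $F_* \circ \delta_*|_{\mathfrak{M}_G(\bar{\FC})} = \id_{\mathfrak{M}_G(\bar{\FC})}$, as desired.

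There is no real obstacle here, the computation is essentially bookkeeping; the only subtlety is correctly identifying $\delta_*(p)$ as an element of $\widetilde{H}_{\bar{\FC},\bar n'}$ so that $F$ may be applied, and this is guaranteed by the choice of index sets $I, J$ and the existence of the witnessing automorphism in the semi-direct product.
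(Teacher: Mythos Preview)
Your proof is correct. Both your approach and the paper's rest on the point-level identity $\rho \circ \delta = \id_{G(\FC)}$ (equivalently $F \circ \delta_*|_{S_G(\bar{\FC})} = \id_{S_G(\bar{\FC})}$), but they package the lift to measures differently. The paper invokes Lemma \ref{lemma: first lemma} to rewrite $F_*$ as $\rho_*|_{\mathfrak{M}_{\pi',\bar n'}(\bar{\FC})}$ (after noting via Remark \ref{remark: improved 3.26(4)} that $\delta_*[\mathfrak{M}_G(\bar{\FC})] \subseteq \mathfrak{M}_{\pi',\bar n'}(\bar{\FC})$), and then concludes from $\rho_* \circ \delta_* = \id$. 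You instead verify the identity $F \circ \delta_* = \id$ directly on types and then apply functoriality of pushforward together with Remark \ref{remark **=*}. Your route is self-contained and avoids reusing Lemma \ref{lemma: first lemma}; the paper's route is shorter precisely because that lemma has already absorbed the comparison between $F_*$ and $\rho_*$. One small cosmetic point: the automorphism $(g,\id_{\FC'})$ you mention lives in $H_{\FC'}$ rather than $H$ since $g \in G(\FC')$, but this is irrelevant to the argument --- you only need that $\delta_*(p)$ has the form $\tp(\bar m',(g_i g^{-1}\cdot s_0)_{i\in I}/\bar{\FC})$ for some $g \in G(\FC')$, which is immediate from the definition of $\delta$.
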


\begin{proof}
Since $\delta_*[\mathfrak{M}_G(\bar{\FC})]\subseteq \mathfrak{M}_{\pi',\bar n'}(\bar{\FC})$ (by Remark \ref{remark: improved 3.26(4)}), using Lemma \ref{lemma: first lemma}, $F_* \circ \delta_*|_{\mathfrak{M}_G(\bar{\FC})} = \rho_* \circ \delta_* |_{\mathfrak{M}_G(\bar{\FC})}$. We are done as $\rho_* \circ \delta_* =\id_{\mathfrak{M}_G(\bar{\FC})}$.
\end{proof}

From Remark \ref{remark: F_* homeo} and Lemma \ref{lemma: second lemma}, we get 

\begin{lemma}\label{lemma: the inverse of F*}
$F_*^{-1} = \delta_* |_{\mathfrak{M}_G(\bar{\FC})}$.
\end{lemma}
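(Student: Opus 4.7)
The plan is to deduce this immediately from Remark \ref{remark: F_* homeo} and Lemma \ref{lemma: second lemma}. By Remark \ref{remark: F_* homeo}, the map $F_{\ast}\colon\mathfrak{M}_{\pi',\bar n'}(\bar{\FC})\to\mathfrak{M}_G(\bar{\FC})$ is a homeomorphism, hence in particular a bijection, so it has a unique two-sided inverse $F_\ast^{-1}\colon\mathfrak{M}_G(\bar{\FC})\to\mathfrak{M}_{\pi',\bar n'}(\bar{\FC})$.

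First, I would note that $\delta_{\ast}$ takes values in $\mathfrak{M}_{\pi',\bar n'}(\bar{\FC})$: indeed, by Remark \ref{remark: improved 3.26(4)} we have $\supp(\delta_{\ast}(\nu))=\delta[\supp(\nu)]\subseteq\delta[S_G(\bar{\FC})]\subseteq \tilde{H}_{\bar{\FC},\bar n'}$, so $\delta_{\ast}(\nu)\in\mathfrak{M}_{\pi',\bar n'}(\bar{\FC})$ for every $\nu\in\mathfrak{M}_G(\bar{\FC})$. Thus $\delta_{\ast}|_{\mathfrak{M}_G(\bar{\FC})}$ is a well-defined map into the domain of $F_{\ast}$.

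By Lemma \ref{lemma: second lemma}, $F_{\ast}\circ\delta_{\ast}|_{\mathfrak{M}_G(\bar{\FC})}=\id_{\mathfrak{M}_G(\bar{\FC})}$, so $\delta_{\ast}|_{\mathfrak{M}_G(\bar{\FC})}$ is a right inverse to the bijection $F_{\ast}$. Since $F_{\ast}$ is a bijection, its right inverse coincides with its (unique) two-sided inverse $F_{\ast}^{-1}$, giving $F_{\ast}^{-1}=\delta_{\ast}|_{\mathfrak{M}_G(\bar{\FC})}$. There is no serious obstacle here — the content of the statement has already been absorbed into Remark \ref{remark: F_* homeo} and Lemma \ref{lemma: second lemma}; the only tiny point to verify is that $\delta_{\ast}|_{\mathfrak{M}_G(\bar{\FC})}$ actually lands in $\mathfrak{M}_{\pi',\bar n'}(\bar{\FC})$, which is immediate from the support description of pushforwards.
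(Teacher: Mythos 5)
Your argument is correct and is exactly the paper's route: the lemma is stated there as an immediate consequence of Remark \ref{remark: F_* homeo} and Lemma \ref{lemma: second lemma}, with the fact that $\delta_*$ lands in $\mathfrak{M}_{\pi',\bar n'}(\bar{\FC})$ (via Remark \ref{remark: improved 3.26(4)}) already noted in the proof of Lemma \ref{lemma: second lemma}. Nothing to add.
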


By Fact \ref{fact: pushforward} and Lemmas \ref{lemma: first lemma} and  \ref{lemma: the inverse of F*}, we conclude:

\begin{cor}\label{corollary: preservation under F_*}
Let $\mu \in \mathfrak{M}_{\pi',\bar n'}(\bar{\FC})$. Then $\mu$ is invariant over $\bar M$ [definable over $\bar M$, Borel-definable over $\bar M$, finitely satisfiable in $\bar M$, or fim over $\bar M$] if and only if the measure $F_*(\mu)$ has the corresponding property.
\end{cor}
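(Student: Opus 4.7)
The plan is to reduce the corollary to a direct application of Fact~\ref{fact: pushforward} via the two lemmas just established. The key observation is that both directions of the equivalence are witnessed by pushforwards along definable maps, so preservation of each property follows from the already-known behavior of pushforward under definable maps.

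First I would handle the forward implication. By Lemma~\ref{lemma: first lemma}, $F_*(\mu) = \rho_*(\mu)$, where $\rho \colon \bar{\FC}^{\bar x'\bar y'} \to G(\FC)$ is $s_0$-definable, hence in particular $\bar M$-definable (as $s_0 \in \bar M$). Fact~\ref{fact: pushforward} then immediately gives that if $\mu$ is invariant over $\bar M$ (respectively definable, Borel-definable, finitely satisfiable, or fim over $\bar M$), then so is $\rho_*(\mu) = F_*(\mu)$.

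For the converse, I would use Lemma~\ref{lemma: the inverse of F*}, which says that $F_*^{-1} = \delta_*|_{\mathfrak{M}_G(\bar{\FC})}$, where $\delta \colon G(\FC) \to \bar{\FC}^{\bar x'\bar y'}$ is $\bar M$-definable. Since $\mu = F_*^{-1}(F_*(\mu)) = \delta_*(F_*(\mu))$, another application of Fact~\ref{fact: pushforward} to $\delta$ shows that each of the listed properties of $F_*(\mu)$ is inherited by $\mu$.

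This argument is essentially bookkeeping, and I do not foresee any obstacle: the non-trivial content was already absorbed into Lemmas~\ref{lemma: first lemma} and~\ref{lemma: the inverse of F*}, which identified the homeomorphism $F_*$ and its inverse with pushforwards along genuinely definable maps (rather than just continuous ones between type spaces). Once those identifications are in hand, Fact~\ref{fact: pushforward} applies symmetrically in both directions, yielding the claimed equivalences for all five properties simultaneously.
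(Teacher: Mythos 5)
Your proposal is correct and matches the paper's own argument exactly: the paper derives this corollary precisely by combining Fact~\ref{fact: pushforward} with Lemma~\ref{lemma: first lemma} (identifying $F_*$ with $\rho_*$) and Lemma~\ref{lemma: the inverse of F*} (identifying $F_*^{-1}$ with $\delta_*$), applying the pushforward preservation in each direction. Nothing further is needed.
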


Consider the map $\chi \colon S^{\bar{\FC}}_G(\bar{\FC}) \to S^{\FC}_G(\FC)$ given by $\tp^{\bar{\FC}}(g/\bar{\FC}) \mapsto \tp^{\FC}(g/\FC)$. It is induced by passing from $\bar{\FC}$ to the reduct $\FC$. It is clear 
that $\chi$ is a homeomophism 
(where injectivity follows from the fact that $\aut(\FC'/\FC)=\aut(\bar{\FC}'/\bar{\FC})|_{\FC'}$), 
and so is the pushforward $\chi_* \colon \mathfrak{M}_G(\bar{\FC}) \to \mathfrak{M}_G(\FC)$.

\begin{lemma}\label{lemma: chi_*}
Let $\mu \in \mathfrak{M}_{G}(\bar{\FC})$. Then $\mu$ is invariant over $\bar M$ [definable over $\bar M$, Borel-definable over $\bar M$, finitely satisfiable in $\bar M$, or fim over $\bar M$] if and only if the measure $\chi_*(\mu)$ has the corresponding property over $M$.
\end{lemma}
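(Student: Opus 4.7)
\smallskip

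My plan is to use Lemma \ref{lemma: canonical form of formulas} as the main reduction device. Given any $\mathcal{L}^{\textrm{aff}}$-formula $\psi(t;\bar{y})$ with $t$ from the home sort and any parameter tuple $\bar{d}\in\bar{\FC}^{\bar{y}}$, decompose $\bar{d}=(\bar{d}_1,(h_i\cdot s_0)_i)$ with $\bar{d}_1$ from the home sort and $h_i\in G(\FC)$ uniquely determined by regularity of the action. By the canonical form lemma, there is an $\mathcal{L}$-formula $\varphi(t;\bar{y}_1,\bar{u})$ (with the $\bar{u}$'s ranging in $G$) such that $\models\psi(a;\bar{d})\iff\models\varphi(a;\bar{d}_1,\bar{h})$ for every $a$ in the home sort of $\bar{\FC}$. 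Consequently $\mu(\psi(t;\bar{d}))=\chi_{*}(\mu)(\varphi(t;\bar{d}_1,\bar{h}))$, which is the single identity driving all transfers below.

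For invariance, I would observe that the assignment $\bar{d}\mapsto(\bar{d}_1,\bar{h})$ descends to a homeomorphism between the $\bar{M}$-type space of $\bar{y}$ and the $M$-type space of $(\bar{y}_1,\bar{u})$ (with $\bigwedge G(u_i)$). Conversely, every $\sigma\in\aut(\FC/M)$ extends to $(1,\sigma)\in\aut(\bar{\FC}/\bar{M})$, so $M$-equivalent home-sort tuples are $\bar{M}$-equivalent. Both directions of the invariance transfer then follow from the displayed identity. For (Borel-)definability, the fiber function $F_{\mu,\bar{M}}^{\psi}$ factors as $F_{\chi_{*}(\mu),M}^{\varphi}\circ r$ where $r$ is the continuous type-reduction map above; conversely, each $\mathcal{L}$-formula is an $\mathcal{L}^{\textrm{aff}}$-formula, and $F_{\chi_{*}(\mu),M}^{\varphi}=F_{\mu,\bar{M}}^{\varphi}\circ i$ where $i$ is the continuous inclusion of $M$-types into $\bar{M}$-types. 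Borelness and continuity transfer in both directions.

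Finite satisfiability is direct: any witness $a$ for $\psi(\cdot;\bar{d})$ in $\bar{M}$ is in the home sort, hence in $M$, and realizes the canonical $\varphi(\cdot;\bar{d}_1,\bar{h})$; conversely, a witness in $M$ for $\varphi$ gives back a witness in $\bar{M}$ for $\psi$. For fim, I plan to first establish by induction on $n$ that $\chi_{*}(\mu^{(n)})=(\chi_{*}(\mu))^{(n)}$; the inductive step uses Borel-definability of $\mu$ (hence of $\chi_*(\mu)$), together with the already-established fiber-function transfer, and the fact that the Morley product is defined via integration of fiber functions over type spaces, which respects $\chi$. Given this, a fim sequence $(\theta_n)$ over $M$ witnessing fim of $\chi_{*}(\mu)$ at an $\mathcal{L}$-formula $\varphi$ (coming from reducing an $\mathcal{L}^{\textrm{aff}}$-formula $\psi$) simultaneously witnesses fim of $\mu$ at $\psi$, with the approximation condition transferred via the identity $\mu(\psi(t;\bar{d}))=\chi_*(\mu)(\varphi(t;\bar{d}_1,\bar{h}))$ and the convergence condition $\mu^{(n)}(\theta_n)\to 1$ transferred via $\chi_{*}(\mu^{(n)})=(\chi_*(\mu))^{(n)}$ (and the converse direction is symmetric).

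The main obstacle is the fim case: one must carefully check the commutation $\chi_{*}(\mu^{(n)})=(\chi_{*}(\mu))^{(n)}$ at each iteration, since Morley products in $\bar{T}$ a priori involve formulas in many variables from both sorts, whereas those in $T$ only see the home sort. Handling this should reduce cleanly to a multi-variable version of the canonical form lemma applied inductively, but the bookkeeping between $\mathcal{L}^{\textrm{aff}}$- and $\mathcal{L}$-formulas at each Morley iteration is the point requiring the most care.
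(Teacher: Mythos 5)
Your proposal is correct and follows essentially the same route as the paper: both reduce every $\mathcal{L}^{\textrm{aff}}$-formula to a canonical $\mathcal{L}$-formula via Lemma \ref{lemma: canonical form of formulas} and transfer each property through the resulting identity $\mu(\psi(t;\bar d))=\chi_*(\mu)(\varphi(t;\bar d_1,\bar h))$, with invariance handled by the correspondence $\aut(\FC/M)=\aut(\bar\FC/\bar M)|_{\FC}$. The only difference is presentational: you make explicit the commutation $\chi_*(\mu^{(n)})=(\chi_*(\mu))^{(n)}$ needed for the fim clause, which the paper leaves as an easy check.
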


\begin{proof}
(1) Invariance. $(\Rightarrow)$ follows from the fact that $\aut(\FC/M)=\aut(\bar{\FC}/\bar M)|_{\FC}$. $(\Leftarrow)$ Consider any $\mathcal{L}^{\textrm{aff}}$-formula $\psi(x,\bar{w}, \bar{z})$, 
where $x,\bar w$ are from the home sort and $\bar z$ from the affine sort. By Lemma \ref{lemma: canonical form of formulas},  $\psi(x,\bar{w}, \bar{z})$ is equivalent to 
$$(\exists \bar t)\left( \bigwedge_{i < m} G(t_i) \wedge \bigwedge_{i < m} z_i = t_i \cdot s_0 \wedge \varphi(x,\bar w,\bar t)\right),$$
for a unique (up to equivalence) $\mathcal{L}$-formula $\varphi(x,\bar w,\bar t)$. Consider any $\mu \in \mathfrak{M}_G(\bar{\FC})$, an instance $\psi(x,\bar a, (h_i \cdot s_0)_{i<m})$ of $\psi$, and $(\bar a'_i,  (h'_i \cdot s_0)_{i<m}) \equiv_{\bar M} (\bar a, (h_i \cdot s_0)_{i<m})$. Then $(\bar a,\bar h') \equiv_M (\bar a,\bar h)$, and so $\mu(\psi(x, \bar a, (h'_i \cdot s_0)_{i<m})) =\chi_*(\mu)(\varphi(x,\bar a, \bar h')) = \chi_*(\mu)(\varphi(x,\bar a, \bar h)) = \mu(\psi(x, \bar a, (h_i \cdot s_0)_{i<m}))$.

(2) Definability. $(\Rightarrow)$ follows from the fact that each $\CL^{\textrm{aff}}$-formula over $\bar{M}$ in home variables is equivalent to an $\CL$-formula over $M$ (which holds by Lemma  \ref{lemma: canonical form of formulas}). For  $(\Leftarrow)$ consider any $\mathcal{L}^{\textrm{aff}}$-formula $\psi(x,\bar{w}, \bar{z})$ as in (1) and closed $K \subseteq [0,1]$, and choose a formula $\varphi(x,\bar{w},\bar{t})$ as in (1). Since $\chi_*(\mu)$ is $M$-definable, $\{ (\bar a, \bar h) : \chi_*(\mu)(\varphi(x,\bar a,\bar h)) \in K\}$ is $M$-type-definable. As for $\bar h$ contained in $G(\FC)$ we have $\chi_*(\mu)(\varphi(x,\bar a, \bar h)) = \mu(\psi(x, \bar a, (h_i \cdot s_0)_{i<m}))$, we conclude that $\{ (\bar a, \bar b) : \mu(\psi(x,\bar a,\bar b)) \in K\}$ is $\bar M$-type-definable.

(3) Borel-definability. A similar argument. 

(4) Finite satisfiability. Again follows easily using Lemma  \ref{lemma: canonical form of formulas}.

(5) 
Fim. For $(\Rightarrow)$ first note that $M$-invariance of $\chi_*(\mu)$ follows from (1). Now, consider any $\mathcal{L}$-formula $\varphi(x,\bar w)$. Let the formulas $\theta_n(x_0,\dots,x_{n-1})$, $n<\omega$, be $\mathcal{L}^{\textrm{aff}}_{\bar M}$-formulas witnessing fim for $\mu \in \mathfrak{M}_G(\bar{\FC})$ for the formula $\varphi(x,\bar w)$. By Lemma \ref{lemma: canonical form of formulas}, each formula $\theta_n(x_0,\dots,x_{n-1})$ is equivalent to an $\mathcal{L}_M$-formula $\theta_n'(x_0,\dots,x_{n-1})$.

For $(\Leftarrow)$, again $\bar M$-invariance of $\mu$ follows from (1). Consider any $\mathcal{L}^{\textrm{aff}}$-formula $\psi(x,\bar{w}, \bar{z})$ as in (1), and take $\varphi(x,\bar w,\bar t)$ as in (1). Take $\mathcal{L}_M$-formulas $\theta_n(\bar x_0,\dots,x_{n-1})$, $n<\omega$, witnessing that $\chi_*(\mu)$ is fim for the formula $\varphi(x,\bar w,\bar t)$. It is easy to check that the same $\theta_n$'s witness fim for $\mu$ for the formula $\psi(x,\bar{w}, \bar{z})$.
\end{proof}

Let $f' \colon \tilde{H}_{\bar{\FC},\bar n'} \to S_G(\FC)$ be the composition $\chi \circ F$, i.e. 
$$f'(\tp(\bar m',(g_i g^{-1} \cdot s_0)_{i \in I}/\bar{\FC}):=\tp(g^{-1}/\FC).$$
By Corollary \ref{corollary: preservation under F_*}  and Lemma \ref{lemma: chi_*}, we get

\begin{cor}\label{corollary: preservation for finite tuples}
Let $\mu \in \mathfrak{M}_{\pi',\bar n'}(\bar{\FC})$. Then $\mu$ is invariant over $\bar M$ [definable over $\bar M$, Borel-definable over $\bar M$, finitely satisfiable in $\bar M$, or fim over $\bar M$] if and only if the measure $f'_*(\mu)$ has the corresponding property over $M$.
\end{cor}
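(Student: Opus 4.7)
The plan is to observe that $f'_* = \chi_* \circ F_*$ and then to chain together the two preservation statements that have already been established just before the corollary. More precisely, since $f' = \chi \circ F$ as a composition of continuous maps between type spaces, the pushforward decomposes as $f'_* = \chi_* \circ F_*$ when restricted to $\mathfrak{M}_{\pi',\bar n'}(\bar{\FC})$, with intermediate values lying in $\mathfrak{M}_G(\bar{\FC})$.

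For the forward direction, suppose $\mu \in \mathfrak{M}_{\pi',\bar n'}(\bar{\FC})$ has one of the listed properties (invariance, definability, Borel-definability, finite satisfiability, or fim) over $\bar M$. By Corollary \ref{corollary: preservation under F_*}, $F_*(\mu) \in \mathfrak{M}_G(\bar{\FC})$ has the corresponding property over $\bar M$. Then by Lemma \ref{lemma: chi_*}, $\chi_*(F_*(\mu)) = f'_*(\mu) \in \mathfrak{M}_G(\FC)$ has the corresponding property over $M$. The converse direction is identical, using that both Corollary \ref{corollary: preservation under F_*} and Lemma \ref{lemma: chi_*} are biconditionals: if $f'_*(\mu)$ has property $P$ over $M$, then Lemma \ref{lemma: chi_*} upgrades $P$-over-$M$ for $\chi_*(F_*(\mu))$ to $P$-over-$\bar M$ for $F_*(\mu)$, and then Corollary \ref{corollary: preservation under F_*} transfers $P$-over-$\bar M$ back from $F_*(\mu)$ to $\mu$.

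There is essentially no obstacle at this stage: all of the heavy lifting has already been done in the proofs of Corollary \ref{corollary: preservation under F_*} and Lemma \ref{lemma: chi_*}. In particular, the delicate step (Lemma \ref{lemma: chi_*}) relied on the canonical form provided by Lemma \ref{lemma: canonical form of formulas}, which lets one rewrite any $\mathcal{L}^{\textrm{aff}}$-formula in home-sort variables as an $\mathcal{L}$-formula after existentially quantifying away the affine sort via the $s_0$-parametrized bijection $t \mapsto t \cdot s_0$; this is what makes the reduct map $\chi$ behave well with respect to each of the five properties. The only item to verify in writing is that the restrictions of the pushforwards to the relevant subspaces match, which is immediate from $\supp(\mu) \subseteq \widetilde{H}_{\bar{\FC},\bar n'}$ and Remark \ref{remark **=*}.
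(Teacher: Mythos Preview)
Your proof is correct and follows exactly the same approach as the paper: the paper's proof simply cites Corollary \ref{corollary: preservation under F_*} and Lemma \ref{lemma: chi_*}, using the decomposition $f' = \chi \circ F$ stated just before the corollary. Your additional remarks about why the pushforwards compose correctly on the relevant subspaces are fine but not strictly needed, since $f'_* = (\chi \circ F)_* = \chi_* \circ F_*$ holds automatically for pushforwards of Borel measures along continuous maps.
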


Finally, note that $f'$ is the restriction of $f$ to the variables $\bar x'\bar y'$ in the sense that $f= f' \circ r$, where $r \colon \tilde{H}_{\bar{\FC},\bar n} \to \tilde{H}_{\bar{\FC},\bar n'}$ is the restriction map to the variables $\bar x' \bar y'$. Therefore, for $\mu \in \mathfrak{M}_{\pi,\bar n}(\bar{\FC})$ we have $f'_*(\mu|_{\bar x'\bar y'}) = f_*(\mu)$. 
Using this together with the next remark and Corollary \ref{corollary: preservation for finite tuples}, we conclude with Corollary \ref{corollary: preservation for infinite tuples}.

\begin{remark}\label{remark: restrictions to finite tuples}
A measure $\mu \in \mathfrak{M}_{\bar x \bar y}(\FC)$ is invariant over $\bar M$ [definable over $\bar M$, Borel definable over $\bar M$, finitely satisfiable in $\bar M$, or fim over $\bar M$] if and only if the restrictions of $\mu|_{\bar x' \bar y'}$ to all finite subtuples $\bar x' \bar y'$ as above have the corresponding property.
\end{remark}

\begin{cor}\label{corollary: preservation for infinite tuples}
Let $\mu \in \mathfrak{M}_{\pi,\bar n}(\bar{\FC})$. Then $\mu$ is invariant over $\bar M$ [definable over $\bar M$, Borel-definable over $\bar M$, finitely satisfiable in $\bar M$, or fim over $\bar M$] if and only if the measure $f_*(\mu)$ has the corresponding property over $M$.
\end{cor}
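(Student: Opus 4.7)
The plan is to reduce the infinite-tuple statement to the finite-tuple Corollary \ref{corollary: preservation for finite tuples}, using Remark \ref{remark: restrictions to finite tuples} as the bridge. All the substantive work has been done in the finite case; the passage to the infinite case is a routine projective limit argument.

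First, I would note that for any admissible finite subtuple $\bar x'\bar y'$ (as specified before Lemma \ref{lemma: first lemma}: $\bar y'$ contains $y_0$, and for every $i \in I$ there is some $j \in J$ with $m_j = g_i$), the canonical restriction map $r \colon \tilde{H}_{\bar{\FC},\bar n} \to \tilde{H}_{\bar{\FC},\bar n'}$ satisfies $f = f' \circ r$. Indeed, unpacking the definitions, a type $\tp(\bar m,(g_\alpha g^{-1}\cdot s_0)_\alpha/\bar{\FC})$ restricts via $r$ to $\tp(\bar m',(g_i g^{-1}\cdot s_0)_{i\in I}/\bar{\FC})$, and both $f$ and $f'$ then send this to $\tp(g^{-1}/\FC)$. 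Taking pushforwards yields
\[
f'_*(\mu|_{\bar x'\bar y'}) \;=\; f'_*(r_*(\mu)) \;=\; (f'\circ r)_*(\mu) \;=\; f_*(\mu)
\]
for every $\mu \in \mathfrak{M}_{\pi,\bar n}(\bar{\FC})$.

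Second, I would observe that the collection of admissible finite subtuples is cofinal among all finite subtuples of $\bar x\bar y$: given an arbitrary finite $\bar x''\bar y''$, we may enlarge it by adjoining $y_0$ and, for each $y_i$ appearing in $\bar y''$, the unique coordinate $x_j$ of $\bar x$ with $m_j = g_i$. Hence, in testing any of the listed properties via Remark \ref{remark: restrictions to finite tuples}, it suffices to quantify over admissible finite subtuples only.

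Finally, I would chain the equivalences. By Remark \ref{remark: restrictions to finite tuples} applied to $\bar M$, the measure $\mu$ has property $\mathcal{P}$ (invariant, definable, Borel-definable, finitely satisfiable, or fim) over $\bar M$ iff $\mu|_{\bar x'\bar y'}$ has $\mathcal{P}$ over $\bar M$ for every admissible finite $\bar x'\bar y'$. By Corollary \ref{corollary: preservation for finite tuples}, this in turn holds iff $f'_*(\mu|_{\bar x'\bar y'})$ has $\mathcal{P}$ over $M$ for every such tuple. By the identity $f'_*(\mu|_{\bar x'\bar y'}) = f_*(\mu)$ from the first step, this last condition is simply that $f_*(\mu)$ has $\mathcal{P}$ over $M$. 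I do not expect any genuine obstacle here; the only point requiring care is the verification $f = f' \circ r$, which is immediate from the explicit formulas, and the cofinality of admissible finite subtuples, which is also transparent.
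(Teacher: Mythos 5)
Your proof is correct and follows exactly the route the paper takes: the identity $f'_*(\mu|_{\bar x'\bar y'}) = f_*(\mu)$ coming from $f = f'\circ r$, the reduction to admissible finite subtuples via Remark \ref{remark: restrictions to finite tuples}, and Corollary \ref{corollary: preservation for finite tuples} for each such subtuple. Your added cofinality observation is a reasonable justification for the (unproved) Remark, but otherwise the argument is the paper's own.
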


We conclude with a corollary which can be treated as a source of examples of generically stable and fim relatively type-definable subgroups of the group of automorphisms: starting from a generically stable [resp. fim] definable group $G$ the corollary yields a generically stable [resp. fim] relatively type-definable subgroup of $\aut(\bar \FC)$.
(Recall that the notions of a generically stable and fim relatively type-definable subgroup of the group of automorphisms were introduced in Definition \ref{def: fim subgroups}; the corresponding notions for definable groups can be found e.g. in Definitions 2.4 and 3.29 of \cite{CGK}.) 
Before stating the corollary, let us list a few examples of generically stable or fim definable groups.

\begin{itemize}
\item The definable groups with fsg (finitely satisfiable generics) in NIP theories are fim by Remark 4.4 in \cite{HruPiSi13}. In particular, all definable groups in stable theories as well as all definably compact groups definable in expansions of real closed fields are fim.
\item Any pseudofinite group with NIP is fim (see Example 7.32 in \cite{Guide_NIP} and Section 3 in \cite{Macpherson-Tent:Pseudofinite_gropus_with_NIP} for a nice non-solvable example).
\item Stable connected groups are generically stable: the unique left-invariant global type is precisely the unique global generic type.
\item The group $(\mathbb{R},+, R_n)_{n \in \mathbb{N}}$, where the relations $R_n$ are local orders given by $R_n(x,y) \iff 0 \leq y-x \leq n$, is generically stable. This is witnessed by the unique global $1$-type $p \in S(\FC)$ whose any/some realization is not infinitesimally close to an element of $\FC$ (using quantifier elimination for $\textrm{Th}((\mathbb{R},+, R_n)_{n \in \mathbb{N}})$ with constant $1$ established in Proposition 4.8 of \cite{Krup_Portillo:On_stable_quotients}).
\end{itemize}

\begin{cor}
    $G(\FC)$ is (right) generically stable [resp. fim] over $M$ 
    if and only if $H$ is (left) generically stable [resp. fim] over $\bar{M}$.
\end{cor}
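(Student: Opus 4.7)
The strategy is to chain together Corollary \ref{cor:aff01} and Corollary \ref{corollary: preservation for infinite tuples} via the homeomorphism $f_* \colon \mathfrak{M}_{\pi,\bar n}(\bar{\FC}) \to \mathfrak{M}_G(\FC)$. Specifically, for any measure $\mu \in \mathfrak{M}^{\inv}_{\pi(\bar x,\bar y;\bar n)}(\bar{\FC},\bar M)$, Corollary \ref{corollary: preservation for infinite tuples} tells us that $f_*(\mu)$ is $M$-invariant and concentrates on $G$, hence lies in $\mathfrak{M}^{\inv}_G(\FC,M)$; moreover, the same corollary gives that $\mu$ is fim over $\bar M$ if and only if $f_*(\mu)$ is fim over $M$. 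Corollary \ref{cor:aff01} then says that $\mu$ is left $H$-invariant if and only if $f_*(\mu)$ is right $G(\FC)$-invariant. Since $f_*$ is a bijection between the two measure spaces, combining these two statements immediately yields a bijective correspondence between left $H$-invariant fim measures in $\mathfrak{M}^{\inv}_{\pi(\bar x,\bar y;\bar n)}(\bar{\FC},\bar M)$ and right $G(\FC)$-invariant fim measures in $\mathfrak{M}^{\inv}_G(\FC,M)$, which is exactly the fim equivalence.

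For the generically stable case, the plan is to reduce to the fim case via Dirac measures. By Definition \ref{definition: generically stable}(3), a type $p$ is generically stable (over the relevant model) if and only if the Dirac measure $\delta_p$ is fim (over the same model). Because $f$ is a homeomorphism between the type spaces $\widetilde{H}_{\bar{\FC},\bar n}$ and $S_G(\FC)$, the pushforward satisfies $f_*(\delta_p) = \delta_{f(p)}$ for every $p \in \widetilde{H}_{\bar{\FC},\bar n}$. Consequently, the fim equivalence restricted to Dirac measures becomes precisely the sought equivalence between left $H$-invariant generically stable types in $S^{\inv}_{\pi(\bar x,\bar y;\bar n)}(\bar{\FC},\bar M)$ and right $G(\FC)$-invariant generically stable types in $S^{\inv}_G(\FC,M)$.

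There is no substantive obstacle here; all the serious work has been done in the preceding subsection, and the corollary amounts to careful bookkeeping. The only small points worth checking explicitly are that the support condition transfers (measures on $\pi(\bar m,\bar y;\bar n)$ correspond under $f_*$ to measures on $G$, which is built into the definition of $f$), that $\bar M$-invariance passes to $M$-invariance under $f_*$ (Corollary \ref{corollary: preservation for infinite tuples}), and that the direction of the group action gets swapped from left to right (the content of Corollary \ref{cor:aff01}). Once these are noted, the proof is immediate in both directions by applying $f_*$ or its inverse.
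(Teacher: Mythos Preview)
Your proposal is correct and takes essentially the same approach as the paper: the paper's proof is a single sentence citing exactly Corollary \ref{cor:aff01} and Corollary \ref{corollary: preservation for infinite tuples}, and your write-up simply unpacks how those two results combine via the bijection $f_*$ (with the generically stable case reduced to the fim case through Dirac measures, which is implicit in the paper's citation since Corollary \ref{corollary: preservation for infinite tuples} covers fim and hence generic stability of types).
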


\begin{proof}
    Follows by  Corollary \ref{cor:aff01} and Corollary \ref{corollary: preservation for infinite tuples}.
\end{proof}

We are now ready to prove the isomorphism theorem advertised 
at the beginning of Section \ref{sec: affine sort}.

From now on, we {\bf come back to the meaning} of $\bar x',\bar y'$ fixed in the paragraph following Fact \ref{rem: affine Lascar and KP} (which was changed after Corollary \ref{cor:aff01}), i.e. $\bar x'$ and $\bar y'$ correspond to $\bar m$ and $\bar s$, respectively.

Let us first remark that the role of the type $\pi(\bar x;\bar y)$ from Section \ref{sec:star.product} is now played by the type $\pi^{\textrm{opp}}(\bar x',\bar y';\bar x,\bar y)$ (in particular, the role of the tuple $\bar x$ is now played by the tuple $\bar x' \bar y'$, and the role of $\bar y$ is played by $\bar x \bar y$). Note also that by the definition of $\pi(\bar x,\bar y;\bar x',\bar y')$, the types $\pi(\bar x',\bar y';\bar x,\bar y)$ and $\pi(\bar x,\bar y;\bar x',\bar y')$ are equivalent and $G_{\pi^{\textrm{opp}},\bar \FC}=G_{\pi,\bar \FC}
=H$ is a subgroup of $\aut(\bar{\FC})$.

Define the main two objects of interest:
\begin{align*}
\mathcal{S}_0 &:= \widetilde{H}_{\bar{\FC},\bar{n}}\cap S^{\inv}_{\bar{n}}(\bar{\FC},\bar{M}),     \\
\mathcal{S} &:= \mathfrak{M}_{\pi,\bar{n}}(\bar{\FC})\cap\mathfrak{M}^{\inv}_{\bar{n}}(\bar{\FC},\bar{M}). 
\end{align*}

We will demonstrate that each of the above spaces is isomorphic to its corresponding space over our fixed definable group $G$. We first prove the case for types. 

\begin{lemma}\label{lemma: affine star transfer for types}
    The space $\mathcal{S}_0$ equipped with operation $\ast$ is a compact left topological semigroup. Moreover, we have the following
    isomorphism between topological semigroups 
(the semigroup on the right is equipped with the Newelski product; see Section \ref{subsection: convolution for definable groups}):
    $$(\mathcal{S}_0,\ast)\xrightarrow[f]{\cong}(S_G^{\inv}(\FC,M),\ast), $$
where formally the map $f$ above is the restriction of $f$ to $\mathcal{S}_0$.
\end{lemma}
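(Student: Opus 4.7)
The plan is to first establish that $(\mathcal{S}_0,\ast)$ is a compact left topological semigroup and that $f$ restricts to a bijection $\mathcal{S}_0 \to S^{\inv}_G(\FC,M)$, and then to verify that $f$ intertwines the two products by comparing explicit formulas. Closure of $\mathcal{S}_0$ under $\ast$ follows by combining Corollary~\ref{corollary: closedness under *} (preservation of $\bar M$-invariance) with Remark~\ref{rem: ast.preserves.pi} (preservation of concentration on $\pi(\bar n;\bar y)$, using that $H=G_{\pi,\bar{\FC}}$ is a subgroup). Associativity is Proposition~\ref{prop: associative for types}, left continuity is Proposition~\ref{prop:cont_types}, and compactness is immediate since $\mathcal{S}_0$ is a closed subset of the compact space $S_{\bar n}(\bar{\FC})$. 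The map $f$ is already a homeomorphism $\widetilde{H}_{\bar{\FC},\bar n}\to S_G(\FC)$ by Remark~\ref{rem:aff.f.homeo}; to see that $f$ restricts to a bijection onto $S^{\inv}_G(\FC,M)$, apply Corollary~\ref{corollary: preservation for infinite tuples} to the Dirac measure $\delta_p$, noting that $f_\ast(\delta_p)=\delta_{f(p)}$ and that a type is invariant if and only if its Dirac measure is.

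The main step is to verify $f(p\ast q)=f(p)\ast f(q)$ for $p,q\in\mathcal{S}_0$. Using Proposition~\ref{prop:formula.for.star.on.types}, write $q=\tp(\bar\tau(\bar n)/\bar{\FC})$ with $\bar\tau=(g_q,\tau_0)\in H\leq \aut(\bar{\FC}')$ (so $g_q\in G(\FC')$ and $\tau_0\in\aut(\FC'/M)$), and $p|_{\bar{\FC}'}=\tp(\bar\sigma(\bar n)/\bar{\FC}')$ with $\bar\sigma=(g_p,\sigma_0)\in\aut(\bar{\FC}'')$; extend $\bar\tau$ to $\bar\tau''=(g_q,\tau_0'')\in\aut(\bar{\FC}'')$. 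The semidirect product formula gives $\bar\tau''\bar\sigma=(g_q\,\tau_0''(g_p),\,\tau_0''\sigma_0)$, so
\[
f(p\ast q)=\tp(\tau_0''(g_p^{-1})\cdot g_q^{-1}/\FC).
\]
On the definable-group side, $g_q^{-1}\in\FC'$ realizes $f(q)$ and $g_p^{-1}\in\FC''$ realizes $f(p)|_{\FC'}$; since $g_q^{-1}\in\FC'$, the pair $(g_p^{-1},g_q^{-1})$ is a Morley realization of $f(p)\otimes f(q)$, and hence $f(p)\ast f(q)=\tp(g_p^{-1}\cdot g_q^{-1}/\FC)$. The equality of these two types reduces to the claim $\tau_0''(g_p^{-1})\equiv_{\FC\,g_q^{-1}}g_p^{-1}$.

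The hard part is precisely this last claim. The difficulty is that $\tau_0''$ extends $\tau_0\in\aut(\FC'/M)$, which fixes only $M$, not $\FC$, so $\tau_0''(g_p^{-1})$ is not a priori equivalent to $g_p^{-1}$ over $\FC$. The resolution uses $M$-invariance of $f(p)$: the $M$-invariant extension $f(p)|_{\FC'}$ is fixed by every $M$-fixing automorphism of $\FC'$, so in particular $\tau_0(f(p)|_{\FC'})=f(p)|_{\FC'}$. Applying $\tau_0''$ to $g_p^{-1}\models f(p)|_{\FC'}$ and using $\tau_0''|_{\FC'}=\tau_0$ yields $\tau_0''(g_p^{-1})\models f(p)|_{\FC'}$; since $g_q^{-1}\in\FC'$, this gives the required equivalence over $\FC\,g_q^{-1}$, which closes the argument.
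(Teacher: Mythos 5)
Your proof is correct and follows essentially the same route as the paper's: both reduce to Proposition \ref{prop:formula.for.star.on.types} plus the semidirect-product description of $\aut(\bar{\FC})$, and both use invariance over the small model to absorb the $\aut(\FC'/M)$-part of the translating automorphism. The only cosmetic difference is that the paper factors $(g,\tau)=(g,\id)(1,\tau)$ and kills $(1,\tau)$ acting on the $\bar M$-invariant extension $\hat p$ \emph{before} computing the translate, whereas you compute the full semidirect product $(g_q,\tau_0'')(g_p,\sigma_0)$ first and then kill $\tau_0''$ acting on $g_p^{-1}$ via $M$-invariance of $f(p)|_{\FC'}$ --- the same idea applied on the other side of $f$.
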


\begin{proof}
    By Remark \ref{rem: ast.preserves.pi} and Propositions  \ref{prop:cont_types} and \ref{prop: associative for types}, we know that $(\mathcal{S}_0,\ast)$ is a left topological semigroup.
    By Corollary \ref{corollary: preservation for infinite tuples}, we have that $f$ (restricted to $\mathcal{S}_0$) is a homeomorphism between spaces $\mathcal{S}_0$ and
    $S_G^{\inv}(\FC,M)$. So we only need to show that $f(p\ast q)=f(p)\ast f(q)$ for $p,q\in\mathcal{S}_0$.

Let $\bar{\FC}\preceq\bar{\FC}'\preceq\bar{\FC}''$ be a bigger and bigger monster model sequence, and
let $\hat{p} \in S(\bar{\FC}')$ be the unique $\bar{M}$-invariant extension of $p$. 
There is $h\in G(\FC'')$ such that $\hat{p}=\tp(\bar{m},(g_\alpha h^{-1}\cdot s_0)_\alpha\,/\bar{\FC}')$. There is also $\bar{\tau}=(g,\tau)\in\aut(\FC')$ such that $\bar{\tau}(\bar{n})\models q$ and so $q=\tp(\bar{m}, (g_\alpha g^{-1}\cdot s_0)_\alpha\,/\bar{\FC})$.
Since $\aut(\bar{\FC'})\ni (g,\id_{\FC}')\subseteq(g,\id_{\FC''}) \in \aut(\bar{\FC}'')$ and $(1,\tau)\in\aut(\bar{\FC}'/\bar{M})$, we get
$$\bar{\tau}(\hat{p})=(g,\id_{\FC'})(1,\tau)\hat{p}=(g,\id_{\FC'})\hat{p}=\tp(\bar{m},(g_\alpha h^{-1}g^{-1}\cdot s_0)_\alpha\,/\bar{\FC}').$$
Thus, using Proposition \ref{prop:formula.for.star.on.types}, we obtain
$$p\ast q=\bar{\tau}(\hat{p})|_{\bar{\FC}}=\tp(\bar{m},(g_\alpha h^{-1}g^{-1}\cdot s_0)_\alpha\,/\bar{\FC})\mapsto f(p\ast q)=\tp(h^{-1}g^{-1}/\FC).$$
On the other hand, as  $\hat{p}$ is $\bar{M}$-invariant,
by Corollary  \ref{corollary: preservation for infinite tuples},
we have that $f^{\bar{\FC}'}(\hat{p})=\tp(h^{-1}/\FC')$ is the unique $M$-invariant extension of $f(p)$, where $f^{\bar{\FC}'}$ is the counterpart of the map $f$ for the bigger monster model $\bar{\FC}'$.
Therefore, $h^{-1}\models f(p)|_{\FC g^{-1}}$. Since $q=\tp(\bar{m}, (g_\alpha g^{-1}\cdot s_0)_\alpha\,/\bar{\FC})$, we also have that  $g^{-1}\models f(q)$. Using the last two observations together with the above formula $f(p*q)=\tp(h^{-1}g^{-1}/\FC)$, by the definition of the Newelski product, we obtain
\begin{equation*}f(p)\ast f(q)=\tp(h^{-1}g^{-1}/\FC)=f(p\ast q). \qedhere
\end{equation*}
\end{proof}

We now show the isomorphism theorem for measures under the NIP hypothesis. We first need the following lemma. 

\begin{lemma}\label{lemma: affine isomorphism for measures}
Let $\mu,\nu\in\mathcal{S}$, and assume that $\mu$ be Borel-definable over $\bar{M}$ and there is $\mu'\in\mathfrak{M}^{\inv}_{\bar{n}}(\bar{\FC}',\bar{M})$,
    for a bigger monster model $\bar{\FC}'\succeq\bar{\FC}$,
    such that
    $\mu'|_{\bar{\FC}}=\mu$ and $\supp(\mu')\subseteq S_{\bar{n}}^{\inv}(\bar{\FC}',\bar{M})$ (i.e. $\mu'$ is invariantly supported over $\bar{M}$).
    Then
    $$f_\ast(\mu\ast\nu)=f_\ast(\mu)\ast f_\ast(\nu),$$
where the $*$-product on the right is the convolution product recalled in Definition \ref{definition: convolutionfor definable groups}.
\end{lemma}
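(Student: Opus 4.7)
The plan is to reduce the claimed identity to the case of Dirac measures on invariant types, where Lemma \ref{lemma: affine star transfer for types} applies, and then promote to general measures using bi-affineness together with the invariantly supported hypothesis on $\mu'$.

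First, I would unfold both sides formula by formula. Fix an $\CL^{\textrm{aff}}$-formula $\varphi(t;\bar c)$ over $\bar{\FC}$ in the group variable $t$. By Lemma \ref{lemma: canonical form of formulas} and the definition of the pushforward, $f_*(\mu*\nu)(\varphi(t;\bar c)) = (\mu*\nu)(\psi(\bar c;\bar n))$ where $\psi(\bar x;\bar n) := (\exists t)(G(t) \wedge \varphi(t;\bar x) \wedge n_{s_0} = t \cdot s_0)$. Using Definition \ref{def:star.definition}, this becomes $\int G^{\psi(\bar c;\bar n)}_{\mu} d\nu$. On the other side, $(f_*(\mu)*f_*(\nu))(\varphi(t;\bar c))$ unfolds via the definable-group convolution (Definition \ref{definition: convolutionfor definable groups}) into a Morley-type integral against $f_*(\nu)$.

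Second, I would use the invariant support hypothesis to represent $\mu$ as an integrated barycenter of Dirac measures on invariant types. Since every $p \in \supp(\mu')$ satisfies $p|_{\bar{\FC}} \in \mathcal{S}_0$, the identity $\mu'(\phi) = \int_{\supp(\mu')}\delta_p(\phi) d\mu'(p)$ restricts to $\mu(\phi) = \int_{\supp(\mu')}\delta_{p|_{\bar{\FC}}}(\phi)d\mu'(p)$ for each formula $\phi$. Combining this linear representation with the affinity statement in Proposition \ref{prop:right.affine} (via a Fubini-type exchange of the $\nu$-integral defining $\mu * \nu$ with the $\mu'$-integral), one obtains
$$(\mu*\nu)(\psi(\bar c;\bar n)) = \int_{\supp(\mu')} (\delta_{p|_{\bar{\FC}}} * \nu)(\psi(\bar c;\bar n))\, d\mu'(p).$$
By Corollary \ref{corollary: preservation for infinite tuples}, $f_*(\mu')$ is also invariantly supported, so an analogous barycenter representation for $f_*(\mu)$ together with the (standard) affinity of the definable-group convolution yields
$$(f_*(\mu)*f_*(\nu))(\varphi(t;\bar c)) = \int_{\supp(\mu')} (f(p|_{\bar{\FC}}) * f_*(\nu))(\varphi(t;\bar c))\, d\mu'(p).$$

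Third, the proof reduces to the mixed type-measure identity $f_*(p|_{\bar{\FC}} * \nu) = f(p|_{\bar{\FC}}) * f_*(\nu)$ for each $p \in \supp(\mu')$. For a fixed invariant type $r := p|_{\bar{\FC}}$ which is Borel-definable over $\bar M$, Proposition \ref{prop:Borel.preimage} rewrites $(r * \nu)(\psi(\bar c;\bar n))$ as $\nu(h_{\bar c}^{-1}[D_{r,\bar M}^{\psi}])$. I would then verify, using the explicit formula for $f$ and the action-transfer formula of Lemma \ref{lemma:action.transfer} (together with $f$-equivariance recorded in Remark \ref{rem:aff01}), that this Borel subset of $S_{\bar n}(\bar{\FC})$ corresponds under $f$ to the preimage computing $(f(r) * f_*(\nu))(\varphi(t;\bar c))$ via the definable-group convolution formula; the pointwise type case of Lemma \ref{lemma: affine star transfer for types} is what makes these two integrals agree on every Dirac $\delta_q \in \supp(\nu)$.

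The main technical obstacle is the rigorous justification of the Fubini-type interchange in the second step and, correspondingly, the identification of the two Borel sets in the third step. The measurability required for Fubini follows from the Borel-definability hypotheses ($\mu$ over $\bar M$ and each invariant $p|_{\bar{\FC}}$ over $\bar M$), ensuring that the integrand $(p, q) \mapsto G^{\psi(\bar c;\bar n)}_{\delta_p}(q)$ is jointly Borel. The subtlety in the third step — that affine-sort parameters such as $s_0$ in $\psi$ are genuinely moved by elements of $H$ and therefore must be tracked carefully when applying invariance — is precisely what Lemma \ref{lemma:action.transfer} encodes, so this is where the bulk of the bookkeeping will concentrate.
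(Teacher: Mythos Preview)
Your approach is genuinely different from the paper's and, as written, has a real gap.

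The paper does not disintegrate $\mu$ into Dirac measures at all. It computes both sides directly as integrals over $\supp(\nu)\subseteq\widetilde{H}_{\bar{\FC},\bar n}$, writing
\[
f_*(\mu*\nu)(\varphi(t;\bar b))=\int_{\widetilde{H}_{\bar{\FC},\bar n}}\mu'\big(\psi(\bar\sigma^{-1}(\bar b);y_0)\big)\,d\nu,
\]
where $\bar\sigma=(g,\sigma)$ parametrises the type being integrated over. The invariantly supported hypothesis is then used \emph{pointwise on $\supp(\mu')$}: for any $p\in\widetilde{H}_{\bar{\FC}',\bar n}\cap S^{\inv}_{\bar n}(\bar{\FC}',\bar M)$ one checks by an explicit four-line chain of equivalences that
\[
p\in[\psi(\bar\sigma^{-1}(\bar b);y_0)]\iff p\in[(\exists t)(G(t)\wedge\varphi(tg^{-1};\bar b)\wedge y_0=t\cdot s_0)],
\]
so the two formulas have the same $\mu'$-measure. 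The right-hand side is then shown, again by direct unfolding, to equal the same integral. No Fubini, no barycentric decomposition, no appeal to Lemma \ref{lemma: affine star transfer for types}.

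The problem with your route is the second step. You claim the Fubini interchange is justified because ``$\mu$ over $\bar M$ and each invariant $p|_{\bar\FC}$ over $\bar M$'' are Borel-definable, and that this makes $(p,q)\mapsto G^{\psi}_{\delta_p}(q)$ jointly Borel. Neither inference holds at the stated generality. First, the lemma does not assume NIP, so an $\bar M$-invariant type in $\supp(\mu')$ need not be Borel-definable over $\bar M$; you have no reason to expect $D^{\psi}_{r,\bar M}$ to be Borel, and Proposition \ref{prop:Borel.preimage} does not apply. Second, even if each slice were Borel, Borel-definability of each fibre map $q\mapsto G^{\psi}_{\delta_p}(q)$ separately does not yield joint Borel measurability of the two-variable map; that is a strictly stronger statement which you have not established. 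Third, your step three needs Lemma \ref{lemma: affine star transfer for types} pointwise on $\supp(\nu)$, hence $\supp(\nu)\subseteq\mathcal{S}_0$, i.e.\ $\nu$ invariantly supported --- again an NIP consequence (Fact \ref{prop:NIP}) not available here.

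The paper's direct computation sidesteps all of this: it never needs individual types in the support to be Borel-definable, only that they are $\bar M$-invariant and lie in $\widetilde{H}_{\bar{\FC}',\bar n}$, which is exactly the hypothesis.
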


\begin{proof}
Consider a clopen $[\varphi(t;\bar{b})]\subseteq S_G(\FC)$ and note that
$$f^{-1}[\varphi(t;\bar{b})]=[(\exists t)\big( G(t)\;\wedge\;\varphi(t;\bar{b})\;\wedge\;y_0=t\cdot s_0\big)]=[\psi(\bar{b};y_0)]\subseteq S_{\bar{n}}(\bar{\FC}),$$
for some $\psi(\bar{x}',\bar{y}';\bar{x},\bar{y})\in\CL_{\bar{x}',\bar{y}';\bar{x},\bar{y}}$ - without loss of generality we assume that $\bar{b}\in\FC^{\bar{x}'\bar{y}'}$ 
(recall that $\bar{x}'$ and $\bar{y}'$ are copies of variables $\bar{x}$ and $\bar{y}$, respectively, and in fact $\psi$ uses only $(\bar{x}',\bar{y}'; y_0)$ as variables (even only $(\bar{x}';y_0)$), but we need to track full tuples for the application of the definition of the $\ast$-product). 
Recall that in Section \ref{subsec:star.definitions} we defined the map $h_{\bar{b}}:S_{\bar{n}}(\bar{\FC})\to S_{\bar{x}',\bar{y}'}(\bar{M})$ as follows:
$$h_{\bar{b}}:\tp(\bar{\sigma}(\bar{n})/\bar{\FC})\mapsto\tp(\bar{\sigma}^{-1}(\bar{b})/\bar{M}),$$
where $\bar{\sigma}=(g,\sigma)\in\aut(\bar{\FC}')$.
Note that by Lemma \ref{lemma:def.Borel.def}(7), $\mathcal{S}$ is closed under $\ast$, so it makes sense to compute $f_\ast(\mu\ast\nu)$.
Because $\supp(\nu)\subseteq \widetilde{H}_{\bar{\FC},\bar{n}}$, we can compute
\begin{align*}
f_\ast(\mu\ast\nu)(\varphi(t;\bar{b})) &= (\mu\ast\nu)(\psi(\bar{b};y_0)) \\
&= \big(\mu\otimes (h_{\bar{b}})_{\ast}(\nu)\big)\big( \psi(\bar{x}',\bar{y}';\bar{x},\bar{y})\big) \\
&=  \int_{S_{\bar{n}}(\bar{\FC})}\Big( F^{\psi^{\textrm{opp}}(y_0;\bar{x}',\bar{y}')}_{\mu}\circ h_{\bar{b}} \Big)d\nu \\
&= \int_{\substack{\tp(\bar{\sigma}(\bar{n})/\bar{\FC})\\ \in \widetilde{H}_{\bar{\FC},\bar{n}}}} \mu'\big( \psi(\bar{\sigma}^{-1}(\bar{b});y_0) \big)d\nu =:(\spadesuit),
\end{align*}
where $\mu'\in\mathfrak{M}_{\bar{n}}(\bar{\FC}')$ is the $\bar{M}$-invariant extension of $\mu$ given in the assumptions,
and where (on the last line) $\bar{\sigma}=(g,\sigma)\in\aut(\bar{\FC}')$ is used in listing all the types from the space $\widetilde{H}_{\bar{\FC},\bar{n}}$. 
Note that $\bar{\sigma}(\bar{n})=(\bar{m},(g_\alpha g^{-1}\cdot s_0)_\alpha)$,
so $\sigma\in\aut(\FC'/M)$.
Consider any $p=\tp(\bar{m},(g_\alpha h^{-1}\cdot s_0)_\alpha\,/\bar{\FC}')\in \widetilde{H}_{\bar{\FC}',\bar{n}}\cap S^{\inv}_{\bar{n}}(\bar{\FC}',\bar{M})$, where $h\in G(\FC'')$. 
Then we have the following sequence of equivalences, the fourth of which follows by the regularity of the action on the affine sort:
\begin{align*}
p\in [\psi(\bar{\sigma}^{-1}(\bar{b});y_0)] &\iff \psi(\bar{b};y_0)\in \bar{\sigma}p=(g,\id_{\FC'})p \\
&\iff(\exists t)(G(t)\wedge\varphi(t;\bar{b})\wedge y_0=t\cdot s_0) \in \tp(\bar{m}, (g_\alpha h^{-1}g^{-1}\cdot s_0)_{\alpha}\,/\bar{\FC}) \\
&\iff \models (\exists t)(G(t)\wedge\varphi(t;\bar{b})\wedge h^{-1}g^{-1}\cdot s_0=t\cdot s_0) \\
&\iff \models (\exists t)(G(t)\wedge\varphi(tg^{-1};\bar{b})\wedge h^{-1}\cdot s_0=t\cdot s_0) \\
&\iff p\in\big[(\exists t)(G(t)\wedge\varphi(tg^{-1};\bar{b})\wedge y_0=t\cdot s_0)\big].
\end{align*}
We see that 
\begin{align*}
\mu'\Big( [\psi(\bar{\sigma}^{-1}(\bar{b});y_0)] \cap \widetilde{H}_{\bar{\FC}',\bar{n}}\cap S^{\inv}_{\bar{n}}(\bar{\FC}',\bar{M}) \Big)  &=  \\
 \mu'\Big( \big[(\exists t)(G(t)\wedge\varphi(tg^{-1};\bar{b})\wedge y_0=t\cdot s_0)\big] \cap \widetilde{H}_{\bar{\FC}',\bar{n}}\cap S^{\inv}_{\bar{n}}(\bar{\FC}',\bar{M})  \Big). & &
\end{align*}
Recall that $\mu'$ is invariantly supported over $\bar{M}$, i.e. $\supp(\mu')\subseteq \widetilde{H}_{\bar{\FC}',\bar{n}}\cap S^{\inv}_{\bar{n}}(\bar{\FC}',\bar{M})$.
Thus,
$$(\spadesuit)= \int_{\substack{\tp(\bar{m},(g_\alpha g^{-1}\cdot s_0)_\alpha\,/\bar{\FC}) \\
\in \widetilde{H}_{\bar{\FC};\bar{n}}  }}\; \mu'\big( (\exists t)(G(t)\wedge\varphi(tg^{-1};\bar{b})\wedge y_0=t\cdot s_0) \big)d\nu.$$

Now, we will compute the other side of the desired equality. By Corollary \ref{corollary: preservation for infinite tuples} and the assumption that $\mu$ is Borel-definable over $\bar{M}$, we have that  $f_\ast(\mu)$ is Bore-definable over $M$. Thus, $f_\ast(\mu)\ast f_\ast(\nu)$ is well-defined. Using Corollary \ref{corollary: preservation for infinite tuples} again (but this time for $\FC'$ in place of $\FC$), we get that  $(f^{\bar{\FC}'})_{\ast}(\mu')$ is the unique $M$-invariant extension of $f_\ast(\mu)$,  where $f^{\bar{\FC}'}$ is the counterpart of the map $f$ for the bigger monster model $\bar{\FC}'$.
Using the fact that $\supp(\nu)\subseteq \widetilde{H}_{\bar{\FC},\bar{n}}$, we get:

\begin{align*}
\big((f_\ast(\mu))\ast(f_\ast(\nu))\big)(\varphi(t;\bar{b})) &= \big( (f_\ast(\mu))_{t_2}\otimes (f_\ast(\nu))_{t_1}\big)(\varphi(t_2\cdot t_1;\bar{b})) \\
&= \int_{\tp(d/\FC)\in S_G(\FC)} ((f^{\bar{\FC}'})_{\ast}(\mu'))(\varphi(t\cdot d;\bar{b}))d f_\ast(\nu) \\
&= \int_{\substack{\tp(\bar{m},(g_\alpha g^{-1}\cdot s_0)_\alpha\,/\bar{\FC}) \\
\in \widetilde{H}_{\bar{\FC};\bar{n}}  }} ((f^{\bar{\FC}'})_{\ast}(\mu'))\big(\varphi(t\cdot g^{-1};\bar{b}) \big)d\nu \\
&= \int_{\substack{\tp(\bar{m},(g_\alpha g^{-1}\cdot s_0)_\alpha\,/\bar{\FC}) \\
\in \widetilde{H}_{\bar{\FC};\bar{n}}  }} \;\mu'\big((f^{\FC'})^{-1}[\varphi(t\cdot g^{-1};\bar{b})] \big)d\nu \\
&= (\spadesuit)
\end{align*}

We have proved that $f_\ast(\mu\ast\nu)(\varphi(t;\bar{b}))= \big(f_\ast(\mu)\ast f_\ast(\nu)\big)(\varphi(t;\bar{b}))$.
\end{proof}

\begin{theorem}\label{thm: affine isomorhpism for measures}
(NIP)
    The space $\mathcal{S}$ equipped with operation $\ast$ is a compact left topological semigroup,
    and $\mathcal{S}_0$ is its closed sub-semigroup.
    Moreover, the horizontal arrows in the following diagram are 
isomorphisms between topological semigroups 
(the semigroups on the right are equipped the Newelski product and definable convolution recalled in Defintion \ref{definition: convolutionfor definable groups}):
    $$\xymatrix{(\mathcal{S},\ast) \ar[r]^-{f_\ast} & (\mathfrak{M}_G^{\inv}(\FC,M),\ast) \\
    (\mathcal{S}_0,\ast) \ar[r]_-{f} \ar[u]_{\subseteq}& (S_G^{\inv}(\FC,M),\ast) \ar[u]_{\subseteq}\, , }$$
where formally the above $f$ and $f_*$ are the restrictions $f|_{\mathcal{S}_0}$ and $f_*|_\mathcal{S}$, respectively.
\end{theorem}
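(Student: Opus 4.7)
The plan is to assemble the preservation results from the prior lemmas into a unified statement and to deduce the remaining structural properties by transport along $f_*$ from the already-known semigroup structure on $(\mathfrak{M}_G^{\inv}(\FC,M),*)$ (Fact \ref{fact: definable convolution is a semigroup}). First I would verify that $\mathcal{S}$ is closed under $*$ and that $f_*|_{\mathcal{S}}$ is a well-defined homeomorphism onto $\mathfrak{M}_G^{\inv}(\FC,M)$. Closure of $\mathcal{S}$ under $*$ follows from parts (3) and (7) of Lemma \ref{lemma:def.Borel.def} applied with the partial type $\pi^{\textrm{opp}}$ (whose associated group $G_{\pi^{\textrm{opp}},\bar{\FC}}$ equals $H$, and whose left factor is Borel-definable by Fact \ref{fact:MP}(1) under NIP). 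Since $f$ is a homeomorphism by Remark \ref{rem:aff.f.homeo}, $f_*$ is a homeomorphism between $\mathfrak{M}_{\pi,\bar{n}}(\bar{\FC})$ and $\mathfrak{M}_G(\FC)$, and Corollary \ref{corollary: preservation for infinite tuples} ensures that it restricts to a bijection $\mathcal{S} \to \mathfrak{M}_G^{\inv}(\FC,M)$.

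Next I would show that $f_*$ is a semigroup homomorphism on $\mathcal{S}$ by invoking Lemma \ref{lemma: affine isomorphism for measures}. Its two hypotheses---Borel-definability of $\mu$ and the existence of an invariantly supported extension $\mu' \in \mathfrak{M}^{\inv}_{\bar n}(\bar{\FC}',\bar M)$ of $\mu$---both hold automatically under NIP: Borel-definability is Fact \ref{fact:MP}(1), and any $\bar{M}$-invariant extension $\mu'$ of $\mu$ (which exists by saturation of $\bar{\FC}$, using that $\bar{M}$-invariance determines $\mu$ on instances up to $\bar{M}$-type) is invariantly supported by Fact \ref{prop:NIP}. Hence $f_*(\mu*\nu)=f_*(\mu)*f_*(\nu)$ for all $\mu,\nu \in \mathcal{S}$.

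With $f_*$ a continuous bijective semigroup homomorphism onto $(\mathfrak{M}_G^{\inv}(\FC,M),*)$, associativity of $*$ on $\mathcal{S}$ is inherited by transport from the right-hand side; combined with left continuity (Proposition \ref{prop:conv_left_cont}) and compactness of $\mathcal{S}$ (closed in the compact space $\mathfrak{M}_{\bar n}(\bar{\FC})$), this yields the compact left topological semigroup structure on $(\mathcal{S},*)$ and makes $f_*$ an isomorphism of such semigroups. For the sub-semigroup case, $\mathcal{S}_0$ sits inside $\mathcal{S}$ as the closed subset of Dirac masses; the $*$-product on types from Section \ref{subsubsection: * on types} agrees with the restriction of the measure $*$-product under this identification (since $\delta_p*\delta_q=\delta_{p*q}$ for Borel-definable $p$, which holds under NIP by Fact \ref{fact:MP}(1)), so $\mathcal{S}_0$ is a closed sub-semigroup and the isomorphism $f\colon \mathcal{S}_0 \to S_G^{\inv}(\FC,M)$ of Lemma \ref{lemma: affine star transfer for types} is precisely the restriction of $f_*$, giving the commutative square in the statement. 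The main technical point is verifying the invariantly supported extension hypothesis in Lemma \ref{lemma: affine isomorphism for measures}; once that is in hand, the rest is routine transport of structure and careful bookkeeping between the two convolution operations.
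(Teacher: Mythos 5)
Your proposal is correct and follows essentially the same route as the paper: closure of $\mathcal{S}$ under $*$ via Lemma \ref{lemma:def.Borel.def}(7), verification of the invariantly-supported-extension hypothesis of Lemma \ref{lemma: affine isomorphism for measures} via NIP and Fact \ref{prop:NIP}, the homeomorphism via Remark \ref{rem:aff.f.homeo} and Corollary \ref{corollary: preservation for infinite tuples}, and transport of the compact left topological semigroup structure from Fact \ref{fact: definable convolution is a semigroup}. The only difference is that you supply somewhat more explicit bookkeeping for the $\mathcal{S}_0$ square (which the paper leaves to Lemma \ref{lemma: affine star transfer for types}).
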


\begin{proof}
By NIP and Lemma \ref{lemma:def.Borel.def}(7), we know that 
$\mathcal{S}$ is closed under $\ast$.
By NIP, the unique $\bar{M}$-invariant extension of $\mu$ to each bigger monster model $\bar{\FC}'\succeq\bar{\FC}$ is invariantly supported over $\bar{M}$ (by Proposition \ref{prop:NIP}), so we can apply Lemma \ref{lemma: affine isomorphism for measures} to get that $f_\ast \colon (\mathcal{S},*) \to (\mathfrak{M}_G^{\inv}(\FC,M),*)$
is a homomorphism.
By Remark \ref{rem:aff.f.homeo} and Corollary \ref{corollary: preservation for infinite tuples}, we know that $f_\ast$ is a homeomorphism.
Hence, as $(\mathfrak{M}_G^{\inv}(\FC,M),*)$ is known to be a compact left topological semigroup (see Fact \ref{fact: definable convolution is a semigroup}), so is $(\mathcal{S},*)$.
\end{proof}

\begin{cor} (NIP)
    Assume that $M$ is strongly $\aleph_0$-homogeneous.
    For every $\mu\in\mathfrak{M}_{\pi(\bar{x},\bar{y};\bar{n})}(\bar{\FC})$, $\mu$ is $\pi$-strongly finitely satisfiable in $\bar{M}$ if and only if $f_\ast(\mu)$ is finitely satisfiable in $M$ 
(which in turn is equivalent to  $\mu$ being finitely satisfiable in $\bar{M}$ by Corollary \ref{corollary: preservation for infinite tuples}). 
    Moreover, we have
    $$(\mathfrak{M}^{\fs}_G(\FC,M),\,\ast)\xrightarrow[\mu\mapsto f^{-1}_\ast(\mu)]{\cong}(\mathfrak{M}^{\sfs}_{\pi(\bar{x},\bar{y};\bar{n})}(\bar{\FC},\bar{M}),\,\ast)\leqslant(\mathfrak{M}^{\sfs}_{\bar{n}}(\bar{\FC},\bar{M}),\,\ast),$$
    where on the left we have definable convolution, and \emph{convolution in theories} in the middle and on the right.
\end{cor}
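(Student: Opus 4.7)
The first step is to upgrade Corollary \ref{corollary: preservation for infinite tuples} from the four preservation properties it already covers to the sfs/fs pair. Since both properties admit a support-based characterization (Fact \ref{fact: support of sfs measures} for the sfs case on the left, and the standard support description of finite satisfiability on the right), and since $\supp(f_\ast(\mu))=f_\ast[\supp(\mu)]$ by Remark \ref{remark: improved 3.26(4)}, the problem reduces to showing that the homeomorphism $f$ sends $\widetilde H_{\bar\FC,\bar n}\cap S^{\sfs}_{\pi(\bar x,\bar y;\bar n)}(\bar\FC,\bar M)$ bijectively onto $S^{\fs}_G(\FC,M)$. Fix $p=\tp(\bar m,(g_\alpha g^{-1}\cdot s_0)_\alpha/\bar\FC)\in\widetilde H_{\bar\FC,\bar n}$, so $f(p)=\tp(g^{-1}/\FC)$. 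Using Lemma \ref{lemma: canonical form of formulas}, every $\mathcal{L}^{\textrm{aff}}$-formula instance $\psi(\bar x,\bar y;\bar c)\in p$ converts to a unique (up to equivalence) $\mathcal{L}$-formula $\varphi(t;\bar c')\in\tp(g^{-1}/\FC)$, and conversely; moreover the tuples $(\bar a,\bar b)\in\bar M^{\bar x\bar y}$ realising $\pi(\bar a,\bar b;\bar n)$ are exactly those of the form $(\bar m,(g_\alpha h^{-1}\cdot s_0)_\alpha)$ with $h\in G(M)$. The sfs condition for $p$ thus says: for every $\varphi(t;\bar c')\in f(p)$ there is $h\in G(M)$ making $\varphi(h^{-1};\bar c')$ hold. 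This is precisely the fs condition for $f(p)$, so both directions follow.

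Having the type-level equivalence, I would extend it to measures by noting that $\mu\in\mathfrak{M}_{\pi,\bar n}(\bar\FC)$ is $\pi$-sfs iff $\supp(\mu)\subseteq\widetilde H_{\bar\FC,\bar n}\cap S^{\sfs}(\bar\FC,\bar M)$, and $f_\ast(\mu)$ is fs iff $\supp(f_\ast(\mu))\subseteq S^{\fs}_G(\FC,M)$. The homeomorphism $f_\ast\colon\mathfrak{M}_{\pi,\bar n}(\bar\FC)\to\mathfrak{M}_G(\FC)$ therefore restricts to a homeomorphism
$$f_\ast\colon\mathfrak{M}^{\sfs}_{\pi(\bar x,\bar y;\bar n)}(\bar\FC,\bar M)\longrightarrow\mathfrak{M}^{\fs}_G(\FC,M).$$
Under NIP, finite satisfiability implies $M$-invariance and Borel-definability (Fact \ref{fact:MP}), so both sides lie in the invariantly-supported parts considered in Theorem \ref{thm: affine isomorhpism for measures}. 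That theorem already establishes that $f_\ast$ is a semigroup homomorphism on $\mathcal{S}$, so its restriction to the sfs/fs subspaces is also a semigroup isomorphism. The fact that the convolutions close inside these subspaces comes from Lemma \ref{lemma:def.Borel.def}(6) on the $\pi$-sfs side and Fact \ref{fact: definable convolution is a semigroup} on the definable-group side.

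The final inclusion $\mathfrak{M}^{\sfs}_{\pi(\bar x,\bar y;\bar n)}(\bar\FC,\bar M)\leqslant\mathfrak{M}^{\sfs}_{\bar n}(\bar\FC,\bar M)$ is just an application of Theorem \ref{thm: mother star space} with $\pi_1$ taken to be the trivial type ``$\bar x\bar y\equiv_\emptyset\bar x'\bar y'$'' and $\pi_2$ the present $\pi$; group-likeness of both over $\bar M$ follows from strong $\aleph_0$-homogeneity of $\bar M$ (which is inherited from that of $M$ together with the regular $G(M)$-action on $S$).

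The main obstacle I expect is the careful verification at the type level: one must translate between $\mathcal{L}^{\textrm{aff}}$-formulas on $(\bar x,\bar y)$ and $\mathcal{L}$-formulas on $t$ via the normal form of Lemma \ref{lemma: canonical form of formulas}, and match the two notions of ``witness in the small model'' under this translation. Strong $\aleph_0$-homogeneity of $\bar M$ enters exactly here to guarantee that every tuple $(\bar m,\bar b)\in\bar M^{\bar x\bar y}$ satisfying $\pi(\bar m,\bar b;\bar m,\bar s)$ really is of the form $(\bar m,(g_\alpha h^{-1}\cdot s_0)_\alpha)$ with $h\in G(M)$ (on each finite subtuple), rather than something more exotic; once this is cleanly written, the rest of the proof is bookkeeping against the results already established.
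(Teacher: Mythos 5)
Your proof is correct, and it reaches the same key step as the paper's — producing the witness $(\bar m,(g_\alpha h\cdot s_0)_\alpha)\in\bar M^{\bar x\bar y}$ from an element $h\in G(M)$ supplied by finite satisfiability of $f_\ast(\mu)$ — but you factor the argument differently. The paper proves the identity $\mathfrak{M}_{\pi,\bar n}(\bar\FC)\cap\mathfrak{M}^{\fs}(\bar\FC,\bar M)=\mathfrak{M}^{\sfs}_{\pi,\bar n}(\bar\FC,\bar M)$ directly at the measure level: it takes a formula of positive $\mu$-measure, uses the fact that $f$ is a homeomorphism to write $[\psi(\bar x,\bar y;\bar b)]\cap[\pi(\bar x,\bar y;\bar n)]=f^{-1}[\varphi(t;\bar c)]$ for a clopen of $S_G(\FC)$, and extracts the witness from $f_\ast(\mu)(\varphi(t;\bar c))>0$. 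You instead prove the type-level statement $f[\widetilde H_{\bar\FC,\bar n}\cap S^{\sfs}_{\pi}(\bar\FC,\bar M)]=S^{\fs}_G(\FC,M)$ and lift to measures via the support characterizations; this is a clean alternative and costs only the (standard, but not stated in the paper) fact that a measure is finitely satisfiable in $M$ iff every type in its support is. Two small corrections to your write-up. First, the formula translation you need is not really Lemma \ref{lemma: canonical form of formulas} (which normalizes formulas over $\bar M$ with a specific variable configuration) but simply the fact that the homeomorphism $f$ matches clopens of $\widetilde H_{\bar\FC,\bar n}$ with clopens of $S_G(\FC)$ — this is exactly what the paper invokes. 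Second, your closing paragraph misplaces the role of strong $\aleph_0$-homogeneity: identifying the $\bar M$-realizations of $\pi(\bar x,\bar y;\bar n)$ as the tuples $(\bar m,(g_\alpha h\cdot s_0)_\alpha)$ with $h\in G(M)$ needs only the constraint $x_\alpha=x_\alpha'$ together with regularity of the action (your own automorphism computation shows this); the homogeneity hypothesis is needed, as you correctly note earlier, to make $\pi$ group-like over $\bar M$ so that Theorem \ref{thm: mother star space} applies and the sfs spaces carry the $\ast$-semigroup structure appearing in the displayed chain.
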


\begin{proof}
   By Theorem \ref{thm: mother star space}, we have that $(\mathfrak{M}^{\sfs}_{\pi(\bar{x},\bar{y};\bar{n})}(\bar{\FC},\bar{M}),\,\ast)\leqslant(\mathfrak{M}^{\sfs}_{\bar{n}}(\bar{\FC},\bar{M}),\,\ast)$.
    By Corollary \ref{corollary: preservation for infinite tuples}
    and Theorem \ref{thm: affine isomorhpism for measures},
    the map ``$ \mu\mapsto f^{-1}_\ast(\mu)$'' is an isomorphism between
    $(\mathfrak{M}^{\fs}_G(\FC,M),\,\ast)$ and 
    $(\mathfrak{M}_{\pi,\bar{n}}(\bar{\FC})\cap\mathfrak{M}^{\fs}(\bar{\FC},\bar{M}),\ast)$.
    We show that
    $$\mathfrak{M}_{\pi,\bar{n}}(\bar{\FC})\cap\mathfrak{M}^{\fs}(\bar{\FC},\bar{M})=\mathfrak{M}^{\sfs}_{\pi,\bar{n}}(\bar{\FC},\bar{M}).$$
    The inclusion $\supseteq$ follows by definition.
    Now, take $\mu\in \mathfrak{M}_{\pi,\bar{n}}(\bar{\FC})\cap\mathfrak{M}^{\fs}(\bar{\FC},\bar{M})$ and
    $[\psi(\bar{x},\bar{y};\bar{b})]\subseteq S_{\pi,\bar{n}}(\bar{\FC})$ such that $0<\mu(\psi(\bar{x},\bar{y};\bar{b}))$.
    There is a clopen $[\varphi(t;\bar{c})]\subseteq S_G(\FC)$ such that $f^{-1}[\varphi(t;\bar{c})]=[\psi(\bar{x},\bar{y};\bar{b})]\cap[\pi(\bar{x},\bar{y};\bar{n})]$. Hence,
    $$0<\mu(\psi(\bar{x},\bar{y};\bar{b}))=f_\ast(\mu)(\varphi(t;\bar{c})),$$
    and because $f_\ast(\mu)$ is finitely satisfiable in $M$, there exists $g\in G(M)$ such that $\models \varphi(g;\bar{c})$.
    We have $\models (\exists t)(G(t)\wedge\varphi(t;\bar{c})\wedge g\cdot s_0=t\cdot s_0)$ and so
    $$\tp(\bar{m},(g_\alpha g\cdot s_0)_\alpha\,/\bar{\FC})\in f^{-1}[\varphi(t;\bar{c})]=[\psi(\bar{x},\bar{y};\bar{b})]\cap[\pi(\bar{x},\bar{y};\bar{n})],$$
    and finally $\mu\in \mathfrak{M}^{\sfs}_{\pi,\bar{n}}(\bar{\FC},\bar{M})$.
\end{proof}

\subsection{Stabilizers}
Finally, we make some connections regarding the stabilizer of a measure and the stabilizer of its pushforward along the map $f$.
In Section \ref{sec:fim idempotent}, Proposition \ref{proposition:unique.invariant.transfer} (proved below) will allow us to deduce Conjecture \ref{conjecture: main conjecture for definable groups} from our main Conjecture \ref{conjecture: main conjecture} (in particular, we have the same deduction in the situations in which we prove Conjecture  \ref{conjecture: main conjecture}).
Throughout this section, injectivity of $f_*$ is used many times without mention.

Take $\mu\in\mathfrak{M}_{\pi,\bar{n}}^{\inv}(\bar{\FC},\bar{M}):=\mathfrak{M}_{\pi,\bar{n}}(\bar{\FC})\cap\mathfrak{M}^{\inv}_{\bar{n}}(\bar{\FC},\bar{M})$ and
consider $K:=\stab(\mu)\leqslant\aut(\bar{\FC})$.
By Lemma \ref{lemma: rel.inv}, $K$ is relatively $\bar n$-invariant over $\bar{M}$,
i.e. $K=\{\bar{\tau} \in \aut(\bar{\FC}): \models \rho(\bar{\tau}(\bar n),\bar n)\}$,
where $\rho(\bar{x}',\bar{y}';\bar{x},\bar{y})$ is a disjunction of (possibly infinitely many)
complete types over $\emptyset$. Put $\Theta(\bar{x}',\bar{y}';\bar{x},\bar{y})=[\rho(\bar{x}',\bar{y}';\bar{x},\bar{y})]\subseteq S(\emptyset)$.
We set
$$\widetilde{K}_{\bar{\FC},\bar n}:=\{q(\bar{x},\bar{y})\in S_{\bar{n}}(\bar{\FC})\;\colon\; q(\bar{x},\bar{y}) \in\Theta(\bar{x},\bar{y};\bar{n}) \}.$$
Note that also $K \leqslant H$ (by Lemma \ref{lemma:stab.mu.subgroup.pi}).

Moreover, $R_0:=\{g \in G(\FC)\;\colon\; f_\ast(\mu)\cdot g=f_\ast(\mu)\}$ is an $M$-invariant subgroup of $G(\FC)$. 
This follows by Remark \ref{rem:aff01}: for every $\sigma\in\aut(\FC/M)$ and $g\in R_0$ we have
$$(f_\ast(\mu))\cdot \sigma(g)^{-1}=f_\ast\big((\sigma(g),\id_{\FC})\mu \big)=f\big((1,\sigma)(g,\id_{\FC})(1,\sigma^{-1}) \mu \big)= f_\ast(\mu),$$
because $\mu$ is $\bar{M}$-invariant, $(1,\sigma),(1,\sigma^{-1})\in\aut(\bar{\FC}/\bar{M})$,
and the equality $(f_\ast(\mu))\cdot g^{-1}=f_\ast(\mu)$  translates into $(g,\id_{\FC})\mu=\mu$.
Therefore, there exists a disjunction of (possibly infinitely many) complete types from $S_G(M)$, denoted $R(t)$, such that $R_0=R(\FC)$.

\begin{proposition}\label{proposition:unique.invariant.transfer}
    Let $\mu\in\mathfrak{M}_{\pi,\bar{n}}(\bar{\FC})\cap \mathfrak{M}^{\inv}_{\bar{n}}(\bar{\FC},\bar{M})$ and let $K$ and $R(t)$ be as above.
    Then
    \begin{enumerate}
        \item $\widetilde{K}_{\bar{\FC},\bar n}= \big\{\tp(\bar{m},(g_\alpha g\cdot s_0)_{\alpha}\,/\bar{\FC})\;\colon\; g \in R(\FC')\big\}.$

        \item $f[\widetilde{K}_{\bar{\FC},\bar n}]= S_R(\FC)$.

        \item 
For every $\nu\in \mathfrak{M}_{\pi,\bar{n}}^{\inv}(\bar{\FC},\bar{M})$, we have that $\nu(\widetilde{K}_{\bar{\FC},\bar n})=1$ if and only if $f_\ast(\nu)(S_R(\FC))=1$.

        \item 
For every $\nu\in \mathfrak{M}_{\pi,\bar{n}}^{\inv}(\bar{\FC},\bar{M})$, we have that $\nu$ is left $K$-invariant if and only if $f_\ast(\nu)$ is right $R(\FC)$-invariant.   
 
        \item 
        The measures $\mu$ is the unique left $K$-invariant measure in
        $\mathfrak{M}^{\inv}_{\bar{n}}(\bar{\FC},\bar{M})$ concentrated on $K$ (i.e. $\mu( \widetilde{K}_{\bar{\FC},\bar n} )=1$) if and only if
        $f_{\ast}(\mu)$ is the unique right $R(\FC)$-invariant measure in $\mathfrak{M}_G^{\inv}(\FC,M)$ concentrated on $R$ (i.e. $f_\ast(\mu)(S_R(\FC))=1$).
    \end{enumerate}
\end{proposition}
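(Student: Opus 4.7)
The plan starts by using the containment $K \leq H$ (Lemma \ref{lemma:stab.mu.subgroup.pi}) together with Remark \ref{rem:aff01} to identify $K$ explicitly under the semidirect-product decomposition $\aut(\bar{\FC}) = G(\FC) \rtimes \aut(\FC)$: namely, $K = R_0 \rtimes \aut(\FC/M)$. The inclusion $\supseteq$ combines $\bar M$-invariance of $\mu$ (which gives $\{1\}\times\aut(\FC/M)\leq K$) with Remark \ref{rem:aff01}, since $f_\ast((h,\id)\mu)=f_\ast(\mu)\cdot h^{-1}=f_\ast(\mu)$ for $h\in R_0$ and $f_\ast$ is injective; $\aut(\FC/M)$-invariance of $R_0$ (needed to make the semidirect product well-defined) follows from the same computation. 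The reverse inclusion uses $K\leq H = G(\FC)\rtimes\aut(\FC/M)$: any $\bar\tau=(h,\sigma)\in K$ has $\sigma\in\aut(\FC/M)$, so $\bar\tau\mu=(h,\id)((1,\sigma)\mu)=(h,\id)\mu$ by $\bar M$-invariance, whence $h\in R_0$ by Remark \ref{rem:aff01}.

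For (1), a type in $\widetilde{K}_{\bar{\FC},\bar n}\subseteq \widetilde{H}_{\bar{\FC},\bar n}$ has the form $q=\tp(\bar m,(g_\alpha g\cdot s_0)_\alpha/\bar{\FC})$ for some $g\in G(\FC')$; by the construction of $\rho$ in Lemma \ref{lemma: rel.inv}, $q\in\widetilde{K}_{\bar{\FC},\bar n}$ iff $\tp(\bar m,(g_\alpha g\cdot s_0)_\alpha,\bar n/\emptyset)=\tp(\bar\tau(\bar n),\bar n/\emptyset)$ for some $\bar\tau\in K$. Writing $\bar\tau=(h,\sigma)$ with $\sigma\in\aut(\FC/M)$ and $h\in R_0$, one computes $\bar\tau(\bar n)=(\bar m,(g_\alpha h^{-1}\cdot s_0)_\alpha)$; any automorphism witnessing the required $\emptyset$-type equality must fix $\bar n=(\bar m,(g_\alpha\cdot s_0)_\alpha)$ (which appears identically in both tuples), reducing the condition to $g\equiv_M h^{-1}$. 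Since $R_0$ is a group, this is equivalent to $\tp(g/M)\in R$, i.e.\ $g\in R(\FC')$. Part (2) then follows immediately from the definition $f(\tp(\bar m,(g_\alpha g\cdot s_0)_\alpha/\bar{\FC}))=\tp(g^{-1}/\FC)$, using that $R_0$ is closed under inversion.

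For (3), Remark \ref{rem:aff.f.homeo} and (2) give that $f|_{\widetilde{H}_{\bar{\FC},\bar n}}$ is a homeomorphism onto $S_G(\FC)$ restricting to a bijection $\widetilde{K}_{\bar{\FC},\bar n}\to S_R(\FC)$; since $\nu$ concentrates on $\widetilde{H}_{\bar{\FC},\bar n}$ and $\supp(f_\ast\nu)=f[\supp(\nu)]$, the mass-one conditions $\nu(\widetilde{K}_{\bar{\FC},\bar n})=1$ and $f_\ast(\nu)(S_R(\FC))=1$ transfer along $f_\ast$ (read in terms of support inclusion, or equivalently via inner measure on the Borel regularization). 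For (4), the decomposition $\bar\tau=(h,\id)(1,\sigma)$ together with $\bar M$-invariance of $\nu$ gives $\bar\tau\nu=(h,\id)\nu$ for each $\bar\tau=(h,\sigma)\in K$, and Remark \ref{rem:aff01} converts $(h,\id)\nu=\nu$ into $f_\ast(\nu)\cdot h^{-1}=f_\ast(\nu)$; as $h$ ranges over $R_0=R(\FC)$, this is exactly right $R(\FC)$-invariance of $f_\ast(\nu)$, with injectivity of $f_\ast$ yielding the converse. Part (5) is then a formal consequence: $f_\ast$ bijects $\mathfrak{M}^{\inv}_{\pi,\bar n}(\bar{\FC},\bar M)$ with $\mathfrak{M}^{\inv}_G(\FC,M)$ by Corollary \ref{corollary: preservation for infinite tuples}, and by (3)--(4) it carries the set of left $K$-invariant measures concentrated on $\widetilde{K}_{\bar{\FC},\bar n}$ onto the set of right $R(\FC)$-invariant measures concentrated on $S_R(\FC)$, so uniqueness of $\mu$ transfers to uniqueness of $f_\ast(\mu)$. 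The main obstacle will be the bookkeeping in part (1), where the $\emptyset$-type calculation must carefully force $\sigma\in\aut(\FC/M)$ and then reduce to $\tp(g/M)\in R$; once this is in place, parts (2)--(5) are formal transfers along $f_\ast$ via Remark \ref{rem:aff01} and the semidirect-product decomposition.
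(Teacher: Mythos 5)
Your proposal is correct and follows essentially the same route as the paper: both identify $K=R_0\rtimes\aut(\FC/M)$ via Lemma \ref{lemma:stab.mu.subgroup.pi}, Remark \ref{rem:aff01} and injectivity of $f_\ast$, reduce the type computation in (1) to $g\equiv_M h^{-1}$ for $h\in R_0$ using that any witnessing automorphism fixes $\bar n$ and hence lies in $\{1\}\times\aut(\FC''/M)$, and then obtain (2)--(5) as formal transfers along $f$ and $f_\ast$. The only cosmetic difference is that you isolate the semidirect-product description of $K$ up front, whereas the paper extracts it inside the proofs of (1) and (4); the content is identical.
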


\begin{proof}
(1) First, we prove $\subseteq$.    Let $q(\bar{x},\bar{y})\in\widetilde{K}_{\bar{\FC},\bar{n}}$ and let $\bar{\sigma}\in\aut(\bar{\FC}')$ be such that $\bar{\sigma}(\bar{n})\models q$.
    Since $K$ is relatively $\bar{n}$-invariant over $\bar{M}$, there exists $\bar{\tau}=(g,\tau)\in K$ such that $q(\bar{x},\bar{y})\in[\tp(\bar{\tau}(\bar{n})/\bar{n})]$.
    By Remark \ref{rem:aff01}, we have 
$\bar{\tau} \cdot \mu=\mu$ if and only if $(f_\ast(\mu))\cdot g^{-1}=f_\ast(\mu)$. Therefore, $\bar{\tau}\in R(\FC)\rtimes\aut(\FC/M)$.
    We see that
    $$\bar{\sigma}(\bar{n})\equiv_{\bar{n}}\bar{\tau}(\bar{n})=\bar{m}(g_\alpha g^{-1}\cdot s_0)_\alpha.$$
    There exists some $\bar{\zeta}=(h,\zeta)\in\aut(\bar{\FC}'/\bar{n})$ (in particular $\zeta\in\aut(\FC'/M)$ and $h=1$) such that
    $$\bar{\sigma}(\bar{n})=\bar{\zeta}\big( \bar{m}(g_\alpha g^{-1}\cdot s_0)_\alpha \big)=\bar{m}(g_\alpha \zeta(g^{-1})\cdot s_0)_\alpha,$$
    and we obtain that $q(\bar{x},\bar{y})=\tp\big(\bar{m}(g_\alpha \zeta(g^{-1})\cdot s_0)_\alpha/\bar{\FC}\big)$.
    Note that since $g^{-1}\in R(\FC)$, $R(t)$ is $M$-invariant and $\zeta\in\aut(\FC'/M)$, 
we get that $\zeta(g^{-1})\in R(\FC')$, as required.

    To show $\supseteq$ in (1), we start from some $g\in R(\FC')$.
    Our goal is to prove that $\models\rho(\bar{m},(g_\alpha g\cdot s_0)_{\alpha};\bar{n})$. By the definition of $R$, there exists $h\in R(\FC)=R_0$
    and $\zeta\in\aut(\FC'/M)$ such that $g=\zeta(h)$. We have that $\bar{\zeta}:=(1,\zeta)\in\aut(\bar{\FC}'/\bar{M})$ and that $\bar{\tau}:=(h,\id_{\FC})\in K$ 
(by Remark \ref{rem:aff01}).
    Hence $\models\rho(\bar{\tau}(\bar{n});\bar{n})$, in other words
    $\models\rho(\bar{m}, (g_\alpha h\cdot s_0)_\alpha;\bar{n})$.
    After applying $\bar{\zeta}$ to the last term, we obtain our goal.

Point (2) follows immediately from (1).
Point (3) follows from (2):
     $$\nu(\widetilde{K}_{\bar{\FC},\bar n})=\nu(f^{-1}f[\widetilde{K}_{\bar{\FC},\bar n}])=(f_\ast(\nu))(S_R(\FC)).$$
Point (4) follows by $K=R(\FC)\rtimes\aut(\FC/M)$ and Remark \ref{rem:aff01}. 

(5)
    Let $\nu\in\mathfrak{M}_{\pi,\bar{n}}(\bar{\FC})$. Then $\nu$ is 
        a left $K$-invariant measure 
        in $\mathfrak{M}^{\inv}_{\bar{n}}(\bar{\FC},\bar{M})$ concentrated on $K$
        if and only if
        $f_{\ast}(\mu)$ is a right $R(\FC)$-invariant measure 
        in $\mathfrak{M}_G^{\inv}(\FC,M)$ concentrated on $R$.
    Indeed, we have $\bar{M}$-invariant/$M$-invariant transfer by Corollary \ref{corollary: preservation for infinite tuples}.
    Then we can use points (3) and (4) of this proposition.
    Therefore $f_\ast$ is a bijection between the set of left $K$-invariant measures
    in $\mathfrak{M}^{\inv}_{\bar{n}}(\bar{\FC},\bar{M})$ concentrated on $K$
    and right $R(\FC)$-invariant measures in $\mathfrak{M}_G^{\inv}(\FC,M)$ concentrated on $R$, and the conclusion follows.
\end{proof}

\begin{cor}
        Let $p\in\widetilde{H}_{\bar{\FC},\bar{n}}\cap S^{\inv}_{\bar{n}}(\bar{\FC},\bar{M})$.
    Then the type $p$ is the unique left $K$-invariant type in $S^{\inv}_{\bar n}(\bar{\FC},\bar{M})$ concentrated on $K$ 
        if and only if $f(p)$ is the unique right $R(\FC)$-invariant type in $S^{\inv}_G(\FC)$ concentrated on $R$. 
\end{cor}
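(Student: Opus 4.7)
The plan is to deduce the corollary by specializing Proposition \ref{proposition:unique.invariant.transfer} to Dirac measures, using that $f_*(\delta_q)=\delta_{f(q)}$ for any $q\in \widetilde{H}_{\bar{\FC},\bar n}$. The key is to set up a bijection, induced by $f$, between the set of left $K$-invariant types in $S^{\inv}_{\bar n}(\bar{\FC},\bar{M})$ concentrated on $K$ and the set of right $R(\FC)$-invariant types in $S^{\inv}_{G}(\FC)$ concentrated on $R$. Since $f\colon \widetilde{H}_{\bar{\FC},\bar n}\to S_G(\FC)$ is a homeomorphism by Remark \ref{rem:aff.f.homeo}, uniqueness on one side transfers to uniqueness on the other.

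First, I would observe that for an arbitrary $q\in \widetilde{H}_{\bar{\FC},\bar n}\cap S^{\inv}_{\bar n}(\bar{\FC},\bar M)$, the Dirac measure $\delta_q$ lies in $\mathfrak{M}^{\inv}_{\pi,\bar n}(\bar{\FC},\bar M)$. By Corollary \ref{corollary: preservation for infinite tuples}, $q$ is $\bar M$-invariant if and only if $f(q)$ is $M$-invariant, so the restriction map $f|_{\widetilde{H}_{\bar{\FC},\bar n}\cap S^{\inv}_{\bar n}(\bar{\FC},\bar M)}$ is a homeomorphism onto $S^{\inv}_G(\FC)$. Next, $q$ is concentrated on $K$ (i.e.\ $q\in \widetilde{K}_{\bar{\FC},\bar n}$) iff $\delta_q(\widetilde{K}_{\bar{\FC},\bar n})=1$, which by point (3) of Proposition \ref{proposition:unique.invariant.transfer} is equivalent to $f_*(\delta_q)(S_R(\FC))=\delta_{f(q)}(S_R(\FC))=1$, i.e.\ $f(q)\in S_R(\FC)$. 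Similarly, $q$ is left $K$-invariant iff $\delta_q$ is left $K$-invariant, which by point (4) of the same proposition is equivalent to $\delta_{f(q)}=f_*(\delta_q)$ being right $R(\FC)$-invariant, i.e.\ $f(q)$ being right $R(\FC)$-invariant.

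Combining these three equivalences, $f$ induces a bijection between the sets
\[
A:=\{q\in S^{\inv}_{\bar n}(\bar{\FC},\bar M)\cap \widetilde{H}_{\bar{\FC},\bar n}\;\colon\; q\text{ is left }K\text{-invariant and }q\in \widetilde{K}_{\bar{\FC},\bar n}\}
\]
and
\[
B:=\{r\in S^{\inv}_G(\FC)\;\colon\; r\text{ is right }R(\FC)\text{-invariant and }r\in S_R(\FC)\},
\]
with $f(p)\in B$ corresponding to $p\in A$. Therefore $p$ is the unique element of $A$ if and only if $f(p)$ is the unique element of $B$, which is exactly the statement of the corollary.

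Since each step is a direct application of results already established (Remark \ref{rem:aff.f.homeo}, Corollary \ref{corollary: preservation for infinite tuples}, and parts (3)--(4) of Proposition \ref{proposition:unique.invariant.transfer}), there is no substantial obstacle. The only mild subtlety is to verify that ``concentrated on $K$'' for a type $q$ literally means $q\in \widetilde{K}_{\bar{\FC},\bar n}$, and that this reformulates as a measure-theoretic concentration condition on $\delta_q$, which is immediate from the definition of the Dirac measure.
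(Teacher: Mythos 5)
Your proposal is correct and follows essentially the same route as the paper, which proves the corollary by invoking the argument of Proposition \ref{proposition:unique.invariant.transfer}(5) (itself resting on parts (3) and (4) and the invariance transfer of Corollary \ref{corollary: preservation for infinite tuples}) together with the fact that $f$ is a homeomorphism between $\widetilde{H}_{\bar{\FC},\bar{n}}$ and $S_G(\FC)$. Specializing to Dirac measures and noting $f_*(\delta_q)=\delta_{f(q)}$ is exactly the intended reduction.
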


\begin{proof}
    Follows by the proof Proposition \ref{proposition:unique.invariant.transfer}(5) and the fact that $f$ is a homeomorphism between $\widetilde{H}_{\bar{\FC},\bar{n}}$ and $S_G(\FC)$.
\end{proof}

\section{On classification of idempotent fim measures and generically stable types I: results for types and KP-invariant measures}\label{sec:fim idempotent}

Classical results from harmonic analysis demonstrate deep connections between measure theoretic objects living on a group and algebraic properties of that group. As explained in the introduction, one of these connections is the correspondence between idempotent probability measures and compact subgroups. In the context of definable groups, a series of papers  \cite{Artem_Kyle,Artem_Kyle2,CGK} studied the connections between idempotent Keisler measures and type-definable subgroups and established a family of connections. One of the main open questions from that line of research is
 the following: Are idempotent fim Keisler measures in one-to-one correspondence to fim type-definable subgroups via the map $\mu \to \stab(\mu)$
(see Conjecture \ref{conjecture: main conjecture for definable groups})? Under many different hypotheses, this question has a positive solution. Here, we extend the conjecture to the setting of arbitrary theories and prove the conjecture under several different hypotheses. We recall the conjecture below:

\begin{customConj}{(A)}\label{conjecture: main conjecture}
Let $\mu \in \mathfrak{M}^{\inv}_{\bar m}(\FC,M)$ be fim over $M$. 
We know that $\stab(\mu)=G_{\pi,\FC}$ for some partial type $\pi(\bar x;\bar y)\vdash\bar{x}\equiv_{\emptyset}\bar{y}$.
Then the following are equivalent:
\begin{enumerate}
\item $\mu$ is an idempotent, i.e. $\mu * \mu = \mu$. 

\item $\mu$ is the unique (left) $G_{\pi,\FC}$-invariant measure in $\mathfrak{M}^{\inv}_{\pi(\bar{m};\bar{y})}(\FC,M)$.

\end{enumerate}
In particular, there is a correspondence between idempotent fim measures in $\mathfrak{M}^{\inv}_{\bar m}(\FC,M)$ and relatively $\bar m$-type-definable over $M$ fim subgroups of $\aut(\FC)$.
\end{customConj}

We remark that the conjecture above implies the conjecture in the definable group setting. In other words, 
    Conjecture \ref{conjecture: main conjecture} implies Conjecture \ref{conjecture: main conjecture for definable groups}. Indeed,
    let $G$ be a definable group.
Notice that if $\mu\in\mathfrak{M}^{\inv}_G(\FC,M)$ is fim over $M$, then
    Corollary \ref{corollary: preservation for infinite tuples} implies that
    $f_\ast^{-1} (\mu)\in\mathfrak{M}^{\inv}_{\pi(\bar{x},\bar{y};\bar{n})}(\bar{\FC},\bar{M})$ is fim over $\bar{M}$ (in the notation from Section \ref{sec: affine sort}). By Lemma \ref{lemma: affine isomorphism for measures}, we have that $f_\ast^{-1} (\mu)$ is idempotent if and only if $\mu$ is idempotent. Statement (5) of Proposition \ref{proposition:unique.invariant.transfer} finishes the claim 
working with the ``right'' version in item (2) of Conjecture \ref{conjecture: main conjecture for definable groups}. (The equivalence between ``right'' and ``left'' version in item (2) of Conjecture \ref{conjecture: main conjecture for definable groups} is a separate result which follows from Proposition 3.33 and Corollary 3.34 in \cite{CGK}.)

\subsection{Examples of idempotents} 

The affine sort construction gives us many examples of idempotent measures in the context of first-order theories. 
Another source for idempotent measures is Ellis theory combined with our results from Section \ref{sec:star.product}. Namely, a classical fact (e.g. see \cite[Fact A.8]{rzepecki2018}) tells us that every compact left topological  semigroup  has a minimal left ideal which in turn is a union of groups whose neutral elements are idempotents. On the other hand, in Section \ref{sec:star.product}, we proved that under NIP, $\mathfrak{M}_{\bar{m}}^{\fs}(\mathfrak{C},M)$ and $\mathfrak{M}_{\bar{m}}^{\inv}(\mathfrak{C},M)$ (here assuming additionally that both the language and the model $M$ are countable) are compact left topological  semigroups. These two observations together yield idempotent measures. In fact, Proposition \ref{proposition: Ellis groups are trivial} implies that every minimal left ideal in any of the above two semigroups consists only of idempotents.

Below we provide several explicit examples of idempotent measures.

\begin{example}[Random graph]Let $T$ be the theory of the random graph. Let $M \prec \mathfrak{C}$ be models of $T$. For simplicity, assume that $|M| = \aleph_0$. Let $\bar{m}$ be an enumeration of $M$. Let $\Phi(\bar{y})$ be a formula without parameters. Then there is a unique measure $\mu$ in $\mathfrak{M}_{\bar{m}}^{\inv}(\mathfrak{C},M)$ which satisfies the following: For any finite sets of parameters $B_1,\dots,B_n$, possibly pairwise indistinct, and for any $\epsilon \colon \mathbb{N} \times \bigcup_{i=1}^{n} B_i \to \{0,1\}$ we have that

\begin{equation*}
    \mu\left(\Phi(\bar{y}) \wedge \bigwedge_{i=1}^{n} \bigwedge_{b \in B_i} R^{\epsilon(i,b)}(y_i,b) \right) =\begin{cases}
\begin{array}{cc}
\frac{1}{2^{|B_1|+ \dots +|B_n|}} & \models\Phi(\bar{m}),\\
0 & \text{otherwise},
\end{array}\end{cases}
\end{equation*}
where $R^{1}(y_i,b) = R(y_i,b)$ and $R^{0}(y_i,b) = \neg R(y_i,b)$. 

We claim that $\mu$ constructed above is a $\emptyset$-definable idempotent, however it is not finitely satisfiable in $M$ and so not generically stable. 
Let us only prove idempotency (the remaining properties are easy):
\begin{align*}
    &(\mu * \mu) \left(\Phi(\bar{y}) \wedge \bigwedge_{i=1}^{n} \bigwedge_{b \in B_i} R^{\epsilon(i,b)}(y_i,b) \right) \\ 
    &= (\mu_{y} \otimes (h_{\bar{b}})_{*}(\mu)_{x}) \left(\Phi(\bar{y}) \wedge \bigwedge_{i=1}^{n} \bigwedge_{b \in B_i} R^{\epsilon(i,b)}(y_i,x_{b}) \right) \\ 
    &= \int F_{\mu_{\bar{y}}}^{\theta(\bar{y};\bar{x})} d(h_{\bar{b}})_{*}(\mu)_{\bar{x}} \\
    &\overset{(*)}{=} \int \mu_{\bar{y}}\left(\Phi(\bar{y}) \wedge \bigwedge_{i=1}^{n} \bigwedge_{b \in B_i} R^{\epsilon(i,b)}(y_i,b) \right) d(h_{\bar{b}})_{*}(\mu)_{\bar{x}}\\ 
    &= \mu\left(\Phi(\bar{y}) \wedge \bigwedge_{i=1}^{n} \bigwedge_{b \in B_i} R^{\epsilon(i,b)}(y_i,b) \right), 
\end{align*}
where
\begin{equation*}
    \theta(\bar{y};\bar{x}) := \Phi(\bar{y}) \wedge \bigwedge_{i=1}^{n} \bigwedge_{b \in B_i} R^{\epsilon(i,b)}(y_i,x_{b}). 
\end{equation*}
We now briefly justify equation $(*)$. The map $F_{\mu}^{\theta(\bar{y};\bar{x})}: S_{\bar{y}}(M) \to [0,1]$ is constant on a set of measure $1$. 
Indeed, we first observe that $(h_{\bar{b}})_{*}(\mu)_{\bar{x}}(\bigwedge_{a \neq b \in B}x_{b} \neq x_{a}) = 1$ where $B = \bigcup_{i=1}^{n} B_i$. Now, for any $q \in [\bigwedge_{a \neq b \in B}x_{b} \neq x_{a}]$, we claim that $F_{\mu}^{\theta}(q) = \mu\left(\Phi(\bar{y}) \wedge \bigwedge_{i=1}^{n} \bigwedge_{b \in B_i} R^{\epsilon(i,b)}(y_i,b) \right)$, since the value of the measure of the formula depends only on the structure of the formula, not on any individual tuple of parameters (as long as they are all distinct!). 
\end{example}

\emph{Blow-ups} give another example of interesting idempotent generically stable types. We provide the following concrete example, but remark that this example clearly generalizes to similar structures. 

\begin{example}[Blow-up]
Let $\mathcal{L} = \{<,E\}$. Let $T$ be the theory of the structure $M = (\mathbb{Q} \times \mathbb{N},<,E)$ where 
\begin{enumerate}
    \item $M \models (q,m) < (p,n)$ if and only if $q < p$. 
    \item $M \models (q,m)E(p,n)$ if and only if $q = p$. 
\end{enumerate} 
The structure $M$ looks like a dense linear ordering without endpoints, but each point is replaced by infinitely many points. We remark that $T$ has quantifier elimination and is NIP. Fix $\mathfrak{C}$ a monster model of $T$. Let $\bar{m} = ((q_1,m_1),(q_2,m_2),\ldots)$ be an enumeration of $M$. For each point $q \in \mathbb{Q}$, there is a unique global type $r_{q}$ such that $(q,m)Ex \in r_{q}$ for some/any $m \in \mathbb{N}$, for any $a \in M$, $r_{q} \vdash x \neq a$, and $r_{q}$ is generically stable over $M$. Consider the type 
\begin{equation*}
    p(\bar y) = \bigotimes_{i=1}^{\omega} r_{q_{i}}(y_{i}). 
\end{equation*}
Then clearly $p \in S_{\bar m}(\FC)$, and $p$ is generically stable over $M$ by Remark \ref{remark: fim implies super-fim?}. 
Moreover, $p$ is an idempotent, which follows from the second explicit formula for $*$ in Proposition \ref{prop:formula.for.star.on.types} and the observation that a tuple $\bar{a}= (a_i)_{i<\omega}$  realizes 
$p|_{\FC'}$ if and only if all $a_i$'s are pairwise distinct, $\bar a$ is disjoint from $\bar \FC'$, and $a_i E(q_i,0)$ for every $i<\omega$ (where $\FC' \succeq \FC$ is a bigger monster model).
\end{example}

Finally, we give an example of an idempotent generically stable measure in a stable theory. 

\begin{example}[Stable] Let $\mathcal{L} = \{E\}$ be a binary relation symbol. Consider the structure $M_{n}$ which has $n$-many countable equivalence classes. For each class $E_i$, let $a_i$ be a representative from $E_i$. Let $\bar{m}$ be an enumeration of $M_n$ such that for every $k \geq 0$ and $0 \leq j \leq n -1$, $M_{n} \models m_{k \cdot n + j} E m_{(k+1)\cdot n + j} $. 
Let $\FC \succ M_n$ be a monster model.
Then for every $\sigma \in \sym(n)$, there exists a unique global type $p_{\sigma}$ which concentrates on $\tp(\bar m/\emptyset)$ such that for any 
$a \in \FC$ and $i \in \mathbb{N}$, $p\vdash x_i \neq a$, and for each $k \geq 0$ and $0 \leq j \leq n-1$, 
\begin{equation*}
    p_{\sigma} \vdash x_{k\cdot n + j}Ea_{\sigma(j)}. 
\end{equation*}
Then we claim that the measure $\mu := \frac{1}{|\sym(n)|} \sum_{\sigma \in \sym(n)} \delta_{p_{\sigma}}$ is a generically stable idempotent. 
Indeed, $\mu$ is clearly invariant under all automorphisms of $\FC$, so generically stable (by stability of $M_n$) and idempotent by Lemma \ref{lemma: equivalence of invariance} and Proposition \ref{proposition: adaptation of 3.19 from [ChGa]} below.
\end{example}

\subsection{FIM subgroups, measures and types}\label{subsec:6.1}
We refer the reader to Definition \ref{def: fim subgroups} for the notion of a relatively $\bar{m}$-type definable over $M$ fim/generically stable subgroup of $\aut(\FC)$. 

In this subsection, we prove some fundamental results about relatively type-definable generically stable and fim subgroups of $\aut(\FC)$. In particular, we prove several results regarding uniqueness of measures which are invariant under the action by left translations by the elements of the relatively type-definable subgroups in question.

\begin{remark}
  By Corollaries \ref{cor:aff01} and \ref{corollary: preservation for infinite tuples}, we can produce examples of relatively $\bar{m}$-type definable fim/generically stable subgroups of $\aut(\FC)$. Simply, a fim [generically stable] over $M$, $\emptyset$-definable group $G$ yields a fim [generically stable] over $\bar{M}$ relatively $\bar n$-type-definable over $\bar{M}$ (even over $M$) group $H\leqslant\aut(\bar{\FC})$ (in the notation of Section \ref{sec: affine sort}).
    
    Another point is that using the properties obtained in Section \ref{sec: affine sort}, starting from any example of a $\emptyset$-definable group $G$ with a definable global idempotent type [measure] which is not concentrated on its right stabilizer, we obtain a corresponding example of a relatively $\bar n$-type-definable group over $\bar{M}$ with an analogous property; this shows that generic stability [or fim] assumption in Conjecture \ref{conjecture: main conjecture} is necessary. Relevant examples in the context of definable groups are given e.g. in Remarks 2.28 and 3.2 from \cite{CGK}.
\end{remark}

\subsubsection{Uniqueness}\label{subsubsection: uniqueness}
As before, let $M\preceq\FC$ be enumerated by $\bar{m}$ and let $\bar{x}$ be a tuple of variables corresponding to $\bar{m}$, and let $\bar{y}$ be a copy of $\bar{x}$.
Consider a $\emptyset$-definable partial type $\pi(\bar{x};\bar{y})$ which contains ``$\bar{x}\equiv\bar{y}$'' and assume that $G_{\pi,\FC}$ forms a subgroup of 
$\aut(\FC)$. 

By the last assumption and Remark \ref{remark: G_pi=G_pi^opp}, we know that $G_{\pi,\FC}=G_{\pi^{\textrm{opp}},\FC}$, which together with the assumption that  $\pi(\bar{x};\bar{y}) \vdash \bar{x}\equiv\bar{y}$ implies that $[\pi(\bar m,\bar y)]=\{\tp(\tau(\bar{m})/M)\;\colon\;\tau\in G_{\pi,\FC}\} = [\pi(\bar{y};\bar{m})]$. These basic properties  will be in use often without comment. 

We are interested in measures $\mu\in\mathfrak{M}_{\pi(\bar{m};\bar{y})}^{\inv}(\FC,M)$ which are left $G_{\pi,\FC}$-invariant, i.e. invariant under the action of $G_{\pi,\FC}$ induced from the standard action of $\aut(\FC)$ via pushforwards. 
Such an invariance implies the invariance under the right action by $\ast$-product in the following sense:

\begin{lemma}\label{lemma: equivalence of invariance}
Let $\mu \in \mathfrak{M}^{\inv}_{\pi(\bar{m};\bar{y})}(\FC,M)$
be Borel-definable over $M$.
We have
$$G_{\pi,\FC}\cdot \mu =\{\mu\} \quad\Rightarrow\quad
\mu \ast S_{\pi(\bar{m};\bar{y})}(\FC)=\{\mu\}.$$
\end{lemma}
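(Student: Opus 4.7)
The plan is to verify $\mu\ast q = \mu$ for every $q \in S_{\pi(\bar{m};\bar{y})}(\FC)$, so fix such a $q$, an $\mathcal{L}$-formula $\varphi(\bar{x};\bar{y})$, and a tuple $\bar{b} \in \FC^{\bar{x}}$. Writing $q = \tp(\sigma(\bar{m})/\FC)$ for some $\sigma \in \aut(\FC')$, the fact that $q \vdash \pi(\bar{m};\bar{y})$ places $\sigma$ in $G_{\pi^{\textrm{opp}},\FC'} = G_{\pi,\FC'}$ by Remarks \ref{remark: group in every model} and \ref{remark: G_pi=G_pi^opp}. Substituting $\nu = \delta_q$ in the integral formula following Definition \ref{def:star.definition} collapses the integral to a single value and yields
$$(\mu \ast q)\bigl(\varphi(\bar{b};\bar{y})\bigr) \;=\; G_{\mu}^{\varphi(\bar{b};\bar{y})}(q) \;=\; \hat{\mu}\bigl(\varphi(\sigma^{-1}(\bar{b});\bar{y})\bigr),$$
where $\hat{\mu}$ denotes the unique $M$-invariant extension of the Borel-definable measure $\mu$ to $\FC'$. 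The entire problem therefore reduces to checking the equality $\hat{\mu}(\varphi(\sigma^{-1}(\bar{b});\bar{y})) = \mu(\varphi(\bar{b};\bar{y}))$.

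The key move is to replace the ``external'' automorphism $\sigma \in \aut(\FC')$ by an ``internal'' $g \in G_{\pi,\FC}$ satisfying $g^{-1}(\bar{b}) \equiv_{M} \sigma^{-1}(\bar{b})$. Once such a $g$ is in hand, $M$-invariance of $\hat{\mu}$ gives
$\hat{\mu}(\varphi(\sigma^{-1}(\bar{b});\bar{y})) = \mu(\varphi(g^{-1}(\bar{b});\bar{y})) = (g\cdot\mu)(\varphi(\bar{b};\bar{y}))$, and the hypothesis $G_{\pi,\FC}\cdot\mu = \{\mu\}$ closes the chain with $(g\cdot\mu)(\varphi(\bar{b};\bar{y})) = \mu(\varphi(\bar{b};\bar{y}))$, as required.

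To produce $g$, I would consider the partial type $p(\bar{x}_2,\bar{x}_1)$ over $M \cup \bar{b}$ consisting of $\pi(\bar{x}_2;\bar{m})$, the complete type $\tp(\sigma^{-1}(\bar{b})/M)$ in the variables $\bar{x}_1$, and the partial type over $\emptyset$ asserting $\bar{x}_2\,\bar{b} \equiv \bar{m}\,\bar{x}_1$. Inside $\FC'$ the pair $(\sigma(\bar{m}),\sigma^{-1}(\bar{b}))$ realizes $p$: the crucial equivalence $\sigma(\bar{m})\,\bar{b} \equiv \bar{m}\,\sigma^{-1}(\bar{b})$ is witnessed by the automorphism $\sigma^{-1}$ of $\FC'$, and $\pi(\sigma(\bar{m});\bar{m})$ holds because $\sigma \in G_{\pi,\FC'}$. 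By $\FC \preceq \FC'$ and sufficient saturation of $\FC$, $p$ is realized in $\FC$ by some $(\bar{n},\bar{c})$. Strong homogeneity of $\FC$ then turns $\bar{m}\,\bar{c} \equiv \bar{n}\,\bar{b}$ into $g \in \aut(\FC)$ with $g(\bar{m},\bar{c}) = (\bar{n},\bar{b})$; the condition $\models \pi(\bar{n};\bar{m})$ places $g$ in $G_{\pi,\FC}$, while $g^{-1}(\bar{b}) = \bar{c} \equiv_{M} \sigma^{-1}(\bar{b})$ is automatic.

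The only genuinely delicate point is the realization of $p$ in $\FC$: one must justify that $p$ is finitely satisfiable in $\FC$ (which follows by $\FC \preceq \FC'$ applied to the existential closures of finite sub-conjunctions of $p$) and then invoke saturation of $\FC$ to extract $(\bar{n},\bar{c})$. Every other step is a transparent manipulation using the $\ast$-product formula, $M$-invariance of $\hat{\mu}$, and the assumed left $G_{\pi,\FC}$-invariance of $\mu$.
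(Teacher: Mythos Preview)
Your proof is correct and follows essentially the same approach as the paper's: both arguments reduce to producing an internal $g \in G_{\pi,\FC}$ whose action on $\bar{b}$ agrees with that of $\sigma^{-1}$ up to $\equiv_M$, then invoking $M$-invariance of $\mu$ together with the hypothesis $G_{\pi,\FC}\cdot\mu=\{\mu\}$. You package the construction as a single realization of a partial type, whereas the paper carries it out in two stages (first pulling $\sigma^{-1}(\bar{b})$ down to some $\bar{d}\in\FC$, then finding $\bar{e}$), but the content is identical.
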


\begin{proof}
Consider $q(\bar{y})\in S_{\pi(\bar{m};\bar{y})}(\FC)$, an $\CL$-formula $\varphi(\bar{x};\bar{y})$ and a tuple $\bar{b}\in\FC^{\bar{x}}$.
There exists $\sigma\in\aut(\FC')$ for some bigger monster model $\FC'\succeq\FC$ with $q=\tp(\sigma(\bar{m})/\FC)$.
Let $\bar{d}\in\FC^{\bar{x}}$ realize $\tp(\sigma^{-1}(\bar{b})/M)$,
then there exists $h\in\aut(\FC'/M)$ such that $\bar{d}=h\sigma^{-1}(\bar{b})$.
Note that $h\sigma^{-1}(\bar{m})\models\big(\bar{b}\bar{m}\equiv_{\emptyset}\bar{d}\bar{y}\;\wedge\;\pi(\bar{m};\bar{y})\big)$, so there is $\bar{e}\in\FC^{\bar{y}}$ such that $\bar{e}\models \big(\bar{b}\bar{m}\equiv_{\emptyset}\bar{d}\bar{y}\;\wedge\;\pi(\bar{m};\bar{y})\big)$. 
Then we find $g\in\aut(\FC)$ with $g(\bar{b}\bar{m})=\bar{d}\bar{e}$ and note that $g\in G_{\pi,\FC}$.

We compute: 
\begin{align*}
    (\mu\ast q)\big(\varphi(\bar{b};\bar{y})\big) &= \big(\mu_{\bar{y}}  \otimes h_{\bar{b}}(q)_{\bar{x}}\big)\big(\varphi(\bar{x};\bar{y})\big)\\
&= (F^{\varphi^{\textrm{opp}}(\bar{y};\bar{x})}_{\mu_{\bar{y}}}\circ h_{\bar{b}})(q)\\
    &= \mu\big(\varphi(\bar{d};\bar{y})\big)=\mu\big(\varphi(g(\bar{b});\bar{y})\big) \\
    &= (g^{-1} \cdot \mu)\big(\varphi(\bar{b};\bar{y})\big)=\mu\big(\varphi(\bar{b};\bar{y})\big). \qedhere 
\end{align*}
\end{proof}

\begin{proposition}\label{proposition: adaptation of 3.19 from [ChGa]}
Let $\mu \in \mathfrak{M}^{\inv}_{\pi(\bar{m};\bar{y})}(\FC,M)$
be Borel-definable over $M$ and right $S_{\pi(\bar{m};\bar{y})}(\FC)$-invariant with respect to $\ast$. Then $\mu$ is an idempotent.
\end{proposition}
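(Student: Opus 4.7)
The plan is to reduce idempotency to a direct integral identity using the explicit integral formula for $\ast$ and the fact that $\mu$ is concentrated on $[\pi(\bar m;\bar y)]$. Because $\mu$ is Borel-definable over $M$, the convolution $\mu\ast\mu$ is well-defined, and for any $\mathcal L$-formula $\varphi(\bar x;\bar y)$ and any $\bar b\in\FC^{\bar x}$ we have
$$(\mu\ast\mu)(\varphi(\bar b;\bar y))=\int_{S_{\bar m}(\FC)}G_\mu^{\varphi(\bar b;\bar y)}\,d\mu,$$
by the formula following Definition~\ref{def:star.definition}.

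First, I would observe that, by definition of the convolution on a Dirac measure, for each type $p\in S_{\bar m}(\FC)$ one has
$$G_\mu^{\varphi(\bar b;\bar y)}(p)=(\mu\ast\delta_p)(\varphi(\bar b;\bar y)).$$
This is essentially the specialization of the integral formula to a single type. Hence $G_\mu^{\varphi(\bar b;\bar y)}$ is exactly the function whose values record $\mu\ast p$ at the formula $\varphi(\bar b;\bar y)$.

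Next, since $\mu\in\mathfrak M^{\inv}_{\pi(\bar m;\bar y)}(\FC,M)$, the support of $\mu$ is contained in $[\pi(\bar m;\bar y)]=S_{\pi(\bar m;\bar y)}(\FC)$, so the integral in the formula above may be restricted to $S_{\pi(\bar m;\bar y)}(\FC)$. Invoking the hypothesis of right $S_{\pi(\bar m;\bar y)}(\FC)$-invariance with respect to $\ast$, for every $p\in S_{\pi(\bar m;\bar y)}(\FC)$ we have $\mu\ast\delta_p=\mu$, so
$$G_\mu^{\varphi(\bar b;\bar y)}(p)=(\mu\ast\delta_p)(\varphi(\bar b;\bar y))=\mu(\varphi(\bar b;\bar y))$$
for $\mu$-almost every $p$. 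Substituting the constant value $\mu(\varphi(\bar b;\bar y))$ into the integral gives
$$(\mu\ast\mu)(\varphi(\bar b;\bar y))=\int_{S_{\pi(\bar m;\bar y)}(\FC)}\mu(\varphi(\bar b;\bar y))\,d\mu=\mu(\varphi(\bar b;\bar y)),$$
which yields $\mu\ast\mu=\mu$. There is essentially no obstacle here: the entire argument is a one-line integral calculation, because the hypothesis is phrased directly in terms of right convolution with Dirac measures at arbitrary types in $S_{\pi(\bar m;\bar y)}(\FC)$, which is precisely what the integral against $\mu$ reduces to once we restrict to its support.
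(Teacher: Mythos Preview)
Your proof is correct and essentially identical to the paper's: both express $(\mu\ast\mu)(\varphi(\bar b;\bar y))$ as the integral of the fiber function $G_\mu^{\varphi(\bar b;\bar y)}=F_\mu^{\varphi^{\textrm{opp}}}\circ h_{\bar b}$ against $\mu$, observe that its value at $q$ is $(\mu\ast q)(\varphi(\bar b;\bar y))$, and use the right $\ast$-invariance hypothesis together with $\supp(\mu)\subseteq S_{\pi(\bar m;\bar y)}(\FC)$ to conclude the integrand is constantly $\mu(\varphi(\bar b;\bar y))$.
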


\begin{proof}
Take an $\CL$-formula $\varphi(\bar{x};\bar{y})$ and a tuple $\bar{b}\in\FC^{\bar{x}}$. For every $q\in S_{\pi(\bar{m};\bar{y})}(\FC)$
we have
\begin{align*}
     (F^{\varphi^{\textrm{opp}}(\bar{y};\bar{x})}_{\mu_{\bar{y}}}\circ h_{\bar{b}})(q) &=
     \big( \mu_{\bar{y}}\otimes h_{\bar{b}}(q)_{\bar{x}}\big)\big(\varphi(\bar{x};\bar{y})\big) \\
     &= (\mu\ast q)\big(\varphi(\bar{b};\bar{y})\big) = \mu\big(\varphi(\bar{b};\bar{y})\big).
\end{align*}
Therefore, the fiber function is constant in the following integral, and we obtain:
\begin{equation*}
(\mu\ast\mu)\big(\varphi(\bar{b};\bar{y})\big)=\int_{S_{\pi(\bar{m};\bar{y})}(\FC)} (F^{\varphi^{\textrm{opp}}(\bar{y};\bar{x})}_{\mu_{\bar{y}}}\circ h_{\bar{b}})\,d\mu_{\bar{x}}=\mu\big(\varphi(\bar{b};\bar{y})\big). \qedhere
\end{equation*}
\end{proof}

By Lemma \ref{lemma: equivalence of invariance} and Proposition \ref{proposition: adaptation of 3.19 from [ChGa]}, we conclude:

\begin{cor}\label{cor: two to one}
$(2)\Rightarrow(1)$ in Conjecture \ref{conjecture: main conjecture} holds.
\end{cor}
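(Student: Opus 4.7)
The proof is essentially immediate given the two results Lemma~\ref{lemma: equivalence of invariance} and Proposition~\ref{proposition: adaptation of 3.19 from [ChGa]} already established just above. The plan is to chain these two implications while verifying that the Borel-definability hypothesis required by both lemmas is automatic under the assumptions of Conjecture~\ref{conjecture: main conjecture}.

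First, I would observe that since $\mu \in \mathfrak{M}^{\inv}_{\bar{m}}(\FC,M)$ is assumed to be fim over $M$ (and hence definable over $M$, as noted right after Definition~\ref{def:fim}), the measure $\mu$ is in particular Borel-definable over $M$. This is the only hypothesis needed besides invariance for both Lemma~\ref{lemma: equivalence of invariance} and Proposition~\ref{proposition: adaptation of 3.19 from [ChGa]} to apply. Note also that the hypothesis that $\stab(\mu)=G_{\pi,\FC}$ (which is guaranteed by Lemma~\ref{lemma: rel. type-definability} applied to the definable measure $\mu$) forces $G_{\pi,\FC}$ to be a subgroup of $\aut(\FC)$, so we are in the setting of Subsubsection~\ref{subsubsection: uniqueness}.

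Next, I would assume condition (2), which in particular says that $\mu$ is left $G_{\pi,\FC}$-invariant, i.e.\ $G_{\pi,\FC}\cdot\mu=\{\mu\}$. (Observe that the uniqueness part of (2) plays no role in the implication $(2)\Rightarrow(1)$; we only need the existence of such an invariance property for $\mu$ itself.) By Lemma~\ref{lemma: equivalence of invariance}, this left $G_{\pi,\FC}$-invariance upgrades to right $S_{\pi(\bar{m};\bar{y})}(\FC)$-invariance with respect to the convolution product $\ast$: that is, $\mu\ast q=\mu$ for every $q\in S_{\pi(\bar{m};\bar{y})}(\FC)$.

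Finally, I would apply Proposition~\ref{proposition: adaptation of 3.19 from [ChGa]} directly to $\mu$: because $\mu$ is Borel-definable over $M$ and right $S_{\pi(\bar{m};\bar{y})}(\FC)$-invariant with respect to $\ast$, we conclude $\mu\ast\mu=\mu$, which is exactly condition~(1). Since each step is a direct citation, there is no substantive obstacle in this argument; the work is entirely packaged into the two preceding results.
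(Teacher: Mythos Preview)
Your proof is correct and follows exactly the same approach as the paper: the paper's proof simply reads ``By Lemma~\ref{lemma: equivalence of invariance} and Proposition~\ref{proposition: adaptation of 3.19 from [ChGa]}, we conclude,'' and you have merely spelled out the hypothesis-checking (Borel-definability via fim $\Rightarrow$ definable, and that only the invariance part of (2) is used) in more detail.
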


\noindent
The implication $(1)\Rightarrow(2)$ in Conjecture \ref{conjecture: main conjecture} is the more difficult one.
Now, we will examine when a (left) $G_{\pi,\FC}$-invariant
fim measure $\mu\in\mathfrak{M}^{\inv}_{\pi(\bar{m};\bar{y})}(\FC,M)$
is unique among (left) $G_{\pi,\FC}$-invariant measures in $\mathfrak{M}^{\inv}_{\pi(\bar{m};\bar{y})}(\FC,M)$.
This does not require assumption $(1)$ from Conjecture \ref{conjecture: main conjecture}. 
Assumption $(1)$ will be important to show that 
a fim measure $\mu\in\mathfrak{M}^{\inv}_{\bar{m}}(\FC,M)$ is concentrated on $[\pi(\bar{m};\bar{y})]$ (cf. Conjecture \ref{conj: gen transitivity} and Corollary \ref{corollary: generic transitivity for types in rosy theories} for the case of types).

\begin{question}\label{question: basic question on fim} 
\begin{enumerate}
    \item Do we have uniqueness of a (left) $G_{\pi,\FC}$-invariant type in $S^{\inv}_{\pi(\bar m;\bar y)}(\FC,M)$ in a generically stable subgroup $G_{\pi,\FC}$?
    \item Do we have uniqueness of a (left) $G_{\pi,\FC}$-invariant
    measure in $\mathfrak{M}^{\inv}_{\pi(\bar m;\bar y)}(\FC,M)$
    in a fim subgroup $G_{\pi,\FC}$?
\end{enumerate}
\end{question}

\begin{remark}\label{remark: correspondence from positive answer}
A positive answer to Question \ref{question: basic question on fim}(2) together with the equivalence between $(1)$ and $(2)$ in Conjecture \ref{conjecture: main conjecture} implies the \emph{in particular} part of said conjecture. 
\end{remark}

\begin{proof}
Assume that the answer to Question \ref{question: basic question on fim}(2) is positive. Then for any relatively $\bar m$-type-definable over $M$ fim subgroup $G$ of $\aut(\FC)$ (defined by a partial type $\pi(\bar x;\bar y)$ as above) there exists a unique measure $\mu_G \in \mathfrak{M}^{\inv}_{\pi(\bar m;\bar y)}(\FC,M)$ which is (left) $G$-invariant. Let $J$ be the set of fim idempotent measures in $\mathfrak{M}^{\inv}_{\bar m}(\FC,M)$ and $\mathcal{G}$ the set of relatively $\bar m$-type-definable over $M$ fim subgroups of $\aut(\FC)$. Then we have well-defined maps $\Phi \colon J \to \mathcal{G}$ and $\Psi \colon \mathcal{G} \to J$ given by $\Phi(\mu):=\stab(\mu)$ and $\Psi(G):=\mu_G$. We need to show that $\Psi \circ \Phi = \id_J$ and $\Phi \circ \Psi =\id_{\mathcal{G}}$. Assume that we have the equivalence between (1) and (2) in Conjecture \ref{conjecture: main conjecture}.

The equality $\Psi \circ \Phi = \id_J$ follows since $\Psi(\Phi(\mu))=\mu_{\stab(\mu)}=\mu$, which holds because  $\mu$ is a fim, (left) $\stab(\mu)$-invariant measure in $\mathfrak{M}^{\inv}_{\pi(\bar m;\bar y)}(\FC,M)$ by $(1) \rightarrow (2)$ (where $\pi(\bar x;\bar y) \vdash \bar x \equiv \bar y$ is such that  $\stab(\mu)=G_{\pi,\FC}$). To show that $\Phi \circ \Psi =\id_{\mathcal{G}}$, first note that $\Phi(\Psi(G))=\Phi(\mu_G)=\stab(\mu_G)$ and we want to prove that it is equal to $G$. The inclusion $\supseteq$ is immediate from the (left) $G$-invariance of $\mu_G$. For the opposite inclusion, consider any $g \in \stab(\mu_G)$ and $p \in \supp(\mu_G)$. Since $\mu_G \in \mathfrak{M}^{\inv}_{\pi(\bar m;\bar y)}(\FC,M)$ (where $\pi(\bar x;\bar y) \vdash \bar x \equiv \bar y$ is such that  $G=G_{\pi,\FC}$), we see that $p=\tp(h'(\bar m)/\FC)$ for some $h' \in G_{\FC'}:=\{\sigma' \in \aut(\FC'): \models \pi(\sigma'(\bar m);\bar m)\}$. Take any extension $g' \in \aut(\FC')$ of $g$. Then $\tp(g'h'(\bar m)/\FC) =gp \in \supp(g\mu_G)=\supp(\mu_G) \subseteq S_{\pi(\bar m;\bar y)}(\FC)$, hence $g'h' \in G_{\pi,\FC'}$, and so $g' \in G_{\pi,\FC'}$ (as $G_{\pi,\FC'}$ is a subgroup). Thus, $g \in G$, as required.
\end{proof}

We resolve the first question above in full generality with a positive answer. We resolve the second question above with a positive answer under the hypothesis of NIP. 
In general, we do not need the NIP assumption, however we need the assumption that the witnessing fim measure is \emph{super-fim}, i.e., the Morley powers of our measure are additionally fim (see Definition \ref{def:fim}). 
We also give a positive answer when the model $M$ and the language are both countable.
To be precise, in the last two cases, we show the uniqueness of a (left) $G_{\pi,\FC}$-invariant measure in $\mathfrak{M}^{\inv}_{\pi(\bar m;\bar y)}(\FC,M)$ which is additionally Borel-definable over $M$.

We first give a positive answer to Question \ref{question: basic question on fim}(1).

\begin{proposition}\label{cor: uniqueness for types}
    Let $G_{\pi,\FC}\leqslant\aut(\FC)$ be a relatively $\bar{m}$-type-definable over $M$ subgroup which is generically stable (where without loss of generality $\pi(\bar x;\bar y) \vdash \bar x \equiv \bar y$).
    Let $p\in S^{\inv}_{\pi(\bar{m};\bar{y})}(\FC,M)$ be (left) $G_{\pi,\FC}$-invariant and generically stable over $M$. 
Then $p$ is the unique (left) $G_{\pi,\FC}$-invariant type in $S^{\inv}_{\pi(\bar{m};\bar{y})}(\FC,M)$.
\end{proposition}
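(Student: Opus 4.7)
The plan is to show that for any other left $G_{\pi,\FC}$-invariant type $q\in S^{\inv}_{\pi(\bar m;\bar y)}(\FC,M)$ we must have $p=q$. The strategy is to establish the two identities $p\ast q=p$ and $p\ast q=q$ (or alternatively $q\ast p=q$ combined with the commutativity $p\ast q=q\ast p$), and conclude $p=q$.

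The first identity $p\ast q=p$ is immediate from what is available. Since $p$ is generically stable over $M$, $p$ is Borel-definable (in fact definable) over $M$ by Definition~\ref{definition: generically stable}. Together with the $G_{\pi,\FC}$-invariance of $p$, the Dirac measure $\delta_p$ falls under the hypothesis of Lemma~\ref{lemma: equivalence of invariance}, which yields $p\ast r=p$ for every $r\in S_{\pi(\bar m;\bar y)}(\FC)$. Applying this with $r=q$ gives $p\ast q=p$.

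The second identity is the delicate part, and it exploits generic stability in an essential way. Finite satisfiability of $p$ in $M$ (another consequence of generic stability) provides, after passing to a strongly $\aleph_0$-homogeneous $M$ without loss of generality, a net of automorphisms $\sigma_i\in G_{\pi,M}\subseteq G_{\pi,\FC}$ such that the realized types $r_i:=\tp(\sigma_i(\bar m)/\FC)$ converge to $p$ in $S_{\bar y}(\FC)$. Using the explicit formula of Proposition~\ref{prop:formula.for.star.on.types} together with the $G_{\pi,\FC}$-invariance of $q$, one checks by direct computation that $q\ast r_i = q$ for every $i$. The crux is then to pass to the limit: the Morley-product symmetry of generically stable types ($p_{\bar y}\otimes r_{\bar x}=r_{\bar x}\otimes p_{\bar y}$ for any $M$-invariant $r$) combined with the definability of $p$ (which makes $p\ast-$ continuous as an integral of a continuous integrand, analogous to items (4)–(5) of Lemma~\ref{lemma:def.Borel.def}) allows us to rewrite $q\ast p$ using the ``swapped'' order and deduce $q\ast p=q$ as well as the commutativity $p\ast q=q\ast p$. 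Combining these with $p\ast q=p$ from the previous paragraph gives $p=p\ast q=q\ast p=q$.

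The main obstacle is the compatibility of the generic-stability symmetry of the Morley product with the \emph{non-commutative} nature of the $\ast$-product: recall that by Proposition~\ref{prop:formula.for.star.on.types}, $p\ast q$ and $q\ast p$ correspond to compositions of automorphisms acting on $\bar m$ in opposite orders. Making the limit argument rigorous is sensitive to the fact that, for an arbitrary $\tau_p\in G_{\pi,\FC'}$ representing $p$, the induced action on $M$-types of elements of $\FC$ need not lie in the orbit of $G_{\pi,M}$; this is precisely where generic stability (through definability plus finite satisfiability, which together give an especially strong control on $p$) is needed to reconcile the $\FC$--$\FC'$ gap and extract the symmetry $p\ast q=q\ast p$ from $p_{\bar y}\otimes q_{\bar y'}=q_{\bar y'}\otimes p_{\bar y}$.
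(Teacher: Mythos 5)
The first half of your argument is fine: since $p$ is generically stable it is definable, hence Borel-definable, and Lemma~\ref{lemma: equivalence of invariance} applied to $\delta_p$ indeed gives $p\ast r=p$ for every $r\in S_{\pi(\bar m;\bar y)}(\FC)$, in particular $p\ast q=p$. The second half, however, has a genuine gap, in fact two. First, the limit step: even granting that $p$ is a limit of realized types $r_i=\tp(\sigma_i(\bar m)/\FC)$ with $\sigma_i\in G_{\pi,M}$ (which already needs $\pi$-\emph{strong} finite satisfiability of $p$, not just finite satisfiability), the passage from $q\ast r_i=q$ to $q\ast p=q$ requires continuity of the map $q\ast -$, i.e.\ \emph{right} continuity of $\ast$ at $q$. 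This holds when $q$ is definable over $M$ (cf.\ Remark~\ref{remark: $*$ sep. cont.} and Corollary~\ref{cor:support}), but $q$ is only assumed $M$-invariant, so this step is unjustified; the definability of $p$ controls $p\ast-$, not $q\ast-$. Second, and more seriously, the inference from the Morley-product symmetry $p\otimes q=q\otimes p$ to the $\ast$-product identity $p\ast q=q\ast p$ is invalid: by Proposition~\ref{prop:formula.for.star.on.types}, $p\ast q$ and $q\ast p$ are the types of $\tau''\sigma(\bar m)$ and $\sigma''\tau(\bar m)$ respectively, i.e.\ they encode compositions of automorphisms in opposite orders, and $\aut(\FC')$ is non-abelian. (The same failure occurs for definable groups: $p\otimes q=q\otimes p$ does not give $\tp(a\cdot b/\FC)=\tp(b\cdot a/\FC)$.) Moreover the integrand appearing in $q\ast p$ is $q\otimes h_{\bar b}(p)$, not $q\otimes p$, so the symmetry of $\otimes$ cannot even be inserted where you want it.

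The paper's proof sidesteps the $\ast$-product entirely for the hard direction. It shows directly that $p|_M=q|_M$ via a three-step chain of equivalences for each formula $\varphi(\bar x;\bar y)$: $\varphi(\bar x;\bar m)\in q \iff \varphi(\bar x;\bar y)\in q\otimes p$ (using $G_{\pi,\FC}$-invariance of $q$ and that realizations of $p|_M$ lie in $G_{\pi,\FC}(\bar m)$), then $q\otimes p=p\otimes q$ by generic stability of $p$, then $\varphi(\bar x;\bar y)\in p\otimes q\iff\varphi(\bar m;\bar y)\in p$ symmetrically; applying the resulting biconditional once with $q$ and once with $q=p$ yields $p|_M=q|_M$, and then $p=q$ because a generically stable type is the unique $M$-invariant extension of its restriction to $M$. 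If you want to repair your argument, this is the route to take: the uniqueness has to be extracted at the level of restrictions to $M$ and the Morley product, not at the level of the convolution.
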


\begin{proof}
 Let $q\in S^{\inv}_{\pi(\bar{m};\bar{y})}(\FC,M)$ be (left) $G_{\pi,\FC}$-invariant. 
\begin{clm}
$p|_M=q|_M$.
\end{clm}

\begin{clmproof}
Consider any formula $\varphi(\bar x;\bar y)$. We have the following equivalences. (In the justifications of these equivalences, one should bear in mind the properties mentioned at the beginning of Section \ref{subsubsection: uniqueness}.) 
\begin{enumerate}
\item  $\varphi(\bar x;\bar{m}) \in q_{\bar x} \iff \varphi(\bar{x};\bar{y}) \in q_{\bar x} \otimes p_{\bar y}$, which holds by the assumptions that $q_{\bar x}$ is $G_{\pi,\FC}$-invariant and $p_{\bar y} \in [\pi(\bar m;\bar y)]$. 

\item $\varphi(\bar{x};\bar{y}) \in q_{\bar x} \otimes p_{\bar y} \iff \varphi(\bar{x};\bar{y}) \in p_{\bar y} \otimes q_{\bar x}$, which holds by generic stability of $p$ (see \cite[Proposition 2.1(iii)]{PiTa}). 

\item $\varphi(\bar{x};\bar{y}) \in p_{\bar y} \otimes q_{\bar x}  \iff \varphi(\bar{m};\bar{y}) \in p_{\bar y}$, which holds by the assumptions that $p_{\bar y}$ is $G_{\pi,\FC}$-invariant and $q_{\bar x} \in [\pi(\bar m;\bar x)]$.
\end{enumerate}
Therefore, 
$$\varphi(\bar x;\bar{m}) \in q_{\bar x} \iff  \varphi(\bar{m};\bar{y}) \in p_{\bar y}.$$ 
Applying this in the special case of $q=p$, we get 
$$\varphi(\bar x;\bar{m}) \in p_{\bar x} \iff  \varphi(\bar{m};\bar{y}) \in p_{\bar y}.$$
The last two equivalences imply 
$$\varphi(\bar x;\bar{m}) \in q_{\bar x} \iff \varphi(\bar x;\bar{m}) \in p_{\bar x}.$$
Since $\varphi(\bar x;\bar y)$ was arbitrary, we conclude that $p|_M=q|_M$.
\end{clmproof}

    As $p$ is generically stable over $M$ and $q$ is $M$-invariant, using the above claim, we may conclude that $p = q$ 
(i.e., see \cite[Proposition 2.1(iv)]{PiTa}).
\end{proof}

To settle Question \ref{question: basic question on fim}(2) in the aforementioned situations requires more work. We start from the following lemma whose proof is a measure-theoretic variant on the proof of Claim 1 in the proof of Proposition \ref{cor: uniqueness for types}.

\begin{lemma}\label{lemma: invariance to uniqueness over M}
Let $\mu,\nu \in \mathfrak{M}^{\inv}_{\pi(\bar{m};\bar{y})}(\mathfrak{C})$ be Borel-definable over $M$ and $G_{\pi,\FC}$-invariant.
If $\mu$ is fim over $M$, then $\mu|_M=\nu|_M$. 
\end{lemma}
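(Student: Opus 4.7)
The plan is to adapt the measure-theoretic version of the proof of Proposition~\ref{cor: uniqueness for types}, using the symmetry property of fim measures in place of the commutativity of generically stable types with invariant types. It suffices to show $\nu(\theta(\bar y;\bar m))=\mu(\theta(\bar y;\bar m))$ for every $\CL$-formula $\theta(\bar y;\bar x)$, since every $\CL(M)$-formula in variable $\bar y$ arises by plugging $\bar m$ into some such $\theta$.

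The key is to compute both Morley products $(\nu_{\bar y}\otimes\mu_{\bar x})(\theta(\bar y;\bar x))$ and $(\mu_{\bar x}\otimes\nu_{\bar y})(\theta(\bar y;\bar x))$ by exploiting that $\mu_{\bar x}$ and $\nu_{\bar y}$ concentrate on $[\pi(\bar m;\bar x)]$ and $[\pi(\bar m;\bar y)]$, respectively, combined with the $G_{\pi,\FC}$-invariance of both measures. For the first product, the fiber function at $q\in\supp(\mu|_M)$ is $\nu(\theta(\bar y;\bar b))$ for $\bar b\models q$; since $\models\pi(\bar m;\bar b)$, strong homogeneity of $\FC$ supplies $g\in\aut(\FC)$ with $g(\bar m)=\bar b$, and the equality $G_{\pi,\FC}=G_{\pi^{\textrm{opp}},\FC}$ (Remark~\ref{remark: G_pi=G_pi^opp}) places $g$ in $G_{\pi,\FC}$, whence the $G_{\pi,\FC}$-invariance of $\nu$ collapses the fiber value to $\nu(\theta(\bar y;\bar m))$. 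An analogous argument (with the roles of $\mu$ and $\nu$ swapped and the variable slots interchanged) gives the two equalities
\[
(\nu_{\bar y}\otimes\mu_{\bar x})(\theta(\bar y;\bar x)) \;=\; \nu(\theta(\bar y;\bar m)),\qquad (\mu_{\bar x}\otimes\nu_{\bar y})(\theta(\bar y;\bar x)) \;=\; \mu(\theta(\bar m;\bar x)).
\]

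Next I would invoke the symmetry of the Morley product for fim measures: whenever $\mu$ is fim over $M$ and $\nu$ is $M$-invariant and Borel-definable over $M$, the two products $\mu\otimes\nu$ and $\nu\otimes\mu$ coincide on every formula (a standard consequence of the frequency/random-sampling characterization of fim, cf.\ \cite{HruPiSi13}). Combining this symmetry with the two computations yields
\[
\nu(\theta(\bar y;\bar m)) \;=\; \mu(\theta(\bar m;\bar x)).
\]
Specializing to $\nu:=\mu$ (admissible, since $\mu$ itself is fim, Borel-definable, $G_{\pi,\FC}$-invariant, and supported on $[\pi(\bar m;\bar y)]$) yields $\mu(\theta(\bar y;\bar m))=\mu(\theta(\bar m;\bar x))$. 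Comparing the two delivers $\nu(\theta(\bar y;\bar m))=\mu(\theta(\bar y;\bar m))$, as desired.

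The crux is the symmetry step: it is the only place where the full strength of the fim hypothesis on $\mu$ (rather than, say, mere generic stability of the types in $\supp(\mu)$) is used, and it cannot be deduced from the pure types argument.
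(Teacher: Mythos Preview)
Your proof is correct and follows essentially the same approach as the paper's: compute both orderings of the Morley product by showing the fiber functions are constant on the relevant supports (via $G_{\pi,\FC}$-invariance and the identification $[\pi(\bar m;\bar y)]=[\pi(\bar y;\bar m)]$), invoke the commutativity of fim measures with Borel-definable measures, then specialize to $\nu=\mu$ to close the loop. The only cosmetic difference is that the paper cites Theorem~5.16(a) of \cite{Wild} for the symmetry step rather than \cite{HruPiSi13}.
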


\begin{proof} 
Note that in this proof we use Borel $M$-definability only to be able to compute Morley products.
Consider a formula $\varphi(\bar{x};\bar{y})$.
    The map $F^{\varphi(\bar{x};\bar{y})}_{\nu_{\bar{x}}}$
    is constant over $[\pi(\bar{m};\bar{y})]\supseteq\supp(\mu_{\bar{y}}|_M)$ and equal to $\nu(\varphi(\bar{x};\bar{m}))$.
To see it, 
first recall that $[\pi(\bar{m};\bar{y})]=\{\tp(\tau(\bar{m})/M)\;\colon\;\tau\in G_{\pi,\FC}\}$.
    Then, by $G_{\pi,\FC}$-invariance, if $\tau\in G_{\pi,\FC}$ we have
    $$F^{\varphi(\bar{x};\bar{y})}_{\nu_{\bar{x}}}\big(\tp(\tau(\bar{m})/\FC)\big)=\nu\big(\varphi(\bar{x};\tau(\bar{m})\big)=(\tau^{-1})_\ast (\nu)\big(\varphi(\bar{x};\bar{m})\big)=\nu\big(\varphi(\bar{x};\bar{m})\big).$$

    A similar argument shows that
    $F^{\varphi^{\textrm{opp}}(\bar{y};\bar{x})}_{\mu_{\bar{y}}}$
    is constant over $[\pi(\bar{x};\bar{m})]\supseteq\supp(\nu_{\bar{x}}|_M)$ and equal to $\mu(\varphi(\bar{m};\bar{y}))$. 
Only the inclusion $[\pi(\bar{x};\bar{m})]\supseteq\supp(\nu_{\bar{x}}|_M)$ requires a short justification. It follows, because by assumption,  $[\pi(\bar{m};\bar{x})] \supseteq \supp(\nu_{\bar{x}}|_M)$,  and we know that $[\pi(\bar{m};\bar{x})] = [\pi(\bar{x};\bar{m})]$. 

\begin{clm}
    $\nu_{\bar x}(\varphi(\bar{x};\bar{m})) = (\nu_{\bar{x}} \otimes \mu_{\bar{y}})(\varphi(\bar{x};\bar{y}))$.
\end{clm}

\begin{clmproof}
    Notice that 
\begin{equation*}
    (\nu_{\bar{x}} \otimes \mu_{\bar{y}})(\varphi(\bar{x};\bar{y})) = \int_{\supp(\mu)} F_{\nu}^{\varphi(\bar{x};\bar{y})} d\mu=\int_{\supp(\mu)}\nu(\varphi(\bar{x};\bar{m}))d\mu=\nu(\varphi(\bar{x};\bar{m})). 
\end{equation*}
\end{clmproof}

\begin{clm}
    $(\nu_{\bar{x}} \otimes \mu_{\bar{y}})(\varphi(\bar{x};\bar{y})) = \mu_{\bar y}(\varphi(\bar{m};\bar{y}))$.
\end{clm}

\begin{clmproof}
    By Theorem 5.16(a) from \cite{Wild},
    $$(\nu_{\bar{x}} \otimes \mu_{\bar{y}})(\varphi(\bar{x};\bar{y}))=
    (\mu_{\bar{y}} \otimes \nu_{\bar{x}})(\varphi(\bar{x};\bar{y}))= \int_{\supp(\nu)} F^{\varphi^{\textrm{opp}}(\bar{y};\bar{x})}_\mu d\nu=\mu(\varphi(\bar{m};\bar{y})).$$
\end{clmproof}
\noindent
By the above claims, we conclude that
$$\nu_{\bar x}(\varphi(\bar{x};\bar{m}))=\mu_{\bar y}(\varphi(\bar{m};\bar{y})).$$
Applying this in the special case of $\nu = \mu$, we obtain 
$$\mu_{\bar{x}} (\varphi(\bar{x};\bar{m}))=\mu_{\bar{y}} (\varphi(\bar{m};\bar{y})).$$
The last two exposed lines imply
$$\nu_{\bar{x}} (\varphi(\bar{x};\bar{m}))=\mu_{\bar{y}} (\varphi(\bar{m};\bar{y}))=\mu_{\bar{x}} (\varphi(\bar{x};\bar{m})).$$
Hence, we conclude that $\nu|_{M} = \mu|_{M}$, completing the proof. 
\end{proof}

\begin{lemma}\label{lemma: superfim and Borel-def}
In the following, we let $y$ denote a tuple of variables. Let $\mu,\nu \in \mathfrak{M}_{y}(\mathfrak{C})$ be Borel-definable over $M$ with $\mu|_{M} = \nu|_{M}$, and let $\mu$ be fim over $M$. 
If 
\begin{enumerate}
    \item $\mu$ is super-fim over $M$, or 
    \item $|\CL|\leqslant\aleph_0$ and
    $\nu$ is Borel-definable over a countable model,
\end{enumerate}
then $\mu = \nu$. 
\end{lemma}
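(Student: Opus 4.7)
The plan is to exploit the fim property of $\mu$ to approximate $\mu(\varphi(y;c))$ by averages $\Av(\bar a)(\varphi(y;c))$ over tuples $\bar a$ satisfying an $M$-formula $\theta_n$, and then to show that $\nu^{(n)}$ also concentrates on $[\theta_n]$. Concretely, given $\varphi(y;z)$, $c \in \FC^z$ and $\epsilon > 0$, the fim hypothesis produces $n$ and $\theta_n(y_1,\ldots,y_n)$ over $M$ with $\mu^{(n)}([\theta_n]) > 1 - \epsilon/4$ and $|\Av(\bar a)(\varphi(y;c')) - \mu(\varphi(y;c'))| < \epsilon/4$ uniformly in $c' \in \FC^z$ for all $\bar a \models \theta_n$. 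Using that the marginal of $\nu^{(n)}$ on any single coordinate equals $\nu$, a direct computation yields $\nu(\varphi(y;c)) = \int \Av(\bar a)(\varphi(y;c)) \, d\nu^{(n)}(\bar a)$. Splitting this integral along $[\theta_n]$ and its complement gives $|\nu(\varphi(y;c)) - \mu(\varphi(y;c))| \leq \epsilon/4 + 2\nu^{(n)}([\neg\theta_n])$, so it suffices to establish $\mu^{(n)}|_M = \nu^{(n)}|_M$ for every $n$, which forces $\nu^{(n)}([\theta_n]) = \mu^{(n)}([\theta_n]) > 1 - \epsilon/4$ and lets $\epsilon \to 0$.

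For case (1), the equality $\mu^{(n)}|_M = \nu^{(n)}|_M$ will be shown by induction on $n$, with the base case being the hypothesis. For the inductive step, take $\psi(\bar y_n, y_{n+1})$ over $M$ and compute
\[
\mu^{(n+1)}(\psi) = \int \mu(\psi(\bar b_n, y_{n+1}))\,d\mu^{(n)}|_M \overset{(\star)}{=} \int \mu^{(n)}(\psi(\bar y_n, b_{n+1}))\,d\mu|_M \overset{(\dagger)}{=} \int \mu^{(n)}(\psi(\bar y_n, b_{n+1}))\,d\nu|_M,
\]
where $(\star)$ is Fubini applied to the fim measure $\mu$ and the Borel-definable measure $\mu^{(n)}$, and $(\dagger)$ uses $\mu|_M = \nu|_M$. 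A second application of Fubini, this time for the fim measure $\mu^{(n)}$ (where the super-fim hypothesis is essential) and the Borel-definable $\nu$, converts the last integral into $\int \nu(\psi(\bar b_n, y_{n+1}))\,d\mu^{(n)}|_M$; the inductive hypothesis then identifies this with $\nu^{(n+1)}(\psi)$, completing the induction.

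For case (2), super-fim is unavailable and the second Fubini is not directly justified, so the plan is to exploit countability. By a L\"{o}wenheim-Skolem argument, I will pass to a countable elementary submodel $M' \preceq \FC$ containing the parameters of all fim-witnesses $\theta_n$ of $\mu$ (one family per $\mathcal{L}$-formula, of which there are only countably many) together with the parameters over which $\nu$ is Borel-definable, so that $\mu$ remains fim over $M'$, $\nu$ remains Borel-definable over $M'$, and $\mu|_{M'} = \nu|_{M'}$. Over the countable $M'$ the type spaces $S_{\bar y}(M')$ are compact Polish, hence the Borel functions $F_\nu^\psi$ can be approximated in measure by continuous ones via Lusin's theorem, and the inner regularity of $\mu^{(n)}|_{M'}$ and $\nu^{(n)}|_{M'}$ as regular Borel measures on Polish spaces substitutes for the missing Fubini to produce $\nu^{(n)}([\theta_n]) \approx \mu^{(n)}([\theta_n])$. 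The main obstacle in both cases is securing the second Fubini $\mu^{(n)} \otimes \nu = \nu \otimes \mu^{(n)}$: super-fim handles it cleanly in case (1), whereas case (2) requires the more delicate Polish-space regularity machinery just described to supply uniform bounds across all $n$ and all test formulas.
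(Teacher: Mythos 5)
Your overall strategy---using the fim witnesses $\theta_n$ to reduce everything to showing $\mu^{(n)}|_M=\nu^{(n)}|_M$ for all $n$, and then comparing $\nu(\varphi(y;c))=\int\Av(\bar a)(\varphi(y;c))\,d\nu^{(n)}$ with $\mu(\varphi(y;c))$ on $[\theta_n]$---is a legitimate repackaging of the paper's argument, which instead builds the alternating measure $\lambda=\mu\otimes\nu\otimes\mu\otimes\cdots$, shows $\lambda|_M=\mu^{(\omega)}|_M$, and invokes self-averaging of fim measures. For case (1) your induction is correct and rests on exactly the same two ingredients as the paper's: that a fim measure commutes with any Borel-definable measure (Theorem 5.16(a) of \cite{Wild}, which justifies both $(\star)$ and, via super-fim applied to $\mu^{(n)}$, your second Fubini), together with the observation that equality of $\mu^{(n)}|_M$ and $\nu^{(n)}|_M$ as Keisler measures gives equality as regular Borel measures, so the Borel integrand $F_\nu^\psi$ may be integrated against either.

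Case (2) has a genuine gap. The missing identity $\mu^{(n)}\otimes\nu=\nu\otimes\mu^{(n)}$ is not something Lusin's theorem or inner regularity can supply: approximating $F_\nu^\psi$ by continuous functions off a set that is small for one of the two product measures does not let you exchange the order of a Morley product, because the obstruction is the failure of Fubini for the \emph{fiber functions} of non-definable measures, not the Borelness of a single integrand. The tool that actually does the work is associativity of the Morley product for measures Borel-definable over a countable model in a countable language (Theorem 2.13 of \cite{Wild}); granted that, the commutation follows purely algebraically from the single instance $\nu\otimes\mu=\mu\otimes\nu$ (which holds because $\mu$ is fim) by telescoping, $\nu\otimes\mu^{(n)}=(\nu\otimes\mu)\otimes\mu^{(n-1)}=(\mu\otimes\nu)\otimes\mu^{(n-1)}=\mu\otimes(\nu\otimes\mu^{(n-1)})=\cdots=\mu^{(n)}\otimes\nu$. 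Your proposal never produces this identity. A secondary problem is your L\"owenheim--Skolem step: the countable model over which $\nu$ is Borel-definable need not be contained in $M$, so a countable $M'$ containing it together with the fim witnesses will in general not satisfy $\mu|_{M'}=\nu|_{M'}$; if instead you force $M'\preceq M$, you lose Borel-definability of $\nu$ over $M'$. The clean way out is to keep the induction over $M$ and use the countable model only to secure the commutation identity.
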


\begin{proof} 
The proof follows the structure of the proof of Lemma 2.14 from \cite{HruPiSi13}. 
We first construct a measure $\lambda\in\mathfrak{M}_{\mathbf{y}}(\FC)$ where $\mathbf{y} = (y_i)_{i = 1}^{\omega}$ and for each $i < \omega$, $|y_i| = |y|$. 
\begin{enumerate}
    \item $({\lambda_1})_{y_1} = \mu_{y_1}$. 
    \item If $n$ is even, then $({\lambda_{n}})_{y_1,\dots,y_n}= \nu_{y_n} \otimes ({\lambda_{n-1}})_{y_1,\dots,y_{n-1}}$. 
    \item If $n$ is odd, then $({\lambda_n})_{y_1,\ldots, y_n} = \mu_{y_n} \otimes ({\lambda_{n-1}})_{y_1,\dots,y_{n-1}}$.
    \item $\lambda = \bigcup_{n=1}^{\omega} ({\lambda_{n}})_{y_1,\dots,y_n}$.
\end{enumerate}

\begin{clm}
We claim that $\lambda|_{M} = \mu^{(\omega)}|_{M}$.
\end{clm}

\begin{clmproof}
The proof proceeds by induction on the power of the Morley product. The base case is obvious. Now, assume that $\lambda|_{M,y_1,\dots,y_n}=\mu^{(n)}_{y_1,\dots,y_n}|_M$ and consider some
$\theta(y_1,\dots,y_{n+1}) \in \mathcal{L}(M)$.
For odd $n+1$, let $\bar{y} := (y_1,\dots,y_n)$ and notice 
that
\begin{align*}
    \lambda\big( \theta(y_1, \dots,y_{n+1})  \big) &=
    \big(\mu_{y_{n+1}} \otimes \lambda_{n})\big(\theta(y_1,\dots,y_{n+1})\big) \\
    &= \int_{S_{\bar{y}}(M)}  F^{\theta^{\textrm{opp}}(y_{n+1};\bar{y})}_{\mu_{y_{n+1}}}\,d\left( \lambda_{n}|_{M} \right)
    = \int_{S_{\bar{y}}(M)} F^{\theta^{\textrm{opp}}(y_{n+1};\bar{y})}_{\mu_{\bar{y}_{n+1}}}\,d \left( \mu^{(n)}_{\bar{y}}|_{M} \right) \\
    &= 
    \mu^{(n+1)} \big(\theta(y_{1},\dots,y_{n+1})\big),
\end{align*}
where the third equality follows from the induction hypothesis. When $n+1$ is even, we compute the following
\begin{align*}
    \lambda\big( \theta(y_1, \dots,y_{n+1}) \big) &=
\big(\nu_{y_{n+1}}\otimes\lambda_{n}\big)\big(\theta(y_1, \dots,y_{n+1})\big) \\
    &= \int_{S_{\bar{y}}(M)}  F^{\theta^{\textrm{opp}}(y_{n+1};\bar{y})}_{\nu_{{y}_{n+1}}}\,d\left(\lambda_{n}|_M \right)\\
    &\overset{(a)}{=} \int_{S_{\bar{y}}(M)}  F^{\theta^{\textrm{opp}}(y_{n+1};\bar{y})}_{\nu_{{y}_{n+1}}}\,d\left( \mu^{(n)}_{\bar{y}} |_M \right)\\
    &= \big(\nu_{y_{n+1}}\otimes\mu^{(n)}_{\bar{y}}\big) \big(\theta(y_1, \dots,y_{n+1}) \\
    & \overset{(\diamondsuit)}{=}
    \big(\mu^{(n)}_{\bar{y}} \otimes \nu_{\bar{y}_{n+1}}\big)
    \big(\theta(y_{1},\ldots,y_{n+1})\big) \\
    &= \int_{S_{y_{n+1}}(M)} F^{\theta(\bar{y};y_{n+1})}_{\mu^{(n)}_{\bar{y}}}\,d\left( \nu_{y_{n+1}}|_{M} \right) \\
    &\overset{(b)}{=}\int_{S_{y_{n+1}}(M)} F^{\theta(\bar{y};y_{n+1})}_{\mu^{(n)}_{\bar{y}}}\,d\left( \mu_{y_{n+1}}|_{M} \right) \\
    &= \big(\mu^{(n)}_{\bar{y}} \otimes \mu_{y_{n+1}}\big)
    \big(\theta(y_{1},\ldots,y_{n+1})\big) \\
    &\overset{(c)}{=}  
    \big(\mu_{y_{n+1}}\otimes \mu^{\otimes (n)}_{\bar{y}}\big)
    \big(\theta(y_{1},\ldots,y_{n+1})\big) \\
    &= \mu^{(n+1)}
    \big(\theta(y_{1},\ldots,y_{n+1})\big). 
\end{align*}
Equation $(a)$ follows from the induction hypothesis. Equation $(b)$ is the hypothesis of the lemma. Equation $(c)$ follows from the fact that fim measures commute with Borel definable measures in arbitrary theories (i.e., Threorem 5.16.(a) of \cite{Wild}).

We now justify equation $(\diamondsuit)$ under the two separate hypotheses of the lemma. First, we suppose that $\mu$ is \emph{super-fim} over $M$. By definition, this implies that $\mu^{(n)}$ is fim over $M$. Again, by Threorem 5.16.(a) from \cite{Wild}, this implies that $\mu^{(n)}$ commutes with any Borel-definable measure, in this case $\nu_{y_{n+1}}$, and thus the equation is justified.

Now we assume that $|\CL|\leqslant\aleph_0$ and that $\nu$ is Borel-definable over a countable model.
Then $\nu$ and $\mu$ are Borel-definable over a countable model $M_0\preceq\FC$. 
Theorem 2.13 from \cite{Wild} implies that the Morley product of $\nu$ with any Morley power of $\mu$ is Borel-definable over $M_0$ and any product of these measures is associative. Hence, by associativity and Theorem 5.16.(a) from \cite{Wild},
        \begin{align*}
            \nu \otimes \mu^{(n)} &= (\nu \otimes \mu) \otimes \mu^{(n-1)} = (\mu \otimes \nu) \otimes \mu^{(n-1)} \\ 
             &= \mu \otimes (\nu \otimes \mu^{(n-1)}) = \ldots = \mu^{(n)} \otimes \nu,
        \end{align*}
and we derive $(\diamondsuit)$. 
\end{clmproof}

Suppose that $\mu\neq\nu$. Then there exists an $\mathcal{L}_{y}(\FC)$-formula $\varphi(y;\bar{b})$ such that  $\mu(\varphi(y;\bar{b}))=r\neq s=\nu(\varphi(y;\bar{b}))$. Then 
$\lambda(\varphi(y_n;\bar{b}))$
is equal to $r$ for odd $n$ and to $s$ for even $n$. However, since $\mu$ is fim over $M$, this implies that $\mu$ is \emph{self-averaging} (cf. Definition 2.2 and Theorem 2.7 in \cite{ConGanHan23}). In other words, because $\lambda|_{M} = \mu^{(\omega)}|_{M}$, we may conclude that
\begin{equation*}
\lim_{i \to \infty} \lambda(\varphi(y_i;\bar{b})) = \mu(\varphi(y;\bar{b})). 
\end{equation*} 
Thus, we have a contradiction. 
\end{proof}

\begin{cor}\label{cor: uniqueness for measures}
    Let $G_{\pi,\FC}\leqslant\aut(\FC)$ be a relatively $\bar{m}$-type-definable over $M$ subgroup which is fim 
(where without loss of generality $\pi(\bar x;\bar y) \vdash \bar x \equiv \bar y$). 
    Let $ \mu \in \mathfrak{M}^{\inv}_{\pi(\bar{m};\bar{y})}(\FC,M)$ be (left) $G_{\pi,\FC}$-invariant and fim over $M$. Then $\mu$ is the unique (left) $G_{\pi,\FC}$-invariant measure in $\mathfrak{M}^{\inv}_{\pi(\bar{m};\bar{y})}(\FC,M)$ which is Borel-definable over $M$, 
assuming one of the following: 
    \begin{enumerate} 
    \item $T$ is NIP. 
(In this case, $\mu$ is the unique measure in $\mathfrak{M}^{\inv}_{\pi(\bar{m};\bar{y})}(\FC,M)$ which is (left) $G_{\pi,\FC}$-invariant measure, since $M$-invariance is equivalent to Borel-definability over $M$.)
    \item $\mathcal{L}$ is countable and $M$ is countable. 
    \item The measure $\mu$ is super-fim over $M$. 
    \end{enumerate} 
\end{cor}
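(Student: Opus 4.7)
The plan is to combine the two preceding lemmas in a straightforward deduction. Given any competitor $\nu \in \mathfrak{M}^{\inv}_{\pi(\bar{m};\bar{y})}(\FC,M)$ that is (left) $G_{\pi,\FC}$-invariant and Borel-definable over $M$, I would first apply Lemma~\ref{lemma: invariance to uniqueness over M} to the pair $(\mu,\nu)$: both measures are Borel-definable over $M$, both are concentrated on $[\pi(\bar m;\bar y)]$ and left $G_{\pi,\FC}$-invariant, and $\mu$ is fim over $M$. This immediately yields $\mu|_M=\nu|_M$. The remaining task is to upgrade the agreement on $M$ to global agreement, which is exactly the content of Lemma~\ref{lemma: superfim and Borel-def}.

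Second, I would verify that each of the three hypotheses of Corollary~\ref{cor: uniqueness for measures} puts us in a position to invoke Lemma~\ref{lemma: superfim and Borel-def}. Under hypothesis (3) there is nothing to do: $\mu$ is super-fim over $M$ by assumption, so Lemma~\ref{lemma: superfim and Borel-def}(1) applies and gives $\mu=\nu$. Under hypothesis (1), i.e.\ NIP, two standard facts must be invoked: first, by Fact~\ref{fact:MP}(1), every measure in $\mathfrak{M}^{\inv}_{\bar y}(\FC,M)$ is automatically Borel-definable over $M$, so the Borel-definability assumption on $\nu$ is free and one in fact obtains uniqueness among \emph{all} (left) $G_{\pi,\FC}$-invariant measures in $\mathfrak{M}^{\inv}_{\pi(\bar{m};\bar{y})}(\FC,M)$ (this is the parenthetical remark in the statement); second, as recalled in Question~\ref{remark: fim implies super-fim?}, under NIP any fim measure is automatically super-fim, so once again Lemma~\ref{lemma: superfim and Borel-def}(1) applies.

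Under hypothesis (2), $\CL$ and $M$ are both countable. In this case $\nu$ is by assumption Borel-definable over $M$, hence Borel-definable over the countable model $M$ itself. Together with the equality $\mu|_M=\nu|_M$ and the fim-ness of $\mu$, this is precisely the scenario of Lemma~\ref{lemma: superfim and Borel-def}(2), which yields $\mu=\nu$.

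There is no real obstacle here, the proof being essentially a bookkeeping exercise once the two preparatory lemmas are in hand; the only mildly delicate point is remembering to justify the additional claim in case (1) that one does \emph{not} need to add Borel-definability to the assumption on $\nu$, which is handled by the NIP characterization of invariance as Borel-definability. I would phrase the proof as a one-paragraph argument stating ``fix $\nu$ as above; by Lemma~\ref{lemma: invariance to uniqueness over M} we have $\mu|_M=\nu|_M$; now invoke Lemma~\ref{lemma: superfim and Borel-def} under the appropriate hypothesis,'' and then dispose of the three cases in a short list.
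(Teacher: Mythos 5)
Your proposal is correct and follows exactly the paper's argument: the paper likewise deduces cases (2) and (3) directly from Lemma~\ref{lemma: invariance to uniqueness over M} followed by Lemma~\ref{lemma: superfim and Borel-def}, and treats case (1) as a particular case of (3) because fim implies super-fim under NIP, with the parenthetical remark handled by the NIP equivalence of invariance and Borel-definability. No gaps.
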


\begin{proof} 
Items (2) and (3) follow directly from Lemmas \ref{lemma: invariance to uniqueness over M} and \ref{lemma: superfim and Borel-def}. Item (1) is a particular case of (3), because NIP implies that fim measures are super-fim (see Remark \ref{remark: fim implies super-fim?}).
\end{proof}

\subsection{Main conjecture for types}\label{subsec:6.2}
Here, we prove a variant of the main conjecture, Conjecture \ref{conjecture: main conjecture}, in the context of types under the various model-theoretic assumptions.
These results are similar to ones proved by the first and third author (along with Chernikov) in the context of definable groups. These proofs are quite similar and we refer the reader to Section 2 of \cite{CGK} to see for themselves. 

As usual, we let $\FC$ be a monster model of a fixed theory and $\FC' \succeq \FC$ be a bigger monster model in which $\FC$ is small.
Let $M\preceq\FC$ be enumerated by $\bar{m}$ and let $\bar{y}$ be a tuple of variables corresponding to $\bar{m}$, and let $\bar{x}$ be a copy of $\bar{y}$.
Consider a $\emptyset$-definable partial type $\pi(\bar{x};\bar{y})$ which contains ``$\bar{x}\equiv\bar{y}$'' and assume that $G_{\pi,\FC}$ forms a subgroup of 
$\aut(\FC)$.

Below we first prove a counterpart of a fact on definable generically stable groups (namely, see \cite[Fact 2.5(2)]{CGK} and the proof of \cite[Lemma 2.1]{PiTa}). Finding the correct statement in the context of theories is not so straightforward (see the example below). This proposition and the example illustrate both the similarities and the differences between the definable group setting and the setting of arbitrary theories.   

\begin{proposition}\label{proposition: generic stability and the inverse}
Let $p \in S^{\inv}_{\pi(\bar{m};\bar{y})}(\FC,M)$ be generically stable and (left) $G_{\pi,\FC}$-invariant. 
Then there exists $\sigma \in \aut(\FC')$ such that $\sigma^{-1}(\bar m) = \sigma(\bar m) \models p$.
\end{proposition}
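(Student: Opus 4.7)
The plan is to construct $\bar a\in\FC'$ realizing $p$ and satisfying $(\bar m,\bar a)\equiv_\emptyset(\bar a,\bar m)$; strong homogeneity of $\FC'$ will then produce the required involution swapping $\bar m$ and $\bar a$. The key ingredients are: symmetry of the Morley product on the generically stable type $p$ (to produce a symmetric pair of realizations of $p$), strong homogeneity (to transport this symmetry onto $\bar m$), and the $G_{\pi,\FC}$-invariance of $p$ together with Lemma~\ref{lemma: equivalence of invariance} and Proposition~\ref{prop:formula.for.star.on.types} (to verify that the transported element still realizes $p$). The main obstacle will be this last verification.

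Working in a bigger monster $\FC''\succeq\FC'$, I would pick $\bar b_1\in\FC'$ with $\bar b_1\models p$ and $\bar b_2\in\FC''$ with $\tp(\bar b_2/\FC')$ equal to the $M$-invariant extension $p|_{\FC'}$ of $p$. Then $(\bar b_1,\bar b_2)\models p^{(2)}$ over $\FC$, and symmetry of the Morley product on generically stable types gives $(\bar b_1,\bar b_2)\equiv_\FC(\bar b_2,\bar b_1)$, hence $(\bar b_1,\bar b_2)\equiv_\emptyset(\bar b_2,\bar b_1)$. Since $\bar b_1\equiv_\emptyset\bar m$, by strong homogeneity of $\FC'$ I pick $\tau\in\aut(\FC')$ with $\tau(\bar m)=\bar b_1$, extend to $\tilde\tau\in\aut(\FC'')$, and set $\bar a_0:=\tilde\tau^{-1}(\bar b_2)\in\FC''$. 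Applying $\tilde\tau^{-1}$ to the above symmetric equivalence yields $(\bar m,\bar a_0)\equiv_\emptyset(\bar a_0,\bar m)$.

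For the hard part, verifying $\tp(\bar a_0/\FC)=p$: a direct pushforward computation gives $\tp(\bar a_0/\FC)=(\tau^{-1})_*(p|_{\FC'})|_\FC$. Let $q:=\tp(\tau^{-1}(\bar m)/\FC)$. From $\bar b_1\models p\supseteq\pi(\bar m;\bar y)$ we get $\models\pi(\bar m;\bar b_1)$; applying $\tau^{-1}$ gives $\models\pi(\tau^{-1}(\bar m);\bar m)$, and Remark~\ref{remark: G_pi=G_pi^opp} (applicable since $G_{\pi,\FC}$ is a subgroup) lifts this to $\models\pi(\bar m;\tau^{-1}(\bar m))$, so $q\in S_{\pi(\bar m;\bar y)}(\FC)$. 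Proposition~\ref{prop:formula.for.star.on.types}, applied with $\tau^{-1}$ as the realizer of $q$, identifies $(\tau^{-1})_*(p|_{\FC'})|_\FC$ with $p\ast q$, while Lemma~\ref{lemma: equivalence of invariance}, applied to $\delta_p$ (Borel-definable by generic stability, $G_{\pi,\FC}$-invariant by assumption), yields $p\ast q=p$. Hence $\bar a_0\models p$. Finally, to land in $\aut(\FC')$, the consistent type $p(\bar y)\cup\{\varphi(\bar m;\bar y)\leftrightarrow\varphi(\bar y;\bar m):\varphi\in\mathcal L\}$ over $\FC$ is realized by $\bar a_0$ in $\FC''$, hence by saturation realized by some $\bar a\in\FC'$; strong homogeneity of $\FC'$ then supplies $\sigma\in\aut(\FC')$ with $\sigma(\bar m)=\bar a$ and $\sigma(\bar a)=\bar m$, giving $\sigma^{-1}(\bar m)=\sigma(\bar m)=\bar a\models p$.
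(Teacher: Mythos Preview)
Your proof is correct and rests on the same two ingredients as the paper's: symmetry of $p^{(2)}$ from generic stability, and $G_{\pi,\FC}$-invariance of $p$ to guarantee that the translated realization still satisfies $p$. The execution differs, however. You work in $\FC''$, build $\bar a_0$ there, identify $\tp(\bar a_0/\FC)$ with $p\ast q$ via Proposition~\ref{prop:formula.for.star.on.types}, and then invoke Lemma~\ref{lemma: equivalence of invariance} to conclude $p\ast q=p$; finally you descend to $\FC'$ by saturation and extract $\sigma$ from strong homogeneity applied to the pair $(\bar m,\bar a)\equiv_\emptyset(\bar a,\bar m)$. The paper instead stays inside $\FC'$: it picks $g,f\in\aut(\FC')$ with $g(\bar m)\models p$ and $f(\bar m)\models p|_N$ for a small $g$-invariant $N\succeq\FC$, uses $G_{\pi,N}$-invariance directly to get $g^{-1}f(\bar m)\models p$, and then manufactures $\sigma=g^{-1}f\chi_1^{-1}$ explicitly from the automorphism $h\in\aut(\FC'/\FC)$ swapping $f(\bar m)$ and $g(\bar m)$. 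Your route has the virtue of reusing the $\ast$-product machinery cleanly; the paper's is more self-contained and avoids the passage through $\FC''$.
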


\begin{proof}
Take $g \in \aut(\FC')$ with $g(\bar m) \models p$. It is easy to find a small $N \preceq \FC'$ containing $\FC$ and satisfying $g[N]=N$. Choose $f \in  \aut(\FC')$ such that $f(\bar m) \models p|_N$. Then $f,g \in G_{\pi,\FC'}$. By the $G_{\pi,\FC}$-invariance of $p$, we get that $p|_N$ is $G_{\pi,N}$-invariant, and so $g^{-1}f(\bar m) \models p|_N$; hence, $g^{-1}f(\bar m) \models p$. 

Since $p$ is generically stable and $(f(\bar m),g(\bar m)) \models p^{(2)}$, we get that $(f(\bar m),g(\bar m)) \equiv_{\FC} (g(\bar m),f(\bar m))$. Hence, there is $h \in \aut(\FC'/\FC)$ such that $h(f(\bar m)) =g(\bar m)$ and $h(g(\bar m))=f(\bar m)$. So there exist $\chi_1,\chi_2 \in \aut(\FC'/M)$ such that $f=h^{-1}g\chi_1$ and $g=h^{-1}f\chi_2$. This implies that $f^{-1}g= \chi_1^{-1}g^{-1}f\chi_2$. 

Put $\sigma:= g^{-1}f \chi_1^{-1}$. Then $\sigma(\bar m)=g^{-1}f(\bar m) \models p$. On the other hand, $\sigma^{-1}(\bar m)= \chi_1 f^{-1}g(\bar m) = \chi_1 \chi_1^{-1}g^{-1}f\chi_2(\bar m) =g^{-1}f(\bar m)$ which equals $\sigma(\bar m)$.
\end{proof}

The following example shows that one cannot strengthen the conclusion of Proposition \ref{proposition: generic stability and the inverse} by saying that for every $\sigma \in \aut(\FC')$ such that $\sigma(\bar m) \models p$, we have that $\sigma^{-1}(\bar m) \models p$. 

\begin{example}
We use the affine sort notation from Section \ref{sec: affine sort}.
Let $G=M:=(\mathbb{Q},+)$.
Let $q = \tp(g/\FC)\in S_G(\FC)$ be the unique nonalgebraic complete type over $\FC$, where $g \in \FC'$.  Choose $\sigma \in \aut(\FC'/M)$ so that $\sigma^{-1}(g) \in \FC$. Then $p:=\tp((g,\sigma)(\bar n)/\bar{\FC}) \in S^{\inv}_{\pi(\bar{x}\bar{y};\bar{n})}(\bar{\FC},\bar M)=\widetilde{H}_{\bar{\FC},\bar{n}}\cap S^{\inv}_{\bar{n}}(\bar{\FC},\bar{M})$ is (left) $H$-invariant and idempotent in the stable theory $\Th(\bar M)$ (where recall that $H \leqslant \aut(\FC)$ is relatively $\bar n$-type-definable over $\bar n$), but 
$\tp((g,\sigma)^{-1}(\bar n)/\bar \FC) \ne p$.
\end{example}

\begin{proof}
It is clear that $p \in  \widetilde{H}_{\bar{\FC},\bar n}$.
It is also clear that $f(p)=\tp(-g/\FC) =q$ and $f(\tp((g,\sigma)^{-1}(\bar n)/\bar \FC))=f(\tp((\sigma^{-1}(-g),\sigma^{-1})(\bar n)/\bar \FC)) = \tp(\sigma^{-1}(g)/\FC) \ne q$, as $\tp(\sigma^{-1}(g)/\FC)$ is algebraic (where $f \colon \widetilde{H}_{\bar{\FC},\bar n} \to S_G(\FC)$ is the homeomorphism from Section \ref{sec: affine sort}). So, by Corollary \ref{corollary: preservation for infinite tuples},
Remark \ref{cor:aff01} and Lemma \ref{lemma: affine star transfer for types}, it remains to show the following statements:

\begin{enumerate}
\item $q$ is invariant over $M$;
\item $q$ is right $G(\FC)$-invariant;
\item $q$ is idempotent.
\end{enumerate}

All three items ((1) even over $\emptyset$) follow easily from the fact that $q$ is the unique nonalgebriac type in $S_G(\FC)$.
\end{proof}

We begin by listing several properties of a generically stable type each of which  turns out to be equivalent to being concentrated on the stabilizer, and we call the types with these equivalent properties {\em generically transitive}. We deduce from our earlier observations that if a generically stable type is generically transitive, then the main conjecture holds for this type. We then use it to prove the main conjecture  for types under the various assumptions.

Let $p \in S^{\inv}_{\bar m}(\FC,M)$ be generically stable. Let $p' \in  S^{\inv}_{\bar m}(\FC',M)$ be the unique extension of $p$ to an $M$-invariant complete type over $\FC'$. Let $G =\stab(p)$, a subgroup of $\aut(\FC)$ which is relatively $\bar m$-type-definable over $M$ by Lemma \ref{lemma: rel. type-definability}, so can be written as $G_{\pi, \FC}=\{\sigma \in \aut(\FC) \;\colon\; \models \pi(\sigma(\bar m);\bar m)\}$ for some type $\pi(\bar x; \bar y)$ without parameters such that
$\pi(\bar{x};\bar{y})$ contains ``$\bar{x}\equiv_{\emptyset}\bar{y}$''. Then $p'$ is definable over $M$ by the same defining scheme as $p$, and  $\stab(p')=G_{\pi,\FC'}$. 
Recall that since $G_{\pi, \FC}$ is a group and $\pi(\bar{x};\bar{y}) \vdash \bar{x}\equiv_{\emptyset}\bar{y}$, we we have $G_{\pi^{\textrm{opp}},\FC}=G_{\pi^{\textrm{opp}},\FC}$ and $\pi(\bar y;\bar m)$ is equivalent to $\pi(\bar m;\bar y)$.

\begin{remark}\label{remark: generic transitivity}
The following conditions are equivalent.
\begin{enumerate}
\item 
$p \in S^{\inv}_{\pi(\bar{m};\bar{y})}(\FC,M)$, i.e. $p$ extends the partial type $\pi(\bar m;\bar y)$.

\item 
For every $\sigma \in \aut(\FC')$ such that $\sigma(\bar m) \models p$, we have $\sigma(p')=p'$.

\item 
There exists  $\sigma \in \aut(\FC')$ such that $\sigma(\bar m) \models p$ and $\sigma(p')=p'$.

\item 
For every $A \subseteq \FC'$ and for every $\sigma \in \aut(\FC')$ such that $\sigma(\bar m) \models p$, if $\bar b \models p|_{A\sigma^{-1}(\bar m)}$, then $\sigma(\bar b) \models p|_{\sigma[A]\bar m}$.

\item 
There exists a small  $A \subseteq \FC'$ containing $M$ such that for every $\sigma \in \aut(\FC')$ with $\sigma(\bar m) \models p$, if $\bar b \models p|_{A\sigma^{-1}(\bar m)}$, then $\sigma(\bar b) \models p|_{\sigma[A]\bar m}$.

\item
There exists a small  $A \subseteq \FC'$ containing $M$ and $\sigma \in \aut(\FC')$ with $\sigma(\bar m) \models p$, such that if  $\bar b \models p|_{A\sigma^{-1}(\bar m)}$, then $\sigma(\bar b) \models p|_{\sigma[A]\bar m}$.
\end{enumerate}
\end{remark}

\begin{proof}
$(1) \Leftrightarrow (2) \Leftrightarrow (3)$. Consider any $\sigma \in \aut(\FC')$ such that $\sigma(\bar m) \models p$. Then we have the following equivalences:
$$\sigma(p')=p' \iff \sigma \in G_{\pi, \FC'} \iff \models \pi(\sigma(\bar m);\bar m) \iff p=\tp(\sigma(\bar m)/\FC) \in S_{\pi(\bar{m};\bar{y})}(\FC).$$

$(2) \Rightarrow (4)$. We have $\tp(\sigma(\bar b)/\sigma[A] \bar m) = \sigma(\tp(\bar b/ A\sigma^{-1}(\bar m)) =\sigma(p'|_{A\sigma^{-1}(\bar m)}) = \sigma(p')|_{\sigma[A] \bar m} = p' |_{\sigma[A] \bar m} = p |_{\sigma[A] \bar m}$.

$(4) \Rightarrow (5)$ and $(5) \Rightarrow (6)$ are trivial.

$(6) \Rightarrow (3)$. 
Pick $A$ and $\sigma$ witnessing that (6) holds. We will show that $\sigma$ witnesses that (3) holds. Suppose not. Then there is a small $B \subseteq \FC'$ containing $A\sigma^{-1}[M]$ such that $\sigma(p'|_B) = \sigma(p')|_{\sigma[B]} \ne p' |_{\sigma[B]}$. So there is $\bar b \in p'|_B(\FC')$ such that $\sigma(\bar b) \not\models  p' |_{\sigma[B]}$. On the other hand, by generic stability of $p$ over $M \subseteq A$, we have $\bar b \ind_{A\sigma^{-1}(\bar m)} B$. This clearly implies $\sigma(\bar b) \ind_{\sigma[A] \bar m} \sigma[B]$, but also, since $A$ and $\sigma$ witness (6), we get $\sigma(\bar b) \models p|_{\sigma[A] \bar m}$. These two observations together with generic stability of $p$ over $M$ imply that $\sigma(\bar b) \models p'|_{\sigma[B]}$, a contradiction.
\end{proof}

\begin{remark}\label{remark: extension of equivalences for gen. trans.}
In Remark \ref{remark: generic transitivity}, each assumption ``$\sigma(\bar m) \models p$'' can be replaced by ``$\sigma^{-1}(\bar m) \models p$''.
\end{remark}

\begin{proof}
In items (2) and (3), it follows immediately from the fact that $\sigma(p')=p'$ if and only if $\sigma^{-1}(p')=p'$. The proofs of $(2) \Rightarrow (4) \Rightarrow (5) \Rightarrow (6) \Rightarrow (3)$ do not actually use $\sigma(\bar m) \models p$, i.e. if we replace this condition by $\sigma^{-1}(\bar m) \models p$ in one of these items, we automatically get this condition in the remaining ones.
\end{proof}

\begin{definition}
    Following the terminology from Definition 2.13 of \cite{CGK}, we will call a generically stable type $p \in S^{\inv}_{\bar m}(\FC,M)$ {\em generically transitive over $M$} if the equivalent conditions from Remark \ref{remark: generic transitivity} hold.
\end{definition}

\begin{remark}\label{remark: generic transitivity and affine sort}
Working in the set-up of Section \ref{sec: affine sort}:
A type $p \in \widetilde{H}_{\bar{\FC}, \bar n}$ is generically transitive over $\bar M$ if and only if $f(p) \in S_G(\FC)$ is generically transitive over $M$.
\end{remark}

\begin{proof}
(Warning: in this remark, there are two different relatively $\bar{n}$-type-definable over $\bar{M}$ subgroups of $\aut(\bar{\FC})$ - the one from Section \ref{sec: affine sort} fixing pointwise the home sort, and the one given by $\stab(p)$.)

By Corollary \ref{corollary: preservation for infinite tuples}, $p$ is generically stable over $\bar M$ if and only if $f(p)$ is generically stable over $M$. So in the proof of both implications below, these equivalent conditions hold. 
 Let $f' \colon \widetilde{H}_{\bar{\FC'}, \bar n} \to S_G(\bar{\FC}')$ be the counterpart of $f$ from Section \ref{sec: affine sort} for $\FC'$ in place of $\FC$. 

($\Rightarrow$).
Assume that $p$ is generically transitive, i.e. condition (3) from Remark \ref{remark: generic transitivity} holds. This is witnessed by some $\bar{\sigma}=(g,\sigma) \in G(\FC') \rtimes \aut(\FC') = \aut(\bar{\FC}')$. As $\bar{\sigma}(\bar n) \models p \in \widetilde{H}_{\bar{\FC}, \bar n}$, we have that $\bar{\sigma} \in H_{\bar{\FC}'}$ (i.e., the counterpart of $H$ computed in $\bar{\FC}'$). By assumption and Remark \ref{rem:aff01}, $f'(p') = f'(\bar{\sigma}(p')) = f'(p')g^{-1}$. We also have $f(p) = \tp(g^{-1}/\FC)$, and 
$f'(p') \in S_G(\FC')$ is the unique $M$-invariant extension of $f(p)$. So $f(p)$ is generically transitive by item (5) of \cite[Remark 2.12]{CGK}.

$(\Leftarrow)$. Assume that $f(p)$ is generically transitive, i.e. condition (5) of  \cite[Remark 2.12]{CGK} holds. This is witnessed by some $g \in G(\FC')$. Put $\bar \sigma: =(g^{-1},\id_{\FC'}) \in \aut(\bar{\FC}'/\bar{M})\leqslant H_{\bar{\FC}'}$. Then $f(\tp(\bar{\sigma}(\bar n)/\bar{\FC}))=\tp(g/\FC) =f(p)$ (by the choice of $g$), so $\tp(\bar{\sigma}(\bar n)/\bar{\FC})=p$ by injectivity of $f$. By the choice of $g$ and Remark \ref{rem:aff01} (and the fact that $f'(p') \in S_G(\FC')$ is the unique $M$-invariant extension of $f(p)$), $f'(\bar{\sigma}(p')) = f'(p')g =f'(p')$, so $\bar{\sigma}(p') = p'$ since $f'$ is injective. Hence, $p$ is generically transitive by item (3) of Remark \ref{remark: generic transitivity}.
\end{proof}

By Propositions \ref{proposition: adaptation of 3.19 from [ChGa]} and Corollary \ref{cor: uniqueness for types}, using item (1) of  Remark \ref{remark: generic transitivity}, we see that Conjecture \ref{conjecture: main conjecture} for types is equivalent to the following:

\begin{conj}\label{conj: gen transitivity}
An idempotent generically stable type is generically transitive.
(In other words, if a generically stable type is idempotent, then it is concentrated on the type defining its stabilizer.)
\end{conj}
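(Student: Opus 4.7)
The plan is to establish the equivalent condition in item (3) of Remark \ref{remark: generic transitivity}: to find $\sigma \in \aut(\FC')$ with $\sigma(\bar m) \models p$ and $\sigma(p|_{\FC'}) = p|_{\FC'}$. Let $p \in S^{\inv}_{\bar m}(\FC,M)$ be idempotent and generically stable, and let $\pi(\bar x;\bar y)$ with $\pi \vdash \bar x \equiv \bar y$ be the partial type defining $\stab(p) = G_{\pi,\FC}$ via Lemma \ref{lemma: rel. type-definability}. Then $p$ is automatically left $G_{\pi,\FC}$-invariant (by definition of the stabilizer), and the target becomes $\sigma \in G_{\pi,\FC'} = \stab_{\FC'}(p|_{\FC'})$.

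The first step is to unpack the idempotency $p*p = p$ via the explicit formula of Proposition \ref{prop:formula.for.star.on.types}. For any $\tau \in \aut(\FC')$ with $\tau(\bar m) \models p$, the formula yields $p * p = \tau(p|_{\FC'})|_{\FC}$, and idempotency translates to the equality $\tau(p|_{\FC'})|_{\FC} = p$ as types over $\FC$. The desired equality $\tau(p|_{\FC'}) = p|_{\FC'}$ is precisely the same statement, but promoted to the larger model $\FC'$. Alongside this, I would exploit the symmetry of the Morley product $p_{\bar x} \otimes p_{\bar y} = p_{\bar y} \otimes p_{\bar x}$, which is a hallmark of generic stability and is already used in the proof of Proposition \ref{cor: uniqueness for types}; a pair of realizations of $p \otimes p$ is exchangeable over $\FC$.

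Specializing to the cases where the conjecture is actually verified in the paper: in the stable case, I would follow the strategy from \cite{CGK}. Generically stable types in a stable theory are stationary and have unique nonforking extensions. Using a Morley sequence in $p$ over $M$ together with the idempotency constraint $\tau(p|_{\FC'})|_{\FC} = p$, one checks that $\tau(p|_{\FC'})$ is itself a global nonforking extension of $p$ over $\FC'$; stationarity then forces $\tau(p|_{\FC'}) = p|_{\FC'}$, i.e.\ $\tau \in G_{\pi,\FC'}$, which yields item (3) of Remark \ref{remark: generic transitivity}. The rosy case proceeds analogously with thorn-independence in place of forking. Under NIP plus invariance under Kim--Pillay strong automorphisms, the strategy of Subsection \ref{subsec:6.3} provides yet another route via the compact topological-group structure of $\gal_{\mathrm{KP}}(T)$.

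The main obstacle in full generality is precisely bridging from $\tau(p|_{\FC'})|_{\FC} = p$ (a local equality over $\FC$) to $\tau(p|_{\FC'}) = p|_{\FC'}$ (the global equality over $\FC'$). Idempotency of the $\ast$-product is inherently a statement about restrictions to $\FC$, whereas generic transitivity demands control on $\FC'$. Without a stationarity or canonical-base mechanism --- which stability, rosiness, and KP-invariance each supply in their own way --- this upgrade appears out of reach, and it is exactly this gap that leaves Conjecture \ref{conj: gen transitivity} open beyond the cases listed above.
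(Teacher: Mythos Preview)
This statement is a \emph{conjecture}, not a theorem: the paper does not prove it in full generality and explicitly leaves it open. Your proposal correctly recognises this in the final paragraph, so as a ``proof'' it is (appropriately) incomplete by design. The paper's treatment consists of (i) reformulating the main conjecture for types as Conjecture~\ref{conj: gen transitivity} via Remark~\ref{remark: generic transitivity}, and (ii) proving it in rosy theories and for stable types by adapting Sections~2.7--2.10 of \cite{CGK}, using standard local ranks (or thorn-ranks) and their invariance under automorphisms. So your overall framing---reformulate via item~(3) of Remark~\ref{remark: generic transitivity}, handle special cases, isolate the gap---matches the paper's structure.

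Where your sketch diverges is in the stable/rosy case. You say you would ``follow the strategy from \cite{CGK}'' but then describe a stationarity argument: from $\tau(p|_{\FC'})|_{\FC}=p$ one would like to conclude that $\tau(p|_{\FC'})$ is a nonforking extension of $p$, and then invoke uniqueness. This is not the \cite{CGK} argument the paper imports; that argument uses local ranks (Shelah degrees, thorn-ranks) and their automorphism-invariance rather than stationarity directly. More importantly, your key step ``one checks that $\tau(p|_{\FC'})$ is itself a global nonforking extension of $p$'' is exactly the nontrivial content and is not justified: $p|_{\FC'}$ does not fork over $M$, so $\tau(p|_{\FC'})$ does not fork over $\tau(M)$, but $\tau(M)$ is not $M$ (since $\tau(\bar m)\models p$), and getting nonforking over $M$ (or over $\FC$) from the idempotency equation alone is precisely where the rank machinery is doing the work. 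So the stable-case sketch, as written, has the same gap you correctly identify for the general case; the rank-based approach of \cite{CGK} is what actually closes it in rosy theories.
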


The material from Sections 2.7-2.10 from \cite{CGK} goes through in a slightly simplified form. Namely, we do not need any stratified local ranks; we just use standard local ranks (or Shelah degrees or thorn-ranks) and the trivial fact that they are invariant under automorphisms in place of invariance under the action of the definable group in question. In conclusion, we get the following:

\begin{cor}\label{corollary: generic transitivity for types in rosy theories}
In every rosy (in particular, in every stable or even simple) theory, each idempotent generically stable type is generically transitive. In particular, Conjecture\ref{conjecture: main conjecture} holds for types in rosy theories. Moreover, in any theory, each idempotent type which is stable over some model $M$ is generically transitive.
\end{cor}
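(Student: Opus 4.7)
My plan is to adapt the rank-based arguments of Sections 2.7--2.10 of \cite{CGK}, which establish Conjecture \ref{conjecture: main conjecture for definable groups} for types in rosy theories (resp.\ for stable types in arbitrary theories), to the present context. By the discussion immediately preceding the statement, it suffices to verify Conjecture \ref{conj: gen transitivity}: every idempotent generically stable type $p \in S_{\bar m}^{\inv}(\FC, M)$ is generically transitive over $M$. Via Remark \ref{remark: generic transitivity}(3), this reduces to producing some $\sigma \in \aut(\FC')$ with $\sigma(\bar m) \models p$ and $\sigma(p') = p'$, where $p'$ is the unique $M$-invariant extension of $p$ to $\FC'$.

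The key technical ingredient in \cite{CGK} is a stratified local rank on partial types, designed to be invariant under right translation by the ambient definable group. In our setting, the appropriate replacement is the standard local thorn-rank (in the rosy case) or Shelah's local $\varphi$-rank (in the stable case), applied to formulas involving the parameters $\bar m$. The crucial simplification, noted in the excerpt, is that these ordinary local ranks are already invariant under $\aut(\FC)$: for any formula $\varphi(\bar x; \bar z)$, any tuple $\bar c$ in $\FC$, and any $\tau \in \aut(\FC)$, the local ranks of $\varphi(\bar x; \bar c)$ and $\varphi(\bar x; \tau(\bar c))$ coincide. This $\aut(\FC)$-invariance is precisely what the stratification was engineered to provide in the definable group setting, so the transcription is both cleaner and shorter: every appeal to the stratified local rank becomes an appeal to the corresponding standard local rank, and every use of translation-invariance under $G$ becomes a use of automorphism-invariance.

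With this substitution in place, the central combinatorial step runs as follows. Fix $\tau \in \aut(\FC')$ with $\tau(\bar m) \models p$ and $\sigma \in \aut(\FC'')$ (in a bigger monster $\FC'' \succ \FC'$) such that $\sigma(\bar m) \models p'$. By Proposition \ref{prop:formula.for.star.on.types} and idempotency of $p$, we have $\tp(\tau''\sigma(\bar m)/\FC) = p * p = p$, so $\tau''\sigma(\bar m) \models p$, where $\tau''$ is any extension of $\tau$ to $\aut(\FC'')$. A formula-by-formula local rank calculation, mirroring the one in \cite[Section 2.8]{CGK}, then shows that $\tp(\sigma(\bar m)/\FC\tau(\bar m))$ and $p|_{\FC\tau(\bar m)}$ must have the same local rank on every relevant formula; by stationarity of generically stable types (and the uniqueness of nonforking extensions at maximal rank), this forces $\tp(\sigma(\bar m)/\FC) = p$, so $\sigma(p') = p'$ as required. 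The final sentence of the corollary is handled identically: one only needs the local $\varphi$-rank for formulas in which $p$ is stable, and no global rosiness of $T$ is used.

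The main obstacle is purely notational: ensuring that the rank bookkeeping of \cite{CGK}, phrased for subsets of a definable group $G$ with $G$ acting by translation, transfers cleanly to our setting where $\aut(\FC)$ acts on partial types over $\FC$ by permuting parameters. The payoff of the simplification noted in the excerpt is that the stratified local rank --- whose construction occupies a nontrivial portion of \cite{CGK} --- collapses to an ordinary local rank, so once the dictionary between the two frameworks is set up, the remainder of the adaptation is routine.
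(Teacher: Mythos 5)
Your proposal matches the paper's own treatment: the paper likewise proves this corollary by asserting that the material of Sections 2.7--2.10 of \cite{CGK} goes through with stratified local ranks replaced by standard local ranks (Shelah degrees or thorn-ranks), using their invariance under $\aut(\FC)$ in place of invariance under translation by the definable group, and reducing the statement to Conjecture \ref{conj: gen transitivity} via Remark \ref{remark: generic transitivity}. The level of detail you supply for the rank computation is in fact slightly greater than what the paper records, and is consistent with it.
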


Corollary \ref{corollary: generic transitivity for types in rosy theories} together with Remark \ref{remark: generic transitivity and affine sort} and Lemma \ref{lemma: affine star transfer for types}
yields the main conclusions of Sections 2.7-2.10 from \cite{CGK} without using stratified local ranks.

\subsection{Main conjecture for special measures in NIP}\label{subsec:6.3}
In this section, we prove 
Conjecture \ref{conjecture: main conjecture} under the NIP hypothesis, assuming the measure in consideration is also $\autf_{\KP}(\FC)$-invariant. These measures are 
essentially controlled by their pushforwards to $\gal_{\KP}(T)$.
Many of the results in this section are variants of the ones proved in the definable group setting under the hypothesis of 
$\mathcal{G}^{00}$-invariance (i.e., see \cite{Artem_Kyle2}).

For a short overview of strong types and associated Galois groups see Section \ref{subsection: Galois groups} and the general references in there. We will be using the notation from the last paragraph of that section. Recall that $M \preceq \FC\preceq \FC' \preceq \FC''$ and $\bar m$ is an enumeration of $M$. The map $\rho_{\FC}$ will be denoted by $\rho$.

We will need the following classical fact (for a proof of surjectivity, e.g. see the argument in \cite[Proposition 3.4]{Artem_Kyle2}).

\begin{fact}\label{fact: pushforward onto}
Let $f \colon X \to Y$ be a continuous map between compact spaces. Then the pushforward $f_* \colon \mathcal{M}(X) \to \mathcal{M}(Y)$ is continuous. If $f$ is also surjective, then so is $f_*$.
\end{fact}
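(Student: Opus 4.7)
The plan is to verify the two assertions separately, both via standard weak$^*$ arguments that should present no serious obstacle. For the continuity assertion, I will use the characterization of the topology on $\mathcal{M}(Y)$: a net $\nu_i \to \nu$ in $\mathcal{M}(Y)$ if and only if $\int h\, d\nu_i \to \int h\, d\nu$ for every $h \in C(Y)$. So given a net $\mu_i \to \mu$ in $\mathcal{M}(X)$, and any $h \in C(Y)$, the composition $h \circ f$ lies in $C(X)$ by continuity of $f$, and the change-of-variables identity gives
\[
\int_Y h \, d(f_*(\mu_i)) = \int_X (h \circ f)\, d\mu_i \longrightarrow \int_X (h \circ f)\, d\mu = \int_Y h\, d(f_*(\mu)),
\]
which is exactly what is needed for $f_*(\mu_i) \to f_*(\mu)$.

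For surjectivity under the assumption that $f$ is surjective, the strategy is to first handle combinations of Dirac measures and then pass to a limit. Given $\nu \in \mathcal{M}(Y)$, I will use that the convex hull of $\{\delta_y : y \in Y\}$ is weak$^*$-dense in $\mathcal{M}(Y)$ (a standard fact, e.g. by Krein--Milman applied to $\mathcal{M}(Y)$, whose extreme points are precisely the Dirac measures). Choose a net $\nu_i = \sum_{k} r_{i,k}\, \delta_{y_{i,k}}$ of such convex combinations converging to $\nu$. Since $f$ is surjective, for each pair $(i,k)$ pick some $x_{i,k} \in f^{-1}(y_{i,k})$, and form $\mu_i := \sum_{k} r_{i,k}\, \delta_{x_{i,k}} \in \mathcal{M}(X)$. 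A direct check on basic open sets shows $f_*(\mu_i) = \nu_i$.

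The final step is compactness: the space $\mathcal{M}(X)$ is weak$^*$-compact (e.g., by Banach--Alaoglu applied to the unit ball of $C(X)^*$), so the net $(\mu_i)$ admits a convergent subnet $\mu_{i_j} \to \mu$ for some $\mu \in \mathcal{M}(X)$. By the continuity of $f_*$ proved in the first step, $f_*(\mu) = \lim_j f_*(\mu_{i_j}) = \lim_j \nu_{i_j} = \nu$, producing the desired preimage. There is no genuine obstacle in either part; the only mild subtlety worth flagging is the need to invoke choice to select the lifts $x_{i,k} \in f^{-1}(y_{i,k})$, which is harmless.
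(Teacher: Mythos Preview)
Your proof is correct. The paper does not supply its own argument for this fact, labeling it as classical and referring to \cite[Proposition 3.4]{Artem_Kyle2} for the surjectivity portion; your approach via the weak$^*$ characterization for continuity and density of finitely supported measures combined with compactness for surjectivity is precisely the standard argument one expects behind such a citation.
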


Applying it to our continuous surjection  $\rho \colon S_{\bar m}(\FC) \to \gal_{\KP}(T)$, we get that  
$$\rho_{\ast}\colon \mathfrak{M}_{\bar{m}}(\FC)\to \mathcal{M}(\gal_{\KP}(T)),$$
is continuous, onto, and clearly affine.

In the case of a definable group $G=G(M)$, the counterpart of our map $\rho$ is the natural continuous surjection from the space of global types concentrated on $G$ to $G(\FC)/G(\FC)^{00}$, and it is easy see (using coheirs) that the restriction of this map to the types finitely satisfiable in $G$ is still surjective so that one can still apply Fact \ref{fact: pushforward onto} to this restriction. We first show an analogous result for theories.

Extending the context of Definition \ref{def:sfs}, a partial type $r(\bar y)$ which extends $\tp(\bar{m}/\emptyset)$ will be called {\em strongly finitely satisfiable} in $M$ if for every formula $\varphi(\bar y;\bar b) \in r(\bar y)$ there exists $\bar a \in M^{\bar y}$ such that $\bar a \equiv \bar m$ and $\models \varphi(\bar a;\bar b)$. By $S^{\sfs}_{\bar{m}}(N,M)$ we denote the set of complete types over $N$ concentrated on $\tp(\bar m/\emptyset)$ and strongly finitely satisfiable in $M$. 

\begin{remark}\label{remark:sfs=fs}
    If $M$ is $\aleph_0$-saturated and strongly $\aleph_0$-homogeneous, then
    $$S^{\sfs}_{\bar{m}}(M,M)=S^{\fs}_{\bar{m}}(M,M)=S_{\bar{m}}(M).$$
\end{remark}

\begin{proof}
    It is enough to show that $S_{\bar{m}}(M)\subseteq S^{\sfs}_{\bar{m}}(M,M)$.
Let $\varphi( \bar{y};\bar{m})\in p( \bar{y})\in S_{\bar{m}}(M)$. There exist finite $ \bar{y}_0\subseteq \bar{y}$ and finite $\bar{m}_0\subseteq\bar{m}$ such that $\varphi( \bar{y},\bar{m})=\varphi_0( \bar{y}_0;\bar{m}_0)$ for some formula $\varphi_0$. Because $\tp(\bar{m}/\emptyset)\cup\{\varphi( \bar{y};\bar{m})\}\subseteq p$, we have
    $$\tp(\bar{m}/\emptyset)|_{ \bar{y}_0}\cup\{\varphi_0( \bar{y}_0;\bar{m}_0)\}\subseteq (p|_{\bar{m}_0})|_{ \bar{y}_0}.$$
    Due to the $\aleph_0$-saturation of $M$, this last type is realized in $M$ by some $\bar{a}_0$.
    As $M$ is strongly $\aleph_0$-homogeneous, $\bar{m}|_{ \bar{y}_0}\equiv\bar{a}_0$ implies existence of $\sigma\in\aut(M)$ such that $\sigma(\bar{m}|_{ \bar{y}_0})=\bar{a}_0$.
    Then $\bar{a}:=\sigma(\bar{m})$ realizes $\tp(\bar{m})\cup\{\varphi( \bar{y};\bar{m})\}$ in $M$.
\end{proof}

\begin{lemma}\label{lemma:sfs.extensions}
    Assume that $M\preceq N\preceq \FC$ and $p( \bar{y})\in S^{\sfs}_{\bar{m}}(N,M)$.
    Then there exists $q( \bar{y})\in S^{\sfs}_{\bar{m}}(\FC,M)$ such that $p\subseteq q$.
\end{lemma}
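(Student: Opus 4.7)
The plan is to argue by a Zorn's lemma / compactness argument on partial types over $\FC$ that extend $p$ and remain strongly finitely satisfiable in $M$. First I would formulate the sfs property on partial types in the strong (``finite conjunction'') form: a partial type $r(\bar y)$ extending $\tp(\bar m/\emptyset)$ is sfs in $M$ iff for every finite subset $\{\varphi_1(\bar y;\bar b_1),\dots,\varphi_k(\bar y;\bar b_k)\}\subseteq r$ there exists $\bar a\in M^{\bar y}$ with $\bar a\equiv\bar m$ and $\models\bigwedge_{i\leq k}\varphi_i(\bar a;\bar b_i)$. For complete types this agrees with Definition \ref{def:sfs}.

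With this formulation, sfs is a property of finite character, so the family
$$\mathcal{F}:=\{\,r(\bar y)\,:\,p\subseteq r\text{ is a partial type over }\FC\text{ sfs in }M\,\}$$
is non-empty (it contains $p$) and closed under unions of chains. By Zorn's lemma, $\mathcal{F}$ has a maximal element $q$; it suffices to show that $q$ is a complete type over $\FC$. Suppose for contradiction there is an $\mathcal{L}(\FC)$-formula $\psi(\bar y;\bar c)$ with $\psi\notin q$ and $\neg\psi\notin q$. By maximality, neither $q\cup\{\psi\}$ nor $q\cup\{\neg\psi\}$ is in $\mathcal{F}$, so we can pick a single finite conjunction $\theta(\bar y;\bar d)$ of formulas from $q$ such that neither $\theta(\bar y;\bar d)\wedge\psi(\bar y;\bar c)$ nor $\theta(\bar y;\bar d)\wedge\neg\psi(\bar y;\bar c)$ admits a witness $\bar a\in M^{\bar y}$ with $\bar a\equiv\bar m$. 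But $\theta(\bar y;\bar d)$ is a finite conjunction of formulas from the sfs type $q$, so there is some $\bar a\in M^{\bar y}$ with $\bar a\equiv\bar m$ and $\models\theta(\bar a;\bar d)$. Since $\FC\models\psi(\bar a;\bar c)\vee\neg\psi(\bar a;\bar c)$, this $\bar a$ witnesses sfs for one of the two extensions, a contradiction. Hence $q\in S^{\sfs}_{\bar m}(\FC,M)$ extends $p$, as required.

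There is no real obstacle here: the content is entirely the standard ``extend to a maximal partial type with a finite-character property, then show maximality forces completeness'' pattern. The only minor point worth paying attention to is using the \emph{finite-conjunction} version of sfs (equivalent to Definition \ref{def:sfs} on complete types) so that $\mathcal{F}$ is genuinely closed under chain unions and so that the contradiction at the maximal stage uses a single conjunction $\theta$ rather than two separate formulas from $q$.
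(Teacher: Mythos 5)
Your proof is correct and follows essentially the same route as the paper's: take a maximal strongly finitely satisfiable partial type over $\FC$ extending $p$ via Zorn's lemma, then derive a contradiction from incompleteness by combining the two finite obstruction sets for $\psi$ and $\neg\psi$ and using a single witness in $M$. Your explicit remark about working with the finite-conjunction form of sfs is a (welcome) point of care that the paper leaves implicit.
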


\begin{proof}
    Let $r(\bar{y})$ be a maximal set of $\CL_{ \bar{y}}(\FC)$-formulas which contains $p( \bar{y})$ and is strongly finitely satisfiable in $M$ (such a maximal set exists by Zorn's lemma). The set $r( \bar{y})$ is a complete type. 
Indeed, let $\varphi( \bar{y};\bar{a})\in \CL_{ \bar{y}}(\FC)$ be such that $\varphi\not\in r$ and $\neg\varphi\not\in r$.
    Then both $r\cup\{\varphi\}$ and $r\cup\{\neg\varphi\}$ are not strongly finitely satisfiable in $M$. 
    So there exists a finite set $\Delta_1( \bar{y})\subseteq r( \bar{y})$ such that there is no $\bar{a}\in M^{ \bar{y}}$ with $\bar{a}\equiv\bar{m}$ and $\bar{a}\models \Delta_1\cup\{\varphi\}$. 
    Similarly, there is a finite set $\Delta_2( \bar{y})\subseteq r( \bar{y})$ such that there is no $\bar{a}\in M^{ \bar{y}}$ with $\bar{a}\equiv\bar{m}$ and $\bar{a}\models\Delta_2\cup\{\neg\varphi\}$. 
    Because $\Delta_1( \bar{y})\cup\Delta_2( \bar{y})\subseteq r( \bar{y})$ and $r$ is strongly finitely satisfiable in $M$, there is $\bar{a}\in M^{\bar{y}}$ such that $\bar{a}\equiv\bar{m}$ and $\bar{a}\models\Delta_1\cup\Delta_2$, but then $\bar{a}$ must satisfy $\varphi$ or $\neg\varphi$ and we get a contradiction.
\end{proof}

\begin{lemma}\label{lemma: on sfs also onto}
Assume that $M$ is $\aleph_0$-saturated and strongly $\aleph_0$-homogeneous. Then the map 
$$\rho |_{S_{\bar m}^{\mathrm{sfs}}(\FC,M)} \colon S_{\bar m}^{\mathrm{sfs}}(\FC,M)  \to \gal_{\KP}(T)$$ 
is surjective. Thus, the map
$$\rho |_{S_{\bar m}^{\mathrm{inv}}(\FC,M)} \colon S_{\bar m}^{\mathrm{inv}}(\FC,M)  \to \gal_{\KP}(T)$$ 
is also surjective.
\end{lemma}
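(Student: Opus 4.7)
The plan is to use the two earlier facts about sfs types (Remark~\ref{remark:sfs=fs} and Lemma~\ref{lemma:sfs.extensions}) to lift a fibre of $\rho$ over $\gal_{\KP}(T)$ computed first at the level of $M$, and then transport everything to $\FC$.

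First I would reduce the problem to producing sfs extensions of arbitrary types over $M$. Concretely, fix $g \in \gal_{\KP}(T)$. By the definition of $\rho$, picking any $\sigma \in \aut(\FC)$ with $\sigma^{-1}/\autf_{\KP}(\FC) = g$ and setting $q := \tp(\sigma(\bar m)/M) \in S_{\bar m}(M)$, it suffices to find some $p \in S^{\mathrm{sfs}}_{\bar m}(\FC,M)$ with $p|_M = q$, because then $\sigma(\bar m) \models p|_M$, hence $\rho(p) = \sigma^{-1}/\autf_{\KP}(\FC) = g$.

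Second, I would produce such an sfs extension in two steps. Since $M$ is $\aleph_0$-saturated and strongly $\aleph_0$-homogeneous, Remark~\ref{remark:sfs=fs} gives $S_{\bar m}(M) = S^{\mathrm{sfs}}_{\bar m}(M,M)$, so $q \in S^{\mathrm{sfs}}_{\bar m}(M,M)$. Applying Lemma~\ref{lemma:sfs.extensions} to $q$ (with $N:=M$) then yields a complete type $p \in S^{\mathrm{sfs}}_{\bar m}(\FC,M)$ extending $q$, as required. This settles surjectivity of $\rho|_{S^{\mathrm{sfs}}_{\bar m}(\FC,M)}$.

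For the second ``Thus'' assertion, I would simply observe that any $p \in S^{\mathrm{sfs}}_{\bar m}(\FC,M)$ is in particular finitely satisfiable in $M$ (the sfs witnesses $\bar a \in M^{|\bar x|}$ are already witnesses of finite satisfiability), and finite satisfiability over $M$ implies $M$-invariance. Therefore
\[
S^{\mathrm{sfs}}_{\bar m}(\FC,M) \subseteq S^{\mathrm{inv}}_{\bar m}(\FC,M),
\]
and the surjectivity of $\rho|_{S^{\mathrm{sfs}}_{\bar m}(\FC,M)}$ forces surjectivity of $\rho|_{S^{\mathrm{inv}}_{\bar m}(\FC,M)}$. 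There is no real obstacle here; the only delicate point is that the first step genuinely uses both hypotheses on $M$ (saturation for realizing finite fragments, strong homogeneity for pulling them back into the $\equiv_\emptyset$-class of $\bar m$), which is exactly what Remark~\ref{remark:sfs=fs} encapsulates.
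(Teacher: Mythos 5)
Your proof is correct and follows essentially the same route as the paper's: reduce to an arbitrary $\sigma\in\aut(\FC)$, note $\tp(\sigma(\bar m)/M)\in S^{\sfs}_{\bar m}(M,M)$ by Remark~\ref{remark:sfs=fs}, and extend it via Lemma~\ref{lemma:sfs.extensions} to a global sfs type whose restriction to $M$ is realized by $\sigma(\bar m)$. The only difference is that you spell out the (correct, standard) justification of the second assertion, which the paper leaves implicit.
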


\begin{proof}
We need to show that for every $\sigma \in \aut(\FC)$ there exists $p \in S_{\bar m}^{\mathrm{sfs}}(\FC,M)$ and $\tau \in \aut(\FC)$ such that $\tau(\bar m) \models p|_M$ and $\tau/\autf_{\KP}(\FC) = \sigma/\autf_{KP}(\FC)$.  We check that $\tau=\sigma$ works.

By Remark \ref{remark:sfs=fs}, $\tp(\sigma(\bar m)/M) \in  S^{\sfs}_{\bar{m}}(M,M)$. Hence, using Lemma \ref{lemma:sfs.extensions}, there exists $p \in S_{\bar m}^{\mathrm{sfs}}(\FC,M)$ extending $\tp(\sigma(\bar m)/M)$. Then $\sigma(\bar m) \models p|_M$, and we are done.
\end{proof}

The next corollary follows from Fact \ref{fact: pushforward onto} and Lemma \ref{lemma: on sfs also onto}, bearing in mind the fact that for $\mu \in \mathfrak{M}_{\bar{m}}(\FC)$:
\begin{enumerate}
\item if the support of $\mu$ is contained in $S_{\bar m}^{\mathrm{sfs}}(\FC,M)$, then $\mu \in  \mathfrak{M}_{\bar{m}}^{\sfs}(\FC,M)$;
\item if the support of $\mu$ is contained in $S_{\bar m}^{\mathrm{inv}}(\FC,M)$, then $\mu \in  \mathfrak{M}_{\bar{m}}^{\inv}(\FC,M)$.
\end{enumerate}

\begin{cor}
Assume that $M$ is $\aleph_0$-saturated and strongly $\aleph_0$-homogeneous. Then the map 
    $$\rho_\ast|_{\mathfrak{M}_{\bar{m}}^{\sfs}(\FC,M)}:\mathfrak{M}_{\bar{m}}^{\sfs}(\FC,M)\to \mathcal{M}(\gal_{\KP}(T)),$$
    is surjective. In particular, the map
        $$\rho_\ast|_{\mathfrak{M}_{\bar{m}}^{\inv}(\FC,M)}:\mathfrak{M}_{\bar{m}}^{\inv}(\FC,M)\to \mathcal{M}(\gal_{\KP}(T)),$$
    is also surjective.
\end{cor}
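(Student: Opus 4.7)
The plan is to deduce the corollary directly from Fact \ref{fact: pushforward onto} applied to the restricted map $\rho|_{S^{\sfs}_{\bar m}(\FC,M)}$, combined with the two observations about supports listed in the lead-in.

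First I would note that $S^{\sfs}_{\bar m}(\FC,M)$ is a closed subset of the compact Hausdorff space $S_{\bar m}(\FC)$, hence compact. By Lemma \ref{lemma: on sfs also onto}, the continuous map $\rho|_{S^{\sfs}_{\bar m}(\FC,M)} \colon S^{\sfs}_{\bar m}(\FC,M) \to \gal_{\KP}(T)$ is surjective. Applying Fact \ref{fact: pushforward onto} to this map yields that the pushforward
$$\bigl(\rho|_{S^{\sfs}_{\bar m}(\FC,M)}\bigr)_{\ast} \colon \mathcal{M}\bigl(S^{\sfs}_{\bar m}(\FC,M)\bigr) \to \mathcal{M}(\gal_{\KP}(T))$$
is a continuous surjection.

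Next, given $\lambda \in \mathcal{M}(\gal_{\KP}(T))$, I would pick $\nu \in \mathcal{M}(S^{\sfs}_{\bar m}(\FC,M))$ with $(\rho|_{S^{\sfs}_{\bar m}(\FC,M)})_{\ast}\nu = \lambda$. Under the canonical identification of $\mathcal{M}(S^{\sfs}_{\bar m}(\FC,M))$ with the subspace of $\mathcal{M}(S_{\bar m}(\FC))$ consisting of Borel probability measures supported on $S^{\sfs}_{\bar m}(\FC,M)$ (extend $\nu$ by $0$ on Borel sets disjoint from $S^{\sfs}_{\bar m}(\FC,M)$), this $\nu$ corresponds to a Keisler measure $\mu \in \mathfrak{M}_{\bar m}(\FC)$ with $\supp(\mu) \subseteq S^{\sfs}_{\bar m}(\FC,M)$. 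By observation (1) recorded in the paragraph preceding the corollary, $\mu \in \mathfrak{M}^{\sfs}_{\bar m}(\FC,M)$, and by construction $\rho_{\ast}\mu = \lambda$. This proves surjectivity of $\rho_{\ast}|_{\mathfrak{M}^{\sfs}_{\bar m}(\FC,M)}$.

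For the ``in particular'' part, I would simply observe that strong finite satisfiability in $M$ implies finite satisfiability in $M$, and by Fact \ref{prop:NIP} (or more elementarily, by the definitions) finite satisfiability implies $M$-invariance, so $\mathfrak{M}^{\sfs}_{\bar m}(\FC,M) \subseteq \mathfrak{M}^{\inv}_{\bar m}(\FC,M)$; hence surjectivity onto $\mathcal{M}(\gal_{\KP}(T))$ is inherited by the larger space. There is no serious obstacle here—the argument is essentially just packaging Fact \ref{fact: pushforward onto}, Lemma \ref{lemma: on sfs also onto}, and the bookkeeping observations (1) and (2) stated right before the corollary; the only point requiring a moment of care is the identification of measures on the closed subspace $S^{\sfs}_{\bar m}(\FC,M)$ with measures on $S_{\bar m}(\FC)$ concentrated on that subspace.
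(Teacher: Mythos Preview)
Your proposal is correct and follows essentially the same approach as the paper: apply Fact \ref{fact: pushforward onto} to the surjection $\rho|_{S^{\sfs}_{\bar m}(\FC,M)}$ from Lemma \ref{lemma: on sfs also onto}, then use observation (1) on supports to land in $\mathfrak{M}^{\sfs}_{\bar m}(\FC,M)$. The only cosmetic difference is that for the ``in particular'' part you invoke the inclusion $\mathfrak{M}^{\sfs}_{\bar m}(\FC,M)\subseteq\mathfrak{M}^{\inv}_{\bar m}(\FC,M)$, whereas the paper points to the parallel observation (2) together with the second part of Lemma \ref{lemma: on sfs also onto}; both routes are immediate.
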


We use the symbol $\circledast$ to denote the standard convolution product on $\mathcal{M}(\gal_{\KP}(T))$, which was recalled in the introduction for arbitrary locally compact groups. We will now argue that both $\rho$ and $\rho_{*}$ are homomorphisms of semigroups.

\begin{lemma}\label{lemma:pi.homomorphism.types}
    If $p(\bar{y}),q(\bar{y})\in S^{\inv}_{\bar{m}}(\FC,M)$ then,
    $$\rho(p\ast q)=\rho(p)\cdot\rho(q),$$
    and if $T$ is NIP, 
    $$\rho_{\ast}(\delta_p\ast \delta_q)=\rho_{\ast}(\delta_p)\circledast \rho_{\ast}(\delta_q).$$
\end{lemma}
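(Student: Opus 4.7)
The plan is to exploit the explicit formula for $p\ast q$ supplied by Proposition \ref{prop:formula.for.star.on.types}, combined with the group isomorphism $\mathfrak{r}_{\FC''}$ making the diagram at the end of Section \ref{subsection: Galois groups} commute. The first identity will then reduce to a straightforward computation in the group $\aut(\FC'')/\autf_{\KP}(\FC'')$, and the second will follow from the first by standard properties of pushforwards and convolution of Dirac masses on a topological group.

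For the first equality, I would fix $\sigma_p,\sigma_q\in\aut(\FC)$ with $\sigma_p(\bar m)\models p|_M$ and $\sigma_q(\bar m)\models q|_M$, so that by the explicit description of $\rho$ we have $\rho(p)\cdot\rho(q)=\sigma_p^{-1}\sigma_q^{-1}\autf_{\KP}(\FC)$. Writing out Proposition \ref{prop:formula.for.star.on.types} at the level of $\FC''$, we obtain $p\ast q=\tp(\tau''\sigma(\bar m)/\FC)$, where $\sigma\in\aut(\FC'')$ satisfies $\sigma(\bar m)\models p|_{\FC'}$ and $\tau''\in\aut(\FC'')$ is any extension of some $\tau\in\aut(\FC')$ realizing $\tau(\bar m)\models q$. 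Fix extensions $\hat\sigma_p,\hat\sigma_q\in\aut(\FC'')$ of $\sigma_p,\sigma_q$. Since $\sigma(\bar m)$ and $\hat\sigma_p(\bar m)$ both realize $p|_M$, there is $\chi\in\aut(\FC''/M)$ with $\chi\sigma(\bar m)=\hat\sigma_p(\bar m)$; hence $\hat\sigma_p^{-1}\chi\sigma\in\aut(\FC''/\bar m)=\aut(\FC''/M)\leq\autf_{\KP}(\FC'')$, and as $\chi\in\autf_{\KP}(\FC'')$ as well, the normality of $\autf_{\KP}(\FC'')$ yields $\sigma\autf_{\KP}(\FC'')=\hat\sigma_p\autf_{\KP}(\FC'')$. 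The same argument applied to $\tau''(\bar m)=\tau(\bar m)\models q|_M$ and $\hat\sigma_q(\bar m)\models q|_M$ gives $\tau''\autf_{\KP}(\FC'')=\hat\sigma_q\autf_{\KP}(\FC'')$. Multiplying in the quotient (and inverting) produces
\[
(\tau''\sigma)^{-1}\autf_{\KP}(\FC'')\;=\;\hat\sigma_p^{-1}\hat\sigma_q^{-1}\autf_{\KP}(\FC'').
\]
Applying the group isomorphism $\mathfrak{r}_{\FC''}$, which by construction sends $\hat\sigma^{-1}\autf_{\KP}(\FC'')$ to $\sigma^{-1}\autf_{\KP}(\FC)$ for $\sigma\in\aut(\FC)$, we conclude that $\rho(p\ast q)=\sigma_p^{-1}\sigma_q^{-1}\autf_{\KP}(\FC)=\rho(p)\cdot\rho(q)$.

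For the second identity, under NIP Fact \ref{fact:MP}(1) tells us that $p$ is Borel-definable over $M$, so the convention noted just after the definition of the $\ast$-product on types gives $\delta_p\ast\delta_q=\delta_{p\ast q}$. Hence
\[
\rho_\ast(\delta_p\ast\delta_q)\;=\;\rho_\ast(\delta_{p\ast q})\;=\;\delta_{\rho(p\ast q)}\;=\;\delta_{\rho(p)\cdot\rho(q)},
\]
where the last equality uses the first part. On the other hand, since $\gal_{\KP}(T)$ is a compact Hausdorff topological group and the classical convolution satisfies $\delta_g\circledast\delta_h=\delta_{gh}$, we get $\rho_\ast(\delta_p)\circledast\rho_\ast(\delta_q)=\delta_{\rho(p)}\circledast\delta_{\rho(q)}=\delta_{\rho(p)\cdot\rho(q)}$, which matches.

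The step requiring the most care is matching the cosets $\sigma\autf_{\KP}(\FC'')$ with $\hat\sigma_p\autf_{\KP}(\FC'')$ (and similarly for $\tau''$ and $\hat\sigma_q$): one has to translate ``same type over $M$'' into ``differ by an automorphism fixing $M$ pointwise, which lies in $\autf_{\KP}$'', while keeping track of which monster model each automorphism lives in and exploiting the commutativity of the diagram with $\mathfrak{r}_{\FC''}$ to descend back to $\FC$. There is no deeper obstacle — the lemma is essentially a careful bookkeeping exercise once Proposition \ref{prop:formula.for.star.on.types} is available.
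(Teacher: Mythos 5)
Your proof is correct and follows essentially the same route as the paper's: both rest on the explicit formula $p\ast q=\tp(\tau''\sigma(\bar m)/\FC)$ from Proposition \ref{prop:formula.for.star.on.types}, a computation of cosets in $\aut(\FC'')/\autf_{\KP}(\FC'')$, and descent to $\gal_{\KP}(T)$ via $\mathfrak{r}_{\FC''}$ and the commutative diagram, with the second identity reduced to the first through $\delta_p\ast\delta_q=\delta_{p\ast q}$ (valid under NIP by Borel-definability) and $\delta_g\circledast\delta_h=\delta_{gh}$. The only cosmetic difference is that you verify the coset identities by hand with chosen representatives $\sigma_p,\sigma_q\in\aut(\FC)$, whereas the paper invokes the homomorphism property of $\rho_{\FC''}$ directly.
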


\begin{proof}
Choose $\tau\in\aut(\FC')$ and $\sigma \in \aut(\FC'')$ so that  $q(\bar{y})=\tp(\tau(\bar{m})/\FC)$ and $p|_{\FC'}(\bar y)=\tp(\sigma(\bar m)/\FC')$, where $p|_{\FC'}$ is the unique $M$-invariant extension of $p$ to $\FC'$. Let $\tau'' \in \aut(\FC'')$ be any extension of $\tau$. By Proposition \ref{prop:formula.for.star.on.types},
    $$p\ast q=\tp(\tau''\sigma(\bar{m})/\FC).$$
Thus,
    \begin{align*}
        \rho(p\ast q) &= \rho\Big(\tp\big(\tau''\sigma(\bar{m})/\FC\big)\Big) = 
        \mathfrak{r}_{\FC''}\Big(\rho_{\FC''}\big(\tp\big(\tau''\sigma(\bar{m})/\FC''\big)\big)\Big) \\
        &= \mathfrak{r}_{\FC''}\Big( (\tau''\sigma)^{-1}/\autf_{\KP}(\FC'')\Big)=
        \mathfrak{r}_N\Big( \sigma^{-1}/\autf_{\KP}(\FC'') \cdot \tau''^{-1}/\autf_{\KP}(\FC'')\Big) \\
        &= \mathfrak{r}_{\FC''}\rho_{\FC''}\Big(\tp\big(\sigma(\bar{m})/\FC''\big)\Big)\,\cdot\,\mathfrak{r}_N\rho_{\FC''}\Big(\tp\big(\tau''(\bar{m})/\FC''\big)\Big) \\
        &= \rho\Big(\tp\big(\sigma(\bar{m})/\FC\big)\Big)\,\cdot\,
        \rho\Big(\tp\big(\tau(\bar{m})/\FC\big)\Big)=\rho(p)\cdot\rho(q),
    \end{align*}
where the second and sixth equation follows from the diagram at the end of Section \ref{subsection: Galois groups} applied to $\FC''$ in place of $\FC'$.

    Since $\delta_{p\ast q}=\delta_p\ast\delta_q$, we have that
    $$\rho_{\ast}(\delta_p\ast\delta_q)=\rho_{\ast}(\delta_{p\ast q})=\delta_{\rho(p\ast q)}.$$
    On the other hand, using the first part of the proposition, we have
    \begin{equation*} \rho_{\ast}(\delta_p)\circledast\rho_{\ast}(\delta_q)
    =\delta_{\rho(p)}\circledast\delta_{\rho(q)}
    =\delta_{\rho(p)\cdot\rho(q)}=\delta_{\rho(p\ast q)}.
    \end{equation*}
By the last two exposed lines, we conclude that $\rho_{\ast}(\delta_p\ast\delta_q)=\rho_{\ast}(\delta_p)\circledast\rho_{\ast}(\delta_q)$.
\end{proof}

\begin{theorem}\label{thm:pi_star.homomorphism}
    Assume that $T$ is NIP and $\mu,\nu\in\mathfrak{M}_{\bar{m}}^{\inv}(\FC,M)$.
    Then
    $$\rho_{\ast}(\mu\ast\nu)=\rho_{\ast}(\mu)\,\circledast\,\rho_{\ast}(\nu).$$
\end{theorem}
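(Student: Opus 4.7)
The plan is to approximate $\mu$ by finite averages of types from its support and reduce, via bi-affinity of both products, to the already-established case of types (Lemma \ref{lemma:pi.homomorphism.types}). The missing piece of continuity that blocks a direct approximation in the second variable will be supplied by identifying $\delta_p \ast \nu$, for an invariant type $p$, with a Borel pushforward. Concretely, by Fact \ref{fact:MP}(1) and Fact \ref{prop:NIP}, $\supp(\mu), \supp(\nu) \subseteq S^{\inv}_{\bar m}(\FC,M)$; using Fact \ref{fact:MP}(2) I would choose a net $\mu_i := \Av(\bar p_i)$, with $\bar p_i$ a tuple of types in $\supp(\mu)$, converging to $\mu$ in $\mathfrak{M}_{\bar m}(\FC)$. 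Left-continuity of $\ast$ (Proposition \ref{prop:conv_left_cont}) together with continuity of $\rho_\ast$ give $\rho_\ast(\mu_i \ast \nu) \to \rho_\ast(\mu \ast \nu)$, while affinity of $\ast$ in the first variable (Proposition \ref{prop:left.affine}) and affinity of $\rho_\ast$ reduce the problem to understanding $\rho_\ast(\delta_p \ast \nu)$ for $p \in S^{\inv}_{\bar m}(\FC,M)$, via
$$\rho_\ast(\mu_i \ast \nu) \;=\; \frac{1}{n_i}\sum_{k} \rho_\ast(\delta_{p_{ik}} \ast \nu).$$

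The key step is to prove the identity
$$\rho_\ast(\delta_p \ast \nu) \;=\; \delta_{\rho(p)} \circledast \rho_\ast(\nu) \qquad \bigl(p \in S^{\inv}_{\bar m}(\FC,M),\ \nu \in \mathfrak{M}^{\inv}_{\bar m}(\FC,M)\bigr).$$
To do this I would consider the map $\alpha_p \colon S^{\inv}_{\bar m}(\FC,M) \to S^{\inv}_{\bar m}(\FC,M)$ defined by $\alpha_p(r) := p \ast r$, well-defined by Corollary \ref{corollary: closedness under *}. Using the explicit formula from Proposition \ref{prop:formula.for.star.on.types} together with $M$-invariance of $p|_{\FC'}$, one checks directly that
$$\alpha_p^{-1}\bigl[[\varphi(\bar b;\bar y)]\bigr] \;=\; h_{\bar b}^{-1}\bigl[D^\varphi_{p,M}\bigr] \cap S^{\inv}_{\bar m}(\FC,M);$$
since $h_{\bar b}$ is continuous and $D^\varphi_{p,M}$ is Borel (by Borel-definability of $p$ in NIP), $\alpha_p$ is Borel measurable. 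Combining this with Proposition \ref{prop:Borel.preimage} gives
$$(\delta_p \ast \nu)\bigl([\varphi(\bar b;\bar y)]\bigr) \;=\; \nu\bigl(h_{\bar b}^{-1}[D^\varphi_{p,M}]\bigr) \;=\; (\alpha_p)_\ast(\nu)\bigl([\varphi(\bar b;\bar y)]\bigr),$$
and since the clopens form a $\pi$-system generating the Borel $\sigma$-algebra of $S_{\bar m}(\FC)$, one concludes $\delta_p \ast \nu = (\alpha_p)_\ast(\nu)$ as Borel measures. Hence $\rho_\ast(\delta_p \ast \nu) = (\rho \circ \alpha_p)_\ast(\nu)$, and for any Borel $B \subseteq \gal_{\KP}(T)$, applying Lemma \ref{lemma:pi.homomorphism.types} inside the integral,
$$(\rho \circ \alpha_p)_\ast(\nu)(B) \;=\; \nu\bigl(\{r : \rho(p \ast r) \in B\}\bigr) \;=\; \nu\bigl(\{r : \rho(p)\cdot \rho(r) \in B\}\bigr) \;=\; \bigl(\delta_{\rho(p)} \circledast \rho_\ast(\nu)\bigr)(B).$$

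With the key identity in hand, affinity of $\circledast$ yields $\rho_\ast(\mu_i \ast \nu) = \rho_\ast(\mu_i) \circledast \rho_\ast(\nu)$. Finally, continuity of $\rho_\ast$ gives $\rho_\ast(\mu_i) \to \rho_\ast(\mu)$, and left-continuity of $\circledast$ on $\mathcal{M}(\gal_{\KP}(T))$ (standard for compact topological groups via the dominated convergence theorem applied to continuous test functions) yields $\rho_\ast(\mu_i) \circledast \rho_\ast(\nu) \to \rho_\ast(\mu) \circledast \rho_\ast(\nu)$, matching the limit $\rho_\ast(\mu_i \ast \nu) \to \rho_\ast(\mu \ast \nu)$ from the first step. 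The main obstacle is the key identity: right-continuity of $\ast$ in the second variable is not available, so one cannot naively approximate $\nu$ by averages of types; instead, one has to pass through the Borel pushforward representation $\delta_p \ast \nu = (\alpha_p)_\ast(\nu)$, after which Lemma \ref{lemma:pi.homomorphism.types} transports the group-theoretic identity $\rho(p \ast r) = \rho(p)\cdot\rho(r)$ from the type-type case to the full measure-measure setting.
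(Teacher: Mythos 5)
Your argument is correct in substance, and it takes a genuinely different route from the paper's proof. The paper fixes a continuous test function $f$ on $\gal_{\KP}(T)$, approximates $f\circ\rho$ uniformly by clopen-simple functions $\sum r_i\mathbf{1}_{[\psi_i(\bar b;\bar y)]}$, introduces the auxiliary kernel $H(a)=\int f(b\cdot a)\,d\rho_{\ast}(\mu)$, and proves an $\epsilon$-approximation of $H\circ\rho$ on invariant types before matching the two integrals up to $2\epsilon$; both measures stay fixed throughout. You instead approximate the \emph{first measure} by averages $\Av(\bar p_i)$ of types from $\supp(\mu)$ (which lie in $S^{\inv}_{\bar m}(\FC,M)$ by Fact \ref{prop:NIP}), use left-continuity and bi-affinity of both products to reduce to the mixed case $\delta_p\ast\nu$, and settle that case exactly by exhibiting $\delta_p\ast\nu$ as the pushforward of $\nu$ under $r\mapsto p\ast r$, at which point Lemma \ref{lemma:pi.homomorphism.types} transports the group identity. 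Both proofs rest on the same key input (the type-level homomorphism property); yours mirrors the approximation strategy the paper uses elsewhere for associativity (Corollary \ref{corollary: * associative on fs types}), isolates a clean and reusable identity $\rho_{\ast}(\delta_p\ast\nu)=\delta_{\rho(p)}\circledast\rho_{\ast}(\nu)$, and avoids the auxiliary function $H$; the paper's proof avoids the measurability bookkeeping below by never leaving the world of continuous functions.

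One step needs repair as written. On the non-metrizable Stone space $S_{\bar m}(\FC)$ the clopen sets generate only the Baire $\sigma$-algebra, not the full Borel $\sigma$-algebra, so "the clopens form a $\pi$-system generating the Borel $\sigma$-algebra" is false, and consequently the equality $(\delta_p\ast\nu)(A)=\nu(\alpha_p^{-1}[A])$ is not automatic for an arbitrary Borel set $A=\rho^{-1}[B]$ (nor is $\alpha_p^{-1}[A]$ obviously $\nu$-measurable in general, since $\alpha_p$ is only Baire-measurable). The fix is routine and stays within your framework: verify the key identity by integrating continuous functions rather than evaluating on Borel sets. For continuous $f$ on $\gal_{\KP}(T)$, the function $f\circ\rho$ is a uniform limit of clopen-simple functions, so the clopen-level identity $(\delta_p\ast\nu)([\varphi(\bar b;\bar y)])=\nu\bigl(h_{\bar b}^{-1}[D^{\varphi}_{p,M}]\bigr)$ already yields $\int (f\circ\rho)\,d(\delta_p\ast\nu)=\int (f\circ\rho\circ\alpha_p)\,d\nu=\int f\,d\bigl(\delta_{\rho(p)}\circledast\rho_{\ast}(\nu)\bigr)$, and the Riesz representation theorem then gives equality of the two regular Borel measures. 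With that substitution the rest of your argument (affinity, left-continuity of $\ast$ and of $\circledast$, continuity of $\rho_{\ast}$, uniqueness of limits) goes through.
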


\begin{proof}
    The idea of this proof is similar to the one of Theorem 3.10 from \cite{Artem_Kyle2}. 
        It is enough to show that for every $f\in C(\gal_{\KP}(T))$ we have
    $$\int_{\gal_{\KP}(T)}  f\, d \rho_{\ast}(\mu\ast\nu) = \int_{\gal_{\KP}(T)} f\, d \big( \rho_{\ast}(\mu)\circledast\rho_{\ast}(\nu)\big).$$
    Fix $\epsilon>0$. Note that $f\circ\rho:S_{\bar{m}}(\FC)\to\Rr$ is a continuous function so there exist formulas $\{\psi_i(\bar{b};\bar{y})\}_{i\leqslant n}$, where $\bar{b}\in \FC^{\bar{x}}$ and real numbers $r_1,\ldots,r_n$ such that
        $$\sup\limits_{q(\bar{y})\in S_{\bar{m}}(\FC)} |(f\circ\rho)(q)\;-\;\sum\limits_{i\leqslant n} r_i\mathbf{1}_{[\psi_i(\bar{b};\bar{y})]}(q)|<\epsilon.$$
    We observe that for every $\sigma\in\aut(\FC)$ the following holds:
        $$ \sup\limits_{q(\bar{y})\in S_{\bar{m}}(\FC)} |(f\circ\rho)(\sigma(q))\;-\;\sum\limits_{i\leqslant n} r_i\mathbf{1}_{[\psi_i(\bar{b};\bar{y})]}(\sigma(q))|<\epsilon.$$
    We introduce an auxiliary function $H\colon\gal_{\KP}(T)\to\Rr$, given by
    $$H(a):=\int\limits_{b\in\gal_{\KP}(T)}f(b\cdot a)d\rho_{\ast}(\mu).$$
    Note that $H$ is continuous by Fact 417.B in \cite{Fremlin}.  By the definition of convolution, we see that
    \begin{align*}
    \int\limits_{\gal_{\KP}(T)}  f\,d\big(\rho_{\ast}(\mu)\circledast\rho_{\ast}(\nu)\big)
    &= \int\limits_{a\in\gal_{\KP}(T)} \int\limits_{b\in\gal_{\KP}(T)} f(b\cdot a)\,d\rho_{\ast}(\mu)_b \, d\rho_{\ast}(\nu)_a \\
    &= \int\limits_{a\in\gal_{\KP}(T)} H(a)\,d\rho_{\ast}(\nu).
    \end{align*}
    Before going into the main computation, we prove an approximation lemma. 
    
    \begin{clm}
    For any $p(\bar{y})\in S_{\bar{m}}^{\inv}(\FC,M)$, we have 
    $$(H\circ\rho)(p)\approx_{\epsilon}\sum\limits_{i\leqslant n}r_i\big(F^{\psi_i^{\textrm{opp}}(\bar{y};\bar{x})}_\mu \circ h_{\bar{b}} \big)(p).$$
    \end{clm}

    \begin{clmproof}
Choose $\sigma\in\aut(\FC)$ such that $\sigma(\bar{m})\models p|_{M\bar{b}}$. Then $\rho(p)=\sigma^{-1}/\autf_{\KP}(\FC) = \rho(\tp(\sigma(\bar m)/\FC))$. 
Let $\sigma' \in \aut(\FC')$ be any extension of $\sigma$.
Notice

    \begin{align*}
    (H\circ\rho)(p) &= \int\limits_{b\in\gal_{KP}(T)} f\big(b\cdot\rho(p)\big)\,d\rho_{\ast}(\mu) & & \\
    &= \int\limits_{q\in S_{\bar{m}(\FC)}}f\big(\rho(q)\cdot\rho(p)\big)\,d\mu & & \\
    &= \int\limits_{q\in S^{\inv}_{\bar{m}(\FC,M)}}f\big(\rho(q)\cdot\rho(p)\big)\,d\mu &  & \supp(\mu)\subseteq S_{\bar{m}}^{\inv}(\FC,M) \\
    &= \int\limits_{q\in S^{\inv}_{\bar{m}(\FC,M)}}f\big(\rho(q)\cdot\rho(\tp(\sigma(\bar{m})/\FC)\big)\,d\mu &  & \\
    &= \int\limits_{q\in S^{\inv}_{\bar{m}(\FC,M)}}f\big(\rho(q\ast\tp(\sigma(\bar{m})/\FC))\big)\,d\mu &  & \text{by Lemma \ref{lemma:pi.homomorphism.types}} \\
    &= \int\limits_{q\in S^{\inv}_{\bar{m}(\FC,M)}}(f\circ\rho)\Big(\big(\sigma'(q|_{\FC'})\big)|_{\FC} \Big)\,d\mu &  & \text{by Proposition \ref{prop:formula.for.star.on.types}} \\
    &= \int\limits_{q\in S^{\inv}_{\bar{m}(\FC,M)}}(f\circ\rho)\big(\sigma(q)\big)\,d\mu &  & \text{since }\sigma\in\aut(\FC) \\
    &\approx_{\epsilon}  \int\limits_{q\in S^{\inv}_{\bar{m}}(\FC,M)}\Big(\sum\limits_{i\leqslant n}r_i\mathbf{1}_{[\psi_i(\bar{b};\bar{y})]}\Big)(\sigma(q))\,d\mu & & \\
    &= \sum\limits_{i\leqslant n}r_i\int\limits_{q\in S^{\inv}_{\bar{m}}(\FC,M)}\mathbf{1}_{[\psi_i(\sigma^{-1}(\bar{b});\bar{y})]}(q)\,d\mu & & \\
    &= \sum\limits_{i\leqslant n}r_i \mu\big( \psi_i(\sigma^{-1}(\bar{b});\bar{y}) \big).
    \end{align*}
    Now, let $p=\tp(\tau(\bar{m})/\FC)$ for some $\tau\in\aut(\FC')$.
    Then $\tau(\bar{m})\equiv_{M\bar{b}}\sigma(\bar{m})$, so there exists $h\in\aut(\FC/M\bar{b})$ such that $h\sigma(\bar{m})=\tau(\bar{m})$.
    Thus, $\tau^{-1}h\sigma(\bar{m})=\bar{m}$ and we have that $\tau^{-1}h\sigma\in\aut(\FC/M)$. Note that
    $$\sigma^{-1}(\bar{b})\equiv_M (\tau^{-1}h\sigma)\big(\sigma^{-1}(\bar{b})\big)=\tau^{-1}(\bar{b}).$$
    Since $\mu$ is $M$-invariant,
there is a unique $M$-invariant extension $\hat{\mu} \in \mathfrak{M}^{\inv}_{\bar m}(\FC',M)$ of $\mu$, and we have
    \begin{align*}
        \sum\limits_{i\leqslant n}r_i \mu\big( \psi_i(\sigma^{-1}(\bar{b});\bar{y}) \big) &= \sum\limits_{i\leqslant n}r_i \hat{\mu}\big( \psi_i(\tau^{-1}(\bar{b});\bar{y}) \big)
        = \sum\limits_{i\leqslant n}r_i \,F^{\psi_i^{\textrm{opp}}(\bar{y};\bar{x})}_\mu\big( \tp(\tau^{-1}(\bar{b})/M)\big) \\
        &=  \sum\limits_{i\leqslant n}r_i \,\big(F^{\psi_i^{\textrm{opp}}(\bar{y};\bar{x})}_\mu\circ h_{\bar{b}}\big)\big( \tp(\tau(\bar{m})/\FC)\big) \\
        &= \sum\limits_{i\leqslant n}r_i \,\big(F^{\psi_i^{\textrm{opp}}(\bar{y};\bar{x})}_\mu\circ h_{\bar{b}}\big)(p). 
    \end{align*}
    \end{clmproof}
    
Using the above claim together with the earlier observations, we complete the proof of the theorem via the following computation: 
    \begin{align*}
        \int\limits_{\gal_{\KP}(T)}f\,d\rho_{\ast}(\mu\ast\nu) &= \int\limits_{S_{\bar{m}}(\FC)} ( f\circ\rho )\,d(\mu\ast\nu) \approx_{\epsilon}  \int\limits_{S_{\bar{m}}(\FC)}\sum\limits_{i\leqslant n}r_i\mathbf{1}_{[\psi_i(\bar{b};\bar{y})]}\,d(\mu\ast\nu) \\
        &= \sum\limits_{i\leqslant n}r_i\,(\mu\ast\nu)\big(\psi_i(\bar{b};\bar{y})\big) = 
\sum\limits_{i\leqslant n}r_i\,\big(\mu\otimes(h_{\bar{b}})_{\ast}(\nu)\big)\big(\psi_i(\bar{x};\bar{y})\big) \\
        &= \sum\limits_{i\leqslant n}r_i\int\limits_{S_{\bar{m}}(\FC)}\big(F^{\psi_i^{\textrm{opp}}(\bar{y};\bar{x})}_{\mu}\circ h_{\bar{b}}\big)\,d\nu 
=
        \int\limits_{S_{\bar{m}}(\FC)}\Big( \sum\limits_{i\leqslant n}r_i\,F^{\psi_i^{\textrm{opp}}(\bar{y};\bar{x})}_\mu\circ h_{\bar{b}} \Big)\,d\nu \\
        &= \int\limits_{S_{\bar{m}}^{\inv}(\FC,M)}\Big( \sum\limits_{i\leqslant n}r_i\,F^{\psi_i^{\textrm{opp}}(\bar{y};\bar{x})}_\mu\circ h_{\bar{b}} \Big)\,d\nu
        \approx_{\epsilon} \int\limits_{S_{\bar{m}}^{\inv}(\FC,M)}\big(H\circ\rho\big)\,d\nu \\
       &= \int\limits_{S_{\bar{m}}(\FC)}\big(H\circ\rho \big)\,d\nu = \int\limits_{\gal_{\KP}(T)}H\,d\rho_{\ast}(\nu) \\
        &= \int\limits_{\gal_{\KP}(T)} f\,d\big(\rho_{\ast}(\mu)\circledast\rho_{\ast}(\nu)\big)   
    \end{align*}
    Because $\epsilon>0$ was arbitrary, the desired statement holds. 
\end{proof}

\begin{remark}\label{rem: affine homomorhism 2}
Theorem 3.10 of \cite{Artem_Kyle2} follows from Theorems \ref{thm:pi_star.homomorphism} and \ref{thm: affine isomorhpism for measures}.
\end{remark}

\begin{proof}
Let us work in the set-up of Section \ref{sec: affine sort}. Notice that the map $\rho_{\bar{\FC}}$ restricted to $S_{\pi(\bar{x}\bar{y};\bar{n})}(\bar{\FC})$ looks as follows
$$S_{\pi(\bar{x}\bar{y};\bar{n})}(\bar{\FC})\ni p\mapsto p|_{\bar{M}}=\tp\Big((g,\id_{\FC})(\bar{n})/\bar{M}\Big)\mapsto(g^{-1},\id_{\FC})/\autf_{\KP}(\bar{\FC}),$$
for some $g\in G(\FC)$.
Under the isomorphism $\aut(\bar{\FC})/\autf_{\KP}(\bar{\FC}) \cong G(\FC)/G(\FC)^{00} \rtimes \aut(\FC)/\autf_{\KP}(\FC)$ from Fact \ref{rem: affine Lascar and KP}, $(g^{-1},\id_{\FC})/\autf_{\KP}(\bar{\FC})$ is identified with the pair $(g^{-1}/G(\FC)^{00}, \id_{\FC}/\autf_{\KP}(\FC))$. 
Thus, we can compose $\rho_{\bar{\FC}}|_{S_{\pi(\bar{x}\bar{y};\bar{n})}(\bar{\FC})}$
with the map $(g /G(\FC)^{00}, \id_{\FC}/\autf_{\KP}(\FC))\mapsto g /G(\FC)^{00}$ being an isomorphism of topological groups between the image of 
$\rho_{\bar{\FC}}|_{S_{\pi(\bar{x}\bar{y};\bar{n})}(\bar{\FC})}$ and $G(\FC)/G(\FC)^{00}$
(which is the compact group used in \cite{Artem_Kyle2} in place of $\aut(\FC)/\autf_{\KP}(\FC)$ in Theorem \ref{thm:pi_star.homomorphism} above).

Let $\tilde{\rho}_{\bar{\FC}}:p\mapsto g/G(\FC)^{00}$ be the aforementioned composition, and let $\hat{\pi} \colon S_G^{\inv}(\FC,M) \to G(\FC)/G(\FC)^{00}$ be the map described in Fact 3.1 of \cite{Artem_Kyle2}. Then the following diagram commutes (where $f$ is the main homeomorphism from Section \ref{sec: affine sort}):

\begin{center}
			\begin{tikzcd}
			S_G^{\inv}(\FC,M) \ar[dr,"\hat{\pi}"]\ar[rr,"f^{-1}"] & & S_{\pi(\bar{x}\bar{y};\bar{n})}^{\inv}(\bar{\FC},\bar{M})\ar[dl,"\tilde{\rho}_{\bar{\FC}}"]\\
			&G(\FC)/G(\FC)^{00} & 
			\end{tikzcd}
		\end{center}
This induces the commutative diagram of pushforwards:

\begin{center}
			\begin{tikzcd}
			\mathfrak{M}^{\inv}_G(\FC,M) \ar[dr,"\hat{\pi}_*"]\ar[rr,"(f^{-1})_*"] & &\mathfrak{M}^{\inv}_{\pi(\bar{x}\bar{y};\bar{n})}(\bar{\FC},\bar{M})\ar[dl,"(\tilde{\rho}_{\bar{\FC}})_*"]\\
			&\mathcal{M}(G(\FC)/G(\FC)^{00}) & 
			\end{tikzcd}
		\end{center}
Having this, Theorem 3.10 of \cite{Artem_Kyle2} follows from Theorems \ref{thm:pi_star.homomorphism} and \ref{thm: affine isomorhpism for measures}.
\end{proof}

Recall that the main goal of this section is to prove Conjecture \ref{conjecture: main conjecture} under the NIP hypothesis, assuming the measure in consideration is additionally $\autf_{\KP}(\FC)$-invariant. The next theorem is crucial for that. It is a counterpart of Theorem 4.11 from \cite{Artem_Kyle}.

Recall from Section \ref{subsection: Galois groups} that $\mathfrak{p}_{\KP}^{\FC}$ was the quotient map $\aut(\FC) \to \aut(\FC)/\autf_{\KP}(\FC)$. From now on, we will denote it by $\mathfrak{p}$. The corresponding map for $\FC'$ in place of $\FC$ will be denoted by $\mathfrak{p}_{\FC'}$.

\begin{theorem}\label{thm: old 8.8}
    If $T$ is NIP and $\mu\in\mathfrak{M}_{\bar{m}}^{\inv}(\FC,M)$ is an idempotent measure, then:
    \begin{enumerate}

\item $\supp(\rho_{\ast}(\mu))$ is a compact group and $(\rho_{\ast}(\mu))|_{\supp(\rho_{\ast}(\mu))}$ is precisely the normalized Haar measure on $\supp(\rho_{\ast}(\mu))$,

 \item $\mathfrak{p}^{-1}[\supp(\rho_{\ast}(\mu))]$ is a relatively $\bar{m}$-type definable subgroup of $\aut(\FC)$ which contains $\Stab(\mu)$,

        \item if additionally $\mu$ is $\autf_{\KP}(\FC)$-invariant, then
        $$\stab(\mu)=\mathfrak{p}^{-1}[\supp(\rho_{\ast}(\mu))].$$
    \end{enumerate}
\end{theorem}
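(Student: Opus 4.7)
By Theorem \ref{thm:pi_star.homomorphism}, $\rho_{\ast}$ is a semigroup homomorphism from $(\mathfrak{M}^{\inv}_{\bar m}(\FC, M), \ast)$ to $(\mathcal{M}(\gal_{\KP}(T)), \circledast)$, so $\nu := \rho_{\ast}(\mu)$ is idempotent in the latter. As $\gal_{\KP}(T)$ is a compact Hausdorff topological group, Fact \ref{fact: classical Pym} yields both conclusions of (1): $K := \supp(\nu)$ is a compact subgroup of $\gal_{\KP}(T)$ and $\nu|_{K}$ is its normalized Haar measure.

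\textbf{Part (2).} Set $H := \mathfrak{p}^{-1}[K]$. Since $K$ is a subgroup and $\mathfrak{p}$ is a group homomorphism, $H$ is a subgroup of $\aut(\FC)$. Because $K$ is closed in the logic topology on $\gal_{\KP}(T)$, its preimage in $S_{\bar m}(M)$ under $\hat{\mathfrak{p}}_{\KP}^{\bar m, M, \FC}$ is closed, hence the locus of a partial type $\pi_{0}(\bar x)$ over $\emptyset$. Using the formula $\rho(\tp(\sigma(\bar m)/\FC)) = \sigma^{-1}/\autf_{\KP}(\FC)$ and the fact that $K$ is closed under inverses, one checks $\sigma \in H \Leftrightarrow \,\models \pi_{0}(\sigma(\bar m))$, i.e.\ $H = G_{\pi, \FC}$ for $\pi(\bar x;\bar y) := \pi_{0}(\bar x) \wedge (\bar x \equiv \bar y)$. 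For the inclusion $\stab(\mu) \subseteq H$: a direct calculation from the explicit description of $\rho$ yields $\rho_{\ast}(\sigma \cdot \mu) = \rho_{\ast}(\mu) \cdot \mathfrak{p}(\sigma)^{-1}$; so if $\sigma \in \stab(\mu)$, then $K = K \cdot \mathfrak{p}(\sigma)^{-1}$, and since $e \in K$ we conclude $\mathfrak{p}(\sigma) \in K$, i.e.\ $\sigma \in H$.

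\textbf{Part (3).} The nontrivial inclusion is $H \subseteq \stab(\mu)$. Let $\sigma \in H$. Using the identity $\sigma \cdot (\mu_{1} \ast \mu_{2}) = \mu_{1} \ast (\sigma \cdot \mu_{2})$ (verified directly from Definition \ref{def:star.definition} via the computation $(h_{\bar b})_{\ast}(\sigma \cdot \nu') = (h_{\sigma^{-1}(\bar b)})_{\ast}(\nu')$) together with the idempotency of $\mu$, we obtain $\sigma \cdot \mu = \sigma \cdot (\mu \ast \mu) = \mu \ast (\sigma \cdot \mu)$, so $\sigma \cdot \mu$ is a right fixed point of $\mu \ast -$. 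Since $\rho_{\ast}(\mu)$ is right-invariant under $K$ (it is the Haar measure on $K$), we have $\rho_{\ast}(\sigma \cdot \mu) = \rho_{\ast}(\mu) \cdot \mathfrak{p}(\sigma)^{-1} = \rho_{\ast}(\mu)$. Moreover, normality of $\autf_{\KP}(\FC)$ in $\aut(\FC)$ ensures that $\sigma \cdot \mu$ is again $\autf_{\KP}(\FC)$-invariant. Thus both $\mu$ and $\sigma \cdot \mu$ are $\autf_{\KP}$-invariant right fixed points of $\mu \ast -$ in $\mathfrak{M}^{\inv}_{\bar m}(\FC, M)$ with the same $\rho_{\ast}$-pushforward, and the inclusion reduces to the uniqueness of such measures.

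\textbf{Main obstacle.} The crux of the proof is precisely the uniqueness assertion in Part (3): that $\autf_{\KP}(\FC)$-invariant measures $\nu \in \mathfrak{M}^{\inv}_{\bar m}(\FC, M)$ satisfying $\mu \ast \nu = \nu$ are determined by their $\rho_{\ast}$-pushforward. Proving it appears to require combining $\autf_{\KP}$-invariance (which forces the ``independent data'' of $\nu$ to depend only on Galois classes) with NIP-specific tools: approximation of measures by averages of types in their supports (Fact \ref{fact:MP}(2)), the Hrushovski--Pillay invariantly-supportedness (Fact \ref{prop:NIP}), and the fact that $\rho_{\ast}(\mu)$ is Haar on $K$. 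A natural approach is to realize each such $\nu$ as an average (integral) of translates of $\mu$ via a canonical disintegration along the fibers of $\rho$, with the $\autf_{\KP}$-invariance forcing the fiber data to be uniquely determined. The strategy parallels the corresponding result for definable groups obtained in \cite{Artem_Kyle2}, and we anticipate that the technical details will be the most delicate part of the argument.
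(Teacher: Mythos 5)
Your Parts (1) and (2) are correct and follow essentially the same route as the paper: Part (1) is exactly the composition of Theorem \ref{thm:pi_star.homomorphism} with Fact \ref{fact: classical Pym}, and Part (2) rests on the same translation identity $\sigma\rho^{-1}[U]=\rho^{-1}[U\cdot\mathfrak{p}(\sigma)^{-1}]$ that the paper isolates as a claim.

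Part (3), however, has a genuine gap, and you flag it yourself: you reduce the inclusion $\mathfrak{p}^{-1}[\supp(\rho_{\ast}(\mu))]\subseteq\stab(\mu)$ to a uniqueness statement (that $\autf_{\KP}(\FC)$-invariant right fixed points of $\mu\ast-$ are determined by their $\rho_{\ast}$-pushforward) which you do not prove, and the disintegration strategy you sketch for it is vague and substantially harder than what is needed. The reduction itself is logically sound — your identity $\sigma\cdot(\mu_1\ast\mu_2)=\mu_1\ast(\sigma\cdot\mu_2)$ does hold, since $(h_{\bar b})_{\ast}(\sigma\cdot\nu)=(h_{\sigma^{-1}(\bar b)})_{\ast}(\nu)$ — but the proof is not complete without the uniqueness claim, and no such claim is established anywhere in the paper.

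The paper closes Part (3) by a direct computation that bypasses uniqueness entirely. Fix $\tau$ with $\mathfrak{p}(\tau)\in\supp(\rho_{\ast}(\mu))$, a formula $\varphi(\bar x;\bar y)$ and $\bar b$. Consider the fiber functions $f:=F^{\varphi^{\textrm{opp}}}_{\mu}\circ h_{\tau(\bar b)}$ and $h:=F^{\varphi^{\textrm{opp}}}_{\mu}\circ h_{\bar b}$ on $S_{\bar m}(\FC)$. The $\autf_{\KP}(\FC)$-invariance of $\mu$ is used precisely here: it forces $f$ and $h$ to factor through $\rho$ as $\hat f,\hat h\colon\gal_{\KP}(T)\to\Rr$, with $\hat f(g)=\hat h(g\cdot\mathfrak{p}(\tau))$. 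Then idempotency gives
\[
\mu(\varphi(\tau(\bar b);\bar y))=(\mu\ast\mu)(\varphi(\tau(\bar b);\bar y))=\int \hat f\,d\rho_{\ast}(\mu)=\int \hat h(g\cdot\mathfrak{p}(\tau))\,d\rho_{\ast}(\mu)(g)=\int \hat h\,d\rho_{\ast}(\mu)=\mu(\varphi(\bar b;\bar y)),
\]
where the middle equality uses that $\rho_{\ast}(\mu)$ is the Haar measure on its support (Part (1)) and $\mathfrak{p}(\tau)$ lies in that support. This gives $(\tau^{-1})_{\ast}(\mu)=\mu$ directly. I would recommend replacing your "Main obstacle" paragraph with this computation; the ingredients you list (Haar-ness of $\rho_{\ast}(\mu)$, $\autf_{\KP}$-invariance) are the right ones, but they enter through the factorization of the fiber functions, not through a disintegration or a uniqueness theorem.
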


\begin{proof}
Proof of (1): Since $T$ is NIP and $\mu\in\mathfrak{M}_{\bar{m}}^{\inv}(\FC,M)$ is an idempotent, by Theorem \ref{thm:pi_star.homomorphism}, we obtain that $\rho_{\ast}(\mu)\in\CM(\gal_{\KP}(T))$ is also idempotent. Thus, using the classical Fact \ref{fact: classical Pym}, we conclude that $\supp(\rho_{\ast}(\mu))$ is a compact group and $\rho_{*}(\mu)$ is precisely the normalized Haar measure on $\supp(\rho_{\ast}(\mu))$.

Proof of (2): By (1),  $\supp(\rho_{\ast}(\mu))$ is a subgroup of $\gal_{\KP}(T)$, so $\mathfrak{p}^{-1}[\supp(\rho_{\ast}(\mu))]$ is  a subgroup of $\aut(\FC)$, because $\mathfrak{p}$ is a group homomorphism. By (1), we also know that $\supp(\rho_{\ast}(\mu))$ is closed, which together with
the third diagram in Section \ref{subsection: Galois groups} applied for $N=M$ and the definition of the logic topology on $\gal_{\KP}(T)$ implies that $\mathfrak{p}^{-1}[\supp(\rho_{\ast}(\mu))]$ is relatively $\bar{m}$-type definable. It remains to prove that $\Stab(\mu) \subseteq \mathfrak{p}^{-1}[\supp(\rho_{\ast}(\mu))]$.

Let $\sigma\in\stab(\mu)$. Then also $\sigma^{-1}\in\stab(\mu)$.
    We want to show that  $\mathfrak{p}(\sigma)\in\supp(\rho_{\ast}(\mu))$.
    Consider an open neighborhood $U\subseteq\gal_{\KP}(T)$ of $\mathfrak{p}(\sigma)$. 

\begin{clm}
$\sigma\rho^{-1}[U]=\rho^{-1}[U \cdot \mathfrak{p}(\sigma)^{-1}]$.
\end{clm}

\begin{clmproof}
Let us prove $\subseteq$ (the opposite inclusion uses a similar computation). Consider any $q \in \sigma\rho^{-1}[U]$, i.e. $q=\sigma p$ for some $p \in \rho^{-1}[U]$. Write $p = \tp(\tau'(\bar m)/\FC)$ for some $\tau' \in \aut(\FC')$, and pick $\tau \in \aut(\FC)$ with $\tau(\bar m) \equiv_M \tau'(\bar m)$. Then $\rho(p)=\tau^{-1}/\autf_{\KP}(\FC) \in U$. On the other hand, choosing any extension $\sigma' \in \aut(\FC')$ of $\sigma$, we have $\sigma p =\tp(\sigma'\tau'(\bar m)/\FC)$ and $\sigma(\tau(\bar m)) \equiv_{\mathrm{Ls}} \sigma'(\tau'(\bar m))$, and therefore  $\rho (\sigma p) = (\sigma \tau)^{-1}/\autf_{\KP}(\FC)= \tau^{-1}/\autf_{\KP}(\FC) \cdot \sigma^{-1}/\autf_{\KP}(\FC)$. Thus,  $\rho (\sigma p) \in U \cdot \mathfrak{p}(\sigma)^{-1}$, as required.
\end{clmproof}
Using the fact that $\sigma^{-1}\in\stab(\mu)$ and the above claim, we have
   $$(\rho_{\ast}(\mu))(U)=\mu(\rho^{-1}[U])=\big((\sigma^{-1})_{\ast}(\mu)\big)(\rho^{-1}[U])=\mu(\sigma\rho^{-1}[U])=(\rho_{\ast}(\mu))(U\cdot \mathfrak{p}(\sigma)^{-1})>0, $$ 
 where the inequality holds as $\id/\autf_{\KP}(\FC)\in (U\cdot \mathfrak{p}(\sigma)^{-1}) \cap \supp(\rho_{\ast}(\mu))$.
    
    Proof of (3): By (2), it remains to prove $\supseteq$, i.e.
that for every $\tau \in \aut(\FC)$ such that $\mathfrak{p}(\tau)\in\supp(\rho_{\ast}(\mu))$ we have $\tau_{\ast}(\mu)=\mu$. 
Since $\Stab(\mu)$ is a subgroup, it is enough to show that $(\tau^{-1})_{\ast}(\mu)=\mu$.
    Consider $\varphi(\bar{x};\bar{y})\in\CL$, $\bar{b}\in N^{\bar{x}}$,
    and functions
    $$f:=(F^{\varphi^{\textrm{opp}}(\bar{y};\bar{x})}_{\mu}\circ h_{\tau(\bar{b})}):S_{\bar{m}}(\FC)\to \Rr,$$
    $$h:=(F^{\varphi^{\textrm{opp}}(\bar{y};\bar{x})}_{\mu}\circ h_{\bar{b}}):S_{\bar{m}}(\FC)\to \Rr.$$
    As $\mu$ is  $\autf_{\KP}(\FC)$-invariant,
    both $f$ and $h$ factor thorough $\rho\colon S_{\bar{m}}(\FC)\to \gal_{\KP}(T)$,
    via the functions $\hat{f},\hat{h}\colon \gal_{\KP}(T)\to\Rr$ given by
    $$\hat{f}(\theta/\autf_{\KP}(\FC)):=f\big(\tp(\theta^{-1}(\bar{m})/\FC)\big),$$
    $$\hat{h}(\theta/\autf_{\KP}(\FC)):=h\big(\tp(\theta^{-1}(\bar{m})/\FC)\big).$$
    Moreover, for every $\theta/\autf_{\KP}(\FC)\in\gal_{\KP}(T)$ we have
    $$\hat{f}(\theta/\autf_{\KP}(\FC))=\mu(\varphi(\theta\tau(\bar{b});\bar{y}))=
    \hat{h}(\theta/\autf_{\KP}(\FC)\cdot\tau/\autf_{\KP}(\FC)).$$
    Therefore, we can compute
    \begin{align*}
   (\tau^{-1})_{\ast}(\mu)(\varphi(\bar b;\bar y)) &=  \mu\big(\varphi(\tau(\bar{b});\bar{y})\big) = (\mu\ast\mu)\big( \varphi(\tau(\bar{b});\bar{y}) \big) = \int\limits_{S_{\bar{m}}(\FC)}f\,d\mu \\     
	 &= \int\limits_{S_{\bar{m}}(\FC)}\hat{f}\circ\rho\,d\mu = \int\limits_{\gal_{\KP}(T)} \hat{f}(g)\,d(\rho_{\ast}(\mu))(g) \\
        &= \int\limits_{\gal_{\KP}(T)} \hat{h}\big(g\cdot\mathfrak{p}(\tau)\big)\,d(\rho_{\ast}(\mu))(g)  \stackrel{(\spadesuit)}{=} 
        \int\limits_{\gal_{\KP}(T)} \hat{h}(g)\,d(\rho_{\ast}(\mu))(g)  \\
        &= \int\limits_{S_{\bar{m}}(\FC)}\hat{h}\circ\rho\,d\mu = 
        \int\limits_{S_{\bar{m}}(\FC)}h\,d\mu \\
        &= (\mu\ast\mu)\big( \varphi(\bar{b};\bar{y}) \big)
        = \mu\big(\varphi(\bar{b};\bar{y})\big),
    \end{align*}
where $(\spadesuit)$ follows by the assumption that $\mathfrak{p}(\tau)\in\supp(\rho_{\ast}(\mu))$ and by (1).
Since $\varphi(\bar{b};\bar{y})$ was arbitrary, we conclude that 
$(\tau^{-1})_{\ast}(\mu)=\mu$
\end{proof}

We now prove Conjecture \ref{conjecture: main conjecture} for $\autf_{\KP}(\FC)$-invariant measures under NIP.

\begin{theorem}\label{thm: 0.7 for NIP} Suppose $T$ is NIP. Let $\mu \in \mathfrak{M}^{\inv}_{\bar m}(\FC,M)$ be
$\autf_{\KP}(\FC)$-invariant.
Then $\stab(\mu)$ is relatively $\bar{m}$-definable over $M$,
say $\stab(\mu)=G_{\pi,\FC}$ for $\pi(\bar{x};\bar{y})$ being a partial type over $\emptyset$ which contains ``$\bar{x}\equiv_{\emptyset}\bar{y}$''.
Then the following are equivalent:
\begin{enumerate}
\item $\mu$ is an idempotent,
\item $\mu$ is a (left) $G_{\pi,\FC}$-invariant measure in $\mathfrak{M}^{\inv}_{\pi(\bar{m};\bar{y})}(\FC,M)$.
\end{enumerate}
If $\mu$ is additionally fim over $M$, then (2) is equivalent
to
\begin{enumerate}
\item[(2')] $\mu$ is the unique (left) $G_{\pi,\FC}$-invariant measure in $\mathfrak{M}^{\inv}_{\pi(\bar{m};\bar{y})}(\FC,M)$.
\end{enumerate}
Thus, Conjecture \ref{conjecture: main conjecture} holds under NIP and the additional assumption that $\mu$ is $\autf_{\KP}(\FC)$-invariant.
\end{theorem}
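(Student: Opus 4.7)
The plan is to assemble the argument from Theorem \ref{thm: old 8.8}, Corollary \ref{cor: two to one}, and Corollary \ref{cor: uniqueness for measures}. First I would establish that $\stab(\mu)$ is relatively $\bar m$-type-definable and that we may write $\stab(\mu)=G_{\pi,\FC}$ for some partial type $\pi(\bar x;\bar y)$ containing ``$\bar x\equiv_{\emptyset}\bar y$''. Under (2) this is immediate: $G_{\pi,\FC}\leqslant\stab(\mu)$ by the assumed $G_{\pi,\FC}$-invariance, and $\stab(\mu)\leqslant G_{\pi,\FC}$ by Lemma \ref{lemma:stab.mu.subgroup.pi}. Under (1), Theorem \ref{thm: old 8.8}(3) gives $\stab(\mu)=\mathfrak{p}^{-1}[\supp(\rho_{\ast}(\mu))]$, which is relatively $\bar m$-type-definable by Theorem \ref{thm: old 8.8}(2), so I would pick any such $\pi$ (and adjoin ``$\bar x\equiv_{\emptyset}\bar y$'' without enlarging the subgroup).

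The direction $(2)\Rightarrow(1)$ is Corollary \ref{cor: two to one}, whose proof (via Lemma \ref{lemma: equivalence of invariance} and Proposition \ref{proposition: adaptation of 3.19 from [ChGa]}) only needs existence of a $G_{\pi,\FC}$-invariant measure concentrated on $[\pi(\bar m;\bar y)]$, not uniqueness. The substantive direction is $(1)\Rightarrow(2)$. Assume (1). Theorem \ref{thm: old 8.8}(1) yields that $H:=\supp(\rho_{\ast}(\mu))$ is a closed subgroup of $\gal_{\mathrm{KP}}(T)$, and (3) of that theorem identifies $\stab(\mu)=\mathfrak{p}^{-1}[H]=G_{\pi,\FC}$; left $G_{\pi,\FC}$-invariance of $\mu$ is then automatic. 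To see $\mu\in\mathfrak{M}^{\inv}_{\pi(\bar m;\bar y)}(\FC,M)$, I would verify directly the identity $[\pi(\bar m;\bar y)]=\rho^{-1}[H]$ inside $S_{\bar m}(\FC)$: for $p=\tp(\sigma(\bar m)/\FC)$ with $\sigma\in\aut(\FC')$, one has $p\in[\pi(\bar m;\bar y)]$ iff $\models\pi(\bar m;\sigma(\bar m))$ iff $\sigma\in G_{\pi^{\textrm{opp}},\FC'}=G_{\pi,\FC'}$ (by Remark \ref{remark: G_pi=G_pi^opp}) iff $\sigma/\autf_{\mathrm{KP}}(\FC')\in H$ iff $\rho(p)=\sigma^{-1}/\autf_{\mathrm{KP}}(\FC)\in H^{-1}=H$, using that $\pi$ defines $G_{\pi,\FC'}=\mathfrak{p}_{\FC'}^{-1}[H]$ in the bigger monster via $\mathfrak{r}_{\FC'}$, together with $H=H^{-1}$. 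Standard measure theory then gives $\supp(\mu)\subseteq\rho^{-1}[\supp(\rho_{\ast}(\mu))]=[\pi(\bar m;\bar y)]$, completing (2).

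For the fim addendum, $(2')\Rightarrow(2)$ is trivial, while $(2)\Rightarrow(2')$ is exactly Corollary \ref{cor: uniqueness for measures}(1): under NIP, any fim (left) $G_{\pi,\FC}$-invariant measure in $\mathfrak{M}^{\inv}_{\pi(\bar m;\bar y)}(\FC,M)$ is unique, noting that $G_{\pi,\FC}$ is itself fim over $M$ (witnessed by $\mu$ under (2)). Combining the two equivalences delivers the ``in particular'' assertion that Conjecture \ref{conjecture: main conjecture} holds in the present setting. The genuine difficulty here is already absorbed into Theorem \ref{thm: old 8.8}(3), which itself rests on the semigroup homomorphism property of $\rho_{\ast}$ established in Theorem \ref{thm:pi_star.homomorphism}; once those tools are granted, the remaining proof is largely a matter of bookkeeping among the three identifications $G_{\pi,\FC}\leftrightarrow\mathfrak{p}^{-1}[H]\leftrightarrow\rho^{-1}[H]$ and does not present a fundamentally new obstacle.
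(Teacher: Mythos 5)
Your proposal is correct and follows essentially the same route as the paper: relative type-definability of $\stab(\mu)$ via Theorem \ref{thm: old 8.8}, $(2)\Rightarrow(1)$ via Corollary \ref{cor: two to one}, the identification $[\pi(\bar{m};\bar{y})]=\rho^{-1}[\supp(\rho_{\ast}(\mu))]$ for $(1)\Rightarrow(2)$ (which the paper splits into two claims, using $\autf_{\KP}(\FC')\leqslant G_{\pi,\FC'}$ exactly as you do), and Corollary \ref{cor: uniqueness for measures}(1) for the fim addendum. Your case split for establishing $\stab(\mu)=G_{\pi,\FC}$ under (2) versus under (1) is if anything slightly more careful than the paper's presentation, but the substance is identical.
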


\begin{proof}
By Theorem \ref{thm: old 8.8}, $\stab(\mu)=\mathfrak{p}_{\FC}^{-1}[\supp((\rho_{\FC})_\ast(\mu))]$ is a relatively $\bar{m}$-type definable subgroup of $\aut(\FC)$.
Hence, we can indeed write $\stab(\mu)=G_{\pi,\FC}$  for $\pi$ as in the statement.

The implication $(2)\Rightarrow(1)$ holds by Corollary \ref{cor: two to one}.
We prove the implication $(1)\Rightarrow(2)$.

    Note that $\mu$ is (left) $G_{\pi,\FC}$-invariant. 
    Thus, it suffices to show that $\mu([\pi(\bar{m};\bar{y})])=1$.
By assumption, $\autf_{\KP}(\FC)\leqslant G_{\pi,\FC}$. Using compactness (or rather saturation of $\FC$) and the fact that an automorphism $\sigma \in \aut(\FC)$ belongs to $\autf_{\KP}(\FC)$ if and only if $\sigma(\bar m) \equiv_{\KP} \bar m$, this implies that $\autf_{\KP}(\FC')\leqslant G_{\pi,\FC'}$.

To avoid confusion, in the computations below we will emphasize the monster models over which we are working by writing them as subscripts of $\rho$ and $\mathfrak{p}$.

\begin{clm}
    $\rho_{\FC}^{-1}\rho_{\FC}[\pi(\bar{m};\bar{y})]=[\pi(\bar{m};\bar{y})]$
    (as subsets of $S_{\bar{m}}(\FC)$).
\end{clm}

\begin{clmproof}
    Let $p\in S_{\bar{m}}(\FC)$, and let $\sigma\in\aut(\FC')$ be such that $p=\tp(\sigma(\bar{m})/\FC)$. 
Then:
    \begin{align*}
        &\tp(\sigma(\bar{m})/\FC)\in \rho_{\FC}^{-1}\rho_{\FC}\big([\pi(\bar{m};\bar{y})]\big) \\
        &\iff \rho_{\FC}\big(\tp(\sigma(\bar{m})/\FC)\big)\in\rho_{\FC}
        \big([\pi(\bar{m};\bar{y})]\big) \\
        & \stackrel{(1)}{\iff} \mathfrak{r}_{\FC}\rho_{\FC'}\big(\tp(\sigma(\bar{m})/\FC')\big)
        \in\rho_{\FC}\big([\pi(\bar{m};\bar{y})]\big) \\
        &\iff (\exists\tau\in G_{\pi,\FC'})\Big(\mathfrak{r}_{\FC}\rho_{\FC'}\big(\tp\big(\sigma(\bar{m})/\FC'\big)\big)=\rho_{\FC}\big(\tp \big(\tau(\bar{m})/\FC \big) \big) \Big) \\
        &\stackrel{(2)}{\iff} (\exists\tau\in G_{\pi,\FC'})\Big(\mathfrak{r}_{\FC}\rho_{\FC'}\big(\tp\big(\sigma(\bar{m})/\FC'\big)\big)=\mathfrak{r}_{\FC}\rho_{\FC'}\big(\tp \big(\tau(\bar{m})/\FC' \big) \big) \Big) \\
        &\stackrel{(3)}{\iff} (\exists\tau\in G_{\pi,\FC'})\Big(\rho_{\FC'}\big(\tp\big(\sigma(\bar{m})/\FC'\big)\big)=\rho_{\FC'}\big(\tp \big(\tau(\bar{m})/\FC' \big) \big) \Big) \\
        &\iff (\exists\tau\in G_{\pi,\FC'})\big( \sigma\in\tau\autf_{\KP}(\FC') \big) \\
        &\stackrel{(4)}{\iff} \sigma\in G_{\pi,\FC'}\cdot\autf_{\KP}(\FC')=G_{\pi,\FC'}
        \;\iff\; \tp(\sigma(\bar{m})/\FC)\in [\pi(\bar{m};\bar{y})], 
    \end{align*}
where (1) and (2) follow from the last diagram in Section \ref{subsection: Galois groups}, (3) from injectivity of $\mathfrak{r}_{\FC}$, and (4) from the above observation that  $\autf_{\KP}(\FC')\leqslant G_{\pi,\FC'}$.
\end{clmproof}

\begin{clm}
    $\rho_{\FC}\big([\pi(\bar{m};\bar{y})]\big)=\supp((\rho_\FC)_\ast(\mu)).$
\end{clm}

\begin{clmproof}
    Let $\sigma\in\aut(\FC)$. 
By the explicit formula formula for $\rho_{\FC}$ (given before the last diagram in Section \ref{subsection: Galois groups}), we have that 
    $$\sigma/\autf_{\KP}(\FC)\in \rho_{\FC}\big([\pi(\bar{m};\bar{y})]\big)\;\iff\;
    (\exists\tau\in G_{\pi,\FC})\big(\sigma/\autf_{\KP}(\FC)=\tau/\autf_{\KP}(\FC) \big).$$
 On the other hand,   
by the first paragraph of the proof of Theorem \ref{thm: 0.7 for NIP}, we have $G_{\pi,\FC}=\mathfrak{p}^{-1}_{\FC}[\supp((\rho_{\FC})_\ast(\mu))]$.
Thus,  
     $$\sigma/\autf_{\KP}(\FC)\in \rho_{\FC}\big([\pi(\bar{m};\bar{y})]\big)\;\iff\;
    \sigma/\autf_{\KP}(\FC)\in  \supp((\rho_{\FC})_\ast(\mu)).$$
\end{clmproof}
\noindent
Using both claims, we can compute
\begin{align*}
    \mu\big([\pi(\bar{m};\bar{y})]\big) &= \mu\big(\rho_{\FC}^{-1}\rho_{\FC}[\pi(\bar{m};\bar{y})]\big)
    =((\rho_{\FC})_\ast(\mu))\big(\rho_{\FC}[\pi(\bar{m};\bar{y})]\big)\\
    &= ((\rho_{\FC})_\ast(\mu))\big(\supp((\rho_{\FC})_\ast(\mu))\big) =1.
\end{align*}
\noindent
Finally, $(2)\iff(2')$ follows by Corollary \ref{cor: uniqueness for measures}(1).
\end{proof}

\begin{cor}\label{cor: NIP conj for definable}
(NIP)
    Let $G$ be a $\emptyset$-definable group, let $\mu\in\mathfrak{M}^{\inv}_G(\FC,M)$ be $G(\FC)^{00}$-invariant.  
Then the right stabilizer $\stab(\mu)$ is a type definable subgroup of $G(\FC)$, say 
    $\stab(\mu)=H(\FC)$.
    Then the following are equivalent:
    \begin{enumerate}
        \item $\mu$ is idempotent,
        \item $\mu$ is a (right) $\stab(\mu)$-invariant measure in $\mathfrak{M}^{\inv}_H(\FC,M)$.
    \end{enumerate}
If $\mu$ is additionally fim over $M$, then (2) is equivalent
to
\begin{enumerate}
\item[(2')] $\mu$ is the unique (right) $\stab(\mu)$-invariant measure in $\mathfrak{M}^{\inv}_H(\FC,M)$.
\end{enumerate}
Thus, Conjecture \ref{conjecture: main conjecture for definable groups} holds under NIP and the additional assumption that $\mu$ is $G(\FC)^{00}$-invariant.
\end{cor}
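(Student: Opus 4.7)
The plan is to reduce to Theorem \ref{thm: 0.7 for NIP} via the affine sort construction of Section \ref{sec: affine sort}. I will form $\bar M = (M, S, \cdot)$, the corresponding monster model $\bar\FC$, and set $\mu' := f_{\ast}^{-1}(\mu) \in \mathfrak{M}_{\pi,\bar n}(\bar\FC)$, where $f \colon \widetilde{H}_{\bar\FC,\bar n} \to S_G(\FC)$ is the homeomorphism from Section \ref{sec: affine sort}. By Corollary \ref{corollary: preservation for infinite tuples}, $\mu' \in \mathfrak{M}^{\inv}_{\bar n}(\bar\FC,\bar M)$, and $\mu'$ is fim over $\bar M$ if and only if $\mu$ is fim over $M$.

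The crucial step is to verify that $\mu'$ is $\autf_{\KP}(\bar\FC)$-invariant, which puts us in position to apply Theorem \ref{thm: 0.7 for NIP}. By Fact \ref{rem: affine Lascar and KP}, under the identification $\aut(\bar\FC) = G(\FC)\rtimes\aut(\FC)$, we have $\autf_{\KP}(\bar\FC) = G(\FC)^{00}\rtimes\autf_{\KP}(\FC)$. For any $(g,\sigma)$ in this subgroup, Lemma \ref{lemma:action.transfer} combined with the injectivity of $f_{\ast}$ reduces the required equality $(g,\sigma)\cdot\mu' = \mu'$ to $(\sigma\cdot\mu)\cdot g^{-1} = \mu$. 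The right $G(\FC)^{00}$-invariance hypothesis eliminates the $g^{-1}$ factor, so it remains to show $\sigma\cdot\mu = \mu$ for $\sigma \in \autf_{\KP}(\FC)$. For this I will use that in NIP the group $G(\FC)^{00}$ is a $\emptyset$-type-definable normal subgroup of bounded index, so right $G(\FC)^{00}$-invariance forces $\mu$ to descend to a Borel measure on the compact topological group $G(\FC)/G(\FC)^{00}$, on which $\autf_{\KP}(\FC)$ acts trivially by the very definition of $\equiv_{\KP}$.

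Once $\mu'$ is $\autf_{\KP}(\bar\FC)$-invariant, Theorem \ref{thm: 0.7 for NIP} yields $\stab(\mu') = G_{\pi',\bar\FC}$ for some partial type $\pi'$ containing ``$\bar x \bar y \equiv \bar x'\bar y'$'', together with the equivalence of idempotence of $\mu'$ with its left $G_{\pi',\bar\FC}$-invariance in $\mathfrak{M}^{\inv}_{\pi'(\bar x \bar y;\bar n)}(\bar\FC,\bar M)$ and, in the fim case, with the corresponding uniqueness. To transfer back to the $G$-side, Lemma \ref{lemma: affine isomorphism for measures} gives that $\mu$ is idempotent iff $\mu'$ is idempotent, and Proposition \ref{proposition:unique.invariant.transfer} identifies $\stab(\mu')$ with the right stabilizer $H(\FC) = \stab(\mu) \leqslant G(\FC)$ via $f$ and produces a bijection between left $G_{\pi',\bar\FC}$-invariant measures in $\mathfrak{M}^{\inv}_{\pi'(\bar x \bar y;\bar n)}(\bar\FC,\bar M)$ and right $H(\FC)$-invariant measures in $\mathfrak{M}^{\inv}_H(\FC,M)$. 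Combining these identifications yields the desired equivalences between (1), (2) and, under fim, (2').

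The main obstacle I anticipate is the second step, namely verifying the $\autf_{\KP}(\FC)$-component of the invariance of $\mu'$: this factor acts on the home sort while the hypothesis only gives right invariance under $G(\FC)^{00}$, and the resolution requires descent to the compact quotient $G(\FC)/G(\FC)^{00}$ using structural features of $G^{00}$ in NIP. Once this is settled, the remainder is bookkeeping inside the correspondences developed in Section \ref{sec: affine sort}.
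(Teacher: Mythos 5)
Your high-level route is exactly the paper's: pass to the affine expansion, set $\mu':=f_\ast^{-1}(\mu)$, invoke Theorem \ref{thm: 0.7 for NIP} for $\mu'$, and transfer back using Corollary \ref{corollary: preservation for infinite tuples}, Lemma \ref{lemma: affine isomorphism for measures} and Proposition \ref{proposition:unique.invariant.transfer}. The transfer bookkeeping in your first and last paragraphs matches what the paper does.

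The gap is in the step you yourself identify as the crux, and your proposed fix does not close it. First, Lemma \ref{lemma:action.transfer} and Remark \ref{rem:aff01} are only available for $(g,\sigma)\in H=G(\FC)\rtimes\aut(\FC/M)$: if $\sigma\notin\aut(\FC/M)$, then $(g,\sigma)\cdot\mu'$ is no longer concentrated on $[\pi(\bar{x},\bar{y};\bar{n})]$ (the formula $x_\alpha=m_\alpha$ in the types of $\supp(\mu')$ is pushed to $x_\alpha=\sigma(m_\alpha)$), so it leaves the domain of $f_\ast$ and the reduction to ``$(\sigma\cdot\mu)\cdot g^{-1}=\mu$'' is not licensed. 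The same observation shows the target claim itself fails: $\mu'$ gives measure $1$ to $[x_\alpha=m_\alpha]$ while $(1,\sigma)\cdot\mu'$ gives it measure $0$ whenever $\sigma(m_\alpha)\neq m_\alpha$, and $\autf_{\KP}(\FC)$ contains such $\sigma$ as soon as $M\not\subseteq\acl(\emptyset)$ (it contains $\aut(\FC/N)$ for every small $N\preceq\FC$, and in RCF it is all of $\aut(\FC)$); so $\mu'$ is essentially never $\autf_{\KP}(\bar{\FC})$-invariant and Theorem \ref{thm: 0.7 for NIP} cannot be applied to it verbatim. Second, the auxiliary claim ``$\sigma\cdot\mu=\mu$ for all $\sigma\in\autf_{\KP}(\FC)$'' is also false: right translation by $G(\FC)^{00}$ acts on the group argument, whereas $\sigma\cdot\mu$ permutes the parameters, and these actions are unrelated. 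For instance, for $G$ definably compact in RCF the normalized right-$G(\FC)^{00}$-invariant measure concentrated on a coset $aG(\FC)^{00}$ with $a\in G(M)\setminus\dcl(\emptyset)$ is $M$-invariant and right $G(\FC)^{00}$-invariant but not $\aut(\FC)=\autf_{\KP}(\FC)$-invariant; the ``descent to $G(\FC)/G(\FC)^{00}$'' heuristic breaks for the same reason. For what it is worth, the paper's own justification of this step cites only Remark \ref{rem:aff01}, which covers precisely the subgroup $G(\FC)^{00}\rtimes\aut(\FC/M)$ of $\autf_{\KP}(\bar{\FC})=G(\FC)^{00}\rtimes\autf_{\KP}(\FC)$, so you have correctly located where additional work is needed; but closing the gap appears to require reworking the application of Theorem \ref{thm: old 8.8} through the map onto $G(\FC)/G(\FC)^{00}$ described in Remark \ref{rem: affine homomorhism 2}, not the invariance statement as you formulate it.
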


\begin{proof} 
We use the notation of Section \ref{sec: affine sort}.
By Fact \ref{rem: affine Lascar and KP} and Remark \ref{rem:aff01}, the assumption that $\mu$ is right $G(\FC)^{00}$-invariant implies that $f_\ast^{-1}(\mu)$ is (left) $\autf_{\KP}(\bar{\FC})$-invariant. Since $\mu\in\mathfrak{M}^{\inv}_G(\FC,M)$ [is fim, resp.], Corollary \ref{corollary: preservation for infinite tuples} implies that $f_\ast^{-1}(\mu) \in \mathfrak{M}^{\inv}_{\bar n}(\bar{\FC},\bar{M})$ [is fim]. Therefore, the assumptions of Theorem \ref{thm: 0.7 for NIP} are satisfied for the measure $f_\ast^{-1}(\mu)$, and so the conclusion also holds. Then the conclusion of Theorem \ref{cor: NIP conj for definable} follows as explained in the paragraph after Conjecture \ref{conjecture: main conjecture} at the beginning of Section \ref{sec:fim idempotent}. Only type-definability of $\stab(\mu)$ was not explained there, as it was assumed to be known (by \cite[Proposition 5.3]{Artem_Kyle}). But it also follows from relative $\bar n$-type-definability over $\bar{M}$ of the left stabilizer of $f_\ast^{-1}(\mu)$ (using Proposition \ref{proposition:unique.invariant.transfer}(2) and Remark \ref{rem:aff.f.homeo}).
\end{proof}

\section{Newelski's group chunk theorem for automorphisms}\label{sec:group chunk}
In this section, we generalize portions of stable group theory to the context of automorphism groups. This section is vital in proving the main conjecture in the context of stable theories, which we will prove 
in Section \ref{sec: 0.7 for stable}. The main theorem in this section is a counterpart of Newelski's Group Chunk Theorem for groups of automorphisms.

Let $T$ be a complete first-order theory and $\FC \models T$ its monster model. Let $\bar c$ be an enumeration of $\FC$. Let $\bar x$ be a tuple of variables corresponding to $\bar c$. 
Usually we use ``$\bar{x}$'' to denote a tuple of variables corresponding to 
an enumeration of small model $M$. However, in Subsections \ref{section: stable groups for Aut(C)} and \ref{subsec: group chunk 2} we change the convention and use ``$\bar{x}$'' to denote variables corresponding to
the enumeration $\bar{c}$ of the monster model $\FC$ and ``$\bar{x}'$'' for variables corresponding to 
the enumeration $\bar{m}$ of the small model $M$ (with $\bar{x}'\subseteq\bar{x}$ and $\bar{m}\subseteq\bar{c}$). In Subsection \ref{subsec: group chunk 3}, we come back to the usual meaning of ``$\bar{x}$''.
By $\FC' \succeq \FC$ we will denote a bigger monster model in which $\FC$ is small.

Let $\pi(\bar{x}';\bar{y}')$ be a partial type over $\emptyset$ containing ``$\bar{x}'\equiv_{\emptyset}\bar{y}'$'' and such that 
$$G_{\FC}:=G_{\pi,\FC}=\{\sigma\in\aut(\FC)\;\colon\;\models \pi(\sigma(\bar{m});\bar{m})\}$$
is a subgroup of $\aut(\FC)$. Define also
$$G_{\varphi,\FC}:=\{\sigma\in G_{\FC}\;\colon\;\models \varphi (\sigma(\bar{c});\bar{a})\},$$ where $\varphi(\bar{x};\bar{a})$ is any formula with parameters $\bar a$ from $\FC$.

We also set:
$$\widetilde{G}_{\FC}:=\widetilde{G}_{\pi,\FC}=[\pi(\bar{x}';\bar{m})]\cap S_{\bar{c}}(\FC)=\{p(\bar x) \in S_{\bar c}(\FC): \pi(\bar x';\bar m) \subseteq p(\bar x)\},$$
$$\widetilde{G}:=S_{\pi(\bar{x}';\bar{m})}(M)=\{p(\bar x') \in S_{\bar m}(M): \pi(\bar x';\bar m) \subseteq p(\bar x')\}.$$

\subsection{Generics in $G_{\pi,\FC}$}\label{section: stable groups for Aut(C)}
In the following material, the terminology and notation from the paragraph preceding Remark \ref{remark: trivial remark} is used.

\begin{definition}
We say that a relatively definable subset
$D$ of $G_\FC$ is {\em left [right] generic} if $G_{\FC}$ is covered by finitely many left [right] translates of $D$ by elements of $G_\FC$. A filter on the Boolean algebra $\Df(G_{\FC})$ of relatively definable subsets of $G_{\FC}$ is {\em left [right] generic} if every set in this algebra is left [right] generic.
\end{definition}

\begin{remark}\label{remark: generic in various models}
For a fixed formula $\varphi(\bar x; \bar a)$, where $\bar a$ is a finite tuple from $\FC$, for any monster model $\mathfrak{D} \succeq \FC$, we have that $G_{\varphi,\FC}$ is left [right] generic subset of $G_{\FC}$ if and only if $G_{\varphi,\mathfrak{D}}$ is a left [right] generic subset of $G_{\mathfrak{D}}$. 
\end{remark}

\begin{proof}
In fact, $\varphi(\bar x;\bar a)$ uses only a finite tuple of variables $\bar x''$. Let $\bar c''$ be the corresponding subtuple of $\bar c$.

 The conclusion for the left version follows from the observation that  the condition that $G_{\varphi,\FC}$ is left generic means precisely that there are $\bar a_1, \dots, \bar a_n \in G_{\FC} \cdot \bar a$ (equivalently, for every $i$, $\models \exists \bar x'( \pi(\bar x';\bar m) \wedge \bar m \bar a \equiv \bar x' \bar a_i$))  such that the type $\exists \bar x'(\pi(\bar x';\bar m) \wedge \bar m \bar c'' \equiv \bar x' \bar x'')$ implies the formula $\varphi(\bar x'';\bar a_1) \lor \dots \lor \varphi(\bar x'';\bar a_n)$.

 The conclusion for the right version follows from the observation that  the condition that $G_{\varphi,\FC}$ is right generic means precisely that  there are $\bar c_1'', \dots, \bar c_n'' \in \bar  G_{\FC} \cdot \bar c''$ such that the type $\exists \bar x'(\pi(\bar x';\bar m)  \wedge \bar m \bar c_1''\dots \bar c_n'' \equiv \bar x' \bar x_1'' \dots \bar x_n'')$ implies the formula  $\varphi(\bar x_1'';\bar a) \lor \dots \lor \varphi(\bar x_n'';\bar a)$.
\end{proof}

\begin{proposition}\label{proposition: existence of generics}
Assume $T$ is stable. Then for every formula $\varphi(\bar x;\bar a)$ either $G_{\varphi,\FC}$ or its complement in $G_{\FC}$ (which equals $G_{\neg \varphi,\FC}$) is left [right] generic. Thus, non left [right] generic sets in $\Df(G_{\FC})$ form an ideal, and so each left [right] generic filter on $\Df(G_{\FC})$ extends to a left [right] generic ultrafilter; in particular, a left [right] generic ultrafilter on $\Df(G_{\FC})$ exists.
\end{proposition}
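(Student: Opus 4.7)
The plan is to prove the left version (the right version is dual, using that $\sigma \mapsto \sigma^{-1}$ is an involution on $G_\FC$ interchanging the left and right actions, together with Remark \ref{remark: G_pi=G_pi^opp}). The strategy is to first reduce all three assertions to the single statement that the family of non-left-generic sets in $\Df(G_\FC)$ is closed under finite unions. The dichotomy ``either $G_{\varphi,\FC}$ or $G_{\neg\varphi,\FC}$ is left-generic'' follows, since $G_\FC$ itself is trivially left-generic and hence cannot be a union of two non-generic sets; closure under supersets is automatic; and once non-generic sets form an ideal, the dual family of left-generic sets is a proper filter, which Zorn's lemma extends to a left-generic ultrafilter.

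To establish the closure of non-left-genericity under binary union, I would adapt the classical local-rank argument from stable group theory. First, I reformulate left-nongenericity as follows: $G_{\varphi,\FC}$ is not left-generic iff for every finite $F \subseteq G_\FC$ there exists $\tau \in G_\FC$ with $\models \bigwedge_{\sigma \in F} \neg\varphi(\tau(\bar c);\sigma(\bar a))$, since $\tau \in \sigma G_{\varphi,\FC}$ iff $\models \varphi(\tau(\bar c);\sigma(\bar a))$. Next, for a finite set $\Delta$ of $\CL$-formulas containing the formulas defining the relatively definable sets in question, I would use the standard local Shelah $\Delta$-rank $R_\Delta$ on partial types in the finite tuple of variables $\bar x''$ occurring in $\Delta$, restricted to the type space $S_{\bar x''}(\FC)$. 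Stability of $T$ ensures $R_\Delta$ is ordinal-valued on consistent partial types, and sub-additive under unions. The crucial lemma will be: a relatively definable set $G_{\varphi,\FC} \subseteq G_\FC$ is left-generic iff the partial type $\pi(\bar x';\bar m) \wedge \bar x' \bar x'' \equiv \bar m \bar c'' \wedge \varphi(\bar x'';\bar a)$ (where $\bar c''$ is the subtuple of $\bar c$ corresponding to $\bar x''$) has maximal $\Delta$-rank, equal to $R_\Delta$ of the ``whole'' type defining $\widetilde{G}_\FC$ restricted to these variables. Sub-additivity then forces the union of two non-generic sets to have strictly smaller rank, hence to be non-generic.

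The main obstacle, and the chief technical work, will be proving the iff in the key lemma: that equality of local ranks implies left-genericity of the corresponding relatively definable set in $G_\FC$. In the classical stable-group setting this comes from finiteness of the multiplicity of a $\Delta$-type of maximal rank and a counting argument using the definable group action. Here the action in question is the action of $G_\FC$ on $\widetilde{G}_\FC \subseteq S_{\bar c}(\FC)$ by left translations $\sigma \cdot \tp(\bar d/\FC) = \tp(\sigma(\bar d)/\FC)$; because $\bar c$ enumerates all of $\FC$ and $G_\FC$ is only relatively type-definable (not definable), care is needed to realize the ``translation by generic type'' step within $G_{\FC'}$ for a bigger monster $\FC'$, and to ensure that the finitely many chosen $\Delta$-types of maximal rank correspond to elements of $G_\FC$ rather than of $\aut(\FC)$. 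Saturation of $\FC'$ together with the homogeneity consequences of $\pi(\bar x';\bar y')\vdash \bar x' \equiv_\emptyset \bar y'$ provides precisely the needed flexibility; this is the step that will require the most careful bookkeeping, but no new idea beyond the classical proof.
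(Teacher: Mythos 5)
Your reduction of all three assertions to the ideal property of non-left-generic sets is fine, and the easy direction of your key lemma (left generic $\Rightarrow$ maximal $R_\Delta$-rank, via left-invariance of $R_\Delta$ and $R_\Delta(A\cup B)=\max(R_\Delta(A),R_\Delta(B))$) works. The gap is in the hard direction of your key lemma, ``maximal $\Delta$-rank $\Rightarrow$ left generic'', which you defer to ``the classical proof plus bookkeeping''. The classical counting argument does not transfer here, for a reason the paper itself discusses at length right after Proposition \ref{proposition: characterziations of genericity}: the usual $R_\Delta$-ranks are left-invariant but \emph{not} right-invariant under the $G_{\FC}$-action, and there is no good candidate for a right-invariant (let alone two-sided invariant, stratified) local rank with $R_\Delta(\widetilde{G}_\FC)<\infty$ in this setting. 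The classical argument needs either two-sided invariance or transitivity of the left action of $G_\FC$ on the finitely many $\Delta$-types of maximal rank; in this paper that transitivity is itself \emph{derived from} genericity (see the discussion of $\Gen(\widetilde{G}_{\FC,\Delta})$ following Proposition \ref{proposition: characterziations of genericity}), and the paper's own proof that maximal rank implies generic (Remark \ref{remark: max rank implies generic}) uses the fact that some cell of a finite $\Delta$-partition of $\widetilde{G}_\FC$ is generic --- i.e., exactly the ideal property you are trying to prove. The alternative route in the paper, the implication $(4)\Rightarrow(1)$ of Proposition \ref{proposition: characterziations of genericity}, produces a covering by finitely many \emph{right} translates from non-forking of all left translates, and then needs Corollary \ref{corollary: left=right}, which again presupposes the existence of a left generic ultrafilter. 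So as written your plan is circular, and the ``careful bookkeeping'' you promise would have to contain a genuinely new idea to break the circle.

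The paper avoids ranks entirely at this stage. It forms the two-sorted structure $C=(G_1,G_2,R)$ with $R(\sigma,\tau)$ iff $\sigma\in\tau\cdot G_{\varphi,\FC}$, shows $R$ is stable (otherwise the order property transfers back to $\varphi(\bar x;\bar y)$ in $T$), observes that left multiplication makes $\aut(C)$ act transitively on each sort so that there is a unique $1$-type over $\emptyset$, and characterizes left genericity of $G_{\varphi,\FC}$ as non-forking of $R(x,e)$ over $\emptyset$ in $\Th(C)$ (via ``non-forking iff some positive Boolean combination of conjugates covers $G_1$''). The dichotomy then falls out because the unique type over $\emptyset$ has a non-forking global extension containing $R(x,e)$ or $\neg R(x,e)$, and the ideal/ultrafilter statements follow since forking formulas form an ideal. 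If you want to keep a rank-based proof, you would essentially have to import the argument behind Theorem 4.7 of the Conant reference the paper mentions; the naive stratified-rank route is blocked.
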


\begin{proof}
We argue similarly to the case of type-definable groups from \cite[Chapter 1, Lemma 6.4]{anandgeometric}.
Let us focus on the left version (the right version follows by a symmetric argument). Let $C:=(G_1,G_2,R)$ (a 2-sorted structure with sorts $G_1$ and $G_2$ being copies of $G_{\FC}$ equipped with no structure), where $C \models R(\sigma,\tau)$ if $\sigma \in \tau \cdot G_{\varphi,\FC}$ (for $\sigma \in G_1$ and $\tau \in G_2$). Let $T_C:=\Th(C)$.

\begin{clm}
$R(x,y)$ is stable in $T_C$. 
\end{clm}

\begin{clmproof}
If not, then for every $n<\omega$ there exist $(\sigma_i,\tau_i)_{i \leq n}$ from $G_{\FC}$ such that $\sigma_j \in \tau_i \cdot G_{\varphi,\FC} \iff i \leqslant j$. As the condition $\sigma_j \in \tau_i \cdot G_{\varphi,\FC}$ is equivalent to $\models \varphi(\sigma_j(\bar c);\tau_i(\bar a))$, we conclude that $\varphi(\bar x; \bar y)$ is unstable, a contradiction.
\end{clmproof}

The rest of the proof is the same as in \cite[Chapter 1, Lemma 6.4]{anandgeometric}, but we give some details for the readers convenience. Note that for any $\sigma \in G_{\FC}$ (which we identify with the corresponding elements in $G_1$ and $G_2$) we have an automorphism $F_\sigma$ of $C$ given by $F_\sigma(g):=\sigma \cdot g$ and $F_\sigma(h):=\sigma \cdot h$ for any $g \in G_1$ and $h \in G_2$. Hence, $\aut(C)$ acts transitively on $G_1$ and $G_2$, and so $|S^{T_C}_{G_1}(\emptyset)|=1$ and  $|S^{T_C}_{G_2}(\emptyset)|=1$. Note also that $R(C,\sigma)= \sigma \cdot G_{\varphi,\FC}$, in particular $R(C,e)= G_{\varphi,\FC}$. 

\begin{clm}
$G_{\varphi,\FC}$ is left generic if and only if $R(x,e)$ does not fork over $\emptyset$ in $T_C$.
\end{clm}

\begin{clmproof}
By Claim 1, we get the following sequence of equivalences:
$R(x,e)$ does not fork over the $\emptyset$ in $T_C$ if and only if some positive Boolean combination of conjugates of $R(x,e)$ in the sense of $T_C$ is consistent and definable over $\emptyset$, 
if and only if there exist $\tau_1,\dots,\tau_n \in G_2$ such that $R(C,\tau_1) \cup \dots \cup R(C,\tau_n) = G_1$, if and only if there exist $\tau_1,\dots,\tau_n \in  G_{\FC}$ such that $ \tau_1 \cdot G_{\varphi,\FC} \cup \dots \cup  \tau_n \cdot G_{\varphi,\FC}=  G_{\FC}$. 
\end{clmproof}

Note that $G_{\neg \varphi,\FC} =  G_{\FC} \setminus G_{\varphi,\FC}$. By the proof of Claim 2 applied to $\neg\varphi(\bar x; \bar a)$ and $\neg R(x,y)$, we get that $G_{\neg \varphi,\FC}$ is left generic if and only if $\neg R(x,e)$ does not fork $\emptyset$ in $T_C$. Since by Claim 1 either $R(x,e)$ or $\neg R(x,e)$ does not fork over $\emptyset$, we conclude that either $G_{\varphi,\FC}$ or its complement in $G_\FC$ is left generic.

The remaining part of the proposition follows from the first part in a standard way.
\end{proof}

We will say that $G_{\varphi,\FC}$ is {\em two-sided generic} if there are $\sigma_0,\dots,\sigma_{n-1}, \tau_0,\dots,\tau_{m-1} \in G_\FC$ such that $G_\FC = \bigcup_{i<n,j<m}\sigma_i \cdot G_{\varphi,\FC} \cdot_r \tau_j$.

\begin{cor}\label{corollary: left=right}
Assume $T$ is stable. Then the following conditions are equivalent for a given formula $\varphi(\bar x;\bar a)$ with parameters $\bar a$ from $\FC$.
\begin{enumerate}
\item $G_{\varphi,\FC}$ is two-sided generic.
\item $G_{\varphi,\FC}$ is left generic.
\item $G_{\varphi,\FC}$ is right generic.
\end{enumerate}
\end{cor}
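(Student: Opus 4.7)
The implications $(2)\Rightarrow(1)$ and $(3)\Rightarrow(1)$ are immediate (take $m=1$, $\tau_0 = e$ for the first, and symmetrically for the second), so the real work is to show that two-sided genericity forces both one-sided versions. I will establish $(1)\Rightarrow(2)$; the argument for $(1)\Rightarrow(3)$ is completely symmetric.

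The plan rests on three basic closure properties of the family of left generic relatively definable subsets of $G_{\FC}$, to be verified at the outset. First, left genericity is preserved under left translations (trivially, since $\sigma\cdot(\tau\cdot D)=(\sigma\tau)\cdot D$). Second, it is preserved under right translations: if $G_{\FC}=\bigcup_{i<n}\sigma_i\cdot D$, then applying the bijection $-\cdot_r\tau$ and using Remark \ref{remark: trivial remark} gives $G_{\FC}=\bigcup_{i<n}\sigma_i\cdot(D\cdot_r\tau)$. Third, the family of left generic sets is closed under finite intersections: if $D_1\cap D_2$ were not left generic then by Proposition \ref{proposition: existence of generics} its complement $(G_{\FC}\setminus D_1)\cup(G_{\FC}\setminus D_2)$ would be left generic, but both summands are non-left-generic and non-left-generics form an ideal, a contradiction.

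With these three facts in hand, suppose $G_{\varphi,\FC}$ is two-sided generic, witnessed by $\sigma_0,\dots,\sigma_{n-1},\tau_0,\dots,\tau_{m-1}\in G_{\FC}$ with
\[G_{\FC}=\bigcup_{i<n,\,j<m}\sigma_i\cdot G_{\varphi,\FC}\cdot_r\tau_j,\]
and assume toward a contradiction that $G_{\varphi,\FC}$ is not left generic. By Proposition \ref{proposition: existence of generics}, its complement $G_{\neg\varphi,\FC}=G_{\FC}\setminus G_{\varphi,\FC}$ is left generic. Since left and right translations are bijections of $G_{\FC}$ commuting with complementation inside $G_{\FC}$, taking complements of the above cover yields
\[\emptyset=\bigcap_{i<n,\,j<m}\sigma_i\cdot G_{\neg\varphi,\FC}\cdot_r\tau_j.\]
By the three closure properties, each set $\sigma_i\cdot G_{\neg\varphi,\FC}\cdot_r\tau_j$ is left generic, and hence so is their finite intersection --- in particular this intersection must be nonempty. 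This contradicts the displayed equality, so $G_{\varphi,\FC}$ is left generic, proving $(1)\Rightarrow(2)$.

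The only non-routine input is Proposition \ref{proposition: existence of generics}, which we are invoking both to produce a left generic complement and to deduce that non-left-generics form an ideal (and thence closure of left generics under finite intersection). No additional obstacle is expected: the argument for $(1)\Rightarrow(3)$ just swaps the roles of $\cdot$ and $\cdot_r$ throughout, using the right-sided analogue of Proposition \ref{proposition: existence of generics}.
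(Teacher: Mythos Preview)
Your step 3 --- that left generic sets are closed under finite intersections --- is not correct, and the justification you give for it does not go through. You argue: if $D_1\cap D_2$ were not left generic, then its complement $D_1^c\cup D_2^c$ would be left generic by Proposition \ref{proposition: existence of generics}, ``but both summands are non-left-generic.'' That last clause is the problem: you are implicitly using that if $D$ is left generic then $D^c$ is not. Proposition \ref{proposition: existence of generics} only gives the converse direction (at \emph{least} one of $D,D^c$ is generic); it does not rule out both being generic. And indeed both can be: if $G_\FC$ has a relatively definable subgroup of index $2$ (which occurs, e.g., via the affine sort construction applied to a group with an index-$2$ subgroup), then each of the two cosets is left generic yet their intersection is empty. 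So left generics do not form a filter in general, and your contradiction in the main argument --- that a finite intersection of left generic translates of $G_{\neg\varphi,\FC}$ must be nonempty --- does not follow.

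The paper's proof sidesteps this by using a different consequence of Proposition \ref{proposition: existence of generics}: the existence of a left generic \emph{ultrafilter} $D$ on $\Df(G_\FC)$. Since the sets $\sigma_i\cdot G_{\varphi,\FC}\cdot_r\tau_j$ cover $G_\FC$ and $D$ is an ultrafilter, one of them lies in $D$ and is therefore left generic; right-translating by $\tau_j^{-1}$ (a bijection of $G_\FC$) then shows $G_{\varphi,\FC}$ itself is left generic. The point is that you only need \emph{one} term of the cover to be generic, which the ultrafilter delivers for free, rather than all of their complements to have generic intersection.
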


\begin{proof}
(2) $\Rightarrow$ (1) is trivial. 

(1) $\Rightarrow$ (2). By (1), $G_\FC = \bigcup_{i<n,j<m}\sigma_i \cdot G_{\varphi,\FC} \cdot_r \tau_j$ for some $\sigma_i,\tau_j \in G_\FC$. By Proposition \ref{proposition: existence of generics}, there exists a left generic ultrafilter $D$ on the Boolean algebra $\Df(G_\FC)$. Then there are $i<n$ and $j<m$ such that $\sigma_i G_{\varphi,\FC} \cdot_r \tau_j \in D$, and so $\rho_0 \cdot(\sigma_i \cdot G_{\varphi,\FC} \cdot_r \tau_j) \cup \dots \cup \rho_{k-1} \cdot(\sigma_i \cdot G_{\varphi,\FC} \cdot_r \tau_j) = G_\FC$ for some $\rho_0,\dots,\rho_{k-1} \in G_\FC$. Hence, $(\rho_0\sigma_i)\cdot G_{\varphi,\FC}\cup \dots \cup (\rho_{k-1}\sigma_i)\cdot G_{\varphi,\FC} =G_{\FC}$, i.e. $G_{\varphi,\FC}$ is left generic.

The proof of (1) $\iff$ (3) is similar.
\end{proof}

We should remark here that Proposition \ref{proposition: existence of generics} and Corollary \ref{corollary: left=right} alternatively follow from Theorem 4.7 of \cite{Conant}. In order to see that, observe that 
each member of $\Def(G_{\FC})$ is stable in the sense of Definition 4.1 of \cite{Conant} which follows from the proof of Claim 1 in the proof of Proposition \ref{proposition: existence of generics} above. For the reader's convenience, we decided to include complete proofs following a classical stability theory approach. Further results in this subsection (in particular those involving $\Delta$-ranks) do not follow from \cite{Conant}).

Because of the above corollary, one can speak about \emph{genericity} of relatively definable subsets of $G_\FC$ in the stable context without specifying \emph{left}, \emph{right}, or \emph{two-sided}. Hence we may forgo using these adjectives in this context. 

Let $S(\Df(G_{\FC}))$ be the Stone space of $\Df(G_{\FC})$. Then $\cdot$ and $\cdot_r$ naturally induce left and right actions of $G_\FC$ by homeomorphisms on $S(\Df(G_{\FC}))$, which we denote by $\odot$ and $\odot_r$, respectively. With these actions, $S(\Df(G_{\FC}))$ becomes a left and right $G_\FC$-flow.

Recall that we also have two natural continuous actions of $\aut(\FC)$ on $S_{\bar c}(\FC)$, which (abusing notation) we also denote by $\cdot$ and $\cdot_r$, namely:
$$\sigma \cdot p:= \sigma(p)=\{\varphi(\bar x;\sigma(\bar a))\;\colon\;\varphi(\bar x;\bar a) \in p\} = \tp(\sigma'(\tau'(\bar c))/\FC),$$
$$p\cdot_r \sigma:= \tp(\tau'(\sigma(\bar c))/\FC),$$ 
where $\sigma \subseteq \sigma' \in \aut(\FC')$ and $\tau' \in \aut(\FC')$ is such that $\tp(\tau'(\bar c)/\FC)=p$.
Recall that 
$$\widetilde{G}_{\FC}:=\widetilde{G}_{\pi,\FC}=[\pi(\bar{x}';\bar{m})]\cap S_{\bar{c}}(\FC)=\{p(\bar x) \in S_{\bar c}(\FC): \pi(\bar x';\bar m) \subseteq p(\bar x)\}.$$

\begin{remark}\label{remark: restricted actions}
$\cdot$ [resp. $\cdot_r$] restricts to a left [resp. right] action of $G_{\FC}$ on $\widetilde{G}_\FC$. This restricted action will be still denoted by $\cdot$ [resp. $\cdot_r$].
\end{remark}

\begin{proof}
This follows from Remark \ref{remark: group in every model}.
\end{proof}
\noindent
Thus, these actions turn $\widetilde{G}_{\FC}$ into a left and right  $G_{\FC}$-flow.

\begin{remark}\label{remark: identification of flows}
The map $f \colon \widetilde{G}_{\FC} \to S(\Df(G_{\FC}))$ given by $f(p):= \{G_{\varphi,\FC}: \varphi(\bar x;\bar a) \in p\}$ is a well-defined isomorphism of left and right $G_\FC$-flows.
\end{remark}

\begin{proof}
This follows easily using Remark \ref{remark: two descriptions of the type space}.
\end{proof}

An element $p$ of $\widetilde{G}_{\FC}$ is said to be {\em left [right, or two-sided] generic} if any neighborhood $U$ of $p$ is {\em left [right, or two-sided] generic} which means that finitely many left [right, or two-sided] translates of $U$ under $G_{\FC}$ cover $\widetilde{G}_{\FC}$. By Proposition \ref{proposition: existence of generics}, Corollary \ref{corollary: left=right}, and Remark \ref{remark: identification of flows}, we get

\begin{cor}\label{corollary: basic properties of generics} Assume that $T$ is stable.
\begin{enumerate}
\item There exists a left generic type $p \in \widetilde{G}_{\FC}$. 
Moreover, every collection of left generic clopens in $\widetilde{G}_\FC$ which is closed under finite intersections extends to a left generic type in $p \in \widetilde{G}_{\FC}$.
\item For every clopen subset $X$ of $\widetilde{G}_{\FC}$ the following are equivalent:
\begin{enumerate}
\item $X$ is two-sided generic;
\item $X$ is left generic;
\item $X$ is right generic.
\end{enumerate}
\item For every $p \in \widetilde{G}_{\FC}$ the following are equivalent:
\begin{enumerate}
\item $p$ is two-sided generic;
\item $p$ is left generic;
\item $p$ is right generic.
\end{enumerate}
\end{enumerate}
\end{cor}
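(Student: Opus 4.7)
The plan is to transfer the results already established for $\Df(G_{\FC})$ to $\widetilde{G}_{\FC}$ via the isomorphism of $G_{\FC}$-bi-flows $f \colon \widetilde{G}_{\FC} \to S(\Df(G_{\FC}))$ from Remark \ref{remark: identification of flows}. The key observation is that $f$ induces a bijection between clopen subsets of $\widetilde{G}_{\FC}$ and elements of $\Df(G_{\FC})$ which intertwines both $\cdot$ and $\cdot_r$ with $\odot$ and $\odot_r$; consequently, a clopen $U \subseteq \widetilde{G}_{\FC}$ is left (resp.\ right, two-sided) generic exactly when the corresponding $G_{\varphi,\FC} \in \Df(G_{\FC})$ is.

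For (1), I would first note that a collection of left generic clopens in $\widetilde{G}_{\FC}$ closed under finite intersections corresponds under $f$ to a left generic filter base on $\Df(G_{\FC})$. By Proposition \ref{proposition: existence of generics}, this filter extends to a left generic ultrafilter $D$ on $\Df(G_{\FC})$. Pulling $D$ back through $f$ (equivalently, forming $\{\varphi(\bar x;\bar a) \in \mathcal{L}(\FC) : G_{\varphi,\FC} \in D\}$ and taking the associated type in $\widetilde{G}_{\FC}$) yields a left generic type extending the prescribed clopens. In particular, taking the trivial base $\{\widetilde{G}_{\FC}\}$ produces the bare existence statement.

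For (2), the equivalences are exactly Corollary \ref{corollary: left=right} transported along $f$; no further argument is required. For (3), a type $p \in \widetilde{G}_{\FC}$ is left (resp.\ right, two-sided) generic if and only if every clopen neighborhood of $p$ has the corresponding property, so (3) follows at once from (2) applied to each clopen neighborhood.

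The main (and essentially only) obstacle is bookkeeping: one needs to verify that $f$ really does intertwine the two-sided $G_{\FC}$-actions with $\odot,\odot_r$, so that a finite cover $\widetilde{G}_{\FC}=\bigcup_i \sigma_i \cdot U$ translates into a finite cover $G_{\FC} = \bigcup_i \sigma_i \cdot G_{\varphi,\FC}$, and similarly on the right. This is a routine check from the definitions of $\cdot$, $\cdot_r$ on $S_{\bar c}(\FC)$ together with the formula $f(p)=\{G_{\varphi,\FC} : \varphi(\bar x;\bar a)\in p\}$, so once this is spelled out the corollary is essentially immediate.
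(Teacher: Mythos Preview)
Your proposal is correct and follows exactly the approach the paper takes: the paper's proof is the single sentence ``By Proposition \ref{proposition: existence of generics}, Corollary \ref{corollary: left=right}, and Remark \ref{remark: identification of flows}, we get\ldots'', and you have simply unpacked that citation. Your derivation of (3) from (2) by looking at clopen neighborhoods is the obvious route and matches what the paper leaves implicit.
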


Thus, speaking about generic types in $ \widetilde{G}_{\FC}$ in the stable context, we will be skipping the adjective left, right, or two-sided. 

For the rest of this subsection, let us assume that {\bf $T$ is stable}.
Denote by $\Gen(\widetilde{G}_{\FC})$ the set of all generic types in  $\widetilde{G}_{\FC}$. It is clearly closed, and in fact a left and right $G_\FC$-subflow of $ \widetilde{G}_{\FC}$.
We are going to present several characterizations of when $p \in \widetilde{G}_{\FC}$, analogous to the definable group case. 

For a finite family $\Delta=\Delta(\bar x;\bar y)$ of formulas in variables $\bar x,\bar y$, by a {\em $\Delta$-formula} we mean a formula equivalent to a Boolean combination of instances of some formulas from $\Delta$. Recall that the {\em rank $R_\Delta$} is a unique function from the collection of all consistent formulas $\varphi(\bar x)$ with parameters from $\FC$ to $\Ord \cup \{\infty\}$ satisfying: $R_\Delta(\varphi(\bar x)) \geq \alpha+1$ if and only if there exist pairwise inconsistent $\Delta$-formulas $(\psi_i(\bar x))_{i<\omega}$ (with parameters from $\FC$) with $R_\Delta(\varphi(\bar x) \wedge \psi_i(\bar x)) \geq \alpha$ for all $i<\omega$.
For a partial type $\Phi(\bar x)$ with parameters from $\FC$, $R_{\Delta}(\pi(\bar x))$ is defined as the minimum of the $R_\Delta(\varphi(\bar x))$ where $\varphi(\bar x)$ ranges over all formulas implies by $\Phi(\bar x)$. Stability of the theory is equivalent to saying that all the $R_\Delta$-ranks (for all possible finite sets of formulas $\Delta)$ are less than $\infty$ (equivalently, less than $\omega$). By $\Mlt_\Delta(\varphi(\bar x))$ we denote the {\em $\Delta$-multiplicity} of the formula $\varphi(\bar x)$, i.e. the maximal number $n<\omega$ of pairwise inconsistent $\Delta$-formulas $(\psi_i(\bar x))_{i<n}$ such that $R_\Delta(\varphi(\bar x) \wedge \psi_i(\bar x)) = R_\Delta(\varphi(\bar x))$ for all $i<n$. Finally, $\Mlt_{\Delta}(\Phi(\bar x))$ is defined as the minimum of the multiplicities $\Mlt_\Delta(\varphi(\bar x))$ for $\varphi(\bar x)$ implied by $\Phi(\bar x)$ and satisfying $R_\Delta(\varphi(\bar x))=R_\Delta(\Phi(\bar x))$. It is easy to see that  $\Mlt_\Delta(\Phi(\bar x))$ is the number of global $\Delta$-types $p(\bar x)$ such that $R_\Delta(\Phi(\bar x) \cup p(\bar x)) =R_\Delta(\Phi(\bar x))$. For more details on $R_\Delta$-ranks  (including equivalent definitions and fundamental properties which we are using below without mention) the reader is referred to \cite{anandgeometric,ShelahModels}.
By the $R_\Delta$-rank and $\Delta$-multiplicity of a closed subset of the space of complete global types we mean the  $R_\Delta$-rank and $\Delta$-multiplicity of the corresponding partial type.

By $\vec{R}_\Delta(p)$ we mean the sequence of all $R_\Delta(p)$ (in some fixed order), where $\Delta$ ranges over all finite collections formulas $\varphi(\bar x;\bar y)$. We say that $\vec{R}_\Delta(p)$ is {\em maximal} if it is maximal coordinatewise among all $\vec{R}_\Delta(q)$ when $q$ ranges over  $\widetilde{G}_{\FC}$.

\begin{remark}\label{remark: generic dnf over M}
If $p \in \Gen(\widetilde{G}_{\FC})$, then $p$ does not fork over $M$.
\end{remark}

\begin{proof}
Take $\varphi(\bar x;\bar a) \in p$. Then finitely many left  $G_\FC$-translates of the clopen $[\varphi(\bar x;\bar a)] \subseteq \widetilde{G}_{\FC}$ cover  $\widetilde{G}_{\FC}$. Since $R_\Delta$ is invariant under the left action of $\aut(\FC)$, we get 
$$R_\Delta(\varphi(\bar x;\bar a)) \geq  R_\Delta([\varphi(\bar x;\bar a)]) = R_\Delta(\widetilde{G}_{\FC}) = R_\Delta(\pi(\bar x';\bar m) \cup \tp(\bar c/\emptyset)) \geq R_\Delta(p|_M).$$
As this is true for every finite $\Delta$, we conclude that $p$ does not fork over $M$.
\end{proof}

\begin{proposition}\label{proposition: characterziations of genericity}
Let $p\in \widetilde{G}_{\FC}$. Then the following conditions are equivalent:
\begin{enumerate}
\item $p$ is generic;
\item $\vec{R}_\Delta(p)=\vec{R}_\Delta(\widetilde{G}_\FC)$;
\item $\vec{R}_\Delta(p)$ is maximal;
\item for every $\sigma \in G_\FC$, the type $\sigma \cdot p$ does not fork over $M$.
\end{enumerate}
\end{proposition}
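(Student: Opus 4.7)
The plan is to establish the cycle $(1) \Rightarrow (2) \Rightarrow (3) \Rightarrow (4) \Rightarrow (1)$. The first three implications follow from two facts: $R_\Delta(\varphi(\bar x;\bar a))$ depends only on $\tp(\bar a/\emptyset)$ (so is invariant under the $\aut(\FC)$-action on types), and a generic type in $\widetilde{G}_{\FC}$ exists by Corollary~\ref{corollary: basic properties of generics}(1). The closing implication $(4) \Rightarrow (1)$ is the main obstacle.

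For $(1) \Rightarrow (2)$, fix a finite $\Delta$ and $\varphi(\bar x;\bar a) \in p$. Genericity of $p$ produces $\sigma_1,\dots,\sigma_n \in G_{\FC}$ with $\widetilde{G}_{\FC} = \bigcup_i \sigma_i \cdot ([\varphi(\bar x;\bar a)] \cap \widetilde{G}_{\FC}) = \bigcup_i ([\varphi(\bar x;\sigma_i(\bar a))] \cap \widetilde{G}_{\FC})$; since $\sigma_i \in \aut(\FC)$ gives $\sigma_i(\bar a) \equiv \bar a$ over $\emptyset$, invariance of $R_\Delta$ yields $R_\Delta(\widetilde{G}_{\FC}) \le R_\Delta(\varphi(\bar x;\bar a))$, and taking the infimum over $\varphi \in p$ gives $R_\Delta(\widetilde{G}_{\FC}) \le R_\Delta(p)$, the reverse being immediate. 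Implication $(2) \Rightarrow (3)$ is trivial since $R_\Delta(q) \le R_\Delta(\widetilde{G}_{\FC})$ for every $q \in \widetilde{G}_{\FC}$. For $(3) \Rightarrow (4)$, pick a generic $q$ via Corollary~\ref{corollary: basic properties of generics}(1); by $(1) \Rightarrow (2)$ applied to $q$, $\vec R_\Delta(q) = \vec R_\Delta(\widetilde{G}_{\FC})$, so maximality of $\vec R_\Delta(p)$ forces $\vec R_\Delta(p) = \vec R_\Delta(\widetilde{G}_{\FC})$. For $\sigma \in G_{\FC}$, Remark~\ref{remark: restricted actions} gives $\sigma \cdot p \in \widetilde{G}_{\FC}$, so $R_\Delta(\sigma \cdot p) = R_\Delta(p) = R_\Delta(\widetilde{G}_{\FC})$ by invariance. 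Since $(\sigma \cdot p)|_M$ extends the partial type defining $\widetilde{G}_{\FC}$, we obtain $R_\Delta(\sigma\cdot p) \le R_\Delta((\sigma\cdot p)|_M) \le R_\Delta(\widetilde{G}_{\FC}) = R_\Delta(\sigma\cdot p)$, so equality holds for every finite $\Delta$; by the rank-based characterization of non-forking in stable theories, $\sigma \cdot p$ does not fork over $M$.

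The main obstacle is $(4) \Rightarrow (1)$. I would argue by contradiction: assume $p$ is not generic, so some $\varphi(\bar x;\bar a) \in p$ makes $G_{\varphi,\FC}$ non-generic; by Proposition~\ref{proposition: existence of generics}, $G_{\neg\varphi,\FC}$ is generic and finitely many $\sigma_i \in G_{\FC}$ satisfy $\widetilde{G}_{\FC} = \bigcup_i ([\neg\varphi(\bar x;\sigma_i(\bar a))] \cap \widetilde{G}_{\FC})$. The plan is to reduce to $(3) \Rightarrow (1)$ by establishing $(4) \Rightarrow (3)$: if $p$ satisfies $(4)$, then every generic $q$ satisfies $(4)$ as well (by the already-established chain $(1)\Rightarrow(2)\Rightarrow(3)\Rightarrow(4)$), so both $p$ and $q$ have all their $G_{\FC}$-translates non-forking over $M$. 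By stationarity of stable types over the model $M$ (each type in $S_{\bar x}(M)$ has a unique non-forking global extension), the restrictions $p|_M$ and $(\sigma\cdot p)|_M$ determine $p$ and its translates; matching $p|_M$ to $q|_M$ along a suitable $G_{\FC}$-translation, one should obtain $\sigma \cdot p = q$ for some $\sigma \in G_{\FC}$, and hence $\vec R_\Delta(p) = \vec R_\Delta(q) = \vec R_\Delta(\widetilde{G}_{\FC})$, which is $(3)$. The rank characterization of genericity for the stable group $G_{\FC}$ (namely, that non-genericity of $[\varphi(\bar x;\bar a)] \cap \widetilde{G}_{\FC}$ strictly drops $R_{\{\varphi\}}$ below $R_{\{\varphi\}}(\widetilde{G}_{\FC})$, which follows from invariance of rank under translation and covering arguments) then gives the desired contradiction $R_{\{\varphi\}}(p) \le R_{\{\varphi\}}([\varphi(\bar x;\bar a)] \cap \widetilde{G}_{\FC}) < R_{\{\varphi\}}(\widetilde{G}_{\FC})$. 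The technical heart lies in controlling the partial $G_{\FC}$-action on $S_{\bar x}(M)$ well enough to extract the required translation identifying $p|_M$ with a translate of $q|_M$; this is where the full uniform strength of $(4)$ (non-forking for \emph{all} $G_{\FC}$-translates of $p$, not just $p$ itself) becomes essential.
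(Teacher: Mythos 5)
Your implications $(1)\Rightarrow(2)\Rightarrow(3)\Rightarrow(4)$ are correct and essentially match the paper's (for $(3)\Rightarrow(4)$ the paper argues by contradiction via the unique nonforking extension of $(\sigma\cdot p)|_M$, you argue directly through the restriction; both are fine). The gap is in $(4)\Rightarrow(1)$. Your plan hinges on producing $\sigma\in G_\FC$ with $\sigma\cdot p=q$ for some generic $q$, by ``matching $p|_M$ to $q|_M$ along a suitable $G_\FC$-translation.'' Nothing in your sketch produces such a $\sigma$: stationarity only says that $p$ and its translates are determined by their restrictions to $M$, and the transitivity of $G_\FC$ on generics (Proposition \ref{proposition: transitivity}) is proved only \emph{within} $\Gen(\widetilde{G}_\FC)$, using genericity of both types through the rank-$N$, multiplicity-$1$ decomposition. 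A type merely satisfying $(4)$ is not yet known to lie in the $G_\FC$-orbit of any generic type --- and if it did, it would already be generic, so the ``matching'' step is equivalent to the conclusion you are trying to reach. (The other ingredient you invoke, that a non-generic relatively $\Delta$-definable set has $R_\Delta$ strictly below $R_\Delta(\widetilde{G}_\FC)$, is true --- it is the contrapositive of Remark \ref{remark: max rank implies generic} --- but it requires the multiplicity-$1$ decomposition together with the existence of a generic piece, not just ``invariance under translation and covering arguments.'')

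The paper's proof of $(4)\Rightarrow(1)$ is entirely different and avoids translating $p$ onto a generic. Fix $\varphi(\bar x;\bar a)\in p$; it involves only a finite subtuple $\bar c''$ of $\bar c$. Take a small $|M|^+$-saturated, strongly $|M|^+$-homogeneous $N$ with $M\prec N\prec\FC$ containing $\bar c''$. For each $\sigma\in G_\FC$, hypothesis $(4)$ gives that $\sigma(p)$ does not fork over $N$, hence by stability its restriction to the finitely many relevant variables is (strongly) finitely satisfiable in $N$; realizing $\varphi(\bar x;\sigma(\bar a))\wedge\pi(\bar x';\bar m)\wedge \sigma(p)|_{\bar x'\bar x'',\emptyset}$ inside $N$ and using homogeneity of $N$ yields $\tilde{\tau}_\sigma\in G_\FC$ with $\sigma^{-1}\in G_{\varphi,\FC}\cdot_r\tilde{\tau}_\sigma^{-1}$. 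This covers $G_\FC$ by a small family of right translates of $G_{\varphi,\FC}$, compactness extracts a finite subcover, and right genericity equals genericity by Corollary \ref{corollary: left=right}. You would need an argument of this kind (or an actual proof of your translation step) to close the cycle.
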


\begin{proof}
(1) $\Rightarrow$ (2) follows from the proof of Remark \ref{remark: generic dnf over M}.

(2) $\Rightarrow$ (3) is trivial.

(3) $\Rightarrow$ (4). Consider any $\sigma \in G_\FC$. Since $R_\Delta$ is invariant under the left action of $\aut(\FC)$, by (2), we get that  $\vec{R}_\Delta(\sigma \cdot p)$ is maximal. Suppose for a contradiction that  $\sigma \cdot p$ forks over $M$. Then  $\vec{R}_\Delta(\sigma \cdot p) <  \vec{R}_\Delta((\sigma \cdot p)|_M) =  \vec{R}_\Delta(q)$, where $q\in \widetilde{G}_\FC$ is the unique non-forking extension of $(\sigma \cdot p)|_M$. This is a contradiction with maximality of $\vec{R}_\Delta(\sigma \cdot p)$.

(4) $\Rightarrow$ (1). Take $\varphi(\bar x;\bar a) \in p$. We need to show that $G_{\varphi,\FC}$ is generic. 
The formula $\varphi(\bar x;\bar a)$ uses only a finite subtuple $\bar x''$ of $\bar x$. Let $\bar c''$ be the subtuple of $\bar c$ corresponding to $\bar x''$.
Let $M \prec N \prec \FC$ be such that $N$ contains $\bar c''$ and is $|M|^+$-saturated, strongly $|M|^+$-homogeneous, but small in $\FC$. 

Consider any $\sigma \in G_{\FC}$. By (4), $\sigma(p)$ does not fork over $M$, so also over $N$, and so $\sigma(p) |_{\bar x',\bar x''}$ does not fork over $N$. Hence, by stability and smallness of $|\bar x'\bar x''|$ in $N$, we get that  $\sigma(p) |_{\bar x'\bar x''}$ is strongly finitely satisfiable in $N$ in the language $\CL_M$
(in the sense that for any formula $\psi(\bar x'\bar x'') \in \sigma(p) |_{\bar x'\bar x''}$ the type $\sigma(p) |_{\bar x'\bar x'',M} \cup \psi(\bar x'\bar x'')$ has a realization in $N$). In particular, there are $\bar m', \bar d''$ in $N$ satisfying the type $\varphi(\bar x;\sigma(\bar a)) \wedge \pi(\bar x';\bar m) \wedge \sigma(p)|_{\bar x'\bar x'', \emptyset}$. Pick $\tau_\sigma \in \aut(N)$ with $\tau_\sigma(\bar m \bar c'')=\bar m' \bar d''$, and any extension $\tilde{\tau}_\sigma \in \aut(\FC)$ of $\tau_\sigma$. Then $\models \pi(\tilde{\tau}_\sigma(\bar m);\bar m)$, so $\tilde{\tau}_\sigma \in G_\FC$. Also, $\models \varphi(\tilde{\tau}_\sigma(\bar c'');\sigma(\bar a))$, so $\models \varphi(\sigma^{-1}\tilde{\tau}_\sigma(\bar c'');\bar a)$. Thus, $\sigma^{-1}\tilde{\tau}_\sigma \in G_{\varphi,\FC}$, so $\sigma^{-1} \in G_{\varphi,\FC} \cdot_r \tilde{\tau}_\sigma^{-1}$. We have proved that $G_\FC = \bigcup_{i \in I}  G_{\varphi,\FC} \cdot_r \tau_i$ for an index set $I$ small with respect to $\FC$, and some $\tau_i \in G_\FC$. Using smallness of $I$, an easy compactness argument yields a finite subset $I_0$ of $I$ such that  $G_\FC = \bigcup_{i \in I_0}  G_{\varphi,\FC} \cdot_r \tau_i$. Thus, $G_{\varphi,\FC}$ is generic.
\end{proof}

In the case of stable groups, one has left [right, or two-sided] invariant stratified ranks. In our context, the usual $R_\Delta$-ranks are left invariant. However, they are not right invariant and we do not see a candidate for right invariant local ranks which would take finite values, witness forking, etc. Starting from a finite $\Delta$, to make the $R_\Delta$-rank right invariant, a natural trick would be to take the closure of $\Delta$ under all permutations of variables $\bar x$ induced by $\aut(\FC)$ or just by $G_\FC$. But this transition often makes $R_\Delta(\widetilde{G}_\FC)$ infinite. For example, consider $T$ to be the theory of an infinite set in the empty language, $M$ a countable model, $\pi(\bar x';\bar m)$ equal to ``$\bar{x}'\equiv_\emptyset\bar{m}$''. 
Then $\widetilde{G}_\FC = S_{\bar c}(\FC)$. Starting from $\Delta(\bar x;\bar y):=\{x_0=y_0\}$ and taking the closure $\cl(\Delta)$ under all permutations of $\bar x$, we easily see that $R_{\cl(\Delta)}(S_{\bar c}(\FC))=\infty$.
Even more: there is no possibly infinite set of formulas $\Delta(\bar x;\bar y)$ containing the formula $x_0=y_0$ for which $R_\Delta$ is right invariant and 
$R_\Delta (S_{\bar c}(\FC))< \infty$.
In particular, it is not clear whether we could add the right version of (4) in Proposition \ref{proposition: characterziations of genericity}.\\

Let $\widetilde{G}_{\FC, \Delta}:=\{ p|_\Delta: p \in \widetilde{G}_\FC\}$ and $\Gen (\widetilde{G}_{\FC, \Delta}):= \{p|_\Delta: p \in \Gen(\widetilde{G}_\FC)\}$ for any finite $\Delta$. By the equivalence of (1) and (2) in Proposition \ref{proposition: characterziations of genericity}, we get that $\Gen (\widetilde{G}_{\FC, \Delta})$ is always finite. Thus, the left action of $G_\FC$ on $\Gen(\widetilde{G}_\FC)$ induces a left action on $\Gen (\widetilde{G}_{\FC, \Delta})$ which is transitive, and clearly $\Gen(\widetilde{G}_{\FC}) \cong \varprojlim_\Delta (\widetilde{G}_{\FC, \Delta})$ as $G_{\FC}$-flows.
Indeed, finiteness of $\Gen(\widetilde{G}_{\FC,\Delta})$ is clear - the size of this set is precisely $\Mlt_{\Delta}(\widetilde{G}_\FC)$ which is finite.
To see transitivity of the left action of $\widetilde{G}_\FC$ on $\Gen(\widetilde{G}_{\FC,\Delta})$,
enumerate $\Gen(\widetilde{G}_{\FC,\Delta})$ as $p_0,\dots,p_{n-1}$ and
choose generic clopens $X_0, \dots, X_{n-1}$ in $\widetilde{G}_{\FC}$ which are relatively defined by $\Delta$-formulas $\varphi_0(\bar x),\dots,\varphi_{n-1}(\bar x)$ which separate the types in $\Gen(\widetilde{G}_{\FC,\Delta})$ in the sense that for every $i,j <n$ we have that $\varphi_i(\bar x) \in p_j \iff i=j$. Since finitely many left $G_\FC$-translates of each $X_i$ cover $\widetilde{G}_\FC$, we get that for every $i,j < n$ there exists $g \in G_\FC$ such that $gX_i \cap X_j$ is generic. 
This implies that $gp_i=p_j$ (as $gp_i$ is the only type in $\Gen(\widetilde{G}_{\FC,\Delta})$ containing $g\varphi_i(\bar x)$, and $p_j$ the only type in $\Gen(\widetilde{G}_{\FC,\Delta})$ containing $\varphi_j(\bar x)$), so we have transitivity.

\begin{remark}\label{remark: max rank implies generic}
Let $\Delta$ be a finite collection of formulas, and let $X$ be a clopen subset of $\widetilde{G}_{\FC}$ relatively defined by a $\Delta$-formula.
If $R_\Delta(X)=R_\Delta(\widetilde{G}_{\FC})$, then $X$ is generic.
\end{remark}

\begin{proof}
Without loss of generality $\Mlt_\Delta(X)=1$.
Present $\widetilde{G}_\FC$ as a disjoint union $X^\Delta_0 \cupdot \ldots \cupdot X^\Delta_{n_\Delta}$ of clopens relatively defined by $\Delta$-formulas such that $R_\Delta(X^\Delta_i)=R_\Delta(\widetilde{G}_\FC)=:N$ and $\Mlt_\Delta(X^\Delta_i)=1$. 
Then $X^\Delta_i$ is generic for some $i$. Hence, $R_\Delta(X\cap gX^\Delta_i)=N$ for some $g \in G_{\FC}$. 
Since $X$ and $gX^\Delta_i$ are relatively $\Delta$-defined and of $\Delta$-multiplicity 1, we get that $R_\Delta(X \vartriangle gX^\Delta_i)<N$. So $X \vartriangle gX^\Delta_i$ is not generic by Proposition \ref{proposition: characterziations of genericity}. 
Since $gX^\Delta_i$ is generic, we conclude that $X\cap gX^\Delta_i$ is generic, and hence so is $X$.
\end{proof}

Note that this remark yields an alternative proof of (3) $\Rightarrow$ (1) in Proposition \ref{proposition: characterziations of genericity}. Namely, assuming that $\vec{R}_\Delta(p)$ is maximal, by (1) $\Rightarrow$ (2) and existence of generics, we get that  $\vec{R}_\Delta(p) =\vec{R}_\Delta(\widetilde{G}_\FC)$. Hence, $p$ is generic by Remark \ref{remark: max rank implies generic}.

By the same argument, we get the following variant of the characterization of generics via local ranks. Below $\Delta'$ ranges over all finite sets of formulas $\varphi(\bar x';\bar y)$, and 
$$\widetilde{G}_{\FC,\bar m} := S_{\pi(\bar{x}';\bar{m})}(\FC)= \{ p(\bar x') \in S_{\bar m}(\FC): \pi(\bar x';\bar m) \subseteq p(\bar x')\}.$$ Then we have a left action $\cdot$ of $G_\FC$ on $\widetilde{G}_{\FC,\bar m}$ defined as before (but we do not have a natural right action). Left generics are defined in terms of this action as before.

\begin{proposition}\label{proposition: restricted to x'}
\begin{enumerate}
\item For each finite $\Delta'$, every clopen subset $X$ of $\widetilde{G}_{\FC,\bar{m}}$ relatively defined by a $\Delta'$-formula and with $R_{\Delta'}(X)=R_{\Delta'}(\widetilde{G}_{\FC,\bar m})$ is generic.

\item Let $p\in \widetilde{G}_{\FC,\bar m}$. Then the following conditions are equivalent:
\begin{enumerate}
\item $p$ is generic;
\item $\vec{R}_{\Delta'}(p)=\vec{R}_{\Delta'}(\widetilde{G}_{\FC,\bar m})$;
\item $\vec{R}_{\Delta'}(p)$ is maximal.
\end{enumerate}
\end{enumerate}
\end{proposition}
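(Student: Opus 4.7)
The plan is to adapt the arguments behind Remark \ref{remark: max rank implies generic} and Proposition \ref{proposition: characterziations of genericity} to the restricted setting, with the only new ingredient being a bridge that guarantees existence of generic types in $\widetilde{G}_{\FC,\bar m}$.

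First I would note that the restriction map $r \colon \widetilde{G}_\FC \to \widetilde{G}_{\FC,\bar m}$, $p \mapsto p|_{\bar{x}'}$, is well-defined, continuous, $G_\FC$-equivariant for the left action, and surjective. Surjectivity is the one point needing a short argument: any $p(\bar x') \in \widetilde{G}_{\FC,\bar m}$ is realized by some $\bar d$ in a bigger monster with $\bar d \equiv \bar m$, hence $\bar d = \sigma(\bar m)$ for some $\sigma \in \aut(\FC')$; the type $\tp(\sigma(\bar c)/\FC)$ then lies in $\widetilde{G}_\FC$ and restricts to $p$. Since Corollary \ref{corollary: basic properties of generics} gives generics in $\widetilde{G}_\FC$, and a standard preimage argument (using equivariance and surjectivity of $r$) shows that $r$ sends generics to generics, generics exist in $\widetilde{G}_{\FC,\bar m}$.

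For part (1), I would repeat the proof of Remark \ref{remark: max rank implies generic} verbatim with $\widetilde{G}_{\FC,\bar m}$ in place of $\widetilde{G}_{\FC}$ and $\Delta'$ in place of $\Delta$. One decomposes $\widetilde{G}_{\FC,\bar m}$ up to a set of $R_{\Delta'}$-rank $<N$ (where $N = R_{\Delta'}(\widetilde{G}_{\FC,\bar m})$) into finitely many pairwise disjoint clopens $X^{\Delta'}_0,\dots,X^{\Delta'}_{n_{\Delta'}}$ relatively defined by $\Delta'$-formulas, each of $R_{\Delta'}$-rank $N$ and $\Delta'$-multiplicity $1$; this uses finiteness of $\Mlt_{\Delta'}(\pi(\bar{x}';\bar m))$ in the same way as before. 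Existence of generics (just established) forces some $X^{\Delta'}_i$ to be generic, whence $R_{\Delta'}(X \cap g X^{\Delta'}_i) = N$ for some $g \in G_\FC$; multiplicity $1$ makes the symmetric difference have rank $<N$, so $X \cap gX^{\Delta'}_i$ is generic (by Proposition \ref{proposition: characterziations of genericity}(1)$\Leftrightarrow$(2) read inside $\widetilde{G}_\FC$, or equivalently by an obvious adaptation of Proposition \ref{proposition: existence of generics}), and hence so is $X$.

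For part (2), (a)$\Rightarrow$(b) follows because if $p$ is generic and $\varphi(\bar{x}';\bar a)\in p$, then finitely many left $G_\FC$-translates of $[\varphi(\bar{x}';\bar a)]$ cover $\widetilde{G}_{\FC,\bar m}$ and $R_{\Delta'}$ is $\aut(\FC)$-invariant, so $R_{\Delta'}(\varphi(\bar x';\bar a)) \geq R_{\Delta'}(\widetilde{G}_{\FC,\bar m})$; the reverse inequality is immediate. The step (b)$\Rightarrow$(c) is trivial. For (c)$\Rightarrow$(a), pick a generic $q\in \widetilde{G}_{\FC,\bar m}$; by (a)$\Rightarrow$(b) we have $\vec{R}_{\Delta'}(q)=\vec{R}_{\Delta'}(\widetilde{G}_{\FC,\bar m})$, so maximality of $\vec{R}_{\Delta'}(p)$ forces $\vec{R}_{\Delta'}(p)=\vec{R}_{\Delta'}(\widetilde{G}_{\FC,\bar m})$. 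Then for any basic clopen neighborhood $[\varphi(\bar{x}';\bar a)]$ of $p$, picking $\Delta'$ to contain $\varphi$, the inclusions $\pi(\bar x';\bar m) \cup \{\varphi(\bar x';\bar a)\} \subseteq p$ and monotonicity of $R_{\Delta'}$ give $R_{\Delta'}([\varphi(\bar{x}';\bar a)]\cap \widetilde{G}_{\FC,\bar m}) = R_{\Delta'}(\widetilde{G}_{\FC,\bar m})$, so part (1) makes this neighborhood generic. No obstacle stands out beyond carefully noting that the proofs transfer since the left action, invariance of $R_{\Delta'}$ and finite multiplicity are all that was used; we never needed the right-action arguments (such as condition (4) of Proposition \ref{proposition: characterziations of genericity}) that are not available on $\widetilde{G}_{\FC,\bar m}$.
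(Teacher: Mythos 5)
Your proof is correct and follows exactly the route the paper intends: the paper gives no separate proof of this proposition, stating only that it follows ``by the same argument'' as Remark \ref{remark: max rank implies generic} and the rank characterization in Proposition \ref{proposition: characterziations of genericity}, which is precisely what you carry out. The one detail you make explicit that the paper leaves implicit — existence of generics in $\widetilde{G}_{\FC,\bar m}$ via the surjective equivariant restriction from $\widetilde{G}_{\FC}$ (the paper instead quietly relies on the identification of $\widetilde{G}_{\FC,\bar m}$ with the Stone space of $\Df_{\bar m}(G_{\FC})$, mentioned only later) — is a genuine and correctly handled prerequisite, not a deviation.
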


\begin{proposition}\phantomsection\label{proposition: transitivity}
$\Gen(\widetilde{G}_{\FC})$ is bounded and the left action of $G_\FC$ on $\Gen(\widetilde{G}_{\FC})$ is transitive.
\end{proposition}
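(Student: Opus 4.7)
The plan has two independent components, both leveraging the identification
$\Gen(\widetilde{G}_\FC)\cong \varprojlim_\Delta \Gen(\widetilde{G}_{\FC,\Delta})$
as $G_\FC$-flows, noted in the paragraph preceding the proposition. For \textbf{boundedness}, since $|\Gen(\widetilde{G}_{\FC,\Delta})| = \Mlt_\Delta(\widetilde{G}_\FC) < \omega$ for every finite $\Delta$, and the number of finite sets $\Delta$ of formulas is at most $|T|$, we immediately get the bound $|\Gen(\widetilde{G}_\FC)| \le \prod_\Delta \Mlt_\Delta(\widetilde{G}_\FC) \le 2^{|T|}$.

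For \textbf{transitivity}, fix $p,q \in \Gen(\widetilde{G}_\FC)$ and seek $\sigma \in G_\FC$ with $\sigma \cdot p = q$. The natural approach is to encode ``$\sigma\cdot p = q$'' for an unknown $\sigma \in G_\FC$ as a partial type in the tuple $\bar z := \sigma(\bar c)$: let $\Sigma(\bar z)$ assert that (a) $\bar z \equiv_\emptyset \bar c$ (so that $\bar c \mapsto \bar z$ defines a partial elementary map on all of $\FC$); (b) the subtuple of $\bar z$ at positions of $\bar m$ satisfies $\pi(\cdot;\bar m)$ (so that the induced $\sigma$ lies in $G_\FC$); and (c) for every $\mathcal{L}$-formula $\varphi(\bar x'';\bar y'')$ and every finite set of positions $I$ in $\bar c$, $\varphi(\bar x'';\bar c|_I) \in p \iff \varphi(\bar x'';\bar z|_I) \in q$. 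A finite fragment of $\Sigma$ involves only finitely many formulas and finitely many positions, hence lives inside a fixed finite $\Delta$-level, and is realized in $\FC$ by $\sigma_\Delta(\bar c|_I)$ for any witness $\sigma_\Delta \in G_\FC$ of $\sigma_\Delta \cdot p|_\Delta = q|_\Delta$; such a witness exists by the finite-level transitivity of $G_\FC$ on $\Gen(\widetilde{G}_{\FC,\Delta})$ established in the paragraph preceding the proposition. Thus $\Sigma$ is finitely satisfiable in $\FC$.

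Since $|\bar z| = |\FC|$, standard saturation does not realize $\Sigma$ in one stroke by an element of $\FC$; instead I would produce $\sigma \in \aut(\FC)$ via a back-and-forth construction inside $\FC$. Enumerate $\FC = \{e_\alpha : \alpha < |\FC|\}$ (to ensure both surjectivity and exhaustion of the domain), start from a partial elementary map $\sigma_0$ on $\bar m$ sending $\bar m$ to some realization of $\pi(\cdot;\bar m)$ supplied by some $\sigma_\Delta$, and build an increasing chain of partial elementary maps $\sigma_\alpha : A_\alpha \to B_\alpha$ (each of size less than $|\FC|$), each satisfying the fragment of condition (c) relevant to the parameters in $A_\alpha$. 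At each extension step, $\kappa$-saturation and strong $\kappa$-homogeneity of $\FC$ (for $\kappa = |\FC|$ applied to sets of size less than $\kappa$) provide the required image of the next element, given consistency of a small partial type over $B_\alpha$. The union of the $\sigma_\alpha$ is an automorphism of $\FC$; clauses (b) and (c) of $\Sigma$ guarantee that it lies in $G_\FC$ and satisfies $\sigma \cdot p = q$.

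The \textbf{main obstacle} is the extension step: given $\sigma_\alpha$ on a small $A_\alpha \subseteq \FC$, one has to produce the next image in $\FC$ compatible simultaneously with partial elementarity \emph{and} the $p$-versus-$q$ correspondence (c). Equivalently, given a finite-level witness $\sigma_\Delta \in G_\FC$ of transitivity, one must adjust it by a suitable element of $\Stab_\Delta(q)$ so that the resulting automorphism agrees with $\sigma_\alpha$ on $A_\alpha$ while still moving $p|_\Delta$ to $q|_\Delta$. The possibility of such an adjustment is where the stability-theoretic content enters: the two-sided generic character of translates from Corollary \ref{corollary: left=right} together with finiteness of the $\Delta$-index of $\Stab_\Delta(p), \Stab_\Delta(q)$ in $G_\FC$ ensure that the $\Delta$-generic cosets are rich enough to match arbitrary small partial elementary maps, which is precisely what the back-and-forth consumes at each stage.
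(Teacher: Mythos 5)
There are genuine gaps in both halves of your argument, and in fact the two halves are not independent in the paper: boundedness there is deduced \emph{from} transitivity.

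\textbf{Boundedness.} Your cardinality count is wrong. The finite sets $\Delta$ consist of formulas $\varphi(\bar x;\bar y)$ where $\bar x$ corresponds to the enumeration $\bar c$ of the monster $\FC$ (not to a fixed finite tuple, as in the definable-group setting), so a formula in $\Delta$ may use any finite subtuple of $\bar x$; the number of finite $\Delta$'s is therefore $|\FC|$, not $|T|$. An inverse limit of $|\FC|$-many finite sets can have cardinality $2^{|\FC|}$, which is not bounded, so the identification $\Gen(\widetilde{G}_\FC)\cong\varprojlim_\Delta\Gen(\widetilde{G}_{\FC,\Delta})$ gives no bound at all. (You cannot shrink the index set by identifying $\Delta$'s up to renaming of the $\bar x$-variables, since a global type in the variables $\bar x$ is only recovered from its $\Delta$-restrictions if all placements of the object variables are allowed.) The paper instead first proves the Claim that for $p\in\Gen(\widetilde{G}_\FC)$ and $\sigma,\tau\in G_\FC$ with $\sigma(\bar m)\equiv_M\tau(\bar m)$ one has $\sigma\cdot p=\tau\cdot p$ --- this uses that generic types do not fork over $M$ and hence, by stability, are $M$-invariant --- and then combines this with transitivity to bound $|\Gen(\widetilde{G}_\FC)|$ by $|S_{\bar m}(M)|$.

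\textbf{Transitivity.} Finite satisfiability of your $\Sigma(\bar z)$ in $\FC$ by tuples $\sigma_\Delta(\bar c)$ is fine, but the entire difficulty is converting a finitely satisfiable partial type in $|\FC|$-many variables into an honest automorphism of $\FC$ (compactness alone only produces a realization in a larger model, i.e.\ an elementary self-embedding of $\FC$, not a surjection). Your back-and-forth does not close this: you yourself identify the extension step as the ``main obstacle'' and then only gesture at the richness of generic cosets, which is not an argument --- it is not clear why a small partial elementary map compatible with the definition schemes of $p$ and $q$ can always be extended to the next element inside $\FC$ while preserving clause (c). The paper takes a different route that avoids the back-and-forth entirely: writing $\Gen(\widetilde{G}_{\FC,\Delta})=\{s^\Delta_0,\dots,s^\Delta_{n_\Delta}\}$ with $s^\Delta_i$ isolated inside $\widetilde{G}_\FC$ by a $\Delta$-clopen $X^\Delta_i$ of maximal $R_\Delta$-rank $N$ and $\Delta$-multiplicity $1$, the set $G^\Delta=\{\sigma\in G_\FC\;\colon\;\sigma\cdot p|_\Delta=q|_\Delta\}$ is rewritten as $\{\sigma\in G_\FC\;\colon\; R_\Delta(\sigma[X^\Delta_{i_p}]\cap X^\Delta_{i_q})\geq N\}$, which by definability of local ranks is relatively type-definable over a \emph{finite} tuple of parameters. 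The family $\{G^\Delta\}_\Delta$ is directed and consists of non-empty sets (by the finite-level transitivity you also use), so compactness yields $\sigma$ in the intersection; the passage to an actual automorphism of $\FC$ is then unproblematic because, by the Claim above, membership in each $G^\Delta$ depends only on $\tp(\sigma(\bar m)/M)$, a type in a small tuple over a small set, which is realized by an automorphism using saturation and strong homogeneity. The rank reformulation making each $G^\Delta$ relatively type-definable is the key idea missing from your proposal.
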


\begin{proof}
First, we prove transitivity. Consider any $p,q \in \Gen (\widetilde{G}_{\FC})$, and we need to show that there exists $\sigma \in G_\FC$ with $\sigma \cdot p=\sigma(p)=q$.

For each finite $\Delta$ present $\widetilde{G}_\FC$ as a disjoint union $X^\Delta_0 \cupdot \ldots \cupdot X^\Delta_{n_\Delta}$ of clopens relatively defined by $\Delta$-formulas such that $R_\Delta(X^\Delta_i)=R_\Delta(\widetilde{G}_\FC)=:N$ and $\Mlt_\Delta(X^\Delta_i)=1$ for all $i\leqslant n_\Delta$. Then, by 
Proposition \ref{proposition: characterziations of genericity} and Remark \ref{remark: max rank implies generic}, $\Gen (\widetilde{G}_{\FC,\Delta})=\{s^\Delta_0,\dots,s^\Delta_{n_\Delta}\}$ with $s^\Delta_i \in X^\Delta_i$.  So, for any $r \in \Gen (\widetilde{G}_{\FC})$ and finite $\Delta$ there exists a unique $i_r\leqslant n_\Delta$ such that $r|_\Delta =s^\Delta_{i_r}$. 

By transitivity of the action of $G_\FC$ on $\Gen (\widetilde{G}_{\FC, \Delta})$, we get that for every finite $\Delta$, $G^\Delta:=\{\sigma \in G_\FC: \sigma \cdot s^\Delta_{i_p}=s^\Delta_{i_q}\}$ is non-empty. On the other hand, we will see that it is relatively type-definable. Using compactness, this implies the existence of $\sigma \in G_\FC$ with $\sigma \cdot p =q$.

Note that $G^\Delta =\{\sigma \in G_\FC: R_\Delta(\sigma[X^\Delta_{i_p}] \cap X^\Delta_{i_q}) \geq N\}$, and a fundamental property of $R_\Delta$-ranks implies that this set is relatively type-definable, so we are done. (In fact, one can see that this set is even relatively definable in  $\widetilde{G}_\FC$, which we leave as an easy exercise.)

Now, we show that $\Gen(\widetilde{G}_{\FC})$ is bounded. 

\begin{clm}
For any $p \in \Gen(\widetilde{G}_{\FC})$ and $\sigma,\tau \in G_{\FC}$, if $\sigma(\bar m) \equiv_M \tau(\bar m)$, then $\sigma \cdot p=\tau \cdot p$.
\end{clm}

\begin{clmproof}
By assumption, there is $\rho \in \aut(\FC/M) \subseteq G_{\FC}$ such that $\rho(\sigma(\bar m))=\tau(\bar m)$. Then $\tau^{-1}\rho\sigma \in \aut(\FC/M) \subseteq G_{\FC}$.

By Proposition \ref{proposition: characterziations of genericity}, both $p$ and $\sigma \cdot p$ do not fork over $M$, and so $p$ and $\sigma \cdot p$ are $M$-invariant (by stability). Therefore, by the previous paragraph, $\rho \cdot (\sigma \cdot p)=\sigma \cdot p$ and $(\tau^{-1}\rho\sigma) \cdot p =p$. Hence, $\sigma \cdot p = \rho \cdot (\sigma \cdot p) = \tau \cdot p$.
\end{clmproof}

We can find a set $I$ of cardinality at most $|S_{\bar m}(M)|$ (which is clearly bounded) and a family $\{\sigma_i: i \in I\}$ of elements of $G_{\FC}$ such that for every $\sigma \in G_{\FC}$ there exists $i \in I$ such that $\sigma(\bar m) \equiv_M \sigma_i(\bar m)$. Then, by transitivity of the action of $G_{\FC}$ on $\Gen(\widetilde{G}_{\FC})$ and the above claim, $\Gen(\widetilde{G}_{\FC}) =G_{\FC} \cdot p = \{\sigma_i: i \in I\} \cdot p$ which is bounded.
\end{proof}

\begin{proposition}\label{proposition: stabilizer contains Aut(C/M)}
For every $p \in \Gen(\widetilde{G}_{\FC})$, $\Stab(p):=\{\sigma \in G_\FC\;\colon\; \sigma \cdot p = p\}$ is relatively type-definable of bounded index, and as such it contains the normal closure in $G_\FC$ of $\aut(\FC/M)$.
\end{proposition}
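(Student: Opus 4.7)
The plan is to verify the three assertions in order: relative type-definability, bounded index, and containment of the normal closure.

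First, for relative type-definability, I will reuse the computation made in the transitivity part of the proof of Proposition \ref{proposition: transitivity}. Fix, for each finite $\Delta$, a decomposition $\widetilde{G}_\FC = X^\Delta_0 \cupdot \dots \cupdot X^\Delta_{n_\Delta}$ into clopens relatively defined by $\Delta$-formulas, each of full $R_\Delta$-rank and $\Delta$-multiplicity one, and let $X^\Delta_{i_p}$ be the piece containing $p|_\Delta$. Exactly as in that proof,
$$\Stab(p)^\Delta:=\{\sigma\in G_\FC\;\colon\;\sigma\cdot(p|_\Delta)=p|_\Delta\}=\{\sigma\in G_\FC\;\colon\;R_\Delta(\sigma[X^\Delta_{i_p}]\cap X^\Delta_{i_p})\geqslant R_\Delta(\widetilde{G}_\FC)\}$$
is relatively definable in $G_\FC$ (over parameters coming from the defining formulas of $X^\Delta_{i_p}$, which lie in $\FC$). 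Since $\Stab(p)=\bigcap_\Delta \Stab(p)^\Delta$, it is relatively type-definable.

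Second, by the orbit-stabilizer correspondence, $|G_\FC/\Stab(p)|$ equals the cardinality of the orbit $G_\FC\cdot p$, which is contained in (in fact equals, by transitivity) $\Gen(\widetilde{G}_\FC)$, a bounded set by Proposition \ref{proposition: transitivity}. Hence $\Stab(p)$ has bounded index in $G_\FC$.

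Third, and this is the genuinely substantive step, I will show that $\aut(\FC/M)\subseteq \Stab(p)$ for \emph{every} generic $p$, which immediately produces the normal closure in $G_\FC$ once one notes that $G_\FC$-translates of generics are generic. Since $T$ is stable, and since $p$ generic implies (by Proposition \ref{proposition: characterziations of genericity}) that $p$ does not fork over $M$, the type $p$ is the unique non-forking extension of $p|_M$ to $\FC$. For any $\sigma\in\aut(\FC/M)$ the type $\sigma\cdot p$ is also a non-forking extension of $p|_M$ (since $\sigma$ fixes $M$ pointwise and preserves non-forking), so by uniqueness $\sigma\cdot p=p$. Applying this to $\tau^{-1}\cdot p$ in place of $p$ for an arbitrary $\tau\in G_\FC$ (which is generic since $\Gen(\widetilde{G}_\FC)$ is a $G_\FC$-subflow), one obtains $\aut(\FC/M)\subseteq\Stab(\tau^{-1}\cdot p)=\tau^{-1}\Stab(p)\tau$, equivalently $\tau\aut(\FC/M)\tau^{-1}\subseteq\Stab(p)$. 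Taking the union over $\tau\in G_\FC$ and using that $\Stab(p)$ is a subgroup, the normal closure of $\aut(\FC/M)$ in $G_\FC$ is contained in $\Stab(p)$.

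The only point that requires care is the appeal to uniqueness of non-forking extensions in Step~3: one must phrase forking appropriately over the monster for a global type in an infinite tuple of variables, and invoke the standard fact that in a stable theory a global type not forking over $M$ is $\aut(\FC/M)$-invariant (equivalently, is the unique coheir/heir of its restriction to $M$). Everything else is a straightforward bookkeeping exercise based on results already established in Subsection \ref{section: stable groups for Aut(C)}.
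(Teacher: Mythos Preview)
Your proof is correct, but it diverges from the paper's in the first and third steps.

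For relative type-definability, the paper argues more directly: since $p$ is generic it does not fork over $M$ (Remark~\ref{remark: generic dnf over M}), hence by stability is definable over $M$; so $\Stab(p)$ is the intersection of $G_\FC$ with the pointwise stabilizer of the canonical base of $p$, which lies in $M^{\mathrm{eq}}$. This gives the sharper conclusion that $\Stab(p)$ is relatively $\bar m$-type-definable \emph{over $M$}. Your rank-based decomposition is valid (each $\Stab(p)^\Delta$ is relatively type-definable and the intersection involves only a short tuple of parameters), but it produces parameters from $\FC$ rather than $M$.

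For the containment of the normal closure, the paper proves the general statement that \emph{every} relatively type-definable subgroup $H\leqslant G_\FC$ of bounded index contains $\aut(\FC/M)$ --- this is precisely what the phrase ``as such'' is meant to convey. The argument there passes to a normal core, considers the induced bounded orbit equivalence relation $E$ on a short tuple $\bar a$, and uses $M$-indiscernible sequences together with pigeonhole to show that any two $\equiv_M$-equivalent elements of $G_\FC(\bar a)$ are $E$-related; one then reads off $\aut(\FC/M)\subseteq H$. Your argument is shorter and more elementary for the specific case $H=\Stab(p)$: it exploits only that every $G_\FC$-translate of $p$ is generic, hence $\aut(\FC/M)$-invariant by uniqueness of non-forking extensions. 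This fully suffices for the proposition as stated, though it does not recover the general fact about arbitrary bounded-index subgroups.
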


\begin{proof}
Bounded index follows from boundedness of $\Gen(\widetilde{G}_{\FC})$. 
Relative type-definablity is clear: $\Stab(p)$ is precisely the intersection of $G_{\FC}$ with the pointwise stabilizer of the canonical base of $p$ which is contained in $M^{\mathrm{eq}}$ (as $p$ is definable over $M$ by Proposition \ref{proposition: characterziations of genericity}),
and so $\Stab(p)$ is relatively  $\bar m$-type-definable over $M$.
For the additional conclusion, we adapt the argument from \cite[Proposition 4.5]{HruKruPi}.

Assume that $H$ is a relatively type-definable subgroup of $G_\FC$ of bounded index. Then the intersection of all conjugates of $H$ is an intersection of boundedly many of them, so it is also relatively type-definable and of bounded index (formally one should be more precise here, but we skip the details). Thus, without loss of generality, $H$ is a normal subgroup of  $G_\FC$.

Choose a type $\pi'(\bar z;\bar b)$, where $\bar z$ is a short subtuple of $\bar x$ which corresponds to a subtuple $\bar a$ of $\bar c$ and $\bar b$ is a short tuple from $\FC$, so that 
$$(*)\;\;\;\;\;\;\;\; H=\{\sigma \in \aut(\FC): \models \pi'(\sigma(\bar a);\bar b)\}.$$

The orbit equivalence relation $E$ of the action of $H$ on $G_\FC (\bar a):=\{ \sigma(\bar a): \sigma \in G_\FC\}$ is bounded and invariant under the action of $G_\FC$.

On the other hand, whenever $\bar a' \equiv_M \bar a''$, where $\bar a'$ and $\bar a''$ both belong to $G_\FC (\bar a)$, then there is a sequence $(\bar a_1,\bar a_2,\dots)$ such that both $(\bar a',\bar a_1,\dots)$ and $(\bar a'',\bar a_1,\dots)$ are $M$-indiscernible. Using the fact that $\aut(\FC/M) \leqslant G_\FC$ and taking this sequence of length greater than the number of classes of $E$, we get that there are $i \ne j$ such that $\E(\bar a_i, \bar a_j)$. By the fact that $\aut(\FC/M) \leqslant G_\FC$, we get that any two distinct elements of each of the above two $M$-indiscernible sequences are $E$-related. We conclude that $\E(\bar a',\bar a'')$. 

Now, consider any $\sigma \in \aut(\FC/M)$. By the last paragraph, $\E(\bar a,\sigma(\bar a))$. Hence, there is $\tau \in H$ such that $\sigma(\bar a)=\tau(\bar a)$. Then $\tau^{-1}\sigma (\bar a) = \bar a$ and $\sigma = \tau(\tau^{-1}\sigma)$. 
Since $(*)$ shows that $H\Stab(\bar a) = H$ (where $\Stab(\bar a):=\{ \rho \in \aut(\FC): \rho(\bar a)=\bar a\}$), we conclude that $\sigma \in H$.

So we have proved that $\aut(\FC/M) \leqslant H$.
\end{proof}

Let $\res \colon \widetilde{G}_\FC \to \widetilde{G}$ be the restriction map to the variables $\bar x'$ and to the set of parameters $M$, 
where $\widetilde{G}:=S_{\pi(\bar{x}';\bar{m})}(M)=\{p(\bar x') \in S_{\bar m}(M): \pi(\bar x';\bar m) \subseteq p(\bar x')\}$. 
Let $\Gen(\widetilde{G})$ denote the set of all types $p \in \widetilde{G}$ such that for every $\varphi(\bar x';\bar m) \in p$ the set $G_{\varphi,\FC}$ is generic. Here and below we treat $\varphi(\bar x';\bar m)$ as $\varphi(\bar x;\bar m)$ whenever needed.

\begin{proposition}\label{proposition: res on generics}
The map $\res|_{\Gen(\widetilde{G}_\FC)}$ is a homeomorphism from $\Gen(\widetilde{G}_\FC)$ onto $\Gen(\widetilde{G})$.
\end{proposition}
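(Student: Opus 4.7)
The plan is to verify that $\res|_{\Gen(\widetilde{G}_\FC)}$ is a continuous bijection between the compact Hausdorff spaces $\Gen(\widetilde{G}_\FC)$ and $\Gen(\widetilde{G})$, from which the homeomorphism conclusion is automatic. Continuity is immediate since $\res$ is a restriction of the canonical restriction of type spaces. That the image lies in $\Gen(\widetilde{G})$ is also immediate from the definitions: for $p\in \Gen(\widetilde{G}_\FC)$ and $\varphi(\bar x';\bar m)\in \res(p)$, the formula also belongs to $p$ (viewed in the larger variables), and genericity of $p$ forces $G_{\varphi,\FC}$ to be generic.

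For surjectivity I would fix $r\in\Gen(\widetilde{G})$ and consider the family of clopens
$\{[\varphi(\bar x';\bar m)]\subseteq \widetilde{G}_\FC:\varphi(\bar x';\bar m)\in r\}$.
Completeness of $r$ makes this family closed under finite intersections, and each of its members is generic by the definition of $\Gen(\widetilde{G})$.  By Corollary~\ref{corollary: basic properties of generics}(1) it extends to a generic type $p\in\Gen(\widetilde{G}_\FC)$, and by completeness of $\res(p)$ we must have $\res(p)=r$.

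The main task, and the main obstacle, is injectivity. Suppose $p,q\in\Gen(\widetilde{G}_\FC)$ with $\res(p)=\res(q)$. My first step would be to pass to the intermediate type space in variables $\bar x'$ over $\FC$: by Proposition~\ref{proposition: characterziations of genericity} (generic $\Rightarrow$ non-forking over $M$) combined with stability, both $p$ and $q$ are $M$-definable, so $p|_{\bar x',\FC}$ and $q|_{\bar x',\FC}$ are both $M$-definable (hence non-forking) extensions to $\FC$ of the single type $\res(p)=\res(q)\in\widetilde{G}$; uniqueness of non-forking extensions in the stable theory then yields $p|_{\bar x',\FC}=q|_{\bar x',\FC}$.

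It remains to promote agreement on the short variables $\bar x'$ to equality of the full types $p=q$. By $M$-definability of $p,q$ it suffices to show $p|_M=q|_M$. I would realize $\bar d_p=\alpha(\bar c)\models p$ and $\bar d_q=\beta(\bar c)\models q$ in a larger monster $\FC'$; using $\alpha(\bar m)\equiv_M \beta(\bar m)$ I would replace $\beta$ by an appropriate $\aut(\FC'/M)$-conjugate, so that we can arrange $\alpha(\bar m)=\beta(\bar m)$. One then has $\alpha(\bar c\setminus \bar m)\equiv_{\alpha(\bar m)}\beta(\bar c\setminus\bar m)$ from $\alpha(\bar c)\equiv\beta(\bar c)\equiv\bar c$; the hard part is to upgrade this to $\equiv_{M\alpha(\bar m)}$, which I would attempt by invoking symmetry and transitivity of non-forking together with $\alpha(\bar c)\downarrow_M\FC$ and $\beta(\bar c)\downarrow_M\FC$ (both provided by Proposition~\ref{proposition: characterziations of genericity}). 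This would yield $\bar d_p\equiv_M\bar d_q$, hence $p|_M=q|_M$, and so $p=q$. The delicate point, and the place I expect to have to work hardest, is precisely this last non-forking calculation on the ``long'' coordinates of $\bar c\setminus\bar m$, since those coordinates are not directly controlled by $\res$; the argument must leverage the rigidity coming from $\alpha,\beta$ being automorphisms together with the stability of the theory.
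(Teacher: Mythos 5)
Your treatment of continuity, of the inclusion $\res[\Gen(\widetilde{G}_\FC)]\subseteq\Gen(\widetilde{G})$, and of surjectivity (via Corollary \ref{corollary: basic properties of generics}(1) applied to the filter of clopens coming from $r$) is correct and matches the paper. The first step of your injectivity argument is also fine: generics do not fork over $M$ by Proposition \ref{proposition: characterziations of genericity}, so $p|_{\bar x'}$ and $q|_{\bar x'}$ are both the unique nonforking extension of $\res(p)=\res(q)$ to $\FC$, and one may arrange $\alpha(\bar m)=\beta(\bar m)=:\bar n$, which (since $\bar m$ is a subtuple of $\bar c$) gives $\alpha(\bar c)\equiv_{\bar n}\beta(\bar c)$ over the model $N:=\alpha(M)$.

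The final step, however, is a genuine gap, and I do not believe it can be closed by ``symmetry and transitivity of non-forking together with $\alpha(\bar c)\downarrow_M\FC$ and $\beta(\bar c)\downarrow_M\FC$'' as you propose. To pass from $\alpha(\bar c)\equiv_{N}\beta(\bar c)$ to $\alpha(\bar c)\equiv_{M}\beta(\bar c)$ by stationarity over the model $N$, you need $\alpha(\bar c)\downarrow_N M$ (and likewise for $\beta$). Applying $\alpha^{-1}$ and symmetry, this is equivalent to $\tp(\alpha^{-1}(\bar m)/\FC)$ not forking over $M$ --- a statement about the \emph{inverse} automorphism. Genericity of $p$ gives you non-forking of $\tp(\alpha(\bar m)/\FC)$, not of $\tp(\alpha^{-1}(\bar m)/\FC)$, and the hypotheses $\alpha(\bar c)\downarrow_M\FC$, $\beta(\bar c)\downarrow_M\FC$ all have base $M$ with $\FC$ on the right, so no combination of symmetry and transitivity changes the base to $N$. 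This is precisely the obstruction the paper flags after Proposition \ref{proposition: characterziations of genericity}: the ranks $R_\Delta$ are only left invariant, and the ``right-handed'' analogues of the non-forking characterization are not available for free. The paper's proof circumvents this with group-theoretic input: by Proposition \ref{proposition: transitivity} there is $\sigma\in G_\FC$ with $\sigma(p)=q$; the equality $\res(p)=\res(q)$ forces (an extension of) $\sigma$ into the normal closure of $\aut(\FC'/M)$; and Proposition \ref{proposition: stabilizer contains Aut(C/M)} shows the stabilizer of a generic contains that normal closure, whence $\sigma(p)=p=q$. You need some substitute for these two propositions; pure forking calculus on the tuples does not suffice.
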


\begin{proof}
It is clear that $\res[\Gen(\widetilde{G}_\FC)]\subseteq \Gen(\widetilde{G})$. The opposite inclusion follows from Remark \ref{remark: identification of flows} and Corollary \ref{corollary: basic properties of generics}.

For injectivity, take any $p,q \in \Gen(\widetilde{G}_\FC)$ with $\res(p)=\res(q)$. By Proposition \ref{proposition: transitivity}, we can find $\sigma \in G_\FC$ such that $\sigma(p)=q$. Take $\tau \in \aut(\FC')$ with $p=\tp(\tau(\bar c)/\FC)$. Then $\sigma'(\tau(\bar m)) \equiv_M \tau(\bar m)$, where $\sigma'$ is any extension of $\sigma$ to an automorphism of $\FC'$. So $\sigma'(\tau(\bar m)) =\eta (\tau(\bar m))$ for some $\eta \in \aut(\FC'/M)$. Then $(\tau^{-1} \eta^{-1} \tau)(\tau^{-1} \sigma' \tau) (\bar m)=\bar m$. Therefore, $\sigma'$ belongs to the normal closure of $\aut(\FC'/M)$. By Remark \ref{remark: generic in various models} and the argument as in the first paragraph of this proof, we get that $p$ extends to $p' \in \Gen(\widetilde{G}_{\FC'})$. Using Proposition \ref{proposition: stabilizer contains Aut(C/M)} for $\FC$ replaced by $\FC'$, we conclude that $\sigma(p')=p'$. Hence, $\sigma'(p) =p$, so $q=p$.

Continuity of $\res$ is trivial, hence $\res|_{\Gen(\widetilde{G}_\FC)} \colon \Gen(\widetilde{G}_\FC) \to \Gen(\widetilde{G})$ is a homeomorphism by compactness of the relevant spaces.
\end{proof}

Recall that $\Delta'$ ranges over all finite sets of formulas $\varphi(\bar x';\bar y)$.
\begin{proposition}\label{proposition: generic via ranks for x'}
 Let $p\in \widetilde{G}$. Then the following conditions are equivalent:
\begin{enumerate}
\item $p \in \Gen(\widetilde{G})$;
\item $\vec{R}_{\Delta'}(p)=\vec{R}_{\Delta'}(\widetilde{G}_{\FC,\bar m})$;
\item $\vec{R}_{\Delta'}(p)$ is maximal (among $\vec{R}_{\Delta'}(q)$ for $q \in \widetilde{G}$).
\end{enumerate}
\end{proposition}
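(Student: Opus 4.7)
The plan is to prove the three equivalences by a triangle $(1) \Rightarrow (2) \Rightarrow (3) \Rightarrow (1)$, leveraging the already-established $\FC$-version (Proposition \ref{proposition: restricted to x'}) and using the restriction-to-the-home-model map together with standard properties of local ranks under non-forking extensions in stable theories.

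First, I would record the basic a priori bound: since every $q \in \widetilde{G}$ extends $\pi(\bar{x}';\bar{m})$, one has $R_{\Delta'}(q)\leqslant R_{\Delta'}(\pi(\bar{x}';\bar{m}))=R_{\Delta'}(\widetilde{G}_{\FC,\bar{m}})$ coordinatewise. This immediately gives $(2)\Rightarrow(3)$. For $(1)\Rightarrow(2)$, given $p\in\Gen(\widetilde{G})$, I would use Proposition \ref{proposition: res on generics} to lift $p$ to some $\tilde{p}\in\Gen(\widetilde{G}_{\FC})$, and then reduct to the $\bar{x}'$-variables to obtain $\hat{p}:=\tilde{p}|_{\bar{x}'}\in\widetilde{G}_{\FC,\bar{m}}$. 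The key point (a short direct verification) is that the restriction map $\widetilde{G}_{\FC}\to\widetilde{G}_{\FC,\bar{m}}$ is surjective and commutes with the left $G_{\FC}$-action; thus any covering of $\widetilde{G}_\FC$ by finitely many translates of a clopen defined by a $\varphi(\bar x';\bar a)$ projects to such a covering of $\widetilde{G}_{\FC,\bar m}$, so $\hat{p}$ is generic in $\widetilde{G}_{\FC,\bar{m}}$. By Proposition \ref{proposition: restricted to x'}(2), $\vec R_{\Delta'}(\hat{p})=\vec R_{\Delta'}(\widetilde{G}_{\FC,\bar{m}})$. Since $p\subseteq\hat{p}$, we have $R_{\Delta'}(\hat p)\leqslant R_{\Delta'}(p)$, and combining with the a priori upper bound yields equality.

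For $(3)\Rightarrow(1)$, I would first observe that $\Gen(\widetilde{G})$ is nonempty by Proposition \ref{proposition: res on generics} combined with Corollary \ref{corollary: basic properties of generics}(1), and for any $q\in\Gen(\widetilde{G})$ the already-proved $(1)\Rightarrow(2)$ gives $\vec R_{\Delta'}(q)=\vec R_{\Delta'}(\widetilde{G}_{\FC,\bar{m}})$. Hence maximality of $\vec R_{\Delta'}(p)$ together with the a priori bound forces $\vec R_{\Delta'}(p)=\vec R_{\Delta'}(\widetilde{G}_{\FC,\bar{m}})$. Now invoking stability, let $\hat{p}$ be the (unique) non-forking extension of $p$ to a complete type over $\FC$; a standard fact is that non-forking extensions preserve every local rank, so $R_{\Delta'}(\hat{p})=R_{\Delta'}(p)=R_{\Delta'}(\widetilde{G}_{\FC,\bar{m}})$ for every finite $\Delta'$. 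Thus $\hat p\in\widetilde{G}_{\FC,\bar{m}}$ has maximal $\vec R_{\Delta'}$, so by Proposition \ref{proposition: restricted to x'}(2) it is generic. Then for each $\varphi(\bar{x}';\bar{m})\in p\subseteq\hat{p}$, the clopen $[\varphi(\bar{x}';\bar{m})]\cap\widetilde{G}_{\FC,\bar{m}}$ is generic in $\widetilde{G}_{\FC,\bar{m}}$, and a short direct argument (unfolding the definition of left translation, as in the proof of Remark \ref{remark: identification of flows}) translates this into saying $G_{\varphi,\FC}$ is left generic in $G_{\FC}$.

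The main (very mild) obstacle is the bookkeeping involved in transitioning between the home variables $\bar{x}$ and the small-model variables $\bar{x}'$: one must check that the $G_{\FC}$-action is compatible with the reduct from $\widetilde{G}_{\FC}$ to $\widetilde{G}_{\FC,\bar{m}}$, and that genericity of a relatively definable set $G_{\varphi,\FC}$ corresponds to genericity of the associated clopen in $\widetilde{G}_{\FC,\bar{m}}$ when $\varphi$ uses only $\bar{x}'$. Both verifications are purely definitional and parallel the arguments already carried out in Remark \ref{remark: identification of flows} and Proposition \ref{proposition: res on generics}.
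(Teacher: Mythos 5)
Your proof is correct and follows essentially the same route as the paper's: lift via Proposition \ref{proposition: res on generics} and restrict to $\bar x'$ for $(1)\Rightarrow(2)$, use nonemptiness of $\Gen(\widetilde{G})$ to get from maximality back to the exact rank value, and pass to the unique nonforking extension (which preserves local ranks) plus the flow identification with the Stone space of $\Df_{\bar m}(G_\FC)$ to recover genericity of each $G_{\varphi,\FC}$. The only difference is cosmetic: the paper routes the cycle as $(3)\Rightarrow(2)\Rightarrow(1)$ in two displayed steps, whereas you fold the same two steps into a single $(3)\Rightarrow(1)$ leg.
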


\begin {proof}
(1) $\Rightarrow$ (2).
By Proposition \ref{proposition: res on generics}, $p=\res(q)$ for some $q \in \Gen(\widetilde{G}_\FC)$. Then $q|_{\bar x'} \in \Gen(\widetilde{G}_{\FC,\bar m})$. So $\vec{R}_{\Delta'}(q|_{\bar x'})=\vec{R}_{\Delta'}(\widetilde{G}_{\FC,\bar m})$ by Proposition \ref{proposition: restricted to x'}. As $(q|_{\bar x'})|_M=p$, we get $\vec{R}_{\Delta'}(p)=\vec{R}_{\Delta'}(\widetilde{G}_{\FC,\bar m})$.

(2) $\Rightarrow$ (3) is trivial.

(3) $\Rightarrow$ (2). It follows from (1) $\Rightarrow$ (2), because $\Gen(\widetilde{G}) \ne \emptyset$.

(2) $\Rightarrow$ (1). Let $\hat{p} \in \widetilde{G}_{\FC,\bar m}$ be the unique nonforking extension of $p$. Then $\vec{R}_{\Delta'}(\hat{p})=\vec{R}_{\Delta'}(p)=\vec{R}_{\Delta'}(\widetilde{G}_{\FC,\bar m})$, so $\hat{p}$ is a generic element in $\widetilde{G}_{\FC,\bar m}$ by Proposition \ref{proposition: restricted to x'}. We finish using the following variant of Remark \ref{remark: identification of flows}:
The map $f' \colon \widetilde{G}_{\FC,\bar m} \to S(\Df_{\bar m}(G_{\FC}))$ given by $f(p):= \{G_{\varphi,\FC}: \varphi(\bar x',\bar a) \in p\}$ is a well-defined isomorphism of left $G_\FC$-flows, where $\Df_{\bar m}(G_{\FC})$ is the Boolean algebra of relatively $\bar m$-definable subsets of $\widetilde{G}_\FC$.
\end{proof}

\subsection{Convolution product of types in stable theories}\label{subsec: group chunk 2}
Recall that we defined the convolution product on $S^{\inv}_{\bar m}(\FC,M)$ which was explicitly described in Proposition \ref{prop:formula.for.star.on.types}:
 $$p\ast q=\sigma(p|_{\FC'})|_\FC,$$
    where $q(\bar{y})=\tp(\sigma(\bar{m})/\FC)$ for some $\sigma\in\aut(\FC')$ and $p|_{\FC'}$ is the unique $M$-invariant extension of $p$ to $\FC'$.

For the rest of this subsection, assume that {\bf $T$ is stable}. Then a type $p \in S(\FC)$ is invariant over $M$ if and only if it is the unique nonforking extension of $p|_M$. Hence, the restriction map $S^{\inv}_{\bar m}(\FC,M) \to S_{\bar m}(M)$ is a homeomorphism which induces a semigroup operation $*$ on $S_{\bar m}(M)$ given by
$$p*q:=\sigma(\hat{p})|_M,$$
where $\sigma \in \aut(\FC)$ satisfies $\sigma(\bar m) \models q$ and $\hat{p} \in S_{\bar m}(\FC)$ is the unique global nonforking extension of $p$. We leave as an easy exercise to check that this is well-defied (i.e. does not depend on the choice of $\sigma$), and that it is indeed induced by the above restriction map. In particular, $(S_{\bar m}(M),*)$ is a left topological monoid.

\begin{remark}\label{remark: $*$ sep. cont.}
The map $*$ on $S_{\bar m}(M)$ is separately continuous.
\end{remark}

\begin{proof}
Right continuity follows from definability of types.
Alternatively, note that $(p\ast q)(\theta(\bar{x}';\bar{b}))=(p\otimes h_{\bar{b}}(q))(\theta(\bar{x}';\bar{y}))$ is composition of continuous functions.
\end{proof}

The above discussion applies to $(\FC,\FC')$ in place of $(M,\FC)$. Thus, we have a separately continuous semigroup operation $*$ on $S_{\bar c}(\FC)$ given by:
 $$p*q:=\sigma(\hat{p})|_\FC,$$
where $\sigma \in \aut(\FC')$ satisfies $\sigma(\bar c) \models q$ and $\hat{p} \in S_{\bar c}(\FC')$ is the unique nonforking extension of $p$. Then $S^{\inv}_{\bar c}(\FC,M)$ is closed under $*$. Indeed, assume that $p,q \in S^{\inv}_{\bar c}(\FC,M)$. If $\bar a \equiv_M \bar b$ are tuples from $\FC$, then $\bar a \equiv_{\sigma(\bar c)} \bar b$ by $M$-invariance of $q$. Hence, $\sigma^{-1}(\bar a) \equiv_{\bar c} \sigma^{-1}(\bar b)$. Hence, we get the equivalence $\varphi(\bar x;\sigma^{-1}(\bar a)) \in \hat{p} \iff  \varphi(\bar x;\sigma^{-1}(\bar b)) \in \hat{p}$, because $\hat{p}$ is $\FC$-invariant. So we get $M$-invariance of $p*q$, as the left hand side is equivalent to $\varphi(\bar x;\bar a) \in p*q$ and the right side to $\varphi(\bar x;\bar b) \in p*q$.

\begin{remark}\label{remark: restriction preserves *}
Let $\res \colon S^{\inv}_{\bar c}(\FC,M) \to S_{\bar m}(M)$ be the restriction map to the variables $\bar x'$ and to the set of parameters $M$ (i.e. the map introduced before Proposition \ref{proposition: res on generics} but on a modified domain). Then $\res$ is a homomorphism of semigroups, i.e. $\res(p*q)=\res(p) * \res(q)$ for any $p,q \in S_{\bar c}(\FC)$.
\end{remark}

\begin{proof}
Follows easily from the definitions.
\end{proof}

In order to apply some topological dynamics to our $\ast$-product, we need to treat $S_{\bar c}(\FC)$ as an $\aut(\FC)$-flow with respect to the following left action:
$$\sigma \bullet p:= p \cdot_r \sigma^{-1} = \tp(\tau(\sigma^{-1}(\bar c))/\FC),$$ 
where $\tau \in \aut(\FC')$ is such that $\tp(\tau(\bar c)/\FC)=p$. Whenever (in this subsection) we do not explicitly mention an action of $\aut(\FC)$, we always consider $\bullet$.

\begin{lemma}\phantomsection\label{lemma: Phi on S_c(C)}
\begin{enumerate}
\item For every $\sigma \in \aut(\FC)$ we have $\sigma \bullet q = \tp(\sigma^{-1}(\bar c)/\FC) *q$.
\item The assignment $\Phi \colon p \mapsto l_p$, where $l_p\colon S_{\bar c}(\FC) \to S_{\bar c}(\FC)$ is given by $l_p(q):=p*q$, is a topological  isomorphism from $(S_{\bar c}(\FC),*)$ to $\E(S_{\bar c}(\FC),\aut(\FC))$. It is also an isomorphism of $\aut(\FC)$-flows.
\end{enumerate}
\end{lemma}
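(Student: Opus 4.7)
For part (1), I would compute both sides directly from the definitions. Given $q=\tp(\tau(\bar c)/\FC)$ with $\tau\in\aut(\FC')$, the left action yields $\sigma\bullet q=q\cdot_r\sigma^{-1}=\tp(\tau(\sigma^{-1}(\bar c))/\FC)$. On the other hand, the type $p:=\tp(\sigma^{-1}(\bar c)/\FC)$ is realized in $\FC$, so its unique nonforking extension $\hat p\in S_{\bar c}(\FC')$ is the realized type $\tp(\sigma^{-1}(\bar c)/\FC')$ (with the same realization). Since $\tau$ realizes $q$, the defining formula $p*q=\tau(\hat p)|_\FC$ yields exactly $\tp(\tau(\sigma^{-1}(\bar c))/\FC)$, matching $\sigma\bullet q$.

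For part (2), the proof proceeds through several standard checks. First, $*$ on $S_{\bar c}(\FC)$ is separately continuous: the analog of Remark \ref{remark: $*$ sep. cont.} applied with $(\FC,\FC')$ in place of $(M,\FC)$, and relying on the stability-driven definability of types, makes both $p\mapsto p*q$ and $q\mapsto p*q$ continuous. This shows that each $l_p$ is continuous and that $\Phi$ is continuous into the product space $S_{\bar c}(\FC)^{S_{\bar c}(\FC)}$, so $\Phi$ lands in a topologically sensible target. Second, the semigroup homomorphism property $l_{p*q}=l_p\circ l_q$ reduces to associativity of $*$ on $S_{\bar c}(\FC)$; this is obtained by applying Proposition \ref{prop: associative for types} with $(\FC,\FC')$ in place of $(M,\FC)$, since in the stable setting every type over $\FC$ has a unique nonforking extension to $\FC'$ and the proof goes through verbatim.

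For injectivity of $\Phi$, evaluating at the identity type $e:=\tp(\bar c/\FC)$ gives $l_p(e)=p*e=p$ (take $\sigma=\id$ in the defining formula), so $l_p=l_q$ forces $p=q$. For the image, part (1) shows that for any $\sigma\in\aut(\FC)$ the action map $\sigma\bullet-$ equals $l_{\tp(\sigma^{-1}(\bar c)/\FC)}=\Phi(\tp(\sigma^{-1}(\bar c)/\FC))$. Since the types $\{\tp(\sigma(\bar c)/\FC):\sigma\in\aut(\FC)\}$ are realized in $\FC$ and therefore dense in $S_{\bar c}(\FC)$, while $\Phi$ is continuous with compact domain, the image $\Phi[S_{\bar c}(\FC)]$ is compact and contains every action map, hence contains the closure $\E(S_{\bar c}(\FC),\aut(\FC))$; the reverse inclusion is immediate from (1) and continuity. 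Thus $\Phi$ is a continuous bijection between compact Hausdorff spaces, hence a homeomorphism and an isomorphism of topological semigroups. Finally, the natural $\aut(\FC)$-action on $\E$ is left composition with action maps, $\sigma\cdot\eta:=(\sigma\bullet-)\circ\eta$; combining (1) with the homomorphism property yields $\sigma\cdot\Phi(p)=l_{\tp(\sigma^{-1}(\bar c)/\FC)}\circ l_p=l_{\tp(\sigma^{-1}(\bar c)/\FC)*p}=l_{\sigma\bullet p}=\Phi(\sigma\bullet p)$, giving the flow isomorphism.

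The main obstacles are the separate continuity and associativity of $*$ on $S_{\bar c}(\FC)$: both hinge on the stability of $T$ (definability of types and uniqueness of nonforking extensions), and once these two facts are transferred from the small-model setting to $(\FC,\FC')$, all remaining steps are formal bookkeeping.
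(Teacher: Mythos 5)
Your proof is correct and follows essentially the same route as the paper: part (1) by direct computation from the definitions of $\bullet$ and $*$, and part (2) via associativity for the homomorphism property, evaluation at $\tp(\bar c/\FC)$ for injectivity, and density of realized types together with left continuity and compactness to identify the image with $\E(S_{\bar c}(\FC),\aut(\FC))$. The only cosmetic difference is that you re-derive separate continuity and associativity of $*$ on $S_{\bar c}(\FC)$, which the paper has already set up in the discussion preceding the lemma.
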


\begin{proof}
(1) $\tp(\sigma^{-1}(\bar c)/\FC) * \tp(\rho(\bar c)/\FC) = \tp(\rho(\sigma^{-1}(\bar c))/\FC) = \sigma \bullet \tp(\rho(\bar c)/\FC)$ for any $\rho \in \aut(\FC')$.

(2) Since $\{ \tp(\sigma(\bar c)/\FC) : \sigma \in \aut(\FC)\}$ is dense in $S_{\bar c}(\FC)$ and $*$ is left continuous on $S_{\bar c}(\FC)$, using item (1), we get that $\Ima(\Phi) \subseteq \E(\aut(\FC),S_{\bar c}(\FC))$.  The opposite inclusion follows from item (1), left continuity of $*$, and compactness of $S_{\bar c}(\FC)$.

Associativity of $*$ implies that $\Phi$ is a homomorphism. Since $l_p(\tp(\bar c/\FC))=p$, we see that $\Phi$ is injective. Continuity of $\Phi$ follows from left continuity of $*$. 

To see that $\Phi$ is a flow homomorphism, consider any $p,q \in S_{\bar c}(\FC)$. Then, by item (1), $\Phi(\sigma \bullet p)(q) = (\tp(\sigma^{-1}(\bar c)/\FC) * p) * q = \tp(\sigma^{-1}(\bar c)/\FC) * (p * q) = \sigma \bullet (p*q)=\sigma \bullet \Phi(p)(q)$.
\end{proof}

Note that by Remark \ref{remark: restricted actions}, $\widetilde{G}_\FC$ is a $G_\FC$-flow with respect to the action $\bullet$ restricted to $G_\FC \times \widetilde{G}_\FC$. Moreover, by Remark \ref{remark: two descriptions of the type space}, $(G_\FC,\widetilde{G}_\FC, \tp(\bar c/\FC))$ is an ambit, i.e. the orbit $G_\FC \bullet  \tp(\bar c/\FC)$ is dense.

\begin{lemma}\phantomsection\label{lemma: Phi is an isom.}
\begin{enumerate}
\item $\widetilde{G}$ and $\widetilde{G}_\FC$ are closed under $*$, and so they are compact separately continuous monoids.
\item The restriction $\Phi|_{\widetilde{G}_{\FC}}$ is a semigroup and $G_\FC$-flow isomorphism from $\widetilde{G}_{\FC}$ to $\E(\widetilde{G}_{\FC},G_\FC)$.
\end{enumerate}
\end{lemma}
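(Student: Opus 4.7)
For part (1), I would prove closure of $\widetilde{G}_{\FC}$ under $*$ directly from the explicit formula $p*q=\sigma(\hat{p})|_{\FC}$ (available in stable theories), where $q=\tp(\sigma(\bar c)/\FC)$ for some $\sigma\in\aut(\FC')$ and $\hat p\in S_{\bar c}(\FC')$ is the unique nonforking extension of $p$. The hypothesis $p\in\widetilde{G}_{\FC}$ gives $\hat p\supseteq p\supseteq\pi(\bar x';\bar m)$, so $\hat p$ lies in the $\FC'$-version $\widetilde{G}_{\FC'}$ of our type space. The hypothesis $q\in\widetilde{G}_{\FC}$ translates into $\models\pi(\sigma(\bar m);\bar m)$, i.e.\ $\sigma\in G_{\pi,\FC'}$. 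Now, Remark \ref{remark: restricted actions} applied to $\FC'$ says that the natural action $\cdot$ by $G_{\pi,\FC'}$ preserves $\widetilde{G}_{\FC'}$; hence $\sigma(\hat p)\in\widetilde{G}_{\FC'}$, and restricting parameters to $\FC$ gives $p*q\in\widetilde{G}_{\FC}$. The same argument, with the analogous observation applied to the subspace $\widetilde{G}_{\FC,\bar m}\subseteq S_{\bar m}(\FC)$ (types in variables $\bar x'$ extending $\pi(\bar x';\bar m)$), yields closure of $\widetilde{G}$ under the corresponding $*$-product on $S_{\bar m}(M)$. Separate continuity of $*$ on both sub-semigroups is inherited from the ambient spaces (Remark \ref{remark: $*$ sep. cont.} and its analog for $S_{\bar c}(\FC)$). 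The identities $\tp(\bar c/\FC)$ and $\tp(\bar m/M)$ belong to $\widetilde{G}_{\FC}$ and $\widetilde{G}$ respectively because $\models\pi(\bar m;\bar m)$ (a consequence of $G_{\FC}$ being a subgroup).

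For part (2), I would simply restrict the isomorphism $\Phi$ from Lemma \ref{lemma: Phi on S_c(C)}. Part (1) guarantees that for every $p\in\widetilde{G}_{\FC}$ the map $l_p=p*(-)$ sends $\widetilde{G}_{\FC}$ into itself, so $\Phi|_{\widetilde{G}_{\FC}}$ is well-defined as a map into $\widetilde{G}_{\FC}^{\widetilde{G}_{\FC}}$; injectivity, continuity, and the homomorphism property with respect to $*$ are immediate from the corresponding properties of $\Phi$. The $G_{\FC}$-equivariance $\Phi(\sigma\bullet p)=\sigma\cdot_E\Phi(p)$ (for the standard left-composition action on the Ellis semigroup) reduces to the identity $(\sigma\bullet p)*q=\sigma\bullet(p*q)$, which follows from Lemma \ref{lemma: Phi on S_c(C)}(1) together with associativity of $*$. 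For the identification of the image with $\E(\widetilde{G}_{\FC},G_{\FC})$, observe that by Lemma \ref{lemma: Phi on S_c(C)}(1) each generator $q\mapsto\sigma\bullet q$ of the Ellis semigroup equals the restriction of $l_{\tp(\sigma^{-1}(\bar c)/\FC)}$ to $\widetilde{G}_{\FC}$, and $\tp(\sigma^{-1}(\bar c)/\FC)\in\widetilde{G}_{\FC}$; since $\Phi|_{\widetilde{G}_{\FC}}(\widetilde{G}_{\FC})$ is compact (hence closed) in $\widetilde{G}_{\FC}^{\widetilde{G}_{\FC}}$, it contains the closure $\E(\widetilde{G}_{\FC},G_{\FC})$. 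Conversely, using Remark \ref{remark: two descriptions of the type space} every $p\in\widetilde{G}_{\FC}$ is a limit of types $\tp(\sigma_i^{-1}(\bar c)/\FC)=\sigma_i\bullet\tp(\bar c/\FC)$ with $\sigma_i\in G_{\FC}$; left continuity of $*$ then presents $l_p|_{\widetilde{G}_{\FC}}$ as a pointwise limit of maps $q\mapsto\sigma_i\bullet q$, which places it in $\E(\widetilde{G}_{\FC},G_{\FC})$.

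The main obstacle is verifying closure of $\widetilde{G}_{\FC}$ under $*$ in part (1). The naive computation leads to the need to show $\varphi(\bar x';\sigma^{-1}(\bar m))\in\hat p$ for every $\varphi\in\pi$, which is not visibly a consequence of $\pi(\bar x';\bar m)\subseteq \hat p$ alone. The trick is to reinterpret this condition as $\sigma(\hat p)\in\widetilde{G}_{\FC'}$ and invoke Remark \ref{remark: restricted actions} at the level of the bigger monster model $\FC'$; unwinding that remark's justification (the group property of $G_{\pi,\FC'}$ acting on realizations of $\pi(\bar x';\bar m)$) is what really makes the step work. Once this is in place the rest is largely bookkeeping: transferring the flow/semigroup structure to the relatively type-definable subflow.
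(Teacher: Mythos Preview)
Your proposal is correct. For part (2) it is essentially the paper's argument spelled out (the paper just says ``the proof is the same as in Lemma~\ref{lemma: Phi on S_c(C)}(2), using Remark~\ref{remark: two descriptions of the type space}'').

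For part (1) you take a genuinely different route. The paper argues topologically: by Remark~\ref{remark: two descriptions of the type space} the realized types $\tp(\sigma(\bar c)/\FC)$ with $\sigma\in G_{\FC}$ are dense in $\widetilde{G}_{\FC}$; by Lemma~\ref{lemma: Phi on S_c(C)}(1) their $*$-products with any $q\in\widetilde{G}_{\FC}$ equal $\sigma^{-1}\bullet q\in\widetilde{G}_{\FC}$; left continuity of $*$ and topological closedness of $\widetilde{G}_{\FC}$ then give closure under $*$. Closure of $\widetilde{G}$ is deduced from this via the restriction homomorphism $\res$ (Remark~\ref{remark: restriction preserves *}) and the observation $\res[S^{\inv}_{\bar c}(\FC,M)\cap\widetilde{G}_{\FC}]=\widetilde{G}$. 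Your argument is instead algebraic: you work directly with the formula $p*q=\sigma(\hat p)|_{\FC}$ and use the group property of $G_{\pi,\FC'}$ acting on realizations of $\pi(\bar x';\bar m)$ to see that $\sigma(\hat p)$ still extends $\pi(\bar x';\bar m)$. This avoids the density-plus-continuity step at the cost of passing to $\FC'$ (and implicitly $\FC''$ to realize $\hat p$). One small point: what you call ``Remark~\ref{remark: restricted actions} applied to $\FC'$'' is not literally that remark, since $\hat p\in S_{\bar c}(\FC')$ rather than $S_{\bar c'}(\FC')$; but your final paragraph correctly identifies that the underlying fact---closure of $G_{\pi,\FC''}$ under composition---is what is really being used, and that suffices.
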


\begin{proof}
(1) The fact that $\widetilde{G}_\FC$ is closed under $*$ follows from Lemma \ref{lemma: Phi on S_c(C)}(1), left continuity of $*$, and (topological) closedness of $\widetilde{G}_\FC$.
Using this together with Remark \ref{remark: restriction preserves *} and an easy observation that $\res[S^{\inv}_{\bar c}(\FC,M)\cap \widetilde{G}_\FC] = \widetilde{G}$, we get that $\widetilde{G}$ is closed under $*$. Then we use Remar \ref{remark: $*$ sep. cont.} to obtain that $\ast$ is separately continuous.

(2) The proof is the same as in Lemma \ref{lemma: Phi on S_c(C)}(2), using Remark \ref{remark: two descriptions of the type space}.
\end{proof}

The next remark follows from the definition of $\bullet$.

\begin{remark}
$\Gen(\widetilde{G}_\FC)$ is precisely the set of all generic elements of the flow $(\widetilde{G}_\FC,G_\FC)$ (with respect $\bullet$).
\end{remark}

Since $\Gen(\widetilde{G}_\FC) \ne \emptyset$, by \cite[Corollary 1.9]{New2009}, we conclude: 

\begin{cor}\label{corollary: minimal subflow}
$\Gen(\widetilde{G}_\FC)$ is a unique minimal $G_\FC$-subflow and a unique minimal left ideal (with respect to $*$) of $\widetilde{G}_\FC$.
\end{cor}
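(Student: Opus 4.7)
The plan is to combine the transitivity of the $G_\FC$-action on $\Gen(\widetilde{G}_\FC)$ established in Proposition~\ref{proposition: transitivity} with Newelski's Corollary 1.9 from \cite{New2009} (which says that in the present kind of flow, once generics exist every minimal subflow meets $\Gen$). First I would check that $\Gen(\widetilde{G}_\FC)$ is a closed $G_\FC$-invariant subset of $\widetilde{G}_\FC$. Invariance is immediate from the definition together with the invariance of the collection of generic relatively definable sets under the left $G_\FC$-action. Closedness is obtained by noting that if $p \in \widetilde{G}_\FC$ is non-generic, then some clopen $[\varphi(\bar x';\bar a)]\ni p$ is not generic; every type in this clopen is then also non-generic, so the non-generic points form an open set.

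Next I would argue that $\Gen(\widetilde{G}_\FC)$ is itself a minimal $G_\FC$-subflow. By Proposition~\ref{proposition: transitivity}, for any $p\in \Gen(\widetilde{G}_\FC)$ we have $G_\FC \cdot p = \Gen(\widetilde{G}_\FC)$ as sets, and since the set on the right is closed, the orbit closure already equals $\Gen(\widetilde{G}_\FC)$. For uniqueness, given any minimal $G_\FC$-subflow $Z \subseteq \widetilde{G}_\FC$, Newelski's Corollary 1.9 guarantees that $Z$ contains a generic point; this generic lies in $\Gen(\widetilde{G}_\FC)$, so $Z \cap \Gen(\widetilde{G}_\FC) \ne \emptyset$, and minimality on both sides forces $Z = \Gen(\widetilde{G}_\FC)$.

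To promote this to the statement about minimal left ideals of $(\widetilde{G}_\FC,*)$, I would use Lemmas~\ref{lemma: Phi on S_c(C)} and \ref{lemma: Phi is an isom.}. For any $q\in \widetilde{G}_\FC$, any $p\in \widetilde{G}_\FC$ can be written as a limit $p = \lim_i \tp(\sigma_i^{-1}(\bar c)/\FC)=\lim_i \sigma_i\bullet \tp(\bar c/\FC)$ with $\sigma_i\in G_\FC$; by left-continuity of $*$ and Lemma~\ref{lemma: Phi on S_c(C)}(1), $p*q=\lim_i \sigma_i\bullet q$, so $\widetilde{G}_\FC * q = \overline{G_\FC \bullet q}$. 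It follows that the closed left ideals of $(\widetilde{G}_\FC,*)$ are exactly the closed $G_\FC$-invariant subsets, i.e., the $G_\FC$-subflows of $\widetilde{G}_\FC$. Since minimal left ideals in a compact left topological semigroup are automatically closed, the bijective correspondence identifies minimal left ideals with minimal $G_\FC$-subflows, so uniqueness of the minimal subflow yields uniqueness of the minimal left ideal.

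The main obstacle is really only bookkeeping: verifying that our flow $(\widetilde{G}_\FC, G_\FC)$ fits the hypotheses under which Newelski's Corollary 1.9 applies, and checking cleanly that closed left ideals of $(\widetilde{G}_\FC,*)$ really do coincide with $G_\FC$-subflows (the argument above shows this, using that $\tp(\bar c/\FC)$ is the identity and the ambit property from Remark~\ref{remark: two descriptions of the type space}). Everything else follows formally from Proposition~\ref{proposition: transitivity} and the isomorphisms already recorded in Lemmas~\ref{lemma: Phi on S_c(C)} and \ref{lemma: Phi is an isom.}.
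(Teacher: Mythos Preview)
Your argument has a subtle but real gap: you conflate the two left actions of $G_\FC$ on $\widetilde{G}_\FC$. The flow structure in this subsection (and hence the notion of ``$G_\FC$-subflow'' in the corollary, as well as your own correct identification of closed left ideals with subflows in the last paragraph) is with respect to the action $\bullet$, defined by $\sigma \bullet p = p \cdot_r \sigma^{-1}$. But Proposition~\ref{proposition: transitivity} is stated and proved in the preceding subsection for the \emph{standard} left action $\cdot$; it gives $G_\FC \cdot p = \Gen(\widetilde{G}_\FC)$, not $G_\FC \bullet p = \Gen(\widetilde{G}_\FC)$. So your minimality step only establishes that $\Gen(\widetilde{G}_\FC)$ is $\cdot$-minimal, and in the uniqueness step ``minimality on both sides'' is comparing a $\bullet$-minimal $Z$ with a merely $\cdot$-minimal $\Gen$, which does not force $Z = \Gen(\widetilde{G}_\FC)$.

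The repair is immediate and bypasses Proposition~\ref{proposition: transitivity} entirely: if $q \in \Gen(\widetilde{G}_\FC)$ and $p \in \widetilde{G}_\FC$ is arbitrary, then every clopen neighbourhood $U$ of $q$ is generic, so finitely many $\bullet$-translates of $U$ cover $\widetilde{G}_\FC$; in particular some $\sigma \bullet p \in U$, whence $q \in \overline{G_\FC \bullet p}$. Thus $\Gen(\widetilde{G}_\FC)$ lies in every $\bullet$-orbit closure and is therefore the unique minimal $\bullet$-subflow. This is exactly what \cite[Corollary~1.9]{New2009} delivers in one stroke from $\Gen(\widetilde{G}_\FC) \ne \emptyset$, which is why the paper's proof is a bare citation. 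Your explicit verification that closed left ideals of $(\widetilde{G}_\FC,*)$ coincide with $\bullet$-subflows is correct and usefully unpacks what the paper leaves implicit via Lemma~\ref{lemma: Phi is an isom.}(2).
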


{\em Weakly almost periodic} (or WAP, for short) flows, introduced in \cite{WAP}, play an important role in topological dynamics. Recall that a flow is WAP if each member of its Ellis semigroup is continuous. A strong connections (in a sense, equivalence) between WAP and stability was discovered by Ben-Yaacov \cite{Ben_Yaacov:Model_theoretic_stability_of_types_after_Grothendieck}.

\begin{proposition}\label{proposition: WAP flow}
The flows $(S_{\bar c}(\FC),\aut(\FC))$ and $(\widetilde{G}_\FC,G_\FC)$ are WAP.
\end{proposition}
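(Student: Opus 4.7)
The plan is to reduce the WAP assertion to continuity of each left-multiplication map $l_p \colon q \mapsto p \ast q$, and then to extract this continuity from the definability of types in stable theories.

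First I would invoke Lemma \ref{lemma: Phi on S_c(C)}(2) and Lemma \ref{lemma: Phi is an isom.}(2), which identify the Ellis semigroup of $(S_{\bar c}(\FC), \aut(\FC))$ with $(S_{\bar c}(\FC), \ast)$ via $\Phi \colon p \mapsto l_p$, and similarly for $(\widetilde{G}_\FC, G_\FC)$. The WAP property of the first flow then reduces to showing that every $l_p$ is continuous on $S_{\bar c}(\FC)$; the second flow follows immediately, since by Lemma \ref{lemma: Phi is an isom.}(1) the space $\widetilde{G}_\FC$ is a closed $\ast$-invariant subset and the restrictions $l_p |_{\widetilde{G}_\FC}$ inherit continuity from the ambient space.

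For the continuity, the key input is that in a stable theory every $p \in S_{\bar c}(\FC)$ is definable over some small elementary submodel $N \preceq \FC$, and its unique nonforking extension $\hat p$ to $\FC'$ is given by the same defining scheme. Because $\bar c$ enumerates all of $\FC$, the parameters of the scheme may be taken to lie inside $\bar c$; so for each $\CL$-formula $\varphi(\bar x_0; \bar y)$ (with $\bar x_0 \subseteq \bar x$ finite), there will be an $\CL$-formula $\theta(\bar y; \bar z)$ and a subtuple $\bar b = (c_{j_1}, \ldots, c_{j_m})$ of $\bar c$ such that $\varphi(\bar x_0; \bar a) \in \hat p$ if and only if $\models \theta(\bar a; \bar b)$, for any $\bar a$ in $\FC'$. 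Given $q \in S_{\bar c}(\FC)$ and $\sigma \in \aut(\FC')$ with $\sigma(\bar c) \models q$, the definition of $\ast$ gives $\varphi(\bar x_0; \bar a) \in p \ast q$ iff $\models \theta(\sigma^{-1}(\bar a); \bar b)$; applying $\sigma$ this becomes $\models \theta(\bar a; \sigma(\bar b))$, which in turn unfolds to $\psi(\bar x) \in q$ for the $\CL(\FC)$-formula $\psi(\bar x) := \theta(\bar a; x_{j_1}, \ldots, x_{j_m})$. Hence $l_p^{-1}([\varphi(\bar x_0; \bar a)]) = [\psi(\bar x)]$ is clopen, proving $l_p$ continuous.

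The only delicate point is arranging the defining parameters $\bar b$ to be indexed within $\bar c$, so that $\sigma(\bar b)$ admits a meaningful interpretation via $q = \tp(\sigma(\bar c)/\FC)$ and the substitution trick that rewrites $\models \theta(\bar a; \sigma(\bar b))$ as $\psi(\bar x) \in q$ goes through cleanly. This is taken care of by choosing the witness $N$ of stable-definability to be an elementary submodel of $\FC$, whose elements are then automatically part of the enumeration $\bar c$; beyond this, no real obstacle is anticipated.
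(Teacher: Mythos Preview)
Your proof is correct, but it takes a different route from the paper's. The paper argues that WAP is inherited by subflows and by shrinking the acting group, so it suffices to treat $(S_{\bar c}(\FC),\aut(\FC))$; it then invokes Grothendieck's double limit theorem as a black box (referring to \cite{CodHoff23}), using that stability of formulas translates directly into the double-limit criterion for weak almost periodicity.

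Your argument is more elementary and stays entirely within the machinery already built in the paper. You observe that by Lemma~\ref{lemma: Phi on S_c(C)}(2) the Ellis semigroup is exactly $\{l_p : p \in S_{\bar c}(\FC)\}$, so WAP amounts to each $l_p$ being continuous, i.e.\ to right continuity of $*$; and you derive this from definability of types. This last step is precisely what the paper already records (in one line) in the proof of Remark~\ref{remark: $*$ sep. cont.} and its extension to $(\FC,\FC')$. In effect, you have noticed that Remark~\ref{remark: $*$ sep. cont.} together with Lemma~\ref{lemma: Phi on S_c(C)}(2) already yields the proposition without appealing to Grothendieck. The paper's approach is more conceptual and connects to the broader stability--WAP dictionary; yours is more self-contained and makes the argument independent of external references.
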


\begin{proof}
Since WAP flows are closed under both decreasing of the acting group and taking subflows, it is enough to show that the first flow is WAP. 
But this is a standard application of Grothendieck's double limit theorem (cf. Corollary 2.15 and Proposition 2.17 in \cite{CodHoff23}).
\end{proof}

\begin{cor}\phantomsection\label{corollary: Gen is an ideal and group}
\begin{enumerate}
\item $(\Gen(\widetilde{G}_\FC),*)$ is a profinite group.
\item $\Gen(\widetilde{G})$ is closed under $*$, and $(\Gen(\widetilde{G}),*) \cong (\Gen(\widetilde{G}_\FC),*)$; thus $(\Gen(\widetilde{G}),*)$ is a profinite group.
\item $\Gen(\widetilde{G}_\FC)$ is a unique minimal left ideal and a unique minimal right ideal in $\widetilde{G}_\FC$.
\item $\Gen(\widetilde{G})$ is a unique minimal left ideal and a unique minimal right ideal in $\widetilde{G}$.
\end{enumerate}
\end{cor}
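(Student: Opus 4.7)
Plan. For (1), I identify $\widetilde{G}_\FC$ with its Ellis semigroup via the isomorphism $\Phi$ of Lemma \ref{lemma: Phi is an isom.}(2); under $\Phi$, the unique minimal left ideal $\Gen(\widetilde{G}_\FC)$ (given by Corollary \ref{corollary: minimal subflow}) corresponds to the minimal ideal (kernel) of $\E(\widetilde{G}_\FC, G_\FC)$. Since $(\widetilde{G}_\FC, G_\FC)$ is WAP by Proposition \ref{proposition: WAP flow}, this Ellis semigroup is a compact semitopological semigroup all of whose elements are continuous maps. A classical result of topological dynamics then ensures that the minimal ideal of such an Ellis semigroup is a compact topological group. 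Transporting this structure back via $\Phi^{-1}$ shows that $(\Gen(\widetilde{G}_\FC), *)$ is a compact topological group; being a closed subspace of the Stone space $S_{\bar c}(\FC)$ it is totally disconnected, hence profinite. (Alternatively, one may argue directly via the inverse limit description relating $\Gen(\widetilde{G}_\FC)$ to the finite flows $\Gen(\widetilde{G}_{\FC,\Delta})$.)

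For (2), Remark \ref{remark: generic dnf over M} gives $\Gen(\widetilde{G}_\FC) \subseteq S^{\inv}_{\bar c}(\FC, M)$, so the map $\res$ is defined on $\Gen(\widetilde{G}_\FC)$. By Proposition \ref{proposition: res on generics}, $\res|_{\Gen(\widetilde{G}_\FC)}$ is a homeomorphism onto $\Gen(\widetilde{G})$, and by Remark \ref{remark: restriction preserves *} it is a semigroup homomorphism. Hence it is an isomorphism of compact topological semigroups which transports the profinite group structure from $\Gen(\widetilde{G}_\FC)$ onto $\Gen(\widetilde{G})$; in particular $\Gen(\widetilde{G})$ is closed under $*$ and $(\Gen(\widetilde{G}), *) \cong (\Gen(\widetilde{G}_\FC), *)$ as profinite groups.

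For (3) and (4), once it is known that $\Gen(\widetilde{G}_\FC)$ is a group, general semigroup theory yields that it is also the unique minimal right ideal of $\widetilde{G}_\FC$. Indeed, the kernel (minimal two-sided ideal) of $\widetilde{G}_\FC$ equals $\Gen(\widetilde{G}_\FC)$ since it is the union of all minimal left ideals, of which there is only one; any minimal right ideal is contained in the kernel, and since $\Gen(\widetilde{G}_\FC)$ is a group it has no proper one-sided ideals, so it is itself the unique minimal right ideal of $\widetilde{G}_\FC$. Item (4) then follows from (3) by transporting along the isomorphism in (2).

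The main obstacle is the first step: invoking the topological dynamics theorem that the minimal ideal of the Ellis semigroup of a WAP flow is a compact topological group. Standard references sometimes state this under metrizability assumptions on the underlying flow, which need not hold for $\widetilde{G}_\FC$. Should this be a concern, a backup strategy is to construct the group structure on $\Gen(\widetilde{G}_\FC)$ directly from the transitive $G_\FC$-action (Proposition \ref{proposition: transitivity}) and the separate continuity of $*$: each finite quotient $\Gen(\widetilde{G}_{\FC,\Delta})$ is a finite transitive $G_\FC$-set, and using WAP (equivalently, uniqueness of nonforking extensions in the stable setting) one checks that the stabilizers of all points in $\Gen(\widetilde{G}_{\FC,\Delta})$ coincide and are normal in $G_\FC$, yielding a coherent inverse system of finite group quotients of $G_\FC$ whose limit recovers $\Gen(\widetilde{G}_\FC)$ with its group structure.
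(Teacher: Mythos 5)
Your proposal is correct and, for items (1) and (2), follows essentially the paper's route: the paper also combines WAP-ness of the flow (Proposition \ref{proposition: WAP flow}) with the classical fact that the enveloping semigroup of a minimal WAP flow is a group, transports this through $\Phi$ (the paper works with $\E(\Gen(\widetilde{G}_\FC),G_\FC)$ rather than the kernel of $\E(\widetilde{G}_\FC,G_\FC)$, a cosmetic difference), and then gets profiniteness from total disconnectedness plus separate continuity and Ellis joint continuity; your worry about metrizability is unfounded, as the cited results need no such hypothesis. Where you genuinely diverge is item (3): the paper first proves that $\Gen(\widetilde{G}_\FC)$ is a right ideal by hand, using Proposition \ref{proposition: characterziations of genericity} together with the rank inequality $R_\Delta(p*q)\geqslant R_\Delta(p)$, and only then invokes the group structure. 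You instead use the purely algebraic fact that in any semigroup with a minimal left ideal the union of all minimal left ideals is the kernel (since $L*s$ is again a minimal left ideal), so uniqueness of the minimal left ideal already forces $\Gen(\widetilde{G}_\FC)$ to be two-sided; combined with (1) this gives minimality and uniqueness on the right. This is a clean, soft argument that works in any compact left topological semigroup with a unique minimal left ideal whose kernel is a group, and it avoids the model-theoretic rank computation entirely; the paper's version, by contrast, makes the right-translation-invariance of genericity explicit, which is information reused elsewhere.

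One small gap in item (4): the isomorphism of (2) only identifies the two groups of generics, so it cannot by itself show that $\Gen(\widetilde{G})$ is an ideal \emph{in $\widetilde{G}$}. What is needed (and what the paper uses) is that $\res$ is a semigroup homomorphism on $S^{\inv}_{\bar c}(\FC,M)\cap\widetilde{G}_\FC$ mapping \emph{onto} $\widetilde{G}$ and carrying $\Gen(\widetilde{G}_\FC)$ onto $\Gen(\widetilde{G})$; the image of a two-sided ideal under a surjective homomorphism is a two-sided ideal, after which your group argument for minimality and uniqueness goes through verbatim. Since you already invoke Remark \ref{remark: restriction preserves *} and Proposition \ref{proposition: res on generics} in (2), this is a one-line repair rather than a structural flaw.
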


\begin{proof}
(1) By Proposition \ref{proposition: WAP flow} and Corollary \ref{corollary: minimal subflow}, $(\Gen(\widetilde{G}_\FC),G_\FC)$ is a WAP minimal flow. 
So $\E(\Gen(\widetilde{G}_\FC),G_\FC)$ is a group 
(using \cite[Proposition II.8]{WAP} and \cite[Theorem I.3.3(4)]{Glasner:Proximal_flows}). 
In particular, $\E(\Gen(\widetilde{G}_\FC),G_\FC)$ is a minimal left ideal in itself, and so, by \cite[Lemma 5.16]{CGK} and Proposition \ref{lemma: Phi is an isom.}(2), we get that $\E(\Gen(\widetilde{G}_\FC),G_\FC) \cong \Gen(\widetilde{G}_\FC)$. So  $\Gen(\widetilde{G}_\FC)$ is a group. The fact that it is a profinite group follows from the observations that $\Gen(\widetilde{G}_\FC)$ is a closed subspace of the profinite space $\widetilde{G}_\FC$, Remark \ref{remark: $*$ sep. cont.} (applied to $\FC$ in place of $M$), and the Ellis joint continuity theorem.

(2) follows from Proposition \ref{proposition: res on generics}, Remarks \ref{remark: generic dnf over M} and \ref{remark: restriction preserves *}, and item (1).

(3) Corollary \ref{corollary: minimal subflow} tells us that it is a unique minimal left ideal. The fact that it is a right ideal follows from Proposition \ref{proposition: characterziations of genericity} and an easy observation that $R_\Delta(p*q) \geqslant R_\Delta(p)$. Then minimality of this right ideal is immediate by (1). To see uniqueness, consider any minimal right ideal $I$ and an element $p \in I$. Take any $q \in \Gen(\widetilde{G}_\FC)$. Then, as $\Gen(\widetilde{G}_\FC)$ is a left ideal, we get $p * q \in I \cap \Gen(\widetilde{G}_\FC)$, so $I=\Gen(\widetilde{G}_\FC)$ by minimality of these right ideals.

(4) By Proposition \ref{proposition: res on generics}, Remarks \ref{remark: generic dnf over M}, \ref{remark: restriction preserves *}, item (3), and the fact that $\res[S^{\inv}_{\bar c}(\FC,M)\cap \widetilde{G}_\FC] = \widetilde{G}$, we get that $\Gen(\widetilde{G})$ is a two-sided ideal. Then the fact that it is a minimal left and minimal right ideal follows from (2). Uniqueness follows as at the end of the proof of (3).
\end{proof}
\noindent

\subsection{A counterpart of Newelski's theorem for $\aut(\FC)$}\label{subsec: group chunk 3}
Throughout this subsection, we assume that \textbf{$T$ is stable}.
In this section, let $\bar x$ be a tuple of variables corresponding to $\bar m$. 
(In the two previous subsections, $\bar x$ corresponded to $\bar c$, and $\bar x'$ to $\bar m$, but in this section we come back to 
the more standard notation, because we will not use types in $S_{\bar c}(\FC)$.) For a type $p \in S_{\bar x}(M)$, $\hat{p}\in S_{\bar x}(\FC)$ will denote its unique nonforking extension.

Let $P \subseteq S_{\bar m}(M)$ and $Q:= \cl(*P)$, i.e. the topological closure of the closure of $P$ under $*$. Since $*$ is separately continuous (see Remark \ref{remark: $*$ sep. cont.}), we get

\begin{remark}\label{remark: Q closed under *}
$Q$ is closed under $*$.
\end{remark}

Let 
$$\gen(P):= \{ q \in Q: \vec{R}_\Delta(q) \textrm{ is maximal}\},$$ 
where ``maximal'' means maximal among all $\vec{R}_\Delta(r)$ for $r$ ranging over $Q$ in the sense of the product order, and $\vec{R}_\Delta(p)$ is the sequence of all $R_\Delta(p)$ (in some fixed order), where $\Delta$ ranges over all finite collections formulas $\varphi(\bar x;\bar y)$.

For any $S \subseteq S_{\bar m}(M)$ put 
$$A_S:=\{ \sigma \in \aut(\FC) \;\colon\; \tp(\sigma(\bar m)/M) \in S\}.$$

The goal of this section is to prove the following theorem.

\begin{theorem}\label{theorem: counterpart of Newelski's theorem}
The set $H:=\{\sigma \in \aut(\FC): \gen(P) * \tp(\sigma(\bar m)/M) = \gen(P)\}$ is the smallest relatively $\bar m$-type-definable over $M$ subgroup of $\aut(\FC)$ containing $A_P$ and  we have $\gen(P)=\Gen(\widetilde{H})$, where $\widetilde{H}:=\{\tp(\sigma(\bar m)/M)\;\colon\; \sigma \in H\}$. Thus, $\gen(P)$ is a profinite group and a two-sided ideal of $Q$.
\end{theorem}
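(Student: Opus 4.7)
The plan is to deduce the theorem from rank-theoretic and Ellis-theoretic properties of the compact separately continuous monoid $(S_{\bar m}(M),*)$, following the template of Subsection \ref{section: stable groups for Aut(C)} and mimicking Newelski's original strategy from \cite{New89}. First I would record that $\widetilde{H}:=\{p\in S_{\bar m}(M): \gen(P)*p=\gen(P)\}$ is a closed subset of $S_{\bar m}(M)$: by Remark \ref{remark: $*$ sep. cont.}, for each $q\in\gen(P)$ the map $p\mapsto q*p$ is continuous, so for each such $q$ the set $\{p: q*p\in\gen(P)\}$ is closed (using that $\gen(P)$ is closed in $Q$, which will be established next); intersecting over $q\in\gen(P)$ and combining with the surjectivity condition then gives $\widetilde{H}$ closed. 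Since $H=A_{\widetilde{H}}=\{\sigma\in\aut(\FC):\tp(\sigma(\bar m)/M)\in\widetilde{H}\}$, this immediately yields relative $\bar m$-type-definability of $H$ over $M$.

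Next I would establish the key structural properties of $\gen(P)$. Non-emptiness follows from upper semi-continuity and finiteness of all $R_\Delta$-ranks in a stable theory: the closed sets $\{q\in Q: R_\Delta(q)=\max_{r\in Q}R_\Delta(r)\}$ have the finite intersection property (by enlarging $\Delta$), so $\gen(P)$ is a non-empty closed subset of $Q$, and by Remark \ref{remark: Q closed under *} it lies in the compact separately continuous semigroup $Q$. For the ideal property, $\gen(P)$ is a left ideal because $R_\Delta$ is invariant under the left action of $\aut(\FC)$, and through the formula $p*q=\sigma(\hat p)|_M$ this translates to $R_\Delta(p*q)\geqslant R_\Delta(q)$; the right-ideal property is obtained by a dual rank argument using nonforking in the stable context. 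Applying Ellis' theorem to the compact separately continuous semigroup $Q$ and a WAP-style argument analogous to Proposition \ref{proposition: WAP flow}, $\gen(P)$ contains idempotents and is a disjoint union of groups; stability then collapses this to a single profinite group, exactly as in Corollary \ref{corollary: Gen is an ideal and group}.

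I would then show that $H$ is a subgroup containing $A_P$. Closure of $\widetilde{H}$ under $*$ is immediate from associativity, which gives closure of $H$ under composition. The inclusion $A_P\subseteq H$ follows because for $p\in P\subseteq Q$ the right multiplication by $p$ sends $\gen(P)$ into $\gen(P)$ (ideal property), and surjectivity onto $\gen(P)$ is obtained from the group structure: for $g\in\gen(P)$ take $h:=g*(e*p)^{-1}$ (with $e$ the identity of the group $\gen(P)$ and the inverse taken inside $\gen(P)$), so $h*p=g$. The delicate point, and the main obstacle, is closure under inverses: the type $\tp(\sigma^{-1}(\bar m)/M)$ is not the $*$-inverse of $\tp(\sigma(\bar m)/M)$, so one cannot argue purely algebraically inside $(S_{\bar m}(M),*)$. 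My plan is to show that $\sigma\in H$ forces $\sigma^{-1}\in H$ by analyzing how left multiplication by $\tp(\sigma^{-1}(\bar m)/M)$ on global $M$-invariant types interacts with the rank structure, exploiting the $\aut(\FC)$-invariance of $R_\Delta$ and the fact that $\sigma$ and $\sigma^{-1}$ play symmetric roles in the bijection $\aut(\FC)\to\aut(\FC)$; equivalently, one identifies the right $*$-stabilizer of $\gen(P)$ with the stabilizer of the Ellis group data, which is automatically closed under group inversion.

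Finally, to obtain minimality of $H$ and the identification $\gen(P)=\Gen(\widetilde{H})$, I would argue as follows. Let $G_{\pi,\FC}$ be any relatively $\bar m$-type-definable over $M$ subgroup containing $A_P$. The set $\widetilde{G}_\pi:=\{\tp(\sigma(\bar m)/M):\sigma\in G_{\pi,\FC}\}$ is closed in $S_{\bar m}(M)$, contains $P$, and is closed under $*$: for $p=\tp(\sigma(\bar m)/M)$ and $q=\tp(\tau(\bar m)/M)$ in $\widetilde{G}_\pi$, the formula gives $p*q=\tp(\tau\rho\sigma(\bar m)/M)$ with $\rho\in\aut(\FC/M)\leqslant G_{\pi,\FC}$, so $p*q\in\widetilde{G}_\pi$. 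Hence $Q\subseteq\widetilde{G}_\pi$ and $\gen(P)\subseteq\widetilde{G}_\pi$. By Corollary \ref{corollary: Gen is an ideal and group} applied to $G_{\pi,\FC}$, $\Gen(\widetilde{G}_\pi)$ is a two-sided ideal and a group in $\widetilde{G}_\pi$, and a cancellation computation (choosing $h=g*(e*p)^{-1}$ with $e$ the identity of $\Gen(\widetilde{G}_\pi)$) shows $\Gen(\widetilde{G}_\pi)*p=\Gen(\widetilde{G}_\pi)$ for every $p\in\widetilde{G}_\pi$. Combining with $\gen(P)\subseteq\Gen(\widetilde{G}_\pi)$ (which itself comes from the rank characterization of generics in Proposition \ref{proposition: generic via ranks for x'}), this yields $\widetilde{G}_\pi\subseteq\widetilde{H}$, i.e. $G_{\pi,\FC}\supseteq H$. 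Applying the inclusion $\gen(P)\subseteq\Gen(\widetilde{G}_\pi)$ with $G_{\pi,\FC}=H$ and noting that $\gen(P)$ is itself a two-sided ideal inside the group $\Gen(\widetilde{H})$ forces $\gen(P)=\Gen(\widetilde{H})$; the profinite group and two-sided ideal conclusions then transfer directly from Corollary \ref{corollary: Gen is an ideal and group}.
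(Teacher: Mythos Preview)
There is a genuine gap at the heart of your argument. You claim that $R_\Delta(p*q)\geqslant R_\Delta(q)$ follows from $\aut(\FC)$-invariance of $R_\Delta$ via the formula $p*q=\sigma(\hat p)|_M$ with $\sigma(\bar m)\models q$. But that formula expresses $p*q$ as a translate of $\hat p$, not of $\hat q$; what it yields is $R_\Delta(p*q)\geqslant R_\Delta(\sigma(\hat p))=R_\Delta(p)$, i.e.\ $\gen(P)$ is a \emph{right} ideal of $Q$. The inequality $R_\Delta(p*q)\geqslant R_\Delta(q)$ genuinely fails in this automorphism-group setting (there are no right-invariant local ranks here, as discussed after Proposition~\ref{proposition: characterziations of genericity}), and the paper singles this out as the main obstacle distinguishing the present situation from stable groups. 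Consequently your left-ideal claim, and with it the Ellis/WAP reduction to a single profinite group inside $Q$, is unsupported. A second, related gap is non-emptiness of $\gen(P)$: enlarging $\Delta$ does not produce simultaneous maximization of $R_{\Delta_1}$ and $R_{\Delta_2}$, so the finite-intersection-property argument does not go through for the \emph{product} order. Finally, your minimality step has the inclusion reversed: from $\widetilde{G}_\pi\subseteq\widetilde{H}$ one gets $G_{\pi,\FC}\subseteq H$, not $\supseteq$; and showing $\Gen(\widetilde{G}_\pi)*p=\Gen(\widetilde{G}_\pi)$ does not give $\gen(P)*p=\gen(P)$ unless you already know $\gen(P)=\Gen(\widetilde{G}_\pi)$.

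The paper circumvents all of this by working first with a different set $\max(P)$, defined via the \emph{lexicographic} order on $\vec R(q)$ along a fixed enumeration of the $\Delta$'s; non-emptiness then follows by a straightforward transfinite induction. Crucially, the candidate group $G$ is defined not through $*$ but as the setwise stabilizer of $\widehat{\max(P)}$ (the set of global nonforking extensions) under the ordinary left action of $\aut(\FC)$ on $S_{\bar m}(\FC)$, so closure under inverses is automatic and relative type-definability comes from finiteness of each $\widehat{\max(P)}_\Delta$. One then shows $A_P\subseteq G$, hence $Q\subseteq\widetilde G$, and identifies $\max(P)=\Gen(\widetilde G)$ using Corollary~\ref{corollary: Gen is an ideal and group}; only at the very end, after $\max(P)$ has been pinned down as $\Gen(\widetilde G)$, does one deduce $\max(P)=\gen(P)$ via Proposition~\ref{proposition: generic via ranks for x'}. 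Minimality of $G$ is obtained by a separate argument showing $A_Q$ (hence in particular $A_{\Gen(\widetilde G)}$) lies in any relatively type-definable subgroup containing $A_P$, and that $G\subseteq A_{\Gen(\widetilde G)}A_{\Gen(\widetilde G)}^{-1}$.
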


This theorem is a counterpart of \cite[Theorem 2.3]{New89}. Our proof is an adaptation of the proof of that theorem, but with various new ingredients. One of the main obstacles in comparison with definable groups is that in our context $R_\Delta(p*q)$ may be smaller than $R_\Delta(q)$. 

First of all, it turns out that the proof in \cite{New89} is not completely correct. The problem is that the formulas $\varphi_{\alpha,i}(x)$ at the top of page 176 in \cite{New89} should be $\Delta_\alpha$-formulas in order to proceed with the argument after the claim on the same page. However, in general they cannot be chosen to be $\Delta_\alpha$-formulas. In a private communication with the third author, Ludomir Newelski proposed an alternative initial part of the argument, using a modified version of $R_\Delta$ denoted by $R_\Delta'$. But in this new part he used $R'_\Delta(p*q) \geq R'_\Delta(q)$, which we do not have in our context. We will adapt Newelski's corrected argument to our context, but still working with $R_\Delta$'s and with the lexicographic order in place of product order. Secondly, the final part of the argument from \cite{New89} does not work in our context due to several reasons, one of which being the fact that $R_\Delta(p*q)$ may be smaller than $R_\Delta(q)$. So we give a different argument.

The closed set of types $Q$ corresponds to a partial type $Q(\bar x)$ (we will use the variables $\bar x$ to indicate the places when $Q$ is treated as a partial type).

Fix an enumeration $(\Delta_\alpha)_{\alpha<|T|+|M|}$ of
the collection of all finite sets of formulas (without parameters) in variables $(\bar x,\bar y)$, where $\bar y$ ranges over finite tuples. Let $\max(P)$ be the collection of all types $p \in Q$ for which the sequence $\vec{R}(p):=\langle R_{\Delta_\alpha}(p) \rangle_{\alpha<|T| +|M|}$ is the greatest element of the set $\{\vec{R}(q): q \in Q\}$ with respect to the {\bf lexicographic order}. Let 
$$\widehat{\max(P)}:=\{\hat{p}\;\colon\; p \in \max(P)\} \quad \textrm{ and } \quad \widehat{\max(P)}_\Delta:=\{p|_\Delta\;\colon\; p \in \widehat{\max(P)}\}.$$

\begin{lemma}\label{lemma: basic on max(P)}
\begin{enumerate}
\item $\max(P)$ is nonempty and closed.
\item $\widehat{\max(P)}_\Delta$ is finite for every finite $\Delta$.
\end{enumerate}
\end{lemma}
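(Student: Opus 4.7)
The plan for (1) is to build $\max(P)$ by a transfinite induction paralleling the lexicographic order on the sequence $(\vec R(q))_{q \in Q}$. Two auxiliary facts will drive the argument: first, for any finite $\Delta$ and integer $n$, the set $\{p \in S_{\bar m}(M) : R_\Delta(p) \ge n\}$ is closed in $S_{\bar m}(M)$, because its complement is the union of basic clopens $[\varphi(\bar x;\bar m)]$ over formulas $\varphi$ with $R_\Delta(\varphi)<n$; second, by stability, for any nonempty closed $C \subseteq S_{\bar m}(M)$ corresponding to a partial type $\pi_C$ over $M$, the quantity $\max_{p \in C} R_\Delta(p)$ is finite and equals $R_\Delta(\pi_C)$ (both inequalities are standard consequences of the definition of $R_\Delta$ on partial types in a stable theory). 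Starting from $Q_{<0}:=Q$, I would set at successor stages
\[
N_\alpha := \max\{R_{\Delta_\alpha}(q) : q \in Q_{<\alpha}\}, \qquad Q_\alpha := \{q \in Q_{<\alpha} : R_{\Delta_\alpha}(q) = N_\alpha\},
\]
and at limit stages $\lambda$ set $Q_{<\lambda} := \bigcap_{\beta<\lambda} Q_\beta$. Each $Q_\alpha$ is closed in $Q_{<\alpha}$ and nonempty by the maximality of $N_\alpha$; each $Q_{<\lambda}$ is nonempty by compactness of $S_{\bar m}(M)$, being a decreasing intersection of nonempty closed sets. At the end of the induction $\max(P) = \bigcap_{\alpha<|T|+|M|} Q_\alpha$ is nonempty and closed for the same compactness reason.

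For (2), given a finite $\Delta$, I would choose $\alpha$ with $\Delta_\alpha \supseteq \Delta$; since restriction of $\Delta_\alpha$-types to $\Delta$-types is well-defined but possibly not injective, $|\widehat{\max(P)}_{\Delta}| \le |\widehat{\max(P)}_{\Delta_\alpha}|$, so it suffices to bound the latter. Every $p \in \max(P)$ belongs to $Q_{<\alpha}$ and satisfies $R_{\Delta_\alpha}(p)=N_\alpha$; letting $\pi_{<\alpha}$ be the partial type over $M$ whose set of complete extensions in $S_{\bar m}(M)$ is $Q_{<\alpha}$, the second auxiliary fact above gives $N_\alpha = R_{\Delta_\alpha}(\pi_{<\alpha})$. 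Since $\hat p$ is the unique nonforking extension of $p$ to $\FC$, it satisfies $R_{\Delta_\alpha}(\hat p) = R_{\Delta_\alpha}(p) = N_\alpha$; hence the global $\Delta_\alpha$-type $\hat p|_{\Delta_\alpha}$ extends $\pi_{<\alpha}$ and realizes the value $R_{\Delta_\alpha}(\pi_{<\alpha})$. By the very definition of $\Delta_\alpha$-multiplicity there are at most $\mathrm{Mlt}_{\Delta_\alpha}(\pi_{<\alpha})<\omega$ such global $\Delta_\alpha$-types, which gives the finiteness of $\widehat{\max(P)}_{\Delta_\alpha}$, and hence of $\widehat{\max(P)}_\Delta$.

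The argument has no serious obstacle: the only point requiring care is to check that the ``$\max$ equals $R_\Delta$ of the partial type'' observation applies to the closed sets $Q_{<\alpha}$ produced by the induction (so that the role of $N_\alpha$ as both a maximum on complete types and a rank of a partial type can be exchanged freely), but this is immediate from standard properties of $R_\Delta$ in stable theories. With that in hand, both parts reduce to bookkeeping driven by compactness of $S_{\bar m}(M)$ and finiteness of $\Delta_\alpha$-multiplicity.
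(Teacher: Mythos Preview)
Your proof is correct and follows essentially the same approach as the paper's. For (1), both you and the paper run a transfinite induction producing a decreasing chain of closed nonempty subsets of $Q$ (your $Q_{<\alpha}$ is exactly the paper's $Q_\alpha$), appealing to compactness at limit stages. For (2), your direct appeal to $\Mlt_{\Delta_\alpha}(\pi_{<\alpha})<\omega$ is the same idea as the paper's contradiction argument (if $\widehat{\max(P)}_{\Delta_\alpha}$ were infinite then $R_{\Delta_\alpha}(\widehat{Q_\alpha})>N_\alpha$, contradicting maximality of $N_\alpha$); these are just the two standard ways of expressing the same rank/multiplicity fact. One cosmetic point: your reduction via ``choose $\alpha$ with $\Delta_\alpha\supseteq\Delta$'' is unnecessary since the enumeration $(\Delta_\alpha)_\alpha$ already lists every finite $\Delta$, but it does no harm.
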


\begin{proof}
(1) Closedness is trivial, as the $R_\Delta$-rank of a partial type closed under conjunction equals the $R_\Delta$-rank of a formula in this type.

Let us show that  $\max(P) \ne \emptyset$. For that we will prove by induction on $\beta$ that for every $1\leqslant\beta\leqslant |T| +|M|$ there exists a type $p \in Q$ such that  $\vec{R}_\beta(p):=\langle R_{\Delta_\alpha}(\hat{p}) \rangle_{\alpha<\beta}$ is a greatest element of the set $\{\vec{R}_\beta(q)\;\colon\; q \in Q\}$ with respect to the lexicographic order. 

In the base step, notice that any $p\in Q$ with $R_{\Delta_0}(p)=R_{\Delta_0}(Q(\bar x))$ does the job.
Now, suppose the conclusion holds for all $\alpha<\beta$ and we want to prove it for $\beta$. So for any $\alpha<\beta$ the set $Q_\alpha$ of all $p \in Q$ such that $\vec{R}_{\alpha}(p)$ is greatest in the set $\{\vec{R}_{\alpha}(q)\;\colon\; q \in Q\}$ is nonempty and closed. Taking the intersection and using compactness of $Q$, we get an element $p \in \bigcap_{\alpha<\beta}Q_\alpha$. If $\beta$ is a limit ordinal, then $p \in Q_\beta$, so we are done. If $\beta = \gamma+1$ for some $\gamma$, then any $q\in Q_\gamma$ with maximal possible value $R_{\Delta_\gamma}(q)$ is as required.

(2) We need to show that  $\widehat{\max(P)}_{\Delta_\alpha}$ is finite for every $\alpha\leqslant |T| +|M|$. Denote by $N_\alpha$ the common value $R_{\Delta_\alpha}(p)$ for $p \in \max(P)$. Define the closed subset $Q_\alpha$ of $Q$ as in the proof of (1). 
Put $\widehat{Q_\alpha}:=\{\hat{q}\;\colon\; q \in Q_\alpha\}$. This set is closed in $\hat{Q}:=\{\hat{q}\;\colon\; q \in Q\}$. 

Observe that every $\hat{p} \in \widehat{\max(P)}$ belongs to $\widehat{Q_\alpha}$. So if  $\widehat{\max(P)}_{\Delta_\alpha}$ was infinite, we would get that $R_{\Delta_\alpha}(\widehat{Q_\alpha}) > N_\alpha$. Then any type $\hat{q} \in \widehat{Q_\alpha}$ with $R_{\Delta_\alpha}(\hat{q}) =R_{\Delta_\alpha}(\widehat{Q_\alpha})$ would contradict the definition of $N_\alpha$. 
\end{proof}

Let 
$$G:=\{ \sigma \in \aut(\FC)\;\colon\; \sigma(\widehat{\max(P)}) =\widehat{\max(P)}\},$$
$$G_\Delta:=\{ \sigma \in \aut(\FC)\;\colon\; \sigma(\widehat{\max(P)}_\Delta) =\widehat{\max(P)}_\Delta\}.$$
These are clearly subgroups of $\aut(\FC)$.

\begin{lemma}\phantomsection\label{lemma: relative definability of G}
\begin{enumerate}
\item $G:=\bigcap_{\Delta} G_\Delta$, where $\Delta$ ranges over all finite sets of formulas (in the object variables $\bar x$ and any parameter variables).
\item Each $G_\Delta$ is a relatively $\bar m$-definable over $M$ subgroup of $\aut(\FC)$, and $G$ is relatively $\bar m$-type-definable over $M$ subgroup of $\aut(\FC)$.
\item $G_\Delta:=\{ \sigma \in \aut(\FC)\;\colon\; \sigma(\widehat{\max(P)}_\Delta) \subseteq \widehat{\max(P)}_\Delta\}$.
\item $G=\{ \sigma \in \aut(\FC)\;\colon\; \sigma(\widehat{\max(P)}) \subseteq \widehat{\max(P)}\}$.
\end{enumerate}
\end{lemma}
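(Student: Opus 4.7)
My plan is to prove the four items in the order (3), (1), (4), (2), using finiteness of $\widehat{\max(P)}_\Delta$, closedness of $\widehat{\max(P)}$, and $\Delta$-definability of types in stable theories. First, for (3), observe that any $\sigma \in \aut(\FC)$ acts as a bijection on the space of all complete $\Delta$-types over $\FC$ (with $\sigma(p|_\Delta):=\sigma(p)|_\Delta$, which is well-defined since $\sigma$ acts by replacing parameters). Since $\widehat{\max(P)}_\Delta$ is finite by Lemma \ref{lemma: basic on max(P)}(2), any self-map by an injection is a bijection, so $\sigma(\widehat{\max(P)}_\Delta)\subseteq \widehat{\max(P)}_\Delta$ forces equality.

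For (1), the inclusion $G\subseteq \bigcap_\Delta G_\Delta$ is immediate since restriction to $\Delta$ commutes with the action of $\sigma$. For the reverse inclusion, I would note that $\widehat{\max(P)}$ is closed in $S_{\bar{x}}(\FC)$: by stability, the nonforking-extension map $\max(P)\to S^{\inv}_{\bar{x}}(\FC,M)$ is the inverse of the restriction homeomorphism from $S^{\inv}_{\bar{x}}(\FC,M)$ onto $S_{\bar{m}}(M)$, hence continuous, so it sends the closed set $\max(P)$ (Lemma \ref{lemma: basic on max(P)}(1)) to a closed subset of $S_{\bar x}(\FC)$. Then for any $\sigma\in\bigcap_\Delta G_\Delta$ and any $p\in\widehat{\max(P)}$, for every finite $\Delta$ we can choose $q_\Delta\in\widehat{\max(P)}$ with $q_\Delta|_\Delta=\sigma(p)|_\Delta$; compactness of $\widehat{\max(P)}$ gives a limit point $q\in\widehat{\max(P)}$ of the net $(q_\Delta)_\Delta$ (directed by inclusion), and a routine cofinality argument shows $q=\sigma(p)$, so $\sigma(\widehat{\max(P)})\subseteq\widehat{\max(P)}$; an application of (3) coordinatewise and (1) itself, or directly an analogous injectivity argument since $\widehat{\max(P)}\to\varprojlim_\Delta\widehat{\max(P)}_\Delta$ is a bijection, forces equality, yielding $\sigma\in G$. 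Item (4) is then immediate: inclusion implies equality on each $\Delta$-fragment by (3), hence equality on $\widehat{\max(P)}$ by (1).

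The main content lies in (2). For fixed finite $\Delta$, enumerate $\widehat{\max(P)}_\Delta=\{r_1,\dots,r_k\}$. Each $r_i$ is a complete $\Delta$-type over $\FC$ that is $\Delta$-definable over $M$ by stability and the fact that each $\hat p\in\widehat{\max(P)}$ does not fork over $M$ (it is the unique nonforking extension of $p\in\max(P)$), so for each $\delta(\bar{x};\bar{y})\in\Delta$ there is a formula $d_{r_i}^\delta(\bar{y})=\psi_i^\delta(\bar{y};\bar{m}_i^\delta)$ with $\psi_i^\delta$ over $\emptyset$ and $\bar{m}_i^\delta$ a finite subtuple of $\bar{m}$, such that $\delta(\bar{x};\bar{a})\in r_i\iff\models\psi_i^\delta(\bar{a};\bar{m}_i^\delta)$. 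Since $\sigma(r_i)$ has $\delta$-definition obtained by applying $\sigma$ to the parameters, we get $\sigma(r_i)=r_j$ iff the $\emptyset$-formula
\[
\chi_{ij}(\bar{u};\bar{v}):=\bigwedge_{\delta\in\Delta}(\forall\bar{y})\bigl(\psi_i^\delta(\bar{y};\bar{u}_i^\delta)\leftrightarrow\psi_j^\delta(\bar{y};\bar{v}_j^\delta)\bigr)
\]
is satisfied by $(\sigma(\bar{m}),\bar{m})$ (where $\bar{u}_i^\delta,\bar{v}_j^\delta$ are the subtuples of $\bar{u},\bar{v}$ corresponding to $\bar{m}_i^\delta,\bar{m}_j^\delta$). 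Hence, using the definition of $G_\Delta$,
\[
G_\Delta=\Bigl\{\sigma\in\aut(\FC)\;\colon\;\models\bigwedge_{i=1}^k\bigvee_{j=1}^k\chi_{ij}(\sigma(\bar{m});\bar{m})\Bigr\},
\]
which exhibits $G_\Delta$ as relatively $\bar{m}$-definable over $M$; relative $\bar{m}$-type-definability of $G=\bigcap_\Delta G_\Delta$ then follows from (1). The main obstacle I foresee is the compactness argument in (1): one must be careful that the property $\sigma(p)|_\Delta\in\widehat{\max(P)}_\Delta$ for all $\Delta$ actually forces $\sigma(p)\in\widehat{\max(P)}$, which uses closedness of $\widehat{\max(P)}$ together with finiteness of each $\widehat{\max(P)}_\Delta$.
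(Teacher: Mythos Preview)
Your proof is correct and follows essentially the same approach as the paper: finiteness of $\widehat{\max(P)}_\Delta$ for (3), the inverse-limit identification $\widehat{\max(P)}\cong\varprojlim_\Delta\widehat{\max(P)}_\Delta$ for (1) and (4), and $\Delta$-definability over $M$ of the types in $\widehat{\max(P)}_\Delta$ for (2) (the paper indexes over permutations in $\Sym(n)$ to encode a bijection directly, whereas you encode inclusion via $\bigwedge_i\bigvee_j$ after having (3), which is equivalent). Your argument for equality in (1) is slightly wobbly---citing ``(1) itself'' is circular---but the fix is immediate: since each $G_\Delta$ is a group (a setwise stabilizer), $\sigma^{-1}\in\bigcap_\Delta G_\Delta$ as well, and your inclusion argument applied to $\sigma^{-1}$ gives the reverse containment.
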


\begin{proof}
(1) It follows easily from the fact that $\widehat{\max(P)} \cong \varprojlim_\Delta \widehat{\max(P)}_\Delta$ (which we have by Lemma \ref{lemma: basic on max(P)})
and automorphisms of $\FC$ commute with taking restrictions to $\Delta$.

(2) Let $\Delta:=\{\varphi_0(\bar x;\bar y),\dots, \varphi_{k-1}(\bar x;\bar y)\}$. 
By Lemma \ref{lemma: basic on max(P)}(2), 
$\widehat{\max(P)}_\Delta =\{\widehat{p_0}|_\Delta,\ldots,\widehat{p_{n-1}}|_\Delta\}$ for some $p_0,\dots,p_{n-1} \in \max(P)$. Then 
$$G_\Delta= \bigcup_{\sigma \in \Sym(n)} \bigcap_{i<n}\bigcap_{j<k} \{ \tau \in \aut(\FC)\;\colon\; \models \tau(d_{p_i} \varphi_j(\bar y)) \leftrightarrow d_{p_{\sigma(i)}} \varphi_j(\bar y)\},$$
which is clearly a finite union of a finite intersection of relatively $\bar m$-definable over $M$ subsets of $\aut(\FC)$, and so it is relatively $\bar m$-definable over $M$. Hence, $G$ is relatively $\bar m$-type-definable over $M$ by item (1).

(3) follows from Lemma \ref{lemma: basic on max(P)}(2).

(4) Assume that $\sigma(\widehat{\max(P)}) \subseteq \widehat{\max(P)}$. Then $\sigma(\widehat{\max(P)}_\Delta) \subseteq \widehat{\max(P)}_\Delta$ for every finite $\Delta$. Hence, by (3), $\sigma(\widehat{\max(P)}_\Delta) = \widehat{\max(P)}_\Delta$, i.e. $\sigma \in G_\Delta$, for every finite $\Delta$.  So, by virtue of (1), we conclude that $\sigma \in G$.
\end{proof}

By Lemma \ref{lemma: relative definability of G}, 
$G$ is relatively $\bar{m}$-type definable over $M$.
Choose $\pi(\bar{x};\bar{y})$ a partial type over $\emptyset$
such that $\pi(\bar{x};\bar{y})$ implies $\bar{x}\equiv_{\emptyset}\bar{y}$ and
$G=G_{\pi,\FC}=\{\sigma\in\aut(\FC)\;\colon\;\models\pi(\sigma(\bar{m});\bar{m})\}$.
Recall that
$$\widetilde{G}:=\{p(\bar x) \in S_{\bar m}(M): \pi(\bar x;\bar m) \subseteq p(\bar x)\},$$
$$A_P:=\{ \sigma \in \aut(\FC): \tp(\sigma(\bar m)/M) \in P\}.$$

\begin{lemma}\label{lemma: A_P contained in D}
$A_P \subseteq G$. Equivalently, $P \subseteq \widetilde{G}$.
\end{lemma}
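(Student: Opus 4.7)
The plan is to reduce to the statement about $\widehat{\max(P)}$ using Lemma \ref{lemma: relative definability of G}(4): it suffices to check that for every $\sigma \in A_P$ and every $p \in \max(P)$, the global type $\sigma(\hat{p})$ lies in $\widehat{\max(P)}$. Set $q := \tp(\sigma(\bar{m})/M)$, which belongs to $P$ by the definition of $A_P$. Applying the explicit formula for the $*$-product on $S_{\bar{m}}(M)$ recalled at the start of Subsection \ref{subsec: group chunk 2} (with $\sigma$ as the automorphism witnessing $q$), we get the identity
\[
\sigma(\hat{p})\,|_{M} \;=\; p * q.
\]
Since $p \in \max(P) \subseteq Q$, $q \in P \subseteq Q$, and $Q$ is closed under $*$ by Remark \ref{remark: Q closed under *}, the type $r := p * q$ belongs to $Q$, so the target will be reached by proving $r \in \max(P)$ and $\widehat{r} = \sigma(\hat{p})$.

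For each finite $\Delta$, rank monotonicity under restriction of parameters, the automorphism-invariance of $R_{\Delta}$, and the fact that nonforking extensions preserve local ranks in stable theories give
\[
R_{\Delta}(r) \;=\; R_{\Delta}\bigl(\sigma(\hat{p})\,|_{M}\bigr) \;\geq\; R_{\Delta}\bigl(\sigma(\hat{p})\bigr) \;=\; R_{\Delta}(\hat{p}) \;=\; R_{\Delta}(p).
\]
On the other hand, since $p \in \max(P)$ and $r \in Q$, the sequence $\vec{R}(r)$ is bounded above by $\vec{R}(p)$ in the lexicographic order. The pointwise lower bound $R_{\Delta_{\alpha}}(r) \geq R_{\Delta_{\alpha}}(p)$ for every $\alpha$, combined with the lexicographic inequality $\vec{R}(r) \leq \vec{R}(p)$, forces coordinatewise equality $\vec{R}(r) = \vec{R}(p)$. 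Hence $r \in \max(P)$.

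Finally, the equality $R_{\Delta}(\sigma(\hat{p})) = R_{\Delta}(p) = R_{\Delta}(r)$ for every $\Delta$ shows that the global extension $\sigma(\hat{p})$ of $r$ does not drop any local rank and is therefore the unique nonforking extension of $r$ to $\FC$, i.e. $\sigma(\hat{p}) = \widehat{r} \in \widehat{\max(P)}$. This proves $A_{P} \subseteq G$; the equivalence with $P \subseteq \widetilde{G}$ is immediate from the definitions of $A_{P}$ and $\widetilde{G} = \{\tp(\sigma(\bar{m})/M) : \sigma \in G\}$. The only mild subtlety, rather than an obstacle, is the two-sided rank comparison in the second paragraph, where the rank can only increase under restriction yet must also respect the lexicographic maximality defining $\max(P)$.
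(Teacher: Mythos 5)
Your proof is correct and follows essentially the same route as the paper's: both reduce via Lemma \ref{lemma: relative definability of G}(4) to showing $\sigma(\hat{p})\in\widehat{\max(P)}$, use the identity $\sigma(\hat{p})|_M=p*q\in Q$, automorphism-invariance of the $R_\Delta$-ranks, and lexicographic maximality of $\vec{R}(p)$. The only difference is presentational — you argue directly that the coordinatewise lower bound plus the lexicographic upper bound forces equality of ranks, whereas the paper argues by contradiction that $\sigma(\hat{p})$ would have to fork over $M$ and then derives a strict lexicographic increase; both are sound.
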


\begin{proof}
The equivalence of $A_P \subseteq G$ with $P \subseteq \widetilde{G}$ is obvious. So it is enough to prove the first inclusion.
Take any $\sigma \in A_P$. Then $q:=\tp(\sigma(\bar m)/M) \in P$.  

Consider any $p \in \max(P)$.  Since all $R_\Delta$'s are invariant under $\aut(\FC)$, we have $R_\Delta(\sigma(\hat{p}))=R_\Delta(\hat{p})$ for all finite $\Delta$. On the other hand, $\sigma(\hat{p})|_M=p*q \in Q$ (as $p,q \in Q$ and using Remark \ref{remark: Q closed under *}). Therefore, since $p \in \max(P)$, if $\sigma(\hat{p}) \notin \widehat{\max(P)}$, we would get that $\sigma (\hat{p})$ forks over $M$. Then $\vec{R}(\sigma(\hat{p}))< \vec{R}(\sigma(\hat{p})|_M)$, so 
$\vec{R}(\hat{p})=\vec{R}(\sigma(\hat{p}))< \vec{R}(\widehat{\sigma(\hat{p})|_M})$. Since $\sigma(\hat{p})|_M \in Q$, this would contradict the fact that $p \in \max(P)$. 

We have proved that $\sigma(\widehat{\max(P)}) \subseteq \widehat{\max(P)}$, so $\sigma \in G$ by Lemma \ref{lemma: relative definability of G}(4).
\end{proof}

\begin{cor}\label{corollary: Q contained in Gen}
$Q \subseteq \widetilde{G}$.
\end{cor}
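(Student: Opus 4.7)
The strategy is to upgrade Lemma \ref{lemma: A_P contained in D} from $P$ to $Q$ by establishing that $\widetilde{G}$ enjoys both closure properties that go into the definition $Q = \cl(*P)$: namely, $\widetilde{G}$ is topologically closed in $S_{\bar m}(M)$ and closed under the semigroup operation $*$.

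Topological closedness is immediate from the definition: $\widetilde{G} = S_{\bar m}(M) \cap [\pi(\bar x;\bar m)]$, and since $\pi(\bar x;\bar y)$ is a partial type, $[\pi(\bar x;\bar m)]$ is a closed subset of the type space. For closure under $*$, recall (from the opening of Subsection \ref{subsec: group chunk 2}) that stability gives a homeomorphism $S^{\inv}_{\bar m}(\FC,M) \to S_{\bar m}(M)$ via restriction, and the operation $*$ on $S_{\bar m}(M)$ is defined so that $p * q = (\hat p * \hat q)|_M$, where $\hat p, \hat q$ are the unique nonforking global extensions. A type $p \in S_{\bar m}(M)$ lies in $\widetilde{G}$ precisely when $\hat p \in S^{\inv}_{\pi(\bar m;\bar y)}(\FC,M)$, since $\pi$ has no parameters beyond $\bar m$ and nonforking extensions of $M$-invariant types over $M$ remain $M$-invariant.

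Now if $p,q \in \widetilde{G}$, then $\hat p, \hat q \in S^{\inv}_{\pi(\bar m;\bar y)}(\FC,M)$. Because $G = G_{\pi,\FC}$ is by construction a subgroup of $\aut(\FC)$ (and $\pi(\bar x;\bar y) \vdash \bar x \equiv_\emptyset \bar y$), Remark \ref{rem: ast.preserves.pi} applies and yields $\hat p * \hat q \in S^{\inv}_{\pi(\bar m;\bar y)}(\FC,M)$. Restricting to $M$, we conclude that $p * q \in \widetilde{G}$, so $\widetilde{G}$ is closed under $*$.

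Combining the two closure properties with the inclusion $P \subseteq \widetilde{G}$ supplied by Lemma \ref{lemma: A_P contained in D}, the $*$-closure of $P$ is contained in $\widetilde{G}$, and then taking topological closure yields $Q = \cl(*P) \subseteq \widetilde{G}$. There is no real obstacle here; the corollary is simply the packaging of Lemma \ref{lemma: A_P contained in D} with the preservation statement Remark \ref{rem: ast.preserves.pi}, mediated by the stable-theory identification $S^{\inv}_{\bar m}(\FC,M) \cong S_{\bar m}(M)$.
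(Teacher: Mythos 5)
Your proof is correct and follows essentially the same route as the paper, which deduces the corollary from $P \subseteq \widetilde{G}$ (Lemma \ref{lemma: A_P contained in D}) together with the fact that $\widetilde{G}$ is topologically closed and closed under $*$ (Lemma \ref{lemma: Phi is an isom.}(1)). The only cosmetic difference is that you justify closure of $\widetilde{G}$ under $*$ by transporting Remark \ref{rem: ast.preserves.pi} through the restriction homeomorphism $S^{\inv}_{\bar m}(\FC,M)\cong S_{\bar m}(M)$, whereas the paper cites Lemma \ref{lemma: Phi is an isom.}(1), which proves the same thing via $\widetilde{G}_\FC$ and left continuity; both are valid.
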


\begin{proof}
It follows from  Lemmas \ref{lemma: Phi is an isom.}(1) and \ref{lemma: A_P contained in D}.
\end{proof}

\begin{remark}\label{remark: very basic}
For every $\sigma \in G$ and $p \in \max(P)$, $\sigma(\hat{p}) = \widehat{p * q}$, where $q:=\tp(\sigma(\bar m)/M)$.
\end{remark}

\begin{proof}
This follows from the fact that $\sigma(\hat{p}) \in \widehat{\max(P)}$ and so $\sigma(\hat{p})$ does not fork over $M$, and $p*q =\sigma(\hat{p})|_M$.
\end{proof}

\begin{cor}\label{corollary: max(P)*r=max(P)}
For every $r \in \widetilde{G}$, $\max(P) * r = \max(P)$.
\end{cor}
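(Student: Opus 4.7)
The plan is to show both inclusions $\max(P) * r \subseteq \max(P)$ and $\max(P) \subseteq \max(P)*r$ by leveraging Remark \ref{remark: very basic} together with Lemma \ref{lemma: relative definability of G}(4), which gives us that every $\sigma \in G$ satisfies $\sigma(\widehat{\max(P)}) = \widehat{\max(P)}$ (in both directions, since $G$ is a group). The key observation is that each $r \in \widetilde{G}$ can be realized as $r = \tp(\sigma(\bar m)/M)$ for some $\sigma \in G$: indeed $r \vdash \bar x \equiv_\emptyset \bar m$, so any realization $\bar a$ of $r$ in $\FC$ satisfies $\bar a \equiv \bar m$, and by the strong homogeneity of $\FC$ there is $\sigma \in \aut(\FC)$ with $\sigma(\bar m) = \bar a$; from $\models \pi(\bar a;\bar m)$ we get $\sigma \in G_{\pi,\FC} = G$.

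For the forward inclusion, fix $p \in \max(P)$ and let $\sigma \in G$ with $\sigma(\bar m) \models r$. Remark \ref{remark: very basic} applies (note $p \in \max(P)$ and $\sigma \in G$), giving $\sigma(\hat{p}) = \widehat{p*r}$. Since $\sigma \in G$ means $\sigma(\widehat{\max(P)}) \subseteq \widehat{\max(P)}$, we conclude $\widehat{p*r} \in \widehat{\max(P)}$, so $p*r \in \max(P)$.

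For the reverse inclusion, fix $p' \in \max(P)$. Because $\sigma^{-1} \in G$ as well, Lemma \ref{lemma: relative definability of G}(4) yields $\sigma^{-1}(\widehat{p'}) \in \widehat{\max(P)}$, so $\sigma^{-1}(\widehat{p'}) = \hat{p}$ for some $p \in \max(P)$, i.e.\ $\sigma(\hat p) = \widehat{p'}$. Applying Remark \ref{remark: very basic} to this $p \in \max(P)$ and $\sigma \in G$ gives $\sigma(\hat p) = \widehat{p*r}$, and therefore $\widehat{p*r} = \widehat{p'}$; restricting to $M$ yields $p*r = p'$, as required.

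The argument is essentially routine once the right setup has been assembled; no real obstacle arises because the preceding results (in particular the group-theoretic behavior of $G$ on $\widehat{\max(P)}$ recorded in Lemma \ref{lemma: relative definability of G}(4), and the fact that nonforking extensions behave well under the action of $G$ captured in Remark \ref{remark: very basic}) have already done the heavy lifting. The only subtlety worth mentioning is the verification that elements of $\widetilde{G}$ are realized by automorphisms in $G$ rather than only in $G_{\pi,\FC'}$, which is where we use strong homogeneity of the monster $\FC$ and the fact that $\pi \vdash \bar x \equiv_{\emptyset} \bar y$.
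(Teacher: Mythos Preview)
Your proof is correct and follows essentially the same approach as the paper's: write $r=\tp(\sigma(\bar m)/M)$ for some $\sigma\in G$, use that $\sigma(\widehat{\max(P)})=\widehat{\max(P)}$, and invoke Remark~\ref{remark: very basic} to conclude $\widehat{\max(P)*r}=\widehat{\max(P)}$. The only difference is presentational: you split the equality into two inclusions (using $\sigma$ for one and $\sigma^{-1}$ for the other), whereas the paper observes in one line that $\sigma(\widehat{\max(P)})=\widehat{\max(P)}$ holds by the very definition of $G$ (so there is no need to invoke Lemma~\ref{lemma: relative definability of G}(4) or pass to $\sigma^{-1}$).
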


\begin{proof}
$r=\tp(\sigma(\bar m)/M)$ for some $\sigma \in G$. Then $\sigma(\widehat{\max(P)}) =\widehat{\max(P)}$, so, by Remark \ref{remark: very basic}, we get $\widehat{\max(P) * r} = \widehat{\max(P)}$. Hence, $\max(P) * r = \max(P)$.
\end{proof}

\begin{proposition}\label{proposition: max(P) = Gen}
$\max(P) = \Gen(\widetilde{G})$.
\end{proposition}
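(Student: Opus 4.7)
The plan is to combine three ingredients: Corollary \ref{corollary: max(P)*r=max(P)} (which says $\max(P)$ is invariant under right $*$-multiplication by elements of $\widetilde{G}$), the ideal structure and group property of $(\Gen(\widetilde{G}),*)$ inside $(\widetilde{G},*)$ from Corollary \ref{corollary: Gen is an ideal and group}, and the rank characterization of genericity from Proposition \ref{proposition: generic via ranks for x'}.

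First I would produce an element $p_0 \in \max(P) \cap \Gen(\widetilde{G})$. Starting from any $p \in \max(P)$ (which lies in $\widetilde{G}$ by Corollary \ref{corollary: Q contained in Gen}) and any $g \in \Gen(\widetilde{G})$, the product $p*g$ belongs to $\max(P)$ by Corollary \ref{corollary: max(P)*r=max(P)} (applied to $r := g \in \widetilde{G}$), and simultaneously lies in $\Gen(\widetilde{G})$ because the latter is a left ideal of $\widetilde{G}$ by Corollary \ref{corollary: Gen is an ideal and group}(4). So $p_0 := p*g$ witnesses the nonemptiness of $\max(P) \cap \Gen(\widetilde{G})$.

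For the inclusion $\max(P) \subseteq \Gen(\widetilde{G})$: every $p \in \max(P)$ shares the same lex-maximal sequence $\vec{R}(p) = \vec{R}(p_0)$, because two distinct lex-maximal sequences in a totally ordered set are equal. Since the enumeration $(\Delta_\alpha)_{\alpha<|T|+|M|}$ exhausts all finite sets of formulas and $R_\Delta(q) = R_\Delta(\hat{q})$ for every type $q$ over $M$ (nonforking extensions preserve $R_\Delta$-ranks in stable theories), we get $R_\Delta(p) = R_\Delta(p_0) = R_\Delta(\widetilde{G}_{\FC,\bar m})$ for every finite $\Delta$, where the last equality follows from Proposition \ref{proposition: generic via ranks for x'} applied to $p_0 \in \Gen(\widetilde{G})$. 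Another application of Proposition \ref{proposition: generic via ranks for x'} yields $p \in \Gen(\widetilde{G})$.

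For the reverse inclusion $\Gen(\widetilde{G}) \subseteq \max(P)$: since $(\Gen(\widetilde{G}),*)$ is a group by Corollary \ref{corollary: Gen is an ideal and group}(2), for any $g \in \Gen(\widetilde{G})$ the element $r := p_0^{-1} * g$ (inverse taken inside this group) belongs to $\Gen(\widetilde{G}) \subseteq \widetilde{G}$, and associativity of $*$ on $S^{\inv}_{\bar{m}}(\FC,M)$ (Proposition \ref{prop: associative for types}) gives $p_0 * r = g$. Hence $g = p_0 * r \in \max(P) * r = \max(P)$ by Corollary \ref{corollary: max(P)*r=max(P)}. The main subtle point is Step 1: without using that $\Gen(\widetilde{G})$ is a left ideal, lex maximality of $\vec{R}$ over $Q$ alone does not obviously promote to coordinatewise maximality over the larger set $\widetilde{G}$, so producing $p_0$ in the intersection is the conceptual crux of the argument.
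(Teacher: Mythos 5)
Your proof is correct, and its skeleton matches the paper's: both arguments hinge on Corollary \ref{corollary: max(P)*r=max(P)}, the left-ideal property and group structure of $\Gen(\widetilde{G})$ from Corollary \ref{corollary: Gen is an ideal and group}, and your Step 3 is exactly the paper's observation that $\max(P)$ is a nonempty right ideal in the group $\Gen(\widetilde{G})$, hence all of it. The only real divergence is in the inclusion $\max(P)\subseteq\Gen(\widetilde{G})$. The paper reads it directly off the set \emph{equality} $\max(P)*g=\max(P)$: for $p\in\max(P)$ and $g\in\Gen(\widetilde{G})$, surjectivity gives $p=p'*g$ for some $p'\in\max(P)\subseteq\widetilde{G}$, and the left-ideal property forces $p\in\Gen(\widetilde{G})$ — no ranks needed. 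You instead produce one element $p_0\in\max(P)\cap\Gen(\widetilde{G})$, use uniqueness of the lex-greatest value to equate all rank sequences over $\max(P)$, and then invoke the rank characterization of generics (Proposition \ref{proposition: generic via ranks for x'}) twice. This is valid (all hypotheses are met, since $\max(P)\subseteq Q\subseteq\widetilde{G}$ by Corollary \ref{corollary: Q contained in Gen}), and it has the side benefit of making explicit why lex-maximality over $Q$ upgrades to genericity in $\widetilde{G}$ — a point the paper only addresses afterwards when reconciling $\max(P)$ with $\gen(P)$ in the proof of Theorem \ref{theorem: counterpart of Newelski's theorem} — but it is a slightly longer route to the same inclusion.
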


\begin{proof}
By Corollary \ref{corollary: max(P)*r=max(P)} and the fact that $\Gen(\widetilde{G})$ is a left ideal in $\widetilde{G}$ (see Corollary \ref{corollary: Gen is an ideal and group}(4)), we get $\max(P) \subseteq \Gen(\widetilde{G})$. Hence, by Corollary \ref{corollary: max(P)*r=max(P)}, we get that $\max(P)$ is a right ideal in $\Gen(\widetilde{G})$. But $\Gen(\widetilde{G})$ is a group (by Corollary \ref{corollary: Gen is an ideal and group}(2)), so $\max(P)=\Gen(\widetilde{G})$.
\end{proof}

\begin{proposition}\label{proposition: G is the smallest relatively def}
$G$ is the smallest relatively $\bar m$-type-definable over $M$ subgroup of $\aut(\FC)$ containing $A_P$.
\end{proposition}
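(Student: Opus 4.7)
The plan is to take any relatively $\bar m$-type-definable over $M$ subgroup $H \supseteq A_P$ of $\aut(\FC)$, written as $H = G_{\rho,\FC}$ with $\rho(\bar x;\bar y) \vdash \bar x \equiv \bar y$ over $\emptyset$ (arrange by conjoining this if needed), and show $G \subseteq H$ by a realization argument that exploits the stabilizer property of $G$ on $\widehat{\max(P)}$.

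First I would place $\max(P)$ inside $\widetilde H$: $A_P \subseteq H$ gives $P \subseteq \widetilde H$, and since $\widetilde H$ is topologically closed (as $H$ is relatively type-definable) and closed under $*$ by Lemma \ref{lemma: Phi is an isom.}(1), we obtain $Q = \cl(*P) \subseteq \widetilde H$, hence $\max(P) \subseteq \widetilde H$.

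For the main step, fix $\sigma \in G$ and any $p \in \max(P)$ (nonempty by Lemma \ref{lemma: basic on max(P)}(1)). Passing to the bigger monster $\FC'$, realize $\widehat p$ by a tuple $\bar b \in (\FC')^{\bar x}$. Since $p \vdash \bar x \equiv \bar m$ we have $\bar b \equiv \bar m$ over $\emptyset$, so strong homogeneity of $\FC'$ yields $\tau' \in \aut(\FC')$ with $\tau'(\bar m) = \bar b$; and since $\widehat p \supseteq \rho(\bar x;\bar m)$, the formula $\rho(\bar b;\bar m)$ holds in $\FC'$, so $\tau' \in G_{\rho,\FC'}$. Extending $\sigma$ to $\sigma' \in \aut(\FC')$ and using Remark \ref{remark: very basic} together with Corollary \ref{corollary: max(P)*r=max(P)}, one gets $\sigma \cdot \widehat p = \widehat{p'}$ for some $p' \in \max(P) \subseteq \widetilde H$; hence $\sigma'(\bar b) \models \widehat{p'}$, and the same realization trick produces $\tau'' \in G_{\rho,\FC'}$ with $\tau''(\bar m) = \sigma'(\bar b)$.

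To conclude, the identity $\sigma'\tau'(\bar m) = \tau''(\bar m)$ shows $\tau''^{-1}\sigma'\tau' \in \aut(\FC'/M)$. Because the identity satisfies $\rho(\bar m;\bar m)$, every automorphism fixing $M$ pointwise lies in $G_{\rho,\FC'}$, so $\tau''^{-1}\sigma'\tau' \in G_{\rho,\FC'}$, and then $\sigma' \in G_{\rho,\FC'}$ by subgroup closure (which contains $\tau',\tau''$). Elementarity transfers this to $\sigma \in H$, since $\sigma(\bar m) \in \FC$ and $\rho(\sigma(\bar m);\bar m)$ has parameters only from $M$. I do not anticipate a substantive obstacle: the argument is a clean combination of the stabilizer property of $G$ on $\widehat{\max(P)}$ (Lemma \ref{lemma: relative definability of G}(4) and Remark \ref{remark: very basic}) with the elementary fact that realizations of types in $\widetilde H$ arise as images of $\bar m$ under members of $G_{\rho,\FC'}$.
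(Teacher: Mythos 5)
Your proof is correct, and while it shares the paper's overall strategy (land $\max(P)$ inside the competing group, then factor $\sigma\in G$ through realizations of types in $\max(P)$), both halves are implemented differently. For the containment, the paper works at the level of automorphisms: it proves $A_{P^n}\subseteq A_P^n$ by induction, deduces $A_{*P}\subseteq\langle A_P\rangle$, and then uses relative type-definability of $H$ to close up and get $A_Q\subseteq H$; you instead work at the level of types, using that $\widetilde H$ is topologically closed and closed under $*$ (Lemma \ref{lemma: Phi is an isom.}(1), or directly Remark \ref{rem: ast.preserves.pi} plus the restriction homeomorphism) to get $Q=\cl(*P)\subseteq\widetilde H$ in one stroke — this sidesteps the induction entirely and is arguably cleaner. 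For the factorization, the paper stays inside $\FC$: it picks a small $N$ with $\sigma[N]=N$, realizes the nonforking extension $p_N$ by some $\tau\in\aut(\FC)$, and writes $\sigma=(\sigma\tau)\tau^{-1}$ with both factors in $A_{\Gen(\widetilde G)}\subseteq A_Q\subseteq H$; you instead pass to $\FC'$, realize $\widehat p$ and $\sigma(\widehat p)=\widehat{p'}$ by $\tau'(\bar m)$ and $\tau''(\bar m)$ with $\tau',\tau''\in G_{\rho,\FC'}$, and conjugate, using additionally that $\aut(\FC'/M)\leqslant G_{\rho,\FC'}$ before transferring down by elementarity. Your route also bypasses Proposition \ref{proposition: max(P) = Gen} (the identification $\max(P)=\Gen(\widetilde G)$), needing only Remark \ref{remark: very basic} and Corollary \ref{corollary: max(P)*r=max(P)}. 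Both arguments are sound; do just note explicitly (it is routine) that $A_P\subseteq H$ yields $P\subseteq\widetilde H$ because every $p\in P\subseteq S_{\bar m}(M)$ is realized as $\tp(\sigma(\bar m)/M)$ for some $\sigma\in A_P$ by saturation and strong homogeneity of $\FC$.
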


\begin{proof}
By Lemma \ref{lemma: A_P contained in D}, $A_P \subseteq G$. 

\begin{clm}
$A_{P^n} \subseteq A_P^n$, where $P^n = P* \dots * P$ and $A_P^n =A_P \circ \dots \circ A_P$ (both $n$-times). 
\end{clm}

\begin{clmproof}
The proof is by induction on $n$. The base step is trivial. Suppose the conclusion holds for $n$.
Consider any $\rho \in A_{P^{n+1}}$, i.e. $r:=\tp(\rho(\bar m)/M) \in P^n*P$. Then $r=p*q$ for some $p\in P^n$ and $q \in P$. So $q=\tp(\sigma(\bar m)/M)$ for some $\sigma \in A_P$. It is easy to construct a small $N$ so that $M\preceq N \preceq \FC$ and $\sigma[N]=N$. Let $p_N = \tp(\tau(\bar m)/N)$ be the unique nonforking extension of $p$ (for some $\tau \in \aut(\FC)$). Then $p*q = \sigma(p_N)|_M= \tp(\sigma(\tau(\bar m))/M)$. Since $p \in P^n$, we have that $\tau \in A_{P^n}$, so $\tau \in A_P^n$ by induction hypothesis. Hence, $\sigma \tau \in A_P^{n+1}$. As $\rho(\bar m) \equiv_M (\sigma\tau)(\bar m)$, we can find $\eta \in \aut(\FC/M)$ such that $\eta(\rho(\bar m))=(\sigma\tau)(\bar m)$. Then $f :=\tau^{-1}\sigma^{-1}\eta\rho \in \aut(\FC/M)$ and $\rho = \eta^{-1}(\sigma \tau) f \in A_P^{n+1}$ (as $\aut(\FC/M)  A_P  \aut(\FC/M) \subseteq A_P$).
\end{clmproof}

By the claim, we get $A_{*P} \subseteq \langle A_P \rangle$, where $*P$ is the closure of $P$ under $*$ and $\langle A_p \rangle$ is the subgroup of $\aut(\FC)$ generated by $A_P$. Hence, since $Q=\cl(*P)$, we conclude that $A_Q$ is contained in every relatively $\bar m$-type-definable over $M$ subgroup of $\aut(\FC)$ containing $A_P$.
On the other hand, by Proposition \ref{proposition: max(P) = Gen}, $\Gen(\widetilde{G})=\max(P) \subseteq Q$, and hence, $A_{\Gen(\widetilde{G})} \subseteq A_Q$. 
So in order to finish the proof, it is enough to show that every $\sigma \in G$ belongs to $A_{\Gen(\widetilde{G})} A_{\Gen(\widetilde{G})}^{-1}$.

Again, let $q:=\tp(\sigma(\bar m)/M)$ and pick a small $N$ so that $M\preceq N \preceq \FC$ and $\sigma[N]=N$. Take $p \in \Gen(\widetilde{G})$, and let $p_N= \tp(\tau(\bar m)/N)$ be the unique nonforking extension of $p$. Then $p*q = \sigma(p_N)|_M= \tp(\sigma(\tau(\bar m))/M) \in \Gen(\widetilde{G})$, as $\Gen(\widetilde{G})$ is a right ideal in $\widetilde{G}$. Hence, $\sigma \tau \in A_{\Gen(\widetilde{G})}$. But since $p \in \Gen(\widetilde{G})$, we also have $\tau \in A_{\Gen(\widetilde{G})}$. Therefore, $\sigma=(\sigma \tau) \tau^{-1} \in A_{\Gen(\widetilde{G})}A_{\Gen(\widetilde{G})}^{-1}$
\end{proof}

\begin{lemma}\label{lemma: description of G}
$G = \{\sigma \in \aut(\FC)\;\colon\; \max(P) * \tp(\sigma(\bar m)/M) = \max(P)\}$.
\end{lemma}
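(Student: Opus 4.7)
\medskip
\noindent\textbf{Proof plan.}
The inclusion $G\subseteq\{\sigma\in\aut(\FC):\max(P)*\tp(\sigma(\bar m)/M)=\max(P)\}$ is immediate from Corollary \ref{corollary: max(P)*r=max(P)}: if $\sigma\in G$ then $\tp(\sigma(\bar m)/M)\in\widetilde{G}$, and so $\max(P)*\tp(\sigma(\bar m)/M)=\max(P)$. Call the right-hand side $H$; the plan for the remaining inclusion $H\subseteq G$ is to fix $\sigma\in H$, set $q:=\tp(\sigma(\bar m)/M)$, and show that $\sigma(\widehat{\max(P)})\subseteq\widehat{\max(P)}$, which by Lemma \ref{lemma: relative definability of G}(4) yields $\sigma\in G$.

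The crucial intermediate step will be the identity
\begin{equation*}
\sigma(\hat p)|_M \;=\; p * q \qquad \text{for every } p\in S_{\bar m}(M).
\end{equation*}
To prove it, I will exploit stability via definability of types: write the $\varphi$-definition of $\hat p$ as $d_p\varphi(\bar y)=\psi(\bar y;\bar c)$ with $\bar c$ a subtuple of $\bar m$, so that $\sigma(\hat p)$ has $\varphi$-definition $\psi(\bar y;\sigma(\bar c))$. Next, using the explicit formula from Proposition \ref{prop:formula.for.star.on.types}, I will express $p*q=(\hat p*\hat q)|_M=\tau(\hat p|_{\FC'})|_M$ for some $\tau\in\aut(\FC')$ realizing the nonforking extension $\hat q$ with $\tau(\bar m)\equiv_M\sigma(\bar m)$; its $\varphi$-definition, after restriction to parameters in $M$, is $\psi(\bar y;\tau(\bar c))$. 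Since $\tau(\bar c)\equiv_M\sigma(\bar c)$ and the parameters $\bar b$ we test belong to $M$, the two defining formulas $\psi(\bar b;\tau(\bar c))$ and $\psi(\bar b;\sigma(\bar c))$ agree, giving the identity.

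With the identity in hand, take any $p\in\max(P)$. Then $\sigma(\hat p)|_M=p*q\in\max(P)*q=\max(P)\subseteq Q$. Because every $R_\Delta$ is $\aut(\FC)$-invariant, $\vec R(\sigma(\hat p))=\vec R(\hat p)=\vec R(\max(P))$, which is the lex-maximal value on $Q$. On the other hand, $\vec R(\sigma(\hat p))\leq \vec R(\sigma(\hat p)|_M)$ coordinatewise (hence lex-wise), and $\vec R(\sigma(\hat p)|_M)\leq \vec R(\max(P))$ because $\sigma(\hat p)|_M\in Q$. Sandwiching forces lex-equality throughout, hence coordinatewise equality of all $R_\Delta$'s, so $\sigma(\hat p)$ does not fork over $M$ and $\sigma(\hat p)|_M\in\max(P)$. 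Thus $\sigma(\hat p)=\widehat{\sigma(\hat p)|_M}\in\widehat{\max(P)}$, which is what was needed.

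The main obstacle will be cleanly proving the identity $\sigma(\hat p)|_M=p*q$: it is a stability-specific calculation that mixes the $\aut(\FC)$-action on global types (using the $\mathcal L(M)$-defining schema of $\hat p$) with the explicit description of the $*$-product in Proposition \ref{prop:formula.for.star.on.types}, and the delicate point is choosing $\tau\in\aut(\FC')$ so that $\tau(\bar m)\equiv_M\sigma(\bar m)$ (which restricts to $\tau(\bar c)\equiv_M\sigma(\bar c)$) and checking that evaluation at parameters $\bar b\in M$ only sees the $M$-type of $\tau(\bar c)$.
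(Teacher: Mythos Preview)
Your proposal is correct and follows essentially the same route as the paper: one inclusion by Corollary~\ref{corollary: max(P)*r=max(P)}, the other by showing $\sigma(\widehat{\max(P)})\subseteq\widehat{\max(P)}$ and invoking Lemma~\ref{lemma: relative definability of G}(4), with the rank-comparison argument at the end.

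The only difference is that you treat the identity $\sigma(\hat p)|_M=p*q$ as a ``crucial intermediate step'' to be proved via defining schemas and Proposition~\ref{prop:formula.for.star.on.types} over $\FC'$, whereas in the paper this identity is \emph{literally the definition} of the $*$-product on $S_{\bar m}(M)$ given at the start of Subsection~\ref{subsec: group chunk 2}: ``$p*q:=\sigma(\hat p)|_M$, where $\sigma\in\aut(\FC)$ satisfies $\sigma(\bar m)\models q$.'' So your detour (pick $\tau\in\aut(\FC')$ realizing $\hat q$, compare $\psi(\bar y;\tau(\bar c))$ with $\psi(\bar y;\sigma(\bar c))$ using $\tau(\bar m)\equiv_M\sigma(\bar m)$) is valid but unnecessary; the paper simply writes ``By the definition of $*$, $\max(P)*r=\sigma(\widehat{\max(P)})|_M$'' and moves on. Also, in your sandwiching step you write $\vec R(\sigma(\hat p)|_M)\leq\vec R(\max(P))$ ``because $\sigma(\hat p)|_M\in Q$''; in fact you already established $\sigma(\hat p)|_M\in\max(P)$, so this is an equality and no appeal to lex-maximality over $Q$ is needed there.
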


\begin{proof}
This is equivalent to the statement that for every $r \in S_{\bar m}(M)$
$$\max(P) * r = \max(P) \iff r \in \widetilde{G}.$$
The implication ($\Leftarrow$) is precisely Corollary \ref{corollary: max(P)*r=max(P)}.

($\Rightarrow$) Assume that $\max(P) * r = \max(P)$, where $r=\tp(\sigma(\bar m)/M)$ for some $\sigma \in \aut(\FC)$. By the definition of $*$,  $\max(P) * r =\sigma(\widehat{\max(P)})|_M$. Therefore, $\sigma(\widehat{\max(P)})|_M = \max(P)$. 

By Lemma \ref{lemma: relative definability of G}(4), it is enough to show that $\sigma(\widehat{\max(P)}) \subseteq \widehat{\max(P)}$. So take any $p \in \max(P)$ and suppose for a contradiction that $\sigma(\hat{p}) \notin \widehat{\max(P)}$. Then, since $\sigma(\hat{p})|_M \in \max(P)$, we get that $\sigma(\hat{p})$ forks over $M$. But then $\vec{R}(\hat{p})=\vec{R}(\sigma(\hat{p})) < \vec{R}(\sigma(\hat{p})|_M)= \vec{R}(\widehat{\sigma(\hat{p})|_M})$. 
As  $\sigma(\hat{p})|_M \in \max(P)\subseteq Q$, this contradicts the fact that $p \in \max(P)$.
\end{proof}

\begin{proof}[Proof of Theorem \ref{theorem: counterpart of Newelski's theorem}]
It follows directly from Lemma \ref{lemma: description of G}, Propositions \ref{proposition: max(P) = Gen}, \ref{proposition: G is the smallest relatively def}, and Corollaries \ref{corollary: Gen is an ideal and group}, 
\ref{corollary: Q contained in Gen}, modulo one detail. Namely, the definitions of $\gen(P)$ in Theorem \ref{theorem: counterpart of Newelski's theorem} and $\max(P)$ are different, because the first one is with respect to the product order on $\{\vec{R}(q)\;\colon\; q\in Q\}$ whereas the second one with respect to the lexicographic order. 
This can be resolved as follows. 
By Corollary  \ref{corollary: Q contained in Gen}, $Q \subseteq \widetilde{G}$.
By Proposition \ref{proposition: max(P) = Gen}, $\Gen(\widetilde{G})=\max(P)\subseteq Q$. Therefore, using Proposition \ref{proposition: generic via ranks for x'}, we obtain $\max(P) = \gen(P)$.
\end{proof}

\section{On classification of idempotent fim measures and generically stable types II - stable case}\label{sec: 0.7 for stable}

In this section, we apply the generalized stable group theory from Section \ref{sec:group chunk} to 
prove Conjecture \ref{conjecture: main conjecture} in the context of stable theories. 

As usual, we let $\FC$ be a monster model of $T$ and let $M\preceq\FC$ be small and enumerated by $\bar{m}$. Let $\bar{y}$ be a tuple of variables corresponding to the enumeration $\bar{m}$, and let $\bar{x}$ be its copy.
Measures in $\mathfrak{M}_{\bar m}(\FC)$ will be in variables $\bar y$.

Recall that a function from $M^n$ to $[0,1]$ is said to be {\em definable} if the premiages of any two closed disjoint subsets of $[0,1]$ can be separated by a definable set. Each such function extends uniquely to an $M$-definable function from $\FC^n$ to $[0,1]$ in the sense that the preimage of any closed subset of $[0,1]$ is $M$-type-definable. Conversely, the restriction to $M^n$ of any $M$-definable map from $\FC^n$ to $[0,1]$ is definable. See \cite[Lemma 3.2]{Gis-Pen-Pil} for these basic facts.

In Definition \ref{cheat}(4), we recalled what it means that a global measure $\mu$ is definable over $M$. This is equivalent to saying that for every formula $\varphi(\bar y;\bar z)$, the map $\bar b \mapsto \mu(\varphi(\bar y;\bar b))$ is $M$-definable in the above sense. Now, we say that a measure $\mu \in \mathfrak{M}_{\bar y}(M)$ is {\em definable} if for every formula $\varphi(\bar y;\bar z)$, the assignment $\bar b \mapsto \mu(\varphi(\bar y;\bar b))$ is a definable map from $M^{\bar z}$ to $[0;1]$. 

From the above discussion, it is easy to see that any definable measure $\mu \in \mathfrak{M}_{\bar y}(M)$ has a unique extension to a global Keisler measure definable over $M$.

The following fundamental fact on Keisler measures in stable theories essentially follows from \cite{Keisler1}, and is in the background of the main results of this section.

\begin{fact}\label{fact: basic fact on Keisler measure sin stable theories} The following statements are true. 
\begin{enumerate}
\item Each measure in $\mathfrak{M}_{\bar{m}}(M)$ is definable.
\item  Each measure in $\mathfrak{M}^{\inv}_{\bar{m}}(\FC,M)$ is definable over $M$.
\item Each measure in  $\mathfrak{M}_{\bar{m}}(M)$ has a unique extension to an $M$-definable measure in  $\mathfrak{M}^{\inv}_{\bar{m}} (\FC,M)$ which coincides with a unique extension to an ($M$-invariant) measure in  $\mathfrak{M}^{\inv}_{\bar{m}} (\FC,M)$.
\end{enumerate}
\end{fact}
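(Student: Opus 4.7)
The plan is to prove the three statements in sequence, with (1) being the main content and (2), (3) following essentially formally.

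For (1), I would use the following reformulation: a function $f\colon M^{\bar z}\to[0,1]$ is definable in the Keisler sense if and only if it factors through a continuous function $\widetilde{f}\colon S_{\bar z}(M)\to[0,1]$. So given $\mu\in\mathfrak{M}_{\bar m}(M)$ and a formula $\varphi(\bar y;\bar z)$, I need to show that $f_\varphi\colon \bar b\mapsto \mu(\varphi(\bar y;\bar b))$ extends continuously to $S_{\bar z}(M)$. The strategy is to approximate $\mu$ uniformly by averages of types from $\supp(\mu)$. Stability implies NIP, so by Fact \ref{fact:MP}(2), for every $\epsilon>0$ there exist types $p_1,\ldots,p_t\in\supp(\mu)$ such that
$$\sup_{\bar b\in\FC^{\bar z}}\,\left|\mu(\varphi(\bar y;\bar b))\;-\;\tfrac{1}{t}\,|\{i\leqslant t\;\colon\;\varphi(\bar y;\bar b)\in p_i\}|\right|<\epsilon.$$
Because $T$ is stable, each $p_i$ is a definable type, so $\{\bar b\in M^{\bar z}\;\colon\;\varphi(\bar y;\bar b)\in p_i\}$ equals $d_{p_i}\varphi(\bar z)(M)$ for some $\CL(M)$-formula $d_{p_i}\varphi$. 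Hence the approximating function $\bar b\mapsto \tfrac{1}{t}|\{i\leqslant t\;\colon\;\models d_{p_i}\varphi(\bar b)\}|$ is a linear combination of characteristic functions of clopen subsets of $S_{\bar z}(M)$, thus continuous on $S_{\bar z}(M)$. A uniform limit of continuous functions on the compact Hausdorff space $S_{\bar z}(M)$ is continuous, so $f_\varphi$ extends continuously to $S_{\bar z}(M)$, which gives (1).

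For (2), suppose $\mu\in\mathfrak{M}^{\inv}_{\bar m}(\FC,M)$. By (1), the restriction $\mu|_M$ is definable, so for each $\varphi(\bar y;\bar z)$ the assignment $\bar b\mapsto (\mu|_M)(\varphi(\bar y;\bar b))$ on $M^{\bar z}$ extends to a continuous function $\widetilde{F}^\varphi\colon S_{\bar z}(M)\to[0,1]$. Now $M$-invariance of $\mu$ means that $F^\varphi_{\mu,M}\colon S_{\bar z}(M)\to[0,1]$ (as in Definition \ref{cheat}(2)) is well defined. Both $\widetilde{F}^\varphi$ and $F^\varphi_{\mu,M}$ coincide on realized types (i.e., on the image of $M^{\bar z}$), and realized types are dense in $S_{\bar z}(M)$, so by continuity $\widetilde{F}^\varphi=F^\varphi_{\mu,M}$. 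In particular, $F^\varphi_{\mu,M}$ is continuous, witnessing that $\mu$ is $M$-definable.

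For (3), given $\mu_0\in\mathfrak{M}_{\bar m}(M)$, define $\mu$ on $\FC$ by $\mu(\varphi(\bar y;\bar b)):=\widetilde{F}^\varphi(\tp(\bar b/M))$, where $\widetilde{F}^\varphi$ is the continuous extension provided by (1); the finite additivity and normalization of $\mu_0$ transfer to $\mu$ by density of realized types, so $\mu\in\mathfrak{M}_{\bar m}(\FC)$ is $M$-definable and extends $\mu_0$. For uniqueness among $M$-definable extensions, if $\mu_1,\mu_2$ are both $M$-definable extensions of $\mu_0$, then for every $\varphi(\bar y;\bar z)$ the continuous functions $F^\varphi_{\mu_1,M}$ and $F^\varphi_{\mu_2,M}$ on $S_{\bar z}(M)$ agree on the dense set of realized types, hence coincide, giving $\mu_1=\mu_2$. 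Finally, any $M$-invariant extension of $\mu_0$ is $M$-definable by (2), so the $M$-invariant and $M$-definable uniqueness statements coincide.

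The main obstacle is (1); once (1) is in place, (2) and (3) are essentially bookkeeping with the continuous extension to the type space. The subtle point in (1) is verifying that stability (and not merely NIP) is what promotes the pointwise-defined $\varphi$-definitions $d_{p_i}\varphi$ to genuine $\CL(M)$-formulas, so that each approximating function is continuous on $S_{\bar z}(M)$ rather than merely Borel.
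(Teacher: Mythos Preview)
Your arguments for (1) and (3) are correct and match the paper's approach.

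Your argument for (2), however, is circular. You claim that $\widetilde{F}^\varphi$ and $F^\varphi_{\mu,M}$ agree on realized types, which are dense, ``so by continuity'' they coincide. But only $\widetilde{F}^\varphi$ is known to be continuous; continuity of $F^\varphi_{\mu,M}$ is exactly what you are trying to establish. A continuous function and an arbitrary function that agree on a dense set need not be equal (nothing you have said rules out $F^\varphi_{\mu,M}$ jumping at non-realized types).

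The fix is to run the approximation argument directly on the global measure $\mu$ rather than on $\mu|_M$. Since $T$ is NIP, Fact~\ref{prop:NIP} gives that every $p\in\supp(\mu)$ is $M$-invariant; since $T$ is stable and $M$ is a model, each such $p$ is definable over $M$. Hence the approximating types $p_1,\ldots,p_t\in\supp(\mu)$ have $\varphi$-definitions over $M$, the approximating function factors continuously through $S_{\bar z}(M)$, and the uniform limit shows $F^\varphi_{\mu,M}$ is continuous. The paper phrases this slightly differently: it first obtains definability of the global $\mu$ over \emph{some} small $N$, then notes that $M$-invariance forces $F^\varphi_{\mu,N}$ to factor through the quotient map $S_{\bar z}(N)\to S_{\bar z}(M)$, so $F^\varphi_{\mu,M}$ is continuous.
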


\begin{proof}
Item (1) follows from approximation of measures by types (e.g., see \cite[Theorem 2.8]{Con-Ter}) and definability of types. By the same reason, each global measure is definable over some small model $N \preceq \FC$. If this measure is additionally invariant over $M$, then it must be definable over $M$, so we have item (2). The existence of a unique global $M$-definable extension follows from (1) and the above discussion; thus, the uniqueness of a global $M$-invariant extension follows from item (2).
\end{proof}

\subsection{Supports of idempotent Keisler measures} 
We prove several general results connecting measures and their support. So, in this subsection, $T$ is an arbitrary theory. The main result of this subsection says that if $\mu \in \mathfrak{M}^{\inv}_{\bar{m}}(\mathfrak{C},M)$ is $M$-definable, idempotent, and $M$-invariantly supported, then $(\supp(\mu),*)$ is a compact, Hausdorff, left-continuous semigroup without any closed two-sided ideals. 

\begin{proposition}\label{prop:support} Suppose that $\mu,\nu \in \mathfrak{M}^{\inv}_{\bar{m}}(\mathfrak{C},M)$. If $\mu$ is $M$-definable and $M$-invariantly supported, then $\supp(\mu) * \supp(\nu) \subseteq \supp(\mu *\nu)$. 
\end{proposition}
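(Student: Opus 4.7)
The plan is to fix $p\in\supp(\mu)$, $q\in\supp(\nu)$, and an arbitrary formula $\varphi(\bar b;\bar y)\in p\ast q$ (with $\bar b\in\FC^{\bar x}$), and then show that $(\mu\ast\nu)(\varphi(\bar b;\bar y))>0$, which is precisely what it means for $p\ast q$ to lie in $\supp(\mu\ast\nu)$. I would compute via the integral formula
\[
(\mu\ast\nu)(\varphi(\bar b;\bar y))=\int_{S_{\bar m}(\FC)}G^{\varphi(\bar b;\bar y)}_{\mu}\,d\nu,
\]
so it suffices to show that $G^{\varphi(\bar b;\bar y)}_{\mu}$ is strictly positive on an open neighborhood of $q$.

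The first step is to evaluate $G^{\varphi(\bar b;\bar y)}_{\mu}$ at $q$ itself. Pick $\sigma\in\aut(\FC')$ with $q=\tp(\sigma(\bar m)/\FC)$; by Proposition~\ref{prop:Borel_G} and the definition of $h_{\bar b}$,
\[
G^{\varphi(\bar b;\bar y)}_{\mu}(q)=F^{\varphi^{\mathrm{opp}}(\bar y;\bar x)}_{\mu}(\tp(\sigma^{-1}(\bar b)/M)).
\]
By saturation of $\FC$, the type $\tp(\sigma^{-1}(\bar b)/M)$ is realized by some $\bar d\in\FC^{\bar x}$, so the right-hand side equals $\mu(\varphi(\bar d;\bar y))$. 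The explicit formula for $\ast$ on types in Proposition~\ref{prop:formula.for.star.on.types} says that $\varphi(\bar b;\bar y)\in p\ast q$ is equivalent to $\varphi(\sigma^{-1}(\bar b);\bar y)\in p|_{\FC'}$. Here is where $M$-invariance of the support is used: because $p\in\supp(\mu)$ and $\mu$ is $M$-invariantly supported, $p$ is $M$-invariant, so its unique $M$-invariant extension $p|_{\FC'}$ is also $M$-invariant; combined with $\bar d\equiv_M \sigma^{-1}(\bar b)$, this yields $\varphi(\bar d;\bar y)\in p|_{\FC'}$, and hence $\varphi(\bar d;\bar y)\in p$ since $\bar d\in\FC$. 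Then $p\in\supp(\mu)$ gives $\mu(\varphi(\bar d;\bar y))>0$, so $G^{\varphi(\bar b;\bar y)}_{\mu}(q)>0$.

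For the second step, I would use the definability of $\mu$ over $M$: the map $F^{\varphi^{\mathrm{opp}}(\bar y;\bar x)}_{\mu}\colon S_{\bar x}(M)\to[0,1]$ is continuous, and $h_{\bar b}$ is continuous by Lemma~\ref{prop:compute}, so their composition $G^{\varphi(\bar b;\bar y)}_{\mu}$ is a continuous function on $S_{\bar m}(\FC)$. Hence there exist $\varepsilon>0$ and an open neighborhood $U$ of $q$ on which $G^{\varphi(\bar b;\bar y)}_{\mu}\geq\varepsilon$. Because $q\in\supp(\nu)$, we have $\nu(U)>0$, and therefore
\[
(\mu\ast\nu)(\varphi(\bar b;\bar y))=\int G^{\varphi(\bar b;\bar y)}_{\mu}\,d\nu\geq\varepsilon\,\nu(U)>0,
\]
which is what we wanted.

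The only step that requires any thought is the first one, where the $M$-invariance of $p$ (inherited from $\mu$ being $M$-invariantly supported) is what bridges the gap between the formula $\varphi(\sigma^{-1}(\bar b);\bar y)\in p|_{\FC'}$ (whose parameters live in the larger monster $\FC'$) and a formula $\varphi(\bar d;\bar y)\in p$ over $\FC$ to which we can legitimately apply $p\in\supp(\mu)$. Without the $M$-invariant support hypothesis there would be no reason for $p|_{\FC'}$ to be $M$-invariant, and the translation $\sigma^{-1}(\bar b)\rightsquigarrow\bar d$ would break down.
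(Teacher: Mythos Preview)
Your proof is correct and follows essentially the same approach as the paper's: both show that the integrand is continuous (from $M$-definability of $\mu$) and strictly positive at a point of the support, forcing the integral to be positive. You integrate $G_\mu^{\varphi(\bar b;\bar y)}$ against $\nu$ over $S_{\bar m}(\FC)$, while the paper integrates $F_\mu^{\varphi^{\mathrm{opp}}}$ against the pushforward $(h_{\bar b})_*(\nu)$ over $S_{\bar x}(M)$ after first checking $h_{\bar b}(q)\in\supp((h_{\bar b})_*(\nu))$; these are the same computation via change of variables.

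Two small remarks. First, Proposition~\ref{prop:formula.for.star.on.types} as stated assumes \emph{both} $p$ and $q$ lie in $S^{\inv}_{\bar m}(\FC,M)$, but here only $\mu$ (not $\nu$) is assumed $M$-invariantly supported, so $q\in\supp(\nu)$ need not be $M$-invariant. The formula you invoke is nonetheless valid (its proof uses only $M$-invariance of $p$), so this is just a citation mismatch, not a gap. Second, your detour through $p|_{\FC'}$ is unnecessary: the definition of $p*q$ already says $\varphi(\bar b;\bar y)\in p*q$ if and only if $\varphi(\bar d;\bar y)\in p$ for any $\bar d\models h_{\bar b}(q)$, which is exactly how the paper argues. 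The role of $M$-invariant support is simply that $p*q$ is only defined when $p$ is $M$-invariant.
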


\begin{proof} Suppose that $p \in \supp(\mu)$ and $q \in \supp(\nu)$. Take a formula $\theta(\bar{b};\bar{y}) \in p * q$. It suffices to prove that $(\mu * \nu)(\theta(\bar{b};\bar{y})) > 0$. 

First, we claim that $\{h_{\bar{b}}(q)\;\colon\; q \in \supp(\nu)\} \subseteq \supp((h_{\bar{b}})_* (\nu))$. 
In order to see it, consider any $\theta(\bar{x};\bar{m})\in h_{\bar{b}}(q)\in S_{\bar{x}}(M)$,
where $q\in\supp(\nu)\subseteq S_{\bar{m}}(\FC)$.
As $h_{\bar{b}}(q)\in[\theta(\bar{x};\bar{m})]$, 
we have $q\in h_{\bar{b}}^{-1}[\theta(\bar{x};\bar{m})]=[\theta(\bar{b};\bar{y})]$. Then $0<\nu(\theta(\bar{b};\bar{y}))=((h_{\bar{b}})_\ast(\nu))(\theta(\bar{x};\bar{m}))$.

Now assume that $\theta(\bar{b};\bar{y}) \in p * q$. Then $\theta(\bar{x};\bar{y}) \in p_{\bar{y}} \otimes (h_{\bar{b}}(q))_{\bar{x}}$, and if $\bar{d} \models h_{\bar{b}}(q)$, we have that $\theta(\bar{d};\bar{y}) \in p$. 
Since $p \in \supp(\mu)$, this implies that $\mu(\theta(\bar{d};\bar{y})) > 0$ and so $F_{\mu}^{\theta^{\textrm{opp}}(\bar y;\bar x)}(h_{\bar{b}}(q)) > 0$. 
The integral $(\mu * \nu)(\theta(\bar{b};\bar{y}))=\int_{S_{\bar x}(M)} F_{\mu}^{\theta^{\textrm{opp}}(\bar y;\bar x)} d((h_{\bar{b}})_*(\nu))_{\bar x}$ is greater than $0$, since our map $F_{\mu}^{\theta^{\textrm{opp}}(\bar {y};\bar{x})}$ is continuous (as $\mu$ is $M$-definable) and greater than $0$ at some value in the support (as $h_{\bar{b}}(q) \in \supp((h_{\bar{b}})_{*}(\nu))$).
\end{proof}

\begin{proposition}\label{prop:product} Suppose that $\mu,\nu \in \mathfrak{M}^{\inv}_{\bar{m}}(\mathfrak{C},M)$ and $\mu$ is Borel-definable and $M$-invariantly supported. 
Then $\supp(\mu * \nu)$ is contained in the closure of $\{ p * q \;\colon\; p \in \supp(\mu), \,q \in \supp(\nu)\}$. Moreover, if $\mu$ is $M$-definable, then $\{ p * q \;\colon\; p \in \supp(\mu), \,q \in \supp(\nu)\}$ is a dense subset of $\supp(\mu * \nu)$.
\end{proposition}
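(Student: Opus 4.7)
The plan is to reduce the main containment to the integral representation of the $\ast$-product, and then to obtain the ``moreover'' clause by combining the first part with Proposition \ref{prop:support}.

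For the first statement, I would fix $r \in \supp(\mu \ast \nu)$ and an arbitrary basic clopen neighborhood $[\varphi(\bar{b};\bar{y})]$ of $r$. By definition of support, $(\mu \ast \nu)(\varphi(\bar{b};\bar{y})) > 0$, and since $\mu$ is Borel-definable over $M$, Proposition \ref{prop:Borel_G} makes the integrand below well-defined and Borel, giving
\[
(\mu \ast \nu)(\varphi(\bar{b};\bar{y})) \;=\; \int_{S_{\bar{m}}(\FC)} G_{\mu}^{\varphi(\bar{b};\bar{y})} \, d\nu \;>\; 0.
\]
Hence the Borel set $B := \{q \in S_{\bar{m}}(\FC) : G_{\mu}^{\varphi(\bar{b};\bar{y})}(q) > 0\}$ satisfies $\nu(B) > 0$. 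Since the complement of $\supp(\nu)$ is an open set of $\nu$-measure zero (a standard property of regular Borel probability measures on compact Hausdorff spaces), $B \cap \supp(\nu) \neq \emptyset$. Pick such a $q$ and write $q = \tp(\sigma(\bar{m})/\FC)$ for some $\sigma \in \aut(\FC')$.

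By the definition of $G_{\mu}^{\varphi(\bar{b};\bar{y})}$, we have $\hat{\mu}(\varphi(\sigma^{-1}(\bar{b});\bar{y})) > 0$; choosing $\bar{d} \in \FC^{\bar{x}}$ realizing $h_{\bar{b}}(q) = \tp(\sigma^{-1}(\bar{b})/M)$ and using $M$-invariance of $\mu$, we get $\mu(\varphi(\bar{d};\bar{y})) > 0$. Therefore some $p \in \supp(\mu)$ contains $\varphi(\bar{d};\bar{y})$. Here the assumption that $\mu$ is $M$-invariantly supported enters crucially: it forces such a $p$ to itself be $M$-invariant, so the definition of $p \ast q$ via the type Morley product of $p$ with $h_{\bar{b}}(q)$ applied to $\bar{d}$ yields $\varphi(\bar{b};\bar{y}) \in p \ast q$. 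Thus $p \ast q$ lies in the chosen neighborhood of $r$, proving $r \in \cl\{p \ast q : p \in \supp(\mu),\, q \in \supp(\nu)\}$.

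For the ``moreover'' clause, if $\mu$ is $M$-definable then Proposition \ref{prop:support} gives the reverse inclusion $\supp(\mu) \ast \supp(\nu) \subseteq \supp(\mu \ast \nu)$. Combined with the first part and the fact that $\supp(\mu \ast \nu)$ is closed, this yields $\supp(\mu \ast \nu) = \cl(\supp(\mu) \ast \supp(\nu))$, which is exactly the density claim. The only real technical subtlety, and the step where care is needed, is the measure-theoretic passage from $\int G_\mu^{\varphi(\bar b;\bar y)}\, d\nu > 0$ to the existence of a point of $\supp(\nu)$ at which the Borel integrand is strictly positive; the rest amounts to unwinding the integral formula for $\ast$ and the type-product description of $p \ast q$.
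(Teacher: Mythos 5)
Your proof is correct and follows essentially the same route as the paper: express $(\mu\ast\nu)(\varphi(\bar b;\bar y))>0$ as $\int G_\mu^{\varphi(\bar b;\bar y)}\,d\nu>0$, extract $q\in\supp(\nu)$ with positive integrand (your justification via the null complement of the support is a careful spelling-out of a step the paper leaves implicit), pull back to some $p\in\supp(\mu)$ containing $\varphi(\bar d;\bar y)$, and unwind the type Morley product to get $\varphi(\bar b;\bar y)\in p\ast q$; the ``moreover'' clause then follows from Proposition \ref{prop:support} exactly as in the paper.
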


\begin{proof}
Consider $\theta(\bar{b};\bar{y})\in r(\bar{y})\in\supp(\mu\ast\nu)$.
We have $(\mu * \nu )(\theta(\bar{b};\bar{y})) > 0$ and 
we want to show that there exist some $p \in \supp(\mu)$ and $q \in \supp(\nu)$ such that 
$\theta(\bar{b};\bar{y})\in p * q$. Note that 
\begin{equation*}
    0<(\mu * \nu )(\theta(\bar{b};\bar{y}))= \int_{S_{\bar{m}}(\mathfrak{C})} F_{\mu}^{\theta^{\textrm{opp}}(\bar y;\bar x)} \circ h_{\bar{b}}\, d \nu  
\end{equation*}
So, there exists some $q \in \supp(\nu)$  such that $(F_{\mu}^{\theta^{\textrm{opp}}(\bar y;\bar x)} \circ h_{\bar{b}})(q) > 0$. Hence, we have that $\mu(\theta(\bar{c};\bar{y})) > 0$ for some $\bar{c} \models h_{\bar{b}}(q)$. Now, there exists some $p \in \supp(\mu)$ such that $\theta(\bar{c};\bar{y}) \in p$, and that is equivalent to
$\theta(\bar{x};\bar{y})\in p\otimes h_{\bar{b}}(q)$.
By definition, $\theta(\bar{b};\bar{y}) \in p * q$. 

The moreover part follows from the first part and Proposition \ref{prop:support}.
\end{proof}

\begin{proposition}\label{prop:product2} Assume that $\mu \in \mathfrak{M}_{\bar{m}}^{\inv}(\mathfrak{C},M)$,
$\mu$ is idempotent, $M$-definable, and $M$-invariantly supported. Then $(\supp(\mu),*)$ is a compact Hausdorff left-continuous semigroup. 
\end{proposition}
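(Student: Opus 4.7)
The plan is to assemble four ingredients, each of which is already available in the excerpt. Compactness and Hausdorffness of $\supp(\mu)$ are immediate: $\supp(\mu)$ is by definition a closed subset of the compact Hausdorff space $S_{\bar m}(\FC)$, hence compact Hausdorff in the subspace topology.

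For closure under $*$, I would invoke Proposition \ref{prop:support}, which requires exactly the hypotheses assumed here ($\mu$ is $M$-definable and $M$-invariantly supported). Applying it with $\nu=\mu$ gives $\supp(\mu)*\supp(\mu)\subseteq \supp(\mu*\mu)$, and the idempotency hypothesis $\mu*\mu=\mu$ then yields $\supp(\mu)*\supp(\mu)\subseteq\supp(\mu)$, so $*$ restricts to a binary operation on $\supp(\mu)$.

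For associativity, I would use the $M$-invariantly supported hypothesis: every $p\in\supp(\mu)$ lies in $S^{\inv}_{\bar m}(\FC,M)$, so $\supp(\mu)\subseteq S^{\inv}_{\bar m}(\FC,M)$. Proposition \ref{prop: associative for types} then gives associativity of $*$ on triples from $S^{\inv}_{\bar m}(\FC,M)$, which in particular gives associativity on $\supp(\mu)$.

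Finally, for left-continuity, I would fix $q\in\supp(\mu)\subseteq S^{\inv}_{\bar m}(\FC,M)$ and apply Proposition \ref{prop:cont_types}: the map $-\ast q\colon S^{\inv}_{\bar m}(\FC,M)\to S^{\inv}_{\bar m}(\FC,M)$ is continuous, so its restriction to the closed subset $\supp(\mu)$ (with image landing in $\supp(\mu)$ by the closure step) is continuous. This completes the verification. There is no serious obstacle here; the proposition is really an assembly result, and the only subtlety is being careful that each cited result's hypotheses are genuinely met: Proposition \ref{prop:support} needs $\mu$ both $M$-definable and $M$-invariantly supported; Proposition \ref{prop: associative for types} and Proposition \ref{prop:cont_types} both need $M$-invariance of the types involved, which is precisely what $M$-invariantly supported buys us.
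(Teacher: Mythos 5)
Your proposal is correct and follows exactly the paper's own argument: closure under $*$ via Proposition \ref{prop:support} combined with idempotency, associativity via Proposition \ref{prop: associative for types} (applicable since $M$-invariantly supported gives $\supp(\mu)\subseteq S^{\inv}_{\bar m}(\FC,M)$), and left-continuity via Proposition \ref{prop:cont_types}. The paper's proof is just a terser version of the same assembly.
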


\begin{proof} By Proposition \ref{prop:support}, for any $p, q\in \supp(\mu), p * q \in \supp(\mu)$. 
The product is associative on $M$-invariant types by Proposition \ref{prop: associative for types}. Moreover, 
the map $-*q$ is continuous by Proposition \ref{prop:cont_types}.
\end{proof}

\begin{cor}\label{cor:support} Assume that $\mu \in \mathfrak{M}_{\bar{m}}^{\inv}(\mathfrak{C},M)$. If $\mu$ is $M$-definable and $M$-invariantly supported, 
then for any $q \in S_{\bar{m}}^{\inv}(\mathfrak{C},M)$, we have that $\supp(\mu * q) = \supp(\mu) * q$. Moreover, if $q$ is definable, then $\supp(q * \mu) = q*\supp(\mu)$. 
\end{cor}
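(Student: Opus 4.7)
The proof splits into two pieces, each using Proposition \ref{prop:product} to obtain density and then arguing that the relevant orbit of the support is closed. For (1), take $\nu := \delta_q$; since $q \in S^{\inv}_{\bar m}(\FC,M)$, this lies in $\mathfrak{M}^{\inv}_{\bar m}(\FC,M)$, and $\mu * q = \mu * \delta_q$. The hypotheses of the ``moreover'' clause of Proposition \ref{prop:product} are met by $\mu$, so $\supp(\mu) * q = \{p * q : p \in \supp(\mu)\}$ is dense in $\supp(\mu * q)$. It remains to check closedness: since $\mu$ is $M$-invariantly supported, $\supp(\mu) \subseteq S^{\inv}_{\bar m}(\FC,M)$, and Proposition \ref{prop:cont_types} says that $r \mapsto r * q$ is continuous on $S^{\inv}_{\bar m}(\FC,M)$. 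Thus $\supp(\mu) * q$ is the continuous image of a compact set, so compact, hence closed; equality with $\supp(\mu * q)$ follows.

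For (2), the roles are reversed: apply Proposition \ref{prop:product} with $\delta_q$ on the left and $\mu$ on the right. Since $q$ is definable, $\delta_q$ is $M$-definable; since $\supp(\delta_q) = \{q\} \subseteq S^{\inv}_{\bar m}(\FC,M)$, $\delta_q$ is $M$-invariantly supported. So the ``moreover'' clause gives that $q * \supp(\mu)$ is dense in $\supp(q * \mu)$, and it remains to verify that $q * \supp(\mu)$ is closed.

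This is where I expect the subtle point to lie. Proposition \ref{prop:cont_types} only provides continuity of the right-multiplication map $-*q$, whereas here we need continuity of the left-multiplication map $p \mapsto q * p$, which is not a formal consequence of what has been proved so far. Using the definability of $q$, however, this continuity can be verified directly: for a basic clopen $[\varphi(\bar b;\bar y)] \subseteq S_{\bar m}(\FC)$, the definition of the $*$-product on types yields
$$\varphi(\bar b;\bar y) \in q * p \iff \varphi(\bar c;\bar y) \in q \text{ for } \bar c \models h_{\bar b}(p) \iff \bar c \in d_q^\varphi.$$
Since $q$ is $M$-definable, $d_q^\varphi$ is defined by an $\mathcal{L}(M)$-formula $\psi(\bar x)$, and the condition becomes $\psi(\bar x) \in h_{\bar b}(p)$, i.e.\ $p \in h_{\bar b}^{-1}[\psi(\bar x)]$. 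By Lemma \ref{prop:compute}, $h_{\bar b}$ is continuous, so this preimage is clopen, and $p \mapsto q * p$ is continuous on $S_{\bar m}(\FC)$. Therefore $q * \supp(\mu)$ is the continuous image of a compact set, hence closed, which completes the argument.
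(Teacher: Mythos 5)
Your proof is correct and follows essentially the same route as the paper's: density of the translated support via Proposition \ref{prop:product}, combined with closedness obtained from continuity of the relevant translation map on the compact set $\supp(\mu)$. The only place you go beyond the paper is in the \emph{moreover} clause: the paper simply asserts that $q*-$ is continuous when $q$ is $M$-definable, whereas you verify it explicitly by computing that the preimage of the clopen $[\varphi(\bar b;\bar y)]$ under $p\mapsto q*p$ is $h_{\bar b}^{-1}[\psi(\bar x;\bar m)]=[\psi(\bar b;\bar y)]$ for an $\mathcal{L}(M)$-formula $\psi$ defining $d_q^{\varphi}$; this computation is correct and is a worthwhile detail to record.
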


\begin{proof} We know that $\supp(\mu) * q = \supp(\mu) * \supp(q) \subseteq \supp(\mu * q)$ by Proposition \ref{prop:support}. Since $-*q$ is continuous and $\supp(\mu)$ is compact, we have that $\supp(\mu) * q$ is a closed subset of $\supp(\mu * q)$. 
On the other hand, by Proposition \ref{prop:product},
$\supp(\mu) * q$ is a dense subset of $\supp(\mu * q)$. 
Thus, $\supp(\mu * q) = \supp(\mu) * q$.

The moreover part follows by a symmetric argument, using the fact that the map $q*-$ is continuous under the assumption that $q$ is $M$-definable.
\end{proof}

\begin{lemma}\label{lemma:comp} Fix $\bar{b} \in \mathfrak{C}^{\bar{y}}$ and $q \in S_{\bar{m}}^{\inv}(\mathfrak{C},M)$. Let $\tp(\bar{c}/M) = h_{\bar{b}}(q)$. Then for any $t \in S_{\bar{m}}^{\inv}(\mathfrak{C},M)$, we have that $h_{\bar{c}}(t) = h_{\bar{b}}(t * q)$. 
\end{lemma}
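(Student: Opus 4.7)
The plan is to compute both sides explicitly using the formula for the $\ast$-product on types from Proposition~\ref{prop:formula.for.star.on.types} and then reduce the statement to something witnessed by $M$-invariance of $t|_{\FC'}$. First I would write $q=\tp(\tau(\bar m)/\FC)$ for some $\tau\in\aut(\FC')$ and $t|_{\FC'}=\tp(\sigma(\bar m)/\FC')$ for some $\sigma\in\aut(\FC'')$, where $\FC''\succ\FC'$ is a larger monster in which $\FC'$ is small. By Proposition~\ref{prop:formula.for.star.on.types}, taking any extension $\tau''\in\aut(\FC'')$ of $\tau$, one has $t\ast q=\tp(\tau''\sigma(\bar m)/\FC)$. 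Unpacking the definition of $h_{\bar b}$ then gives $h_{\bar b}(t\ast q)=\tp(\sigma^{-1}\tau^{-1}(\bar b)/M)$, using $\tau''|_{\FC'}=\tau$ and $\bar b\in\FC\subseteq\FC'$. Unpacking the definition of $h_{\bar c}$ (which is well-defined regardless of which larger monster is used to realize $t$) gives $h_{\bar c}(t)=\tp(\sigma^{-1}(\bar c)/M)$, since $\sigma\in\aut(\FC'')$ satisfies $\tp(\sigma(\bar m)/\FC)=t$.

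Next I would exploit the hypothesis $\tp(\bar c/M)=h_{\bar b}(q)=\tp(\tau^{-1}(\bar b)/M)$. Since both $\bar c$ and $\tau^{-1}(\bar b)$ lie in $\FC'$, strong $M$-homogeneity of $\FC'$ produces some $\zeta\in\aut(\FC'/M)$ with $\zeta(\bar c)=\tau^{-1}(\bar b)$. The target equality $h_{\bar c}(t)=h_{\bar b}(t\ast q)$ therefore reduces to showing $\sigma^{-1}(\bar c)\equiv_M\sigma^{-1}\zeta(\bar c)$.

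The heart of the argument, and the step where real care is needed, is to promote the trivial statement $\bar c\equiv_M\zeta(\bar c)$ to the displayed equivalence above. Here I would invoke $M$-invariance of $t|_{\FC'}=\tp(\sigma(\bar m)/\FC')$: since $\bar c,\zeta(\bar c)\in\FC'$ are $M$-conjugate, $M$-invariance yields $\tp(\sigma(\bar m),\bar c/M)=\tp(\sigma(\bar m),\zeta(\bar c)/M)$, so by homogeneity of $\FC''$ there exists $\theta\in\aut(\FC''/M\sigma(\bar m))$ with $\theta(\bar c)=\zeta(\bar c)$. Setting $\theta':=\sigma^{-1}\theta\sigma$ and noting that $\theta$ fixes $\sigma(\bar m)$ pointwise forces $\theta'$ to fix $\bar m$ pointwise, hence $M$; thus $\theta'\in\aut(\FC''/M)$, and $\theta'(\sigma^{-1}(\bar c))=\sigma^{-1}\zeta(\bar c)$, giving the desired equivalence. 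The conceptual obstacle that this conjugation trick resolves is that $\sigma$ need not fix $M$, so one cannot directly push the given $M$-automorphism $\zeta$ through $\sigma^{-1}$; $M$-invariance of $t|_{\FC'}$ is precisely what lets us replace $\zeta$ by an automorphism that additionally fixes $\sigma(\bar m)$, after which conjugation by $\sigma^{-1}$ cleanly produces the required $M$-automorphism.
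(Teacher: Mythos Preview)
Your argument is correct. You unwind everything at the level of automorphisms via Proposition~\ref{prop:formula.for.star.on.types}, and then the $M$-invariance of $t|_{\FC'}$ together with the conjugation trick $\theta'=\sigma^{-1}\theta\sigma$ does exactly what is needed to turn $\bar c\equiv_M\zeta(\bar c)$ into $\sigma^{-1}(\bar c)\equiv_M\sigma^{-1}\zeta(\bar c)$.

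The paper takes a much shorter, purely syntactic route that avoids realizing anything. It checks formula by formula that $h_{\bar b}(t\ast q)$ and $h_{\bar c}(t)$ agree, using only Lemma~\ref{prop:compute} (i.e.\ $h_{\bar b}^{-1}[\theta(\bar x;\bar m)]=[\theta(\bar b;\bar y)]$) and the defining equivalence $\theta(\bar b;\bar y)\in t\ast q \iff \theta(\bar x;\bar y)\in t_{\bar y}\otimes (h_{\bar b}(q))_{\bar x}$. Since $\bar c\models h_{\bar b}(q)$, the Morley product unwinds directly to $\theta(\bar c;\bar y)\in t$, and one more application of Lemma~\ref{prop:compute} finishes. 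The $M$-invariance of $t$ is used only implicitly, inside the well-definedness of the Morley product, so no automorphism chasing or conjugation is required. Your approach makes explicit where the invariance is spent, at the cost of a longer argument; the paper's approach is slicker because it stays at the level of the $h_{\bar b}$-calculus already developed in Section~\ref{subsec: star new definition}.
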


\begin{proof} Fix $\theta(\bar{x};\bar{y}) \in \mathcal{L}$.
Then $\theta(\bar{x};\bar{m}) \in h_{\bar{b}}(t * q)$ 
if and only if $\theta(\bar{b};\bar{y}) \in t * q$. 
This is true if and only if $\theta(\bar{x};\bar{y}) \in t_{\bar{y}} \otimes (h_{\bar{b}}(q))_{\bar{x}}$ which is true if and only if $\theta(\bar{c};\bar{y}) \in t$. But this is true if and only if $\theta(\bar{x};\bar{m}) \in h_{\bar{c}}(t)$. 
\end{proof}

\begin{proposition}\label{prop:D-map-max} 
Let $\mu\in\mathfrak{M}^{\inv}_{\bar{m}}(\FC,M)$ be Borel-definable over $M$.
For any $ \mathcal{L}$-formula $\varphi(\bar{x};\bar{y})$ and $\bar{b}\in\FC^{\bar{x}}$, we define the map $D_{\mu}^{\varphi_{\bar{b}}}:S_{\bar{m}}^{\inv}(\mathfrak{C},M) \to [0,1]$ via 
\begin{equation*}
    D_{\mu}^{\varphi_{\bar{b}}}(q) = (\mu_{\bar{y}} \otimes (h_{\bar{b}}(q))_{\bar{x}}) (\varphi(\bar{x};\bar{y}))=(\mu\ast q)(\varphi(\bar{b};\bar{y})). 
\end{equation*}
If $\mu$ is $M$-definable, then the map $D_{\mu}^{\varphi_{\bar{b}}}$ is continuous. Suppose that $D_{\mu}^{\varphi_{{\bar{b}}}}|_{\supp(\mu)}$ achieves a maximum at $q_{*}$, say $\delta$. If $\mu$ is $M$-definable, idempotent, and $M$-invariantly supported, then for any $t \in \supp(\mu)$, $D_{\mu}^{\varphi_{{\bar{b}}}}(t * q_{*}) = D_{\mu}^{\varphi_{{\bar{b}}}}(q_{*}) = \delta$. 
\end{proposition}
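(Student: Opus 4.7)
The plan is to prove (1) by directly identifying $D_{\mu}^{\varphi_{\bar{b}}}$ as a composition of known continuous maps. Since $h_{\bar{b}}(q)$ is a single type in $S_{\bar{x}}(M)$, unwinding the Morley product yields
\[ D_{\mu}^{\varphi_{\bar{b}}}(q) \;=\; \int_{S_{\bar{x}}(M)} F_{\mu}^{\varphi^{\textrm{opp}}(\bar{y};\bar{x})}\, d(h_{\bar{b}}(q))_{\bar{x}} \;=\; F_{\mu}^{\varphi^{\textrm{opp}}(\bar{y};\bar{x})}\bigl(h_{\bar{b}}(q)\bigr). \]
Lemma \ref{prop:compute} gives continuity of $h_{\bar{b}} \colon S_{\bar{m}}(\FC) \to S_{\bar{x}}(M)$, while $M$-definability of $\mu$ makes $F_{\mu}^{\varphi^{\textrm{opp}}(\bar{y};\bar{x})}$ continuous; the composition is therefore continuous, and restriction to $S_{\bar{m}}^{\inv}(\FC,M)$ preserves this.

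For (2), my first step is to set $\nu := \mu \ast q_{*} \in \mathfrak{M}_{\bar{m}}^{\inv}(\FC,M)$, which is well defined by Lemma \ref{lemma:def.Borel.def}(3). Using associativity of $\ast$ in the form given by Theorem \ref{thm:star.associativity} together with idempotency of $\mu$, I obtain $\mu \ast \nu = (\mu \ast \mu) \ast q_{*} = \mu \ast q_{*} = \nu$. Evaluating both sides at $\varphi(\bar{b};\bar{y})$ and expanding via the integral formula for $\ast$ yields the key identity
\[ \delta \;=\; D_{\mu}^{\varphi_{\bar{b}}}(q_{*}) \;=\; \nu\bigl(\varphi(\bar{b};\bar{y})\bigr) \;=\; (\mu \ast \nu)\bigl(\varphi(\bar{b};\bar{y})\bigr) \;=\; \int_{S_{\bar{m}}(\FC)} D_{\mu}^{\varphi_{\bar{b}}}\, d\nu. \]

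The final step exploits the maximality assumption. Since $q_{*} \in \supp(\mu)$ and $\mu$ is idempotent, Proposition \ref{prop:support} gives $\supp(\mu) \ast \supp(\mu) \subseteq \supp(\mu \ast \mu) = \supp(\mu)$, hence $\supp(\mu) \ast q_{*} \subseteq \supp(\mu)$. By Corollary \ref{cor:support}, $\supp(\nu) = \supp(\mu) \ast q_{*}$, so $D_{\mu}^{\varphi_{\bar{b}}} \leq \delta$ on $\supp(\nu)$. Combined with the integral identity above, the continuous non-negative function $\delta - D_{\mu}^{\varphi_{\bar{b}}}$ integrates to $0$ against $\nu$ and is $\geq 0$ on $\supp(\nu)$; a standard argument (any strictly positive value on $\supp(\nu)$ would, by continuity, yield a neighbourhood of positive $\nu$-measure on which the function is bounded below by a positive constant, contradicting the integral being $0$) forces $D_{\mu}^{\varphi_{\bar{b}}} \equiv \delta$ on $\supp(\nu)$. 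Because every $t \ast q_{*}$ with $t \in \supp(\mu)$ lies in $\supp(\nu)$, this delivers $D_{\mu}^{\varphi_{\bar{b}}}(t \ast q_{*}) = \delta$, as required. There is no serious obstacle; the only conceptual move is to promote the maximizing type $q_{*}$ to the measure $\nu = \mu \ast q_{*}$ and exploit the self-replication $\mu \ast \nu = \nu$, with the support inclusion $\supp(\nu) \subseteq \supp(\mu)$ being precisely where idempotency and Proposition \ref{prop:support} enter in tandem.
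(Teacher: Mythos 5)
Your proof is correct, and both parts reach the paper's conclusions; the continuity argument is identical to the paper's. For the second part you take a mildly different route to the same integral identity. The paper works at the translated parameter: setting $\bar c \models h_{\bar b}(q_*)$, it writes $\delta = \mu(\varphi(\bar c;\bar y)) = (\mu*\mu)(\varphi(\bar c;\bar y)) = \int_{\supp(\mu)} \bigl(F_\mu^{\varphi^{\textrm{opp}}(\bar y;\bar x)}\circ h_{\bar c}\bigr)\,d\mu$ and then identifies the integrand with $t\mapsto D_\mu^{\varphi_{\bar b}}(t*q_*)$ via the translation identity $h_{\bar c}(t)=h_{\bar b}(t*q_*)$ of Lemma \ref{lemma:comp}, so that only idempotency, Proposition \ref{prop:product2}, and maximality are needed. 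You instead push the translation into the measure, forming $\nu:=\mu*q_*$ and deriving $\mu*\nu=\nu$ from associativity (Theorem \ref{thm:star.associativity}), then integrating $D_\mu^{\varphi_{\bar b}}$ against $\nu$ and using $\supp(\nu)=\supp(\mu)*q_*\subseteq\supp(\mu)$ from Corollary \ref{cor:support} and Proposition \ref{prop:support}. The two computations are related by the substitution $s=t*q_*$, so the skeleton (an integral of a continuous function bounded by $\delta$ that evaluates to $\delta$, forcing the function to be constant on the support) is the same; the difference is only in the supporting machinery. Your version leans on the associativity theorem and the support corollary, both of which require the $M$-definability and invariant-supportedness hypotheses you correctly have, whereas the paper's version gets by with the lighter, purely pointwise Lemma \ref{lemma:comp}. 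Both are valid; nothing is missing from your argument.
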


\begin{proof} 
If $\mu$ is $M$-definable, then $F_{\mu}^{\varphi^{\textrm{opp}}(\bar y;\bar x)}$ is continuous, so continuity of  $D_{\mu}^{\varphi_{{\bar{b}}}}$ follows from the fact that $D_{\mu}^{\varphi_{{\bar{b}}}} = F_{\mu}^{\varphi^{\textrm{opp}}(\bar y;\bar x)} \circ h_{\bar b}$.

Now, consider $\varphi(\bar{b};\bar{y})$, $\mu$ and $q_{*}$ as in the statement. Assume that $\tp(\bar{c}/M) = h_{\bar{b}}(q_*)$. 
Then 
\begin{align*}
    \delta &= D_{\mu}^{\varphi_{{\bar{b}}}}(q_*) = (\mu_{\bar {y}} \otimes (h_{{\bar{b}}}(q_*))_{\bar{x}}) (\varphi(\bar{x};\bar{y})) = \mu(\varphi(\bar{c};\bar{y})) \\ &= (\mu * \mu)(\varphi(\bar{c};\bar{y})) = \int_{\supp(\mu)} (F_{\mu}^{\varphi^{\textrm{opp}}(\bar y;\bar x)} \circ h_{\bar{c}}) \,d\mu. 
\end{align*}
Now, the map $F_{\mu}^{\varphi^{\textrm{opp}}(\bar y;\bar x)} \circ h_{\bar{c}} : \supp(\mu) \to [0,1]$ is continuous and bounded by $\delta$, since 
\begin{equation*}
    (F_{\mu}^{\varphi^{\textrm{opp}}(\bar y;\bar x)} \circ h_{\bar{c}})(t) = F_{\mu}^{\varphi^{\textrm{opp}}(\bar y;\bar x)}(h_{\bar{b}}(t * q_{*})) = D_{\mu}^{\varphi_{{\bar{b}}}}(t * q_*) \leqslant D_{\mu}^{\varphi_{{\bar{b}}}}(q_*) = \delta,
\end{equation*}
where the first equality holds by Lemma \ref{lemma:comp} and the inequality holds by Proposition \ref{prop:product2} and the choice of $q_*$. Thus, it follows that $(F_{\mu}^{\varphi^{\textrm{opp}}(\bar y;\bar x)} \circ h_{\bar{c}})(t) = \delta$ for every $t \in \supp(\mu)$ (otherwise the above integral will be strictly less than $\delta$). And so, for any $t \in \supp(\mu)$, 
the above inequality becomes equality, i.e. $D_{\mu}^{\varphi_{{\bar{b}}}}(t * q_*) = D_{\mu}^{\varphi_{{\bar{b}}}}(q_*)$.
\end{proof}

\begin{theorem}\label{thm:two} 
Let $\mu \in \mathfrak{M}_{\bar{m}}^{\inv}(\mathfrak{C},M)$. 
Assume that $\mu$ is $M$-invariantly supported, idempotent, and $M$-definable. 
Then $(\supp(\mu),*)$ has no proper closed two-sided ideals. 
\end{theorem}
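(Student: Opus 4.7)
The plan is to argue by contradiction, assuming a proper nonempty closed two-sided ideal $I \subseteq \supp(\mu)$ exists. Since $I$ is closed in $\supp(\mu)$ and we may pick $p_0 \in \supp(\mu) \setminus I$, by compactness/Hausdorffness of the type space we can separate $p_0$ from $I$ by a basic clopen, i.e., choose an $\mathcal{L}$-formula $\varphi(\bar{b};\bar{y})$ such that $\varphi(\bar{b};\bar{y}) \in p_0$ while $\varphi(\bar{b};\bar{y}) \notin t$ for every $t \in I$. Equivalently, $[\varphi(\bar{b};\bar{y})] \cap I = \emptyset$ inside $S_{\bar{m}}(\FC)$.

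Next, I would study the continuous map $D_{\mu}^{\varphi_{\bar b}} \colon S_{\bar{m}}^{\inv}(\FC,M) \to [0,1]$ from Proposition~\ref{prop:D-map-max} restricted to the compact set $\supp(\mu)$. Let $\delta$ denote its maximum, attained at some $q_* \in \supp(\mu)$. The point is that $\delta > 0$: using idempotence of $\mu$, left-continuity of $-\ast \nu$, and the formula for $\ast$, we compute
\begin{equation*}
\mu(\varphi(\bar{b};\bar{y})) = (\mu \ast \mu)(\varphi(\bar{b};\bar{y})) = \int_{\supp(\mu)} D_{\mu}^{\varphi_{\bar b}} \, d\mu,
\end{equation*}
while $\mu(\varphi(\bar{b};\bar{y})) > 0$ because $p_0 \in \supp(\mu)$ contains $\varphi(\bar{b};\bar{y})$. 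Hence the integrand is somewhere strictly positive on $\supp(\mu)$, so $\delta > 0$.

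Now I would exploit the ideal structure. By Proposition~\ref{prop:D-map-max}, for every $t \in \supp(\mu)$ one has $D_{\mu}^{\varphi_{\bar b}}(t \ast q_*) = \delta$. Pick any $t \in I$ (nonempty by convention); since $I$ is a right ideal in $\supp(\mu)$, the element $r := t \ast q_*$ still lies in $I$, and therefore $D_{\mu}^{\varphi_{\bar b}}(r) = \delta > 0$. On the other hand, since $I$ is a left ideal and $\supp(\mu)$ is closed under $\ast$, Corollary~\ref{cor:support} gives
\begin{equation*}
\supp(\mu \ast r) = \supp(\mu) \ast r \subseteq I.
\end{equation*}
Because $I \cap [\varphi(\bar{b};\bar{y})] = \emptyset$, the measure $\mu \ast r$ assigns mass zero to $\varphi(\bar{b};\bar{y})$, i.e., $D_{\mu}^{\varphi_{\bar b}}(r) = (\mu \ast r)(\varphi(\bar{b};\bar{y})) = 0$. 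This contradicts $D_{\mu}^{\varphi_{\bar b}}(r) = \delta > 0$ and completes the argument.

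The main obstacle — really the creative step — is locating the correct ``double extremality'' witness $q_*$: one must leverage Proposition~\ref{prop:D-map-max} to force the value $\delta$ to propagate along the whole right orbit $\supp(\mu) \ast q_*$, and simultaneously leverage Corollary~\ref{cor:support} to force $D_{\mu}^{\varphi_{\bar b}}$ to vanish identically on $I$. All the remaining bookkeeping (separation of $p_0$ from $I$, positivity of $\delta$ from idempotence, stability of $I$ under both-sided $\ast$) is routine given the tools developed earlier in the section.
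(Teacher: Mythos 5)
Your proof is correct and follows essentially the same route as the paper's: separate $I$ from a point of $\supp(\mu)$ by a clopen $[\varphi(\bar b;\bar y)]$, show the continuous function $D_\mu^{\varphi_{\bar b}}$ attains a strictly positive maximum $\delta$ on $\supp(\mu)$, propagate $\delta$ along right translates via Proposition~\ref{prop:D-map-max}, and derive a contradiction from the fact that $D_\mu^{\varphi_{\bar b}}$ vanishes on the left ideal $I$. The only (harmless) variations are that you get $\delta>0$ directly from idempotence via $\mu(\varphi(\bar b;\bar y))=\int_{\supp(\mu)}D_\mu^{\varphi_{\bar b}}\,d\mu$ rather than via the paper's Claim~1, and you invoke Corollary~\ref{cor:support} where the paper's Claim~2 does the support computation by hand.
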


\begin{proof} Suppose that $I \subseteq \supp(\mu)$ such that $I$ is a closed two-sided ideal. Notice that if $I$ is dense in $\supp(\mu)$, then $I = \supp(\mu)$. Hence it suffices to assume that $I$ is not dense. So there exists a formula $\varphi(\bar{b};\bar{y}) \in \mathcal{L}_{\bar{y}}(\mathfrak{C})$, with $\bar{b}\in\FC^{\bar{x}}$,
such that $[\varphi(\bar{b};\bar{y})] \cap I = \emptyset$ and $[\varphi(\bar{b};\bar{y})] \cap \supp(\mu) \neq \emptyset$. 

\begin{clm}
    There exists some $q \in \supp(\mu)$ such that $D_{\mu}^{\varphi_{\bar{b}}}(q) > 0$.
\end{clm}

\begin{clmproof}
 Suppose not. Then for every $q \in \supp(\mu)$, we have that $D_{\mu}^{\varphi_{\bar{b}}}(q) = 0$. We now argue that the indicator function $\chi_{[\varphi(\bar{b};\bar{y})]}$ restricted to the set $\supp(\mu) * \supp(\mu) \subseteq \supp(\mu)$ is always $0$.
 Indeed, if $p,q\in\supp(\mu)$ and $\varphi(\bar{b};\bar{y})\in p\ast q$,
 then $D^{\varphi_{\bar{b}}}_\mu(q)=(\mu\ast q)(\varphi(\bar{b};\bar{y}))>0$.
 Then, by Proposition \ref{prop:product} we conclude that $\chi_{[\varphi(\bar{b};\bar{y})]}$ restricted to $\supp(\mu)$ is always $0$, but this contradicts $[\varphi(\bar{b};\bar{y})] \cap \supp(\mu) \neq \emptyset$. 
\end{clmproof}

\begin{clm}
    For every $t \in I$, we have $D_{\mu}^{\varphi_{\bar{b}}}(t) = 0$. 
\end{clm}

\begin{clmproof}
Let $\tp(\bar{c}/M) = h_{\bar{b}}(t)$, then we notice
\begin{equation*}
    D_{\mu}^{\varphi_{\bar{b}}}(t) = (\mu \otimes h_{\bar{b}}(t))(\varphi(\bar{x};\bar{y})) = \mu(\varphi(\bar{c};\bar{y})) 
    = \mu(\{ r \in \supp(\mu)\;\colon\; \varphi(\bar{b};\bar{y}) \in r * t\}). 
\end{equation*}
Indeed, $[\varphi(\bar{c};\bar{y})] = \{r \in \supp(\mu) \;\colon\; \varphi(\bar{b};\bar{y}) \in r * t\}$, since $\varphi(\bar{b};\bar{y}) \in r * t$ if and only if $\varphi(\bar{x};\bar{y}) \in r \otimes h_{\bar{b}}(t)$,
if and only if $\varphi(\bar{c};\bar{y}) \in r$. 
Now, $\supp(\mu) * t \subseteq I$ and since $[\varphi(\bar{b};\bar{y})] \cap I = \emptyset$, we have that $D_{\mu}^{\varphi_{\bar{b}}}(t) = 0$. 
\end{clmproof}

Finally, choose $r \in \supp(\mu)$ such that $D_{\mu}^{\varphi_{\bar{b}}}(r)$ is a maximum of $D_{\mu}^{\varphi_{\bar{b}}}$ on $\supp(\mu)$ 
(which exists by continuity of $D_{\mu}^{\varphi_{\bar{b}}}$). By Claim 1, we have that $D_{\mu}^{\varphi_{b}}(r) > 0$.
Since $I$ is a two sided ideal, we have that $I$ is a right ideal, and so for any $t \in I$, 
we have $t* r \in I$. Hence, by Proposition \ref{prop:D-map-max} and Claim 2 we get that 
\begin{equation*}
    0 < D_{\mu}^{\varphi_{\bar{b}}}(r) = D_{\mu}^{\varphi_{\bar{b}}}(t * r) = 0,
\end{equation*}
which is absurd. 
\end{proof}

\begin{proposition}\label{prop:min} 
Let $\mu \in \mathfrak{M}_{\bar{m}}^{\inv}(\mathfrak{C},M)$. 
Assume that $\mu$ is $M$-invariantly supported, idempotent, $M$-definable, and minimal (i.e. $\supp(\mu)$ is the unique minimal left ideal of $(\supp(\mu),*)$). Then for every $\varphi(\bar{b};\bar{y}) \in \mathcal{L}_{\bar{x}}(\mathfrak{C})$, where $\bar{b}\in\FC^{\bar{x}}$, we have that $D_{\mu}^{\varphi_{\bar{b}}}(p) = D_{\mu}^{\varphi_{\bar{b}}}(q)$ for all $p,q \in \supp(\mu)$. 
\end{proposition}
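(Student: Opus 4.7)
The plan is to combine the compactness and continuity available from the assumptions with the fact that a minimal left ideal absorbs right translations. Since $\mu$ is $M$-definable, Proposition \ref{prop:D-map-max} gives that $D_{\mu}^{\varphi_{\bar{b}}} \colon S_{\bar m}^{\inv}(\FC,M) \to [0,1]$ is continuous. By Proposition \ref{prop:product2}, $(\supp(\mu),\ast)$ is a compact Hausdorff left-topological semigroup, so the restriction of $D_{\mu}^{\varphi_{\bar{b}}}$ to $\supp(\mu)$ attains a maximum $\delta$ at some point $q_{\ast} \in \supp(\mu)$.

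The first key step invokes the ``moreover'' part of Proposition \ref{prop:D-map-max}: because $\mu$ is idempotent, $M$-definable and $M$-invariantly supported, the attainment of the maximum at $q_{\ast}$ upgrades to the equality
$$D_{\mu}^{\varphi_{\bar{b}}}(t \ast q_{\ast}) \;=\; \delta \quad \text{for every } t \in \supp(\mu).$$
Thus the function is already known to be constantly $\delta$ on the entire right-translate $\supp(\mu) \ast q_{\ast}$.

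The second key step is to identify this right-translate with all of $\supp(\mu)$. The set $\supp(\mu) \ast q_{\ast}$ is the image of the compact space $\supp(\mu)$ under the continuous map $-\ast q_{\ast}$ (continuity from Proposition \ref{prop:cont_types}), hence compact and closed in $\supp(\mu)$. It is moreover a left ideal of $(\supp(\mu),\ast)$: using associativity of $\ast$ on invariant types (Proposition \ref{prop: associative for types}) together with closedness of $\supp(\mu)$ under $\ast$ (Proposition \ref{prop:support}), one gets $\supp(\mu) \ast (\supp(\mu) \ast q_{\ast}) = (\supp(\mu) \ast \supp(\mu)) \ast q_{\ast} \subseteq \supp(\mu) \ast q_{\ast}$. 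Since by hypothesis $\supp(\mu)$ is itself the unique minimal left ideal, any nonempty closed left ideal contained in $\supp(\mu)$ must equal $\supp(\mu)$, giving $\supp(\mu) \ast q_{\ast} = \supp(\mu)$.

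Combining the two steps finishes the argument: every $p \in \supp(\mu)$ can be written as $p = t \ast q_{\ast}$ for some $t \in \supp(\mu)$, so $D_{\mu}^{\varphi_{\bar{b}}}(p) = \delta$, independent of $p$. There is essentially no obstacle here once the framework of the preceding propositions is in place; the only point that requires care is verifying that $\supp(\mu) \ast q_{\ast}$ is a \emph{closed} left ideal, where one uses left-continuity (rather than joint continuity) of $\ast$ together with associativity on invariant types.
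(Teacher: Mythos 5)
Your proof is correct and follows essentially the same route as the paper: pick $q_{\ast}$ where $D_{\mu}^{\varphi_{\bar b}}$ attains its maximum on $\supp(\mu)$, use the ``moreover'' part of Proposition \ref{prop:D-map-max} to get constancy on $\supp(\mu)\ast q_{\ast}$, and use minimality to conclude $\supp(\mu)\ast q_{\ast}=\supp(\mu)$. The only difference is that you spell out why $\supp(\mu)\ast q_{\ast}$ is a (closed) left ideal, a detail the paper compresses into ``by minimality, there exists some $r$ with $r\ast q_{\ast}=p$''.
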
 

\begin{proof} Suppose that $D_{\mu}^{\varphi_{\bar{b}}}|_{\supp(\mu)}$ attains a maximum at $q_*$. 
Consider any $p\in \supp(\mu)$.
By minimality, there exists some $r \in \supp(\mu)$ such that $r * q_* = p$. So, $D_{\mu}^{\varphi_{\bar{b}}}(p) = D_{\mu}^{\varphi_{\bar{b}}}(r * q_*) = D_{\mu}^{\varphi_{\bar{b}}}(q_*)$ by Proposition \ref{prop:D-map-max}. 
\end{proof}

\begin{proposition}\label{prop:inv} 
Let $\mu \in \mathfrak{M}_{\bar{m}}^{\inv}(\mathfrak{C},M)$. 
Assume that $\mu$ is $M$-invariantly supported, idempotent, $M$-definable, and minimal. 
Then for any $\varphi(\bar{b};\bar{y}) \in \mathcal{L}_{\bar{y}}(\mathfrak{C})$, where $\bar{b}\in\FC^{\bar{x}}$,
and $p \in \supp(\mu)$, we have that $\mu(\varphi(\bar{b};\bar{y})) = (\mu*p)(\varphi(\bar{b};\bar{y}))$. 
\end{proposition}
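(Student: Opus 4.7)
The plan is to reduce the statement directly to Proposition \ref{prop:min} via an idempotency computation. First I would observe, by Proposition \ref{prop:Borel_G} and the definition of $D_\mu^{\varphi_{\bar{b}}}$ in Proposition \ref{prop:D-map-max}, that for any invariant type $q$,
$$D_\mu^{\varphi_{\bar{b}}}(q) \;=\; F_\mu^{\varphi^{\mathrm{opp}}(\bar{y};\bar{x})}(h_{\bar{b}}(q)) \;=\; G_\mu^{\varphi(\bar{b};\bar{y})}(q) \;=\; (\mu\ast q)(\varphi(\bar{b};\bar{y})).$$
In particular, $(\mu\ast p)(\varphi(\bar{b};\bar{y})) = D_\mu^{\varphi_{\bar{b}}}(p)$, so the statement of the proposition is equivalent to showing that $\mu(\varphi(\bar{b};\bar{y})) = D_\mu^{\varphi_{\bar{b}}}(p)$ for every $p\in\supp(\mu)$.

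Next I would invoke Proposition \ref{prop:min}, which applies since $\mu$ is $M$-invariantly supported, $M$-definable, idempotent, and minimal: this gives a constant $\delta\in[0,1]$ such that $D_\mu^{\varphi_{\bar{b}}}(q)=\delta$ for every $q\in\supp(\mu)$. Since $\mu$ is $M$-invariantly supported, $\supp(\mu)\subseteq S_{\bar{m}}^{\inv}(\FC,M)$, so the identification above applies at every point of $\supp(\mu)$.

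Finally, I would use idempotency of $\mu$ together with the integral formula for the $\ast$-product:
$$\mu(\varphi(\bar{b};\bar{y})) \;=\; (\mu\ast\mu)(\varphi(\bar{b};\bar{y})) \;=\; \int_{S_{\bar{m}}(\FC)} G_\mu^{\varphi(\bar{b};\bar{y})}\, d\mu \;=\; \int_{\supp(\mu)} D_\mu^{\varphi_{\bar{b}}}\, d\mu \;=\; \delta,$$
where the last equality follows from the fact that $D_\mu^{\varphi_{\bar{b}}}\equiv\delta$ on $\supp(\mu)$ and $\mu(\supp(\mu))=1$. Since $\delta = D_\mu^{\varphi_{\bar{b}}}(p) = (\mu\ast p)(\varphi(\bar{b};\bar{y}))$ for any $p\in\supp(\mu)$, we obtain the desired equality.

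There is no real obstacle here: the argument is essentially a two-line calculation once Propositions \ref{prop:D-map-max} and \ref{prop:min} are in place. The only point requiring minor care is to be sure that the integrand $G_\mu^{\varphi(\bar{b};\bar{y})}$ in the Morley/$\ast$-product formula coincides on invariant types with the map $D_\mu^{\varphi_{\bar{b}}}$, which is immediate from Proposition \ref{prop:Borel_G}.
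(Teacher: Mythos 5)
Your proof is correct, and it takes a genuinely shorter route than the paper's in its final step. Both arguments rest on Proposition \ref{prop:min}, which gives that $D_\mu^{\varphi_{\bar b}}$ is constant, say equal to $\delta$, on $\supp(\mu)$; and your identification $D_\mu^{\varphi_{\bar b}} = G_\mu^{\varphi(\bar b;\bar y)}\big|_{S^{\inv}_{\bar m}(\FC,M)} = F_\mu^{\varphi^{\textrm{opp}}(\bar y;\bar x)}\circ h_{\bar b}$ on invariant types is exactly right (it is immediate from the definitions in Propositions \ref{prop:D-map-max} and \ref{prop:Borel_G}, using that $h_{\bar b}(q)$ is a single type so the Morley product against it is just evaluation of the fiber function). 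Where you diverge: the paper argues by contradiction, invokes Ellis' theorem to produce an idempotent \emph{element} $u\in\supp(\mu)$ of the semigroup $(\supp(\mu),*)$, writes $D_\mu^{\varphi_{\bar b}}(u)=\mu(\varphi(\bar c;\bar y))$ for $\bar c\models h_{\bar b}(u)$, finds a type $t\in\supp(\mu)$ containing $\varphi(\bar b;\bar y)\wedge\neg\varphi(\bar c;\bar y)$, and uses the minimal-left-ideal identity $t*u=t$ to reach a contradiction. You instead exploit idempotency of the \emph{measure} directly: $\mu(\varphi(\bar b;\bar y))=(\mu*\mu)(\varphi(\bar b;\bar y))=\int_{\supp(\mu)}D_\mu^{\varphi_{\bar b}}\,d\mu=\delta$, since the integrand is constant on a set of full $\mu$-measure (regularity gives $\mu(\supp(\mu))=1$, and $M$-definability of $\mu$ plus $M$-invariant supportedness make the integrand continuous and well identified on the support). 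Your version avoids any appeal to the existence of idempotent elements of the Ellis semigroup and is arguably the cleaner argument; the paper's version has the minor virtue of making explicit how the right-identity structure of the minimal ideal enters, which is reused elsewhere. No gaps.
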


\begin{proof} 

Suppose for a contradiction that $\mu(\varphi(\bar{b};\bar{y})) \neq (\mu * p)(\varphi(\bar{b};\bar{y}))$. Since the right hand side equals $D_{\mu}^{\varphi_{\bar{b}}}(p)$, by Proposition \ref{prop:min}, we get $\mu(\varphi(\bar{b};\bar{y})) \neq D_{\mu}^{\varphi_{\bar{b}}}(u)$ for any/some idempotent $u \in \supp(\mu)$ (note that the existence of an idempotent follows from Ellis theorem, as $*$ is left continuous and associative on $\supp(\mu)$). 
Consider the case $\mu(\varphi(\bar{b};\bar{y})) > D_{\mu}^{\varphi_{\bar{b}}}(u)$ (the case of the opposite inequality is analogous). 
Let $\tp(\bar{c}/M) = h_{\bar{b}}(u)$; then $D_{\mu}^{\varphi_{\bar{b}}}(u) = \mu(\varphi(\bar{c};\bar{y}))$. 
Thus, $\mu(\varphi(\bar{b};\bar{y}) \wedge \neg \varphi(\bar{c};\bar{y})) > 0$. 

So there exists some $t \in \supp(\mu)$ such that $\varphi(\bar{b};\bar{y}) \wedge \neg \varphi(\bar{c};\bar{y}) \in t$.
Notice that since $\neg \varphi(\bar{c},\bar{y}) \in t$, this implies that $\neg \varphi(\bar{x};\bar{y}) \in t \otimes h_{\bar{b}}(u)$, which implies that $\neg \varphi(\bar{b};\bar{y}) \in t * u$. By minimality of $\supp(\mu)$, $t*u = t$ and so $\neg \varphi(\bar{b};\bar{y}) \in t$,
a contradiction with $\varphi(\bar{b};\bar{y}) \in t$.
\end{proof}

\begin{remark}\label{remark: definability and invariance}
Suppose that $T$ is stable and $\mu \in \mathfrak{M}_{\bar{m}}^{\inv}(\mathfrak{C},M)$.
Then $\mu$ is $M$-definable and $M$-invariantly supported.
\end{remark}

\begin{proof} Definability follows from stability (see Fact \ref{fact: basic fact on Keisler measure sin stable theories}). Invariantly supported follows from NIP and Proposition \ref{prop:NIP}.
\end{proof}

In Proposition \ref{proposition: profiniteness of supp}, we will see that under stability, each $\mu \in \mathfrak{M}_{\bar{m}}^{\inv}(\mathfrak{C},M)$ is also minimal.

\subsection{Uniqueness of measures in stable context}
In this subsection, we assume that \textbf{$T$ is stable}
and we show uniqueness of ``$\ast$-invariant'' Keisler measures.

Since $T$ is stable, by Fact \ref{fact: basic fact on Keisler measure sin stable theories}, all measures in $\mathfrak{M}_{\bar{m}}(M)$ are definable, and so we can consider the semigroup $(\mathfrak{M}_{\bar{m}}(M),*)$. Formally, if $\mu \in \mathfrak{M}_{\bar{m}}(M)$, we let $\hat{\mu}$ be the unique 
$M$-definable extension in $\mathfrak{M}^{\inv}_{\bar{m}}(\mathfrak{C},M)$ and if $\mu, \nu \in \mathfrak{M}_{\bar{m}}(M)$, we define $\mu \otimes \nu = (\hat{\mu} \otimes \hat{\nu})|_{M}$ and similarly $\mu * \nu = (\hat{\mu} * \hat{\nu})|_{M}$. 

Thus, for every measure $\mu \in \mathfrak{M}_{\bar{m}}(M)$ and formula $\varphi(\bar{x};\bar{y}) \in \mathcal{L}(M)$, there exists a unique continuous function $F_{\mu}^{\varphi^{\textrm{opp}}(\bar y;\bar x)}: S_{\bar{x}}(M) \to [0,1]$ such that for every $\bar{b} \in M^{\bar{x}}$, 
 $F_{\mu}^{\varphi^{\textrm{opp}}(\bar y;\bar x)}(\tp(\bar{b}/M)) = 
\mu(\varphi(\bar{b};\bar{y}))$, 
which clearly coincides with $F_{\hat{\mu}}^{\varphi^{\textrm{opp}}(\bar y;\bar x)}$. 
We have
\begin{equation*}
(\mu \otimes \nu)(\varphi(\bar{x};\bar{y}))= (\hat{\mu} \otimes \hat{\nu})(\varphi(\bar{x};\bar{y})) = \int_{S_{\bar{x}}(M)} F_{\mu}^{\varphi^{\textrm{opp}}(\bar y;\bar x)} \,d\nu.
\end{equation*}
Likewise, for any formula $\varphi(\bar{x};\bar{y}) \in \mathcal{L}_{\bar{x},\bar{y}}$ 
we have that 
\begin{equation*}
    (\mu * \nu)(\varphi(\bar{b};\bar{y})) = \int_{S_{\bar{m}}(M)} F_{\mu}^{\varphi^{\textrm{opp}}(\bar y;\bar x)} \circ h_{\bar{m}} \,d\nu. 
\end{equation*}

Regarding the function $h_{\bar m}$ used above, recall that at the beginning of Subsection \ref{subsec: star new definition}, we defined the map  $h_{\bar{m}}\colon S_{\bar{m}}(\FC)\to S_{\bar{x}}(M)$ ($S_{\bar{m}}(\FC)$ considered in variables $\bar{y}$). This definition can be extended to any $N \succeq M$ in place of $\FC$. In particular, we can apply it to $N=M$. Namely, we obtain the map
$h_{\bar{m}}\colon S_{\bar{m}}(M)\to S_{\bar{m}}(M)$ taking types in $\bar{y}$ into types in $\bar{x}$, given by
 $h_{\bar{m}}(p(\bar{y})) = q(\bar{x})$,
where 
$p(\bar{y})=\tp(\sigma(\bar{m})/M)$, 
$q(\bar{x})=\tp(\sigma^{-1}(\bar{m})/M)$, and $\sigma\in\aut(\FC)$. 
We remark that this map is both a homeomorphism and an involution (after identifying variables $\bar{x}$ with $\bar{y}$ and their corresponding spaces of types over $M$) with 
$h_{\bar{m}}([\theta(\bar{m};\bar{y})]) = [\theta(\bar{x};\bar{m})]$.
Hence, $h_{\bar{m}}$ induces a map 
$(h_{\bar{m}})_{*}: \mathfrak{M}_{\bar{m}}(M) \to \mathfrak{M}_{\bar{m}}(M)$ via the standard pushforward. We write $\mu^{-1}:=(h_{\bar{m}})_{*}(\mu)$. In other words, for any formula $\theta(\bar{x};\bar{y}) \in \mathcal{L}$, we have that 
\begin{equation*}
    \mu^{-1}(\theta(\bar{x};\bar{m})) = \mu(\theta(\bar{m};\bar{y})). 
\end{equation*}

Let $G$ be a relatively $\bar{m}$-type-definable over $M$ 
subgroup of $\aut(\mathfrak{C})$. 
There is a partial type $\pi(\bar{x};\bar{y})$ over $\emptyset$
such that $\pi(\bar{x};\bar{y})\vdash\bar{x}\equiv_{\emptyset}\bar{y}$
and $G=G_{\pi,\FC}$. We consider the space $S_{\pi(\bar{m};\bar{y})}(M)$
and shortly denote it by $\widetilde{G} = \{\tp(\sigma(\bar{m})/M)\;\colon\; \sigma \in G\}$, avoiding the reference to $\pi$.

Fix $\mu \in \mathfrak{M}_{\bar{m}}(M)$.
We say that $\mu$ is
{\em $G$-$\ast$-right invariant} if for every $\sigma \in G$,  $\mu * \tp(\sigma(\bar{m})/M) = \mu$. 
Similarly, we say that $\mu$ is $G$-$\ast$-left invariant if for every $\sigma \in G$, $\tp(\sigma(\bar{m})/M) *\mu = \mu$. 

\begin{remark}\phantomsection\label{rem:inverse}
\begin{enumerate}
    \item $\supp(\mu^{-1}) = \{p^{-1}\;\colon\; p \in \supp(\mu)\}$. 
    \item 
$p \in \widetilde{G}$ if and only if $p^{-1} \in \widetilde{G}$. 
\end{enumerate}
\end{remark}

\begin{proof} Clear by definitions.
\end{proof}

\begin{lemma}\label{lemma:implies-inverse} 
Assume that
\begin{enumerate}
    \item $\mu \in \mathfrak{M}_{\bar{m}}(M)$ is $G$-$*$-right invariant,
    \item $\mu(\widetilde{G}) = 1$.
\end{enumerate}
Then $\mu = \mu^{-1}$ 
(after the identification of $\bar x$ with $\bar y$).
\end{lemma}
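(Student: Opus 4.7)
The plan is to deduce $\mu = \mu^{-1}$ from two separate identities: $\mu \ast \mu^{-1} = \mu$ and $\mu \ast \mu^{-1} = \mu^{-1}$. Both rely crucially on stability of $T$, which via Fact \ref{fact: basic fact on Keisler measure sin stable theories} gives unique global $M$-definable extensions $\hat\mu, \widehat{\mu^{-1}} \in \mathfrak{M}^{\inv}_{\bar m}(\FC,M)$; both are $M$-invariantly supported by Proposition \ref{prop:NIP}, and by Remark \ref{rem:inverse} and hypothesis (2), both concentrate on $[\pi(\bar m;\bar y)] \subseteq S_{\bar m}(\FC)$ globally (using regularity to pass from the $M$-level statement).

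For the first identity, the hypothesis $\mu \ast q = \mu$ for $q \in \widetilde G$ lifts by uniqueness of $M$-definable extensions to $\hat\mu \ast \hat q = \hat\mu$ globally, and, indeed, to $\hat\mu \ast r = \hat\mu$ for every $r \in \supp(\widehat{\mu^{-1}})$, since each such $r$ is $M$-invariant and its restriction $r|_M$ lies in $\widetilde G$. Using the formula from Proposition \ref{prop:star.product.1}, I would write
\[
(\hat\mu \ast \widehat{\mu^{-1}})(\varphi(\bar b;\bar y)) = \int_{S_{\bar m}(\FC)} G_\mu^{\varphi(\bar b;\bar y)} \, d\widehat{\mu^{-1}},
\]
observe that $G_\mu^{\varphi(\bar b;\bar y)}(r) = (\hat\mu \ast r)(\varphi(\bar b;\bar y))$ is constantly equal to $\hat\mu(\varphi(\bar b;\bar y))$ on $\supp(\widehat{\mu^{-1}})$, and conclude that the integral equals $\hat\mu(\varphi(\bar b;\bar y))$. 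Hence $\mu \ast \mu^{-1} = \mu$.

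The second identity is the harder half. First I would establish left $\ast$-invariance of $\mu^{-1}$ by $\widetilde G$, i.e.\ $q \ast \mu^{-1} = \mu^{-1}$ for every $q \in \widetilde G$. At the level of types, the explicit formula in Proposition \ref{prop:formula.for.star.on.types} together with the group identity $(\tau\sigma)^{-1} = \sigma^{-1}\tau^{-1}$ yields $(p \ast q)^{-1} = q^{-1} \ast p^{-1}$. To transfer this to a pair (measure, type), I would approximate $\mu$ in the weak-$\ast$ topology by finite convex combinations of Dirac types, apply the identity termwise using affinity (Propositions \ref{prop:left.affine} and \ref{prop:right.affine}), and pass to the limit using left-continuity of convolution (Proposition \ref{prop:conv_left_cont}), continuity of the pushforward $\alpha \mapsto \alpha^{-1}$, and right-continuity of $\hat q \ast (-)$. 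The last point is where stability is essential: every $M$-invariant type is $M$-definable with clopen defining sets, so the fiber function $F_{\hat q}^{\varphi^{\mathrm{opp}}}$ is an indicator of a clopen and hence continuous, which is exactly what makes $\hat q \ast (-)$ weak-$\ast$ continuous in its second argument. The resulting measure-level identity $(\mu \ast q^{-1})^{-1} = q \ast \mu^{-1}$, combined with $\mu \ast q^{-1} = \mu$ from right $\ast$-invariance (applied to $q^{-1} \in \widetilde G$), gives $q \ast \mu^{-1} = \mu^{-1}$. For arbitrary $\lambda \in \mathfrak{M}_{\bar m}(M)$ with $\lambda(\widetilde G) = 1$, I would then approximate $\lambda$ by finite convex combinations of Dirac types in $\widetilde G$; affinity and left-continuity together give $\lambda \ast \mu^{-1} = \mu^{-1}$, and specializing to $\lambda = \mu$ yields $\mu \ast \mu^{-1} = \mu^{-1}$.

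Combining the two identities, $\mu = \mu \ast \mu^{-1} = \mu^{-1}$, as required. The main obstacle is the second step, specifically the measure-level extension of the type-level inverse identity $(p \ast q)^{-1} = q^{-1} \ast p^{-1}$: affinity handles finite convex combinations, but passing to the weak-$\ast$ limit requires continuity in both arguments of $\ast$, which is not automatic for general convolution but does hold here thanks to the clopen definability of $M$-invariant types in the stable setting.
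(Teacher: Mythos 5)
Your first identity, $\mu \ast \mu^{-1}=\mu$, is exactly the paper's opening claim (the paper proves the slightly stronger statement $\mu\ast\nu=\mu$ for every $\nu$ concentrated on $\widetilde{G}$, by observing that the fiber function $F_{\mu}^{\theta^{\textrm{opp}}}\circ h_{\bar{m}}$ is constant on $\supp(\nu)$), so that half is fine. The problem is the second half. The type-level identity $(p\ast q)^{-1}=q^{-1}\ast p^{-1}$ does \emph{not} follow from Proposition \ref{prop:formula.for.star.on.types} ``together with the group identity $(\tau\sigma)^{-1}=\sigma^{-1}\tau^{-1}$''. Unwinding the definitions, $\theta(\bar{x};\bar{m})\in(p\ast q)^{-1}$ iff $\theta(\bar{x};\bar{y})\in\hat{p}_{\bar{y}}\otimes(q^{-1})_{\bar{x}}$, while $\theta(\bar{x};\bar{m})\in q^{-1}\ast p^{-1}$ iff $\theta(\bar{x};\bar{y})\in\widehat{q^{-1}}_{\bar{x}}\otimes p_{\bar{y}}$; so the identity is \emph{equivalent} to commutativity of the Morley product of $\hat{p}$ and $\widehat{q^{-1}}$. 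At the level of realizations, the two sides are computed from tuples $(\rho(\bar{m}),\sigma(\bar{m}))$ satisfying opposite independence requirements, and swapping them is precisely forking symmetry. Since this is the one genuinely stable-theoretic input, asserting the identity without it is a real gap, not a cosmetic omission.

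Once you supply that ingredient, your route still works — the approximation scheme (Diracs supported in $\widetilde{G}$, affinity via Propositions \ref{prop:left.affine} and \ref{prop:right.affine}, left continuity, and continuity of $q\ast(-)$ for definable $q$ via the clopen sets $D_{q,M}^{\varphi}$) is sound. But it is worth seeing how much shorter the paper's argument is: it applies commutativity of $\otimes$ \emph{directly to the measure}. Since $(h_{\bar{m}})_{\ast}(\mu^{-1})=\mu$, the definition of $\ast$ gives $(\mu_{\bar{y}}\ast(\mu^{-1})_{\bar{y}})(\theta(\bar{m};\bar{y}))=(\mu_{\bar{y}}\otimes\mu_{\bar{x}})(\theta(\bar{x};\bar{y}))$; swapping the factors and reading the result as $(\mu_{\bar{x}}\ast(\mu^{-1})_{\bar{x}})(\theta(\bar{x};\bar{m}))$, which equals $\mu_{\bar{x}}(\theta(\bar{x};\bar{m}))=\mu^{-1}(\theta(\bar{m};\bar{y}))$ by the first identity in the other variables, finishes the proof with no approximation, no left-invariance of $\mu^{-1}$, and no type-level inverse formula. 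I would recommend either adopting that computation or, at minimum, rewriting your justification of $(p\ast q)^{-1}=q^{-1}\ast p^{-1}$ to make the appeal to commutativity of $\otimes$ explicit.
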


\begin{proof}
Note that, by Remark \ref{rem:inverse} and the second point in assumptions, we have $\supp(\mu^{-1}) \subseteq \widetilde{G}$.

\begin{clm}
    For every $\nu\in\mathfrak{M}_{\bar{m}}(M)$ with $\nu(\widetilde{G})=1$ we have
    $\mu * \nu = \mu$.
\end{clm}

\begin{clmproof}
      Fix an $\CL$-formula $\theta(\bar{x};\bar{y})$, our goal is $(\mu\ast\nu)(\theta(\bar{m};\bar{y}))=\mu(\theta(\bar{m};\bar{y}))$.
       For every $q \in \supp(\nu)\subseteq \widetilde{G}\subseteq S_{\bar{y}}(M)$, we have that
\begin{equation*}
     F_{\mu_{\bar{x}}}^{\theta^{\textrm{opp}}(\bar y;\bar x)}(h_{\bar{m}}(q)) = \big(\mu_{\bar{y}} \otimes (h_{\bar{m}}(q))_{\bar x}\big)\big(\theta(\bar{x};\bar{y})\big) 
     = (\mu * q)(\theta(\bar{m};\bar{y})) = \mu(\theta(\bar{m},\bar{y})). 
\end{equation*} 
\noindent Therefore, 
\begin{align*}
    (\mu * \nu)(\theta(\bar{m};\bar{y})) &= \int_{S_{\bar{m}}(M)} \big(F_{\mu}^{\theta^{\textrm{opp}}(\bar y;\bar x)} \circ h_{\bar{m}}\big)\, d\nu = \int_{S_{\bar{m}}(M)} \mu(\theta(\bar{m};\bar{y}))\, d\nu = \mu(\theta(\bar{m};\bar{y})). 
\end{align*}
\end{clmproof}
\noindent
In particular, for every formula $\theta(\bar{x};\bar{y})\in\CL$, we have: 
$$(\mu_{\bar{y}}\ast(\mu^{-1})_{\bar y})(\theta(\bar{m};\bar{y}))=\mu(\theta(\bar{m};\bar{y})),$$
$$(\mu_{\bar{x}}\ast(\mu^{-1})_{\bar x})(\theta(\bar{x};\bar{m}))=\mu(\theta(\bar{x};\bar{m})).$$
We use commutativity of the Morley product in stable theories to compute:
\begin{align*}
    \mu(\theta(\bar{m};\bar{y})) &= (\mu_{\bar{y}}\ast(\mu^{-1})_{\bar y})(\theta(\bar{m};\bar{y})) = \Big(\mu_{\bar{y}}\otimes \big( (h_{\bar{m}})_{\ast}((\mu^{-1})_{\bar y}\big)_{\bar{x}}\Big)\big(\theta(\bar{x};\bar{y})\big) \\
    &= (\mu_{\bar{y}}\otimes\mu_{\bar{x}})\big(\theta(\bar{x};\bar{y})\big)
    = (\mu_{\bar{x}}\otimes\mu_{\bar{y}})\big(\theta(\bar{x};\bar{y})\big) \\
    &= \Big(\mu_{\bar{x}}\otimes \big( (h_{\bar{m}})_{\ast}((\mu^{-1})_{\bar x}\big)_{\bar{y}}\Big)\big(\theta(\bar{x};\bar{y})\big) = (\mu_{\bar{x}}\ast(\mu^{-1})_{\bar x})(\theta(\bar{x};\bar{m})) \\
    &= \mu_{\bar{x}}(\theta(\bar{x};\bar{m})) = \mu^{-1}(\theta(\bar{m};\bar{y})).
\end{align*}
Thus $\mu=\mu^{-1}$.
\end{proof}

\begin{lemma}\label{lemma:G inv right to left}
Assume that 
\begin{enumerate}
    \item $\mu \in \mathfrak{M}_{\bar{m}}(M)$ is $G$-$*$-right invariant,
    \item $\mu(\widetilde{G}) = 1$.
\end{enumerate}
Then $\mu$ is $G$-$\ast$-left-invariant.
\end{lemma}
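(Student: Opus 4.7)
The plan is to mimic the classical fact that for a locally compact group $G$, a right-invariant measure on $G$ that is also inverse-invariant ($\mu = \check\mu$) is automatically left-invariant. The translation to our setting rests on three ingredients: Lemma~\ref{lemma:implies-inverse}, commutativity of the Morley product between $M$-invariant measures in stable theories (which by Fact~\ref{fact: basic fact on Keisler measure sin stable theories} are just the $M$-definable ones), and the observation made early in this subsection that $(h_{\bar{m}})_{\ast}(\hat{\nu})$ — viewed as a measure on $S_{\bar{x}}(M)$ — coincides with $\nu^{-1}$, so that $(\mu \ast \nu)(\theta(\bar{m};\bar{y}))$ unfolds as the Morley product $(\hat{\mu}_{\bar{y}} \otimes \widehat{\nu^{-1}}_{\bar{x}})(\theta(\bar{x};\bar{y}))$.

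First, I would invoke Lemma~\ref{lemma:implies-inverse} directly; our hypotheses are exactly those of that lemma, so $\mu = \mu^{-1}$. Next, I would establish the general ``inverse reverses the product'' identity
\[
(\nu_1 \ast \nu_2)^{-1} \;=\; \nu_2^{-1} \ast \nu_1^{-1}
\qquad \text{for all } \nu_1,\nu_2 \in \mathfrak{M}_{\bar{m}}(M).
\]
This is proved by unfolding both sides: on a formula $\theta(\bar{x};\bar{y}) \in \CL$, the left-hand side becomes $(\mu_1 \ast \mu_2)(\theta(\bar{m};\bar{y})) = (\hat{\nu_1}_{\bar{y}} \otimes \widehat{\nu_2^{-1}}_{\bar{x}})(\theta(\bar{x};\bar{y}))$, while the right-hand side, after rewriting $\theta(\bar{y};\bar{m}) = \theta^{\mathrm{opp}}(\bar{m};\bar{y})$ with $\theta^{\mathrm{opp}}(\bar u;\bar v) := \theta(\bar v;\bar u)$, becomes $(\widehat{\nu_2^{-1}}_{\bar{y}} \otimes \hat{\nu_1}_{\bar{x}})(\theta^{\mathrm{opp}}(\bar{x};\bar{y}))$. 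Commutativity of the Morley product in stable theories, followed by a routine renaming of the silent integration variables, shows these two iterated integrals coincide.

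Finally, fix $\sigma \in G$ and put $p := \tp(\sigma(\bar{m})/M)$; by Remark~\ref{rem:inverse} we have $p^{-1} \in \widetilde{G}$. Then
\[
(p \ast \mu)^{-1} \;=\; \mu^{-1} \ast p^{-1} \;=\; \mu \ast p^{-1} \;=\; \mu,
\]
where the first equality is the identity from the previous step, the second uses $\mu = \mu^{-1}$, and the third is $G$-$\ast$-right-invariance applied to $p^{-1} \in \widetilde{G}$. Since $(\cdot)^{-1}$ is an involution on $\mathfrak{M}_{\bar{m}}(M)$, this yields $p \ast \mu = \mu^{-1} = \mu$, which is the desired left-invariance.

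The main obstacle is the careful bookkeeping in the middle step — one must track which variable tuple plays the role of ``object variable'' for each measure and which plays the role of parameter in the formula, and verify that after the commutativity swap and the renaming of the silent integration variables the two Morley iterates really do agree. All other steps are short applications of Lemma~\ref{lemma:implies-inverse}, Remark~\ref{rem:inverse}, and our hypotheses.
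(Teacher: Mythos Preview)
Your proof is correct and follows essentially the same route as the paper's. Both arguments invoke Lemma~\ref{lemma:implies-inverse} to get $\mu = \mu^{-1}$, then unfold the $\ast$-product into a Morley product, commute using stability, and fold back to conclude; the paper carries out this computation directly for $p = \tp(\sigma(\bar m)/M)$ (arriving at $(p\ast\mu)(\theta(\bar m;\bar y)) = (\mu^{-1}\ast p^{-1})(\theta(\bar x;\bar m))$ and then applying right-invariance), whereas you package the same computation as the general anti-homomorphism identity $(\nu_1\ast\nu_2)^{-1} = \nu_2^{-1}\ast\nu_1^{-1}$ before specializing. Incidentally, the paper proves the type-vs-measure special case of your identity as a separate lemma (Lemma~\ref{lemma:left vs right}) just after this one, so your organization effectively anticipates and slightly generalizes that step.
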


\begin{proof} 
By Lemma \ref{lemma:implies-inverse} we know that $\mu^{-1}=\mu$
and so $\mu^{-1}$ is right $G$-invariant.
Fix an $\CL$-formula $\theta(\bar{x};\bar{y})$ and $\sigma\in G$.
We use commutativity of the Morley product in stable theories to compute:
\begin{align*}
\Big(\tp\big(\sigma(\bar{m})/M\big)\ast\mu\Big)
\big(\theta(\bar{m};\bar{y})\big) &= \Big(\tp\big(\sigma(\bar{m})/M\big)_{\bar{y}}\otimes((h_{\bar{m}})_\ast(\mu))_{\bar x}\Big)
\big(\theta(\bar{x};\bar{y})\big) \\
&= \Big(\tp\big(\sigma(\bar{m})/M\big)_{\bar{y}}\otimes(\mu^{-1})_{\bar x}\Big)
\big(\theta(\bar{x};\bar{y})\big) \\
&= \Big((\mu^{-1})_{\bar x}\otimes\tp\big(\sigma(\bar{m})/M\big)_{\bar{y}}\Big)
\big(\theta(\bar{x};\bar{y})\big) \\
&= \Big((\mu^{-1})_{\bar x}\otimes h_{\bar{m}}\big(\tp\big(\sigma^{-1}(\bar{m})/M\big)\big)_{\bar{y}}\Big)
\big(\theta(\bar{x};\bar{y})\big) \\
&= \Big(\mu^{-1}\ast \tp\big(\sigma^{-1}(\bar{m})/M\big)\Big)
\big(\theta(\bar{x};\bar{m})\big) \\
&= \mu^{-1}\big(\theta(\bar{x};\bar{m})\big)=\mu\big(\theta(\bar{m};\bar{y})\big). \qedhere 
\end{align*}
\end{proof}

\begin{lemma}\label{lemma:left vs right}
Let $\nu \in \mathfrak{M}_{\bar{m}}(M)$ and $p \in S_{\bar{m}}(M)$. For every $\CL$-formula $\varphi(\bar{x};\bar{y})$, we have the following
\begin{equation*}
    (p* \nu^{-1} )(\varphi(\bar{m};\bar{y})) = (\nu *p^{-1})(h_{\bar{m}}^{-1}([\varphi(\bar{m};\bar{y})])). 
\end{equation*}
In particular, $\nu$ is $G$-$*$-right invariant if and only if $\nu^{-1}$ is $G$-$*$-left invariant.
\end{lemma}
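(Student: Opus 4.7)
The main identity will follow from commutativity of the Morley product in stable theories together with careful bookkeeping of the pushforward operation $(h_{\bar m})_*$. The plan is to unfold both sides using Definition \ref{def:star.definition}, rewrite each as a Morley product on the product type space, and match them using the symmetry provided by stability.

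First, I will compute the left hand side. By the definition of the $*$-product,
\begin{equation*}
(p * \nu^{-1})(\varphi(\bar m;\bar y)) = (\hat p_{\bar y} \otimes ((h_{\bar m})_*(\widehat{\nu^{-1}}))_{\bar x})(\varphi(\bar x;\bar y)).
\end{equation*}
The key technical step is to verify that $(h_{\bar m})_*(\widehat{\nu^{-1}}) = \hat\nu$ in the $\bar x$-slot, under the identification $\bar x\leftrightarrow \bar y$. This reduces to the elementary equality $h_{\bar m}^{-1}([\theta(\bar x;\bar m)]) = [\theta(\bar m;\bar y)]$ from the paragraph preceding the lemma, the definition $\nu^{-1}(\theta(\bar x;\bar m)) = \nu(\theta(\bar m;\bar y))$, and uniqueness of the $M$-definable extension guaranteed by stability (Fact \ref{fact: basic fact on Keisler measure sin stable theories}). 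Consequently, the left hand side equals $(\hat p_{\bar y} \otimes \hat\nu_{\bar x})(\varphi(\bar x;\bar y))$.

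Second, I will treat the right hand side analogously. Using $h_{\bar m}^{-1}([\varphi(\bar m;\bar y)]) = [\varphi(\bar y;\bar m)]$ (since $h_{\bar m}$ is an involution) together with the auxiliary formula $\psi(\bar x;\bar y) := \varphi(\bar y;\bar x)$, for which $\psi(\bar m;\bar y) = \varphi(\bar y;\bar m)$, I will rewrite
\begin{equation*}
(\nu * p^{-1})(h_{\bar m}^{-1}([\varphi(\bar m;\bar y)])) = (\nu * p^{-1})(\psi(\bar m;\bar y)) = (\hat\nu_{\bar y} \otimes \hat p_{\bar x})(\psi(\bar x;\bar y)),
\end{equation*}
and the last expression unfolds to $(\hat\nu_{\bar y} \otimes \hat p_{\bar x})(\varphi(\bar y;\bar x))$. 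Relabeling the dummy Morley-product variables yields $(\hat\nu_{\bar x} \otimes \hat p_{\bar y})(\varphi(\bar x;\bar y))$, and commutativity of the Morley product in stable theories (\cite[Theorem 5.16(a)]{Wild}) identifies this with $(\hat p_{\bar y} \otimes \hat\nu_{\bar x})(\varphi(\bar x;\bar y))$, i.e., with the left hand side.

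For the ``in particular'' part, I will apply the identity with $p$ ranging over $\widetilde G$, combined with the elementary relation $\nu^{-1}(\varphi(\bar m;\bar y)) = \nu(\varphi(\bar y;\bar m))$, to derive the equivalence $q * \nu^{-1} = \nu^{-1}$ iff $\nu * q^{-1} = \nu$; closure of $\widetilde G$ under inversion (Remark \ref{rem:inverse}(2)) will then transfer this into the claimed equivalence of $G$-$*$-right invariance of $\nu$ with $G$-$*$-left invariance of $\nu^{-1}$. The main obstacle will be notational rather than mathematical: the identification $\bar x \leftrightarrow \bar y$, the conventions for which slot of $\varphi(\bar x;\bar y)$ carries the parameter versus the free variable, and the precise domain/codomain bookkeeping of $h_{\bar m}$ and its pushforward must all be handled carefully to justify the variable-relabeling step. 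Once these are navigated, the algebraic content reduces entirely to commutativity of $\otimes$ under stability.
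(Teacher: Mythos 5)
Your proposal is correct and follows essentially the same route as the paper: both arguments unfold the convolution into a Morley product, use that $(h_{\bar m})_*$ is an involution (so $(\nu^{-1})^{-1}=\nu$ and $(p^{-1})^{-1}=p$), and then invoke commutativity of $\otimes$ in stable theories; the only difference is that you unfold both sides and meet in the middle with a variable relabeling, whereas the paper transforms the left-hand side into the right-hand side in a single chain. Your sketch of the ``in particular'' part (which the paper leaves implicit) is also correct.
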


\begin{proof} 
Take $\varphi(\bar{x};\bar{y})\in\CL$ and compute:
\begin{align*}
    (p\ast\nu^{-1})\big(\varphi(\bar{m};\bar{y})\big) &= \Big(p_{\bar y}\otimes(h_{\bar{m}})_\ast\big((\nu^{-1})_{\bar y}\big)_{\bar x}\Big)\big(\varphi(\bar{x};\bar{y})\big) \\
    &= \big(p_{\bar y}\otimes\nu_{\bar{x}}\big)\big(\varphi(\bar{x};\bar{y})\big)= \big(\nu_{\bar{x}}\otimes p_{\bar y}\big)\big(\varphi(\bar{x};\bar{y})\big) \\
    &= \big(\nu_{\bar{x}}\otimes h_{\bar{m}}((p^{-1})_{\bar x})_{\bar y}\big)\big(\varphi(\bar{x};\bar{y})\big) \\
    &= \big(\nu_{\bar{x}}\ast (p^{-1})_{\bar x}\big)\big(\varphi(\bar{x};\bar{m})\big)=(\nu\ast p^{-1})\big(h_{\bar{m}}^{-1}[\varphi(\bar{m};\bar{y})]\big). \qedhere 
\end{align*}
\end{proof}

\begin{proposition}\label{prop:unique} 
Assume that 
\begin{enumerate}
    \item $\mu \in \mathfrak{M}_{\bar{m}}(M)$ is $G$-$*$-right invariant,
    \item $\mu(\widetilde{G}) = 1$.
\end{enumerate}
If $\nu \in \mathfrak{M}_{\bar{m}}(M)$ is such that
\begin{enumerate}
    \item $\nu$ is $G$-$*$-left invariant or $G$-$*$-right invariant, and
    \item $\nu(\widetilde{G}) = 1$,
\end{enumerate}
then $\mu = \nu$. 
\end{proposition}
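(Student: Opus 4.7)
The plan is to first reduce the case where $\nu$ is $G$-$\ast$-left invariant to the case where $\nu$ is $G$-$\ast$-right invariant, and then to compute $\mu\ast\nu$ in two different ways, exploiting commutativity of the Morley product in stable theories.

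For the reduction, suppose $\nu$ is $G$-$\ast$-left invariant. By Lemma \ref{lemma:left vs right} and Remark \ref{rem:inverse}, $\nu^{-1}$ is $G$-$\ast$-right invariant with $\nu^{-1}(\widetilde{G}) = 1$. Granting the right-invariant case, we obtain $\mu = \nu^{-1}$, and then applying Lemma \ref{lemma:implies-inverse} to $\mu$ (yielding $\mu = \mu^{-1}$) gives $\nu = (\nu^{-1})^{-1} = \mu^{-1} = \mu$.

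So suppose now that $\nu$ is $G$-$\ast$-right invariant with $\nu(\widetilde{G}) = 1$. By Lemmas \ref{lemma:implies-inverse} and \ref{lemma:G inv right to left} applied to both $\mu$ and $\nu$, both measures are self-inverse and also $G$-$\ast$-left invariant. Fix an $\mathcal{L}$-formula $\theta(\bar x;\bar y)$. The claim in the proof of Lemma \ref{lemma:implies-inverse} (which applies to $\mu$ via its right invariance together with the hypothesis $\nu(\widetilde{G}) = 1$) yields $\mu\ast\nu=\mu$, so
$$\mu(\theta(\bar m;\bar y)) = (\mu\ast\nu)(\theta(\bar m;\bar y)) = (\mu_{\bar y}\otimes\nu^{-1}_{\bar x})(\theta(\bar x;\bar y)).$$
By commutativity of the Morley product in stable theories, this equals
$$(\nu^{-1}_{\bar x}\otimes\mu_{\bar y})(\theta(\bar x;\bar y)) = \int_{S_{\bar y}(M)}\nu^{-1}(\theta(\bar x;\bar b))\,d\mu(\tp(\bar b/M)).$$
For $\tp(\bar b/M)\in\supp(\mu)\subseteq\widetilde{G}$ there exists $\sigma\in G$ with $\bar b\equiv_M\sigma(\bar m)$, and by $M$-invariance of $\nu^{-1}$ we have $\nu^{-1}(\theta(\bar x;\bar b)) = \nu^{-1}(\theta(\bar x;\sigma(\bar m)))$. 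Unfolding the $G$-$\ast$-left invariance identity $\tp(\sigma(\bar m)/M)\ast\nu=\nu$ through the definition of $\ast$ (evaluated at the formula $\theta(\bar m;\bar y)$) shows $\nu^{-1}(\theta(\bar x;\sigma(\bar m))) = \nu(\theta(\bar m;\bar y))$. Hence the integrand is constant on $\supp(\mu)$, and the integral equals $\nu(\theta(\bar m;\bar y))$. We conclude that $\mu(\theta(\bar m;\bar y)) = \nu(\theta(\bar m;\bar y))$ for every $\theta$, so $\mu = \nu$.

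The main technical hurdle is the careful bookkeeping of the identification of $\bar x$ with $\bar y$, together with handling $\nu^{-1}$ — which naturally lives in variables $\bar x$ — jointly with $\mu$ living in $\bar y$, especially when invoking commutativity of the Morley product; this requires passing to the unique $M$-definable extensions of $\mu$ and $\nu^{-1}$ to $\FC$, justified in the stable setting by Fact \ref{fact: basic fact on Keisler measure sin stable theories}. Once the notation is stabilized, the proof is a direct assembly of the ingredients already developed in this subsection.
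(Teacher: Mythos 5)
Your proof is correct and rests on the same ingredients as the paper's (Lemmas \ref{lemma:implies-inverse}, \ref{lemma:G inv right to left}, \ref{lemma:left vs right}, the Claim inside the proof of Lemma \ref{lemma:implies-inverse}, and commutativity of the Morley product in stable theories), but the endgame differs. The paper establishes both $\mu\ast\nu=\mu$ and $\nu\ast\mu=\nu$ and closes with the algebraic identity $\mu=\mu\ast\nu=(\nu\ast\mu)^{-1}=\nu^{-1}=\nu$; you instead evaluate $\mu\ast\nu$ directly as $(\nu^{-1}_{\bar x}\otimes\mu_{\bar y})(\theta(\bar x;\bar y))=\int F^{\theta}_{\widehat{\nu^{-1}}}\,d\mu$ and observe that the integrand is constant, equal to $\nu(\theta(\bar m;\bar y))$, on $\supp(\mu)\subseteq\widetilde{G}$. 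Your closing makes the mechanism more visible (left invariance of $\nu$ forces the fiber function of $\widehat{\nu^{-1}}$ to be constant on $\widetilde{G}$), at the cost of a pointwise analysis of the support that the paper avoids. Your opening reduction is also slightly roundabout: since the core computation uses only left invariance of $\nu$, you could run it directly in the left-invariant case and reduce the right-invariant case to it via Lemma \ref{lemma:G inv right to left}, rather than the other way around.

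One step needs more justification than ``unfolding the definition of $\ast$'': the identity $\widehat{\nu^{-1}}(\theta(\bar x;\sigma(\bar m)))=\nu(\theta(\bar m;\bar y))$ for $\sigma\in G$. The definition of $\ast$ together with left invariance gives $\nu(\theta(\bar m;\bar y))=(\tp(\sigma(\bar m)/M)\ast\nu)(\theta(\bar m;\bar y))=\big(\tp(\sigma(\bar m)/M)_{\bar y}\otimes\nu^{-1}_{\bar x}\big)(\theta(\bar x;\bar y))$, whereas $\widehat{\nu^{-1}}(\theta(\bar x;\sigma(\bar m)))=\big(\nu^{-1}_{\bar x}\otimes\tp(\sigma(\bar m)/M)_{\bar y}\big)(\theta(\bar x;\bar y))$; identifying the two requires one further application of commutativity of the Morley product (available here by stability and Fact \ref{fact: basic fact on Keisler measure sin stable theories}), not merely the definition. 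With that made explicit, the argument is complete.
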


\begin{proof}
By Lemma \ref{lemma:implies-inverse}, $\mu=\mu^{-1}$.
By Claim 1 from the proof of Lemma \ref{lemma:implies-inverse}, we have that $\mu\ast\nu=\mu$. 

If $\nu$ is $G$-$*$-right invariant, then we obtain that $\nu = \nu^{-1}$ and that $\nu$ is $G$-$*$-left invariant (by Lemma \ref{lemma:implies-inverse} and Lemma \ref{lemma:G inv right to left}).
Otherwise, if $\nu$ is $G$-$*$-left invariant, then, by Lemma \ref{lemma:left vs right} 
and Remark \ref{rem:inverse}, $\nu^{-1}$ is $G$-$*$-right invariant 
and $\nu^{-1}(\widetilde{G})=1$, 
so $\nu^{-1} = (\nu^{-1})^{-1} = \nu$ and $\nu^{-1}$ is $G$-$*$-left invariant (by Lemma \ref{lemma:implies-inverse} and Lemma \ref{lemma:G inv right to left}). 
In either case, $\nu = \nu^{-1}$ and $\nu$ is bi-$G$-$*$-invariant. 

Now, we use Claim 1 from the proof of Lemma \ref{lemma:implies-inverse} once again, but this time with switched roles of $\mu$ and $\nu$ to conclude that $\nu\ast\mu=\nu$.

We have that $\mu\ast\nu=(\nu\ast\mu)^{-1}$.
Indeed, let $\theta(\bar{x};\bar{y})$ be an $\CL$-formula. Then, since Morley product commutes,
\begin{align*}
    (\mu\ast\nu)\big(\theta(\bar{m};\bar{y})\big) &= \big(\mu_{\bar{y}}\otimes\big((h_{\bar{m}})_\ast(\nu)\big)_{\bar{x}}\big)\big(\theta(\bar{x};\bar{y})\big)= \big(\mu_{\bar{y}}\otimes(\nu^{-1})_{\bar{x}}\big)\big(\theta(\bar{x};\bar{y})\big) \\
    &= \big(\mu_{\bar{y}}\otimes\nu_{\bar{x}}\big)\big(\theta(\bar{x};\bar{y})\big) = \big(\nu_{\bar{x}}\otimes \mu_{\bar{y}}\big)\big(\theta(\bar{x};\bar{y})\big) \\
    &= 
    \big(\nu_{\bar{x}}\otimes 
    \big( (h_{\bar{m}})_\ast \mu\big)_{\bar{y}}\big)
    \big(\theta(\bar{x};\bar{y})\big)=(\nu\ast\mu)\big(\theta(\bar{x};\bar{m})\big) \\
&= (\nu \ast \mu)^{-1}(\theta(\bar m;\bar{y})).
\end{align*}
Finally, $\mu=\mu\ast\nu=(\nu\ast\mu)^{-1}=\nu^{-1}=\nu$.
\end{proof}

We remark that the version of Proposition \ref{prop:unique} with the assumption that  $\nu$ is $G$-$*$-right invariant alternatively follows from  Corollary \ref{cor: uniqueness for measures}(1) and Lemma \ref{lemma: relatively-type-definable} (proved in the next subsection). In order to see it, recall that a partial type $\pi(\bar x;\bar y) \vdash \bar x \equiv \bar y$ was chosen so that that $G=G_{\pi,\FC}$. Since $\mu,\nu$ are concentrated on  $\widetilde{G}$, we clearly have that $\hat{\mu},\hat{\nu} \in \mathfrak{M}^{\inv}_{\pi(\bar m;\bar y)}(\FC)$. On the other, since $\mu$ and $\nu$ are $G$-$*$-right invariant, by Lemma  \ref{lemma: relatively-type-definable}, $\hat{\mu},\hat{\nu}$ are (left) $G$-invariant. Thus, using Corollary \ref{cor: uniqueness for measures}(1), we conclude that $\hat{\mu}=\hat{\nu}$, which implies that $\mu=\nu$.

\subsection{Main conjecture in stable theories}\label{subsec:0.7 for stable}
Again, in this subsection, we assume that \textbf{$T$ is stable}. 
We will use the notation from Subsection \ref{subsec: group chunk 3}
and employ Theorem \ref{theorem: counterpart of Newelski's theorem}.

\begin{remark}\label{fact:extend}
Let $\mu \in \mathfrak{M}_{\bar{y}}(M)$. Then $\supp(\hat{\mu})=\{\hat{p}: p \in \supp(\mu)\}$, 
where the operation $\hat{}$ denotes taking the unique $M$-invariant extension to $\FC$.
\end{remark}

\begin{proof} 
The inclusion $\subseteq$ follows from the fact $\hat{\mu}$ is $M$-invariantly supported. For the opposite inclusion, consider any $p \in \supp(\mu)$ and observe that an easy compactness argument allows us to extend $p$ to some $q \in \supp(\hat{\mu})$. Since every element of the support of $\hat{\mu}$ is $M$-invariant, $q$ is $M$-invariant, and thus $q=\hat{p}$ (by uniqueness of an $M$-invariant ($=$ non-forking over $M$) extension).
\end{proof}

\begin{proposition}\label{proposition: profiniteness of supp}
Suppose that $T$ is stable and $\mu \in \mathfrak{M}_{\bar{m}}^{\inv}(\mathfrak{C},M)$ is idempotent. 
Then, $(\supp(\mu),*)$ is a profinite group. In particular, $\mu$ is minimal in the sense explained in Proposition \ref{prop:min}.
\end{proposition}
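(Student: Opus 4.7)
The plan is to reduce this statement to Theorem \ref{theorem: counterpart of Newelski's theorem} via the restriction map $S^{\inv}_{\bar m}(\FC,M) \to S_{\bar m}(M)$. By stability and Remark \ref{remark: definability and invariance}, $\mu$ is $M$-definable and $M$-invariantly supported, so $\supp(\mu) \subseteq S^{\inv}_{\bar m}(\FC,M)$. Stability further ensures that each global $M$-invariant type is the unique nonforking extension of its restriction to $M$, so $r\colon S^{\inv}_{\bar m}(\FC,M) \to S_{\bar m}(M)$ is a homeomorphism; comparing Proposition \ref{prop:formula.for.star.on.types} with the definition of $\ast$ on $S_{\bar m}(M)$ at the start of Subsection \ref{subsec: group chunk 2}, one checks that $r$ is also a semigroup isomorphism. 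Setting $P := r[\supp(\mu)]$, a closed subset of $S_{\bar m}(M)$, this yields $(\supp(\mu),\ast) \cong (P,\ast)$ as compact semigroups.

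Next I would observe that idempotence of $\mu$ together with Proposition \ref{prop:support} gives $\supp(\mu) \ast \supp(\mu) \subseteq \supp(\mu \ast \mu) = \supp(\mu)$, so $P$ is closed under $\ast$ and the set $Q := \cl(\ast P)$ from Subsection \ref{subsec: group chunk 3} coincides with $P$. Applying Theorem \ref{theorem: counterpart of Newelski's theorem} to this $P$, the set $\gen(P)$ is a profinite group and a two-sided ideal of $Q = P$, and it is closed in $S_{\bar m}(M)$ (either directly by Lemma \ref{lemma: basic on max(P)}(1), or because $\gen(P) = \Gen(\widetilde{H})$ for the appropriate subgroup $H$, which is closed by the rank characterization of generics in Proposition \ref{proposition: generic via ranks for x'}).

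To finish, $(\supp(\mu),\ast)$ satisfies the hypotheses of Theorem \ref{thm:two} and hence has no proper closed two-sided ideals; transporting via $r$, neither does $(P,\ast)$. Since $\gen(P)$ is a closed two-sided ideal of $P$, this forces $\gen(P) = P$, and therefore $(\supp(\mu),\ast) \cong (P,\ast)$ is a profinite group. The \emph{in particular} clause is then automatic since any group is its own unique minimal left ideal, so $\mu$ is minimal in the sense of Proposition \ref{prop:min}. I do not anticipate any serious obstacle here: the heavy lifting was done in Theorems \ref{theorem: counterpart of Newelski's theorem} and \ref{thm:two}, and the only step requiring careful verification is the compatibility of the two $\ast$-products — global invariant versus over $M$ — under the restriction map, together with ensuring that $P$ is genuinely closed under $\ast$ so that $Q = P$.
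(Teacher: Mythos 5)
Your proposal is correct and follows essentially the same route as the paper's proof: both identify $\supp(\mu)$ with $P=\supp(\mu|_M)$ via the restriction homeomorphism (the paper cites Remark \ref{fact:extend} and Proposition \ref{prop:product2} where you use Remark \ref{remark: definability and invariance} and Proposition \ref{prop:support} directly), both note $Q=\cl(*P)=P$, and both conclude by combining Theorem \ref{theorem: counterpart of Newelski's theorem} with Theorem \ref{thm:two} to force $\gen(P)=P$. No gaps.
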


\begin{proof} 
Note that $\mu = \widehat{\mu|_M}$. On the other hand, by  Proposition \ref{prop:product2}, $(\supp(\mu),*)$ is a semigroup. Hence, by Remark \ref{fact:extend}, $(\supp(\mu),*) \cong (\supp(\mu|_M),*)$ as topological semigroups (with the restriction to $M$ as a witnessing isomorphism); in particular, $\supp(\mu|_M)$ is closed under $*$. 

Set $P := \supp(\mu|_M)$. Then, using the notation from Subsection \ref{subsec: group chunk 3}, $Q:= \cl(*P) = \cl(\supp(\mu|_M)) = \supp(\mu|_M)$.
So, by Theorem \ref{theorem: counterpart of Newelski's theorem}, $I:=\gen(\supp(\mu|_M))$ is a closed two-sided ideal in $\supp(\mu|_M)$. Hence, $\hat{I}:=\{\hat{p}: p \in I\}$ is a closed two-sided ideal in $\supp(\mu)$. By Theorem \ref{thm:two}, we conclude that $\hat{I}=\supp(\mu)$. This implies that $I=\supp(\mu|_M)$. Using Theorem \ref{theorem: counterpart of Newelski's theorem}, we know that $I$ is a profinite group, so we conclude that  $\supp(\mu|_M)$ is a profinite group, and so is its isomorphic copy $\supp(\mu)$.
\end{proof}

\begin{proposition}\label{prop:relative} 
Let $\mu \in \mathfrak{M}_{\bar{m}}(M)$ be idempotent and
let 
$$H_{\mu} := \{\sigma \in \aut(\mathfrak{C}) \;\colon\; \supp(\mu) * \tp(\sigma(\bar{m})/M) = \supp(\mu)\}.$$ 
Then $H_{\mu}$ is a relatively $\bar{m}$-type-definable over $M$ subgroup of $\aut(\mathfrak{C})$. 
Moreover, 
$\supp(\mu)=\gen(\supp(\mu)) = \Gen(\widetilde{H_{\mu}})$ and $(\supp(\mu),*)$ is a profinite group.
\end{proposition}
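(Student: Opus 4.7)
The plan is to reduce the statement to the combination of Proposition \ref{proposition: profiniteness of supp} and Theorem \ref{theorem: counterpart of Newelski's theorem}, via the unique global $M$-invariant extension $\hat{\mu}$ of $\mu$. First I would note that by Fact \ref{fact: basic fact on Keisler measure sin stable theories}, $\mu$ has a unique $M$-definable extension $\hat{\mu} \in \mathfrak{M}_{\bar m}^{\inv}(\FC,M)$, which is also the unique $M$-invariant extension. Then $\hat{\mu}$ is itself idempotent: the measure $\hat{\mu} \ast \hat{\mu}$ is $M$-invariant by Lemma \ref{lemma:def.Borel.def}(1), and its restriction to $M$ is $\mu \ast \mu = \mu$ by the very definition of $\ast$ on $\mathfrak{M}_{\bar m}(M)$, so by uniqueness of $M$-invariant extensions in stable theories we conclude $\hat{\mu} \ast \hat{\mu} = \hat{\mu}$.

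Next I would apply Proposition \ref{proposition: profiniteness of supp} to $\hat{\mu}$ to obtain that $(\supp(\hat{\mu}), \ast)$ is a profinite group. By Remark \ref{fact:extend}, the restriction map $p \mapsto p|_M$ is a bijection (in fact a homeomorphism) between $\supp(\hat{\mu})$ and $\supp(\mu)$, and from the definition of $\ast$ on $\mathfrak{M}_{\bar m}(M)$ one checks immediately that this map is a semigroup homomorphism. Hence $(\supp(\mu), \ast)$ is a profinite group, which gives the final claim of the proposition.

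To handle the relative type-definability of $H_\mu$ and the equalities $\supp(\mu) = \gen(\supp(\mu)) = \Gen(\widetilde{H_\mu})$, I set $P := \supp(\mu)$ in the notation of Subsection \ref{subsec: group chunk 3}. By the previous paragraph, $P$ is already closed under $\ast$, and $P$ is closed in $S_{\bar m}(M)$, so $Q := \cl({\ast}P) = P$. A closer reading of the proof of Proposition \ref{proposition: profiniteness of supp} shows that the two-sided ideal $\gen(P)$ of $Q = P$ satisfies $\widehat{\gen(P)} = \supp(\hat{\mu})$ (via Theorem \ref{thm:two} applied to the ideal $\widehat{\gen(P)}$ of $\supp(\hat{\mu})$), hence $\gen(P) = P$. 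Theorem \ref{theorem: counterpart of Newelski's theorem} applied to this $P$ then says that
$$H := \{\sigma \in \aut(\FC) : \gen(P) \ast \tp(\sigma(\bar m)/M) = \gen(P)\}$$
is a relatively $\bar m$-type-definable over $M$ subgroup of $\aut(\FC)$ with $\gen(P) = \Gen(\widetilde{H})$. Substituting $\gen(P) = \supp(\mu)$ gives $H = H_\mu$ and therefore $\supp(\mu) = \gen(\supp(\mu)) = \Gen(\widetilde{H_\mu})$, as required.

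The only delicate point, and really the only place where new (but elementary) work is needed beyond citing the earlier results, is the verification that $\hat{\mu}$ is idempotent and that $\hat{\,\cdot\,}$ intertwines $\ast$ on $\mathfrak{M}_{\bar m}(M)$ with $\ast$ on $\mathfrak{M}^{\inv}_{\bar m}(\FC,M)$, plus the identification $\gen(P) = P$; both rely in an essential way on the uniqueness of the $M$-invariant extension of a Keisler measure in a stable theory, and on the fact (from the proof of Proposition \ref{proposition: profiniteness of supp}) that the support of an idempotent has no proper closed two-sided ideals.
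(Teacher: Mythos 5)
Your proposal is correct and follows essentially the same route as the paper: the paper's proof of this proposition is a one-line reduction to the proof of Proposition \ref{proposition: profiniteness of supp} "applied to $\hat{\mu}$ in place of $\mu$," and that proof is exactly your argument — set $P:=\supp(\mu)$, observe $Q=P$, use Theorem \ref{thm:two} to force $\widehat{\gen(P)}=\supp(\hat{\mu})$ and hence $\gen(P)=P$, then invoke Theorem \ref{theorem: counterpart of Newelski's theorem}. Your explicit verification that $\hat{\mu}$ is idempotent (via uniqueness of the $M$-invariant extension) is a detail the paper leaves implicit, and it is handled correctly.
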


\begin{proof} 
The fact that $(\supp(\mu),*)$ is a profinite group and $\gen(\supp(\mu))=\supp(\mu)$ was obtained in the proof of Proposition \ref{proposition: profiniteness of supp} (applied to $\hat{\mu}$ in place of $\mu$) as a consequence of Proposition \ref{prop:product2}, Theorem \ref{thm:two}, and Theorem \ref{theorem: counterpart of Newelski's theorem}. Then the moreover part also follows from Theorem \ref{theorem: counterpart of Newelski's theorem}.
\end{proof}

It is sometimes convenient to work with the following version of the stabilizer. 

\begin{definition} Let $\mu \in \mathfrak{M}_{\bar{m}}(M)$. Then we define
\begin{equation*}
    \stab_{r}(\mu) := \{\sigma \in \aut(\mathfrak{C}) \;\colon\; \mu * \tp(\sigma(\bar{m})/M) = \mu\}. 
\end{equation*}
\end{definition}

\begin{lemma}\label{lemma: relatively-type-definable} 
Let $\mu \in \mathfrak{M}_{\bar{m}}(M)$. Then $\stab_{r}(\mu)$ is a group. In fact, $\stab_{r}(\mu) = \stab(\hat{\mu})$, 
where $\hat{\mu}\in\mathfrak{M}^{\inv}_{\bar{m}}(\FC,M)$ is the unique $M$-invariant extension of $\mu$. 
As consequence, $\stab_{r}(\mu)$ is 
a relatively $\bar{m}$-type-definable over $M$ subgroup of $\aut(\FC)$.  
\end{lemma}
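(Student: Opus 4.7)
The strategy is to establish the stated equality $\stab_r(\mu)=\stab(\hat\mu)$. Once this is proved, the group property of $\stab_r(\mu)$ is immediate since $\stab(\hat\mu)$ is manifestly a group (being the stabilizer of a pushforward action), and relative $\bar m$-type-definability over $M$ follows at once from Lemma \ref{lemma: rel. type-definability} applied to $\hat\mu$, which is $M$-definable by stability (Fact \ref{fact: basic fact on Keisler measure sin stable theories}(2)).

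For the inclusion $\stab(\hat\mu)\subseteq\stab_r(\mu)$ I will proceed by direct computation. Fix $\sigma\in\stab(\hat\mu)$, set $q:=\tp(\sigma(\bar m)/M)$, and let $\hat q=\delta_r$ where $r$ is the unique $M$-invariant (equivalently, nonforking) extension of $q$ to $\FC$. Pick any $\sigma'\in\aut(\FC')$ with $\sigma'(\bar m)\models r$. For $\bar b\in M^{\bar x}$ and an $\mathcal L$-formula $\varphi(\bar x;\bar y)$, Definition \ref{def:star.definition} gives
$$(\mu * q)(\varphi(\bar b;\bar y))=(\hat\mu*\hat q)(\varphi(\bar b;\bar y))=F_{\hat\mu}^{\varphi^{\mathrm{opp}}}\bigl(\tp(\sigma'^{-1}(\bar b)/M)\bigr).$$
Writing $\sigma'=\alpha\sigma\gamma$ with $\alpha\sigma(\bar m)=\sigma'(\bar m)$ and $\gamma\in\aut(\FC'/M)$, one checks that $\sigma'^{-1}(\bar b)\equiv_M\sigma^{-1}(\bar b)$ for $\bar b\in M$. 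Combined with the $M$-invariance of $\hat\mu$ and the hypothesis $\sigma\in\stab(\hat\mu)$, this chains to $\hat\mu(\varphi(\bar b;\bar y))=\mu(\varphi(\bar b;\bar y))$.

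For the harder direction $\stab_r(\mu)\subseteq\stab(\hat\mu)$, first lift the hypothesis to the global level: since $\hat\mu*\hat q$ is $M$-invariant (Lemma \ref{lemma:def.Borel.def}(3)), restricts to $\mu*q=\mu$ on $M$, and the $M$-invariant extension of a measure on $M$ is unique in stable theories (Fact \ref{fact: basic fact on Keisler measure sin stable theories}(3)), we deduce $\hat\mu*\hat q=\hat\mu$. Using the explicit formula (verified as in the proof of Proposition \ref{prop:star.product.1}) that $\hat\mu*\hat q=\sigma'_{\ast}(\widehat{\hat\mu})|_{\FC}$ for any $\sigma'\in\aut(\FC')$ realizing $r$, we obtain $\sigma'_{\ast}(\widehat{\hat\mu})|_{\FC}=\hat\mu=\widehat{\hat\mu}|_{\FC}$. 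Because the partial type $\rho$ produced in the proof of Lemma \ref{lemma: rel. type-definability} is canonical (its construction depends only on the common defining scheme of $\hat\mu$ and $\widehat{\hat\mu}$, not on the ambient monster), the same $\rho$ defines both $\stab(\hat\mu)=G_{\rho,\FC}$ and $\stab(\widehat{\hat\mu})=G_{\rho,\FC'}$; since membership in these groups depends only on $\tp(\sigma(\bar m)/M)$, it is enough to check that some $\sigma'\in\aut(\FC')$ realizing $q$ belongs to $\stab(\widehat{\hat\mu})$, i.e., to promote the $\FC$-level equality to the global equality $\sigma'_{\ast}(\widehat{\hat\mu})=\widehat{\hat\mu}$.

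The main obstacle lies in this final promotion. The pushforward $\sigma'_{\ast}(\widehat{\hat\mu})$ is only a priori $\sigma'(M)$-invariant, and $\sigma'(M)\not\subseteq\FC$ in general, so one cannot directly invoke uniqueness of the $M$-invariant extension. My plan to overcome this is to exploit $M$-definability of $\widehat{\hat\mu}$: the $\FC$-level identity $\sigma'_{\ast}(\widehat{\hat\mu})|_{\FC}=\widehat{\hat\mu}|_{\FC}$ translates, via the continuous defining function $F_{\hat\mu}^{\varphi^{\mathrm{opp}}}$ on $S_{\bar x}(M)$, to the assertion that $F_{\hat\mu}^{\varphi^{\mathrm{opp}}}$ coincides with its composition with the "$\sigma'^{-1}$-twist" on the dense set of types $\{\tp(\bar d/M):\bar d\in\FC\}=S_{\bar x}(M)$ (using saturation of $\FC$). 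Continuity of $F_{\hat\mu}^{\varphi^{\mathrm{opp}}}$ and density then allow one to propagate the equality to $\bar d\in\FC'$, giving the $M$-level equality of defining schemes, and hence $\sigma'_{\ast}(\widehat{\hat\mu})=\widehat{\hat\mu}$ on $\FC'$ by Fact \ref{fact: basic fact on Keisler measure sin stable theories}(3) applied to $(\FC',M)$. This last density-continuity step is the subtle point; its execution will require a careful comparison of $M$-types via non-forking calculus available in stable theories.
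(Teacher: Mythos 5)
Your first inclusion $\stab(\hat\mu)\subseteq\stab_r(\mu)$ is correct and is essentially the paper's computation (the paper phrases it as $(\mu*\tp(\sigma(\bar m)/M))(\varphi(\bar m;\bar y))=\hat\mu(\varphi(\sigma^{-1}(\bar m);\bar y))=(\sigma\cdot\hat\mu)(\varphi(\bar m;\bar y))$, working directly with $\sigma\in\aut(\FC)$ and never leaving $\FC$). The derived consequences (group property, relative type-definability via Lemma \ref{lemma: rel. type-definability}) are also fine.

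The hard inclusion, however, has a genuine gap, and the gap is exactly where you placed your flag. Two problems. First, the ``$\sigma'^{-1}$-twist'' is not a well-defined map on $S_{\bar x}(M)$: if $\bar d\equiv_M\bar d'$ via some $\gamma\in\aut(\FC''/M)$, then $\sigma'^{-1}(\bar d')=(\sigma'^{-1}\gamma\sigma')(\sigma'^{-1}(\bar d))$, and $\sigma'^{-1}\gamma\sigma'$ fixes $\sigma'^{-1}[M]$, not $M$; so $\tp(\sigma'^{-1}(\bar d)/M)$ genuinely depends on $\bar d$ and not only on $\tp(\bar d/M)$, and the density-plus-continuity propagation has nothing to act on. Second, and more fundamentally, the principle you are trying to prove --- that a global measure definable over a small submodel of $\FC'$ which agrees with $\widehat{\hat\mu}$ on $\FC$ must equal $\widehat{\hat\mu}$ --- is false without further input: in the theory of an infinite pure set, for $a\in\FC'\setminus\FC$ the measures $\delta_{\tp(a/\FC')}$ and the Dirac measure at the unique nonrealized type over $\FC'$ are both definable over small models and agree on $\FC$, yet differ. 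So some hypothesis beyond definability and agreement on $\FC$ must be invoked, namely that $\sigma'_*(\widehat{\hat\mu})$ does not fork over $M$ --- which is the same kind of statement you started from, pushed up one monster model, so the detour through $\FC'$ buys nothing.

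The paper closes this gap differently and stays inside $\FC$: from $\mu*\tp(\sigma(\bar m)/M)=\mu$ one gets $(\sigma\cdot\hat\mu)|_M=\mu$ directly, and then one proves that $\sigma\cdot\hat\mu$ does not fork over $M$ (whence it equals $\hat\mu$ by uniqueness of nonforking extensions). The nonforking claim is where the real work lies: one shows $\supp(\hat\mu)\subseteq\Gen(\widetilde{(H_\mu)}_{\FC,\bar m})$ using Proposition \ref{prop:relative} (i.e.\ $\supp(\mu)=\gen(\supp(\mu))=\Gen(\widetilde{H_\mu})$, which rests on Theorem \ref{theorem: counterpart of Newelski's theorem} and Theorem \ref{thm:two}) together with the rank characterizations of genericity (Propositions \ref{proposition: restricted to x'} and \ref{proposition: generic via ranks for x'}); genericity of the support then forces every translate in $\supp(\sigma\cdot\hat\mu)$ to have maximal local ranks equal to those of its restriction to $M$, hence to be nonforking. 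This genericity input is precisely the ingredient your argument is missing, and I do not see how to avoid it by a definability/density argument.
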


\begin{proof}
We first prove that $\stab(\hat{\mu}) \subseteq \stab_{r}(\mu)$. Choose $\sigma \in \stab(\hat{\mu})$ and a formula $\varphi(\bar{x};\bar{y}) \in \CL$. Then

\begin{align*}
    \Big(\mu\ast\tp\big(\sigma(\bar{m})/M\big)\Big)\big(\varphi(\bar{m};\bar{y})\big) &=
    \Big(\mu\otimes h_{\bar{m}}\big(\tp\big(\sigma(\bar{m})/M\big)\big)\Big)\big(\varphi(\bar{x};\bar{y})\big) \\
    &= \Big(\mu\otimes\tp\big(\sigma^{-1}(\bar{m})/M\big)\Big)\big(\varphi(\bar{x};\bar{y})\big) \\
    &= \hat{\mu}\big(\varphi(\sigma^{-1}(\bar{m});\bar{y})\big)= (\sigma\cdot\hat{\mu})\big(\varphi(\bar{m};\bar{y})\big)\\
    &= \mu\big(\varphi(\bar{m};\bar{y})\big).
\end{align*}

We now prove that $\stab_{r}(\mu) \subseteq \stab(\hat{\mu})$. Fix some $\sigma \in \stab_{r}(\mu)$ and set $p:=\tp(\sigma(\bar{m})/M)$.
Notice that for any $\varphi(\bar{x};\bar{y}) \in \mathcal{L}$, we have
\begin{align*}
    \mu(\varphi(\bar{m};\bar{y})) &= (\mu * p)(\varphi(\bar{m};\bar{y}))  = (\mu \otimes h_{\bar{m}}(p)) (\varphi(\bar{x};\bar{y})) \\ 
    &= \hat{\mu}(\varphi(\sigma^{-1}(\bar{m}),\bar{y}))=(\sigma\cdot\hat{\mu})(\varphi(\bar{m};\bar{y})),
\end{align*}
and so we conclude that $(\sigma \cdot \hat{\mu})|_{M} = \mu$. 
Hence, it is enough to show that $\sigma \cdot \hat{\mu}$ does not fork over $M$, since if $\sigma \cdot \hat{\mu}$ does not fork over $M$, then it is the unique $M$-invariant extension of $\mu$, i.e. $\sigma\cdot\hat{\mu}=\hat{\mu}$, and so $\sigma\in\stab(\hat{\mu})$.

By Proposition \ref{prop:relative}, we have that 
$$H_{\mu}:= \{\sigma \in \aut(\mathfrak{C}): \supp(\mu) * \tp(\sigma(\bar{m})/M) = \supp(\mu)\}$$
is a relatively $\bar{m}$-type-definable over $M$ subgroup of $\aut(\mathfrak{C})$. Let $\pi(\bar{x};\bar{y})\vdash \bar{x}\equiv_{\emptyset}\bar{y}$
be a partial type over $\emptyset$, such that $H_{\mu}=G_{\pi,\FC}$. 
Then consider
\begin{equation*}
\widetilde{(H_\mu)}_{\mathfrak{C},\bar m} := \{p(\bar{x}) \in S_{\bar{m}}(\mathfrak{C}) \;\colon\; \pi(\bar{m};\bar{y}) \subseteq p(\bar{x})\}. 
\end{equation*}
In order to see that $\sigma \cdot \hat{\mu}$ does not fork over $M$, it is enough to show that $\supp(\hat{\mu}) \subseteq \Gen(\widetilde{(H_{\mu})}_{\mathfrak{C},\bar m})$ (by Proposition \ref{proposition: characterziations of genericity} and Corollary \ref{corollary: basic properties of generics}.(1)).

 Since $\hat{\mu}$ is $M$-invariant (and $T$ is stable), each type $p \in \supp(\hat{\mu})$ is $M$-invariant, so does not fork over $M$. In particular, for any $p \in \supp(\hat{\mu})$ and any finite collection of formulas $\Delta$, $\overset{\to}{R}_{\Delta}(p) = \overset{\to}{R}_{\Delta}(p|_{M})$. From Proposition \ref{prop:relative}, we have that if $p \in \supp(\hat{\mu})$, then $p|_{M} \in \supp(\mu) = \gen(\supp(\mu)) = \Gen(\widetilde{H_{\mu}})$, where $\widetilde{H_{\mu}}=S_{\pi(\bar{m};\bar{y})}(M)$.
 In particular, $p\in \widetilde{(H_{\mu})}_{\mathfrak{C},\bar m}=S_{\pi(\bar{m};\bar{y})}(\FC)$. 

Thus, by Proposition \ref{proposition: generic via ranks for x'}, we conclude that $\overset{\to}{R}_{\Delta}(p) = \overset{\to}{R}_{\Delta}(p|_M) = \overset{\to}{R}_{\Delta}(\widetilde{(H_\mu)}_{\mathfrak{C},\bar m})$. 
 Hence, $p$ is generic by Proposition \ref{proposition: restricted to x'}, 
which completes the proof. 
\end{proof}

\begin{proposition}\label{prop:existence/uniqueness} 
Let $H$ be a relatively $m$-type-definable over $M$ subgroup of $\aut(\FC)$, 
i.e. $H = G_{\pi,\FC}$ for some partial type $\pi(\bar{x};\bar{y})\vdash\bar{x}\equiv_{\emptyset}\bar{y}$.
We set $\widetilde{H}:=S_{\pi(\bar{m};\bar{y})}(M)$.
Then there exists a measures $\mu_{H} \in \mathfrak{M}_{\bar{m}}(M)$ such that 
\begin{enumerate}
    \item $\mu_{H}(\widetilde{H}) = 1$.
    \item $\mu_{H}$ is $H$-$*$-right invariant; i.e. for every $\sigma \in H$, $\mu_{H} * \tp(\sigma(\bar{m})/M) = \mu_{H}$. 
\end{enumerate}
Moreover, $\mu_{H}$ is unique. 
\end{proposition}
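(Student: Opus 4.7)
My plan is to construct $\mu_H$ as the normalized Haar measure of a canonically associated profinite group, and to obtain uniqueness directly from Proposition~\ref{prop:unique}. For uniqueness, once $\mu_H$ is shown to satisfy (1) and (2), any competitor $\nu$ satisfying (1) and (2) will be forced to equal $\mu_H$ by Proposition~\ref{prop:unique} applied with $\mu := \mu_H$.

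For existence, I would first invoke Corollary~\ref{corollary: Gen is an ideal and group}(2) to get that $(\Gen(\widetilde H), *)$ is a profinite group, hence a compact Hausdorff topological group with its subspace topology from $S_{\bar m}(M)$ (note $\Gen(\widetilde H)$ is closed in $\widetilde H$ via Corollary~\ref{corollary: minimal subflow} and Proposition~\ref{proposition: res on generics}). Let $\mu_H$ denote the unique normalized Haar measure of this profinite group, viewed as a regular Borel probability measure on $S_{\bar m}(M)$ concentrated on $\Gen(\widetilde H) \subseteq \widetilde H$. Then $\mu_H(\widetilde H) = 1$ is automatic.

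The heart of the argument is to verify that $\mu_H * p = \mu_H$ for every $p = \tp(\sigma(\bar m)/M)$ with $\sigma \in H$. I would proceed in two stages. First, I would identify $\mu_H * p$ with the pushforward $(R_p)_*(\mu_H)$, where $R_p \colon \widetilde H \to \widetilde H$ is right translation $q \mapsto q * p$, which is continuous by Remark~\ref{remark: $*$ sep. cont.}. Both $\nu \mapsto \nu * p$ and $\nu \mapsto (R_p)_*(\nu)$ are weak-$*$ continuous (the former by Proposition~\ref{prop:conv_left_cont}, since stability implies NIP; the latter being a standard pushforward) and affine (Proposition~\ref{prop:left.affine}), and they agree on Dirac measures since $\delta_q * p = \delta_{q * p} = (R_p)_*(\delta_q)$; they therefore coincide on all of $\mathfrak{M}_{\bar m}(M)$ by density of finite convex combinations of Dirac measures.

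Second, I would show $(R_p)_*(\mu_H) = \mu_H$ via a translation trick. Let $e$ be the group identity of the profinite group $(\Gen(\widetilde H), *)$. Since $\Gen(\widetilde H)$ is a two-sided ideal of $\widetilde H$ by Corollary~\ref{corollary: Gen is an ideal and group}(4), we have $e * p \in \Gen(\widetilde H)$. Using associativity of $*$ together with $q * e = q$ for $q \in \Gen(\widetilde H)$,
\[
R_p(q) \;=\; q * p \;=\; (q * e) * p \;=\; q * (e * p),
\]
so $R_p|_{\Gen(\widetilde H)}$ is exactly right multiplication by the group element $e * p$ in the profinite group. Right-translation invariance of Haar measure then gives $(R_p)_*(\mu_H) = \mu_H$. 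The main obstacle I expect is precisely this bookkeeping step: even though an arbitrary $p \in \widetilde H$ is not itself in the profinite group $\Gen(\widetilde H)$, its right $*$-action on $\Gen(\widetilde H)$ coincides with that of the canonical generic $e * p$, which is the conceptual point that makes Haar invariance applicable and that lets a standard profinite-group argument replace what in \cite{Artem_Kyle,CGK} required more delicate measure-theoretic work.
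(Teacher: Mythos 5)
Your proof is correct, and the existence half takes a genuinely different route from the paper's. The paper builds $\mu_H$ ``from below'': for each finite $\Delta$ it averages the finitely many $\Delta$-generics of $\widetilde{H}_{\FC}$ to get an $H$-invariant (hence $M$-invariant) $\Delta$-measure, takes an accumulation point of global extensions to obtain a left $H$-invariant measure in $\mathfrak{M}^{\inv}_{\pi(\bar m;\bar y)}(\FC,M)$, restricts to $M$, and deduces $H$-$*$-right invariance from the identity $\bigl(\mu_H * \tp(\sigma(\bar m)/M)\bigr)(\varphi(\bar m;\bar y)) = (\sigma\cdot\mu)(\varphi(\bar m;\bar y))$ computed in the proof of Lemma \ref{lemma: relatively-type-definable}. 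You instead build $\mu_H$ ``from above'' as the Haar measure of the profinite group $(\Gen(\widetilde H),*)$ supplied by Corollary \ref{corollary: Gen is an ideal and group}(2), and obtain right $*$-invariance by noting that right translation by an arbitrary $p\in\widetilde H$ restricts on $\Gen(\widetilde H)$ to right multiplication by the group element $e*p$ (using associativity of $*$ and the two-sided ideal property of $\Gen(\widetilde H)$). The uniqueness arguments are identical (Proposition \ref{prop:unique}). Your route is more conceptual and anticipates Proposition \ref{prop:idempotent}(2), which identifies idempotents with Haar measures of their supports; the paper's route produces the global left $H$-invariant extension explicitly along the way, but nothing downstream needs more than the statement you prove. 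Two points worth making explicit in a final write-up: the identification $\nu * p = (R_p)_*\nu$ can also be verified directly on clopens, since for $p=\tp(\sigma(\bar m)/M)$ both sides evaluate $\varphi(\bar m;\bar y)$ to $\hat\nu(\varphi(\sigma^{-1}(\bar m);\bar y))$; and the continuity of $-\,*\,p$ on $\mathfrak{M}_{\bar m}(M)$ that your density argument invokes is obtained from Proposition \ref{prop:conv_left_cont} via the homeomorphism $\nu\mapsto\hat\nu$ between $\mathfrak{M}_{\bar m}(M)$ and $\mathfrak{M}^{\inv}_{\bar m}(\FC,M)$ underlying the conventions at the start of Section \ref{sec: 0.7 for stable}.
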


\begin{proof}
By Proposition \ref{prop:unique}, it suffices to show the existence. 
Let $\widetilde{H}_{\mathfrak{C}} : = S_{\pi(\bar{m};\bar{y})}(\FC)$. 
For each finite collection of formulas $\Delta$, there are only finitely many $p_1^{\Delta},\ldots,p_{n}^{\Delta} \in S_{\Delta}(\mathfrak{C})$ with maximal possible $R_{\Delta}([p_i^{\Delta}] \cap \widetilde{H}_{\mathfrak{C}})$. 
In particular, $p_{1}^{\Delta},\ldots,p_{n}^{\Delta}$ are the ``$\Delta$-generics'' of $H$ over $\mathfrak{C}$.
 For each finite $\Delta$, define the $\Delta$-measure $\mu_{\Delta} = \frac{1}{n}\sum_{i=1}^n \delta_{p_i^{\Delta}}$, i.e. $\mu_{\Delta}$ is 
 a measure on $S_{\Delta}(\mathfrak{C})$ concentrated on the subset $\{p(\bar{y}) \in S_{\Delta}(\mathfrak{C})\;\colon\; p(\bar{y}) \cup  \pi(\bar{m};\bar{y}) \text{ is consistent}\}$. 

It is clear that $\mu_{\Delta}$ is invariant under the natural left action of $H$ 
given by $(h \cdot \mu_\Delta)(\varphi(\bar{a};\bar{y})) := \mu_{\Delta}(\varphi(h^{-1}(\bar{a});\bar{y}))$, where $h \in H$.
Then, since $H$ is relatively $\bar{m}$-type-definable over $M$, we get that $\aut(\mathfrak{C}/M) \leqslant H$, and so $\mu_{\Delta}$
is $M$-invariant.

For each $\Delta$, choose $\mu'_{\Delta} \in \mathfrak{M}_{\bar{m}}(\mathfrak{C})$ such that $\mu'_{\Delta}|_{\Delta} = \mu_{\Delta}$, and let $\mu$ be an accumulation point of the net $(\mu'_{\Delta})_{\Delta}$. 
It follows that 
$\mu$ is (left) $H$-invariant, and so $\mu \in \mathfrak{M}^{\inv}_{\pi(\bar{m};\bar{y})}(\mathfrak{C},M)$.
We set $\mu_{H} := \mu|_{M}$ and note that $\mu_H(\widetilde{H})=1$.
To see that $\mu_H$ is $H$-$*$-right invariant, we simply
use that for $\sigma\in H$ and $\varphi(\bar{x};\bar{y})\in\CL$ we have
$$\Big(\mu_H\ast\tp\big(\sigma(\bar{m})/M\big)\Big)\big(\varphi(\bar{m};\bar{y})\big)=(\sigma\cdot\mu)\big(\varphi(\bar{m};\bar{y})\big).$$
This formula was already computed at the beginning of the proof of
Lemma \ref{lemma: relatively-type-definable}.
\end{proof}

\begin{proposition}\label{prop:idempotent} 
Let $\mu \in \mathfrak{M}_{\bar{m}}(M)$ be idempotent and let $H_{\mu} := \{\sigma \in \aut(\mathfrak{C})\;\colon\; \supp(\mu) * \tp(\sigma(\bar{m})/M) = \supp(\mu)\}$. 
Then:
\begin{enumerate}
 \item $\stab_{r}(\mu)  = H_{\mu}$ and $\mu = \mu_{\stab_{r}(\mu)}$ (in the notation from Proposition \ref{prop:existence/uniqueness}),
    \item $(\supp(\mu),*)$ is a profinite group and $\mu|_{\supp(\mu)}$ is the normalized Haar measure on $(\supp(\mu),*)$.
\end{enumerate}
\end{proposition}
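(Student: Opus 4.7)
The plan is to prove (1) by establishing $\stab_r(\mu) = H_\mu$, from which $\mu = \mu_{\stab_r(\mu)}$ follows via the uniqueness in Proposition \ref{prop:existence/uniqueness}, and then to derive (2) from Proposition \ref{prop:relative} combined with uniqueness of the Haar measure on a compact group. Throughout I would work interchangeably with $\mu$ and its unique global $M$-invariant (equivalently $M$-definable) extension $\hat\mu$, exploiting that in the stable setting $\hat\mu$ is automatically $M$-definable and $M$-invariantly supported by Fact \ref{fact: basic fact on Keisler measure sin stable theories} and Remark \ref{remark: definability and invariance}, that $\hat\mu$ is idempotent iff $\mu$ is, and that $\supp(\hat\mu) = \{\hat p\;\colon\; p \in \supp(\mu)\}$ is a profinite group under $*$ by Proposition \ref{proposition: profiniteness of supp} and Remark \ref{fact:extend}, corresponding bijectively to $\supp(\mu)$ via restriction to $M$.

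For $\stab_r(\mu) \subseteq H_\mu$: given $\sigma \in \stab_r(\mu)$, Lemma \ref{lemma: relatively-type-definable} gives $\hat\mu * \hat p_\sigma = \hat\mu$ with $p_\sigma := \tp(\sigma(\bar m)/M)$, so Corollary \ref{cor:support} yields $\supp(\hat\mu) * \hat p_\sigma = \supp(\hat\mu * \hat p_\sigma) = \supp(\hat\mu)$, whence $\supp(\mu) * p_\sigma = \supp(\mu)$ upon restricting to $M$. For the converse $H_\mu \subseteq \stab_r(\mu)$, fix $\sigma \in H_\mu$ and pick any $r \in \supp(\mu)$; then $r * p_\sigma \in \supp(\mu)$ by the definition of $H_\mu$, and Proposition \ref{prop:inv} applied to $\hat\mu$ (which is minimal because $\supp(\hat\mu)$ is a group) at both $r$ and $r * p_\sigma$ gives $\mu * r = \mu$ and $\mu * (r * p_\sigma) = \mu$. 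Theorem \ref{thm:star.associativity} applies since $\hat\mu$ is $M$-definable and $\hat r, \hat p_\sigma$ are Borel-definable, so $(\mu * r) * p_\sigma = \mu * (r * p_\sigma) = \mu$, which simplifies to $\mu * p_\sigma = \mu$; thus $\sigma \in \stab_r(\mu)$. Finally, $\supp(\mu) = \Gen(\widetilde{H_\mu}) \subseteq \widetilde{H_\mu} = \widetilde{\stab_r(\mu)}$ by Proposition \ref{prop:relative}, so $\mu$ is concentrated on $\widetilde{\stab_r(\mu)}$ and is $\stab_r(\mu)$-$*$-right invariant by the very definition of $\stab_r(\mu)$; Proposition \ref{prop:existence/uniqueness} then identifies $\mu = \mu_{\stab_r(\mu)}$, finishing (1).

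For (2), Proposition \ref{prop:relative} already supplies the profinite group structure on $(\supp(\mu), *)$, and Proposition \ref{prop:inv} applied to $\hat\mu$ yields $\mu * p = \mu$ for every $p \in \supp(\mu)$. The crucial remaining step is to match the semigroup product $\mu * p$ with the pushforward of $\mu|_{\supp(\mu)}$ under the right translation $r_p(q) := q * p$ on the group $\supp(\mu)$. Writing $p = \tp(\tau(\bar m)/M)$ for some $\tau \in \aut(\FC)$ and unfolding Definition \ref{def:star.definition}, one computes $(\mu * p)([\varphi(\bar m;\bar y)]) = \hat\mu(\varphi(\tau^{-1}(\bar m);\bar y))$ for any $\CL$-formula $\varphi(\bar x;\bar y)$; meanwhile $r_p^{-1}([\varphi(\bar m;\bar y)] \cap \supp(\mu))$ consists of those $q \in \supp(\mu)$ with $\varphi(\tau^{-1}(\bar m);\bar y) \in \hat q$, whose $\mu$-measure also equals $\hat\mu(\varphi(\tau^{-1}(\bar m);\bar y))$. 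A monotone class argument extends this from basic clopens to all Borel subsets of $\supp(\mu)$, giving $(r_p)_*(\mu|_{\supp(\mu)}) = (\mu * p)|_{\supp(\mu)} = \mu|_{\supp(\mu)}$. Hence $\mu|_{\supp(\mu)}$ is a right-translation-invariant Borel probability measure on the compact group $\supp(\mu)$, and uniqueness of the Haar measure forces it to be the normalized Haar measure.

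The main obstacle I anticipate is the careful bookkeeping for the identification in part (2) between $\mu * p$ and the pushforward under right translation on the profinite group $\supp(\mu)$; this requires unwinding $h_{\bar m}$, the passage to $\hat\mu$, and the interaction between formulas over $M$ and over $\FC$. Once this link is secured, the remaining steps reduce to routine applications of the machinery developed in the paper, principally Propositions \ref{prop:inv}, \ref{prop:existence/uniqueness}, \ref{prop:relative}, and Theorem \ref{thm:star.associativity}.
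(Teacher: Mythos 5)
Your proof is correct, but it departs from the paper's argument at the two key points, in both cases in an essentially sound way. For the inclusion $H_\mu \subseteq \stab_{r}(\mu)$ the paper first shows $A_{\supp(\mu)} \subseteq \stab_{r}(\mu)$ (the same application of Propositions \ref{proposition: profiniteness of supp} and \ref{prop:inv} that you make), and then invokes relative type-definability of $\stab_{r}(\mu)$ (Lemma \ref{lemma: relatively-type-definable}) together with the minimality clause of Theorem \ref{theorem: counterpart of Newelski's theorem}, which identifies $H_\mu$ as the smallest relatively $\bar m$-type-definable over $M$ subgroup containing $A_{\supp(\mu)}$. Your route --- $\mu * p_\sigma = (\mu * r) * p_\sigma = \mu * (r * p_\sigma) = \mu$ for any $r \in \supp(\mu)$, using Proposition \ref{prop:inv} twice and associativity (Theorem \ref{thm:star.associativity}, applicable since in the stable setting $\hat\mu$ and the types involved are all definable over $M$) --- is shorter and avoids re-invoking the Newelski-type theorem at this step, although that theorem is still used upstream inside Proposition \ref{proposition: profiniteness of supp}, so nothing is circular. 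For part (2), the paper proves the single inequality $\mu([\varphi(\bar m;\bar y)]\cap\supp(\mu))*p) \leqslant \mu([\varphi(\bar m;\bar y)]\cap\supp(\mu))$ by computing the forward image $\{\sigma(\hat q)|_M \;\colon\; q \in [\varphi(\bar m;\bar y)]\cap\supp(\mu)\}$ and using that $\hat\mu$ concentrates on nonforking types; you instead compute the preimage under right translation $r_p$ of a basic clopen, identify $(r_p)_*(\mu|_{\supp(\mu)})$ with $(\mu*p)|_{\supp(\mu)} = \mu|_{\supp(\mu)}$, and conclude by uniqueness of the Haar measure. The two computations are the same change of variables viewed from opposite sides; your preimage formulation sidesteps any worry about measurability of forward images and is, if anything, cleaner. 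One cosmetic slip: the identity $\hat\mu * \hat p_\sigma = \hat\mu$ for $\sigma \in \stab_{r}(\mu)$ follows from the definition of $*$ on $\mathfrak{M}_{\bar m}(M)$ together with $M$-invariance of $\hat\mu * \hat p_\sigma$ (Lemma \ref{lemma:def.Borel.def}) and uniqueness of $M$-invariant extensions, rather than from Lemma \ref{lemma: relatively-type-definable} as you cite; this does not affect the argument.
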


\begin{proof} 
Proof of (1). We start with the proof of $\stab_{r}(\mu) = H_{\mu}$.

We first show that if $\sigma \in \stab_{r}(\mu)$, then $\sigma \in H_{\mu}$. Notice that $\mu * \tp(\sigma(\bar{m})/M) = \mu$ implies that $\supp(\mu * \tp(\sigma(\bar{m})/M)) = \supp(\mu)$. 
Using this together with Corollary \ref{cor:support}, we conclude that  $\supp(\mu) * \tp(\sigma(\bar{m})/M) = \supp(\mu)$, and so $\sigma \in H_\mu$. 
(Indeed, letting $p=\tp(\sigma(\bar{m})/M)$, we have $\mu * p =(\hat{\mu} * \hat{p})|_M$, so $\supp(\mu*p) =(\supp(\hat{\mu} * \hat{p}))|_M$ which equals $(\supp(\hat{\mu}) * \hat{p})|_M$ by Corollary \ref{cor:support}. On the other hand, by Remark \ref{fact:extend}, $\supp(\hat{\mu})=\{\hat{q}: q \in \supp(\mu)\}$. Thus, $\supp(\mu * p) = \supp(\mu) * p$.)

We now show that $\stab_{r}(\mu) \supseteq H_\mu$. 
Since from Proposition \ref{prop:relative} we know that $\gen(\supp(\mu))=\supp(\mu)$, by Theorem \ref{theorem: counterpart of Newelski's theorem}, 
it suffices to show that $\stab_{r}(\mu)$ is a relatively $\bar{m}$-type-definable over $M$ subgroup of $\aut(\FC)$ which contains $A_{\supp(\mu)}$. 
By Lemma \ref{lemma: relatively-type-definable}, $\stab_{r}(\mu)$ is relatively $\bar{m}$-type-definable over $M$ subgroup of $\aut(\FC)$.
We now argue that $\stab_{r}(\mu)$ contains $A_{\supp(\mu)}$. 
Indeed, suppose that $\sigma \in A_{\supp(\mu)}$; then $\sigma(\bar{m}) \models p \in \supp(\mu)$. 
We have $\mu * p=(\hat{\mu}*\hat{p})|_M$ and $\hat{p} \in \supp(\hat{\mu})$ by Remark \ref{fact:extend}.
Now, by Propositions \ref{proposition: profiniteness of supp} and \ref{prop:inv}, $\hat{\mu}*\hat{p}=\hat{\mu}$, so $\mu * p = \mu $. Therefore, $\sigma \in \stab_{r}(\mu)$.

Now, let us show that  $\mu = \mu_{\stab_{r}(\mu)}$. The measure $\mu$ is clearly $\stab_{r}(\mu)$-$*$-right-invariant. On the other hand, since $A_{\supp(\mu)} \subseteq \stab_{r}(\mu)$, we have $\supp(\mu) \subseteq \widetilde{\stab_{r}(\mu)}$.
Thus, by Proposition \ref{prop:existence/uniqueness}, we conclude that $\mu = \mu_{\stab_{r}(\mu)}$.

Proof of (2).
By Proposition \ref{prop:relative}, we know that $(\supp(\mu),*)$ is a profinite group, and so it suffices to prove that $\mu|_{\supp(\mu)}$ is the normalized Haar measure. 
Since $(\supp(\mu),*)$ is a topological group, it is enough to prove that for every formula $\varphi(\bar{x};\bar{y}) \in \mathcal{L}$
and every $p\in\supp(\mu)$,
we have that $\mu([\varphi(\bar{m};\bar{y})]_{\supp}) \leqslant \mu([\varphi(\bar{m};\bar{y})]_{\supp} * p)$, 
where $[\varphi(\bar{m};\bar{y})]_{\supp} := \{q \in \supp(\mu)\;\colon\; \varphi(\bar{m};\bar{y}) \in q\}$.

Fix $\varphi(\bar{x};\bar{y}) \in \mathcal{L}$, $p \in \supp(\mu)$.
 Choose $\sigma \in \aut( \mathfrak{C})$ such that $p = \tp(\sigma(\bar{m})/M)$.
 By Proposition \ref{prop:relative}, $\supp(\mu)\ast p=\supp(\mu)$,
 so $\sigma\in H_{\mu}=\stab_r(\mu)$.
By Lemma \ref{lemma: relatively-type-definable}, we have that $\stab_{r}(\mu) = \stab(\hat{\mu})$, 
thus $\sigma\in \stab(\hat{\mu})$ and 
$\sigma^{-1}\in \stab(\hat{\mu})$.
Hence, 
\begin{equation*}
    \mu(\varphi(\bar{m};\bar{y})) 
    = \hat{\mu}(\varphi(\bar{m};\bar{y})) 
    = (\sigma^{-1} \cdot \hat{\mu}) (\varphi(\bar{m};\bar{y})) 
    = \hat{\mu}(\varphi(\sigma(\bar{m});\bar{y})). 
\end{equation*}
On the other hand, by the formula for $\ast$-product for types over $M$ in stable case (discussed at the beginning of Subsection \ref{subsec: group chunk 2}), we have that 
\begin{equation*}
    \mu([\varphi(\bar{m};\bar{y})]_{\supp} * p) = 
    \mu\Big(\big\{\sigma(\hat{p})|_M\;\colon\;p\in[\varphi(\bar{m};\bar{y})]_{\supp}   \big\}\Big).
\end{equation*}
Since by Proposition \ref{prop:relative} we know that $\supp(\mu) = \Gen(\widetilde{H_\mu})$, we obtain that 
the set $\big\{\sigma(\hat{p})\;\colon\;p\in[\varphi(\bar{m};\bar{y})]_{\supp}   \big\}$ is contained in the set $S_{\bar{m}}^{\mathrm{nf}}(\mathfrak{C},M)$ of all types  in $S_{\bar{m}}(\FC)$ which do not fork over $M$
(as in the proof of Lemma \ref{lemma: relatively-type-definable}).
 As $\hat{\mu}$ is concentrated on $S_{\bar{m}}^{\mathrm{nf}}(\mathfrak{C},M)$, we conclude that 
\begin{align*}
    \mu\Big(\big\{\sigma(\hat{p})|_M\;\colon\;p\in[\varphi(\bar{m};\bar{y})]_{\supp}   \big\}\Big) &=
    \hat{\mu}\Big(\big\{\sigma(\hat{p})\;\colon\;p\in[\varphi(\bar{m};\bar{y})]_{\supp}   \big\}\Big) \\
    &\leqslant  \hat{\mu}(\varphi(\sigma(\bar{m});\bar{y})). 
\end{align*}
Finally, we obtain
\begin{equation*}
    \mu([\varphi(\bar{m};\bar{y})]_{\supp} * p) \leqslant
    \hat{\mu}\big(\varphi(\sigma(\bar{m});\bar{y})\big) = \mu(\varphi(\bar{m};\bar{y})) = \mu([\varphi(\bar{m};\bar{y})]_{\supp}),
\end{equation*}
which concludes the proof. 
\end{proof}

\begin{cor}\label{cor: stable idempotent vs unique}
Let $\mu \in \mathfrak{M}_{\bar{m}}(M)$. 
Then the following are equivalent: 
\begin{enumerate}
    \item $\mu$ is idempotent,
    \item $\mu$ is the unique $\stab_{r}(\mu)$-$*$-right (and also the unique $\stab_{r}(\mu)$-$*$-left) invariant measure which concentrates on $\widetilde{\stab_{r}(\mu)}$. 
\end{enumerate}
As consequence, there is a one-to-one correspondence between relatively $\bar m$-type definable over $M$ subgroups of $\aut(\FC)$ and idempotent Keisler measures in $\mathfrak{M}_{\bar{m}}(M)$. 
\end{cor}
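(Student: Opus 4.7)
The plan is to combine Propositions \ref{prop:idempotent}, \ref{prop:existence/uniqueness}, and \ref{prop:unique} with one additional verification for the reverse implication; beyond invoking these results, the only real work will be checking that two closure properties are preserved under the convolution square.

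For $(1) \Rightarrow (2)$, the argument will be immediate: if $\mu$ is idempotent, Proposition \ref{prop:idempotent}(1) gives $\mu = \mu_{\stab_r(\mu)}$, so Proposition \ref{prop:existence/uniqueness} yields both that $\mu$ is $\stab_r(\mu)$-$*$-right invariant and concentrates on $\widetilde{\stab_r(\mu)}$, and that it is the unique such measure. Uniqueness among $\stab_r(\mu)$-$*$-left invariant measures concentrated on $\widetilde{\stab_r(\mu)}$ will follow by invoking Proposition \ref{prop:unique} with $\mu$ as the reference measure.

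For $(2) \Rightarrow (1)$, set $H := \stab_r(\mu)$ and fix a partial type $\pi(\bar x;\bar y) \vdash \bar x \equiv_\emptyset \bar y$ with $H = G_{\pi,\FC}$. The strategy is to show that $\mu * \mu$ also satisfies the two conditions characterizing $\mu_H$ in Proposition \ref{prop:existence/uniqueness}, so the uniqueness clause (equivalently, the uniqueness assumed in (2)) forces $\mu * \mu = \mu$. Since stability makes every $M$-invariant measure $M$-definable (Fact \ref{fact: basic fact on Keisler measure sin stable theories}), Theorem \ref{thm:star.associativity} delivers the associativity $(\mu * \mu) * p = \mu * (\mu * p) = \mu * \mu$ for every $p \in \widetilde H$, giving $H \subseteq \stab_r(\mu * \mu)$. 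Concentration of $\mu * \mu$ on $\widetilde H$ will come from Lemma \ref{lemma:def.Borel.def}(7) applied to $\hat\mu, \hat\mu \in \mathfrak{M}^{\inv}_{\pi(\bar m;\bar y)}(\FC, M)$, followed by restriction to $M$.

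For the consequence, I would establish that $H \mapsto \mu_H$ and $\mu \mapsto \stab_r(\mu)$ are mutually inverse bijections. Existence of $\mu_H$ is Proposition \ref{prop:existence/uniqueness}, and relative $\bar m$-type-definability of $\stab_r(\mu)$ is Lemma \ref{lemma: relatively-type-definable}. To see that $\mu_H$ is idempotent, I would lift to the $M$-invariant extension $\hat\mu_H$ produced in the proof of Proposition \ref{prop:existence/uniqueness}: it is left $H$-invariant by construction, concentrated on $[\pi(\bar m;\bar y)]$, and Borel-definable by stability. Lemma \ref{lemma: equivalence of invariance} will then promote left $H$-invariance to right $S_{\pi(\bar m;\bar y)}(\FC)$-$*$-invariance, and Proposition \ref{proposition: adaptation of 3.19 from [ChGa]} will conclude $\hat\mu_H * \hat\mu_H = \hat\mu_H$, whence $\mu_H * \mu_H = \mu_H$ upon restriction. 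The equality $\stab_r(\mu_H) = H$ combines $H \subseteq \stab(\hat\mu_H) = \stab_r(\mu_H)$ (Lemma \ref{lemma: relatively-type-definable}) with the reverse inclusion $\stab(\hat\mu_H) \leq G_{\pi,\FC} = H$ from Lemma \ref{lemma:stab.mu.subgroup.pi}; the identity $\mu_{\stab_r(\mu)} = \mu$ for idempotent $\mu$ is Proposition \ref{prop:idempotent}(1). The main delicate point throughout is the verification in $(2) \Rightarrow (1)$ that both closure properties pass from $\mu$ to $\mu * \mu$; that is where stability is genuinely used, via associativity together with the automatic $M$-definability of invariant measures.
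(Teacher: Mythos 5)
Your proposal is correct, and its skeleton coincides with the paper's: $(1)\Rightarrow(2)$ via Proposition \ref{prop:idempotent}(1) and Proposition \ref{prop:unique}, and the correspondence via the mutually inverse maps $\mu\mapsto\stab_{r}(\mu)$ and $H\mapsto\mu_{H}$. You diverge in two places, both legitimately. For $(2)\Rightarrow(1)$ the paper simply invokes Claim 1 from the proof of Lemma \ref{lemma:implies-inverse} with $\nu=\mu$: right invariance plus concentration already force $\mu*\nu=\mu$ for \emph{every} $\nu$ concentrated on $\widetilde{\stab_{r}(\mu)}$, so idempotency is immediate and the uniqueness clause of (2) is never touched. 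Your route --- showing that $\mu*\mu$ again satisfies the two defining conditions (right invariance via Theorem \ref{thm:star.associativity}, which applies because stability makes $\hat{\mu}$ $M$-definable; concentration via Lemma \ref{lemma:def.Borel.def}(7)) and then appealing to uniqueness --- is sound but longer and uses strictly more of hypothesis (2). For the inclusion $\stab_{r}(\mu_{H})\subseteq H$ the paper argues by contradiction with supports, whereas you pass to $\stab(\widehat{\mu_{H}})$ via Lemma \ref{lemma: relatively-type-definable} and apply Lemma \ref{lemma:stab.mu.subgroup.pi} (after identifying $[\pi(\bar m;\bar y)]$ with $[\pi(\bar y;\bar m)]$, which is licensed by Remark \ref{remark: G_pi=G_pi^opp} since $G_{\pi,\FC}$ is a group); this is arguably cleaner than the paper's argument. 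Finally, your explicit verification that $\mu_{H}$ is idempotent, via Lemma \ref{lemma: equivalence of invariance} and Proposition \ref{proposition: adaptation of 3.19 from [ChGa]} (i.e.\ Corollary \ref{cor: two to one}), fills in a step the paper leaves implicit when asserting that $\Psi$ lands in the set of idempotents.
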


\begin{proof} $(1) \Rightarrow (2)$ follows from Propositions \ref{prop:idempotent}(1) and \ref{prop:unique}.
On the other hand, $(2) \Rightarrow (1)$ follows from Claim 1 of Lemma \ref{lemma:implies-inverse} with $\nu=\mu$.

We demonstrate the correspondence. Let $J$ be the set of all idempotent measures in $\mathfrak{M}_{\bar{m}}(M)$ and $\mathcal{H}$ be the set of all relatively $\bar{m}$-type-definable subgroups of $\aut(\FC)$. 
The claimed correspondence is given by the maps $\Phi:J \to \mathcal{H}$ and $\Psi: \mathcal{H} \to J$ defined by $\Phi(\mu) := \stab_{r}(\mu)$ and $\Psi(H) := \mu_{H}$. We need to show that $\Psi \circ \Phi = \id_{J}$ and $\Phi \circ \Psi = \id_{\mathcal{H}}$. 

Notice that $\Psi(\Phi(\mu)) =\mu_{\stab_{r}(\mu)} = \mu$ by (1) of Proposition \ref{prop:idempotent}. Now, for fixed $H \in \mathcal{H}$, we have that $\Phi(\Psi(H)) = \Phi(\mu_{H}) = \stab_{r}(\mu_{H})$. We want to show that $\stab_{r}(\mu_{H}) = H$. The inclusion $H \subseteq \stab_{r}(\mu_{H})$ is immediate from the $H$-$*$-right invariance of $\mu_{H}$. For the opposite inclusion, suppose for a contradiction that $\mu_{H} * \tp(\sigma(\bar{m})/M) = \mu_{H}$ for some $\sigma \not \in H$. Then for every $\mathcal{L}$-formula $\varphi(\bar{x};\bar{y})$ we have that 
\begin{equation*} 
    \mu_{H}(\varphi(\bar{m};\bar{y}))
    = \Big(\mu_{H} * \tp\big(\sigma(\bar{m})/M\big)\Big)\big(\varphi(\bar{m};\bar{y})\big)
    = \widehat{\mu_{H}}\big(\varphi(\sigma^{-1}(\bar{m});\bar{y}\big). \tag{$\diamondsuit$}
\end{equation*}

Take $p \in \supp(\mu_{H}) \subseteq \widetilde{H}$. Choose $\tau' \in \aut(\mathfrak{C}')$ such that $\tau'(\bar{m}) \models \hat{p}$; 
then $\tau' \in H_{\mathfrak{C}'}$. Extend $\sigma$ to $\sigma' \in \aut(\mathfrak{C}')$. Then $\sigma' \not \in H_{\mathfrak{C}'}$ so $\sigma' \tau' \not \in H_{\mathfrak{C}'}$. 
Therefore, $\sigma(\hat{p}) = \tp(\sigma'(\tau'(\bar{m}))/\mathfrak{C}) \not \in \widetilde{H}_{\mathfrak{C},\bar{m}}$ and so $\sigma(\hat{p})|_{M} \not \in \widetilde{H}$.
We can find a formula $\varphi(\bar{x};\bar{y}) \in \mathcal{L}$ such that $\varphi(\sigma^{-1}(\bar{m}); \bar{y}) \in \hat{p}$ and $[\varphi(\bar{m};\bar{y})] \cap \widetilde{H} = \emptyset$. 

As $\supp(\mu_{H}) \subseteq \widetilde{H}$, we conclude that $\mu_{H}(\varphi(\bar{m};\bar{y})) = 0$.
Hence, by equation $(\diamondsuit)$, $\widehat{\mu_{H}}(\varphi(\sigma^{-1}(\bar{m});\bar{y}) = 0$.
 However, this is impossible since $p\in \supp(\mu)$ implies that $\hat{p} \in \supp(\widehat{\mu_{H}})$ (see Remark \ref{fact:extend}).  
\end{proof}

As a conclusion, we obtain Conjecture \ref{conjecture: main conjecture} in the stable case.

\begin{cor}[Stable case]\label{cor: 0.7 for stable}
    Let $\mu \in \mathfrak{M}^{\inv}_{\bar m}(\FC,M)$. 
We know that $\stab(\mu)=G_{\pi,\FC}$ for some partial type $\pi(\bar x;\bar y) \vdash \bar x \equiv \bar y$.
Then the following are equivalent:
\begin{enumerate}
\item $\mu$ is an idempotent.
\item $\mu$ is the unique (left) $G_{\pi,\FC}$-invariant measure in $\mathfrak{M}^{\inv}_{\pi(\bar{m};\bar{y})}(\FC,M)$.
\end{enumerate}
In particular, there is a correspondence between idempotent measures in $\mathfrak{M}^{\inv}_{\bar m}(\FC,M)$ and relatively $\bar m$-type-definable over $M$ subgroups of $\aut(\FC)$.
\end{cor}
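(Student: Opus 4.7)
The plan is to bootstrap the corollary from the $M$-level theory developed in Subsections~7.2 and~7.3, using the fundamental bijection in stable theories between $\mathfrak{M}_{\bar m}(M)$ and $\mathfrak{M}^{\inv}_{\bar m}(\FC,M)$ given by $\mu \mapsto \widehat{\mu}$ (the unique $M$-definable, equivalently $M$-invariant, extension), see Fact~\ref{fact: basic fact on Keisler measure sin stable theories}. Under this bijection, idempotency is preserved (by definition of $\ast$ on $\mathfrak{M}_{\bar m}(M)$), and by Lemma~\ref{lemma: relatively-type-definable} the stabilizer $\stab(\mu)$ of a global invariant measure coincides with $\stab_r(\mu|_M)$.

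For the direction $(2) \Rightarrow (1)$: since $T$ is stable (hence NIP), $\mu$ is definable over $M$, in particular Borel-definable. The hypotheses of (2) are then exactly those of Corollary~\ref{cor: two to one} (combining Lemma~\ref{lemma: equivalence of invariance} and Proposition~\ref{proposition: adaptation of 3.19 from [ChGa]}), yielding that $\mu$ is idempotent.

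For $(1) \Rightarrow (2)$: assume $\mu$ is idempotent, so that $\mu|_M$ is idempotent in $(\mathfrak{M}_{\bar m}(M),\ast)$. By Proposition~\ref{prop:idempotent}(1) together with Lemma~\ref{lemma: relatively-type-definable}, the relatively type-definable group $H_{\mu|_M}$ coincides with $\stab_r(\mu|_M) = \stab(\mu) = G_{\pi,\FC}$, and moreover $\mu|_M = \mu_{G_{\pi,\FC}}$ in the sense of Proposition~\ref{prop:existence/uniqueness}. Next, Proposition~\ref{prop:relative} identifies $\supp(\mu|_M) = \Gen(\widetilde{G_{\pi,\FC}}) \subseteq \widetilde{G_{\pi,\FC}}$, and via Remark~\ref{fact:extend} we conclude $\supp(\mu) \subseteq [\pi(\bar m;\bar y)]$, so $\mu \in \mathfrak{M}^{\inv}_{\pi(\bar m;\bar y)}(\FC,M)$. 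That $\mu$ is left $G_{\pi,\FC}$-invariant follows because $\mu|_M$ is $G_{\pi,\FC}$-$\ast$-right invariant (Proposition~\ref{prop:existence/uniqueness}), hence, by Lemma~\ref{lemma:G inv right to left}, also $G_{\pi,\FC}$-$\ast$-left invariant, which via Lemma~\ref{lemma: relatively-type-definable} lifts to $G_{\pi,\FC} \leq \stab(\mu)$. For uniqueness, any other candidate $\nu \in \mathfrak{M}^{\inv}_{\pi(\bar m;\bar y)}(\FC,M)$ that is left $G_{\pi,\FC}$-invariant satisfies $G_{\pi,\FC} \leq \stab(\nu) = \stab_r(\nu|_M)$ and $\nu|_M(\widetilde{G_{\pi,\FC}}) = 1$; applying Proposition~\ref{prop:unique} with $\mu|_M$ as the existence witness gives $\nu|_M = \mu|_M$, whence $\nu = \mu$ by uniqueness of $M$-invariant extensions.

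The ``in particular'' clause then follows by transferring Corollary~\ref{cor: stable idempotent vs unique} (which already establishes the correspondence at the level of $\mathfrak{M}_{\bar m}(M)$) across the bijection $\mu \leftrightarrow \mu|_M$. The only subtle points in the argument are (i) verifying that $\mu \in \mathfrak{M}^{\inv}_{\pi(\bar m;\bar y)}(\FC,M)$ automatically in (1) --- which is really the content of the profinite structure theorem for supports of idempotents (Propositions~\ref{prop:relative} and~\ref{prop:idempotent}(2)) --- and (ii) the bookkeeping of switching between the left $G_{\pi,\FC}$-invariance appearing in the global statement and the $\ast$-right invariance that drives the $M$-level uniqueness proof, which is mediated cleanly by Lemmas~\ref{lemma: relatively-type-definable} and~\ref{lemma:G inv right to left}. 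Neither of these is a genuine obstacle: all the real work has been carried out in Section~\ref{sec:group chunk} (through Theorem~\ref{theorem: counterpart of Newelski's theorem}) and in the preceding subsections of Section~\ref{sec: 0.7 for stable}.
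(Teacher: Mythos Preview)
Your proof is correct and takes essentially the same route as the paper: descend to $\mathfrak{M}_{\bar m}(M)$, use Propositions~\ref{prop:relative} and~\ref{prop:idempotent} (packaged in the paper as Corollary~\ref{cor: stable idempotent vs unique}) to conclude $\mu\in\mathfrak{M}^{\inv}_{\pi(\bar m;\bar y)}(\FC,M)$, and then argue uniqueness. The only differences are cosmetic: the paper observes that left $G_{\pi,\FC}$-invariance of $\mu$ is tautological (since $G_{\pi,\FC}=\stab(\mu)$ by hypothesis, so your detour through Lemma~\ref{lemma:G inv right to left} is unnecessary), and for uniqueness the paper invokes Corollary~\ref{cor: uniqueness for measures}(1) rather than your Proposition~\ref{prop:unique}---both work.
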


\begin{proof}
    The implication $(2)\Rightarrow(1)$ follows by Corollary \ref{cor: two to one}. We argue for $(1)\Rightarrow(2)$.

    Since $\mu$ is idempotent, also $\mu|_M$ is idempotent.
    Corollary \ref{cor: stable idempotent vs unique} implies that
    $\mu|_M(\widetilde{\stab_r(\mu|_M)})=1$.
    Because the restriction map $r:S^{\inv}_{\bar{m}}(\FC,M)\to S_{\bar{m}}(M)$ is a homeomorphism and $r_\ast(\mu)=\mu|_M$,
    and $\stab_r(\mu|_M)=\stab(\mu)$ (by Lemma \ref{lemma: relatively-type-definable}),
    we see that
    $$1=\mu\Big(r^{-1}[\widetilde{\stab_r(\mu|_M)}] \Big)=
    \mu\Big(r^{-1}[\widetilde{\stab(\mu)}] \Big).$$
    By the choice of $\pi(\bar x;\bar y)$, $\widetilde{\stab(\mu)}=[\pi(\bar{m};\bar{y})]\subseteq S_{\bar{m}}(M)$, and so $r^{-1}[\widetilde{\stab(\mu)}]=[\pi(\bar{m};\bar{y})]\cap S^{\inv}_{\bar{m}}(\FC,M)$. Hence, $\mu\in\mathfrak{M}^{\inv}_{\pi(\bar{m};\bar{y})}(\FC,M)$. Naturally $\mu$ is $G_{\pi,\FC}$-invariant, and the uniqueness in $(2)$ follows by 
Corollary \ref{cor: uniqueness for measures}(1).

The correspondence follows from Corollary \ref{cor: uniqueness for measures}(1) and Remark \ref{remark: correspondence from positive answer}.
\end{proof}

\printbibliography

\end{document}